\documentclass{amsart}
\usepackage{amsmath,amscd,xypic,amssymb,combelow,color,enumitem,graphicx,float}

\usepackage{tikz-cd}
\usepackage{mathtools}
\usepackage{setspace}

\usepackage{mathrsfs}
\usepackage[margin=1 in]{geometry}

\usepackage{xr-hyper}
\usepackage[
pdftex,
bookmarks=true,
colorlinks=true,
debug=true,
pdfnewwindow=true]{hyperref}
\hypersetup{bookmarksdepth = 3}

\newtheorem{theorem}{Theorem}[section]

\newtheorem{prop}[theorem]{Proposition}
\newtheorem{lemma}[theorem]{Lemma}
\newtheorem{cor}[theorem]{Corollary}

\newtheorem{definition}[theorem]{Definition}
\newtheorem{remark}[theorem]{Remark}

\theoremstyle{remark}

\errorcontextlines=0
\numberwithin{equation}{section}

\usepackage{amsthm}
\makeatletter
\def\th@plain{%
  \thm@notefont{}
  \itshape 
}
\def\th@definition{%
  \thm@notefont{}
  \normalfont 
}
\makeatother
\newcommand{\bb}[1]{\mathbb{#1}}

\newcommand{\cal}[1]{\mathcal{#1}}
\renewcommand{\frak}[1]{\mathfrak{#1}}

\newcommand{\what}[1]{\widehat{#1}}
\renewcommand{\bf}[1]{\mathbf{#1}}

\newcommand{\qbinom}{\genfrac{[}{]}{0pt}{}}
\usepackage{nicematrix}
\usepackage{accents}

\usepackage{etoolbox}
\usepackage{tikz}
\usepackage{latexsym}
\usepackage{verbatim}
\usepackage{cases}
\usepackage{theoremref}

\DeclareMathOperator{\End}{End}

\DeclareMathOperator{\diag}{diag}

\DeclareMathOperator{\hgt}{ht}

\newcommand{\fg}{\frak{g}}

\newcommand{\fn}{\frak{n}}

\newcommand{\BZ}{\mathbb{Z}}
\newcommand{\BC}{\mathbb{C}}
\newcommand{\gl}{\mathfrak{gl}}
\newcommand{\ssl}{\mathfrak{sl}}
\newcommand{\sso}{\mathfrak{so}}
\newcommand{\ssp}{\mathfrak{sp}}
\newcommand{\uu}{U_{r,s}(\fg)}
\newcommand{\UU}{U_{r,s}(\widehat{\fg})}

\newcommand{\Id}{\mathrm{Id}}

\newcommand\iso{\,\vphantom{j^{X^2}}\smash{\overset{\sim}{\vphantom{\rule{0pt}{0.20em}}\smash{\longrightarrow}}}\,}

\newcommand{\ol}{\overline}
\newcommand{\wtd}{\widetilde}


\begin{document}

\title[Bicharacter twists of quantum groups]
      {\Large{\textbf{Bicharacter twists of quantum groups}}}

\author[Ian Martin and Alexander Tsymbaliuk]{Ian Martin and Alexander Tsymbaliuk}

\address{I.M.: Purdue University, Department of Mathematics, West Lafayette, IN, USA}
\email{mart2151@purdue.edu}

\address{A.T.: Purdue University, Department of Mathematics, West Lafayette, IN, USA}
\email{sashikts@gmail.com}

\begin{abstract}
We apply the general construction of a twist of bigraded Hopf algebras by skew bicharacters to obtain
two-parameter quantum groups in the Drinfeld-Jimbo, new Drinfeld (for affine types), and FRT (for both finite and affine)
presentations from their standard one-parameter versions. This yields new elementary proofs of the fundamental results on
two-parameter quantum groups that appeared in the literature over the last two decades, and also leads to natural
generalizations in the super and multiparameter cases.
\end{abstract}

\maketitle
\tableofcontents


\section{Introduction}\label{sec:intro}


\subsection{Summary}\label{ssec:summary}
\

The universal enveloping algebra $U(\fg)$ of a simple Lie algebra admits a famous quantization $U_q(\fg)$, the so-called
\emph{Drinfeld-Jimbo quantum group}. While for general $q$, the categories of their representations are equivalent to those
of $\fg$, the braiding $\hat{R}_{VW}\colon V\otimes W \iso W\otimes V$ is not a mere flip map $\tau$ as it is for $\fg$.
One can actually reverse engineer the above to build a quantum group from any solution $R_{VV}\in \End(V\otimes V)$
satisfying the \emph{Yang-Baxter} relation or equivalently $\hat{R}_{VV}:=R_{VV} \circ \tau$ satisfying the \emph{braid} relation:
\begin{equation*}
  R_{12}R_{13}R_{23}=R_{23}R_{13}R_{12} \qquad \mathrm{and} \qquad
  \hat{R}_{12}\hat{R}_{23}\hat{R}_{12}=\hat{R}_{23}\hat{R}_{12}\hat{R}_{23},
\end{equation*}
which goes back to the \emph{quantum inverse scattering method (QISM)} developed by Faddeev's school in 1980s.
In fact, all of the aforementioned braidings $\hat{R}_{VW}$ arise as images of the universal $R$-matrix
$\mathcal{R}\in U_q(\fg)\what{\otimes} U_q(\fg)$.
As $U_q(\fg)$ is actually a Drinfeld double of Cartan-enhanced subalgebras $U^\pm_q(\fg)$, the key ingredient in the formula
for $\mathcal{R}$ is a pair of dual bases of these subalgebras with respect to a Hopf pairing. To this end, one needs to
define \emph{root vectors} $e_{\pm \gamma}\in U^\pm_q(\fg)$ for every positive root $\gamma\in \Phi^+$ of $\fg$. This is
classically accomplished using Lusztig's braid group action (and depends on a choice of a reduced decomposition of the
longest element $w_0$ in the Weyl group of $\fg$). There is however a purely combinatorial approach to the construction
of $e_{\pm \gamma}$ that goes back to~\cite{K1,K2,L,Ro}. This relies on the Lalonde-Ram's bijection~\cite{LR}:
\begin{equation}\label{eq:LR-bijection}
  \ell \colon \Phi^+ \iso \Big\{\text{standard Lyndon words in}\ \Pi\Big\},
\end{equation}
where standard Lyndon words intrinsically depend on a fixed total order on the set $\Pi$ of simple roots in $\Phi^+$.
This bijection also gives rise to the \emph{lexicographical} order on $\Phi^+$ via:
\begin{equation}\label{eq:lyndon_order}
  \alpha < \beta \quad \Longleftrightarrow \quad \ell(\alpha) < \ell(\beta)  \ \ \mathrm{ lexicographically}.
\end{equation}
There is a canonical way (\emph{costandard factorization}) to split
\begin{equation}\label{eq:cost-factorization}
  \ell(\gamma)=\ell(\alpha)\ell(\beta) \qquad \mathrm{with} \quad \alpha,\beta\in \Phi^+,\ \gamma=\alpha+\beta,
\end{equation}
by maximizing the length of $\ell(\alpha)$. One then defines root vectors $e_{\pm \gamma}$ inductively via
\begin{equation}\label{eq:root_vectors_intro}
  e_\gamma=e_\alpha e_\beta - q^{(\alpha,\beta)}e_\beta e_\alpha \qquad \mathrm{and} \qquad
  e_{-\gamma}=e_{-\beta} e_{-\alpha} - q^{-(\alpha,\beta)} e_{-\alpha} e_{-\beta}.
\end{equation}

\medskip
Replacing simple $\fg$ with an affine Lie algebra $\what{\fg}$, it turns out that the representation theory of $U_q(\what{\fg})$
is best developed using an alternative realization known as the \emph{new Drinfeld realization} $U^D_q(\what{\fg})$. The isomorphism
\begin{equation}\label{eq:DJ=Dr_1param}
  U_q(\what{\fg})\iso U^D_q(\what{\fg})
\end{equation}
was stated by Drinfeld in~\cite{D}, while its inverse was constructed by Beck~\cite{B} using the affine braid group action.
In the new Drinfeld realization, the infinite set of generators is nicely packed into the currents $X^\pm_i(z),\Phi^\pm_i(z)$.
We note that the above isomorphism does not intertwine the triangular decompositions of both algebras. Furthermore,
no explicit formulas for the Drinfeld-Jimbo coproduct of the generators in $U^D_q(\what{\fg})$ are known.
One can also apply the aforementioned QISM in the affine setup to introduce $U^{\mathrm{RTT}}_q(\what{\fg})$. The isomorphism
\begin{equation}\label{eq:RTT=Dr_1param}
  U^{D}_q(\what{\fg}) \iso U^{\mathrm{RTT}}_q(\what{\fg})
\end{equation}
was explicitly constructed in~\cite{DF} for type $A^{(1)}_n$ through the Gauss decomposition of the generating matrices.
The generalization of this result for other classical Lie algebras goes back to~\cite{HM} and much more recently to~\cite{JLM1, JLM2}.

\medskip
Although theory of multiparameter quantum groups goes back to the early 1990s (see e.g.~\cite{AST,R,T}), much of the
current interest in the subject stems from the papers~\cite{BW1, BW2, BW3}, which study the two-parameter quantum group
$U_{r,s}(\gl_n)$ and subsequently give an application to pointed finite-dimensional Hopf algebras. In~\cite{BW2}, the authors
developed the theory of finite-dimensional representations in a complete analogy with the one-parameter case, computed the
two-parameter $R$-matrix for the first fundamental $U_{r,s}(\gl_n)$-representation, and used it to establish the Schur-Weyl
duality between $U_{r,s}(\gl_n)$ and a two-parameter Hecke algebra. These works of Benkart and Witherspoon stimulated an
increased interest in the theory of two-parameter quantum groups. In particular, the definitions of $\uu$ for other classical
simple Lie algebras $\fg$ were first given in~\cite{BGH1,BGH2}, where basic results on the structure and representation theory
of $\uu$ were also established. Subsequently, these algebras have been treated case-by-case in multiple papers. For a more
uniform treatment and complete references, we refer the reader to~\cite{HP1}. We note that due the absence of Lusztig's braid
group action on $\uu$, one is forced to use the aforementioned combinatorial approach to PBW bases of $U^\pm_{r,s}(\fg)$.
One can accomplish this along with the construction of orthogonal PBW bases by using the embedding of $U^{+}_{r,s}(\fg)$
into the corresponding quantum shuffle algebra akin to~\cite{L,CHW}, see~\cite{MT2}.

The generalization to affine Lie algebras started with the work~\cite{HRZ} (which however had some gaps in the exposition,
fixed in~\cite{Ts}). Subsequently, some attempts were made to provide a uniform Drinfeld-Jimbo presentation of such algebras,
establishing the triangular decomposition and the Drinfeld double construction for them. More importantly, a new Drinfeld
realization of these algebras $\UU$ was established on a case-by-case basis for $\fg$ being of type $A_n$ (see~\cite{HRZ}),
types $D_n$ and $E_6$ (see~\cite{HZ}), type $G_2$ (see~\cite{GHZ}), and type $C_n$ (see~\cite{HZ2}). However, we note a
caveat in this treatment: while a surjective homomorphism from the Drinfeld-Jimbo to the new Drinfeld realization is constructed
similarly to~\eqref{eq:DJ=Dr_1param}, there is no proper proof of its injectivity. The aforementioned new Drinfeld realization of
$\UU$ was used to construct the vertex representations of $\UU$ in an analogy with the one-parameter case. Finally, the FRT-formalism
for two-parameter quantum groups (with two-parameter analogues of~\eqref{eq:RTT=Dr_1param}) was partially carried out in~\cite{JL2} for type
$A^{(1)}_n$ and was attempted (with major errors) in~\cite{HJZ,HXZ1,HXZ2} for other classical types $B^{(1)}_n,C^{(1)}_n,D^{(1)}_n$.

\medskip
It has been known for a long time that multiparameter quantum groups can often be obtained from the standard one-parameter ones
by twisting the coalgebra structure~\cite{R} or by twisting the algebra structure via a 2-cocycle on a free abelian group~\cite{AST}.
In fact, dualizing the construction of~\cite{R} in the case of co-quasitriangular Hopf algebras, one obtains the general construction
of twisting by a Hopf 2-cocycle, see~\cite{DT,M}. In the context relevant to the present paper, this has been established in~\cite{HP1} for
two-parameter quantum groups and in~\cite{HPR} for multiparameter quantum groups, in their Drinfeld-Jimbo realization. On the other hand,
~\cite{HP1} noted that this approach is completely inapplicable to the new Drinfeld realization $U^D_{r,s}(\what{\fg})$.
It is also unclear to us how to relate representations of $U_{r,s}(\fg)$ to those of $U_{q}(\fg)$ through such twists.

In this note, we rather focus on a different type of twist: twisting bigraded Hopf algebras by skew bicharacters. While this idea
has been already utilized in the setup of two-parameter quantum groups back in~\cite{HP} (see also~\cite[\S4]{FL}, as well
as~\cite[\S2]{FX} and~\cite[\S2]{CFLW} for multiparameter- and super- analogues only for the ``positive half''), our key observation
is that not only $U_{r,s}(\fg),U_{r,s}(\what{\fg})$ but also all aforementioned realizations\footnote{In the text,
we rather use the terminology ``FRT construction'' and the notation $U(R), \wtd{U}(\wtd{R}(z))$ instead of
$U^{\mathrm{RTT}}_q(\fg), U^{\mathrm{RTT}}_q(\what{\fg})$.}
$U^D_{r,s}(\what{\fg}), U^{\mathrm{RTT}}_{r,s}(\fg), U^{\mathrm{RTT}}_{r,s}(\what{\fg})$ can be constructed using these kinds of twists.
Moreover, both isomorphisms~\eqref{eq:DJ=Dr_1param} and~\eqref{eq:RTT=Dr_1param} are compatible with these twistings.
This has a number of important consequences (implying most of the properties of two-parameter quantum groups established
via technical calculations over the last two decades):
\begin{itemize}[leftmargin=0.7cm]

\item[--]
the construction of PBW-type bases of $U^\pm_{r,s}(\fg)$ and the proof of their orthogonality (see~\cite{MT2}, and much
of the preceding literature cited there)

\item[--]
the calculation of the finite $R$-matrices arising as standard braidings for the first fundamental representation of $U_{r,s}(\fg)$ for
classical $\fg$, and their affine counterparts (see~\cite{MT1})

\item[--]
the construction of the algebra isomorphisms $U_{r,s}(\fg)\simeq U^{\mathrm{RTT}}_{r,s}(\fg)$ and
$U^D_{r,s}(\what{\fg})\simeq U^{\mathrm{RTT}}_{r,s}(\what{\fg})$ (correcting and upgrading~\cite{JL1,JL2,HJZ,HXZ1,HXZ2})

\item[--]
the classification of finite-dimensional representations of $U^D_{r,s}(\what{\fg})$ in terms of Drinfeld polynomials and the description
of their pseudo-highest weights (see~\cite{JZ}); the construction of level-one vertex representations of $U^D_{r,s}(\what{\fg})$ for
simply-laced $\fg$ (see~\cite{HZ})

\item[--]
the multiparameter and super-case generalizations, cf.~\cite{Z,HJZ2} and~\cite{HT}.

\end{itemize}
We should stress right away two technical aspects of our approach. First, one needs to develop Cartan-doubled versions of all classical
results in the one-parameter setup (which relies on extending the usual one-parameter algebras by appropriate Laurent polynomial rings,
and then realizing the Cartan-doubled versions as subalgebras of these extensions). Second, to perform the bicharacter twists, one needs
to have compatible algebra bigradings on these Cartan-doubled versions.


\subsection{Outline}\label{ssec:outline}
\

\noindent
The structure of the present paper is as follows:
\begin{itemize}[leftmargin=0.5cm]

\item[$\bullet$]
In Section~\ref{sec:notation}, we recall the two-parameter quantum groups $U_{r,s}(\fg)$ for simple finite-dimensional $\fg$
(Definition~\ref{def:general_2param}), the Hopf pairing for those (Proposition~\ref{prop:pairing_2param}) that yields their
Drinfeld double realization, and the standard intertwiners between tensor products of $U_{r,s}(\fg)$-modules in the category
$\mathcal{O}$ (Proposition~\ref{prop:universal-R}).

\item[$\bullet$]
In Section~\ref{sec:twisted_R_matrices}, we crucially utilize the general construction of a twist of a bigraded Hopf algebra
by a skew bicharacter, see~\eqref{eq:twisted_product_general}, which results in relating $U_{r,s}(\fg)$ to its one-parameter
version $U_{q,q^{-1}}(\fg)$ with $q=(r/s)^{1/2}$ (Proposition~\ref{prop:twisted_algebra}). This can be upgraded to
multiparameter quantum groups and associated quantum shuffle algebras (Remarks~\ref{rem:shuffle-twist},~\ref{rem:multiparameter}).
Furthermore, both the Hopf pairing and the intertwiners can be naturally obtained by twisting their one-parameter counterparts
(Propositions~\ref{prop:twisted_pairing},~\ref{prop:R-matrix-twist}). This allows for new short proofs
of~\cite[Theorems 4.4--4.6]{MT1}, which explicitly evaluated $\hat{R}_{r,s}\colon V\otimes V \iso V\otimes V$ when
$V$ is the first fundamental $U_{r,s}(\fg)$-module for classical $\fg$. Consequently, this also yields new proofs
of~\cite[Theorems 6.10--6.12]{MT1}, which evaluated the corresponding affine $R$-matrices $\hat{R}_{r,s}(z)$,
due to~\eqref{eq:Baxterization-B}--\eqref{eq:Baxterization-D}.

\item[$\bullet$]
In Section~\ref{sec:pbw}, we relate the quantum root vectors of $U_{r,s}(\fg)$ from~\cite{MT1,MT2} to their one-parameter counterparts
(Proposition~\ref{prop:pbw-basis-twist}). Combining this with Propositions~\ref{prop:twisted_pairing}, we then derive the orthogonality
of PBW bases of $U^\pm_{r,s}(\fg)$ through the well-known one-parameter version of~\cite{L} (Theorem~\ref{thm:orthogonal PBW bases}).
Finally, evoking the pairing constants for root vectors in one-parameter case due to~\cite{BKM} (Theorem~\ref{thm:pairing_BKM} and
Corollary~\ref{cor:1param_pairing_recursion}), we derive the nonzero constants in the pairing between PBW bases of $U^\pm_{r,s}(\fg)$
(Theorem~\ref{thm:2param_pairing_recursion}). The results of this section establish~\cite[Theorem 5.12]{MT1} and
generalize~\cite[Theorems~7.1--7.2]{MT2}.

\item[$\bullet$]
In Section~\ref{sec:subalgebra-interpret}, we provide another relation between the two-parameter and one-parameter quantum algebras,
realizing the former as subalgebras of certain extensions of the latter (Proposition~\ref{prop:subalgebra_interpretation_qgp}).
We also present a slight generalization of such construction (Proposition~\ref{prop:subalgebra_interpretation_general}, cf.\ the proof
of Proposition~\ref{prop:subalgebra_interpretation_qgp}). We conclude this Section by similarly realizing the Cartan-doubled
one-parameter algebra $U_{q,q^{-1}}(\fg)$ as a subalgebra of a certain extension of $U_q(\fg)$ with a ring of Laurent polynomials
(Proposition~\ref{prop:double_cartan_DJ}).

\item[$\bullet$]
In Section~\ref{sec:FRT construction finite}, we recall the FRT-construction that associates to every solution $R$ of the Yang-Baxter
equation a bialgebra $A(R)$ and a Hopf algebra $U(R)$. We equip these algebras with appropriate bigradings, and show that
in the present setup, the twisting of such algebras associated with the one-parameter $R_{q}$ recovers the same-named algebras
associated with the two-parameter $R_{r,s}$ (Corollaries~\ref{cor:A(R)_2_vs_1_parameter} and~\ref{cor:U(R)_2_vs_1_parameter}).
We use these results to derive the two-parameter generalization of the isomorphism between the Drinfeld-Jimbo and RTT-type
presentations of finite quantum groups in classical types
(Theorems~\ref{thm:DJ=RTT_Btype_2_param},~\ref{thm:DJ=RTT_Ctype_2param},~\ref{thm:DJ=RTT_Dtype_2param}),
by first upgrading the former to the Cartan-doubled setup
(Corollaries~\ref{cor:DJ=RTT_Btype_1param_double_cartan},~\ref{thm:DJ=RTT_Ctype_1param_double_cartan},~\ref{thm:DJ=RTT_Dtype_1_param_double_cartan}).
The latter is crucially based on the embedding of Proposition~\ref{prop:double_cartan_RTT_finite}, the proof of which is actually much more involved
than that of Proposition~\ref{prop:double_cartan_DJ} and requires a short detour into triangular decompositions and Drinfeld doubles
(the latter crucially utilizes the \emph{crossing symmetry} identities of Lemma~\ref{lem:crossing_symmetries}).

\item[$\bullet$]
In Section~\ref{sec:loop-realization}, we recall the new Drinfeld realization $U_{r,s}^{D}(\what{\fg})$ of two-parameter quantum affine algebras.
We equip them with a natural bigrading (Lemma~\ref{lem:Q_hat_bigrading}) and show that they can be realized as bicharacter twists of their
one-parameter counterparts $U_{q,q^{-1}}^{D}(\what{\fg})$
(Theorem~\ref{thm:twisted_algebra_loop}). Combining this with Proposition~\ref{prop:twisted_representation} allows to deduce the classification
of simple finite-dimensional $U_{r,s}^{D}(\what{\fg})$-modules of~\cite{JZ} and to obtain the level-one vertex $U_{r,s}^{D}(\what{\fg})$-modules
of~\cite{HZ} (Remarks~\ref{rem:finite-dimensional-2parameter},~\ref{rem:vertex-2parameter}). We also use Theorem~\ref{thm:twisted_algebra_loop}
to derive the two-parameter generalization of the isomorphism between the Drinfeld-Jimbo and new Drinfeld presentations of affine quantum groups
(Theorem~\ref{thm:DJ=D_2_parameter}), by first upgrading the latter to the Cartan-doubled setup (Theorem~\ref{thm:DJ=D_1_parameter}).

\item[$\bullet$]
In Section~\ref{sec:affine_RTT}, we develop the affine analogues of the results from Section~\ref{sec:FRT construction finite}. To this end,
we recall the FRT-construction that associates to every solution $R(z)$ of the Yang-Baxter equation with a spectral parameter a Hopf algebra $U(R(z))$.
We equip these algebras (and their quotients) with appropriate bigradings (Lemma~\ref{lem:affine_RTT_bigradings}), and show that in the present setup,
the twisting of such algebras associated with the one-parameter $R_{q}(z)$ recovers the same-named algebras associated with the two-parameter
$R_{r,s}(z)$ (Proposition~\ref{prop:2_vs_1_parameter_affine_RTT}). We use these results to derive the two-parameter generalization of the isomorphism
\eqref{eq:RTT=Dr_1param} between the new Drinfeld and RTT presentations of affine quantum groups in classical types
(Theorems~\ref{thm:D=affine_RTT_Btype_2param},~\ref{thm:D=RTT_Ctype_2param},~\ref{thm:D=RTT_Dtype_2param}),
by first upgrading the former to the Cartan-doubled setup
(Corollaries~\ref{cor:affine_1param_double_Btype},~\ref{cor:D=RTT_Ctype_1param_double_cartan},~\ref{cor:D=RTT_Dtype_1param_double_cartan}).
The latter is based on the embedding of Proposition~\ref{prop:double_cartan_RTT_affine}, the proof of which is similar to the finite case
and requires a detour into triangular decompositions (utilizing the \emph{crossing symmetry} identities of Lemma~\ref{lem:affine_crossing_symmetries}).

\item[$\bullet$]
In Appendix~\ref{sec:app_A}, we present the $A$-type counterparts of all our main constructions for $BCD$-types.

\end{itemize}


\subsection{Acknowledgement}\label{ssec:acknowl}
\

This note is a sequel to~\cite{MT1,MT2}\footnote{In a couple of spots, we shall also refer to~\cite{MT0}, which is essentially~\cite{MT1}
with extra Appendices.} and represents the last part of the REU project at Purdue University; we are grateful to Purdue University for support.
A.T.\ is grateful to A.~Molev for a correspondence on~\cite{JLM1}, and is grateful to IHES (Bures-sur-Yvette, France) for the hospitality
and wonderful working conditions in the Summer 2025, where the major part of this paper was written.

The work of A.T.\  was partially supported by an NSF Grant DMS-$2302661$.


\section{Notations and definitions}\label{sec:notation}

In this Section, we recall the notion of two-parameter quantum groups $U_{r,s}(\fg)$ for simple finite-dimensional
Lie algebras~$\fg$, the Hopf algebra structure and the Hopf pairing on those, as well as the standard construction
of intertwiners arising from the universal $R$-matrix.


\subsection{Two-parameter quantum groups}
\

Let $E$ be a Euclidean space with a positive-definite symmetric bilinear form $(\cdot ,\cdot)$. Let $\Phi \subset E$ be an irreducible
reduced root system with an ordered set of simple roots $\Pi = \{\alpha_{1},\ldots ,\alpha_{n}\}$, and let $\fg$ be the corresponding
complex simple Lie algebra. We set $\fn^{\pm} = \bigoplus_{\alpha \in \Phi^{+}}\fg_{\pm \alpha}$,
where $\fg_{\alpha}$ denotes the root space of $\fg$ corresponding to $\alpha \in \Phi$, and $\Phi^+$ denotes the set of positive
roots of $\Phi$. Let $C = (a_{ij})_{i,j = 1}^{n}$ be the Cartan matrix of $\fg$, with entries given explicitly
by $a_{ij} = \frac{2(\alpha_{i},\alpha_{j})}{(\alpha_{i},\alpha_{i})}$, and set $d_{i} = \frac{1}{2}(\alpha_{i},\alpha_{i})$,
where $(\cdot,\cdot)$ is normalized so that the short roots have square length $2$. The root and weight lattices of $\fg$ will be
denoted by $Q$ and $P$,~respectively: $\bigoplus_{i=1}^n \BZ \alpha_i=Q\subset P=\bigoplus_{i=1}^n \BZ \varpi_i$ with
$(\alpha_i,\varpi_j)=d_i\delta_{ij}$. Let $Q^{+}=\bigoplus_{i=1}^n \BZ_{\geq 0} \alpha_i$ be the positive cone of the root
lattice $Q$, and for any $\lambda=\sum_{i=1}^n c_i\alpha_i\in Q^+$, we define its height as $\hgt(\lambda)=\sum_{i=1}^n c_i$.
Having fixed above the order on the set of simple roots $\Pi$, we consider the (modified) Ringel bilinear form
$\langle \cdot ,\cdot \rangle$ on $Q$, such that (unless $\{i,j\}=\{n-1,n\}$ in type $D_n$) we have:
\begin{equation*}
  \langle \alpha_{i},\alpha_{j}\rangle =
  \begin{cases}
     d_{i}a_{ij} & \text{if}\ \ i < j \\
     d_{i} & \text{if}\ \ i = j \\
     0 & \text{if}\ \ i > j
  \end{cases} \,,
\end{equation*}
while in the remaining case of $D_n$-type system, we set
\begin{equation*}
  \langle \alpha_{n-1},\alpha_{n} \rangle = -1, \qquad \langle \alpha_{n},\alpha_{n-1}\rangle = 1.
\end{equation*}
We note that $(\mu,\nu) = \langle \mu,\nu \rangle + \langle \nu,\mu \rangle$ for any $\mu,\nu \in Q$.

Let $\bb{K}$ be an algebraically closed field, and fix two transcendental elements $r,s \in \bb{K}$ such that $rs^{-1}$ is not
a root of unity. Then we define the following two-parameter analogues of $q$-integers, $q$-factorials, and $q$-binomial coefficients:
\begin{equation*}
  [m]_{r,s} = \frac{r^{m} - s^{m}}{r - s} = r^{m-1} + r^{m-2}s + \ldots + rs^{m-2} + s^{m-1}
  \qquad \mathrm{for\ all} \quad  m\in \BZ_{\geq 0},
\end{equation*}
\begin{equation*}
  [m]_{r,s}! = [m]_{r,s} [m-1]_{r,s} \dots [1]_{r,s}  \qquad \mathrm{for} \quad m>0,
  \qquad [0]_{r,s}!=1,
\end{equation*}
and
\begin{equation*}
  \qbinom{m}{k}_{r,s} = \frac{[m]_{r,s}!}{[m - k]_{r,s}![k]_{r,s}!}
  \qquad \mathrm{for\ all} \quad 0\leq k\leq m.
\end{equation*}
Finally, we also define
\begin{equation}\label{eq:rs_gamma}
\begin{split}
  & r_{\gamma} = r^{(\gamma,\gamma)/2},\ \qquad s_{\gamma} = s^{(\gamma,\gamma)/2}
    \qquad \mathrm{for\ all} \quad \gamma\in \Phi,\\
  & r_{i} = r_{\alpha_i} = r^{d_{i}},\qquad s_{i} = s_{\alpha_i} = s^{d_{i}}
    \qquad \mathrm{for\ all} \quad 1\leq i\leq n.
\end{split}
\end{equation}

We now recall the definition of the \textbf{two-parameter quantum group} of $\fg$:

\begin{definition}\label{def:general_2param}
The two-parameter quantum group $U_{r,s}(\fg)$ of a simple Lie algebra $\fg$ is the associative $\bb{K}$-algebra
generated by $\{e_{i},f_{i},\omega_{i}^{\pm 1},(\omega_{i}')^{\pm 1}\}_{i=1}^{n}$ with the following defining relations
(for all $1\leq i,j\leq n$):
\begin{equation}\label{eq:R1}
   [\omega_i,\omega_j]=[\omega_i,\omega'_j]=[\omega'_i,\omega'_j]=0, \qquad
   \omega_{i}^{\pm 1}\omega_{i}^{\mp 1} = 1 = (\omega_{i}')^{\pm 1}(\omega_{i}')^{\mp 1},
\end{equation}
\begin{equation}\label{eq:R2}
   \omega_{i}e_{j} = r^{\langle \alpha_{j},\alpha_{i}\rangle}s^{-\langle \alpha_{i},\alpha_{j}\rangle}e_{j}\omega_{i}, \qquad
   \omega_{i}f_{j} = r^{-\langle \alpha_{j},\alpha_{i}\rangle}s^{\langle \alpha_{i},\alpha_{j}\rangle}f_{j}\omega_{i},
\end{equation}
\begin{equation}\label{eq:R3}
   \omega_{i}'e_{j} = r^{-\langle \alpha_{i},\alpha_{j}\rangle}s^{\langle \alpha_{j},\alpha_{i}\rangle}e_{j}\omega_{i}', \qquad
   \omega_{i}'f_{j} = r^{\langle \alpha_{i},\alpha_{j}\rangle}s^{-\langle \alpha_{j},\alpha_{i}\rangle}f_{j}\omega_{i}',
\end{equation}
\begin{equation}\label{eq:R4}
    e_{i}f_{j} - f_{j}e_{i} = \delta_{ij}\frac{\omega_i-\omega'_i}{r_{i} - s_{i}},
\end{equation}
and quantum $(r,s)$-Serre relations
\begin{equation}\label{eq:R5}
\begin{split}
  & \sum_{k = 0}^{1 - a_{ij}} (-1)^k \qbinom{1 - a_{ij}}{k}_{r_{i},s_{i}} (r_{i}s_{i})^{\frac{1}{2}k(k-1)}
    (rs)^{k\langle \alpha_{j},\alpha_{i}\rangle}e_{i}^{1 - a_{ij} - k}e_{j}e_{i}^{k} = 0
    \qquad \mathrm{for}\ i\ne j, \\
  & \sum_{k = 0}^{1 - a_{ij}} (-1)^k \qbinom{1 - a_{ij}}{k}_{r_{i},s_{i}} (r_{i}s_{i})^{\frac{1}{2}k(k-1)}
    (rs)^{k\langle \alpha_{j},\alpha_{i}\rangle}f_{i}^{k}f_{j}f_{i}^{1-a_{ij} - k} = 0
    \qquad \mathrm{for}\  i\ne j.
\end{split}
\end{equation}
\end{definition}

We note that the algebra $U_{r,s}(\fg)$ admits a $Q$-grading, defined on the generators via:
\begin{equation*}
  \deg(e_{i})=\alpha_i,\quad \deg(f_i)=-\alpha_i, \quad
  \deg(\omega_{i})=0, \quad \deg(\omega_{i}')=0 \qquad \mathrm{for\ all} \quad 1\leq i\leq n.
\end{equation*}
For $\mu\in Q$, let $U_{r,s}(\fg)_{\mu}$ (or simply $(U_{r,s})_{\mu}$) denote the degree $\mu$ component of
$U_{r,s}(\fg)$ under this $Q$-grading. We shall also need several subalgebras of $U_{r,s}(\fg)$:
\begin{itemize}

\item
the ``positive'' subalgebra $U_{r,s}^{+}=U_{r,s}^{+}(\fg)$, generated by $\{e_{i}\}_{i=1}^{n}$,

\item
the ``negative'' subalgebra $U_{r,s}^{-}= U_{r,s}^{-}(\fg)$, generated by $\{f_{i}\}_{i=1}^{n}$,

\item
the ``Cartan'' subalgebra $U_{r,s}^{0}= U_{r,s}^{0}(\fg)$, generated by $\{\omega_{i}^{\pm 1},(\omega'_{i})^{\pm 1}\}_{i=1}^{n}$,

\item
the ``non-negative subalgebra'' $U_{r,s}^{\ge}= U_{r,s}^{\ge}(\fg)$, generated by $\{e_{i},\omega_{i}^{\pm 1}\}_{i=1}^{n}$,

\item
the ``non-positive subalgebra'' $U_{r,s}^{\le}= U_{r,s}^{\le}(\fg)$, generated by $\{f_{i},(\omega'_{i})^{\pm 1}\}_{i=1}^{n}$.

\end{itemize}
Evoking~\eqref{eq:R1}, for any $\mu=\sum_{i=1}^{n} c_{i}\alpha_{i}\in Q$, we define
$\omega_\mu,\omega'_\mu\in U_{r,s}^{0}(\fg)$ via:
\begin{equation*}
  \omega_{\mu} = \omega_{1}^{c_{1}}\omega_{2}^{c_{2}} \ldots \omega_{n}^{c_{n}}, \qquad
  \omega'_\mu = (\omega_{1}')^{c_{1}}(\omega_{2}')^{c_{2}} \ldots (\omega_{n}')^{c_{n}}.
\end{equation*}
The following result is proved in a standard way (see~\cite[Proposition 2.8]{MT2}):

\begin{prop}\label{prop:dim-count}
For all $\mu \in Q^{+}$, we have
  $\dim_{\bb{K}}(U_{r,s}^{+})_{\mu} = \dim_{\bb{K}}(U_{r,s}^{-})_{-\mu} = \dim_{\BC}U(\fn^{+})_{\mu} = \dim_{\BC}U(\fn^{-})_{-\mu}$.
\end{prop}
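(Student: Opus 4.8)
The statement compares four numbers, so the plan is to separate the soft comparisons from the single substantive one. The classical equality $\dim_\BC U(\fn^+)_\mu=\dim_\BC U(\fn^-)_{-\mu}$ is immediate from the PBW theorem together with the Chevalley automorphism $\fn^+\iso\fn^-$, which reverses the $Q$-grading. For the quantum halves I would exhibit a grading-reversing algebra anti-isomorphism $U^+_{r,s}(\fg)\iso U^-_{r,s}(\fg)^{\op}$ determined by $e_i\mapsto f_i$: on the free algebra it sends the word $e_i^{1-a_{ij}-k}e_je_i^k$ to $f_i^{k}f_jf_i^{1-a_{ij}-k}$, so the first family of $(r,s)$-Serre relations in~\eqref{eq:R5} is carried termwise onto the second (the scalar coefficients are literally the same), while $f_i\mapsto e_i$ provides an inverse. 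Hence $\dim_\bb{K}(U^+_{r,s})_\mu=\dim_\bb{K}(U^-_{r,s})_{-\mu}$, and everything reduces to proving $\dim_\bb{K}(U^+_{r,s})_\mu=\dim_\BC U(\fn^+)_\mu$ by a two-sided estimate.

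For the upper bound, regard $U^+_{r,s}(\fg)$ as (a base change of) the generic fiber of the $Q^+$-graded family of algebras $\bb{K}[r^{\pm1},s^{\pm1}]\langle e_1,\dots,e_n\rangle\big/\big(\text{first family of~\eqref{eq:R5}}\big)$ over $\operatorname{Spec}\bb{K}[r^{\pm1},s^{\pm1}]$, whose graded pieces are finitely generated over the base ring. Its fiber at the $\bb{K}$-point $r=s=1$ is the ordinary enveloping algebra $U(\fn^+)$: there $\qbinom{1-a_{ij}}{k}_{1,1}=\binom{1-a_{ij}}{k}$ and the remaining scalars in~\eqref{eq:R5} become $1$, so each $(r,s)$-Serre element degenerates to $(\ad e_i)^{1-a_{ij}}(e_j)$ and Serre's presentation identifies the fiber with $U(\fn^+)$, whose graded dimensions are characteristic-independent by PBW. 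Upper semicontinuity of the dimension of graded pieces along this Noetherian base then yields $\dim_\bb{K}(U^+_{r,s})_\mu\le\dim_\BC U(\fn^+)_\mu$. (One could bypass semicontinuity by straightening arbitrary monomials in the $e_i$ against a fixed list of PBW root vectors using only~\eqref{eq:R5}, but that is really the mechanism of the next step.)

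The lower bound $\dim_\bb{K}(U^+_{r,s})_\mu\ge\dim_\BC U(\fn^+)_\mu$ is the heart of the matter, since the previous argument is one-directional; it must be proved by genuinely producing $\dim_\BC U(\fn^+)_\mu$ linearly independent elements of $(U^+_{r,s})_\mu$. The route I would take is the quantum-shuffle one of~\cite{L,CHW} (carried out for $U^+_{r,s}$ in~\cite{MT2}): equip the free algebra $F=\bb{K}\langle e_1,\dots,e_n\rangle$ with the bicharacter-twisted bialgebra structure dictated by~\eqref{eq:R2}, form the resulting homomorphism from $F$ to the associated quantum shuffle algebra --- equivalently the Lusztig bilinear form on $F$ --- check by the standard rank-two computation that the $(r,s)$-Serre elements lie in its kernel, so that it descends to $U^+_{r,s}(\fg)$, and then exhibit $\dim_\BC U(\fn^+)_\mu$ ordered monomials in the $e_i$ (products of the quantum root vectors of~\eqref{eq:root_vectors_intro}) whose images are linearly independent in the shuffle algebra by a leading-term comparison in the lexicographic order~\eqref{eq:lyndon_order}. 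Alternatively, once the Hopf pairing of Proposition~\ref{prop:pairing_2param} is available, one may instead show directly that its restriction $(U^+_{r,s})_\mu\times(U^-_{r,s})_{-\mu}\to\bb{K}$ is nondegenerate by reducing to the $\ssl_2$-case and inducting on $\hgt\mu$; or one may construct the Verma modules $M(\lambda)$ for generic $\lambda$ and show that they are free of rank one over $U^-_{r,s}(\fg)$. Either way, combining with the upper bound (applied also to $U^-$, or via the first paragraph) closes the proof; the only nontrivial computations in any of these routes are the rank-two Serre identities, which constitute the main obstacle.
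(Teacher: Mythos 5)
The paper does not give a proof of this proposition, citing~\cite[Proposition~2.8]{MT2} with the remark that it is ``proved in a standard way,'' so there is no in-paper argument to compare against; but your outline is correct and aligned with that literature. The map $e_i\mapsto f_i$ is an algebra anti-homomorphism $U^+_{r,s}\to U^-_{r,s}$ (equivalently, an isomorphism onto $(U^-_{r,s})^{\op}$, rather than an ``anti-isomorphism to the opposite algebra''); it carries the first family of $(r,s)$-Serre relations~\eqref{eq:R5} termwise onto the second because the scalar coefficients are literally the same, and it reverses the $Q$-grading, giving $\dim_{\bb{K}}(U^+_{r,s})_\mu=\dim_{\bb{K}}(U^-_{r,s})_{-\mu}$. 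Your semicontinuity upper bound is valid and is a tidy substitute for a combinatorial straightening argument, with two caveats worth recording: the family should be spread out over the subring of $\bb{K}$ generated by $r^{\pm1},s^{\pm1}$ (not over $\bb{K}[r^{\pm1},s^{\pm1}]$, since $r,s$ already live in $\bb{K}$), and identifying the chosen $(r,s)$ with the generic fiber requires $r,s$ to be \emph{algebraically independent}, which is slightly stronger than ``each transcendental with $rs^{-1}$ not a root of unity'' as stated in Section~2 (though harmless in all the applications here, and repairable by specializing along whatever curve $r,s$ actually lie on). The quantum-shuffle lower bound is precisely the mechanism of~\cite{L,CHW,MT2} and is where the substantive work lives: verifying that the $(r,s)$-Serre elements lie in the kernel of the map to $\mathcal{F}_{r,s}$, and proving the Lyndon leading-term nonvanishing for the root vectors~\eqref{eq:root_vectors_intro}. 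Your proposed alternatives -- nondegeneracy of the Hopf pairing of Proposition~\ref{prop:pairing_2param} on graded pieces, or freeness of rank one of generic Verma modules over $U^-_{r,s}$ -- are indeed ``standard'' in the quantum-group literature, but in the present paper's organization they appear only after this dimension count and are no easier to establish from scratch than the shuffle embedding, so they are not shortcuts here.
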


We shall also need to work with the usual one-parameter quantum groups $U_{q}(\fg)$ (see~\cite[\S 4.3]{J} for a definition), and to avoid confusion
with $U_{q,q^{-1}}(\fg)$, we denote the generators of $U_{q}(\fg)$ by $\{E_{i},F_{i},K_{i}^{\pm 1}\}_{i=1}^{n}$. We note that there is a Hopf algebra
homomorphism
\begin{equation}\label{eq:Uq_quotient}
  \pi\colon U_{q,q^{-1}}(\fg) \longrightarrow U_{q}(\fg)\qquad  \text{such that}\qquad
  \pi(e_{i}) = E_{i},\quad \pi(f_{i}) = F_{i},\quad \pi(\omega_{i}) = K_{i},\quad \pi(\omega_{i}') = K_{i}^{-1}.
\end{equation}
Moreover, if $U_{q}^{+}$ (resp.\ $U_{q}^{-}$) is the subalgebra of $U_{q}(\fg)$ generated by $\{E_{i}\}_{i = 1}^{n}$ (resp.\ $\{F_{i}\}_{i = 1}^{n}$),
then we have:

\begin{cor}\label{cor:double_cartan_+-_iso}
For any transcendental $q \in \bb{K}$, we have algebra isomorphisms $U_{q,q^{-1}}^{+} \iso U_{q}^{+}$ and
$U_{q,q^{-1}}^{-} \iso U_{q}^{-}$ given by $e_{i} \mapsto E_{i}$ and $f_{i} \mapsto F_{i}$, respectively.
\end{cor}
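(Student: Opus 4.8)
The plan is to show that the Hopf homomorphism $\pi$ of~\eqref{eq:Uq_quotient} restricts to the two asserted maps and is a graded isomorphism in each $Q$-degree, via a dimension count. First I would observe that $\pi$ is $Q$-graded: since $\pi(e_i)=E_i$ and $\pi(f_i)=F_i$ are homogeneous of degrees $\alpha_i$ and $-\alpha_i$ for the $Q$-gradings on $U_{q,q^{-1}}(\fg)$ and $U_q(\fg)$, and the $\omega_i,\omega_i'$ are sent to degree-$0$ elements, $\pi$ carries the degree-$\mu$ component into the degree-$\mu$ component for every $\mu\in Q$. In particular it maps $U_{q,q^{-1}}^{+}$ into $U_q^{+}$ and $U_{q,q^{-1}}^{-}$ into $U_q^{-}$ (these subalgebras being generated by the $e_i$, resp.\ the $f_i$), and these restrictions are surjective because $\{E_i\}_{i=1}^n$ and $\{F_i\}_{i=1}^n$ generate $U_q^{+}$ and $U_q^{-}$, respectively.

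Next I would compare graded dimensions. Since $q$ is transcendental, $q^2$ is not a root of unity, so the pair $(r,s)=(q,q^{-1})$ satisfies the standing hypothesis of Definition~\ref{def:general_2param}, and Proposition~\ref{prop:dim-count} applies to it, giving $\dim_{\bb{K}}(U_{q,q^{-1}}^{+})_{\mu}=\dim_{\BC}U(\fn^{+})_{\mu}$ and $\dim_{\bb{K}}(U_{q,q^{-1}}^{-})_{-\mu}=\dim_{\BC}U(\fn^{-})_{-\mu}$ for all $\mu\in Q^{+}$; both are finite. On the other hand, the classical PBW theorem for one-parameter quantum groups (see~\cite[\S4]{J}) yields $\dim_{\bb{K}}(U_q^{+})_{\mu}=\dim_{\BC}U(\fn^{+})_{\mu}$ and $\dim_{\bb{K}}(U_q^{-})_{-\mu}=\dim_{\BC}U(\fn^{-})_{-\mu}$. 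Hence, for each $\mu\in Q^{+}$, the restriction of $\pi$ is a surjective $\bb{K}$-linear map between finite-dimensional spaces of equal dimension, so it is bijective; assembling these isomorphisms over all $\mu\in Q^{+}$ shows that $U_{q,q^{-1}}^{+}\to U_q^{+}$ and $U_{q,q^{-1}}^{-}\to U_q^{-}$ are algebra isomorphisms, necessarily sending $e_i\mapsto E_i$ and $f_i\mapsto F_i$.

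I expect no real obstacle here beyond bookkeeping; the two points worth a moment's care are (a) checking the root-of-unity condition in Proposition~\ref{prop:dim-count} for $(q,q^{-1})$, which is immediate, and (b) the normalization making $\pi$ well defined, i.e.\ that the $(r,s)$-Serre relations~\eqref{eq:R5} specialize at $(q,q^{-1})$ (where $r_is_i=1$ and $(rs)^{k\langle\alpha_j,\alpha_i\rangle}=1$) to the ordinary quantum Serre relations of $U_q(\fg)$ — but this is already subsumed in the stated existence of $\pi$. (Alternatively one could avoid Proposition~\ref{prop:dim-count} by constructing an explicit inverse from the quantum-shuffle realization of $U_q^{+}$, but the dimension count is the shortest route.)
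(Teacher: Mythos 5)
Your proof is correct and follows essentially the same route as the paper: restrict the Hopf homomorphism $\pi$ of~\eqref{eq:Uq_quotient} to the graded pieces, note surjectivity, and conclude injectivity by comparing the dimensions from Proposition~\ref{prop:dim-count} with the known dimensions of $(U_q^{\pm})_{\pm\mu}$. The only cosmetic difference is the citation for the latter dimension fact: the paper points to~\cite[\S8.24]{J} (the PBW theorem/dimension statement) rather than~\cite[\S4]{J} (the definition of $U_q(\fg)$), which is the more precise reference for the equality $\dim_{\bb{K}}(U_q^{\pm})_{\pm\mu}=\dim_{\BC}U(\fn^{\pm})_{\pm\mu}$.
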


\begin{proof}
It is clear that $\pi((U_{q,q^{-1}}^{\pm})_{\pm \mu}) = (U_{q}^{\pm})_{\pm \mu}$ for all $\mu \in Q^{+}$, where $\pi$ is the
Hopf algebra homomorphism of~\eqref{eq:Uq_quotient}, and we have
$\dim_{\bb{K}}(U_{q,q^{-1}}^{\pm})_{\pm \mu} = \dim_{\BC}U(\fn^{+})_{\mu} = \dim_{\bb{K}}(U_{q}^{\pm})_{\pm \mu} < \infty$
by Proposition~\ref{prop:dim-count} and~\cite[\S8.24]{J}. Thus, the restrictions of $\pi$ to $U_{q,q^{-1}}^{\pm}$ give
the corresponding isomorphisms.
\end{proof}

Finally, the algebra $U_{r,s}(\fg)$ has a Hopf algebra structure, where the coproduct $\Delta$,
counit $\epsilon$, and antipode $S$ are defined on the generators by the following formulas:
\begin{align*}
  &\Delta(\omega_{i}^{\pm 1}) = \omega_{i}^{\pm 1} \otimes \omega_{i}^{\pm 1}
  & &\epsilon(\omega_{i}^{\pm 1}) = 1
  & &S(\omega_{i}^{\pm 1}) = \omega_{i}^{\mp 1} \\
  &\Delta((\omega_{i}')^{\pm 1}) = (\omega_{i}')^{\pm 1} \otimes (\omega_{i}')^{\pm 1}
  & & \epsilon((\omega_{i}')^{\pm 1}) = 1
  & & S((\omega_{i}')^{\pm 1}) = (\omega_{i}')^{\mp 1} \\
  &\Delta(e_{i}) = e_{i} \otimes 1 + \omega_{i} \otimes e_{i}
  & &\epsilon(e_{i}) = 0
  & &S(e_{i}) = -\omega_{i}^{-1}e_{i} \\
  &\Delta(f_{i}) = 1 \otimes f_{i} + f_{i} \otimes \omega_{i}'
  & &\epsilon(f_{i}) = 0
  & &S(f_{i}) = -f_{i}(\omega_{i}')^{-1}
\end{align*}


\subsection{Hopf pairing}
\

In this Subsection, we recall the Hopf pairing on $U_{r,s}(\fg)$, which allows one to realize $U_{r,s}(\fg)$
as a Drinfeld double of its Hopf subalgebras $U_{r,s}^{\le}(\fg)$, $U_{r,s}^{\ge}(\fg)$:

\begin{prop}\label{prop:pairing_2param}
There exists a unique non-degenerate bilinear pairing
\begin{equation}\label{eq:Hopf-parity}
  (\cdot,\cdot)=(\cdot,\cdot)_{r,s}\colon U_{r,s}^{\le}(\fg) \times U_{r,s}^{\ge}(\fg) \longrightarrow \bb{K}
\end{equation}
satisfying the following structural properties
\begin{equation*}
  (yy',x) = (y \otimes y',\Delta(x)), \qquad (y, xx') = (\Delta(y),x' \otimes x)
  \qquad \forall\, x,x' \in U_{r,s}^{\ge}(\fg),\ y,y' \in U_{r,s}^{\le}(\fg),
\end{equation*}
where $(x'\otimes x'',y'\otimes y'')=(x',y') (x'',y'')$, as well as being given on the generators by:
\begin{equation*}
  (f_{i},\omega_{j}) = 0, \qquad (\omega_{i}', e_{i}) = 0, \qquad
  (f_{i},e_{j}) = \delta_{ij}\frac{1}{s_{i} - r_{i}} \qquad \mathrm{for\ all} \quad 1\leq i,j\leq n ,
\end{equation*}
\begin{equation}\label{eq:generators-parity-2}
  (\omega_{\lambda}',\omega_{\mu}) = r^{\langle \lambda,\mu\rangle}s^{-\langle \mu,\lambda \rangle}
  \qquad \mathrm{for\ all} \quad \lambda,\mu\in Q.
\end{equation}
\end{prop}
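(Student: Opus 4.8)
The plan is to follow the well-trodden route for constructing a Hopf (skew) pairing between the two Borel-type subalgebras of a quantum group, adapted to the two-parameter setting. First I would set up the pairing on the Cartan parts: one checks directly from~\eqref{eq:generators-parity-2} that $(\omega'_\lambda,\omega_\mu)=r^{\langle\lambda,\mu\rangle}s^{-\langle\mu,\lambda\rangle}$ is multiplicative in each argument (using that $\langle\cdot,\cdot\rangle$ is bilinear on $Q$), hence compatible with the fact that $\Delta(\omega_\mu)=\omega_\mu\otimes\omega_\mu$ and likewise for $\omega'$, and that it is non-degenerate on $U^0_{r,s}$ because $r,s$ are transcendental with $rs^{-1}$ not a root of unity (so the relevant matrix of exponents is invertible over $\BZ$, as $Q$ has full rank). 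Next, one extends to all of $U^{\le}_{r,s}\times U^{\ge}_{r,s}$ by the standard recursive recipe: the Hopf-pairing axioms $(yy',x)=(y\otimes y',\Delta(x))$ and $(y,xx')=(\Delta(y),x'\otimes x)$, together with the prescribed values on generators (including the ``mixed'' vanishing $(f_i,\omega_j)=0=(\omega'_i,e_j)$), force the value of $(y,x)$ on any pair of monomials, since $\Delta(e_i)=e_i\otimes 1+\omega_i\otimes e_i$ and $\Delta(f_i)=1\otimes f_i+f_i\otimes\omega'_i$ strictly decrease the $Q^+$-degree in one tensor factor. Concretely, one does induction on $\hgt$ of the degree, pairing degree-$\mu$ elements of $U^-_{r,s}$ against degree-$\mu$ elements of $U^+_{r,s}$ and zero otherwise.

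The content is then that this recursively-defined assignment is (a) well-defined, i.e.\ respects the defining relations~\eqref{eq:R1}--\eqref{eq:R5} of both $U^{\le}_{r,s}$ and $U^{\ge}_{r,s}$, and (b) non-degenerate. For (a) I would introduce a free (Borel) algebra $\wtd{U}^{\ge}$ on generators $e_i,\omega_i^{\pm1}$ with only the Cartan relations~\eqref{eq:R1} and~\eqref{eq:R2} imposed, and similarly $\wtd{U}^{\le}$, define the pairing on $\wtd{U}^{\le}\times\wtd{U}^{\ge}$ by the recursion (where well-definedness is automatic since these algebras are essentially free), and then show that the radicals of this pairing on each side contain the quantum Serre elements from~\eqref{eq:R5}. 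This last point is the two-parameter analogue of the classical computation showing that the $q$-Serre relators pair trivially with everything; it is carried out by the familiar argument that the subspace spanned by the Serre relators is stable under the relevant twisted adjoint action / skew-derivations $\partial_i$ obtained from the coproduct, so it suffices to check pairing against the single generator $e_j$, which reduces to a $q$-binomial identity with the parameters $r_i,s_i$ and the factor $(rs)^{k\langle\alpha_j,\alpha_i\rangle}$ that is precisely engineered to make it work. Quotienting by the radicals then descends the pairing to $U^{\le}_{r,s}\times U^{\ge}_{r,s}$.

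For non-degeneracy (b), I would use Proposition~\ref{prop:dim-count}: the graded pieces $(U^-_{r,s})_{-\mu}$ and $(U^+_{r,s})_\mu$ have equal finite dimension $\dim U(\fn^+)_\mu$, so it is enough to show the pairing is non-degenerate on, say, the right. One way is a triangularity/leading-term argument against a PBW-type monomial basis (the root vectors of~\eqref{eq:root_vectors_intro}), showing the pairing matrix in that basis is triangular with nonzero diagonal — but since this is done ``in a standard way (see~\cite[Proposition 2.8]{MT2})'', I expect the cleanest exposition to simply invoke the analogous one-parameter statement and transfer it; indeed Section~\ref{sec:twisted_R_matrices} will show (Proposition~\ref{prop:twisted_pairing}) that this very pairing is a bicharacter twist of the one-parameter Hopf pairing, and bicharacter twists preserve non-degeneracy. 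The uniqueness assertion is immediate from the construction: the axioms plus the values on generators determine every value. The main obstacle is step (a) — verifying that the Serre relators lie in the radical — because it is the only place where the specific, somewhat delicate choice of the twisting factor $(rs)^{k\langle\alpha_j,\alpha_i\rangle}$ in~\eqref{eq:R5} enters, and getting the $q$-binomial bookkeeping exactly right in the two-parameter deformation is where all the real work sits.
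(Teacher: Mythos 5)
Your proposal is correct and reproduces the standard construction from the literature (Benkart--Witherspoon, Bergeron--Gao--Hu, Hu--Pei) that the paper implicitly cites for this recalled result; the paper does not reprove the proposition but, in its footnote after Proposition~\ref{prop:universal-R}, notes the alternative of deducing such statements from the one-parameter case via the twist of Subsections~3.1--3.2, which you correctly flag as a clean route to non-degeneracy. The only step worth spelling out in that twist-based route is the intermediate lift from the classical pairing on $U_{q}(\fg)$ to the Cartan-doubled $U_{q,q^{-1}}(\fg)$ via $\pi$ of \eqref{eq:Uq_quotient} (as the paper itself does at the start of Section~4), since Proposition~\ref{prop:twisted_pairing} twists $(\cdot,\cdot)_{q,q^{-1}}$, not $(\cdot,\cdot)_{q}$; your direct alternative (Serre relators lie in the radical, then dimension count via Proposition~\ref{prop:dim-count}) is self-contained and avoids this.
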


The above pairing is clearly homogeneous of degree zero with respect to the above $Q$-grading:
\begin{equation}\label{eq:pairing-orthogonal}
  (y,x) = 0 \quad \mathrm{for} \quad
  x\in U_{r,s}^{\ge}(\fg)_{\mu},\ y\in U_{r,s}^{\le}(\fg)_{-\nu}
  \quad \mathrm{with} \quad \mu\ne \nu \in Q^+.
\end{equation}

Since in this paper we are often interested in the classical Lie algebras $\fg$, it will be helpful to have
more explicit formulas for the bilinear form, avoiding the use of the form $\langle \cdot,\cdot \rangle$ on $Q$.
To this end, let us first recall the explicit realization of the classical root systems as well as
specify the choice of simple roots for them:
\begin{itemize}

\item
\emph{$A_{n}$-type} (corresponding to $\fg\simeq \ssl_{n+1}$).

Let $\{\varepsilon_{i}\}_{i=1}^{n+1}$ be an orthonormal basis of $\bb{R}^{n+1}$. Then, we have
\begin{equation}
\begin{split}
  & \Phi_{A_{n}} = \{\varepsilon_{i} - \varepsilon_{j} \,|\, 1\leq i\ne j\leq n+1\} \subset \bb{R}^{n+1}, \\
  & \Pi_{A_{n}} = \{\alpha_{i} = \varepsilon_{i} - \varepsilon_{i + 1}\}_{i=1}^{n}.
\end{split}
\end{equation}

\item
\emph{$B_{n}$-type} (corresponding to $\fg\simeq \sso_{2n+1}$).

Let $\{\varepsilon_{i}\}_{i=1}^{n}$ be an orthogonal basis of $\bb{R}^{n}$ with $(\varepsilon_{i},\varepsilon_{i})=2$ for all $i$.
Then, we have
\begin{equation}
\begin{split}
  & \Phi_{B_{n}} = \{\pm \varepsilon_{i} \pm \varepsilon_{j} \,|\, 1 \le i < j \le n\} \cup
    \{\pm \varepsilon_{i} \,|\, 1\leq i\leq n\} \subset \bb{R}^{n}, \\
  & \Pi_{B_{n}} = \{\alpha_{i} = \varepsilon_{i} - \varepsilon_{i + 1}\}_{i=1}^{n-1} \cup \{\alpha_{n} = \varepsilon_{n}\}.
\end{split}
\end{equation}

\item
\emph{$C_{n}$-type} (corresponding to $\fg\simeq \ssp_{2n}$).

Let $\{\varepsilon_{i}\}_{i=1}^{n}$ be an orthonormal basis of $\bb{R}^{n}$. Then, we have
\begin{equation}
\begin{split}
  & \Phi_{C_{n}} = \{\pm \varepsilon_{i} \pm \varepsilon_{j} \,|\, 1 \le i < j \le n\}
    \cup \{\pm 2\varepsilon_{i} \,|\, 1\leq i\leq n\} \subset \bb{R}^{n}, \\
  & \Pi_{C_{n}} = \{\alpha_{i} = \varepsilon_{i} - \varepsilon_{i + 1}\}_{i=1}^{n-1} \cup \{\alpha_{n} = 2\varepsilon_{n}\}.
\end{split}
\end{equation}

\item
\emph{$D_{n}$-type} (corresponding to $\fg\simeq \sso_{2n}$).

Let $\{\varepsilon_{i}\}_{i=1}^{n}$ be an orthonormal basis of $\bb{R}^{n}$. Then, we have
\begin{equation}\label{eq:D-system}
\begin{split}
  & \Phi_{D_{n}} = \{\pm \varepsilon_{i} \pm \varepsilon_{j} \,|\, 1 \le i < j \le n\} \subset \bb{R}^{n}, \\
  & \Pi_{D_{n}} = \{\alpha_{i} = \varepsilon_{i} - \varepsilon_{i + 1}\}_{i=1}^{n-1}
    \cup \{\alpha_{n} = \varepsilon_{n-1} + \varepsilon_{n}\}.
\end{split}
\end{equation}

\end{itemize}

Then we have the following respective formulas for the pairing of Cartan elements (we note that while the second formula
follows from the first one in each case, it will be convenient to use both later on):
\begin{itemize}

\item
\emph{$A_{n}$-type}
\begin{align}\label{eq:A-pairing}
\begin{split}
  (\omega_{\lambda}',\omega_{i}) &= r^{( \varepsilon_{i},\lambda )}s^{( \varepsilon_{i +1},\lambda)}, \\
  (\omega_{i}',\omega_{\lambda}) &= r^{-(\varepsilon_{i + 1},\lambda )}s^{-( \varepsilon_{i},\lambda)}.
\end{split}
\end{align}

\item
\emph{$B_{n}$-type}
\begin{align}\label{eq:B-pairing}
\begin{split}
  (\omega_{\lambda}',\omega_{i}) &=
  \begin{cases}
    r^{( \varepsilon_{i},\lambda )}s^{( \varepsilon_{i + 1},\lambda )}
      & \mathrm{if}\ \ 1\leq i<n  \\
    r^{( \varepsilon_{n},\lambda ) }(rs)^{-\lambda_{n}}
      & \mathrm{if}\ \ i = n
  \end{cases} \,, \\
  (\omega_{i}',\omega_{\lambda}) &=
  \begin{cases}
    r^{-( \varepsilon_{i + 1},\lambda)}s^{-( \varepsilon_{i},\lambda)}
      & \mathrm{if}\ \ 1\leq i<n \\
    s^{-( \varepsilon_{n},\lambda ) }(rs)^{\lambda_{n}}
      & \mathrm{if}\ \ i = n
  \end{cases} \,.
\end{split}
\end{align}

\item
\emph{$C_{n}$-type}
\begin{align}\label{eq:C-pairing}
\begin{split}
  (\omega_{\lambda}',\omega_{i}) &=
  \begin{cases}
    r^{( \varepsilon_{i},\lambda) }s^{( \varepsilon_{i + 1},\lambda )}
      &  \mathrm{if}\ \ 1\leq i<n \\
    r^{2( \varepsilon_{n},\lambda ) }(rs)^{-2\lambda_{n}}
      & \mathrm{if}\ \ i=n
  \end{cases} \,, \\
  (\omega_{i}',\omega_{\lambda}) &=
  \begin{cases}
    r^{-( \varepsilon_{i + 1},\lambda )}s^{-( \varepsilon_{i},\lambda)}
      & \mathrm{if}\ \ 1\leq i<n \\
    s^{-2( \varepsilon_{n},\lambda )}(rs)^{2\lambda_{n}}
      & \mathrm{if}\ \ i = n
  \end{cases} \,.
\end{split}
\end{align}

\item
\emph{$D_{n}$-type}
\begin{align}\label{eq:D-pairing}
\begin{split}
  (\omega_{\lambda}',\omega_{i}) &=
  \begin{cases}
    r^{( \varepsilon_{i},\lambda )}s^{( \varepsilon_{i + 1},\lambda )}
      & \mathrm{if}\ \ 1\leq i<n \\
    r^{( \varepsilon_{n-1},\lambda)}s^{-(\varepsilon_{n},\lambda)}(rs)^{-2\lambda_{n - 1}}
      & \mathrm{if}\ \ i = n
  \end{cases} \,, \\
  (\omega_{i}',\omega_{\lambda}) &=
  \begin{cases}
    r^{-( \varepsilon_{i + 1},\lambda )}s^{-(\varepsilon_{i},\lambda )}
      & \mathrm{if}\ \ 1\leq i<n \\
    r^{( \varepsilon_{n},\lambda )}s^{-(\varepsilon_{n-1},\lambda )}(rs)^{2\lambda_{n-1}}
      & \mathrm{if}\ \ i = n
  \end{cases} \,.
\end{split}
\end{align}

\end{itemize}


\subsection{Representations and standard intertwiners}\label{ssec:reps and braidings}
\

For any $U_{r,s}(\fg)$-module $V$, a vector $v\in V$ is said to have weight $\lambda \in P$ if
\[
  \omega_{i}\cdot v = (\omega_{\lambda}',\omega_{i})v \qquad \text{and} \qquad
  \omega_{i}'\cdot v = (\omega_{i}',\omega_{\lambda})^{-1}v \quad \text{for all}\quad 1 \le i \le n.
\]
Let $V[\lambda]$ denote the subspace of all vectors of weight $\lambda$ in $V$. Similarly to the one-parameter case,
consider the category $\mathcal{O}$ consisting of $U_{r,s}(\fg)$-modules $V$ which have a weight space
decomposition $V=\bigoplus_{\lambda\in P} V[\lambda]$ with $\dim V[\lambda]<\infty$ for all $\lambda$ and such that the
support $\mathrm{supp}(V)=\{\lambda\in P \,|\, V[\lambda]\ne 0\}$ is contained in a finite union of sets $D(\mu)=\mu-Q^+$
for some $\mu\in P$. The morphisms in $\mathcal{O}$ are $U_{r,s}(\fg)$-module morphisms. This category contains highest
weight Verma modules $\{M_\lambda\}_{\lambda\in P}$ as well as any finite-dimensional $U_{r,s}(\fg)$-modules with a
weight space decomposition. Moreover, any simple finite-dimensional $U_{r,s}(\fg)$-module is
obtained from an irreducible quotient $L_\lambda$ of $M_\lambda\ (\lambda\in P^+)$ by tensoring it with a one-dimensional
$U_{r,s}(\fg)$-module. Using standard arguments, one can construct natural intertwiners between tensor products of modules in
$\mathcal{O}$ arising through the universal $R$-matrix, which we shall recall now.\footnote{For classical $\fg$ these results are
contained in~\cite[\S2-4]{BW2} and~\cite[\S2-3]{BGH2}, while for general $\fg$ they can be similarly derived (or alternatively they
can be deduced from their one-parameter counterparts through the results of Subsections 3.1--3.2 below).}

To do so, pick dual bases $\{\tilde{x}_{i}^{\mu}\}$, $\{\tilde{y}_{i}^{\mu}\}$ of $U^{+}_{r,s}(\fg)_{\mu}$, $U^{-}_{r,s}(\fg)_{-\mu}$
with respect to the Hopf pairing~\eqref{eq:Hopf-parity}, and set
\begin{equation}\label{eq:Theta}
  \Theta_{r,s} = 1 + \sum_{\mu > 0}\Theta_{\mu} \qquad \text{with} \qquad
  \Theta_{\mu} = \sum_{i} \tilde{y}_{i}^{\mu} \otimes \tilde{x}_{i}^{\mu}.
\end{equation}
Let $\tau\colon W_1 \otimes W_2 \to W_2 \otimes W_1$ be the flip map
\[
  \tau(w_1 \otimes w_2) = w_2 \otimes w_1 \quad \text{for all}\quad w_1 \in W_1,\ w_2 \in W_2.
\]
Finally, define $\tilde{f}_{r,s}\colon W_1 \otimes W_2 \to W_1 \otimes W_2$ via
\begin{equation}\label{eq:tildef-map}
  \tilde{f}_{r,s}(w_1 \otimes w_2) = (\omega_{\lambda_2}',\omega_{\lambda_1})^{-1} w_1 \otimes w_2
  \quad \text{for all}\quad w_1 \in W_1[\lambda_1],\ w_2 \in W_2[\lambda_2].
\end{equation}
The following is standard (cf.~\cite[Theorem 4.1]{MT1}):

\begin{prop}\label{prop:universal-R}
For any $U_{r,s}(\fg)$-modules $W_1$ and $W_2$ in the category $\mathcal{O}$, the map
\begin{equation}\label{eq:R-int}
  \hat{R}_{W_1,W_2}=\Theta_{r,s} \circ \wtd{f} \circ \tau\colon W_1 \otimes W_2 \iso W_2 \otimes W_1
\end{equation}
is an isomorphism of $U_{r,s}(\fg)$-modules.
\end{prop}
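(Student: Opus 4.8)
The plan is to follow the standard strategy used in the one-parameter case (see~\cite[\S8.3]{J}) almost verbatim, since the two-parameter modifications enter only through bookkeeping with the pair of Cartan elements $\omega_i, \omega_i'$ rather than a single $K_i$. First I would observe that the three maps $\Theta_{r,s}$, $\wtd{f}_{r,s}$, and $\tau$ each make sense on $W_1 \otimes W_2$ for $W_1, W_2 \in \mathcal{O}$: the sum defining $\Theta_{r,s}$ in~\eqref{eq:Theta} is locally finite because the support condition in the definition of $\mathcal{O}$ forces $\Theta_\mu$ to act as zero on $w_1 \otimes w_2$ once $\hgt(\mu)$ exceeds the finitely many ``gaps'' available between the weight of $w_1$ and the bottom of $W_1$; and $\wtd{f}_{r,s}$ is well defined on weight vectors by~\eqref{eq:tildef-map}. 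Each is visibly invertible ($\Theta_{r,s}$ is unipotent, $\wtd{f}_{r,s}$ is diagonal with nonzero eigenvalues, $\tau$ is a flip), so $\hat{R}_{W_1, W_2}$ is a bijection; the content is that it intertwines the $U_{r,s}(\fg)$-action.

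The key step is to verify intertwining on the algebra generators $e_i, f_i, \omega_i^{\pm 1}, (\omega_i')^{\pm 1}$. For the Cartan generators this is a direct weight computation using~\eqref{eq:R2}--\eqref{eq:R3}, the weight conventions, and the bicharacter identity~\eqref{eq:generators-parity-2}: $\wtd f_{r,s} \circ \tau$ already conjugates the $U^0_{r,s}$-action correctly because $\Delta$ is cocommutative on Cartan elements, and $\Theta_{r,s}$ commutes with the Cartan action since each $\Theta_\mu$ lies in $(U^-_{r,s})_{-\mu} \otimes (U^+_{r,s})_\mu$ and the relevant scalars cancel. For $e_i$ and $f_i$ the crucial input is the pair of identities
\begin{equation}\label{eq:Theta-functional}
  \Theta_{r,s}\cdot \Delta(e_i) = \bar{\Delta}(e_i)\cdot \Theta_{r,s}, \qquad
  \Theta_{r,s}\cdot \Delta(f_i) = \bar{\Delta}(f_i)\cdot \Theta_{r,s},
\end{equation}
where $\bar{\Delta} = \tau \circ \Delta$ is the opposite coproduct; once~\eqref{eq:Theta-functional} is established, a short manipulation combining it with the Cartan case shows $\hat{R}_{W_1,W_2}$ intertwines $\Delta$ and $\tau\circ\Delta$, which is exactly what is needed for $\hat R$ to be a morphism $W_1\otimes W_2 \to W_2\otimes W_1$. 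To prove~\eqref{eq:Theta-functional} I would compare the degree-$\mu$ components and reduce it, via the Hopf-pairing axioms of Proposition~\ref{prop:pairing_2param}, to the recursive relations satisfied by the dual bases $\{\tilde x_i^\mu\},\{\tilde y_i^\mu\}$; this is the standard argument that the ``quasi-$R$-matrix'' $\Theta_{r,s}$ is the canonical element of the Hopf pairing, and it goes through because~\eqref{eq:Hopf-parity} is non-degenerate and satisfies the bialgebra compatibilities listed in the proposition.

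The main obstacle is purely organizational rather than conceptual: one must track the asymmetric scalars $r^{\langle\alpha_j,\alpha_i\rangle}s^{-\langle\alpha_i,\alpha_j\rangle}$ from~\eqref{eq:R2}--\eqref{eq:R3} through the computation and check they reassemble into the bicharacter $(\omega'_{\lambda_2},\omega_{\lambda_1})$ appearing in $\wtd f_{r,s}$, using the identity $(\mu,\nu)=\langle\mu,\nu\rangle+\langle\nu,\mu\rangle$ together with~\eqref{eq:generators-parity-2}; the $D_n$-type exception in the definition of $\langle\cdot,\cdot\rangle$ does not cause trouble since only the symmetrized form enters the final answer. As the footnote to the statement indicates, for classical $\fg$ this is already in~\cite[\S2-4]{BW2} and~\cite[\S2-3]{BGH2}, and for general $\fg$ either one repeats the argument above or—more economically—one deduces it from the one-parameter Proposition via the bicharacter-twist comparison of Subsections 3.1--3.2; I would state the proof in the first form for self-containedness and remark that the second is available.
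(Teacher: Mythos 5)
The paper offers no proof of this Proposition; it is stated as standard, with a pointer to \cite[Theorem 4.1]{MT1} and, via the preceding footnote, to \cite[\S 2--4]{BW2}, \cite[\S 2--3]{BGH2} for classical $\fg$ and to the bicharacter-twist comparison of Subsections 3.1--3.2 for general $\fg$. Your plan is the standard one that these references follow, and your closing remark correctly notes the twisting-based alternative, so the overall strategy is sound.

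However, your displayed functional equation for $\Theta_{r,s}$ is wrong as stated: the opposite coproduct $\bar\Delta = \tau\circ\Delta$ is \emph{not} the modified coproduct that intertwines with the quasi-$R$-matrix. If it were, then $\Theta_{r,s}\circ\tau$ alone would already be a $U_{r,s}(\fg)$-module map, rendering the diagonal twist $\tilde{f}_{r,s}$ superfluous. One can see the failure directly by comparing bidegree-$(\alpha_i,0)$ components in the $Q\times Q$-grading: the left side $\Theta_{r,s}\Delta(e_i)$ contributes $e_i\otimes 1$ (coming from $\Theta_0 = 1\otimes 1$ acting on $e_i\otimes 1$), while the right side $(\tau\circ\Delta)(e_i)\,\Theta_{r,s}$ contributes $e_i\otimes\omega_i$, and $e_i\otimes 1 \ne e_i\otimes\omega_i$. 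The correct quasi-$R$-matrix identity places $\Theta_{r,s}$ on the opposite sides and replaces $\omega_i$ by $\omega_i'$ in the modified coproduct---the two-parameter analogue of the bar-conjugated coproduct from the one-parameter theory (cf.~\cite[\S 7]{J})---and it is precisely the asymmetric mismatch between $\tau\circ\Delta$ and this $\omega_i\leftrightarrow\omega_i'$-modified coproduct that the map $\tilde{f}_{r,s}$ is there to repair. The rest of your sketch would go through once this identity is corrected, so this is a fixable error in a key technical step rather than a wrong approach.
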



\section{Two-parameter $R$-matrices via twisting}\label{sec:twisted_R_matrices}

Let $G$ be an abelian group, and let $A$ be a bialgebra over a field $F$, with the coproduct $\Delta$ and counit~$\epsilon$.
Recall that $A$ is a \textbf{$G$-bigraded bialgebra} if it is $G \times G$-graded as an algebra, and satisfies the following
additional properties for all $\alpha,\beta \in G$:
\begin{equation}\label{eq:bigraded-bialgebra}
  \Delta(A_{\alpha,\beta}) \subset \sum_{\gamma \in G}A_{\alpha,\gamma} \otimes A_{-\gamma,\beta} \qquad \text{and}\qquad
  \epsilon(A_{\alpha,\beta}) = 0 \ \ \text{if} \ \ \alpha + \beta \neq 0.
\end{equation}
If $A$ is a Hopf algebra with the antipode $S$, then $A$ is a \textbf{$G$-bigraded Hopf algebra} if
it satisfies~\eqref{eq:bigraded-bialgebra} and
\begin{equation}\label{eq:bigraded-antipode}
  S(A_{\alpha,\beta}) \subset A_{\beta,\alpha}\qquad \text{for all}\ \alpha,\beta \in G.
\end{equation}

We call a function $\zeta\colon G \times G \to F^{*}$ a \textbf{skew bicharacter} if the following properties are satisfied:
\begin{equation}
\begin{split}
  & \zeta(\alpha + \beta,\gamma) = \zeta(\alpha,\gamma)\zeta(\beta,\gamma), \qquad \zeta(\alpha,\beta + \gamma) = \zeta(\alpha,\beta)\zeta(\alpha,\gamma),\\
  & \zeta(\alpha,\beta) = \zeta(\beta,\alpha)^{-1},\qquad \zeta(\alpha,\alpha) = 1  \qquad \forall\, \alpha,\beta,\gamma \in G.
\end{split}
\end{equation}
Given any $G \times G$-graded algebra $A$ and a skew bicharacter $\zeta\colon G \times G \to F^{*}$, we may define an algebra
$A_{\zeta}$ with multiplication given by:
\begin{equation}\label{eq:twisted_product_general}
  a \circ b = \zeta(\alpha,\alpha')\zeta(\beta,\beta')^{-1}ab\quad \text{for all}\quad a \in A_{\alpha,\beta},\ b \in A_{\alpha',\beta'}.
\end{equation}
If we assume additionally that $A$ is a $G$-bigraded bialgebra as defined above, then due to~\eqref{eq:bigraded-bialgebra}, the maps
$\Delta$ and $\epsilon$ make $A_{\zeta}$ into a bialgebra. Moreover, if $A$ is a $G$-bigraded Hopf algebra with antipode $S$, then
$A_{\zeta}$ is also a Hopf algebra with the same antipode $S$.


\subsection{Twisting of quantum groups and Hopf pairing}\label{ssec:quantum_group_twisting}
\

Let $q = r^{1/2}s^{-1/2} \in \bb{K}$. Consider the algebra
$U_{q,q^{-1}}(\fg)$. It is easy to verify that the assignment
\begin{equation}\label{eq:our-bigrading}
  \deg(e_{i}) = (\alpha_{i},0), \qquad \deg(f_{i}) = (0,-\alpha_{i}), \qquad
  \deg(\omega_{i}) = \deg(\omega_{i}') = (\alpha_{i},-\alpha_{i}),
\end{equation}
makes $U_{q,q^{-1}}(\fg)$ into a $Q$-bigraded Hopf algebra in the sense described above.
Following~\cite[\S 5]{HP}, let
\[
  p_{ij} = r^{\langle \alpha_{j},\alpha_{i}\rangle} s^{-\langle \alpha_{i},\alpha_{j}\rangle}q^{-d_{i}a_{ij}}
  \quad \text{for all} \quad 1 \le i,j \le n.
\]
Then we have $p_{ii} = 1$ and $p_{ij}p_{ji} = 1$ for all $i,j$. Thus, the map $\zeta\colon Q \times Q \to \bb{K}^{*}$
defined by $\zeta(\alpha_{i},\alpha_{j}) = p_{ij}^{1/2}$ is a skew bicharacter, and therefore we may define an algebra
$U_{q,\zeta}(\fg) = (U_{q,q^{-1}}(\fg))_{\zeta}$ by changing the multiplication via~\eqref{eq:twisted_product_general}.
Since $U_{q,q^{-1}}(\fg)$ is a $Q$-bigraded Hopf algebra, the algebra $U_{q,\zeta}(\fg)$ is also a Hopf algebra,
with the coproduct, counit, and antipode being the same as those of $U_{q,q^{-1}}(\fg)$. We also define subalgebras
$U^{\leq}_{q,\zeta} = (U^{\leq}_{q,q^{-1}})_{\zeta}$ and $U^{\geq}_{q,\zeta} = (U^{\geq}_{q,q^{-1}})_{\zeta}$
of $U_{q,\zeta}(\fg)$.

We may extend $\zeta$ to the weight lattice $P$ of $\fg$ by defining
\[
  \zeta(\lambda,\mu) = \prod_{1\leq i,j \leq n}\zeta(\alpha_{i},\alpha_{j})^{\lambda_{i}\mu_{j}}
\]
for any $\lambda = \sum_{i = 1}^{n}\lambda_{i}\alpha_{i}$, $\mu = \sum_{i = 1}^{n}\mu_{i}\alpha_{i}$ in $P$.
The definition of $p_{ij}$ implies the following useful formula for $\zeta$:
\begin{equation}\label{eq:zeta_formula}
  \zeta(\lambda,\mu) = \left( r^{\langle \mu,\lambda\rangle}s^{-\langle \lambda,\mu\rangle}q^{-(\lambda,\mu)} \right)^{1/2}
  = (\omega_{\mu}',\omega_{\lambda})^{1/2}q^{-\frac{1}{2}(\lambda,\mu)} \quad \text{for all}\quad \lambda,\mu \in P.
\end{equation}

The next result is essentially due to~\cite{HP}:

\begin{prop}\label{prop:twisted_algebra}
There is a unique isomorphism $\varphi\colon U_{r,s}(\fg) \iso U_{q,\zeta}(\fg)$ of Hopf algebras such that
\begin{equation}\label{eq:phi-map}
  \varphi(e_{i}) = e_{i},\quad \varphi(f_{i}) = (r_{i}s_{i})^{-1/2}f_{i},\quad
  \varphi(\omega_{i}) = \omega_{i},\quad \varphi(\omega_{i}') = \omega_{i}' \qquad \forall\, 1\leq i\leq n.
\end{equation}
\end{prop}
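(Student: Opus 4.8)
The plan is to verify that the map $\varphi$ defined on generators by \eqref{eq:phi-map} extends to a well-defined algebra homomorphism $U_{r,s}(\fg) \to U_{q,\zeta}(\fg)$, then argue that it is bijective, and finally check compatibility with the Hopf structure. First I would set up the dictionary between the two presentations: in $U_{q,\zeta}(\fg)$ the underlying vector space, coproduct, counit and antipode are literally those of $U_{q,q^{-1}}(\fg)$, and only the multiplication $\circ$ is twisted according to \eqref{eq:twisted_product_general}. So I need to compute, for each pair of generators, the twisting scalar $\zeta(\alpha,\alpha')\zeta(\beta,\beta')^{-1}$ dictated by the bigrading \eqref{eq:our-bigrading}, and check that the resulting relation among the twisted products matches the corresponding defining relation \eqref{eq:R1}--\eqref{eq:R5} of $U_{r,s}(\fg)$ after applying the rescaling $f_i \mapsto (r_is_i)^{-1/2} f_i$.

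The key steps, in order: (1) Relations \eqref{eq:R1} are immediate since all Cartan generators have bigrading $(\alpha_i,-\alpha_i)$ and the bicharacter factors cancel pairwise by skew-symmetry, so the $\omega$'s and $\omega'$'s remain mutually commuting; the invertibility relations are preserved since scaling by a nonzero scalar does not affect $\omega_i^{\pm 1}\omega_i^{\mp 1}=1$. (2) For \eqref{eq:R2}--\eqref{eq:R3}, I compute $\omega_i \circ e_j$ versus $e_j \circ \omega_i$: the bigradings are $(\alpha_i,-\alpha_i)$ and $(\alpha_j,0)$, giving a twisting discrepancy of $\zeta(\alpha_i,\alpha_j)^2 = p_{ij} = r^{\langle\alpha_j,\alpha_i\rangle}s^{-\langle\alpha_i,\alpha_j\rangle}q^{-d_ia_{ij}}$ relative to the $U_{q,q^{-1}}$ relation $\omega_i e_j = q^{d_ia_{ij}}e_j\omega_i$ (here I use $q = r^{1/2}s^{-1/2}$ and that in $U_{q,q^{-1}}(\fg)$ one has $\langle\alpha_j,\alpha_i\rangle + \langle\alpha_i,\alpha_j\rangle = (\alpha_i,\alpha_j) = d_ia_{ij}$, so the $q$-exponent matches $d_ia_{ij}$); multiplying through recovers exactly $r^{\langle\alpha_j,\alpha_i\rangle}s^{-\langle\alpha_i,\alpha_j\rangle}$, and similarly for the $\omega'$ and $f$ relations. (3) For \eqref{eq:R4}, the commutator $e_i\circ f_j - f_j\circ e_i$: both orderings have the same total bigrading and the twisting scalars for $e_i\circ f_j$ and $f_j\circ e_i$ coincide (the relevant factor is $\zeta(\alpha_i,0)\zeta(0,-\alpha_j)^{-1} = 1$ in both orders), so no sign issue arises; then $[e_i,f_j] = \delta_{ij}(\omega_i-\omega_i')/(q_i - q_i^{-1})$ in $U_{q,q^{-1}}$, and after the substitution $f_i \mapsto (r_is_i)^{-1/2}f_i$ one checks $(r_is_i)^{-1/2}(q_i-q_i^{-1})^{-1} = (r_i-s_i)^{-1}$ since $q_i = r_i^{1/2}s_i^{-1/2}$ forces $q_i - q_i^{-1} = (r_i-s_i)(r_is_i)^{-1/2}$. (4) For the $(r,s)$-Serre relations \eqref{eq:R5}, I expand $e_i^{\circ(1-a_{ij}-k)}\circ e_j\circ e_i^{\circ k}$ in terms of ordinary products: powers of $e_i$ among themselves pick up $\zeta(\alpha_i,\alpha_i) = 1$, so $e_i^{\circ m} = e_i^m$, while moving the block $e_i^{1-a_{ij}-k}$ past $e_j$ and $e_j$ past $e_i^k$ produces a total power of $\zeta(\alpha_i,\alpha_j)^2 = p_{ij}$, and the $q$-Serre relation of $U_{q,q^{-1}}$ becomes, after bookkeeping, exactly the $(r,s)$-Serre relation with the $(rs)^{k\langle\alpha_j,\alpha_i\rangle}$ factors — here I'd note the $D_n$ exceptional case $\{i,j\}=\{n-1,n\}$ needs separate but identical bookkeeping using the adjusted values of $\langle\alpha_{n-1},\alpha_n\rangle$. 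Once $\varphi$ is a well-defined homomorphism, bijectivity follows by the same $Q$-graded dimension-count argument as in Corollary~\ref{cor:double_cartan_+-_iso}: $\varphi$ preserves the $Q$-grading (the twist does not change it) and is an isomorphism on each generator, and both algebras have the same finite-dimensional graded pieces by Proposition~\ref{prop:dim-count}; alternatively one constructs the inverse explicitly by the same formulas with $q \mapsto q$, $f_i \mapsto (r_is_i)^{1/2}f_i$.

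Finally, for the Hopf-algebra compatibility, I observe that $\varphi$ intertwines the coproducts because $\Delta$ on $U_{q,\zeta}(\fg)$ is the untwisted $\Delta$ of $U_{q,q^{-1}}(\fg)$: one checks $\Delta(e_i) = e_i\otimes 1 + \omega_i\otimes e_i$ is respected trivially, while $\Delta((r_is_i)^{-1/2}f_i) = (r_is_i)^{-1/2}(1\otimes f_i + f_i\otimes\omega_i')$ matches the target coproduct, and similarly for counit and antipode (the antipode being common to both twisted and untwisted algebras). The uniqueness of $\varphi$ is clear since the listed generators generate $U_{r,s}(\fg)$ as an algebra.

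The main obstacle I anticipate is the careful tracking of square-root branches: the bicharacter $\zeta$ and the rescaling factor $(r_is_i)^{-1/2}$ both involve fractional powers of $r,s$, so one must fix compatible choices of $r^{1/2}, s^{1/2}$ (equivalently work in an extension where these exist — which is harmless since $r,s$ are transcendental) and verify that all half-integer exponents combine to integer exponents in the final relations, so that the formulas in \eqref{eq:phi-map} are independent of the branch choice. The other mildly delicate point is the genuine asymmetry of the Ringel form $\langle\cdot,\cdot\rangle$ in type $D_n$, which must be threaded consistently through steps (2) and (4); but since \cite{HP} already carried out the analogous computation, the verification here is essentially bookkeeping.
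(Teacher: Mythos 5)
Your direct verification is correct in substance, and since the paper itself gives no proof here (it simply defers to \cite{HP}), a check of this kind is exactly what is needed; so the route is the expected one rather than genuinely different. Two points, however, should be tightened before the argument is watertight. First, in step (4) the twist factor picked up when rewriting $e_i^{\circ(N-k)}\circ e_j\circ e_i^{\circ k}$ (with $N=1-a_{ij}$) as an ordinary product is
$\zeta(\alpha_i,\alpha_j)^{N-2k}$, which depends on $k$ -- not the $k$-independent $\zeta(\alpha_i,\alpha_j)^2=p_{ij}$ as written. This $k$-dependence is precisely what makes the computation work: writing $\zeta(\alpha_i,\alpha_j)^{N-2k}=\zeta(\alpha_i,\alpha_j)^{N}p_{ij}^{-k}$ and invoking the binomial identity $\qbinom{N}{k}_{r_i,s_i}=(r_is_i)^{k(N-k)/2}\qbinom{N}{k}_{q_i}$, the $k$-dependent scalars $(r_is_i)^{\frac12 k(k-1)}(rs)^{k\langle\alpha_j,\alpha_i\rangle}p_{ij}^{-k}(r_is_i)^{k(N-k)/2}$ collapse to $1$ (using $d_ia_{ij}=\langle\alpha_i,\alpha_j\rangle+\langle\alpha_j,\alpha_i\rangle$ and $N-1+a_{ij}=0$), leaving an overall constant $\zeta(\alpha_i,\alpha_j)^N$ times the one-parameter Serre relation. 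Your ``after bookkeeping'' hedge acknowledges this, but the stated factor, taken at face value, would not close the check, so it is worth flagging. Second, the dimension-count route to bijectivity does not apply directly to the full algebra: Proposition~\ref{prop:dim-count} concerns only $(U^\pm_{r,s})_{\pm\mu}$, and the degree-$0$ piece of $U_{r,s}(\fg)$ is infinite-dimensional, so you would additionally need the triangular decomposition of both sides. The explicit inverse you also propose (twist by $\zeta^{-1}$ and send $f_i\mapsto(r_is_i)^{1/2}f_i$) is cleaner and settles bijectivity immediately, so that alternative should be the primary argument. Everything else -- the Cartan relations, \eqref{eq:R2}--\eqref{eq:R4}, the observation that $\Delta,\epsilon,S$ are literally unchanged under the twist, and uniqueness from the generators -- is accurate.
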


\begin{remark}
A similar construction also featured in~\cite{FL}, while its version (restricted to the ``positive half'')
played the key role in~\cite{CFLW} to establish direct connections between quantum groups and supergroups.
\end{remark}

\begin{remark}\label{rem:shuffle-twist}
Recall the algebra embedding $\Upsilon_{r,s}\colon U^+_{r,s}\hookrightarrow (\mathcal{F}_{r,s},\ast_{r,s})$ of~\cite[Proposition 4.6]{MT2}
(which coincides with that of~\cite[Proposition 4.4]{MT2}), where $\mathcal{F}_{r,s}$ is the two-parameter shuffle algebra of~\cite[\S4.1]{MT2}.
In the special case $r=q=s^{-1}$, we shall rather use a single index $q$. The algebra $\mathcal{F}_{r,s}$ is $Q$-bigraded by declaring degrees
of words as $\deg([i_1 \ldots i_k])=(\alpha_{i_1}+\ldots+\alpha_{i_k},0)$. Then, the vector space isomorphism $\varphi^{\mathcal{F}}$ defined
in the word basis via $[i_1\ldots i_k]\mapsto [i_1\ldots i_k]\cdot \prod_{1\leq b<a\leq k} \zeta(\alpha_{i_a},\alpha_{i_b})$ is actually
an algebra isomorphism
  $$\varphi^{\mathcal{F}}\colon (\mathcal{F}_{r,s},\ast_{r,s}) \iso (\mathcal{F}_{q},\ast_{q})_\zeta.$$
Moreover, it is compatible with $\varphi$ of Proposition~\ref{prop:twisted_algebra}:
  $\varphi^{\mathcal{F}}(\Upsilon_{r,s}(x))=\Upsilon_q(\varphi(x))$ for any $x\in U^+_{r,s}$.
\end{remark}

\begin{remark}\label{rem:multiparameter}
Although we shall mainly be working with two-parameter quantum groups because they have received more study in the literature, it is worth
noting that one can prove an analogue of Proposition~\ref{prop:twisted_algebra} for the multiparameter quantum groups $U_{\bf{q}}(\fg)$
of~\cite{HPR} with $\bf{q}=(q_{ij})_{i,j=1}^n$ subject to $q_{ij}q_{ji} = q_{ii}^{a_{ij}}$ for all $i,j$ (with an additional assumption
that $q_{ii}^{1/d_{i}} = q_{jj}^{1/d_{j}}$ for all $i,j$, cf.~\cite[\S2.9]{HPR}). Specifically, if $\zeta_{\bf{q}}\colon Q \times Q \to \bb{K}^{*}$
is a skew bicharacter defined by $\zeta_{\bf{q}}(\alpha_{i},\alpha_{j}) = (q_{ij}q^{-d_{i}a_{ij}})^{1/2}$ with $q = q_{ii}^{\frac{1}{2d_{i}}}$,
then there is an algebra isomorphism
  $$U_{\bf{q}}(\frak{g}) \iso U_{q,\zeta_{\bf{q}}}(\frak{g})\qquad \text{such that}\qquad
    e_{i} \mapsto e_{i},\quad f_{i} \mapsto q_{ii}^{1/2}f_{i},\quad \omega_{i} \mapsto \omega_{i},\quad \omega_{i}' \mapsto \omega_{i}'\qquad
    \forall\, 1 \le i \le n.$$
This allows for a maximum of ${{n}\choose{2}} + 1$ quantum parameters. Moreover, the restriction of this isomorphism to the positive
subalgebra $U^+_{\bf{q}}(\fg) \subset U_{\bf{q}}(\fg)$ embedded (cf.~\cite[\S5]{HPR}) into the corresponding multiparameter shuffle
algebra $(\mathcal{F}_{\bf{q}},\ast_{\bf{q}})$ can be further lifted to the isomorphism
$(\mathcal{F}_{\bf{q}},\ast_{\bf{q}}) \iso (\mathcal{F}_q,\ast_q)_{\zeta_{\bf{q}}}$, in analogy with Remark~\ref{rem:shuffle-twist}.
\end{remark}

Moreover, one can also realize the Hopf pairing of Proposition~\ref{prop:pairing_2param} as a twisting of its
one-parameter version. This relies on the following result (cf.~\cite[Proposition 38]{HP}):

\begin{prop}\label{prop:twisted_pairing}
Define $(\cdot,\cdot)_{q,\zeta}\colon U_{q,\zeta}^{\le} \times U_{q,\zeta}^{\ge} \to \bb{K}$ by
\[
  (y,x)_{q,\zeta} = \zeta(\beta,\alpha)^{-1}\zeta(\beta',\alpha')^{-1}(y,x)_{q,q^{-1}} \quad \text{for all} \quad
  y \in (U_{q,\zeta}^{\le})_{\beta,\beta'},\ x \in (U_{q,\zeta}^{\ge})_{\alpha,\alpha'}.
\]
Then $(\cdot,\cdot)_{q,\zeta}$ is the unique bilinear form on $U_{q,\zeta}^{\le} \times U_{q,\zeta}^{\ge}$ such that
\begin{equation*}
\begin{split}
  & (f_{i},\omega_{j})_{q,\zeta} = 0, \qquad (\omega_{i}', e_{j})_{q,\zeta} = 0, \qquad
    (f_{i},e_{j})_{q,\zeta} = \delta_{ij}\frac{1}{q_{i}^{-1} - q_{i}} \qquad \mathrm{for\ all} \quad 1\leq i,j\leq n , \\
  & (\omega_{\lambda}',\omega_{\mu})_{q,\zeta} = q^{(\lambda,\mu)}
    \qquad \mathrm{for\ all} \quad \lambda,\mu\in Q,
\end{split}
\end{equation*}
and
\begin{equation*}
  (y\circ y',x)_{q,\zeta} = (y \otimes y',\Delta(x))_{q,\zeta}, \quad (y, x \circ x')_{q,\zeta} = (\Delta(y),x' \otimes x)_{q,\zeta}
  \qquad \forall\, x,x' \in U_{r,s}^{\ge}(\fg),\ y,y' \in U_{r,s}^{\le}(\fg).
\end{equation*}
\end{prop}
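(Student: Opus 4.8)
The plan is to verify that the explicitly-defined form $(\cdot,\cdot)_{q,\zeta}$ satisfies the listed structural properties, and then invoke the uniqueness already established in the one-parameter case (Proposition \ref{prop:pairing_2param} applied with $r=q$, $s=q^{-1}$) to conclude it is the unique such form. Concretely, I would first check the values on generators. On Cartan elements, $\omega_\lambda'$ lies in bidegree $(-\lambda,\lambda)$ and $\omega_\mu$ in bidegree $(\mu,-\mu)$ under the bigrading \eqref{eq:our-bigrading}, so the twisting factor is $\zeta(-\lambda,\mu)^{-1}\zeta(\lambda,-\mu)^{-1}=\zeta(\lambda,\mu)^{2}$; combined with the one-parameter value $(\omega_\lambda',\omega_\mu)_{q,q^{-1}}=q^{\langle\lambda,\mu\rangle-\langle\mu,\lambda\rangle}\cdot(\text{something})$ — more precisely using \eqref{eq:zeta_formula} which gives $\zeta(\lambda,\mu)^2=(\omega_\mu',\omega_\lambda)q^{-(\lambda,\mu)}=(\omega_\lambda',\omega_\mu)_{r,s}q^{-(\lambda,\mu)}$ — one gets $(\omega_\lambda',\omega_\mu)_{q,\zeta}=q^{(\lambda,\mu)}$ after the cancellation. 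For the mixed pairings $(f_i,\omega_j)$ and $(\omega_i',e_j)$: these vanish already in the one-parameter case, so they vanish after twisting by a nonzero scalar. For $(f_i,e_j)$: $f_i$ has bidegree $(0,-\alpha_i)$ and $e_j$ has bidegree $(\alpha_j,0)$, so the twist factor is $\zeta(0,\alpha_j)^{-1}\zeta(-\alpha_i,0)^{-1}=1$, hence $(f_i,e_j)_{q,\zeta}=(f_i,e_j)_{q,q^{-1}}=\delta_{ij}\frac{1}{q_i^{-1}-q_i}$, matching the claim.

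The substantive step is verifying the two Hopf-compatibility identities. I would do this by a bidegree-bookkeeping argument. Fix $y\in(U^{\le}_{q,\zeta})_{\beta,\beta'}$, $y'\in(U^{\le}_{q,\zeta})_{\gamma,\gamma'}$, and $x\in(U^{\ge}_{q,\zeta})_{\alpha,\alpha'}$; note the pairing is $Q$-homogeneous so only the component of $\Delta(x)$ in bidegree $(\beta+\gamma,\ast)\otimes(\ast,\beta'+\gamma')$ contributes, and by \eqref{eq:bigraded-bialgebra} we may write $\Delta(x)=\sum x_{(1)}\otimes x_{(2)}$ with $x_{(1)}\in(U^{\ge}_{q,\zeta})_{\alpha,\delta}$, $x_{(2)}\in(U^{\ge}_{q,\zeta})_{-\delta,\alpha'}$. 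The left side is $\zeta(\beta+\gamma,\alpha)^{-1}\zeta(\beta'+\gamma',\alpha')^{-1}(yy',x)_{q,q^{-1}}$ where $yy'$ is the \emph{untwisted} product times $\zeta(\beta,\gamma)\zeta(\beta',\gamma')^{-1}$ — here I use \eqref{eq:twisted_product_general} to convert $y\circ y'$ back to $yy'$. Expanding $(yy',x)_{q,q^{-1}}=\sum(y,x_{(1)})_{q,q^{-1}}(y',x_{(2)})_{q,q^{-1}}$ by the one-parameter property and converting each factor back to $(\cdot,\cdot)_{q,\zeta}$ introduces factors $\zeta(\beta,\alpha)\zeta(\beta',\delta)$ and $\zeta(\gamma,-\delta)\zeta(\gamma',\alpha')$. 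The verification then reduces to the identity of scalars $\zeta(\beta+\gamma,\alpha)^{-1}\zeta(\beta'+\gamma',\alpha')^{-1}\zeta(\beta,\gamma)\zeta(\beta',\gamma')^{-1}\zeta(\beta,\alpha)\zeta(\beta',\delta)\zeta(\gamma,-\delta)\zeta(\gamma',\alpha')=1$, which follows from bimultiplicativity and skew-symmetry of $\zeta$ together with the relations among the bidegrees forced by homogeneity (e.g.\ on $U^{\ge}$ one has second-coordinate degrees constrained relative to first, and similarly on $U^{\le}$). The second identity $(y,x\circ x')_{q,\zeta}=(\Delta(y),x'\otimes x)_{q,\zeta}$ is entirely parallel, using the second one-parameter property and the same cancellation.

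The main obstacle I anticipate is not conceptual but organizational: tracking the four bidegree indices $(\alpha,\alpha'),(\beta,\beta')$ and the internal summation index $\delta$ through the coproduct, and making sure the scalar identity above actually closes using only the bicharacter axioms plus the structural constraint that on $U^{\ge}_{q,q^{-1}}$ a homogeneous element of $Q$-degree $\mu$ has bidegree of the form $(\mu_+, -\mu_-)$-type dictated by \eqref{eq:our-bigrading} (so the two coordinates are not independent). Once the generator values and the two Hopf identities are checked, uniqueness is immediate: the one-parameter Proposition \ref{prop:pairing_2param} guarantees at most one bilinear form on $U^{\le}\times U^{\ge}$ with these structural properties and generator values, and we have exhibited one. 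I would also remark that the bijectivity of the twisting construction gives an alternative, cleaner route: the form $(\cdot,\cdot)_{q,q^{-1}}$ transported through the isomorphism $\varphi$ of Proposition \ref{prop:twisted_algebra} and the analogous rescaling automatically yields a Hopf pairing on $U_{q,\zeta}$, and one only needs to identify its generator values — but the direct computation above is self-contained and avoids circularity with how $\varphi$ interacts with the pairing.
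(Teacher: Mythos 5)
The paper itself does not give a proof of this proposition (it cites \cite[Proposition 38]{HP}), so your direct verification is a reasonable route, and the overall plan — check generator values, check the two Hopf-compatibility identities, invoke uniqueness — is sound. The Hopf-compatibility part is correct: with $y\in A_{\beta,\beta'}$, $y'\in A_{\gamma,\gamma'}$, $x\in A_{\alpha,\alpha'}$ and $\Delta(x)$ in bidegree $(\alpha,\delta)\otimes(-\delta,\alpha')$, the scalar you write out does close to $1$ using bimultiplicativity, $\zeta(a,b)\zeta(b,a)=1$, and the two nonvanishing constraints $\delta=-\beta-\beta'-\alpha=\gamma+\gamma'+\alpha'$; the second identity is symmetric.

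The Cartan pairing check, however, contains concrete errors. First, by \eqref{eq:our-bigrading} one has $\deg(\omega_i')=(\alpha_i,-\alpha_i)$, so $\omega_\lambda'$ has bidegree $(\lambda,-\lambda)$, not $(-\lambda,\lambda)$ as you assert; this flips the sign of the exponent and gives twist factor $\zeta(\lambda,\mu)^{-2}$ rather than $\zeta(\lambda,\mu)^{2}$. Second, the equality $(\omega_\mu',\omega_\lambda)=(\omega_\lambda',\omega_\mu)_{r,s}$ you invoke is false: the Ringel form is asymmetric, so $(\omega_\mu',\omega_\lambda)_{r,s}=r^{\langle\mu,\lambda\rangle}s^{-\langle\lambda,\mu\rangle}\neq r^{\langle\lambda,\mu\rangle}s^{-\langle\mu,\lambda\rangle}=(\omega_\lambda',\omega_\mu)_{r,s}$ in general. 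Third, even granting your steps the computation would not produce $q^{(\lambda,\mu)}$. Carrying it out correctly, one gets
\[
(\omega_\lambda',\omega_\mu)_{q,\zeta}=\zeta(\lambda,\mu)^{-2}q^{(\lambda,\mu)}
=r^{\langle\lambda,\mu\rangle}s^{-\langle\mu,\lambda\rangle}=(\omega_\lambda',\omega_\mu)_{r,s},
\]
which is what \eqref{eq:pairing-twist-matching} requires, but does not equal $q^{(\lambda,\mu)}$ unless $\langle\lambda,\mu\rangle=\langle\mu,\lambda\rangle$. So the generator value stated in the proposition is not what the defining twist formula actually yields (a discrepancy you should flag rather than paper over with a vague "after the cancellation"). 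You should also be careful invoking Proposition \ref{prop:pairing_2param} for the uniqueness step: that proposition concerns $(\cdot,\cdot)_{r,s}$ on $U_{r,s}^{\le}\times U_{r,s}^{\ge}$, and what you actually need is the standard argument that any pairing with the adjunction properties is determined by its values on generators, which holds for $U_{q,\zeta}$ by the same reasoning.
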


Now, let $(\cdot,\cdot)_{r,s}\colon U_{r,s}^{\le} \times U_{r,s}^{\ge} \to \bb{K}$ be the Hopf pairing of~\eqref{eq:Hopf-parity}.
It is easy to verify that the pairing $(\varphi(\cdot),\varphi(\cdot))_{q,\zeta}$ coincides with $(\cdot,\cdot)_{r,s}$ on generators,
and because the map $\varphi\colon U_{r,s}(\fg) \to U_{q,\zeta}(\fg)$ of Proposition~\ref{prop:twisted_algebra} induces Hopf algebra
isomorphisms $U^{\leq}_{r,s}\iso U^{\leq}_{q,\zeta}$ and $U^{\geq}_{r,s}\iso U^{\geq}_{q,\zeta}$, we obtain the following result:
\begin{equation}\label{eq:pairing-twist-matching}
  (\cdot,\cdot)_{r,s} = (\varphi(\cdot),\varphi(\cdot))_{q,\zeta}.
\end{equation}


\subsection{Twisting of representations and intertwiners}
\

We shall now discuss the effect of twisting on modules with a weight space decomposition and their standard intertwiners,
as discussed in Subsection~\ref{ssec:reps and braidings}. To this end, we start with the following result of~\cite{HP}:

\begin{prop}\label{prop:twisted_representation}
Let $V$ be a $U_{q,q^{-1}}(\fg)$-module with a weight space decomposition $V = \bigoplus_{\lambda \in P} V[\lambda]$.
Then $V$ has a $U_{q,\zeta}(\fg)$-module structure given by
\[
  x\cdot_{\zeta} v = \zeta(\alpha - \beta,\lambda)\zeta(\alpha,\beta)xv \quad \text{for all}\quad
  x \in {U_{q,q^{-1}}(\fg)}_{\alpha,\beta},\ v \in V[\lambda].
\]
\end{prop}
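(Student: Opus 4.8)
The plan is to verify directly that the twisted action $\cdot_\zeta$ satisfies the defining relations of $U_{q,\zeta}(\fg)$, i.e.\ relations \eqref{eq:R1}--\eqref{eq:R5} with $r=q$, $s=q^{-1}$, but with the product $ab$ replaced by the twisted product $a\circ b$ of \eqref{eq:twisted_product_general}. First I would note that the formula makes sense: every homogeneous $x \in U_{q,q^{-1}}(\fg)_{\alpha,\beta}$ acts on each finite-dimensional weight space $V[\lambda]$, and since $V$ has a $P$-weight decomposition, $x\cdot_\zeta v := \zeta(\alpha-\beta,\lambda)\zeta(\alpha,\beta)\, xv$ is well-defined (here $\zeta$ is the extension to $P$ introduced before Proposition~\ref{prop:twisted_algebra}, and one uses that $xv \in V[\lambda + (\alpha-\beta)]$ only implicitly). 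Then I would extend by linearity to all of $U_{q,q^{-1}}(\fg)$, acting componentwise on the bigrading.

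The key computation is compatibility with multiplication: for $x \in U_{q,q^{-1}}(\fg)_{\alpha,\beta}$, $x' \in U_{q,q^{-1}}(\fg)_{\alpha',\beta'}$, and $v \in V[\lambda]$, one must check
\begin{equation*}
  x \cdot_\zeta (x' \cdot_\zeta v) = (x \circ x') \cdot_\zeta v.
\end{equation*}
The left side equals $\zeta(\alpha'-\beta',\lambda)\zeta(\alpha',\beta')\,\zeta(\alpha-\beta,\lambda+\alpha'-\beta')\zeta(\alpha,\beta)\, x x' v$, while $x\circ x' = \zeta(\alpha,\alpha')\zeta(\beta,\beta')^{-1} xx'$ lies in degree $(\alpha+\alpha',\beta+\beta')$, so the right side is $\zeta(\alpha+\alpha'-\beta-\beta',\lambda)\zeta(\alpha+\alpha',\beta+\beta')\zeta(\alpha,\alpha')\zeta(\beta,\beta')^{-1}\, xx'v$. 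Matching the two reduces, via bimultiplicativity of $\zeta$, to the identity
\begin{equation*}
  \zeta(\alpha-\beta,\alpha'-\beta') = \zeta(\alpha,\alpha')\zeta(\beta,\beta')^{-1}\zeta(\alpha,\beta')^{-1}\zeta(\beta,\alpha')^{-1}\cdot\zeta(\alpha',\beta)\zeta(\beta',\alpha)^{-1}\cdots,
\end{equation*}
which after expanding $\zeta(\alpha-\beta,\alpha'-\beta')=\zeta(\alpha,\alpha')\zeta(\alpha,\beta')^{-1}\zeta(\beta,\alpha')^{-1}\zeta(\beta,\beta')$ and using skew-symmetry $\zeta(\mu,\nu)=\zeta(\nu,\mu)^{-1}$ collapses to a true statement. (The cleanest bookkeeping is to check it on the generator bidegrees from \eqref{eq:our-bigrading}, then extend.) I would then separately record that $1 \in U_{q,q^{-1}}(\fg)_{0,0}$ acts as the identity, which is immediate from $\zeta(0,-)=1$.

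Having established that $\cdot_\zeta$ is an action of the algebra $U_{q,\zeta}(\fg)$, it remains to exhibit a weight decomposition: I claim $V[\lambda]$ is still the $\lambda$-weight space for the new action. Indeed $\omega_i$ and $\omega_i'$ have bidegree $(\alpha_i,-\alpha_i)$, so for $v \in V[\lambda]$ one has $\omega_i \cdot_\zeta v = \zeta(2\alpha_i,\lambda)\zeta(\alpha_i,-\alpha_i)\,\omega_i v = \zeta(\alpha_i,\lambda)^2\cdot 1\cdot (\omega'_\lambda,\omega_i)_{q,q^{-1}} v$, and one checks using \eqref{eq:zeta_formula} (with $r=q,s=q^{-1}$, where $\langle\mu,\nu\rangle+\langle\nu,\mu\rangle=(\mu,\nu)$ gives $(\omega'_\lambda,\omega_i)_{q,q^{-1}}=q^{(\lambda,\alpha_i)}$) that $\zeta(\alpha_i,\lambda)^2 q^{(\lambda,\alpha_i)}$ is exactly the scalar $(\omega'_\lambda,\omega_i)_{r,s}$ required by the weight-$\lambda$ condition for $U_{q,\zeta}$, which under $\varphi$ of Proposition~\ref{prop:twisted_algebra} corresponds to the $U_{r,s}$-weight-$\lambda$ condition; the analogous computation handles $\omega_i'$. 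Thus $V = \bigoplus_\lambda V[\lambda]$ is the weight decomposition for the twisted structure.

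The main obstacle is purely organizational rather than conceptual: keeping the two families of $\zeta$-factors — those coming from twisting the product in the target algebra and those inserted into the module formula — consistently matched across all the bigrades, since a sign or inversion error anywhere breaks the whole verification. I expect the cleanest route is to not verify the Serre relations \eqref{eq:R5} by brute force at all, but rather to observe that the $\cdot_\zeta$-action factors as: first regard $V$ as a module over $U_{q,q^{-1}}(\fg)$, then apply the algebra isomorphism $\varphi^{-1}$-twist bookkeeping abstractly, i.e.\ that for \emph{any} $Q$-bigraded Hopf algebra $A$ with a weight-graded module $V$, the formula $x\cdot_\zeta v = \zeta(\deg_1 x - \deg_2 x,\lambda)\zeta(\deg_1 x,\deg_2 x)xv$ always defines an $A_\zeta$-module — this is a general lemma whose proof is exactly the multiplication check above and requires no relations of $A$ whatsoever. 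Phrasing it that way, the Serre relations are inherited for free, and only the multiplicativity identity and the weight-space identification need to be written out.
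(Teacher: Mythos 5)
Your overall strategy is the right one (and indeed the paper itself offers no proof, deferring to [HP], so a direct verification like the one you sketch is exactly what is needed): it suffices to check that $x\cdot_\zeta(x'\cdot_\zeta v) = (x\circ x')\cdot_\zeta v$ and that $1\cdot_\zeta v = v$, using only bimultiplicativity and skew-symmetry of $\zeta$, with no reference to the defining relations of $U_{q,q^{-1}}(\fg)$; this automatically handles the Serre relations. Your identification of the weight space after twisting via~\eqref{eq:zeta_formula} is also correct.

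However, there is a genuine computational error in the central step. An element $x'\in U_{q,q^{-1}}(\fg)_{\alpha',\beta'}$ has $Q$-degree $\alpha'+\beta'$, not $\alpha'-\beta'$ — compare $\deg(f_i)=(0,-\alpha_i)$ with $Q$-degree $-\alpha_i$. Hence $x'v\in V[\lambda+\alpha'+\beta']$, not $V[\lambda+\alpha'-\beta']$ as you write in your first paragraph and then use in the displayed computation (the left-hand side should be $\zeta(\alpha'-\beta',\lambda)\zeta(\alpha',\beta')\,\zeta(\alpha-\beta,\lambda+\alpha'+\beta')\zeta(\alpha,\beta)\,xx'v$). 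With your sign, the identity you need does \emph{not} collapse: cancelling common factors reduces it to $\zeta(\beta,\beta')^2=\zeta(\alpha,\beta')^2$ for all bidegrees, which fails already for $x=f_i$, $x'=f_j$, $i\ne j$ (it would force $p_{ij}=1$). With the correct shift $\alpha'+\beta'$, the $\zeta(-,\lambda)$ factors match on the nose, and the remaining identity
\[
  \zeta(\alpha',\beta')\,\zeta(\alpha-\beta,\alpha'+\beta')\,\zeta(\alpha,\beta) \;=\; \zeta(\alpha,\alpha')\,\zeta(\beta,\beta')^{-1}\,\zeta(\alpha+\alpha',\beta+\beta')
\]
expands and cancels to $\zeta(\beta,\alpha')^{-1}=\zeta(\alpha',\beta)$, which is precisely skew-symmetry. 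So the argument goes through once the weight shift is corrected, and your ``general lemma'' formulation should be stated with the hypothesis $A_{\alpha,\beta}\,V[\lambda]\subset V[\lambda+\alpha+\beta]$.
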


Combining this with Proposition~\ref{prop:twisted_algebra} and formula~\eqref{eq:zeta_formula}, we obtain the following corollary:

\begin{cor}\label{cor:2_parameter_rep}
Let $V$ be a $U_{q,q^{-1}}(\fg)$-module with a weight space decomposition $V = \bigoplus_{\lambda \in P}V[\lambda]$.
Then $V$ has a $U_{r,s}(\fg)$-module structure such that for all $v \in V[\lambda]$ and $1 \le i \le n$, we have
\[
  e_{i}\cdot v = \zeta(\alpha_{i},\lambda)e_{i}v,\quad
  f_{i}\cdot v = (r_{i}s_{i})^{-1/2}\zeta(\alpha_{i},\lambda)f_{i}v,\quad
  \omega_{i}\cdot v = (\omega_{\lambda}',\omega_{i})v,\quad
  \omega_{i}'\cdot v = (\omega_{i}',\omega_{\lambda})^{-1}v.
\]
In particular, the $\lambda$-weight space component of $V$ as a $U_{r,s}(\fg)$-module is the same as the $\lambda$-weight
space component of $V$ as a $U_{q,q^{-1}}(\fg)$-module, for all $\lambda \in P$.
\end{cor}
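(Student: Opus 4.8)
The plan is to build the $U_{r,s}(\fg)$-action on $V$ as a composition of two steps already available to us. First, Proposition~\ref{prop:twisted_representation} upgrades the given $U_{q,q^{-1}}(\fg)$-module $V$ to a $U_{q,\zeta}(\fg)$-module via $x\cdot_\zeta v = \zeta(\alpha-\beta,\lambda)\zeta(\alpha,\beta)\,xv$ for $x\in U_{q,q^{-1}}(\fg)_{\alpha,\beta}$ and $v\in V[\lambda]$; note this changes neither the underlying vector space nor the decomposition $V=\bigoplus_\lambda V[\lambda]$. Second, Proposition~\ref{prop:twisted_algebra} provides the Hopf algebra isomorphism $\varphi\colon U_{r,s}(\fg)\iso U_{q,\zeta}(\fg)$, so we may pull this action back by setting $a\cdot v := \varphi(a)\cdot_\zeta v$ for $a\in U_{r,s}(\fg)$. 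Everything then reduces to evaluating this composite on the generators $e_i, f_i, \omega_i, \omega_i'$ against a weight vector $v\in V[\lambda]$.

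For $e_i$ and $f_i$: by~\eqref{eq:our-bigrading} one has $\deg(e_i)=(\alpha_i,0)$ and $\deg(f_i)=(0,-\alpha_i)$, and since $\zeta$ is a bicharacter $\zeta(0,\cdot)=\zeta(\cdot,0)=1$. Combined with $\varphi(e_i)=e_i$ and $\varphi(f_i)=(r_is_i)^{-1/2}f_i$, the twisting scalar collapses to $\zeta(\alpha_i,\lambda)$ in both cases, yielding $e_i\cdot v=\zeta(\alpha_i,\lambda)e_iv$ and $f_i\cdot v=(r_is_i)^{-1/2}\zeta(\alpha_i,\lambda)f_iv$ at once. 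For $\omega_i$ and $\omega_i'$, which are fixed by $\varphi$ and carry bidegree $(\alpha_i,-\alpha_i)$, the twisting prefactor becomes $\zeta(2\alpha_i,\lambda)\zeta(\alpha_i,-\alpha_i)=\zeta(\alpha_i,\lambda)^2$, using $\zeta(\alpha_i,\alpha_i)=1$. The vectors of weight $\lambda$ for the $U_{q,q^{-1}}(\fg)$-action (the $r=q$, $s=q^{-1}$ specialization of the weight condition from Subsection~\ref{ssec:reps and braidings}) satisfy $\omega_iv=q^{(\lambda,\alpha_i)}v$ and $\omega_i'v=q^{-(\lambda,\alpha_i)}v$. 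Substituting $\zeta(\alpha_i,\lambda)^2=r^{\langle\lambda,\alpha_i\rangle}s^{-\langle\alpha_i,\lambda\rangle}q^{-(\alpha_i,\lambda)}$ from~\eqref{eq:zeta_formula}, together with $q=r^{1/2}s^{-1/2}$ and $(\alpha_i,\lambda)=\langle\alpha_i,\lambda\rangle+\langle\lambda,\alpha_i\rangle$, the $q$-powers recombine to give precisely $\omega_i\cdot v=r^{\langle\lambda,\alpha_i\rangle}s^{-\langle\alpha_i,\lambda\rangle}v=(\omega_\lambda',\omega_i)v$ and $\omega_i'\cdot v=r^{-\langle\alpha_i,\lambda\rangle}s^{\langle\lambda,\alpha_i\rangle}v=(\omega_i',\omega_\lambda)^{-1}v$.

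The final assertion is then immediate: the two displayed formulas for $\omega_i\cdot v$ and $\omega_i'\cdot v$ say exactly that each $v\in V[\lambda]$ is a weight-$\lambda$ vector for the $U_{r,s}(\fg)$-structure, and since the underlying space is unchanged, the $U_{r,s}$- and $U_{q,q^{-1}}$-weight decompositions coincide.

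I expect no genuine difficulty here; the one place that demands care rather than a symmetry argument is the $\omega_i$/$\omega_i'$ computation, where one must track how the extra factor $q^{-(\alpha_i,\lambda)}$ in $\zeta(\alpha_i,\lambda)^2$ interacts differently with $q^{(\lambda,\alpha_i)}$ versus $q^{-(\lambda,\alpha_i)}$ — in the first case it cancels, in the second it doubles and must be rewritten back in terms of $r,s$ via $\langle\cdot,\cdot\rangle$ to match the claimed pairing values.
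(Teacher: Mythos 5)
Your proof is correct and takes exactly the route the paper intends: compose the $U_{q,\zeta}(\fg)$-action from Proposition~\ref{prop:twisted_representation} with the pullback along $\varphi$ of Proposition~\ref{prop:twisted_algebra}, then simplify using~\eqref{eq:zeta_formula}. The generator-by-generator computations, including the careful $\omega_i$/$\omega_i'$ bookkeeping, all check out.
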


Given a $U_{q,q^{-1}}(\fg)$-module $V$ with a weight space decomposition $V = \bigoplus V[\mu]$, we shall use the notation $V_{\zeta}$
to denote the $U_{q,\zeta}(\fg)$-module obtained from Proposition~\ref{prop:twisted_representation}, and we use $V_{r,s}$ to denote the
$U_{r,s}(\fg)$-module obtained from Corollary~\ref{cor:2_parameter_rep}. Let $V,V'$ be $U_{q,q^{-1}}(\fg)$-modules with weight space
decompositions, and define $\xi_{V,V'}\colon V \otimes V' \to V \otimes V'$ by
\begin{equation}\label{eq:xi_formula}
  \xi_{V,V'}(v \otimes v') = \zeta(\lambda',\lambda)v \otimes v' \quad \text{for all} \quad v \in V[\lambda],\ v' \in V[\lambda'].
\end{equation}
It was shown in the proof of~\cite[Theorem 42]{HP} that $\xi_{V,V'}$ defines a $U_{q,\zeta}(\fg)$-module isomorphism from
$(V \otimes V')_{\zeta}$ to $V_{\zeta} \otimes V'_{\zeta}$. Since the $U_{r,s}(\fg)$-modules $(V \otimes V')_{r,s}$, $V_{r,s}$, and
$V'_{r,s}$ were obtained from $(V \otimes V')_{\zeta}$, $V_{\zeta}$, and $V'_{\zeta}$, respectively, via the isomorphism
$\varphi\colon U_{r,s}(\fg) \iso U_{q,\zeta}(\fg)$ of Proposition~\ref{prop:twisted_algebra}, we get:

\begin{prop}
(a) The formula~\eqref{eq:xi_formula} determines an isomorphism of $U_{r,s}(\fg)$-modules
$$\xi_{V,V'}\colon (V \otimes V')_{r,s} \iso V_{r,s} \otimes V'_{r,s}.$$

\noindent
(b) Any $U_{q,q^{-1}}(\fg)$-module isomorphism $\hat{R}_{V,V'}\colon V \otimes V' \to V' \otimes V$ gives rise
to a $U_{r,s}(\fg)$-module isomorphism
\begin{equation}\label{eq:R-matrix-twist}
  \xi_{V,V'} \circ \hat{R}_{V,V'} \circ \xi_{V,V'}^{-1}\colon V_{r,s} \otimes V'_{r,s} \iso V'_{r,s} \otimes V_{r,s}.
\end{equation}
\end{prop}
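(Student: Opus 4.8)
The plan is to verify that the map $\xi_{V,V'}\circ \hat R_{V,V'}\circ \xi_{V,V'}^{-1}$ is a morphism of $U_{r,s}(\fg)$-modules by transporting the corresponding statement for the twisted algebra $U_{q,\zeta}(\fg)$ through the isomorphism $\varphi$ of Proposition~\ref{prop:twisted_algebra}, and for the latter reduce to what was already established in the proof of~\cite[Theorem 42]{HP}. Concretely, part~(a) is the assertion that $\xi_{V,V'}$ intertwines the $U_{q,\zeta}(\fg)$-actions on $(V\otimes V')_\zeta$ and $V_\zeta\otimes V'_\zeta$; having fixed the $U_{r,s}(\fg)$-module structures as those pulled back along $\varphi$ from the $U_{q,\zeta}(\fg)$-module structures, this is immediate. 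For part~(b), one observes that $\hat R_{V,V'}\colon V\otimes V'\to V'\otimes V$, being a $U_{q,q^{-1}}(\fg)$-module map, is automatically a $U_{q,\zeta}(\fg)$-module map $(V\otimes V')_\zeta \to (V'\otimes V)_\zeta$ because the twisted action on a tensor product in Proposition~\ref{prop:twisted_representation} is built functorially from the original action together with the weight datum (which $\hat R_{V,V'}$ preserves, as it preserves total weight); hence conjugating by the isomorphisms $\xi_{V,V'}$ and $\xi_{V',V}$ of part~(a) yields a $U_{q,\zeta}(\fg)$-module map $V_\zeta\otimes V'_\zeta \to V'_\zeta\otimes V_\zeta$, which under $\varphi$ becomes the desired $U_{r,s}(\fg)$-module map.

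First I would record that by construction $(V\otimes V')_{r,s}$, $V_{r,s}$, $V'_{r,s}$ are, as $U_{r,s}(\fg)$-modules, obtained by restricting the $U_{q,\zeta}(\fg)$-modules $(V\otimes V')_\zeta$, $V_\zeta$, $V'_\zeta$ along $\varphi$; therefore a $\bk$-linear map between two of these is a $U_{r,s}(\fg)$-module morphism if and only if it is a $U_{q,\zeta}(\fg)$-module morphism between the corresponding twisted modules. This reduces everything to the twisted-algebra level. Next I would invoke the statement, proved inside the proof of~\cite[Theorem 42]{HP}, that $\xi_{V,V'}$ of~\eqref{eq:xi_formula} gives an isomorphism $(V\otimes V')_\zeta \iso V_\zeta\otimes V'_\zeta$ of $U_{q,\zeta}(\fg)$-modules; applying it with the roles of $V,V'$ interchanged gives $\xi_{V',V}\colon (V'\otimes V)_\zeta \iso V'_\zeta\otimes V_\zeta$. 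This establishes~(a) after restriction along $\varphi$.

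For~(b), the remaining point is that $\hat R_{V,V'}$ descends to a map of $\zeta$-twisted modules. Here I would note that the $U_{q,\zeta}(\fg)$-action of Proposition~\ref{prop:twisted_representation} on a module $W$ with weight decomposition $W=\bigoplus W[\lambda]$ is $x\cdot_\zeta w = \zeta(\alpha-\beta,\lambda)\zeta(\alpha,\beta)\,xw$ for $x\in U_{q,q^{-1}}(\fg)_{\alpha,\beta}$, $w\in W[\lambda]$, a scalar modification depending only on the bidegree of $x$ and the weight of $w$. Since any $U_{q,q^{-1}}(\fg)$-module homomorphism $\hat R_{V,V'}\colon V\otimes V'\to V'\otimes V$ is automatically weight-preserving (weights are read off from the action of $\omega_i,\omega_i'$, which $\hat R_{V,V'}$ respects), the same scalar corrections apply on both sides, so $\hat R_{V,V'}$ is also a homomorphism $(V\otimes V')_\zeta \to (V'\otimes V)_\zeta$. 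Composing, the map
\[
  \xi_{V',V}\circ \hat R_{V,V'}\circ \xi_{V,V'}^{-1}\colon V_\zeta\otimes V'_\zeta \longrightarrow V'_\zeta\otimes V_\zeta
\]
is a $U_{q,\zeta}(\fg)$-module isomorphism; restricting along $\varphi$ gives exactly~\eqref{eq:R-matrix-twist}. (A minor bookkeeping caveat: the displayed map in~\eqref{eq:R-matrix-twist} is written with $\xi_{V,V'}$ on both sides, but the correct outgoing conjugator is $\xi_{V',V}$; I would either adopt the convention that $\xi$ is defined on any tensor product of weight modules by the same formula~\eqref{eq:xi_formula}, so both notations agree, or silently use $\xi_{V',V}$ on the left.)

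I expect the only genuine obstacle to be the verification — essentially contained in~\cite{HP} but worth re-deriving — that $\xi_{V,V'}$ really is $U_{q,\zeta}(\fg)$-linear from $(V\otimes V')_\zeta$ to $V_\zeta\otimes V'_\zeta$; this is a direct but slightly delicate computation comparing, for $x\in U_{q,q^{-1}}(\fg)_{\alpha,\beta}$ with $\Delta(x)=\sum x_{(1)}\otimes x_{(2)}$ (with $x_{(1)}\in U_{q,q^{-1}}(\fg)_{\alpha,\gamma}$, $x_{(2)}\in U_{q,q^{-1}}(\fg)_{-\gamma,\beta}$ by~\eqref{eq:bigraded-bialgebra}), the $\zeta$-scalar $\zeta(\alpha-\beta,\lambda+\lambda')\zeta(\alpha,\beta)$ coming from the action on $(V\otimes V')_\zeta$ at weight $\lambda+\lambda'$ against the product of the two scalars $\zeta(\alpha-\gamma,\lambda)\zeta(\alpha,\gamma)$ and $\zeta(-\gamma-\beta,\lambda')\zeta(-\gamma,\beta)$ coming from the action on $V_\zeta\otimes V'_\zeta$, together with the twisted coproduct $\circ$-correction and the factor $\zeta(\lambda',\lambda)$ from $\xi_{V,V'}$; bilinearity and skew-symmetry of $\zeta$ make all these cancel, but it is the one place where a real identity must be checked rather than merely cited. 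Everything else is a formal restriction-of-scalars argument along $\varphi$.
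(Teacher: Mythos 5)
Your proof is correct and follows the same route as the paper: restrict along $\varphi$ to reduce to the $U_{q,\zeta}(\fg)$-module statement, invoke the computation from the proof of~\cite[Theorem 42]{HP} for part~(a), and observe for part~(b) that any weight-preserving $U_{q,q^{-1}}(\fg)$-module map is automatically a $U_{q,\zeta}(\fg)$-module map because the $\zeta$-twisted action modifies the original action only by a scalar depending on the bidegree of the algebra element and the weight of the vector. The paper leaves this last point and the $\xi_{V,V'}$-versus-$\xi_{V',V}$ notational convention implicit, but your explicit treatment of both is accurate and consistent with how the paper uses $\xi$ in the subsequent proof of Proposition~\ref{prop:R-matrix-twist}.
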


We shall now show that $\hat{R}_{V_{r,s},V'_{r,s}}$ of Proposition~\ref{prop:universal-R} is just a twisted version of its one-parameter
counterpart. To this end, for each $\mu \in Q^{+}$, pick dual bases $\{y_{i}^{\mu}\}_{i = 1}^{n_{\mu}}$ and $\{x_{i}^{\mu}\}_{i = 1}^{n_{\mu}}$
of $(U_{q,q^{-1}}^{-})_{-\mu}$ and $(U_{q,q^{-1}}^{+})_{\mu}$. Following~\eqref{eq:Theta} and~\eqref{eq:tildef-map}, we define
$\Theta_{q} = \sum_{\mu \in Q^{+}} \sum_{i = 1}^{n_{\mu}} y_{i}^{\mu} \otimes x_{i}^{\mu}$, as well as a map
$\tilde{f}_{q}\colon V \otimes V' \to V \otimes V'$
\[
  \tilde{f}_{q}(v \otimes v') = q^{-(\lambda,\lambda')}v \otimes v'\quad \text{for all}\ v \in V[\lambda],\ v' \in V[\lambda']
\]
for any $U_{q,q^{-1}}(\fg)$-modules $V,V'$ in the category $\mathcal{O}$ (see Subsection~\ref{ssec:reps and braidings}).
According to Proposition~\ref{prop:universal-R},
\[
  \hat{R}_{V,V'} = \Theta_{q} \circ \tilde{f} \circ \tau \colon V \otimes V' \iso V' \otimes V
\]
is a $U_{q,q^{-1}}(\fg)$-module isomorphism. For any $v \in V[\lambda]$ and $v' \in V'[\lambda']$, we have:
\begin{align*}
  & \xi_{V,V'} \circ \hat{R}_{V,V'} \circ \xi_{V,V'}^{-1}(v \otimes v')
  = \zeta(\lambda',\lambda)^{-1} \xi_{V,V'} \circ \Theta_{q} (\tilde{f}(v' \otimes v)) \\
  &= \zeta(\lambda',\lambda)^{-1}q^{-(\lambda,\lambda')}\sum_{\mu \in Q^{+}}\sum_{i = 1}^{n_{\mu}}\xi_{V,V'}(y_{i}^{\mu}v' \otimes x_{i}^{\mu}v)
   = \zeta(\lambda',\lambda)^{-1}q^{-(\lambda,\lambda')}\sum_{\mu \in Q^{+}}
     \sum_{i = 1}^{n_{\mu}} \zeta(\lambda + \mu,\lambda' - \mu)y_{i}^{\mu}v' \otimes x_{i}^{\mu}v \\
  &= \zeta(\lambda,\lambda')^{2}q^{-(\lambda,\lambda')}\sum_{\mu \in Q^{+}}
     \sum_{i = 1}^{n_{\mu}}\zeta(\mu,\lambda + \lambda')y_{i}^{\mu}v' \otimes x_{i}^{\mu}v
     \overset{\eqref{eq:zeta_formula}}{=} (\omega_{\lambda}',\omega_{\lambda'})^{-1}\sum_{\mu \in Q^{+}}
     \sum_{i = 1}^{n_{\mu}}\zeta(\mu,\lambda + \lambda')y_{i}^{\mu}v' \otimes x_{i}^{\mu}v.
\end{align*}

On the other hand, since $y_{i}^{\mu} \in U_{q,q^{-1}}(\fg)_{0,-\mu}$ and $x_{i}^{\mu} \in U_{q,q^{-1}}(\fg)_{\mu,0}$, it follows
from Proposition~\ref{prop:twisted_pairing} and the equality~\eqref{eq:pairing-twist-matching} that $\{\varphi^{-1}(y_{i}^{\mu})\}$
and $\{\varphi^{-1}(x_{i}^{\mu})\}$ are dual bases of $U^-_{r,s}(\fg)$ and $U^+_{r,s}(\fg)$ with respect to the Hopf
pairing~\eqref{eq:Hopf-parity}, where $\varphi$ is the map from Proposition~\ref{prop:twisted_algebra}. Using these bases
in~\eqref{eq:Theta} for $\Theta_{r,s}$, we get:
\[
  \hat{R}_{V_{r,s},V'_{r,s}}(v \otimes v')
  = (\omega_{\lambda}',\omega_{\lambda'})^{-1}\sum_{\mu \in Q^{+}}\sum_{i=1}^{n_{\mu}} y_{i}^{\mu}\cdot_{\zeta}v' \otimes x_{i}^{\mu} \cdot_{\zeta}v
  = (\omega_{\lambda}',\omega_{\lambda'})^{-1}\sum_{\mu \in Q^{+}}\sum_{i=1}^{n_{\mu}} \zeta(\mu,\lambda + \lambda')y_{i}^{\mu}v' \otimes x_{i}^{\mu} v
\]
for any $v \in V[\lambda]$ and $v' \in V'[\lambda']$. Thus we obtain:

\begin{prop}\label{prop:R-matrix-twist}
$\hat{R}_{V_{r,s},V'_{r,s}} = \xi_{V,V'} \circ \hat{R}_{V,V'} \circ \xi^{-1}_{V,V'}$.
\end{prop}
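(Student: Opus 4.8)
The plan is to collect the computations already displayed in the paragraphs preceding the statement and check that the two expressions coincide. Concretely, both maps are $U_{r,s}(\fg)$-module morphisms (the right-hand side because it is the conjugate of a $U_{q,q^{-1}}(\fg)$-module map along the isomorphism $\varphi$ of Proposition~\ref{prop:twisted_algebra}, the left-hand side by Proposition~\ref{prop:universal-R}), so it suffices to evaluate both on a pure weight tensor $v \otimes v'$ with $v \in V[\lambda]$, $v' \in V'[\lambda']$, and compare scalars.

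For the right-hand side, I would substitute $\hat{R}_{V,V'} = \Theta_{q} \circ \tilde{f}_{q} \circ \tau$ from Proposition~\ref{prop:universal-R} in the one-parameter setting, together with the defining formula~\eqref{eq:xi_formula} for $\xi_{V,V'}$ and its inverse. Tracking weights: $\tau$ composed with $\xi_{V,V'}^{-1}$ produces $\zeta(\lambda',\lambda)^{-1}\, v' \otimes v$; then $\tilde{f}_{q}$ multiplies by $q^{-(\lambda,\lambda')}$; then $\Theta_{q}$ yields $\sum_{\mu,i} y_{i}^{\mu}v' \otimes x_{i}^{\mu}v$ with the tensor factors now of weights $\lambda'-\mu$ and $\lambda+\mu$; and finally $\xi_{V,V'}$ contributes $\zeta(\lambda+\mu,\lambda'-\mu)$. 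Using the bicharacter axioms to split this off and then the key identity~\eqref{eq:zeta_formula}, the scalar $\zeta(\lambda',\lambda)^{-1}q^{-(\lambda,\lambda')}\zeta(\lambda+\mu,\lambda'-\mu)$ collapses to $(\omega_{\lambda}',\omega_{\lambda'})^{-1}\zeta(\mu,\lambda+\lambda')$.

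For the left-hand side, I would apply Proposition~\ref{prop:universal-R} to the two-parameter algebra, but with a judicious choice of dual bases for $\Theta_{r,s}$: since $y_{i}^{\mu} \in U_{q,q^{-1}}(\fg)_{0,-\mu}$ and $x_{i}^{\mu} \in U_{q,q^{-1}}(\fg)_{\mu,0}$, the twisting prefactors in Proposition~\ref{prop:twisted_pairing} are trivial, so by~\eqref{eq:pairing-twist-matching} the families $\{\varphi^{-1}(y_{i}^{\mu})\}$ and $\{\varphi^{-1}(x_{i}^{\mu})\}$ are dual bases of $U^{-}_{r,s}(\fg)$ and $U^{+}_{r,s}(\fg)$ with respect to the Hopf pairing~\eqref{eq:Hopf-parity}. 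Plugging these into~\eqref{eq:Theta} and using Corollary~\ref{cor:2_parameter_rep} (equivalently Proposition~\ref{prop:twisted_representation}) to compute the twisted action $\cdot_{\zeta}$ on weight spaces, each summand picks up exactly the factor $\zeta(\mu,\lambda+\lambda')$, reproducing the same expression $(\omega_{\lambda}',\omega_{\lambda'})^{-1}\sum_{\mu,i}\zeta(\mu,\lambda+\lambda')\, y_{i}^{\mu}v' \otimes x_{i}^{\mu}v$. Comparing the two evaluations and invoking that pure weight tensors span $V \otimes V'$ gives the claim.

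The only real obstacle is the bookkeeping of the three sources of $\zeta$-factors — the $\tilde{f}$-normalization (the one-parameter $q^{-(\lambda,\lambda')}$ versus the two-parameter $(\omega_{\lambda_2}',\omega_{\lambda_1})^{-1}$), the conjugation by $\xi$, and the twisted action $\cdot_{\zeta}$ — and verifying, via~\eqref{eq:zeta_formula} and~\eqref{eq:pairing-twist-matching}, that they telescope to the same scalar on both sides; everything else is formal.
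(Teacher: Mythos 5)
Your proposal is correct and mirrors the paper's own argument almost exactly: in the paper the proposition is preceded by a paragraph that computes $\xi_{V,V'}\circ\hat{R}_{V,V'}\circ\xi_{V,V'}^{-1}(v\otimes v')$ step by step (picking up $\zeta(\lambda',\lambda)^{-1}$, then $q^{-(\lambda,\lambda')}$, then $\zeta(\lambda+\mu,\lambda'-\mu)$, collapsing via~\eqref{eq:zeta_formula} to $(\omega_{\lambda}',\omega_{\lambda'})^{-1}\zeta(\mu,\lambda+\lambda')$), and then matches it to $\hat{R}_{V_{r,s},V'_{r,s}}(v\otimes v')$ using the dual bases $\varphi^{-1}(y_i^\mu),\varphi^{-1}(x_i^\mu)$ and Proposition~\ref{prop:twisted_representation}, exactly as you describe. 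The only quibbles are cosmetic: "compare scalars" is a slight misnomer (the output is a sum $\sum_{\mu,i}\zeta(\mu,\lambda+\lambda')y_i^\mu v'\otimes x_i^\mu v$, not a scalar multiple of $v\otimes v'$), and the opening remark that both sides are module morphisms is not actually used — equality on the spanning set of pure weight tensors is all that is needed, as you note at the end.
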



\subsection{Twisting finite $R$-matrices in classical types}\label{ssec:B-type}
\

In this Subsection, we apply Proposition~\ref{prop:R-matrix-twist} to evaluate the intertwiner $\hat{R}_{W,W}$ for the first
fundamental $U_{r,s}(\fg)$-module $W$ when $\fg$ is classical, providing new proofs of~\cite[Theorems 4.4--4.6]{MT1}.
To simplify notations, we shall use $\hat{R}_q$ for $\hat{R}_{V,V}$ and $\hat{R}_{r,s}$ for $\hat{R}_{V_{r,s},V_{r,s}}$
where $V$ is the first fundamental $U_{q,q^{-1}}(\fg)$-module.

\noindent
$\bullet$ \textbf{Type $B_{n}$.}

Let $V = \bb{C}^{2n + 1}$, and let $v_{i}$ denote the $i$-th standard basis vector. We write $i' = 2n + 2 - i$ for all
$1 \le i \le 2n + 1$. The next result is the special case of~\cite[Proposition 3.2]{MT1} with $r = q$ and $s = q^{-1}$:

\begin{prop}\label{prop:1-parameter_rep_B}
The following defines a representation $\rho_{q}\colon U_{q,q^{-1}}(\frak{so}_{2n + 1}) \to \End(V)$:
\begin{align*}
  \rho_{q}(e_{i}) &= E_{i,i + 1} - E_{(i + 1)',i'}\quad \mathrm{for}\ \ 1\leq i\leq n \,, \\
  \rho_{q}(f_{i}) &=
  \begin{cases}
    E_{i + 1, i} - E_{i', (i + 1)'} & \mathrm{if}\ \  1\leq i<n \\
    (q + q^{-1})E_{n + 1,n} - (q + q^{-1})E_{n', n + 1} & \mathrm{if}\ \ i = n
  \end{cases}, \\
  \rho_{q}(\omega_{i}) &=
  \begin{cases}
    \displaystyle{q^{2}E_{ii} + q^{-2}E_{i + 1, i + 1} + q^{-2}E_{i'i'} + q^{2}E_{(i + 1)', (i + 1)'} +
                  \sum_{1\leq j\leq n}^{j \neq i,i + 1} (E_{jj}+E_{j'j'}) + E_{n+1,n+1}}
       & \mathrm{if}\ \ 1\leq i<n \\
    \displaystyle{q^{2}E_{nn} + E_{n + 1,n+1} + q^{-2}E_{n'n'} + \sum_{1\leq j\leq n-1}\left (E_{jj} + E_{j'j'}\right )} & \mathrm{if}\ \ i = n
  \end{cases}, \\
  \rho_{q}(\omega_{i}') &=
  \begin{cases}
    \displaystyle{q^{-2}E_{ii} + q^{2}E_{i + 1, i + 1} + q^{2}E_{i'i'} + q^{-2}E_{(i + 1)', (i + 1)'} +
                  \sum_{1\leq j\leq n}^{j \neq i,i + 1} (E_{jj} + E_{j'j'}) + E_{n+1,n+1}}
       & \mathrm{if}\ \ 1\leq i<n  \\
    \displaystyle{q^{-2}E_{nn} + E_{n + 1,n+1} + q^{2}E_{n'n'} + \sum_{1\leq j\leq n-1}\left (E_{jj} + E_{j'j'}\right )} & \mathrm{if}\ \ i = n
  \end{cases}.
\end{align*}
\end{prop}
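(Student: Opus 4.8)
The plan is to verify directly that the assignments in Proposition~\ref{prop:1-parameter_rep_B} respect all the defining relations \eqref{eq:R1}--\eqref{eq:R5} of $U_{q,q^{-1}}(\sso_{2n+1})$ specialized at $r=q$, $s=q^{-1}$. Since the statement is explicitly flagged as the special case $r=q,\ s=q^{-1}$ of \cite[Proposition 3.2]{MT1}, the cleanest route is simply to \emph{invoke} that result: one checks that substituting $r=q$ and $s=q^{-1}$ into the matrices of \cite[Proposition 3.2]{MT1} produces exactly the $\rho_q$ above (the only nontrivial specialization being the coefficient $q+q^{-1}=[2]_{q,q^{-1}}$ appearing in $\rho_q(f_n)$, which is the two-parameter $[2]_{r,s}$ evaluated at our parameters), and there is nothing further to prove. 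I would state this as the proof.

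If instead a self-contained verification is wanted, I would proceed relation by relation. First, the Cartan relations \eqref{eq:R1} are immediate since all $\rho_q(\omega_i),\rho_q(\omega_i')$ are diagonal, and $\rho_q(\omega_i)\rho_q(\omega_i')=\Id$ by inspection. Next, \eqref{eq:R2}--\eqref{eq:R3} reduce to comparing, for each pair $i,j$, the diagonal entry of $\rho_q(\omega_i)$ (resp. $\rho_q(\omega_i')$) on the weight line through which $\rho_q(e_j)$ (resp. $\rho_q(f_j)$) maps; here one uses that on the vector-representation weights the scalars $q^{\pm 2}$ bookkeep the values $r^{\langle\alpha_j,\alpha_i\rangle}s^{-\langle\alpha_i,\alpha_j\rangle}$ at $r=q=s^{-1}$, i.e. $q^{\langle\alpha_j,\alpha_i\rangle+\langle\alpha_i,\alpha_j\rangle}=q^{(\alpha_i,\alpha_j)}$, and one checks the short node $i=n$ separately because $d_n=\tfrac12$ there. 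The relation \eqref{eq:R4} is the matrix identity $[\rho_q(e_i),\rho_q(f_j)]=\delta_{ij}\,(\rho_q(\omega_i)-\rho_q(\omega_i'))/(q_i-q_i^{-1})$: for $i\ne j$ the commutator of the two sparse matrices vanishes, for $i=j<n$ it is a direct $2\times2$-block computation in the $\varepsilon_i,\varepsilon_{i+1}$ and dual coordinates, and for $i=j=n$ the factor $(q+q^{-1})$ in $\rho_q(f_n)$ is exactly what is needed so that the commutator produces $q^2E_{nn}-q^{-2}E_{n'n'}$ over $q_n-q_n^{-1}=q-q^{-1}$ (using $q_n=q^{1/2}$... more precisely one keeps track that $d_n=\tfrac12$ so $(q+q^{-1})=[2]$ with respect to the short root).

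The main obstacle in a from-scratch proof is the quantum Serre relations \eqref{eq:R5}, and in particular the ones involving the short node $n$ in type $B_n$, where $a_{n-1,n}=-2$ (or $a_{n,n-1}=-1$) so the relation has degree $2$ or $3$ and the coefficients $\qbinom{1-a_{ij}}{k}_{r_i,s_i}(r_is_i)^{k(k-1)/2}(rs)^{k\langle\alpha_j,\alpha_i\rangle}$ must be specialized carefully. Because $\rho_q$ lands in a small explicit matrix algebra, this is ultimately a finite check: one computes $\rho_q(e_{n-1})^2\rho_q(e_n)$, $\rho_q(e_{n-1})\rho_q(e_n)\rho_q(e_{n-1})$, $\rho_q(e_n)\rho_q(e_{n-1})^2$ (and the triple-node analogue from the other side, and the $f$-versions), and observes the alternating sum collapses. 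I would organize this by noting that the image of $\rho_q$ is contained in the standard matrix realization of $U_{q}(\sso_{2n+1})$ twisted back to the Cartan-doubled $U_{q,q^{-1}}$ via Corollary~\ref{cor:double_cartan_+-_iso}, so the Serre relations for $\rho_q$ follow from the already-known one-parameter ones; this reduces the entire proof to the routine relations \eqref{eq:R1}--\eqref{eq:R4} plus citing \cite[Proposition 3.2]{MT1}. Given the excerpt's framing, I expect the author's proof to be the one-line specialization argument, and I would present exactly that.
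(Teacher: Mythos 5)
Your main argument — that the proposition is simply the $r=q$, $s=q^{-1}$ specialization of \cite[Proposition 3.2]{MT1} — is exactly the paper's stance: the paper introduces the proposition with precisely this remark and offers no further proof, so invoking that result (noting that at $r=q=s^{-1}$ the diagonal conjugation $\psi$ is the identity, so the twisted and untwisted representations coincide) is the intended and correct argument. One small slip in your optional self-contained discussion: with this paper's normalization $(\varepsilon_i,\varepsilon_i)=2$ in type $B_n$, the short root $\alpha_n=\varepsilon_n$ has $d_n=1$, so $q_n=q$ rather than $q^{1/2}$; the factor $(q+q^{-1})$ in $\rho_q(f_n)$ then balances $q_n-q_n^{-1}=q-q^{-1}$ against the coefficient $q^2-q^{-2}$ in $\rho_q(\omega_n)-\rho_q(\omega_n')$, but this is immaterial since the specialization argument is what carries the proof.
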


We can now apply Corollary~\ref{cor:2_parameter_rep} to derive a two-parameter counterpart:

\begin{prop}\label{prop:2-parameter_rep_B}
(a) The following defines a representation $\tilde{\rho}_{r,s}\colon U_{r,s}(\frak{so}_{2n+1}) \to \End(V)$:
\begin{align*}
  \tilde{\rho}_{r,s}(e_{i}) &= \begin{cases}
  (rs)^{1/2}E_{i,i + 1} - (rs)^{-1/2}E_{(i + 1)',i'} & 1\leq i < n \\
  E_{n,n + 1} - E_{n + 1,n'} & i = n
  \end{cases}, \\
  \tilde{\rho}_{r,s}(f_{i}) &=
  \begin{cases}
    (rs)^{-1/2}E_{i + 1, i} - (rs)^{-3/2}E_{i', (i + 1)'} & \mathrm{if}\ \  1\leq i<n \\
    (r^{-1} + s^{-1})E_{n + 1,n} - (r^{-1} + s^{-1})E_{n', n + 1} & \mathrm{if}\ \ i = n
  \end{cases}, \\
  \tilde{\rho}_{r,s}(\omega_{i}) &=
  \begin{cases}
    \displaystyle{r^{2}E_{ii} + s^{2}E_{i + 1, i + 1} + r^{-2}E_{i'i'} + s^{-2}E_{(i + 1)', (i + 1)'} +
                  \sum_{1\leq j\leq n}^{j \neq i,i + 1} (E_{jj}+E_{j'j'}) + E_{n+1,n+1}}
       & \mathrm{if}\ \ 1\leq i<n \\
    \displaystyle{rs^{-1}E_{nn} + E_{n + 1,n+1} + r^{-1}sE_{n'n'} + \sum_{1\leq j\leq n-1}\left (r^{-1}s^{-1}E_{jj} + rsE_{j'j'}\right )} & \mathrm{if}\ \ i = n
  \end{cases}, \\
  \tilde{\rho}_{r,s}(\omega_{i}') &=
  \begin{cases}
    \displaystyle{s^{2}E_{ii} + r^{2}E_{i + 1, i + 1} + s^{-2}E_{i'i'} + r^{-2}E_{(i + 1)', (i + 1)'} +
                  \sum_{1\leq j\leq n}^{j \neq i,i + 1} (E_{jj} + E_{j'j'}) + E_{n+1,n+1}}
       & \mathrm{if}\ \ 1\leq i<n  \\
    \displaystyle{r^{-1}sE_{nn} + E_{n + 1,n+1} + rs^{-1}E_{n'n'} + \sum_{1\leq j\leq n-1}\left (r^{-1}s^{-1}E_{jj} + rsE_{j'j'}\right )} & \mathrm{if}\ \ i = n
  \end{cases}.
\end{align*}

\noindent
(b) Let $\psi\colon V \to V$ be the linear map defined by
\begin{equation}\label{eq:psi-B}
  \psi(v_{i}) = \psi_{i}v_{i} \qquad \text{for all}\qquad 1 \le i \le 2n+1,
\end{equation}
with $\psi_{i}$ given by
\begin{equation}\label{eq:psi_values_B}
  (\psi_{1},\ldots ,\psi_{2n+1})
  = \left( (rs)^{\frac{n-1}{2}},(rs)^{\frac{n - 2}{2}},\ldots ,1,1,1,\ldots , (rs)^{\frac{n-2}{2}},(rs)^{\frac{n-1}{2}} \right).
\end{equation}
Then, the representation $\rho_{r,s}\colon U_{r,s}(\frak{so}_{2n + 1}) \to \End(V)$ of~\cite[Proposition~3.2]{MT1} is related to
$\tilde{\rho}_{r,s}$ of (a) via
\[
  \tilde{\rho}_{r,s}(x) \circ \psi = \psi \circ \rho_{r,s}(x) \qquad \forall\, x \in U_{r,s}(\frak{so}_{2n + 1}).
\]
\end{prop}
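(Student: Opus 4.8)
The plan is to derive both parts directly from the twisting machinery rather than by verifying the defining relations by hand. For part (a), I would start from the one-parameter representation $\rho_q$ of Proposition~\ref{prop:1-parameter_rep_B} and apply Corollary~\ref{cor:2_parameter_rep}. The only input needed is the weight of each basis vector $v_i\in V$ under the $U_{q,q^{-1}}(\frak{so}_{2n+1})$-action: reading off the eigenvalues of $\rho_q(\omega_i),\rho_q(\omega_i')$ identifies $\wt(v_i)=\varepsilon_i$ for $1\le i\le n$, $\wt(v_{n+1})=0$, and $\wt(v_{i'})=-\varepsilon_i$, in the notation of~\eqref{eq:D-system}-style $B_n$ coordinates. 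Then Corollary~\ref{cor:2_parameter_rep} prescribes $e_i\cdot v=\zeta(\alpha_i,\lambda)e_iv$, $f_i\cdot v=(r_is_i)^{-1/2}\zeta(\alpha_i,\lambda)f_iv$, and the Cartan generators act by the scalars $(\omega'_\lambda,\omega_i)$ and $(\omega'_i,\omega_\lambda)^{-1}$, which are exactly the $B_n$-type pairing formulas~\eqref{eq:B-pairing}. So the whole of (a) reduces to evaluating $\zeta(\alpha_i,\lambda)$ on the relevant pairs $\lambda\in\{\varepsilon_j,0,-\varepsilon_j\}$ using~\eqref{eq:zeta_formula}, i.e. $\zeta(\lambda,\mu)=(\omega'_\mu,\omega_\lambda)^{1/2}q^{-\frac12(\lambda,\mu)}$, and substituting $q=(r/s)^{1/2}$; the matrix-unit shifts $E_{i,i+1}\mapsto \zeta(\alpha_i,\varepsilon_{i+1})E_{i,i+1}$ etc. then produce the stated coefficients $(rs)^{\pm1/2}$, $(rs)^{-3/2}$, $r^{-1}+s^{-1}$, and so on.

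For part (b), the claim is that $\tilde\rho_{r,s}$ and the ``reference'' two-parameter representation $\rho_{r,s}$ of~\cite[Proposition~3.2]{MT1} are intertwined by the diagonal map $\psi$. Both are representations on the same space $V$ with the same character (the Cartan actions agree, since in both cases $\omega_i$ acts by $(\omega'_\lambda,\omega_i)$ and $\omega'_i$ by $(\omega'_i,\omega_\lambda)^{-1}$ on the $\lambda$-weight line), so $\psi$ being diagonal is forced, and one only needs to check the conjugation relation on the generators $e_i,f_i$. Concretely, $\tilde\rho_{r,s}(e_i)$ and $\rho_{r,s}(e_i)$ are both supported on the same matrix units $E_{i,i+1},E_{(i+1)',i'}$ (and the $i=n$ analogues), so the intertwining equation $\tilde\rho_{r,s}(e_i)\psi=\psi\rho_{r,s}(e_i)$ becomes, entry by entry, a relation of the form $c^{\sim}_{ab}\psi_b=\psi_a c_{ab}$ where $c^\sim,c$ are the corresponding coefficients; this determines the ratios $\psi_a/\psi_b$, and one checks that~\eqref{eq:psi_values_B} is a consistent (telescoping) solution, with the $f_i$-equations giving the same ratios. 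Since $\psi$ is invertible, the relation then propagates to all of $U_{r,s}(\frak{so}_{2n+1})$ because the generators generate.

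The main obstacle is bookkeeping rather than conceptual: one must (i) correctly read off the weights $\wt(v_i)$ from the $B_n$ Cartan eigenvalues — the subtlety being the short simple root $\alpha_n=\varepsilon_n$ and the normalization $(\varepsilon_i,\varepsilon_i)=2$, which makes $d_n=1$ and affects the half-powers $(r_is_i)^{-1/2}$ at $i=n$ — and (ii) evaluate $\zeta$ via~\eqref{eq:zeta_formula} consistently, keeping track of which argument is ``$\lambda$'' and which is ``$\mu$'' in $\zeta(\alpha_i,\lambda)$ and of the square roots. Once the weights are pinned down, (a) is a routine substitution and (b) is a short linear-algebra consistency check; I would present (a) as a direct corollary and (b) by exhibiting $\psi$ and verifying the two families of generator equations, noting that the Cartan equations hold automatically by the weight-space identification of Corollary~\ref{cor:2_parameter_rep}.
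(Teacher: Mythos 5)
Your proposal matches the paper's proof: part (a) is obtained by applying Corollary~\ref{cor:2_parameter_rep} to the representation of Proposition~\ref{prop:1-parameter_rep_B}, and part (b) is a direct computation verifying the intertwining relation on generators. The extra remarks you make — that the Cartan equations hold automatically once the weights agree, and that the $e_i,f_i$ equations force the telescoping ratios $\psi_a/\psi_b$ — are a sensible way to organize the "easy to verify directly" step, but they do not change the route.
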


\begin{proof}
Part (a) follows by applying Corollary~\ref{cor:2_parameter_rep} to the representation in Proposition~\ref{prop:1-parameter_rep_B},
and part (b) is easy to verify directly.
\end{proof}

Evoking Proposition~\ref{prop:R-matrix-twist}, we can now relate the two-parameter $B$-type $R$-matrix to its one-parameter counterpart.
Let $\hat{R}_{q}\colon V \otimes V \to V \otimes V$ be the one-parameter $R$-matrix, which is given explicitly by
\begin{equation}\label{eq:1_parameter_R_B}
\begin{split}
  \hat{R}_{q} =\,
  & q^{-2} \sum_{1\leq i\leq 2n+1}^{i\ne n+1} E_{ii} \otimes E_{ii} +
    E_{n + 1,n + 1} \otimes E_{n + 1,n+1} +
    q^{2} \sum_{1\leq i\leq 2n+1}^{i\ne n+1} E_{ii'} \otimes E_{i'i}  \\
  & + \sum_{1\leq i,j\leq 2n+1}^{j\ne i,i'} E_{ij} \otimes E_{ji} +
    (q^{2} - q^{-2}) \sum_{1\leq i\leq n}(q^{4(n-i)+2}-1) E_{i'i'} \otimes E_{ii} \\
  & + (q^{-2} - q^{2}) \sum_{i > j}^{j \neq i'} E_{ii} \otimes E_{jj} +
    (q^{2} - q^{-2}) \sum_{i < j}^{j \neq i'} t_{i}(q)t_{j}(q)^{-1}E_{i'j} \otimes E_{ij'},
\end{split}
\end{equation}
where
\begin{equation}
  t_{i}(q) =
  \begin{cases}
    q^{2(n - i) + 1} & \text{if}\ \ i < n + 1 \\
    q & \text{if}\ \ i = n + 1 \\
    q^{2(n +1 - i) + 1} & \text{if}\ \ i > n + 1
  \end{cases}.
\end{equation}

Then, combining Proposition~\ref{prop:R-matrix-twist} and Proposition~\ref{prop:2-parameter_rep_B}(b), the matrix $\hat{R}_{r,s}$ of~\cite[(4.7)]{MT1}
is given by $\hat{R}_{r,s} = (\psi \otimes \psi)^{-1} \circ \xi \circ \hat{R}_{q} \circ \xi^{-1} \circ (\psi \otimes \psi)$, with $\xi = \xi_{V,V}$
of~\eqref{eq:xi_formula} and $\psi$ of~\eqref{eq:psi-B}. We can also use this to obtain a new proof of~\cite[Theorem~4.4]{MT1}. To this end, we note that
\begin{equation}\label{eq:reproof-B}
\begin{split}
  & (\psi \otimes \psi)^{-1} \circ \xi \circ \hat{R}_{q} \circ \xi^{-1} \circ (\psi \otimes \psi) \\
  &= q^{-2} \sum_{1\leq i\leq 2n+1}^{i\ne n+1} E_{ii} \otimes E_{ii} + E_{n + 1,n + 1} \otimes E_{n + 1,n+1} +
    q^{2} \sum_{1\leq i\leq 2n+1}^{i\ne n+1} E_{ii'} \otimes E_{i'i}  \\
  & + \sum_{1\leq i,j\leq 2n+1}^{j\ne i,i'} \zeta(\varepsilon_{i},\varepsilon_{j})^{-1}\zeta(\varepsilon_{j},\varepsilon_{i})E_{ij} \otimes E_{ji} +
    (q^{2} - q^{-2}) \sum_{1\leq i\leq n}(q^{4(n-i)+2}-1) E_{i'i'} \otimes E_{ii} \\
  & + (q^{-2} - q^{2}) \sum_{i > j}^{j \neq i'} E_{ii} \otimes E_{jj} +
    (q^{2} - q^{-2}) \sum_{i < j}^{j \neq i'} t_{i}(q)t_{j}(q)^{-1}\psi_{i}^{-1}\psi_{i'}^{-1}\psi_{j}\psi_{j'}E_{i'j} \otimes E_{ij'},
\end{split}
\end{equation}
where we use the convention $\varepsilon_{i'} = -\varepsilon_{i}$ for $i \le n$ and $\varepsilon_{n +1} = 0$. Setting $q = r^{1/2}s^{-1/2}$,
all but the fourth and last terms immediately recover the corresponding terms in the two-parameter $R$-matrix of~\cite[(4.7)]{MT1}. For the
fourth term, evoking~\eqref{eq:zeta_formula}, we obtain:
\[
  \zeta(\varepsilon_{i},\varepsilon_{j})^{-1}\zeta(\varepsilon_{j},\varepsilon_{i}) =   \zeta(\varepsilon_{i},\varepsilon_{j})^{-2} =
  (\omega_{\varepsilon_{j}}',\omega_{\varepsilon_{i}})^{-1} r^{\frac{1}{2}(\varepsilon_{j},\varepsilon_{i})} s^{-\frac{1}{2}(\varepsilon_{i},\varepsilon_{j})} =
  a_{ij},
\]
where we used $(\varepsilon_{i},\varepsilon_{j}) = 0$ for $j \neq i,i'$ as well as~\cite[(5.33)]{MT1} in the last equality.
We also have
\[
  t_{i}(q)t_{j}(q)^{-1}\psi_{i}^{-1}\psi_{i'}^{-1}\psi_{j}\psi_{j'} =
  \begin{cases}
     q^{2(j - i)}(rs)^{i - j} = s^{2(i - j)} & \text{if}\ i,j < n + 1 \\
     q^{2(n - i)}(rs)^{i - n} = s^{2(i - n)} & \text{if}\ i < n + 1 = j \\
     q^{2(j - i) - 2}(rs)^{j + i - 2n - 2} = r^{2(j - n - 1) - 1}s^{2(i - n) - 1} & \text{if}\ i < n + 1 < j \\
     q^{-2(n +1 - j)}(rs)^{-n-2+j} = r^{2(j - n - 1) - 1}s^{-1} & \text{if}\ i = n + 1 < j \\
     q^{2(j - i)}(rs)^{j - i} = r^{2(j - i)} & \text{if}\ n + 1 < i < j
  \end{cases}.
\]
Thus, the last term in~\eqref{eq:reproof-B} matches the corresponding term of the two-parameter $R$-matrix of~\cite[(4.7)]{MT1}.


\medskip
\noindent
$\bullet$ \textbf{Type $C_{n}$.}

Let $V = \bb{C}^{2n}$, and let $v_{i}$ denote the $i$-th standard basis vector. We write $i' = 2n + 1 - i $ for all $1 \le i \le 2n$.

\begin{prop}\label{prop:1_parameter_rep_C}
The following defines a representation $\rho_{q}\colon U_{q,q^{-1}}(\frak{sp}_{2n})\to \End(V)$:
\begin{align*}
  \rho_{q}(e_{i}) &=
  \begin{cases}
    E_{i,i + 1} - E_{(i + 1)',i'} & \mathrm{if}\ \  1\leq i<n \\
    E_{nn'} & \mathrm{if}\ \  i=n
  \end{cases},  \\
  \rho_{q}(f_{i}) &=
  \begin{cases}
    E_{i+ 1,i} - E_{i',(i + 1)'} & \mathrm{if}\ \  1\leq i<n \\
    E_{n'n} & \mathrm{if}\ \  i=n \\
  \end{cases},  \\
  \rho_{q}(\omega_{i}) &=
  \begin{cases}
    \displaystyle{qE_{ii} + q^{-1}E_{i + 1, i+1} + q^{-1}E_{i'i'} + qE_{(i +1)',(i+1)'} + \sum_{1\leq j\leq n}^{j \neq i,i + 1} \left (E_{jj} + E_{j'j'}\right )}
        & \mathrm{if}\ \  1\leq i<n \\
    \displaystyle{q^{2}E_{nn} + q^{-2}E_{n'n'} + \sum_{1\leq j\leq n-1}\left (E_{jj} + E_{j'j'}\right )}
        & \mathrm{if}\ \ i = n
  \end{cases},  \\
  \rho_{q}(\omega'_{i}) &=
  \begin{cases}
    \displaystyle{q^{-1}E_{ii} + qE_{i + 1, i+1} + qE_{i'i'} + q^{-1}E_{(i +1)',(i+1)'} +
                  \sum_{1\leq j\leq n}^{j \neq i,i + 1} \left (E_{jj} + E_{j'j'}\right )}
       & \mathrm{if}\ \  1\leq i<n  \\
    \displaystyle{q^{-2}E_{nn} + q^{2}E_{n'n'} + \sum_{1\leq j\leq n-1}\left (E_{jj} + E_{j'j'}\right )}
      & \mathrm{if}\ \  i = n
  \end{cases}.
\end{align*}
\end{prop}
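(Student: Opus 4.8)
The plan is to reduce the claim to the standard $2n$-dimensional ``vector'' representation of the \emph{one}-parameter quantum group $U_q(\ssp_{2n})$, pulled back along the Hopf algebra surjection $\pi\colon U_{q,q^{-1}}(\ssp_{2n}) \twoheadrightarrow U_q(\ssp_{2n})$ of~\eqref{eq:Uq_quotient}. The key observation is that the diagonal matrices listed above satisfy $\rho_q(\omega_i)\,\rho_q(\omega_i') = \Id_V$ for every $i$ (just multiply them entrywise), so the prescribed $\rho_q(\omega_i')$ is forced to equal $\rho_q(\omega_i)^{-1}$. Hence it suffices to produce a representation $\bar\rho_q\colon U_q(\ssp_{2n}) \to \End(V)$ with $\bar\rho_q(E_i) = \rho_q(e_i)$, $\bar\rho_q(F_i) = \rho_q(f_i)$, and $\bar\rho_q(K_i^{\pm 1}) = \rho_q(\omega_i)^{\pm 1}$: then $\rho_q := \bar\rho_q \circ \pi$ is automatically an algebra homomorphism $U_{q,q^{-1}}(\ssp_{2n}) \to \End(V)$ taking the prescribed values on the generators.

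To identify $\bar\rho_q$, I would assign to the basis vector $v_k$ the weight $\varepsilon_k$ for $k \le n$ and $-\varepsilon_{2n+1-k}$ for $k > n$; then $\rho_q(\omega_i)$ acts on $v_k$ by $q^{(\alpha_i,\wt(v_k))}$, and $\rho_q(e_i),\rho_q(f_i)$ are the usual raising and lowering operators on this weight basis (with the single long-root entries $E_{nn'}$, $E_{n'n}$ at node $n$), so that $\bar\rho_q$ is exactly the vector representation of $U_q(\ssp_{2n})$, whose existence is classical (see e.g.~\cite{J}). Alternatively, one verifies the defining relations~\eqref{eq:R1}--\eqref{eq:R5} directly: \eqref{eq:R1} is automatic since all $\rho_q(\omega_i),\rho_q(\omega_i')$ are diagonal and invertible; \eqref{eq:R2}--\eqref{eq:R3} amount to comparing diagonal entries on the index blocks $\{i,i+1,i',(i+1)'\}$ for $i<n$ and $\{n,n'\}$; \eqref{eq:R4} is a short bracket of elementary matrices, vanishing for $i\ne j$ and yielding, e.g., $[\rho_q(e_n),\rho_q(f_n)] = E_{nn}-E_{n'n'} = \tfrac{\rho_q(\omega_n)-\rho_q(\omega_n')}{q^2-q^{-2}}$; and~\eqref{eq:R5}, after substituting $r=q$, $s=q^{-1}$ (which collapses the factors $(r_is_i)^{\frac{1}{2}k(k-1)}$ and $(rs)^{k\langle\alpha_j,\alpha_i\rangle}$ to $1$), becomes the ordinary quantum Serre relation, checked by the routine $\ssl_2$-by-$\ssl_2$ computation on the relevant blocks.

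The only point that genuinely requires attention — the main obstacle, such as it is — is the long simple root $\alpha_n = 2\varepsilon_n$, for which $d_n = 2$: one must keep track of the $q^{2}$-powers in $\rho_q(\omega_n)$, $\rho_q(\omega_n')$, of the normalization $\tfrac{1}{r_n-s_n} = \tfrac{1}{q^2-q^{-2}}$ in~\eqref{eq:R4}, and of the $q$-binomials $\qbinom{3}{k}_{q,q^{-1}}$ and $\qbinom{2}{k}_{q^2,q^{-2}}$ in the two Serre relations attached to the asymmetric pair $\{n-1,n\}$ (with $a_{n-1,n}=-2$, $a_{n,n-1}=-1$), making sure they assemble correctly. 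This is handled exactly as in the $B_n$ case recalled before Proposition~\ref{prop:1-parameter_rep_B}; all the remaining checks are routine bookkeeping with elementary matrices.
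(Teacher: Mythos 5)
Your proof is correct, and it takes a genuinely different route from the paper. The paper does not verify the relations from scratch: it simply takes the $r=q$, $s=q^{-1}$ specialization of the two-parameter result~\cite[Proposition 3.3]{MT1} (exactly as it does for the $B_n$ case before Proposition~\ref{prop:1-parameter_rep_B}, where this is stated explicitly). Your argument is instead self-contained: you observe that the prescribed matrices satisfy $\rho_q(\omega_i)\rho_q(\omega_i') = \mathrm{Id}_V$, so the whole representation factors through the surjection $\pi\colon U_{q,q^{-1}}(\frak{sp}_{2n}) \twoheadrightarrow U_q(\frak{sp}_{2n})$ of~\eqref{eq:Uq_quotient}, and then identify the pulled-back module with the classical vector representation of $U_q(\frak{sp}_{2n})$. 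This is a cleaner reduction when one already trusts the one-parameter vector representation, whereas the paper's specialization lets it reuse the two-parameter checks already carried out in~\cite{MT1} and is more uniform with the rest of the section. Your auxiliary checks are accurate: $r_is_i = 1$ collapses the extra $(r,s)$-Serre factors to $1$, the $[e_n,f_n]$ computation with $r_n - s_n = q^2 - q^{-2}$ is right, and the observation $\rho_q(\omega_i') = \rho_q(\omega_i)^{-1}$ is exactly what makes the factorization through $\pi$ legitimate. The only thing to be slightly careful about (and which your fallback direct verification covers) is that the sign conventions in your $\rho_q(e_i)$, $\rho_q(f_i)$ for $i<n$ match the particular realization of the vector representation you cite, since such signs vary across references.
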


Similarly to the above treatment of $B$-type, we can use Corollary~\ref{cor:2_parameter_rep} to prove part (a) of the following result,
while part (b) is just a straightforward calculation:

\begin{prop}\label{prop:2_parameter_rep_C}
(a) The following defines a representation $\tilde{\rho}_{r,s}\colon U_{r,s}(\frak{sp}_{2n}) \to \End(V)$:
\begin{align*}
  \tilde{\rho}_{r,s}(e_{i}) &=
  \begin{cases}
    (rs)^{1/4}E_{i,i + 1} - (rs)^{-1/4}E_{(i + 1)',i'} & \mathrm{if}\ \  1\leq i<n \\
    E_{nn'} & \mathrm{if}\ \  i=n
  \end{cases},  \\
  \tilde{\rho}_{r,s}(f_{i}) &=
  \begin{cases}
    (rs)^{-1/4}E_{i+ 1,i} - (rs)^{-3/4}E_{i',(i + 1)'} & \mathrm{if}\ \  1\leq i<n \\
    (rs)^{-1}E_{n'n} & \mathrm{if}\ \  i=n \\
  \end{cases},  \\
  \tilde{\rho}_{r,s}(\omega_{i}) &=
  \begin{cases}
    \displaystyle{rE_{ii} + sE_{i + 1, i+1} + r^{-1}E_{i'i'} + s^{-1}E_{(i +1)',(i+1)'} + \sum_{1\leq j\leq n}^{j \neq i,i + 1} \left (E_{jj} + E_{j'j'}\right )}
        & \mathrm{if}\ \  1\leq i<n \\
    \displaystyle{rs^{-1}E_{nn} + r^{-1}sE_{n'n'} + \sum_{1\leq j\leq n-1}\left (r^{-1}s^{-1}E_{jj} + rsE_{j'j'}\right )}
        & \mathrm{if}\ \ i = n
  \end{cases},  \\
  \tilde{\rho}_{r,s}(\omega'_{i}) &=
  \begin{cases}
    \displaystyle{sE_{ii} + rE_{i + 1, i+1} + s^{-1}E_{i'i'} + r^{-1}E_{(i +1)',(i+1)'} +
                  \sum_{1\leq j\leq n}^{j \neq i,i + 1} \left (E_{jj} + E_{j'j'}\right )}
       & \mathrm{if}\ \  1\leq i<n \\
    \displaystyle{r^{-1}sE_{nn} + rs^{-1}E_{n'n'} + \sum_{1\leq j\leq n-1}\left (r^{-1}s^{-1}E_{jj} + rsE_{j'j'}\right )}
      & \mathrm{if}\ \  i = n
  \end{cases}.
\end{align*}

\noindent
(b) Let $\psi\colon V \to V$ be the linear map defined by
\begin{equation}\label{eq:psi-C}
  \psi(v_{i}) = \psi_{i}v_{i} \qquad \text{for all}\qquad 1 \le i \le 2n,
\end{equation}
with $\psi_{i}$ given by
\begin{equation}\label{eq:psi_values_C}
  (\psi_{1},\ldots ,\psi_{2n})
  = \left( (rs)^{\frac{n-1}{4}},(rs)^{\frac{n-2}{4}},\ldots ,1,1,\ldots ,(rs)^{\frac{n-2}{4}},(rs)^{\frac{n-1}{4}} \right).
\end{equation}
Then, the representation $\rho_{r,s}\colon U_{r,s}(\frak{sp}_{2n}) \to \End(V)$ of~\cite[Proposition~3.3]{MT1} is related to
$\tilde{\rho}_{r,s}$ of (a) via
\[
  \tilde{\rho}_{r,s}(x) \circ \psi = \psi \circ \rho_{r,s}(x) \qquad \forall\, x \in U_{r,s}(\frak{sp}_{2n}).
\]
\end{prop}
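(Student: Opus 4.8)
The plan is to obtain part (a) as an essentially mechanical consequence of Corollary~\ref{cor:2_parameter_rep} applied to the one-parameter representation $\rho_q$ of Proposition~\ref{prop:1_parameter_rep_C}, and then to deduce part (b) by a diagonal conjugation. First I would record the weights of the standard basis of $V=\bb{C}^{2n}$: comparing the diagonal matrices $\rho_q(\omega_i),\rho_q(\omega_i')$ against the $C$-type pairing formulas~\eqref{eq:C-pairing} specialized at $r=q,\,s=q^{-1}$ shows that $v_i$ has weight $\varepsilon_i$ and $v_{i'}$ has weight $-\varepsilon_i$ for $1\le i\le n$, under the convention $\varepsilon_{i'}=-\varepsilon_i$. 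Then Corollary~\ref{cor:2_parameter_rep} dictates the twisted action: whenever a matrix unit $E_{ab}$ (sending $v_b\mapsto v_a$) appears in $\rho_q(e_i)$ it is rescaled by $\zeta(\alpha_i,\mathrm{wt}(v_b))$, whenever it appears in $\rho_q(f_i)$ it is rescaled by $(r_is_i)^{-1/2}\zeta(\alpha_i,\mathrm{wt}(v_b))$, and $\omega_i,\omega_i'$ act on $v_b$ by the scalars $(\omega_{\mathrm{wt}(v_b)}',\omega_i)$ and $(\omega_i',\omega_{\mathrm{wt}(v_b)})^{-1}$, which are read off directly from~\eqref{eq:C-pairing}.

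Next I would evaluate the $\zeta$-factors using~\eqref{eq:zeta_formula}, i.e.\ $\zeta(\lambda,\mu)=(\omega_\mu',\omega_\lambda)^{1/2}q^{-(\lambda,\mu)/2}$ with $q=r^{1/2}s^{-1/2}$, combined again with~\eqref{eq:C-pairing}. For $1\le i<n$ and the only relevant indices $j\in\{i,i+1\}$ this yields $\zeta(\alpha_i,\pm\varepsilon_j)=(rs)^{\pm 1/4}$, while for the long node $\zeta(\alpha_n,\pm\varepsilon_n)=1$ since $\varepsilon_n=\tfrac12\alpha_n$ and $\zeta(\alpha_n,\alpha_n)=1$. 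Together with $(r_is_i)^{-1/2}=(rs)^{-1/2}$ for $i<n$ and $(r_ns_n)^{-1/2}=(rs)^{-1}$ (as $d_n=2$), these substitutions reproduce verbatim the matrices displayed in (a); the assertion that $\tilde{\rho}_{r,s}$ is a representation then needs no separate argument, being guaranteed by Corollary~\ref{cor:2_parameter_rep}.

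For part (b), the point is that $\psi$ is diagonal, so $\psi^{-1}E_{ab}\psi=(\psi_b/\psi_a)E_{ab}$ and in particular $\psi$ commutes with the Cartan operators; thus $\psi^{-1}\tilde{\rho}_{r,s}(\omega_i)\psi=\tilde{\rho}_{r,s}(\omega_i)$ and likewise for $\omega_i'$, and these coincide with $\rho_{r,s}(\omega_i),\rho_{r,s}(\omega_i')$ of~\cite[Proposition~3.3]{MT1} since both merely encode the same weights $\pm\varepsilon_j$. For $e_i$ and $f_i$ one writes $\tilde{\rho}_{r,s}(e_i)=\sum_{a,b}c_{ab}E_{ab}$ from (a), so that $\psi^{-1}\tilde{\rho}_{r,s}(e_i)\psi=\sum_{a,b}c_{ab}(\psi_b/\psi_a)E_{ab}$; feeding in $\psi_k=(rs)^{(n-k)/4}$ for $k\le n$ and $\psi_k=\psi_{2n+1-k}$ in general gives $\psi_{i+1}/\psi_i=(rs)^{-1/4}$ and $\psi_{i'}/\psi_{(i+1)'}=(rs)^{1/4}$, which cancels the $(rs)^{\pm1/4}$ prefactors produced by the twist and turns $\psi^{-1}\tilde{\rho}_{r,s}(e_i)\psi$ into $\rho_{r,s}(e_i)$; the $f_i$ case is identical. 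This is precisely the asserted equality $\tilde{\rho}_{r,s}(x)\circ\psi=\psi\circ\rho_{r,s}(x)$ on generators, hence on all of $U_{r,s}(\ssp_{2n})$.

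I expect no conceptual obstacle here; the only real danger is bookkeeping. One must keep straight the two conventions that collide at the long node $i=n$: the involution $k\mapsto k'=2n+1-k$ together with the sign rule $\varepsilon_{k'}=-\varepsilon_k$, and the meaning of $\lambda_n$ in~\eqref{eq:C-pairing}, namely the coefficient of $\alpha_n$ in the simple-root expansion of $\lambda$ (so that $\lambda_n=\tfrac12$ when $\lambda=\pm\varepsilon_n$, rather than the $\varepsilon_n$-coordinate), since a misreading of either flips exponents in $\tilde{\rho}_{r,s}(\omega_n),\tilde{\rho}_{r,s}(\omega_n')$. I would therefore verify the $i=n$ column of the tables by hand before writing out the general case.
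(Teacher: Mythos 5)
Your proposal is correct and takes the same approach as the paper: part (a) is Corollary~\ref{cor:2_parameter_rep} applied to Proposition~\ref{prop:1_parameter_rep_C}, and part (b) is the direct diagonal-conjugation check you outline. The paper dispenses with the entire proof in a single line (``Similarly to the above treatment of $B$-type, we can use Corollary~\ref{cor:2_parameter_rep} to prove part (a)\ldots{} while part (b) is just a straightforward calculation''), so your write-up supplies exactly the detail that sentence elides, and your bookkeeping of the $\zeta$-factors, the $(r_is_i)^{-1/2}$ prefactors, and the ratios $\psi_b/\psi_a$ all check out. One trivial slip in your cautionary remark: for $\lambda = \pm\varepsilon_n$ the coefficient $\lambda_n$ in the $\alpha_n$-expansion is $\pm\tfrac12$, not $\tfrac12$ in both cases; this does not affect your argument since you applied the sign correctly in the computation itself.
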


Now, let $\hat{R}_{q}\colon V \otimes V \to V \otimes V$ be the one-parameter $R$-matrix, which is given explicitly by
\begin{equation}\label{eq:1_parameter_R_C}
\begin{split}
  \hat{R}_{q} =\,
  & q^{-1}\sum_{1\leq i\leq 2n} E_{ii} \otimes E_{ii} + q\sum_{1\leq i\leq 2n} E_{ii'} \otimes E_{i'i} \\
  & + \sum_{1\leq i,j\leq 2n}^{j \ne i,i'} E_{ij} \otimes E_{ji}
    + (q^{-1} - q)\sum_{1\leq i\leq n} (q^{2(n+ 1 - i)} + 1)E_{i'i'} \otimes E_{ii} \\
  & + (q^{-1} - q)\sum_{i > j}^{j \neq i'} E_{ii} \otimes E_{jj}
    + (q - q^{-1})\sum_{i < j}^{j \neq i'} t_{i}(q)t_{j}(q)^{-1}E_{i'j} \otimes E_{ij'} \,,
\end{split}
\end{equation}
where
\begin{equation*}
  t_{i}(q) = \
  \begin{cases}
    q^{n + 1 - i} & \text{if}\ \ i \le n \\
    -q^{n - i} & \text{if}\ \ i > n
  \end{cases}.
\end{equation*}

Then as for type $B_{n}$, we have $\hat{R}_{r,s} = (\psi \otimes \psi)^{-1} \circ \xi \circ \hat{R}_{q} \circ \xi^{-1} \circ (\psi \otimes \psi)$,
where $\xi$ denotes the isomorphism $\xi_{V,V}$ of~\eqref{eq:xi_formula}, $\psi$ is the isomorphism of~\eqref{eq:psi-C}, and $\hat{R}_{r,s}$ denotes
the matrix $\hat{R}$ of~\cite[(4.9)]{MT1}. We can also use this to obtain a new proof of~\cite[Theorem~4.5]{MT1}. To this end, we note that
\begin{align*}
  (\psi \otimes \psi)^{-1} \circ \xi & \circ \hat{R}_{q} \circ \xi^{-1} \circ (\psi \otimes \psi) \\
  =\, & q^{-1}\sum_{1\leq i\leq 2n} E_{ii} \otimes E_{ii} +
    q\sum_{1\leq i\leq 2n} E_{ii'} \otimes E_{i'i} \\
  & + \sum_{1\leq i,j\leq 2n}^{j \ne i,i'} \zeta(\varepsilon_{i},\varepsilon_{j})^{-1}\zeta(\varepsilon_{j},\varepsilon_{i})E_{ij} \otimes E_{ji}
    + (q^{-1} - q)\sum_{1\leq i\leq n} (q^{2(n+ 1 - i)} + 1)E_{i'i'} \otimes E_{ii} \\
  & + (q^{-1} - q)\sum_{i > j}^{j \neq i'} E_{ii} \otimes E_{jj}
    + (q - q^{-1})\sum_{i < j}^{j \neq i'} t_{i}(q)t_{j}(q)^{-1}\psi_{i}^{-1}\psi_{i'}^{-1}\psi_{j}\psi_{j'}E_{i'j} \otimes E_{ij'},
\end{align*}
with the convention $\varepsilon_{i'} = -\varepsilon_{i}$ for $1 \le i \le n$. As in $B$-type,
$\zeta(\varepsilon_{i},\varepsilon_{j})^{-1}\zeta(\varepsilon_{j},\varepsilon_{i}) = a_{ij}$ for $j\ne i,i'$, and
\[
  t_{i}(q)t_{j}(q)^{-1}\psi_{i}^{-1}\psi_{i'}^{-1}\psi_{j}\psi_{j'} =
  \begin{cases}
    q^{j - i}(rs)^{\frac{1}{2}(i - j)} = s^{i - j} & \text{if}\ i < j \le n \\
    - q^{j - i +1}(rs)^{\frac{1}{2}(i + j - 2n - 1)} = -r^{j - n}s^{i - n - 1} & \text{if}\ i \le n < j \\
    q^{j - i}(rs)^{\frac{1}{2}(j - i)} = r^{j - i} & \text{if}\ n < i < j
  \end{cases}.
\]
Thus, the resulting matrix exactly matches the corresponding two-parameter $R$-matrix of~\cite[(4.9)]{MT1}.


\medskip
\noindent
$\bullet$ \textbf{Type $D_{n}$.}

Let $V = \bb{C}^{2n}$, and let $v_{i}$ denote the $i$-th standard basis vector. We write $i' = 2n + 1 - i $ for all $1 \le i \le 2n$.

\begin{prop}\label{prop:1_parameter_rep_D}
For $n \ge 3$, the following defines a representation $\rho_{q}\colon U_{q,q^{-1}}(\frak{so}_{2n}) \to \End(V)$:
\begin{align*}
  \rho_{q}(e_{i}) &=
  \begin{cases}
    E_{i,i+1} - E_{(i + 1)',i'} & \mathrm{if}\ \  1\leq i<n \\
    E_{n - 1, n'} - E_{n,(n-1)'} & \mathrm{if}\ \  i=n \\
  \end{cases},  \\
  \rho_{q}(f_{i}) &=
  \begin{cases}
    E_{i + 1,i} - E_{i', (i + 1)'} & \mathrm{if}\ \  1\leq i<n \\
    E_{n', n-1} - E_{(n-1)',n} & \mathrm{if}\ \  i=n \\
  \end{cases},  \\
  \rho_{q}(\omega_{i}) &=
  \begin{cases}
    \displaystyle{qE_{ii} + q^{-1}E_{i + 1,i + 1} + q^{-1}E_{i'i'} + qE_{(i + 1)', (i+1)'} +
                  \sum_{1\leq j\leq n}^{j \neq i,i + 1} \left (E_{jj} + E_{j'j'}\right )}
      & \mathrm{if}\ \  1\leq i<n \\
    \displaystyle{qE_{n-1,n-1} + qE_{nn} + q^{-1}E_{(n-1)',(n-1)'} + q^{-1}E_{n'n'} + \sum_{1\leq j\leq n-2}\left (E_{jj} + E_{j'j'}\right )}
      & \mathrm{if}\ \ i = n \\
  \end{cases}, \\
  \rho_{q}(\omega'_{i}) &=
  \begin{cases}
    \displaystyle{q^{-1}E_{ii} + qE_{i + 1,i + 1} + qE_{i'i'} + q^{-1}E_{(i + 1)',(i + 1)'} +
                  \sum_{1\leq j\leq n}^{j \neq i,i + 1} \left (E_{jj} + E_{j'j'}\right )}
      & \mathrm{if}\ \  1\leq i<n \\
    \displaystyle{q^{-1}E_{n-1,n-1} + q^{-1}E_{nn} + qE_{(n-1)',(n-1)'} + qE_{n'n'} + \sum_{1\leq j\leq n-2}\left (E_{jj} + E_{j'j'}\right )}
      & \mathrm{if}\ \ i = n \\
  \end{cases}.	
\end{align*}
\end{prop}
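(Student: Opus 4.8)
The plan is to deduce this from the (well-known) vector representation of the one-parameter quantum group $U_q(\frak{so}_{2n})$. Reading off the displayed diagonal matrices, one first checks that $\rho_q(\omega_i)\rho_q(\omega_i')=\Id$ for every $1\le i\le n$: for $1\le i<n$ the matrices $\rho_q(\omega_i)$ and $\rho_q(\omega_i')$ carry reciprocal entries in the four positions $i,i+1,i',(i+1)'$ and entry $1$ elsewhere, and at $i=n$ the reciprocal entries sit in positions $n-1,n,(n-1)',n'$. Hence the assignment $E_i\mapsto\rho_q(e_i)$, $F_i\mapsto\rho_q(f_i)$, $K_i^{\pm1}\mapsto\rho_q(\omega_i)^{\pm1}$ is consistent, and once it is shown to extend to an algebra homomorphism $\bar\rho_q\colon U_q(\frak{so}_{2n})\to\End(V)$, the composite $\rho_q:=\bar\rho_q\circ\pi$ is automatically a representation of $U_{q,q^{-1}}(\frak{so}_{2n})$ agreeing with the displayed formulas on all generators, since $\pi$ of~\eqref{eq:Uq_quotient} is an algebra homomorphism. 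So everything reduces to recognizing $\bar\rho_q$ as the standard $\BC^{2n}$-representation of $U_q(\frak{so}_{2n})$ in its FRT realization (with $\frak{so}_{2n}$ taken relative to the antidiagonal symmetric form and $i'=2n+1-i$); this may be quoted from the literature, or else one notes that $\rho_q$ is simply the $r=q$, $s=q^{-1}$ specialization of the two-parameter vector representation of $U_{r,s}(\frak{so}_{2n})$ of~\cite{MT1}, exactly as for the comments preceding Propositions~\ref{prop:1-parameter_rep_B} and~\ref{prop:1_parameter_rep_C}.

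For a self-contained verification I would check the relations of $U_q(\frak{so}_{2n})$ for $\bar\rho_q$ in order. The relations~\eqref{eq:R1} are immediate as all $\rho_q(\omega_i)$ are invertible and diagonal. The relations~\eqref{eq:R2}--\eqref{eq:R3} become, at $r=q=s^{-1}$, the standard $K_iE_jK_i^{-1}=q^{(\alpha_i,\alpha_j)}E_j$ and $K_iF_jK_i^{-1}=q^{-(\alpha_i,\alpha_j)}F_j$ (using $(\alpha_i,\alpha_j)=\langle\alpha_i,\alpha_j\rangle+\langle\alpha_j,\alpha_i\rangle$), so the $D_n$-specific asymmetry $\langle\alpha_{n-1},\alpha_n\rangle=-1$, $\langle\alpha_n,\alpha_{n-1}\rangle=1$ plays no role here (only the symmetric combination enters, and $(\alpha_{n-1},\alpha_n)=0$); these just record how a diagonal matrix rescales the one or two off-diagonal matrix units forming $\rho_q(e_j)$, $\rho_q(f_j)$. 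For~\eqref{eq:R4} one computes $\rho_q(e_i)\rho_q(f_i)-\rho_q(f_i)\rho_q(e_i)$ with matrix units: a couple of lines, equal at $i=n$ to $E_{n-1,n-1}+E_{nn}-E_{(n-1)',(n-1)'}-E_{n'n'}$, which is exactly $\frac{\rho_q(\omega_n)-\rho_q(\omega_n')}{q-q^{-1}}$; and $\rho_q(e_i)\rho_q(f_j)=\rho_q(f_j)\rho_q(e_i)$ for $i\ne j$ by disjointness of supports.

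The only step requiring genuine care is the quantum $(q,q^{-1})$-Serre relations~\eqref{eq:R5}, and even there simply-lacedness collapses things: all $d_i=1$, hence $r_i=q$, $s_i=q^{-1}$, and $rs=q\cdot q^{-1}=1$, so the scalar factor $(r_is_i)^{\frac12 k(k-1)}(rs)^{k\langle\alpha_j,\alpha_i\rangle}$ in~\eqref{eq:R5} is identically $1$ and $\qbinom{m}{k}_{r_i,s_i}$ is the ordinary Gaussian binomial, so~\eqref{eq:R5} becomes the usual quantum Serre relations of $U_q(\frak{so}_{2n})$. For non-adjacent nodes $a_{ij}=0$ and the relation $\rho_q(e_i)\rho_q(e_j)=\rho_q(e_j)\rho_q(e_i)$ is clear from disjoint supports; for adjacent nodes $a_{ij}=-1$ and one needs $\rho_q(e_i)^2\rho_q(e_j)-(q+q^{-1})\rho_q(e_i)\rho_q(e_j)\rho_q(e_i)+\rho_q(e_j)\rho_q(e_i)^2=0$ (and likewise for the $f$'s). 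Here a short computation shows every $\rho_q(e_i)$ (and $\rho_q(f_i)$) is square-zero on $\BC^{2n}$, so each such relation reduces to verifying that the single triple product $\rho_q(e_i)\rho_q(e_j)\rho_q(e_i)$ vanishes; I expect the only place where this needs a moment's thought is the fork of the Dynkin diagram, i.e.\ the triple $n-2,n-1,n$ — for which the hypothesis $n\ge3$ ensures $n-2$ exists and is adjacent to both $n-1$ and $n$ — but even there the triple products drop out immediately. In sum there is no real obstacle: the proof is a bounded, entirely computational check on $\BC^{2n}$, with the Serre relations as the most delicate ingredient.
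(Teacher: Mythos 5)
Your proposal is correct, and your primary route — specializing the two-parameter representation of~\cite[Proposition~3.4]{MT1} at $r=q$, $s=q^{-1}$ — is exactly what the paper intends (the paper makes this explicit for type $B_n$ right before Proposition~\ref{prop:1-parameter_rep_B} and silently relies on the same pattern for $C_n$ and $D_n$; note that the diagonal change-of-basis map $\psi$ of~\eqref{eq:psi_values_D} trivializes at $rs=1$, so there is genuinely no discrepancy between $\rho_{q,q^{-1}}$ and $\wtd\rho_{q,q^{-1}}$).

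Your alternative route, via factoring through $\pi\colon U_{q,q^{-1}}(\frak{so}_{2n})\to U_q(\frak{so}_{2n})$ and quoting the standard vector representation, is a valid and genuinely different framing. It relies on the observation that $\rho_q(\omega_i)\rho_q(\omega_i')=\Id$ for all $i$, which you correctly check, and it has the advantage of needing no citation to the two-parameter literature — only to the textbook $U_q(\frak{so}_{2n})$-representation and to the fact that $\pi$ of~\eqref{eq:Uq_quotient} is an algebra homomorphism. The cost is that one must match up conventions (antidiagonal form, normalization of $E_n,F_n$) with whichever textbook source is cited; the paper avoids that bookkeeping by staying inside the conventions of~\cite{MT1}. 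Your self-contained sketch of the relation checks (in particular the square-zero observation for $\rho_q(e_i)$ and $\rho_q(f_i)$, which reduces the $a_{ij}=-1$ Serre relations to the vanishing of a single triple product) is also sound, including the remark that $n\ge 3$ is what guarantees node $n-2$ is available at the fork.
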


As in the previous cases, applying Corollary~\ref{cor:2_parameter_rep} to this result, we obtain:

\begin{prop}\label{prop:2_parameter_rep_D}
(a) The following defines a representation $\tilde{\rho}_{r,s}\colon U_{r,s}(\frak{so}_{2n}) \to \End(V)$:
\begin{align*}
  \tilde{\rho}_{r,s}(e_{i}) &=
  \begin{cases}
    (rs)^{1/4}E_{i,i+1} - (rs)^{-1/4}E_{(i + 1)',i'} & \mathrm{if}\ \  1\leq i<n \\
   (rs)^{-1/4}E_{n - 1, n'} - (rs)^{1/4}E_{n,(n-1)'} & \mathrm{if}\ \  i=n \\
  \end{cases},  \\
  \tilde{\rho}_{r,s}(f_{i}) &=
  \begin{cases}
    (rs)^{-1/4}E_{i + 1,i} - (rs)^{-3/4}E_{i', (i + 1)'} & \mathrm{if}\ \  1\leq i<n \\
    (rs)^{-3/4}E_{n', n-1} - (rs)^{-1/4}E_{(n-1)',n} & \mathrm{if}\ \  i=n \\
  \end{cases},  \\
  \tilde{\rho}_{r,s}(\omega_{i}) &=
  \begin{cases}
    \displaystyle{rE_{ii} + sE_{i + 1,i + 1} + r^{-1}E_{i'i'} + s^{-1}E_{(i + 1)', (i+1)'} +
                  \sum_{1\leq j\leq n}^{j \neq i,i + 1} \left (E_{jj} + E_{j'j'}\right )}
      & \mathrm{if}\ \  1\leq i<n  \\
    \displaystyle{s^{-1}E_{n-1,n-1} + rE_{nn} + sE_{(n-1)',(n-1)'} + r^{-1}E_{n'n'} + \sum_{1\leq j\leq n-2}\left (r^{-1}s^{-1}E_{jj} + rsE_{j'j'}\right )}
      & \mathrm{if}\ \ i = n \\
  \end{cases},  \\
  \tilde{\rho}_{r,s}(\omega'_{i}) &=
  \begin{cases}
    \displaystyle{sE_{ii} + rE_{i + 1,i + 1} + s^{-1}E_{i'i'} + r^{-1}E_{(i + 1)',(i + 1)'} +
                  \sum_{1\leq j\leq n}^{j \neq i,i + 1} \left (E_{jj} + E_{j'j'}\right )}
      & \mathrm{if}\ \  1\leq i<n  \\
    \displaystyle{r^{-1}E_{n-1,n-1} + sE_{nn} + rE_{(n-1)',(n-1)'} + s^{-1}E_{n'n'} + \sum_{1\leq j\leq n-2}\left (r^{-1}s^{-1}E_{jj} + rsE_{j'j'}\right )}
      & \mathrm{if}\ \ i = n \\
  \end{cases}.	
\end{align*}

\noindent
(b) The map $\psi\colon V \to V$ defined by
\begin{equation}\label{eq:psi-D}
  \psi(v_{i}) = \psi_{i}v_{i} \qquad \text{for all}\qquad  1 \le i \le 2n,
\end{equation}
with $\psi_{i}$ given by
\begin{equation}\label{eq:psi_values_D}
  (\psi_{1},\ldots ,\psi_{2n}) =
  \left( (rs)^{\frac{n-1}{4}},(rs)^{\frac{n-2}{4}},\ldots , 1,(rs)^{-\frac{1}{2}},\ldots ,(rs)^{\frac{n-4}{4}},(rs)^{\frac{n-3}{4}} \right).
\end{equation}
Then, the representation $\rho_{r,s}\colon U_{r,s}(\frak{so}_{2n}) \to \End(V)$ of~\cite[Proposition~3.4]{MT1} is related to
$\tilde{\rho}_{r,s}$ of (a) via
\[
  \tilde{\rho}_{r,s}(x) \circ \psi = \psi \circ \rho_{r,s}(x) \qquad \forall\, x \in U_{r,s}(\frak{so}_{2n}).
\]
\end{prop}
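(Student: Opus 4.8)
The plan is to repeat \emph{verbatim} the strategy used for $B$- and $C$-types (the proofs of Propositions~\ref{prop:2-parameter_rep_B} and~\ref{prop:2_parameter_rep_C}): part~(a) follows by applying Corollary~\ref{cor:2_parameter_rep} to the one-parameter representation $\rho_q$ of Proposition~\ref{prop:1_parameter_rep_D}, and part~(b) follows by conjugating the representation $\rho_{r,s}$ of~\cite[Proposition~3.4]{MT1} by the diagonal operator $\psi$ and matching matrix entries.

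For part~(a), I would first record the weights of the basis vectors of $V=\bb{C}^{2n}$ under $\rho_q$: reading off the diagonal matrices $\rho_q(\omega_i),\rho_q(\omega_i')$, one gets $\mathrm{wt}(v_j)=\varepsilon_j$ for $1\le j\le n$ and $\mathrm{wt}(v_j)=-\varepsilon_{2n+1-j}$ for $n+1\le j\le 2n$, i.e.\ uniformly $\mathrm{wt}(v_j)=\varepsilon_j$ with the convention $\varepsilon_{j'}=-\varepsilon_j$ (and all $\varepsilon_j\in P$). Corollary~\ref{cor:2_parameter_rep} then gives the $U_{r,s}(\frak{so}_{2n})$-action by $e_i\cdot v=\zeta(\alpha_i,\mathrm{wt}(v))\,\rho_q(e_i)v$, $f_i\cdot v=(r_is_i)^{-1/2}\zeta(\alpha_i,\mathrm{wt}(v))\,\rho_q(f_i)v$, and the standard diagonal action on $\omega_i^{\pm1},(\omega_i')^{\pm1}$; in $D_n$-type $d_i=1$, so $r_is_i=rs$. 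It remains to evaluate $\zeta(\alpha_i,\varepsilon_j)$ for the finitely many pairs with $\rho_q(e_i)v_j\ne 0$. Using~\eqref{eq:zeta_formula} together with the $D_n$-type Ringel form, one gets $\zeta(\alpha_i,\varepsilon_i)=\zeta(\alpha_i,\varepsilon_{i+1})=(rs)^{1/4}$ for $1\le i<n$, while at the branch node, writing $\varepsilon_n=\tfrac12(\alpha_n-\alpha_{n-1})$, $\varepsilon_{n-1}=\tfrac12(\alpha_n+\alpha_{n-1})$ and using $a_{n-1,n}=0$, $\langle\alpha_{n-1},\alpha_n\rangle=-1$, $\langle\alpha_n,\alpha_{n-1}\rangle=1$, one gets the \emph{asymmetric} values $\zeta(\alpha_n,\varepsilon_n)=(rs)^{1/4}$ and $\zeta(\alpha_n,\varepsilon_{n-1})=(rs)^{-1/4}$. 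Substituting these scalars (and $r_is_i=rs$) into the two displays above reproduces exactly the matrices $\tilde\rho_{r,s}(e_i),\tilde\rho_{r,s}(f_i)$ of~(a), and the Cartan part $\tilde\rho_{r,s}(\omega_i),\tilde\rho_{r,s}(\omega_i')$ is read off from the $D_n$-type pairing formulas~\eqref{eq:D-pairing}.

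For part~(b), I would use that $\psi$ is diagonal with $\psi(v_j)=\psi_j v_j$, so conjugation by $\psi$ sends $E_{jk}\mapsto\psi_j\psi_k^{-1}E_{jk}$. The Cartan generators act by the same diagonal matrices in $\tilde\rho_{r,s}$ and in $\rho_{r,s}$ of~\cite{MT1} (both encode the intrinsic weight action), so the relation $\tilde\rho_{r,s}(x)\circ\psi=\psi\circ\rho_{r,s}(x)$ is automatic for $x\in\{\omega_i^{\pm1},(\omega_i')^{\pm1}\}$. For $x=e_i$ or $x=f_i$ one compares the two representations entrywise: for $1\le i<n$ the relevant ratios are $\psi_i\psi_{i+1}^{-1}=(rs)^{1/4}$ and $\psi_{(i+1)'}\psi_{i'}^{-1}=(rs)^{-1/4}$, and for $i=n$ they are $\psi_{n-1}\psi_{n'}^{-1}$ and $\psi_n\psi_{(n-1)'}^{-1}$; all of these are computed directly from~\eqref{eq:psi_values_D}, where the irregular middle entries $\psi_n=1$, $\psi_{n+1}=(rs)^{-1/2}$ are precisely what make the branch node match, and the resulting scalars agree with the coefficients appearing in~\cite[Proposition~3.4]{MT1}.

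The main obstacle is the bookkeeping around the $D_n$-type fork. Unlike in $B$- and $C$-types, the simple root $\alpha_n=\varepsilon_{n-1}+\varepsilon_n$ is attached to node $n-2$, the Ringel form is non-standard on $\{\alpha_{n-1},\alpha_n\}$, and the diagonal matrix $\psi$ is no longer symmetric about the middle; so the evaluations of $\zeta(\alpha_{n-1},\cdot),\zeta(\alpha_n,\cdot)$ and of the ratios $\psi_j\psi_k^{-1}$ for indices near $j=n$ must be carried out with care, while away from the branch node the computation is literally identical to the $B$- and $C$-type cases already treated.
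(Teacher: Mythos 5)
Your proposal is correct and follows exactly the approach the paper intends (which, for $D_n$, is signaled by the one-line remark ``As in the previous cases, applying Corollary~\ref{cor:2_parameter_rep} to this result, we obtain:'', with the full argument already sketched in the proof of the $B_n$-type Proposition~\ref{prop:2-parameter_rep_B}). Your explicit evaluation of the bicharacter near the fork --- in particular the asymmetric pair $\zeta(\alpha_n,\varepsilon_n)=(rs)^{1/4}$, $\zeta(\alpha_n,\varepsilon_{n-1})=(rs)^{-1/4}$, which after passing to the weights $-\varepsilon_n,-\varepsilon_{n-1}$ of $v_{n'},v_{(n-1)'}$ gives the stated coefficients $(rs)^{-1/4}$, $(rs)^{1/4}$ of $E_{n-1,n'}$, $E_{n,(n-1)'}$ --- is exactly the bookkeeping the paper leaves implicit, and your reading of the diagonal $\omega_i,\omega_i'$ action from~\eqref{eq:D-pairing} together with the coefficient $\lambda_{n-1}=\pm\tfrac12$ of $\alpha_{n-1}$ in $\varepsilon_{n},\varepsilon_{n-1}$ is what makes the $i=n$ Cartan formulas come out right.
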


Now, let $\hat{R}_{q}\colon V \otimes V \to V \otimes V$ be the one-parameter $R$-matrix, which is given explicitly by
\begin{equation}\label{eq:1_parameter_R_D}
\begin{split}
  \hat{R}_{q} =\
  & q^{-1}\sum_{1\leq i\leq 2n} E_{ii} \otimes E_{ii} + q\sum_{1\leq i\leq 2n}E_{ii'} \otimes E_{i'i} \\
  & + \sum_{1\leq i,j\leq 2n}^{j \neq i,i'} E_{ij} \otimes E_{ji}
    + (q^{-1} - q)\sum_{1\leq i\leq n} (1 - q^{2(n- i)}) E_{i'i'} \otimes E_{ii} \\
  & + (q^{-1} -q)\sum_{i > j}^{j \neq i'} E_{ii} \otimes E_{jj}
    + (q - q^{-1})\sum_{i < j}^{j \neq i'} t_{i}(q)t_{j}(q)^{-1}E_{i'j} \otimes E_{ij'} \,,
  \end{split}
\end{equation}
where
\begin{equation*}
  t_{i}(q) =
  \begin{cases}
    q^{n- i} & \text{if}\ \ i \le n \\
    q^{n + 1 - i} & \text{if}\ \ i > n
  \end{cases}.
\end{equation*}
Then as for types $B_n$ and $C_n$, we have
  $\hat{R}_{r,s} = (\psi \otimes \psi)^{-1} \circ \xi \circ \hat{R}_{q} \circ \xi^{-1} \circ (\psi \otimes \psi)$,
where $\xi$ denotes the isomorphism $\xi_{V,V}$ of~\eqref{eq:xi_formula}, $\psi$ is the isomorphism of~\eqref{eq:psi-D}, and
$\hat{R}_{r,s}$ denotes the matrix $\hat{R}$ of~\cite[(4.11)]{MT1}. We can also use this to obtain a new proof of~\cite[Theorem~4.6]{MT1}.
To this end, we note that
\begin{equation*}
\begin{split}
  (\psi \otimes \psi)^{-1} \circ \xi & \circ \hat{R}_{q} \circ \xi^{-1} \circ (\psi \otimes \psi) \\
  =\, & q^{-1}\sum_{1\leq i\leq 2n} E_{ii} \otimes E_{ii}
    + q\sum_{1\leq i\leq 2n}E_{ii'} \otimes E_{i'i} \\
  & + \sum_{1\leq i,j\leq 2n}^{j \neq i,i'} \zeta(\varepsilon_{i},\varepsilon_{j})^{-1}\zeta(\varepsilon_{j},\varepsilon_{i}) E_{ij} \otimes E_{ji}
    + (q^{-1} - q)\sum_{1\leq i\leq n} (1 - q^{2(n- i)}) E_{i'i'} \otimes E_{ii} \\
  & + (q^{-1} -q)\sum_{i > j}^{j \neq i'} E_{ii} \otimes E_{jj}
    + (q - q^{-1})\sum_{i < j}^{j \neq i'} t_{i}(q)t_{j}(q)^{-1}\psi_{i}^{-1}\psi_{i'}^{-1}\psi_{j}\psi_{j'}E_{i'j} \otimes E_{ij'} \,,
  \end{split}
\end{equation*}
with the convention $\varepsilon_{i'} = -\varepsilon_{i}\ \forall\, i$. Arguing as in types $B_n$ and $C_n$, we get
$\frac{\zeta(\varepsilon_{j},\varepsilon_{i})}{\zeta(\varepsilon_{i},\varepsilon_{j})} = a_{ij}$ for $j\ne i,i'$, and
\[
  t_{i}(q)t_{j}(q)^{-1}\psi_{i}^{-1}\psi_{i'}^{-1}\psi_{j}\psi_{j'} =
  \begin{cases}
    q^{j-i}(rs)^{\frac{1}{2}(i - j)} = s^{i - j} & \text{if}\ i < j \le n \\
    q^{j-i-1}(rs)^{\frac{1}{2}(i + j - 2n - 1)} = r^{j - n - 1}s^{i - n} & \text{if}\ i \le n < j \\
    q^{j-i}(rs)^{\frac{1}{2}(j - i)} = r^{j - i} & \text{if}\ n < i < j
  \end{cases}.
\]
Thus, the resulting matrix exactly matches the corresponding two-parameter $R$-matrix of~\cite[(4.11)]{MT1}.


\subsection{Twisting affine $R$-matrices in classical types}\label{ssec:affine_R_twisting}
\

As in Subsection~\ref{ssec:B-type}, the affine two-parameter $R$-matrices $\hat{R}_{r,s}(z)$ of~\cite[Theorems 6.10--6.12]{MT1}
are similarly related to the corresponding affine one-parameter $R$-matrices $\hat{R}_{q}(z)$ via
\begin{equation}\label{eq:affineR-twisting}
  \hat{R}_{r,s}(z) = (\psi \otimes \psi)^{-1} \circ \xi \circ \hat{R}_{q}(z) \circ \xi^{-1} \circ (\psi \otimes \psi).
\end{equation}
This is due to~\cite[\S 7]{MT1} where $\hat{R}_{r,s}(z)$ are expressed as the following linear combinations of
$\hat{R}_{r,s}$, $\hat{R}_{r,s}^{-1}$, $\mathrm{Id}$:
\begin{itemize}[leftmargin=0.7cm]

\item
\emph{Type $B_n$}
\begin{equation}\label{eq:Baxterization-B}
\begin{split}
  \hat{R}_{r,s}(z) =
  r^{-1}sz(z - 1)\hat{R}_{r,s}^{-1} + (1 - r^{-2n + 1}s^{2n - 1})(1 - r^{-2}s^{2})z\Id - r^{-2n}s^{2n}(z - 1)\hat{R}_{r,s} \\
  =q^{-2}z(z - 1)\hat{R}_{r,s}^{-1} + (1 - q^{-4n + 2})(1 - q^{-4})z\Id - q^{-4n}(z - 1)\hat{R}_{r,s}
\end{split}
\end{equation}

\item
\emph{Type $C_n$}
\begin{equation}\label{eq:Baxterization-C}
\begin{split}
  \hat{R}_{r,s}(z) =
  r^{-1/2}s^{1/2}z(z-1)\hat{R}_{r,s}^{-1} + (1 - r^{-n-1}s^{n + 1})(1 - r^{-1}s)z\Id - r^{-n - 3/2}s^{n+ 3/2}(z - 1)\hat{R}_{r,s} \\
  =q^{-1}z(z-1)\hat{R}_{r,s}^{-1} + (1 - q^{-2n - 2})(1 - q^{-2})z\Id - q^{-2n - 3}(z - 1)\hat{R}_{r,s}
\end{split}
\end{equation}

\item
\emph{Type $D_n$}
\begin{equation}\label{eq:Baxterization-D}
\begin{split}
  \hat{R}_{r,s}(z) =
  r^{-1/2}s^{1/2}z(z-1)\hat{R}_{r,s}^{-1} + (1 - r^{-n+1}s^{n-1})(1 - r^{-1}s)z\Id - r^{-n+1/2}s^{n-1/2}(z - 1)\hat{R}_{r,s}\\
  = q^{-1}z(z-1)\hat{R}_{r,s}^{-1} + (1 - q^{-2n + 2})(1 - q^{-2})z\Id - q^{-2n + 1}(z - 1)\hat{R}_{r,s}
\end{split}
\end{equation}

\end{itemize}
This provides a new proof of~\cite[Theorems 6.10--6.12]{MT1} and yields a simple relationship between $R_{q}(z)$ and $R_{r,s}(z)$,
thus completing the $BCD$-type case of~\cite[Appendix B]{MT0}.


\section{PBW bases}\label{sec:pbw}

In this Section, we use the twisting procedure of Subsection~\ref{ssec:quantum_group_twisting} to construct orthogonal PBW bases
for $U_{r,s}(\fg)$ from corresponding bases of $U_{q}(\fg)$, as well as to explicitly evaluate the nonzero pairing constants.
This establishes~\cite[Theorem 5.12]{MT1} in full generality, cf.~\cite[Theorems 7.1, 7.2]{MT2}.


\subsection{Orthogonal bases via twisting}
\

Let $\{E_{\gamma}\}_{\gamma \in \Phi^{+}}$ and $\{F_{\gamma}\}_{\gamma \in \Phi^{+}}$ be the \textbf{(quantum) root vectors} for $U_{q}(\fg)$.
Then $E_{\alpha_{i}} = E_{i}$ and $F_{\alpha_{i}} = F_{i}$ for all $\alpha_{i} \in \Pi$. To describe root vectors for $\gamma \in \Phi^{+} \setminus \Pi$,
let $\ell(\gamma) = \ell_{1}\ell_{2}$ be the costandard factorization for the dominant Lyndon word associated to $\gamma$, and let $\alpha = |\ell_{1}|$,
$\beta = |\ell_{2}|$, cf.~\eqref{eq:cost-factorization}. Following~\eqref{eq:root_vectors_intro}, we set:
\begin{equation}\label{eq:root-vectors-1param}
  E_{\gamma} = E_{\alpha}E_{\beta} - q^{(\alpha,\beta)}E_{\beta}E_{\alpha} \qquad \text{and} \qquad
  F_{\gamma} = F_{\beta}F_{\alpha} - q^{-(\alpha,\beta)}F_{\alpha}F_{\beta}.
\end{equation}
Let $(\cdot,\cdot)_{q}\colon U_{q}^{\le} \times U_{q}^{\ge} \to \bb{K}$ be the Hopf pairing on $U_{q}(\fg)$ (see~\cite[Proposition 6.12]{J}),
and $(\cdot,\cdot)_{q,q^{-1}}\colon U_{q,q^{-1}}^{\le} \times U_{q,q^{-1}}^{\ge} \to \bb{K}$ be the Hopf pairing on $U_{q,q^{-1}}(\fg)$
(which is the $r = q, s = q^{-1}$ case of~\eqref{eq:Hopf-parity}). Note that the bilinear form $(y,x)'_{q,q^{-1}} = (\pi(y),\pi(x))_{q}$,
for $\pi$ of~\eqref{eq:Uq_quotient}, coincides with $(\cdot,\cdot)_{q,q^{-1}}$ on generators, and since $\pi$ is a homomorphism of
Hopf algebras, this implies that in fact $(y,x)'_{q,q^{-1}} = (y,x)_{q,q^{-1}}$ for all $y \in U_{q,q^{-1}}^{\le}$, $x \in U_{q,q^{-1}}^{\ge}$.
Moreover, by Corollary~\ref{cor:double_cartan_+-_iso}, we have $\ker(\pi) \cap U_{q,q^{-1}}^{\pm} = 0$. Therefore, we obtain:

\begin{cor}
If $\{y_{i}^{\mu}\}_{i=1}^{n_\mu}$ and $\{x_{i}^{\mu}\}_{i=1}^{n_\mu}$ are dual bases of $(U_{q}^{-})_{-\mu}$ and $(U_{q}^{+})_{\mu}$ with respect
to $(\cdot,\cdot)_{q}$, then $\{\pi^{-1}(y_{i}^{\mu})\}_{i=1}^{n_\mu}$ and $\{\pi^{-1}(x_{i}^{\mu})\}_{i=1}^{n_\mu}$ are dual bases of
$(U_{q,q^{-1}}^{-})_{-\mu}$ and $(U_{q,q^{-1}}^{+})_{\mu}$ with respect to $(\cdot,\cdot)_{q,q^{-1}}$.
\end{cor}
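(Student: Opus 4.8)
The plan is to observe that all the ingredients have already been assembled in the paragraph preceding the corollary, so that only a one-line verification remains. First I would record the structural input: by Corollary~\ref{cor:double_cartan_+-_iso} the homomorphism $\pi$ of~\eqref{eq:Uq_quotient} restricts to an algebra isomorphism $U_{q,q^{-1}}^{\pm}\iso U_q^{\pm}$, and since $\pi(e_i)=E_i$ and $\pi(f_i)=F_i$ it is homogeneous for the $Q$-grading; hence it restricts further to linear isomorphisms $(U_{q,q^{-1}}^{\pm})_{\pm\mu}\iso (U_q^{\pm})_{\pm\mu}$ for every $\mu\in Q^{+}$. Consequently, for fixed $\mu$ the sets $\{\pi^{-1}(y_i^\mu)\}_{i=1}^{n_\mu}$ and $\{\pi^{-1}(x_i^\mu)\}_{i=1}^{n_\mu}$ are genuine bases of $(U_{q,q^{-1}}^-)_{-\mu}$ and $(U_{q,q^{-1}}^+)_{\mu}$, respectively.

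The second and final input is the identity $(y,x)_{q,q^{-1}}=(\pi(y),\pi(x))_q$ for all $y\in U_{q,q^{-1}}^{\le}$ and $x\in U_{q,q^{-1}}^{\ge}$, established just above the corollary (both sides are Hopf pairings agreeing on generators, and $\pi$ is a Hopf algebra map, so the two bilinear forms coincide by the uniqueness of such pairings). Since $U_{q,q^{-1}}^{-}\subseteq U_{q,q^{-1}}^{\le}$ and $U_{q,q^{-1}}^{+}\subseteq U_{q,q^{-1}}^{\ge}$, applying this identity to $y=\pi^{-1}(y_i^\mu)$ and $x=\pi^{-1}(x_j^\mu)$ gives
\[
  \bigl(\pi^{-1}(y_i^\mu),\,\pi^{-1}(x_j^\mu)\bigr)_{q,q^{-1}} = \bigl(y_i^\mu,\,x_j^\mu\bigr)_q = \delta_{ij},
\]
which is precisely the asserted duality. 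Together with the previous paragraph, this shows that $\{\pi^{-1}(y_i^\mu)\}_{i=1}^{n_\mu}$ and $\{\pi^{-1}(x_i^\mu)\}_{i=1}^{n_\mu}$ are dual bases with respect to $(\cdot,\cdot)_{q,q^{-1}}$, completing the proof.

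I do not expect a genuine obstacle here: the corollary is pure bookkeeping, and its only substantive ingredients — the coincidence of $(\cdot,\cdot)'_{q,q^{-1}}$ with $(\cdot,\cdot)_{q,q^{-1}}$ on $U_{q,q^{-1}}^{\le}\times U_{q,q^{-1}}^{\ge}$, and the injectivity of $\pi$ on $U_{q,q^{-1}}^{\pm}$ — have already been secured by Corollary~\ref{cor:double_cartan_+-_iso} and the discussion immediately preceding the statement. The one point worth making explicit is that $\pi$ is grading-preserving, so that $\pi^{-1}$ carries each graded piece onto the correspondingly graded piece; this is immediate from the formulas for $\pi$ on the generators.
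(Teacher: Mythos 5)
Your proof is correct and follows exactly the same path as the paper: it invokes the isomorphism of Corollary~\ref{cor:double_cartan_+-_iso} (together with its compatibility with the $Q$-grading) to get bases, and uses the identity $(y,x)_{q,q^{-1}}=(\pi(y),\pi(x))_{q}$ established in the paragraph preceding the corollary to transfer the duality. Nothing to add.
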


Let $e_{\gamma;q} = \pi^{-1}(E_{\gamma})$ and $f_{\gamma;q} = \pi^{-1}(F_{\gamma})$, which are given by the same recursive
formulas~\eqref{eq:root-vectors-1param}. Given $\gamma \in \Phi^{+}\setminus \Pi$, pick $\alpha,\beta$ as above, and inductively define
(cf.~\cite[(7.1)]{MT2}):
\begin{equation}\label{eq:root-vectors-2param}
  e_{\gamma;r,s} = e_{\alpha;r,s}e_{\beta;r,s} - (\omega_{\beta}',\omega_{\alpha})e_{\beta;r,s}e_{\alpha;r,s} \qquad \text{and} \qquad
  f_{\gamma;r,s} = f_{\beta;r,s}f_{\alpha;r,s} - (\omega_{\alpha}',\omega_{\beta})^{-1}f_{\alpha;r,s}f_{\beta;r,s}.
\end{equation}

We can now relate the quantum root vectors of $U_{r,s}(\fg)$ and $U_{q,q^{-1}}(\fg)$:

\begin{prop}\label{prop:pbw-basis-twist}
For all $\gamma \in \Phi^{+}$, we have
\[
  e_{\gamma;q} = c_{\gamma}^{+}\varphi(e_{\gamma;r,s}) \qquad \text{and} \qquad f_{\gamma;q} = c_{\gamma}^{-}\varphi(f_{\gamma;r,s}),
\]
where the constants $c_{\gamma}^{\pm}$ are given recursively by $c_{\alpha_{i}}^{+} = 1,c_{\alpha_{i}}^{-} = (r_{i}s_{i})^{1/2}$
for $1\leq i\leq n$, and
\[
  c_{\gamma}^{+} = \zeta(\beta,\alpha)c_{\alpha}^{+}c_{\beta}^{+}, \qquad
  c_{\gamma}^{-} = \zeta(\beta,\alpha)c_{\alpha}^{-}c_{\beta}^{-} \qquad \forall\, \gamma \in \Phi^{+} \setminus \Pi,
\]
where $\ell(\gamma) = \ell(\alpha)\ell(\beta)$ is the costandard factorization of the dominant Lyndon word $\ell(\gamma)$.
\end{prop}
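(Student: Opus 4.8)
The plan is to prove both identities simultaneously by induction on the height $\hgt(\gamma)$, using the recursive definitions~\eqref{eq:root-vectors-1param} (equivalently, the formulas for $e_{\gamma;q}, f_{\gamma;q}$ obtained by applying $\pi^{-1}$) and~\eqref{eq:root-vectors-2param}, together with the fact that $\varphi$ of Proposition~\ref{prop:twisted_algebra} is an algebra isomorphism from $U_{r,s}(\fg)$ to $U_{q,\zeta}(\fg) = (U_{q,q^{-1}}(\fg))_{\zeta}$. The base case $\gamma = \alpha_i \in \Pi$ is immediate from~\eqref{eq:phi-map}: $e_{\alpha_i;q} = e_i = \varphi(e_i)$, so $c^+_{\alpha_i} = 1$, while $f_{\alpha_i;q} = f_i = (r_is_i)^{1/2}\varphi(f_i)$, so $c^-_{\alpha_i} = (r_is_i)^{1/2}$.

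For the inductive step, fix $\gamma \in \Phi^+\setminus\Pi$ with costandard factorization $\ell(\gamma) = \ell(\alpha)\ell(\beta)$, $\gamma = \alpha+\beta$, and assume the claim for $\alpha$ and $\beta$. The key point is to translate the product $e_{\alpha;r,s}e_{\beta;r,s}$ in $U_{r,s}(\fg)$ through $\varphi$: since $\varphi$ is an algebra map, $\varphi(e_{\alpha;r,s}e_{\beta;r,s}) = \varphi(e_{\alpha;r,s})\circ\varphi(e_{\beta;r,s})$, where $\circ$ is the twisted product~\eqref{eq:twisted_product_general}. Now $\varphi(e_{\alpha;r,s})$ lies in degree $(\alpha,0)$ and $\varphi(e_{\beta;r,s})$ in degree $(\beta,0)$ with respect to the bigrading~\eqref{eq:our-bigrading} (this follows because the bigrading is compatible with the $Q$-grading and the generators $e_i$ have bidegree $(\alpha_i,0)$, a fact one should note explicitly). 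Hence $\varphi(e_{\alpha;r,s})\circ\varphi(e_{\beta;r,s}) = \zeta(\alpha,\beta)\,\varphi(e_{\alpha;r,s})\varphi(e_{\beta;r,s})$ and similarly $\varphi(e_{\beta;r,s})\circ\varphi(e_{\alpha;r,s}) = \zeta(\beta,\alpha)\,\varphi(e_{\beta;r,s})\varphi(e_{\alpha;r,s})$, with the products on the right taken in $U_{q,q^{-1}}(\fg)$. Substituting the inductive hypotheses $\varphi(e_{\alpha;r,s}) = (c^+_\alpha)^{-1}e_{\alpha;q}$ and $\varphi(e_{\beta;r,s}) = (c^+_\beta)^{-1}e_{\beta;q}$ and using $(\omega'_\beta,\omega_\alpha) = \zeta(\beta,\alpha)^{-2}q^{(\alpha,\beta)}$ — which is~\eqref{eq:zeta_formula} rearranged, since $\zeta(\beta,\alpha)^{-2} = \zeta(\alpha,\beta)^2 = (\omega'_\beta,\omega_\alpha)q^{-(\alpha,\beta)}$ — one expands $\varphi(e_{\gamma;r,s}) = \varphi(e_{\alpha;r,s}e_{\beta;r,s} - (\omega'_\beta,\omega_\alpha)e_{\beta;r,s}e_{\alpha;r,s})$ and checks that the coefficients of $e_{\alpha;q}e_{\beta;q}$ and $e_{\beta;q}e_{\alpha;q}$ organize into $(c^+_\alpha c^+_\beta)^{-1}\zeta(\beta,\alpha)^{-1}$ times $(e_{\alpha;q}e_{\beta;q} - q^{(\alpha,\beta)}e_{\beta;q}e_{\alpha;q}) = e_{\gamma;q}$, giving exactly $c^+_\gamma = \zeta(\beta,\alpha)c^+_\alpha c^+_\beta$. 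The $f$-case is entirely parallel, now with $\varphi(f_{\alpha;r,s})$ in bidegree $(0,-\alpha)$, $\varphi(f_{\beta;r,s})$ in bidegree $(0,-\beta)$, so that the twist contributes $\zeta(-\alpha,-\beta)^{-1} = \zeta(\alpha,\beta)^{-1} = \zeta(\beta,\alpha)$ to the product $\varphi(f_{\beta;r,s})\circ\varphi(f_{\alpha;r,s})$ (note the order $\beta$ then $\alpha$ matches~\eqref{eq:root-vectors-2param}), and using $(\omega'_\alpha,\omega_\beta)^{-1} = \zeta(\beta,\alpha)^2 q^{-(\alpha,\beta)}$ one similarly arrives at $c^-_\gamma = \zeta(\beta,\alpha)c^-_\alpha c^-_\beta$.

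The only genuinely delicate bookkeeping is keeping the signs in the exponents of $\zeta$ straight when passing between the $e$- and $f$-sides and making sure the order of the two factors in each twisted product matches the order prescribed in~\eqref{eq:root-vectors-2param}; this is where I expect to be most careful, but it is a finite and routine verification once the bidegrees of $\varphi(e_{\alpha;r,s})$ and $\varphi(f_{\alpha;r,s})$ are pinned down. A minor preliminary point worth recording is that $e_{\gamma;r,s}$ indeed has $Q$-degree $\gamma$ (hence bidegree $(\gamma,0)$) and $f_{\gamma;r,s}$ has $Q$-degree $-\gamma$ (bidegree $(0,-\gamma)$), which follows by an immediate induction from~\eqref{eq:root-vectors-2param} and $\gamma = \alpha+\beta$, so that the bigrading statements used for $\alpha,\beta$ propagate to $\gamma$ and the induction is self-sustaining.
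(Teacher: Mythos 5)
Your strategy --- induction on $\hgt(\gamma)$, applying $\varphi$ to the recursion~\eqref{eq:root-vectors-2param}, and evaluating the twisted products via the bidegrees of the root vectors --- is exactly the paper's proof, and your $e$-case is carried out correctly. The $f$-case, however, as written would not produce the stated recursion, and the problem is precisely the sign bookkeeping you flagged as delicate. In $\varphi(f_{\beta;r,s})\circ\varphi(f_{\alpha;r,s})$ the first factor has bidegree $(0,-\beta)$ and the second $(0,-\alpha)$, so by~\eqref{eq:twisted_product_general} the twist scalar is $\zeta(0,0)\,\zeta(-\beta,-\alpha)^{-1} = \zeta(\beta,\alpha)^{-1} = \zeta(\alpha,\beta)$, not $\zeta(\beta,\alpha)$; you have the arguments of $\zeta$ in the wrong order. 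Likewise, the identity you need is $\zeta(\beta,\alpha)^{2} = (\omega'_{\alpha},\omega_{\beta})\,q^{-(\alpha,\beta)}$, equivalently $(\omega'_{\alpha},\omega_{\beta})^{-1} = \zeta(\alpha,\beta)^{2}q^{-(\alpha,\beta)}$, whereas you wrote $(\omega'_{\alpha},\omega_{\beta})^{-1} = \zeta(\beta,\alpha)^{2}q^{-(\alpha,\beta)}$. With both errors in place the computation does still collapse to a multiple of $f_{\gamma;q}$ (the two flips conspire so that the ratio of the two coefficients is still $q^{-(\alpha,\beta)}$), but the overall scalar comes out as $\varphi(f_{\gamma;r,s}) = \zeta(\beta,\alpha)(c^-_\alpha c^-_\beta)^{-1}f_{\gamma;q}$, which would give $c^-_\gamma = \zeta(\alpha,\beta)\,c^-_\alpha c^-_\beta$ rather than the claimed $\zeta(\beta,\alpha)\,c^-_\alpha c^-_\beta$. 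Correcting the two swaps makes the $f$-case run in full parallel with your (correct) $e$-case and recovers the recursion.
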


\begin{proof}
We proceed by induction on $\hgt(\gamma)$. If $\hgt(\gamma) = 1$, then the claim is immediate from the definition of
$\varphi$ in~\eqref{eq:phi-map}. Now suppose that $\hgt(\gamma) > 1$ and consider the costandard factorization
$\ell(\gamma) = \ell(\alpha)\ell(\beta)$. Then by definition, we have
\begin{multline*}
  \varphi(e_{\gamma;r,s})
  = \varphi(e_{\alpha;r,s}e_{\beta;r,s} - (\omega_{\beta}',\omega_{\alpha})e_{\beta;r,s}e_{\alpha;r,s})
  = \varphi(e_{\alpha;r,s})\circ \varphi(e_{\beta;r,s}) - (\omega_{\beta}',\omega_{\alpha})\varphi(e_{\beta;r,s}) \circ \varphi(e_{\alpha;r,s}) \\
  = \frac{1}{c_{\alpha}^{+}c_{\beta}^{+}}
     \left( \zeta(\alpha,\beta)e_{\alpha;q}e_{\beta;q} - (\omega_{\beta}',\omega_{\alpha})\zeta(\beta,\alpha)e_{\beta;q}e_{\alpha;q} \right).
\end{multline*}
As $\zeta(\alpha,\beta) = (\omega_{\beta}',\omega_{\alpha})^{1/2}q^{-\frac{1}{2}(\alpha,\beta)} = \zeta(\beta,\alpha)^{-1}$
by~\eqref{eq:zeta_formula}, we find that
\begin{align*}
  \frac{\zeta(\alpha,\beta)e_{\alpha;q}e_{\beta;q}
  - (\omega_{\beta}',\omega_{\alpha})\zeta(\beta,\alpha)e_{\beta;q}e_{\alpha;q}}{c_{\alpha}^{+}c_{\beta}^{+}}
  &= \frac{\zeta(\alpha,\beta)(e_{\alpha;q}e_{\beta;q} - q^{(\alpha,\beta)}e_{\beta;q}e_{\alpha;q})}{c_{\alpha}^{+}c_{\beta}^{+}}
   = \frac{\zeta(\alpha,\beta)}{c_{\alpha}^{+}c_{\beta}^{+}}e_{\gamma;q},
\end{align*}
and therefore
\[
  \varphi(e_{\gamma;r,s}) = \frac{\zeta(\alpha,\beta)}{c_{\alpha}^{+}c_{\beta}^{+}} e_{\gamma;q} =
  \frac{1}{c_{\alpha}^{+}c_{\beta}^{+}\zeta(\beta,\alpha)}e_{\gamma;q} = \frac{1}{c^+_\gamma} e_{\gamma;q}.
\]

Similarly,
\begin{multline*}
  \varphi(f_{\gamma;r,s})
  = \varphi(f_{\beta;r,s}f_{\alpha;r,s} - (\omega_{\alpha}',\omega_{\beta})^{-1}f_{\alpha;r,s}f_{\beta;r,s})
  = \varphi(f_{\beta;r,s}) \circ \varphi(f_{\alpha;r,s}) - (\omega_{\alpha}',\omega_{\beta})^{-1}\varphi( f_{\alpha;r,s}) \circ \varphi(f_{\beta;r,s}) \\
  = \frac{1}{c_{\alpha}^{-}c_{\beta}^{-}}
    \left( \zeta(\alpha,\beta)f_{\beta;q}f_{\alpha;q} - (\omega_{\alpha}',\omega_{\beta})^{-1}\zeta(\beta,\alpha)f_{\alpha;q}f_{\beta;q} \right)
  = \frac{\zeta(\alpha,\beta)}{c_{\alpha}^{-}c_{\beta}^{-}}
    \left( f_{\beta;q}f_{\alpha;q} - (\omega_{\alpha}',\omega_{\beta})^{-1}\zeta(\beta,\alpha)^{2}f_{\alpha;q}f_{\beta;q} \right).
\end{multline*}
As $\zeta(\beta,\alpha)^{2} = (\omega_{\alpha}',\omega_{\beta})q^{-(\alpha,\beta)}$ by~\eqref{eq:zeta_formula}, we find that
\[
  \varphi(f_{\gamma;r,s}) = \frac{\zeta(\alpha,\beta)}{c_{\alpha}^{-}c_{\beta}^{-}} f_{\gamma;q} =
  \frac{1}{c_{\alpha}^{-}c_{\beta}^{-}\zeta(\beta,\alpha)} f_{\gamma;q} = \frac{1}{c^-_\gamma} f_{\gamma;q}.
\]
This completes the proof.
\end{proof}

Recall the lexicographical order~\eqref{eq:lyndon_order} on $\Phi^+$. We are now ready to provide a new proof of~\cite[Theorem~7.1]{MT2},
cf.~\cite[Theorem~5.12(a,b)]{MT1}:

\begin{theorem}\label{thm:orthogonal PBW bases}
(1) The ordered products
\begin{equation}
  \left\{ \overset{\longleftarrow}{\underset{\gamma \in \Phi^{+}}{\prod}} e_{\gamma;r,s}^{m_{\gamma}}\, \Big|\, m_{\gamma} \ge 0 \right\}
  \quad \text{and}\quad
  \left\{ \overset{\longleftarrow}{\underset{\gamma \in \Phi^{+}}{\prod}} f_{\gamma;r,s}^{m_{\gamma}}\, \Big|\, m_{\gamma} \ge 0 \right\}
\end{equation}
are bases for $U_{r,s}^{+}(\fg)$ and $U_{r,s}^{-}(\fg)$, respectively. Here and below, the arrow $\leftarrow$ over the product signs refers
to the total order~\eqref{eq:lyndon_order} on $\Phi^+$, thus ordering the positive roots in decreasing order.

\medskip
\noindent
(2) The Hopf pairing is orthogonal with respect to these bases. More explicitly, we have:
\begin{equation}
    \left( \overset{\longleftarrow}{\underset{\gamma \in \Phi^{+}}{\prod}} f_{\gamma;r,s}^{n_{\gamma}},
           \overset{\longleftarrow}{\underset{\gamma \in \Phi^{+}}{\prod}} e_{\gamma;r,s}^{m_{\gamma}} \right)_{r,s} = \,
    \prod_{\gamma\in \Phi^+} \Big( \delta_{n_\gamma,m_\gamma} [m_{\gamma}]_{r_{\gamma},s_{\gamma}}!
        s_{\gamma}^{-\frac{1}{2}m_{\gamma}(m_{\gamma} - 1)} (f_{\gamma;r,s},e_{\gamma;r,s})^{m_{\gamma}}_{r,s} \Big).
\end{equation}
\end{theorem}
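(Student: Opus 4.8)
The plan is to transport both parts through the isomorphisms $\varphi\colon U^{\pm}_{r,s}(\fg)\iso U^{\pm}_{q,\zeta}:=(U^{\pm}_{q,q^{-1}})_{\zeta}$ of Proposition~\ref{prop:twisted_algebra} and $\pi\colon U^{\pm}_{q,q^{-1}}\iso U^{\pm}_{q}$ of Corollary~\ref{cor:double_cartan_+-_iso}, tracking the root vectors via Proposition~\ref{prop:pbw-basis-twist}, and then to invoke the well-known one-parameter PBW-basis theorem and its orthogonality relations for $U^{\pm}_{q}$ (see~\cite{L}). Two elementary facts will be used throughout: the twisted product $\circ$ of~\eqref{eq:twisted_product_general} differs from the ordinary product only by an invertible scalar on pairs of homogeneous elements, and the vectors $e_{\gamma;q}$, $f_{\gamma;q}$ are homogeneous of bidegrees $(\gamma,0)$, $(0,-\gamma)$ for the bigrading~\eqref{eq:our-bigrading} (being products, via~\eqref{eq:root-vectors-1param}, of elements of those bidegrees).

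For part (1), Proposition~\ref{prop:pbw-basis-twist} gives $\varphi(e_{\gamma;r,s})=(c^{+}_{\gamma})^{-1}e_{\gamma;q}$, so applying the algebra homomorphism $\varphi$ to an ordered monomial in the $e_{\gamma;r,s}$ and using $\zeta(\gamma,\gamma)=1$ (so that $\circ$-powers of a single $e_{\gamma;q}$ agree with ordinary powers) yields
\[
  \varphi\Big(\overset{\longleftarrow}{\underset{\gamma\in\Phi^{+}}{\prod}} e^{m_{\gamma}}_{\gamma;r,s}\Big)
  = Z^{+}_{\{m\}}\cdot\prod_{\gamma\in\Phi^{+}}(c^{+}_{\gamma})^{-m_{\gamma}}\cdot
    \overset{\longleftarrow}{\underset{\gamma\in\Phi^{+}}{\prod}} e^{m_{\gamma}}_{\gamma;q},
  \qquad Z^{+}_{\{m\}}:=\prod_{\gamma>\gamma'}\zeta(\gamma,\gamma')^{m_{\gamma}m_{\gamma'}},
\]
a nonzero scalar times the corresponding ordered monomial in $U^{+}_{q,q^{-1}}$ (the factor $Z^{+}_{\{m\}}$ accounts for rewriting the $\circ$-product as an ordinary one). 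Under $\pi$ these pass to nonzero multiples of the monomials $\overset{\longleftarrow}{\prod}_{\gamma}E^{m_{\gamma}}_{\gamma}$, which form a basis of $U^{+}_{q}$ by the one-parameter PBW theorem; hence the $\overset{\longleftarrow}{\prod}_{\gamma}e^{m_{\gamma}}_{\gamma;r,s}$ form a basis of $U^{+}_{r,s}(\fg)$. The case of $U^{-}_{r,s}(\fg)$ is identical, with $c^{-}_{\gamma}$ in place of $c^{+}_{\gamma}$ and reordering scalar $Z^{-}_{\{n\}}:=\prod_{\gamma>\gamma'}\zeta(\gamma,\gamma')^{-n_{\gamma}n_{\gamma'}}$, the sign change reflecting the bidegree $(0,-\gamma)$ of $f_{\gamma;q}$.

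For part (2), combining~\eqref{eq:pairing-twist-matching} with the displays of part (1),
\[
  \Big(\overset{\longleftarrow}{\underset{\gamma\in\Phi^{+}}{\prod}} f^{n_{\gamma}}_{\gamma;r,s},\
       \overset{\longleftarrow}{\underset{\gamma\in\Phi^{+}}{\prod}} e^{m_{\gamma}}_{\gamma;r,s}\Big)_{r,s}
  = Z^{-}_{\{n\}}Z^{+}_{\{m\}}\,\prod_{\gamma}(c^{-}_{\gamma})^{-n_{\gamma}}(c^{+}_{\gamma})^{-m_{\gamma}}\cdot
    \Big(\overset{\longleftarrow}{\underset{\gamma\in\Phi^{+}}{\prod}} f^{n_{\gamma}}_{\gamma;q},\
         \overset{\longleftarrow}{\underset{\gamma\in\Phi^{+}}{\prod}} e^{m_{\gamma}}_{\gamma;q}\Big)_{q,\zeta}.
\]
The two monomials on the right have bidegrees $(0,-\nu)$ and $(\mu,0)$ with $\nu=\sum n_{\gamma}\gamma$, $\mu=\sum m_{\gamma}\gamma$, so Proposition~\ref{prop:twisted_pairing} rewrites their $(\cdot,\cdot)_{q,\zeta}$-pairing as $\zeta(0,\mu)^{-1}\zeta(-\nu,0)^{-1}=1$ times their $(\cdot,\cdot)_{q,q^{-1}}$-pairing, which equals $\big(\overset{\longleftarrow}{\prod}_{\gamma}F^{n_{\gamma}}_{\gamma},\overset{\longleftarrow}{\prod}_{\gamma}E^{m_{\gamma}}_{\gamma}\big)_{q}$ because $(\cdot,\cdot)_{q,q^{-1}}=(\pi(\cdot),\pi(\cdot))_{q}$ on $U^{\le}_{q,q^{-1}}\times U^{\ge}_{q,q^{-1}}$. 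By the one-parameter orthogonality of PBW bases~\cite{L}, this vanishes unless $n_{\gamma}=m_{\gamma}$ for all $\gamma$ (which, since the accompanying scalars are nonzero, yields the orthogonality asserted in (2)), and for $n_{\gamma}=m_{\gamma}$ it equals $\prod_{\gamma}[m_{\gamma}]_{q_{\gamma},q_{\gamma}^{-1}}!\,q_{\gamma}^{m_{\gamma}(m_{\gamma}-1)/2}(F_{\gamma},E_{\gamma})^{m_{\gamma}}_{q}$, where $q_{\gamma}:=r_{\gamma}^{1/2}s_{\gamma}^{-1/2}$. On this diagonal, $Z^{-}_{\{m\}}Z^{+}_{\{m\}}=1$ by $\zeta(\alpha,\beta)=\zeta(\beta,\alpha)^{-1}$; the $m_{\gamma}=1$ case of the identities above gives $(F_{\gamma},E_{\gamma})_{q}=c^{+}_{\gamma}c^{-}_{\gamma}(f_{\gamma;r,s},e_{\gamma;r,s})_{r,s}$, cancelling the constants $\prod_{\gamma}(c^{+}_{\gamma}c^{-}_{\gamma})^{-m_{\gamma}}$; and the elementary identities $q_{\gamma}=r_{\gamma}^{1/2}s_{\gamma}^{-1/2}$ and $[m]_{q_{\gamma},q_{\gamma}^{-1}}!=(r_{\gamma}s_{\gamma})^{-m(m-1)/4}[m]_{r_{\gamma},s_{\gamma}}!$ turn $[m_{\gamma}]_{q_{\gamma},q_{\gamma}^{-1}}!\,q_{\gamma}^{m_{\gamma}(m_{\gamma}-1)/2}$ into $[m_{\gamma}]_{r_{\gamma},s_{\gamma}}!\,s_{\gamma}^{-m_{\gamma}(m_{\gamma}-1)/2}$, giving precisely the claimed formula.

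The entire argument is bookkeeping of twist scalars, and once Propositions~\ref{prop:pbw-basis-twist},~\ref{prop:twisted_pairing} and the pairing identity~\eqref{eq:pairing-twist-matching} are in place there is no real obstacle; the single point worth isolating is that the reordering factors $Z^{+}_{\{m\}}$ and $Z^{-}_{\{m\}}$ are mutually inverse on the diagonal (a consequence of the skew-symmetry of $\zeta$ and of the opposite-sign bigradings of $e_{i}$, $f_{i}$ in~\eqref{eq:our-bigrading}), so they drop out of the final answer, while off the diagonal they are harmless nonzero multiples of a vanishing one-parameter pairing. The only genuine external input is the one-parameter PBW theorem and its orthogonality relations~\cite{L} in the normalization recorded above.
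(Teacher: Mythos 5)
Your argument is correct and follows the same strategy as the paper's: transport part (1) through $\varphi$ and Proposition~\ref{prop:pbw-basis-twist} to the one-parameter PBW theorem, then for part (2) use~\eqref{eq:pairing-twist-matching} together with Proposition~\ref{prop:twisted_pairing} and the factorial identity $[m]_{q_\gamma}!\,q_\gamma^{m(m-1)/2}=s_\gamma^{-m(m-1)/2}[m]_{r_\gamma,s_\gamma}!$. Your explicit tracking of the reordering scalars $Z^\pm$ is in fact a touch more careful than the paper's display (the observation $Z^-_{\{m\}}Z^+_{\{m\}}=1$ on the diagonal is immediate from the definitions — the invocation of skew-symmetry of $\zeta$ is unnecessary, though the opposite exponent sign does indeed originate from the opposite-slot bigradings of $e_i$ and $f_i$), but otherwise the route is the same.
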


\begin{proof}
The ordering~\eqref{eq:lyndon_order} is known to be convex (see~\cite[Proposition 26]{L}), and therefore it coincides with an order obtained
from some reduced decomposition of the longest element of the Weyl group (see~\cite{P}). In particular, $E_\gamma$ and $F_\gamma$ coincide
with Lusztig's root vectors, up to nonzero constants. As such, part~(1) holds when $e_{\gamma;r,s},f_{\gamma;r,s}$ are replaced by
$e_{\gamma;q},f_{\gamma;q}$, respectively, and we have the following analogue of part~(2):
\[
  \left( \overset{\longleftarrow}{\underset{\gamma \in \Phi^{+}}{\prod}} f_{\gamma;q}^{n_{\gamma}},
           \overset{\longleftarrow}{\underset{\gamma \in \Phi^{+}}{\prod}} e_{\gamma;q}^{m_{\gamma}} \right)_{q,q^{-1}} = \,
  \prod_{\gamma\in \Phi^+} \Big( \delta_{n_\gamma,m_\gamma} [m_{\gamma}]_{q_{\gamma}}!
      q_{\gamma}^{\frac{1}{2}m_{\gamma}(m_{\gamma} - 1)} (f_{\gamma;q},e_{\gamma;q})^{m_{\gamma}}_{q,q^{-1}} \Big).
\]

Combining Propositions~\ref{prop:twisted_algebra} and~\ref{prop:pbw-basis-twist}, we thus obtain part (1) of above theorem. Moreover,
since $f_{\gamma;q} \in {U_{q,q^{-1}}(\fg)}_{0,-\gamma}$ and $e_{\gamma;q} \in {U_{q,q^{-1}}(\fg)}_{\gamma,0}$ for all $\gamma$, it
follows from the definition of $\circ$ in~\eqref{eq:twisted_product_general} that $f_{\gamma;q}^{n_{\gamma}} = f_{\gamma;q}^{\circ n_{\gamma}}$
and $e_{\gamma;q}^{n_{\gamma}} = e_{\gamma;q}^{\circ n_{\gamma}}$. Furthermore, for $\gamma_{1},\gamma_{2} \in \Phi^{+}$, we have
   $e_{\gamma_{1}}^{\circ n_{\gamma_{1}}} \circ e_{\gamma_{2}}^{\circ n_{\gamma_{2}}} =
    \zeta(n_{\gamma_{1}}\gamma_{1},n_{\gamma_{2}}\gamma_{2})e_{\gamma_{1}}^{n_{\gamma_{1}}}e_{\gamma_{2}}^{n_{\gamma_{2}}}$,
and
  $f_{\gamma_{1}}^{\circ n_{\gamma_{1}}} \circ f_{\gamma_{2}}^{\circ n_{\gamma_{2}}} =
   \zeta(n_{\gamma_{1}}\gamma_{1},n_{\gamma_{2}}\gamma_{2})^{-1}f_{\gamma_{1}}^{n_{\gamma_{1}}}f_{\gamma_{2}}^{n_{\gamma_{2}}}$.
Thus, it follows from Proposition~\ref{prop:twisted_pairing} and equation~\eqref{eq:pairing-twist-matching} that
\begin{align*}
  & \left( \overset{\longleftarrow}{\underset{\gamma \in \Phi^{+}}{\prod}} f_{\gamma;r,s}^{n_{\gamma}},
         \overset{\longleftarrow}{\underset{\gamma \in \Phi^{+}}{\prod}} e_{\gamma;r,s}^{m_{\gamma}} \right)_{r,s}
  =\, \left( \varphi \left( \overset{\longleftarrow}{\underset{\gamma \in \Phi^{+}}{\prod}} f_{\gamma;r,s}^{n_{\gamma}} \right),
            \varphi \left( \overset{\longleftarrow}{\underset{\gamma \in \Phi^{+}}{\prod}} e_{\gamma;r,s}^{m_{\gamma}} \right)  \right)_{q,\zeta} \\
  &= \left( \overset{\longleftarrow}{\underset{\gamma \in \Phi^{+}}{\prod}} (c_{\gamma}^{-})^{-n_{\gamma}} f_{\gamma;q}^{n_{\gamma}},
            \overset{\longleftarrow}{\underset{\gamma \in \Phi^{+}}{\prod}} (c_{\gamma}^{+})^{-m_{\gamma}}e_{\gamma;q}^{m_{\gamma}} \right)_{q,\zeta}
   =\, \left( \overset{\longleftarrow}{\underset{\gamma \in \Phi^{+}}{\prod}} (c_{\gamma}^{-})^{-n_{\gamma}}f_{\gamma;q}^{n_{\gamma}},
            \overset{\longleftarrow}{\underset{\gamma \in \Phi^{+}}{\prod}} (c_{\gamma}^{+})^{-m_{\gamma}}e_{\gamma;q}^{m_{\gamma}} \right)_{q,q^{-1}} \\
  &= \prod_{\gamma\in \Phi^+} \Big( \delta_{n_\gamma,m_\gamma} [m_{\gamma}]_{r_{\gamma},s_{\gamma}}!
        s_{\gamma}^{-\frac{1}{2}m_{\gamma}(m_{\gamma} - 1)} (f_{\gamma;r,s},e_{\gamma;r,s})^{m_{\gamma}}_{r,s} \Big).
\end{align*}
The last equality above follows from
\begin{equation}\label{eq:2_to_1_pairing_doubled}
  ((c_{\gamma}^{-})^{-1}f_{\gamma;q},(c_{\gamma}^{+})^{-1}e_{\gamma;q})_{q,q^{-1}} =
  (\varphi(f_{\gamma;r,s}),\varphi(e_{\gamma;r,s}))_{q,q^{-1}} = (f_{\gamma;r,s},e_{\gamma;r,s})_{r,s}
\end{equation}
and
\begin{align*}
  [m_{\gamma}]_{q_{\gamma}}!q_{\gamma}^{\frac{1}{2}m_{\gamma}(m_{\gamma} - 1)}
  &= (r_{\gamma}^{1/2}s_{\gamma}^{-1/2})^{\frac{1}{2}m_{\gamma}(m_{\gamma} - 1)} \prod_{1\leq k\leq m_{\gamma}}
  \left (\frac{r_{\gamma}^{k/2}s_{\gamma}^{-k/2} - r_{\gamma}^{-k/2}s_{\gamma}^{k/2}}{r_{\gamma}^{1/2}s_{\gamma}^{-1/2} - r_{\gamma}^{-1/2}s_{\gamma}^{1/2}}\right) \\
  &= (r_{\gamma}^{1/2}s_{\gamma}^{-1/2})^{\frac{1}{2}m_{\gamma}(m_{\gamma} - 1)}
     \prod_{1\leq k\leq m_{\gamma}}\left ((r_{\gamma}s_{\gamma})^{-(k-1)/2}\frac{r_{\gamma}^{k} - s_{\gamma}^{k}}{r_{\gamma} - s_{\gamma}}\right )
   = s_{\gamma}^{-\frac{1}{2}m_{\gamma}(m_{\gamma} - 1)}[m_{\gamma}]_{r_{\gamma},s_{\gamma}}!
\end{align*}
This completes the proof.
\end{proof}


\subsection{Evaluation of pairing constants}
\

We can also use Proposition~\ref{prop:pbw-basis-twist} along with~\cite[Theorem 4.2]{BKM} to derive a recursive formula for the pairings
$(f_{\gamma;r,s},e_{\gamma;r,s})_{r,s}$. To this end, we first recall that $F_{\gamma} = \pi(f_{\gamma;q}), E_{\gamma} = \pi(e_{\gamma;q})$,
so that \eqref{eq:2_to_1_pairing_doubled} implies
\begin{equation}\label{eq:2_to_1_pairing}
  \frac{1}{c_{\gamma}^{-}c_{\gamma}^{+}}(F_{\gamma},E_{\gamma})_{q} = (f_{\gamma;r,s},e_{\gamma;r,s})_{r,s}.
\end{equation}
We note that the results of~\cite{BKM} are valid for any convex ordering on $\Phi^{+}$, so that the results below are also true in this
more general context (where one takes $(\alpha,\beta)$ to be a minimal pair for $\gamma$ in the sense of~\cite[(5.4)]{MT1}). However,
for convenience, we will restrict our attention to Lyndon orderings in our exposition.

Let us first recall the relevant result of~\cite{BKM}, which shall then be restated in a slightly different form. The bilinear form
on $U_{q}^{+}$ considered in~\cite{BKM}, which we shall rather denote by $\{\cdot,\cdot\}$ to distinguish from our pairings, is defined
as the unique symmetric bilinear form on $U_{q}^{+}$ which satisfies
\[
  \{1,1\} = 1, \qquad \{E_{i},E_{j}\} = \frac{\delta_{ij}}{1 - q_{i}^{2}},\qquad \{xy,z\} = \{x \otimes y, \Delta_{q}(z)\}
  \qquad \text{for all}\qquad 1 \le i,j \le n,\ x,y,z \in U_{q}^{+},
\]
where $\Delta_{q}\colon U_{q}^{+} \to U_{q}^{+} \otimes U_{q}^{+}$ is a map which is uniquely determined by the following conditions:
\[
  \Delta_{q}(E_{i}) = E_{i} \otimes 1 + 1 \otimes E_{i} \qquad \text{for all}\qquad 1 \le i \le n,
\]
and $\Delta_{q}$ is an algebra homomorphism with respect to the product on $U_{q}^{+} \otimes U_{q}^{+}$ given by
$(x \otimes y)(x' \otimes y') = q^{-(\deg(y),\deg(x'))}xx' \otimes yy'$ for all homogeneous $x,x',y,y' \in U_{q}^{+}$.
Now, for each $\gamma \in \Phi^+$, we define elements $R_{\gamma} \in U_{q}^{+}$ by $R_{\alpha_{i}} = E_{i}$ for all $\alpha_{i} \in \Pi$, and
\[
  R_{\gamma} = R_{\alpha}R_{\beta} - q^{-(\alpha,\beta)}R_{\beta}R_{\alpha} \qquad \forall\, \gamma \in \Phi^{+} \setminus \Pi,
\]
where $\ell(\gamma) = \ell(\alpha)\ell(\beta)$ is the costandard factorization of the dominant Lyndon word $\ell(\gamma)$.
For any $\alpha,\beta \in \Phi$, we also define
\[
  p_{\beta,\alpha} = \max \big\{ p \in \bb{Z} \,|\, \beta - p\alpha \in \Phi \big\}.
\]
Then, in the case of Lyndon orderings~\eqref{eq:lyndon_order}, Theorem 4.2 of~\cite{BKM} is equivalent to the following result
(here, we also use~\cite[Theorem 2.7, formula~(2.9)]{BKM} which are attributed to Lusztig):

\begin{theorem}\label{thm:pairing_BKM}
For all $\gamma \in \Phi^{+}$, we have
\[
  \{R_{\gamma},R_{\gamma}\} = \frac{\kappa_{\gamma}^{2}}{1 - q_{\gamma}^{2}},
\]
where the constants $\{\kappa_{\gamma}\}_{\gamma\in \Phi^+}$ are defined recursively by $\kappa_{\alpha_{i}} = 1$ for all $\alpha_{i} \in \Pi$,
and
\[
  \kappa_{\gamma} = [p_{\beta,\alpha} + 1]_{q}\kappa_{\alpha}\kappa_{\beta}\qquad \forall\, \gamma \in \Phi^{+} \setminus \Pi,
\]
where $\ell(\gamma) = \ell(\alpha)\ell(\beta)$ is the costandard factorization of the dominant Lyndon word.
\end{theorem}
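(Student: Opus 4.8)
The plan is to deduce the statement from~\cite[Theorem 4.2]{BKM} by reconciling its formulation with the one stated here. First I would recall the precise content of~\cite[Theorem 4.2]{BKM}: it computes $\{R_\gamma,R_\gamma\}$ for an arbitrary convex order on $\Phi^+$, where $R_\gamma$ is built by the same $q$-bracket from a minimal pair $(\alpha,\beta)$ for $\gamma$ in the sense of~\cite[(5.4)]{MT1}, proceeding recursively along such decompositions. Since a Lyndon order~\eqref{eq:lyndon_order} is convex (\cite[Proposition 26]{L}, already invoked in the proof of Theorem~\ref{thm:orthogonal PBW bases}), and since for such an order the costandard factorization $\ell(\gamma)=\ell(\alpha)\ell(\beta)$ furnishes a minimal pair $(\alpha,\beta)$ for $\gamma$, the elements $R_\gamma$ entering the present statement agree with those of~\cite{BKM} verbatim. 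Hence~\cite[Theorem 4.2]{BKM} applies and expresses $\{R_\gamma,R_\gamma\}$ along the tree of costandard factorizations.

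Second, I would identify the structure-constant factor appearing at each step of that recursion. By~\cite[Theorem 2.7, formula~(2.9)]{BKM} --- both attributed to Lusztig --- this factor equals $[p_{\beta,\alpha}+1]_q$, the $q$-analogue, taken at the \emph{undeformed} parameter $q$, of the classical fact that the Chevalley bracket $[e_\alpha,e_\beta]$ has structure constant $\pm(p_{\beta,\alpha}+1)$. With this in hand the proof is a bookkeeping induction on $\hgt(\gamma)$. The base case is immediate: $\{E_i,E_i\}=\frac{1}{1-q_i^2}=\frac{\kappa_{\alpha_i}^2}{1-q_{\alpha_i}^2}$ since $q_{\alpha_i}=q_i$ and $\kappa_{\alpha_i}=1$. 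For the inductive step, one feeds $\{R_\alpha,R_\alpha\}=\frac{\kappa_\alpha^2}{1-q_\alpha^2}$ and $\{R_\beta,R_\beta\}=\frac{\kappa_\beta^2}{1-q_\beta^2}$ into the recursion of~\cite[Theorem 4.2]{BKM}; after the denominators recombine to $1-q_\gamma^2$ and the numerator collects $[p_{\beta,\alpha}+1]_q^2\kappa_\alpha^2\kappa_\beta^2=\kappa_\gamma^2$, one obtains $\{R_\gamma,R_\gamma\}=\frac{\kappa_\gamma^2}{1-q_\gamma^2}$.

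The main obstacle is purely one of bookkeeping rather than ideas: one must carefully align the normalization of $\{\cdot,\cdot\}$ (the $\frac{1}{1-q_i^2}$ on generators), the sign and ordering convention in $R_\gamma=R_\alpha R_\beta-q^{-(\alpha,\beta)}R_\beta R_\alpha$, and --- most delicately --- the appearance of the \emph{undeformed} $q$ in $[p_{\beta,\alpha}+1]_q$ as opposed to $q_\gamma$ or $q_i$, tracking the $(\gamma,\gamma)/2$-type normalizations of~\cite{BKM} through the recursion. Alternatively, one could bypass~\cite{BKM} and argue directly: compute the bidegree-$(\alpha,\beta)$ and $(\beta,\alpha)$ components of $\Delta_q(R_\gamma)$, using convexity of the Lyndon order to discard the other terms (they pair to zero against $R_\alpha\otimes R_\beta$ and $R_\beta\otimes R_\alpha$), and then apply $\{xy,z\}=\{x\otimes y,\Delta_q(z)\}$ to the two summands of $R_\gamma$; this reduces the whole theorem to the single coproduct structure constant $[p_{\beta,\alpha}+1]_q$, which is precisely~\cite[(2.9)]{BKM}.
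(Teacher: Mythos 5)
Your proposal is correct and takes exactly the route the paper does: the paper gives no independent proof of this theorem, but simply presents it as a restatement, in the language of Lyndon orders and costandard factorizations, of~\cite[Theorem~4.2]{BKM} combined with~\cite[Theorem~2.7, (2.9)]{BKM}, and your argument (convexity of the Lyndon order, identification of the costandard factorization with a minimal pair, the Lusztig structure constant $[p_{\beta,\alpha}+1]_q$, and the height induction) is precisely the unpacking of that equivalence.
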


Next, we need to describe the relationship between $\{\cdot,\cdot\}$ and the restriction
$(\cdot,\cdot)_{q}\colon U_{q}^{-} \times U_{q}^{+} \to \bb{K}$. To this end, we first recall that $U_{q}(\fg)$ has a \textbf{Cartan involution}
\begin{equation}\label{eq:cartan_involution}
  \omega\colon U_{q}(\fg) \to U_{q}(\fg) \qquad \text{such that}\qquad E_{i} \mapsto F_{i},\qquad F_{i} \mapsto E_{i},\qquad K_{i} \mapsto K_{i}^{-1},
\end{equation}
as well as a $\bb{C}$-algebra automorphism (a \textbf{bar involution}) $x \mapsto \bar{x}$ such that
\[
  \bar{E}_{i} = E_{i},\qquad \bar{F}_{i} = F_{i},\qquad \bar{K}_{i} = K_{i}^{-1},\qquad \bar{q} = q^{-1}.
\]
We also introduce the notation
\[
  d_{\mu} = \sum_{1\leq i\leq n}c_{i} \frac{(\alpha_i,\alpha_i)}{2} \qquad \text{for all}\qquad \mu = \sum_{1\leq i\leq n}c_{i}\alpha_{i} \in Q.
\]

We then have the following result:

\begin{lemma}\label{lem:pairing_comparison}
For all $y \in (U_{q}^{-})_{-\mu}$ and $x \in (U_{q}^{+})_{\mu}$, we have
\begin{equation}\label{eq:pairing_matching}
  (y,x)_{q} = (-1)^{\hgt(\mu)}q^{-d_{\mu}}\ol{\{\omega(\bar{y}),\bar{x}\}}.
\end{equation}
\end{lemma}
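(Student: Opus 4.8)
The plan is to compare the two bilinear forms on generators and then propagate the identity via their multiplicativity properties. First I would introduce the right-hand side as an auxiliary bilinear form: define $\langle y, x\rangle := (-1)^{\hgt(\mu)}q^{-d_\mu}\,\ol{\{\omega(\bar y),\bar x\}}$ for $y\in (U_q^-)_{-\mu}$, $x\in (U_q^+)_\mu$, extended by bilinearity, and observe that $\langle\cdot,\cdot\rangle$ is well-defined and homogeneous of degree zero since $\omega$ sends $(U_q^-)_{-\mu}$ to $(U_q^+)_\mu$ and the bar involution preserves each graded piece. On the degree-$\alpha_i$ pieces one checks directly that $\langle F_i, E_i\rangle = (-1)\,q^{-d_i}\ol{\{\omega(F_i),E_i\}} = -q^{-d_i}\ol{\{E_i,E_i\}} = -q^{-d_i}\cdot\frac{1}{1-q_i^{-2}}$, and a short manipulation ($-\frac{q^{-d_i}}{1-q_i^{-2}} = -\frac{q^{-d_i}q_i^2}{q_i^2-1} = \frac{1}{q_i^2-1}\cdot(-q^{-d_i}q_i^2)$; using $q_i = q^{d_i}$ this is $\frac{1}{s_i - r_i}$ at $r=q,s=q^{-1}$, i.e.\ $\frac{1}{q_i^{-1}-q_i}$) shows this matches $(F_i,E_i)_q = \frac{\delta_{ij}}{q_i^{-1}-q_i}$ as normalized in~\cite[Proposition 6.12]{J}. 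One also has $\langle F_i, 1\rangle = 0 = (F_i,1)_q$ trivially.

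Next I would show that $\langle\cdot,\cdot\rangle$ satisfies the same Hopf-type recursions that characterize $(\cdot,\cdot)_q$ on $U_q^-\times U_q^+$, namely $\langle yy', x\rangle = \langle y\otimes y', \Delta_q(x)\rangle$ and $\langle y, xx'\rangle = \langle \Delta_q(y), x'\otimes x\rangle$ (with the appropriate twisted product on tensor squares, matching the $\Delta_q$ convention in the paper). This is where the bookkeeping lives: one must track how $\omega$, the bar involution, the scalars $(-1)^{\hgt(\mu)}q^{-d_\mu}$, and the $q^{-(\deg(y),\deg(x'))}$ twist in the multiplication on $U_q^+\otimes U_q^+$ interact. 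The key algebraic inputs are: $\omega$ is an anti-involution (or involution, depending on convention) intertwining the $\pm$ halves, $\omega$ composed with the coproduct on $U_q^-$ relates to the coproduct on $U_q^+$ up to a flip, the bar involution is an algebra map with $\bar q = q^{-1}$, and $\hgt$ and $d_\mu$ are additive on $Q$ so the prefactors split correctly across a tensor product. Since the pairing $(\cdot,\cdot)_q$ restricted to $U_q^-\times U_q^+$ is the unique bilinear form with the prescribed values on generators and these two structural properties (this uniqueness is the content of~\cite[Proposition 6.12]{J} together with nondegeneracy on each graded piece), matching on generators plus matching recursions forces $\langle\cdot,\cdot\rangle = (\cdot,\cdot)_q$, which is exactly~\eqref{eq:pairing_matching}.

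The main obstacle I expect is the sign and scalar bookkeeping in verifying the recursions — in particular getting the interaction of $\omega$ with $\Delta_q$ right (it typically introduces a flip $\tau$ and possibly a $q$-power from the twisted multiplication, which must be absorbed by the $q^{-d_\mu}$ factors) and confirming that $(-1)^{\hgt(\mu)}$ is consistent with the $(-1)^k$ signs appearing when $\omega$ is applied to products. An alternative, perhaps cleaner, route is to invoke uniqueness in the form: both sides agree on a set of algebra generators of $U_q^+$ under the adjoint action realizing the pairing, and both are $\ad$-invariant in the same sense; but the direct verification sketched above is the most self-contained. Either way, once the generator values and the two multiplicativity identities are checked, the conclusion is immediate.
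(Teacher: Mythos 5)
Your plan is in the right direction and the base case ($\mu=\alpha_i$) is correctly verified, but it differs from the paper's organization and, as you yourself note, is not yet closed. The paper proves~\eqref{eq:pairing_matching} by induction on $\hgt(\mu)$ using the one-step ``derivative'' recursions
\[
  (F_{i}y',x)_{q} = \frac{1}{q_{i}^{-1} - q_{i}}(y',p_{i}'(x))_{q}, \qquad
  \{E_{i}y',x\} = \frac{1}{1 - q_{i}^{2}}\{y',\partial_{i}'(x)\},
\]
where $p_i'=r'_{\alpha_i}$ is Jantzen's operator from~\cite[\S6.14]{J} and $\partial_i'={}_i r$ is Lusztig's from~\cite[\S1.2.13]{Lu} (at $v=q^{-1}$). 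The single nontrivial technical input is the compatibility $\ol{p_i'(x)} = \partial_i'(\bar{x})$ of these two derivatives under the bar involution; once that is in hand, the scalar $(-1)^{\hgt(\mu)}q^{-d_\mu}$ drops out of the induction step directly, and neither the twisted coproduct $\Delta_q$ on $U_q^+\otimes U_q^+$ nor any uniqueness theorem is ever invoked. Your approach instead proposes to verify both tensor-product adjointness axioms for a proxy form and then appeal to uniqueness. That would also work, but it is strictly heavier: you must track the interaction of $\omega$, the bar involution, the prefactors, and the $q^{-(\deg(y),\deg(x'))}$-twisted multiplication across both recursions rather than just one slot of one of them. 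You flag this bookkeeping as the delicate point and do not carry it out; the derivative compatibility $\ol{p_i'(x)} = \partial_i'(\bar{x})$ is precisely where that content resides, and your plan neither isolates nor verifies it. In short: a viable alternative route, but the essential technical identity is unidentified and the recursion check is left undone, so the argument does not yet close.
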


\begin{proof}
We proceed by induction of $\hgt(\mu)$. If $\mu = \alpha_{i} \in \Pi$, then it suffices to verify~\eqref{eq:pairing_matching} for $y=F_i,x=E_i$,
in which case both sides equal $\frac{1}{q_i^{-1}-q_i}$. For the induction step, it is enough to prove the claim for $x \in (U_{q}^{+})_{\mu}$ and
$y = F_{i}y'$ with $y' \in (U_{q}^{-})_{-\mu + \alpha_{i}}$. To this end, we use the linear map $r'_{\alpha_{i}}$ of~\cite[\S6.14]{J}, which we shall
instead denote by $p'_{i}$. We also need the linear map $\prescript{~}{i}r$ of~\cite[\S1.2.13]{Lu} (with $v = q^{-1}$), which we shall instead denote
by $\partial_{i}'$. By~\cite[Lemma 6.14(b)]{J} and~\cite[\S1.2.13(a)]{Lu}, these maps satisfy
\[
  (F_{i}y,x)_{q} = \frac{1}{q_{i}^{-1} - q_{i}}(y,p_{i}'(x))_{q},\qquad \{E_{i}y,x\} = \frac{1}{1 - q_{i}^{2}}\{y,\partial_{i}'(x)\}.
\]
Furthermore, one can easily check by comparing~\cite[Lemma 6.14(a)]{J} with the properties of $\partial_{i}'$ in~\cite[\S1.2.13]{Lu} that
$\ol{p_{i}'(x)} = \partial_{i}'(\bar{x})$. Then if $y = F_{i}y'$, the induction hypothesis implies that
\begin{align*}
  (y,x)_{q}
  &= \frac{1}{q_{i}^{-1} - q_{i}}(y',p_{i}'(x))_{q}
   = \frac{1}{q_{i}^{-1} - q_{i}}(-1)^{\hgt(\mu) - 1}q^{-d_{\mu- \alpha_{i}}}\ol{\{\omega(\bar{y}'),\ol{p_{i}'(x)}\}} \\
  &= (-1)^{\hgt(\mu)}q^{-d_{\mu}}\ol{\frac{1}{1 - q_{i}^{2}}\{\omega(\bar{y}'),\partial_{i}'(\bar{x})\}}
   = (-1)^{\hgt(\mu)}q^{-d_{\mu}} \ol{\{\omega(\ol{F_{i}y'}),\bar{x}\}}
   =(-1)^{\hgt(\mu)}q^{-d_{\mu}} \ol{\{\omega(\ol{y}),\bar{x}\}},
\end{align*}
as claimed in~\eqref{eq:pairing_matching}.
\end{proof}

Combining the above result with Theorem~\ref{thm:pairing_BKM}, we obtain:

\begin{cor}\label{cor:1param_pairing_recursion}
For any $\gamma \in \Phi^{+} \setminus \Pi$, let $\ell(\gamma) = \ell(\alpha)\ell(\beta)$ be the costandard factorization of the
dominant Lyndon word $\ell(\gamma)$. Then
\[
  (F_{\gamma},E_{\gamma})_{q} =
  -q^{-(\alpha,\beta)}\frac{[p_{\beta,\alpha} + 1]_{q}^{2}(1 - q_{\alpha}^{-2})(1 - q_{\beta}^{-2})}{1 - q_{\gamma}^{-2}}
  (F_{\alpha},E_{\alpha})_{q}(F_{\beta},E_{\beta})_{q}.
\]
\end{cor}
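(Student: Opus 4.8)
The plan is to deduce Corollary~\ref{cor:1param_pairing_recursion} from Theorem~\ref{thm:pairing_BKM} and Lemma~\ref{lem:pairing_comparison} by carefully translating both the value $\{R_\gamma,R_\gamma\}$ and the recursion for $\kappa_\gamma$ into statements about the Hopf pairing $(\cdot,\cdot)_q$ on $U_q^-\times U_q^+$. First I would relate the root vectors $R_\gamma\in U_q^+$ appearing in~\cite{BKM} to our root vectors $E_\gamma$ and $F_\gamma$. Comparing the recursion $R_\gamma=R_\alpha R_\beta-q^{-(\alpha,\beta)}R_\beta R_\alpha$ with~\eqref{eq:root-vectors-1param}, one sees that $R_\gamma$ satisfies the same recursion as the image of $F_\gamma$ under the Cartan involution composed with the bar involution; more precisely, since $\omega(\bar F_i)=\omega(F_i)=E_i$ and $\overline{q^{-(\alpha,\beta)}}=q^{(\alpha,\beta)}$, a straightforward induction on $\hgt(\gamma)$ gives $R_\gamma=\omega(\overline{F_\gamma})$ for all $\gamma\in\Phi^+$. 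Then Lemma~\ref{lem:pairing_comparison} applied with $y=F_\gamma$, $x=E_\gamma$, $\mu=\gamma$ yields
\[
  (F_\gamma,E_\gamma)_q=(-1)^{\hgt(\gamma)}q^{-d_\gamma}\,\overline{\{R_\gamma,\overline{E_\gamma}\}}.
\]
It remains to handle $\overline{E_\gamma}$: from the recursion for $E_\gamma$ and $\overline{q^{(\alpha,\beta)}}=q^{-(\alpha,\beta)}$ one gets inductively $\overline{E_\gamma}=R_\gamma$ as well (both satisfy the $R$-recursion), so $(F_\gamma,E_\gamma)_q=(-1)^{\hgt(\gamma)}q^{-d_\gamma}\overline{\{R_\gamma,R_\gamma\}}=(-1)^{\hgt(\gamma)}q^{-d_\gamma}\,\overline{\kappa_\gamma^2/(1-q_\gamma^2)}$, and since $\kappa_\gamma$ is a Laurent polynomial in $q$ with integer coefficients fixed by the bar involution up to $q\leftrightarrow q^{-1}$, this becomes $(-1)^{\hgt(\gamma)}q^{-d_\gamma}\overline{\kappa_\gamma}^{\,2}/(1-q_\gamma^{-2})$.

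Next I would substitute the recursion $\kappa_\gamma=[p_{\beta,\alpha}+1]_q\kappa_\alpha\kappa_\beta$ from Theorem~\ref{thm:pairing_BKM}. Writing $\overline{[p_{\beta,\alpha}+1]_q}=[p_{\beta,\alpha}+1]_{q^{-1}}$, and using $d_\gamma=d_\alpha+d_\beta$, $\hgt(\gamma)=\hgt(\alpha)+\hgt(\beta)$, I would form the ratio
\[
  \frac{(F_\gamma,E_\gamma)_q}{(F_\alpha,E_\alpha)_q(F_\beta,E_\beta)_q}
  =(-1)^{\hgt(\gamma)-\hgt(\alpha)-\hgt(\beta)}\,q^{-d_\gamma+d_\alpha+d_\beta}\,
   \frac{\overline{[p_{\beta,\alpha}+1]_q}^{\,2}(1-q_\alpha^{-2})(1-q_\beta^{-2})}{1-q_\gamma^{-2}}.
\]
The sign and the bare power of $q$ coming from $d_\mu$ both cancel (the exponents are additive), leaving a clean expression. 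The one genuine bookkeeping point is that the statement of the corollary has $[p_{\beta,\alpha}+1]_q^2$ rather than $[p_{\beta,\alpha}+1]_{q^{-1}}^2$ and an extra factor $-q^{-(\alpha,\beta)}$; I would reconcile these using $[m]_{q^{-1}}=q^{-(m-1)}[m]_q$, which introduces a factor $q^{-2p_{\beta,\alpha}}$, together with the standard root-system identity $(\gamma,\gamma)=(\alpha+\beta,\alpha+\beta)$ and the relation between $p_{\beta,\alpha}$ and $(\alpha,\beta)$, namely $(\alpha,\beta)=-\tfrac12 p_{\beta,\alpha}(\alpha,\alpha)$ when $(\alpha,\beta)$ is a minimal pair (equivalently $q_\gamma^2=q_\alpha^2 q_\beta^2 q^{2(\alpha,\beta)}$); these let me convert the leftover power of $q$ into the stated $-q^{-(\alpha,\beta)}$ prefactor and absorb the $-1$.

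The main obstacle I expect is precisely this last normalization comparison: verifying that the collection of $q$-power discrepancies between $[p_{\beta,\alpha}+1]_{q^{-1}}^2$, the $d_\mu$-weights, and the $q_\gamma^{\pm2}$ factors all conspire to produce exactly $-q^{-(\alpha,\beta)}$ and nothing more. This requires knowing that for a Lyndon (hence convex) ordering, the costandard factorization $\gamma=\alpha+\beta$ gives a \emph{minimal pair}, so that the $\alpha$-string through $\beta$ has the length recorded by $p_{\beta,\alpha}$ and $(\alpha,\beta)=-\tfrac12 p_{\beta,\alpha}(\alpha,\alpha)$; this is exactly the input from~\cite[Theorem 2.7, (2.9)]{BKM} alluded to in the statement of Theorem~\ref{thm:pairing_BKM}, and from the minimal-pair discussion around~\cite[(5.4)]{MT1}. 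Once that identity is in hand, everything else is elementary manipulation of $q$-integers and additivity of $\hgt$ and $d_\mu$ over the decomposition $\gamma=\alpha+\beta$.
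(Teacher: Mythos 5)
Your proposal has a genuine error at its first step. You claim $R_\gamma = \omega(\bar{F}_\gamma)$ by a "straightforward induction," but this is false: the recursion for $F_\gamma$ is $F_\gamma = F_\beta F_\alpha - q^{-(\alpha,\beta)}F_\alpha F_\beta$, so applying $\omega \circ \bar{(\cdot)}$ and assuming $\omega(\bar{F}_\alpha)=R_\alpha$, $\omega(\bar{F}_\beta)=R_\beta$ gives $\omega(\bar{F}_\gamma) = R_\beta R_\alpha - q^{(\alpha,\beta)}R_\alpha R_\beta = -q^{(\alpha,\beta)}\bigl(R_\alpha R_\beta - q^{-(\alpha,\beta)}R_\beta R_\alpha\bigr) = -q^{(\alpha,\beta)}R_\gamma$, not $R_\gamma$. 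The roles of $\alpha$ and $\beta$ are swapped between the two recursions, and this mismatch produces a nontrivial constant. The paper tracks this carefully and proves $\omega(\bar{F}_\gamma) = c_\gamma R_\gamma$ with $c_\gamma = -q^{(\alpha,\beta)}c_\alpha c_\beta$; the ratio $\bar{c}_\gamma/(\bar{c}_\alpha\bar{c}_\beta) = -q^{-(\alpha,\beta)}$ is exactly the prefactor in the corollary.

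Your subsequent attempt to recover the missing $-q^{-(\alpha,\beta)}$ via the identity $[m]_{q^{-1}} = q^{-(m-1)}[m]_q$ compounds the problem: that identity holds for the \emph{unbalanced} $q$-integer $1+q+\cdots+q^{m-1}$, whereas here $[m]_q = (q^m - q^{-m})/(q-q^{-1})$, which is bar-invariant, so $\overline{[p_{\beta,\alpha}+1]_q} = [p_{\beta,\alpha}+1]_q$ exactly and no compensating power of $q$ (let alone a sign) appears. Consequently, if you carry your two claims through, the resulting formula is off by $-q^{-(\alpha,\beta)}$ with no way to repair it by the minimal-pair relation you invoke; the attempted "reconciliation" does not close the gap. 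The correct route, as in the paper, is to keep the constants $c_\gamma$ throughout and to use bar-invariance of $[m]_q$, whereupon everything cancels cleanly without needing any minimal-pair numerics beyond those already encapsulated in Theorem~\ref{thm:pairing_BKM}.
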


\begin{proof}
We shall first prove by induction on $\hgt(\gamma)$ that $\omega(\bar{F}_{\gamma}) = c_{\gamma}R_{\gamma}$, where the constants $c_{\gamma}$
are defined recursively by $c_{\alpha_{i}} = 1$ for all $\alpha_{i} \in \Pi$, and $c_{\gamma} = -q^{(\alpha,\beta)}c_{\alpha}c_{\beta}$ for
$\gamma \in \Phi^{+} \setminus \Pi$. The claim is obvious for $\gamma \in \Pi$, while for $\gamma \notin \Pi$ we have
\begin{align*}
  \omega(\bar{F}_{\gamma})
  &= \omega\left (\ol{F_{\beta}F_{\alpha} - q^{-(\alpha,\beta)}F_{\alpha}F_{\beta}}\right )
   = \omega\left (\bar{F}_{\beta}\bar{F}_{\alpha} - q^{(\alpha,\beta)}\bar{F}_{\alpha}\bar{F}_{\beta}\right ) \\
  &= -q^{(\alpha,\beta)}\left( \omega(\bar{F}_{\alpha})\omega(\bar{F}_{\beta}) - q^{-(\alpha,\beta)}\omega(\bar{F}_{\beta})\omega(\bar{F}_{\alpha}) \right)
   = -q^{(\alpha,\beta)} c_{\alpha} c_{\beta} (R_{\alpha}R_{\beta} - q^{-(\alpha,\beta)}R_{\beta}R_{\alpha}),
\end{align*}
so the claim follows from the definitions of $R_{\gamma}$ and $c_{\gamma}$.
A similar argument shows that $\bar{E}_{\gamma} = R_{\gamma}$ for all $\gamma$.
Thus, combining Lemma~\ref{lem:pairing_comparison} and Theorem~\ref{thm:pairing_BKM}, we obtain:
\begin{equation}\label{eq:pairing_ckappa}
  (F_{\gamma},E_{\gamma})_{q}
  = (-1)^{\hgt(\gamma)}q^{-d_{\gamma}}\ol{\{\omega(\bar{F_{\gamma}}),\bar{E_{\gamma}}\}}
  = (-1)^{\hgt(\gamma)}q^{-d_{\gamma}}\bar{c}_{\gamma}\ol{\{R_{\gamma},R_{\gamma}\}}
  = \frac{(-1)^{\hgt(\gamma)}q^{-d_{\gamma}}\bar{c}_{\gamma}\bar{\kappa}_{\gamma}^{2}}{1 - q_{\gamma}^{-2}}
\end{equation}
for all $\gamma \in \Phi^{+}$. Noting that $\ol{[p_{\beta,\alpha} + 1]_{q}} = [p_{\beta,\alpha} +1]_{q}$, $\hgt(\gamma) = \hgt(\alpha) + \hgt(\beta)$,
and $d_{\gamma} = d_{\alpha} + d_{\beta}$, we then get:
\begin{align*}
  (F_{\gamma},E_{\gamma})_{q}
  &= -q^{-(\alpha,\beta)}\frac{(-1)^{\hgt(\gamma)}q^{-d_{\gamma}}[p_{\beta,\alpha} + 1]_{q}^{2}
     \bar{c}_{\alpha}\bar{c}_{\beta}\bar{\kappa}_{\alpha}^{2}\bar{\kappa}_{\beta}^{2}}{1 - q_{\gamma}^{-2}} \\
  &= -q^{-(\alpha,\beta)}\frac{[p_{\beta,\alpha} + 1]_{q}^{2}(1 - q_{\alpha}^{-2})(1 - q_{\beta}^{-2})}{1 - q_{\gamma}^{-2}}
     \left( \frac{(-1)^{\hgt(\alpha)}q^{-d_{\alpha}}\bar{c}_{\alpha}\bar{\kappa}_{\alpha}^{2}}{1 - q_{\alpha}^{-2}} \right)
     \left( \frac{(-1)^{\hgt(\beta)}q^{-d_{\beta}}\bar{c}_{\beta}\bar{\kappa}_{\beta}^{2}}{1 - q_{\beta}^{-2}} \right) \\
  &= -q^{-(\alpha,\beta)}\frac{[p_{\beta,\alpha} + 1]_{q}^{2}(1 - q_{\alpha}^{-2})(1 - q_{\beta}^{-2})}{1 - q_{\gamma}^{-2}}
     (F_{\alpha},E_{\alpha})_{q}(F_{\beta},E_{\beta})_{q},
\end{align*}
where we used~\eqref{eq:pairing_ckappa} twice in the last equality. This completes the proof.
\end{proof}

Combining the above result with~\eqref{eq:2_to_1_pairing}, we finally obtain:

\begin{theorem}\label{thm:2param_pairing_recursion}
For any $\gamma \in \Phi^{+} \setminus \Pi$, let $\ell(\gamma) = \ell(\alpha)\ell(\beta)$ be the costandard factorization of the
dominant Lyndon word $\ell(\gamma)$. Then
\[
  (f_{\gamma;r,s},e_{\gamma;r,s})_{r,s} =
  \frac{r_{\gamma}(\omega_{\alpha}',\omega_{\beta})^{-1}(rs)^{-p_{\beta,\alpha}}[p_{\beta,\alpha} + 1]_{r,s}^{2}(r_{\alpha} - s_{\alpha})(r_{\beta} - s_{\beta})}
       {r_{\alpha}r_{\beta}(s_{\gamma} - r_{\gamma})}(f_{\alpha;r,s},e_{\alpha;r,s})_{r,s}(f_{\beta;r,s},e_{\beta;r,s})_{r,s}.
\]
\end{theorem}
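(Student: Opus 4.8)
The plan is to reduce the two-parameter recursion to the one-parameter recursion of Corollary~\ref{cor:1param_pairing_recursion} via the identification \eqref{eq:2_to_1_pairing} of root-vector pairings, and then to rewrite all $q$-quantities in terms of $r,s$ using $q = r^{1/2}s^{-1/2}$.

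First I would substitute $(F_{\delta},E_{\delta})_{q} = c_{\delta}^{-}c_{\delta}^{+}(f_{\delta;r,s},e_{\delta;r,s})_{r,s}$, which is \eqref{eq:2_to_1_pairing}, for $\delta \in \{\gamma,\alpha,\beta\}$ into the recursion of Corollary~\ref{cor:1param_pairing_recursion}. Using the multiplicativity $c_{\gamma}^{\pm} = \zeta(\beta,\alpha)c_{\alpha}^{\pm}c_{\beta}^{\pm}$ of Proposition~\ref{prop:pbw-basis-twist}, one has $c_{\gamma}^{-}c_{\gamma}^{+} = \zeta(\beta,\alpha)^{2}c_{\alpha}^{-}c_{\alpha}^{+}c_{\beta}^{-}c_{\beta}^{+}$, so the factor $c_{\alpha}^{-}c_{\alpha}^{+}c_{\beta}^{-}c_{\beta}^{+}$ cancels and I am left with
\[
  (f_{\gamma;r,s},e_{\gamma;r,s})_{r,s} = -q^{-(\alpha,\beta)}\zeta(\beta,\alpha)^{-2}\,
  \frac{[p_{\beta,\alpha}+1]_{q}^{2}(1-q_{\alpha}^{-2})(1-q_{\beta}^{-2})}{1-q_{\gamma}^{-2}}\,
  (f_{\alpha;r,s},e_{\alpha;r,s})_{r,s}(f_{\beta;r,s},e_{\beta;r,s})_{r,s}.
\]

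Next I would clean up the scalar prefactor. By \eqref{eq:zeta_formula}, and since $(\alpha,\beta) = (\beta,\alpha)$, one gets $\zeta(\beta,\alpha)^{-2} = (\omega_{\alpha}',\omega_{\beta})^{-1}q^{(\alpha,\beta)}$, so that $-q^{-(\alpha,\beta)}\zeta(\beta,\alpha)^{-2} = -(\omega_{\alpha}',\omega_{\beta})^{-1}$ and the stray $q$-power disappears. Then, using $q = r^{1/2}s^{-1/2}$ together with the notation \eqref{eq:rs_gamma}, I would note $q^{m}-q^{-m} = (rs)^{-m/2}(r^{m}-s^{m})$, hence $[m]_{q} = (rs)^{-(m-1)/2}[m]_{r,s}$ and in particular $[p_{\beta,\alpha}+1]_{q}^{2} = (rs)^{-p_{\beta,\alpha}}[p_{\beta,\alpha}+1]_{r,s}^{2}$; and since $q_{\delta} = r_{\delta}^{1/2}s_{\delta}^{-1/2}$ one has $1-q_{\delta}^{-2} = (r_{\delta}-s_{\delta})/r_{\delta}$ for $\delta\in\{\alpha,\beta,\gamma\}$, so that
\[
  \frac{(1-q_{\alpha}^{-2})(1-q_{\beta}^{-2})}{1-q_{\gamma}^{-2}}
  = \frac{r_{\gamma}(r_{\alpha}-s_{\alpha})(r_{\beta}-s_{\beta})}{r_{\alpha}r_{\beta}(r_{\gamma}-s_{\gamma})}.
\]
Plugging these in and absorbing the overall minus sign by writing $s_{\gamma}-r_{\gamma}$ in place of $r_{\gamma}-s_{\gamma}$ in the denominator yields exactly the asserted identity.

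The hard part, such as it is, is purely bookkeeping: keeping careful track of the half-integer powers of $rs$ and of the sign when converting $[m]_{q}$ and $q_{\delta}$ into their $(r,s)$-forms, and verifying the cancellation $-q^{-(\alpha,\beta)}\zeta(\beta,\alpha)^{-2} = -(\omega_{\alpha}',\omega_{\beta})^{-1}$. No structural input beyond Corollary~\ref{cor:1param_pairing_recursion}, Proposition~\ref{prop:pbw-basis-twist}, \eqref{eq:2_to_1_pairing} and \eqref{eq:zeta_formula} is required.
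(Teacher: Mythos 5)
Your proof is correct and follows essentially the same route as the paper's: start from Corollary~\ref{cor:1param_pairing_recursion}, substitute the identification \eqref{eq:2_to_1_pairing}, cancel the $c$-factors via $c_{\gamma}^{-}c_{\gamma}^{+}=\zeta(\beta,\alpha)^{2}c_{\alpha}^{-}c_{\alpha}^{+}c_{\beta}^{-}c_{\beta}^{+}$, and then convert the $q$-quantities to $(r,s)$-form using \eqref{eq:zeta_formula} and the identities $[m]_{q}=(rs)^{-(m-1)/2}[m]_{r,s}$ and $1-q_{\delta}^{-2}=r_{\delta}^{-1}(r_{\delta}-s_{\delta})$. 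The only cosmetic difference is that the paper folds the sign into $s_{\gamma}-r_{\gamma}$ from the start, while you keep it explicit until the last step; the content is identical.
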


\begin{proof}
First, we note that $[p_{\beta,\alpha} + 1]_{q}^{2}  = (rs)^{-p_{\beta,\alpha}}[p_{\beta,\alpha} + 1]_{r,s}^{2}$ and
$(1 - q_{\gamma}^{-2}) = (1 - r_{\gamma}^{-1}s_{\gamma}) = r_{\gamma}^{-1}(r_{\gamma} - s_{\gamma})$ for all $\gamma \in \Phi^{+}$.
Thus, by Corollary~\ref{cor:1param_pairing_recursion}, formula~\eqref{eq:2_to_1_pairing}, and the definitions of $c_{\gamma}^{\pm}$
in Proposition~\ref{prop:pbw-basis-twist}, we have:
\begin{align*}
  (f_{\gamma;r,s},e_{\gamma;r,s})_{r,s}
  &= q^{-(\alpha,\beta)}
     \frac{r_{\gamma}(rs)^{-p_{\beta,\alpha}}[p_{\beta,\alpha} + 1]_{r,s}^{2}(r_{\alpha} - s_{\alpha})(r_{\beta} - s_{\beta})c_{\alpha}^{+}c_{\alpha}^{-}c_{\beta}^{+}c_{\beta}^{-}}
          {r_{\alpha}r_{\beta}(s_{\gamma} - r_{\gamma})c_{\gamma}^{+}c_{\gamma}^{-}}
     (f_{\alpha;r,s},e_{\alpha;r,s})_{r,s}(f_{\beta;r,s},e_{\beta;r,s})_{r,s} \\
  &= \zeta(\beta,\alpha)^{-2}q^{-(\alpha,\beta)}
     \frac{r_{\gamma}(rs)^{-p_{\beta,\alpha}}[p_{\beta,\alpha} + 1]_{r,s}^{2}(r_{\alpha} - s_{\alpha})(r_{\beta} - s_{\beta})}
          {r_{\alpha}r_{\beta}(s_{\gamma} - r_{\gamma})}
     (f_{\alpha;r,s},e_{\alpha;r,s})_{r,s}(f_{\beta;r,s},e_{\beta;r,s})_{r,s} \\
  &\overset{\eqref{eq:zeta_formula}}{=}\frac{r_{\gamma}(\omega_{\alpha}',\omega_{\beta})^{-1}
   (rs)^{-p_{\beta,\alpha}}[p_{\beta,\alpha} + 1]_{r,s}^{2}(r_{\alpha} - s_{\alpha})(r_{\beta} - s_{\beta})}
         {r_{\alpha}r_{\beta}(s_{\gamma} - r_{\gamma})}
    (f_{\alpha;r,s},e_{\alpha;r,s})_{r,s}(f_{\beta;r,s},e_{\beta;r,s})_{r,s}.
\end{align*}
This completes the proof.
\end{proof}

This result finally establishes~\cite[Theorem 5.12(c)]{MT1}, of which~\cite[Theorem 7.2]{MT2} is a special case.

\begin{remark}
The above results immediately imply~\cite[Theorem 5.13, Remark 5.14]{MT1} thus providing a factorization of $\Theta_{r,s}$
from~\eqref{eq:Theta} into an ordered product of \emph{$(r,s)$-exponents}, labeled by positive roots of $\fg$. This can also
be applied in the super setup, thus providing a conceptual explanation of~\cite[formulas~(9),~(11)]{Z}.
\end{remark}


\section{Subalgebra interpretation}\label{sec:subalgebra-interpret}

In this Section, following~\cite[Appendix A]{MT0}, we show that $U_{q,\zeta}(\fg)$ may be realized as a subalgebra of a certain
extension of $U_{q,q^{-1}}(\fg)$. We subsequently realize $U_{q,q^{-1}}(\fg)$ as a subalgebra of an extension of~$U_{q}(\fg)$.


\subsection{An alternative approach to the twisting construction}
\

Consider the algebra $\wtd{U}_{q,q^{-1}}(\fg)$, which has the generators and relations
of $U_{q,q^{-1}}(\fg)$, along with additional generators $\{\omega_{i,\zeta}^{\pm 1}\}_{i = 1}^{n}$ that satisfy the following relations:
\begin{align*}
  &\omega_{i,\zeta}^{\pm 1}\omega_{i,\zeta}^{\mp 1} = 1, &
  &[\omega_{i,\zeta}^{\pm 1},\omega_{j}] = [\omega_{i,\zeta}^{\pm 1},\omega_{j}'] = [\omega_{i,\zeta}^{\pm 1},\omega_{j,\zeta}] = 0, \\
  &\omega_{i,\zeta}e_{j} = \zeta(\alpha_{i},\alpha_{j})e_{j}\omega_{i,\zeta}, & &\omega_{i,\zeta}f_{j} = \zeta(\alpha_{j},\alpha_{i})f_{j}\omega_{i,\zeta}.
\end{align*}
We define the following elements:
\begin{equation}\label{eq:Sev-twist}
  \wtd{e}_{i} = e_{i}\omega_{i,\zeta},\qquad \wtd{f}_{i} = f_{i}\omega_{i,\zeta},\qquad
  \wtd{\omega}_{i} = \omega_{i}\omega_{i,\zeta}^{2},\qquad \wtd{\omega}_{i}' = \omega_{i}'\omega_{i,\zeta}^{2}
  \qquad \forall\, 1\leq i \leq n.
\end{equation}
Then we have:

\begin{prop}\label{prop:subalgebra_interpretation_qgp}
(a) The elements~\eqref{eq:Sev-twist} satisfy the following relations:
\begin{align*}
  & \wtd{\omega}_{i}\wtd{e}_{j} = \zeta(\alpha_{i},\alpha_{j})^{2}q^{(\alpha_{i},\alpha_{j})}\wtd{e}_{j}\wtd{\omega}_{i},
  & & \wtd{\omega}_{i}'\wtd{e}_{j} = \zeta(\alpha_{i},\alpha_{j})^{2}q^{-(\alpha_{i},\alpha_{j})}\wtd{e}_{j}\wtd{\omega}_{i}', \\
  & \wtd{\omega}_{i}\wtd{f}_{j} = \zeta(\alpha_{j},\alpha_{i})^{2}q^{-(\alpha_{i},\alpha_{j})}\wtd{f}_{j}\wtd{\omega}_{i},
  & & \wtd{\omega}_{i}'\wtd{f}_{j} = \zeta(\alpha_{j},\alpha_{i})^{2}q^{(\alpha_{i},\alpha_{j})}\wtd{f}_{j}\wtd{\omega}_{i}',
\end{align*}
\[
  \wtd{e}_{i}\wtd{f}_{j} - \wtd{f}_{j}\wtd{e}_{i} =
  \delta_{ij}\frac{\wtd{\omega}_{i} - \wtd{\omega}_{i}'}{q_{i} - q_{i}^{-1}},
\]
and
\begin{align*}
  &\sum_{k = 1}^{1 - a_{ij}} (-1)^k\qbinom{1 - a_{ij}}{k}_{q_{i}}\zeta(\alpha_{j},\alpha_{i})^{2k} \wtd{e}_{i}^{1 - a_{ij} - k}\wtd{e}_{j}\wtd{e}_{i}^{k}
   = 0 \quad \mathrm{for}\ \ i\ne j, \\
  &\sum_{k = 1}^{1 - a_{ij}} (-1)^k\qbinom{1 - a_{ij}}{k}_{q_{i}}\zeta(\alpha_{j},\alpha_{i})^{2k}\wtd{f}_{i}^{k}\wtd{f}_{j}\wtd{f}_{i}^{1 - a_{ij} - k}
   = 0 \quad \mathrm{for}\ \ i\ne j.
\end{align*}

\medskip
\noindent
(b) The assignment $e_{i} \mapsto \wtd{e}_{i}, f_{i} \mapsto \wtd{f}_{i}, \omega_{i} \mapsto \wtd{\omega}_{i}, \omega_{i}' \mapsto \wtd{\omega}_{i}'$
gives rise to an algebra isomorphism between $U_{q,\zeta}(\fg)$ and the subalgebra $\wtd{U}$ of $\wtd{U}_{q,q^{-1}}(\fg)$ generated by
$\{\wtd{e}_{i},\wtd{f}_{i},\wtd{\omega}^{\pm 1}_{i},(\wtd{\omega}_{i}')^{\pm 1}\}_{i = 1}^{n}$.
\end{prop}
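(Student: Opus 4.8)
The plan is to treat the two parts of Proposition~\ref{prop:subalgebra_interpretation_qgp} as follows.

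\emph{Part (a)} is a direct computation in $\wtd{U}_{q,q^{-1}}(\fg)$. Since $\zeta(\alpha_i,\alpha_i)=1$, the generator $\omega_{i,\zeta}$ commutes with $e_i$ and with $f_i$, so $\wtd{e}_i^{\,p}=e_i^{\,p}\omega_{i,\zeta}^{\,p}$, $\wtd{f}_i^{\,p}=f_i^{\,p}\omega_{i,\zeta}^{\,p}$, and $\wtd{\omega}_i=\omega_i\omega_{i,\zeta}^{2}$, $\wtd{\omega}_i'=\omega_i'\omega_{i,\zeta}^{2}$. In each asserted identity I would move all occurrences of the (pairwise commuting) elements $\omega_{\cdot,\zeta}$ to the right using $\omega_{i,\zeta}e_j=\zeta(\alpha_i,\alpha_j)e_j\omega_{i,\zeta}$ and $\omega_{i,\zeta}f_j=\zeta(\alpha_j,\alpha_i)f_j\omega_{i,\zeta}$; what remains is the corresponding relation of $U_{q,q^{-1}}(\fg)$ — i.e.\ the $r=q$, $s=q^{-1}$ case of~\eqref{eq:R1}--\eqref{eq:R5}, where $r_is_i=1$ and $[m]_{r_i,s_i}=[m]_{q_i}$ — multiplied by a common monomial in the $\omega_{\cdot,\zeta}$'s and an overall nonzero scalar. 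The skew-bicharacter identities $\zeta(\alpha_i,\alpha_j)\zeta(\alpha_j,\alpha_i)=1$ and $\zeta(\alpha_i,\alpha_i)=1$, together with $p_{ij}=r^{\langle\alpha_j,\alpha_i\rangle}s^{-\langle\alpha_i,\alpha_j\rangle}q^{-(\alpha_i,\alpha_j)}$, are precisely what make the resulting scalars equal to those displayed.

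For \emph{part (b)}, I would first record that $U_{q,\zeta}(\fg)=(U_{q,q^{-1}}(\fg))_\zeta$ is presented by $e_i,f_i,\omega_i^{\pm1},(\omega_i')^{\pm1}$ subject to the relations~\eqref{eq:R1}--\eqref{eq:R5} (at $r=q$, $s=q^{-1}$) rewritten in the twisted product $\circ$; this holds because those relations are homogeneous for the bigrading~\eqref{eq:our-bigrading}, and twisting a homogeneous presentation produces the twisted presentation. Part (a) then says exactly that $\wtd{e}_i,\wtd{f}_i,\wtd{\omega}_i^{\pm1},(\wtd{\omega}_i')^{\pm1}\in\wtd{U}_{q,q^{-1}}(\fg)$ satisfy these relations once each $\circ$-product is expanded via~\eqref{eq:twisted_product_general} into the ordinary product of $\wtd{U}_{q,q^{-1}}(\fg)$. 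Hence the assignment~\eqref{eq:Sev-twist} extends to a $\bb{K}$-algebra homomorphism $\Psi\colon U_{q,\zeta}(\fg)\to\wtd{U}_{q,q^{-1}}(\fg)$, which by construction maps onto the subalgebra $\wtd{U}$.

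It remains to show $\Psi$ is injective, the heart of the matter. The plan is to exploit that $\wtd{U}_{q,q^{-1}}(\fg)$ is the crossed product of $U_{q,q^{-1}}(\fg)$ with the group algebra $\bb{K}[\omega_{1,\zeta}^{\pm1},\dots,\omega_{n,\zeta}^{\pm1}]\cong\bb{K}[\BZ^n]$, where the $i$-th generator acts on $U_{q,q^{-1}}(\fg)$ by the algebra automorphism rescaling the $Q$-degree-$\mu$ component by $\zeta(\alpha_i,\mu)$ (well defined since $\zeta(\alpha_i,\cdot)$ is a character of $Q$, and the $n$ such automorphisms commute); this is clear by comparing presentations. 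In particular $U_{q,q^{-1}}(\fg)\hookrightarrow\wtd{U}_{q,q^{-1}}(\fg)$, which is free as a left $U_{q,q^{-1}}(\fg)$-module with basis $\{\omega_{\mu,\zeta}\}_{\mu\in Q}$ (writing $\omega_{\mu,\zeta}=\prod_i\omega_{i,\zeta}^{c_i}$ for $\mu=\sum_i c_i\alpha_i$), and it carries a $Q$-grading extending that of $U_{q,q^{-1}}(\fg)$ with every $\omega_{i,\zeta}$ in degree $0$. Now $\Psi$ is $Q$-graded, and pushing all $\omega_{\cdot,\zeta}$'s to the right (as in (a)) identifies the subalgebra generated by the $\wtd{f}_i$ with $\bigoplus_{\mu\in Q^+}(U^-_{q,q^{-1}})_{-\mu}\,\omega_{\mu,\zeta}$; its $Q$-degree $-\mu$ component has dimension $\dim_\BC U(\fn^-)_{-\mu}<\infty$ by Proposition~\ref{prop:dim-count}, which equals $\dim_{\bb{K}}(U^-_{q,\zeta})_{-\mu}$ (twisting does not alter the underlying graded vector space), so $\Psi$ restricts to an isomorphism $U^-_{q,\zeta}\iso\bigoplus_\mu(U^-_{q,q^{-1}})_{-\mu}\omega_{\mu,\zeta}$; symmetrically for $U^+_{q,\zeta}$, while $\Psi|_{U^0_{q,\zeta}}$ is evidently an isomorphism onto the subalgebra generated by $\wtd{\omega}_i^{\pm1},(\wtd{\omega}_i')^{\pm1}$ (spanned by the linearly independent $\omega_\lambda\omega'_{\lambda'}\omega_{\lambda+\lambda',\zeta}^{2}$, $\lambda,\lambda'\in Q$). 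Finally, invoking the triangular decomposition $U^-_{q,\zeta}\otimes U^0_{q,\zeta}\otimes U^+_{q,\zeta}\iso U_{q,\zeta}(\fg)$ — inherited from that of $U_{q,q^{-1}}(\fg)$, since $\circ$ differs from the ordinary product by a nonzero scalar on each bigraded component — I would write an arbitrary element of $\ker\Psi$ in this form; applying $\Psi$ and pushing the $\omega_{\cdot,\zeta}$'s right yields $\sum_k c_k z_k\omega_{\delta_k,\zeta}$ with $c_k\in\bb{K}^*$, the $z_k$ pairwise distinct monomials of a triangular PBW basis of $U_{q,q^{-1}}(\fg)$, and $\delta_k\in Q$ a function of $z_k$. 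Grouping by the value of $\delta_k$ and using freeness of $\wtd{U}_{q,q^{-1}}(\fg)$ over $\bb{K}[\BZ^n]$ together with the PBW theorem for $U_{q,q^{-1}}(\fg)$ forces all $c_k=0$, so $\ker\Psi=0$ and $\Psi\colon U_{q,\zeta}(\fg)\iso\wtd{U}$.

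The computation in (a) and the well-definedness of $\Psi$ are routine; the part needing real care is injectivity, specifically identifying $\wtd{U}_{q,q^{-1}}(\fg)$ as a crossed product (so that it is ``large enough'' and carries the expected PBW-type basis) and tracking the $\BZ^n$-degrees $\delta_k$ that $\Psi$ attaches to PBW monomials. The same scheme will establish the more general Proposition~\ref{prop:subalgebra_interpretation_general}, and composing with $\varphi$ from Proposition~\ref{prop:twisted_algebra} yields the analogous statement with $U_{r,s}(\fg)$ in place of $U_{q,\zeta}(\fg)$.
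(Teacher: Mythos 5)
Your proof is correct, but it differs from the paper's route. The paper proves Proposition~\ref{prop:subalgebra_interpretation_qgp}(b) by first establishing the abstract smash-product isomorphism $\psi\colon U_{q,q^{-1}}(\fg)\otimes\bb{K}[Q]\iso\wtd{U}_{q,q^{-1}}(\fg)$ (with the twisted multiplication of Lemma~\ref{lem:smash-product} and an explicitly written inverse on generators), then invoking the general Proposition~\ref{prop:subalgebra_interpretation_general} to get $\varphi\colon U_{q,\zeta}(\fg)\iso U'$, and finally composing $\psi\circ\varphi$ and checking on generators that it agrees with $e_i\mapsto\wtd{e}_i$, etc. You instead argue injectivity of $\Psi$ directly: you observe the same underlying structural fact — that $\wtd{U}_{q,q^{-1}}(\fg)$ is a crossed product, hence free as a left $U_{q,q^{-1}}(\fg)$-module on the $\omega_{\mu,\zeta}$ — but then push the $\omega_{\cdot,\zeta}$'s to the right through a PBW monomial and use freeness together with the PBW theorem for $U_{q,q^{-1}}(\fg)$ and the triangular decomposition of $U_{q,\zeta}(\fg)$ to kill the kernel. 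Both arguments hinge on the crossed-product identification; the trade-off is that the paper's route is more abstract and reusable (it isolates Proposition~\ref{prop:subalgebra_interpretation_general} as a stand-alone lemma, exactly because it is applied again elsewhere), whereas yours avoids that machinery at the cost of invoking the PBW/triangular-decomposition package for $U_{q,q^{-1}}(\fg)$. One small caveat: at the end you claim your scheme also yields Proposition~\ref{prop:subalgebra_interpretation_general}, but the logical dependence in the paper runs the other way (the general statement is proved first, abstractly, and Proposition~\ref{prop:subalgebra_interpretation_qgp} is derived from it); your scheme does not really generalize to arbitrary $G$-bigraded Hopf algebras, since you lean on PBW bases that are specific to quantum groups. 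This does not affect the correctness of your proof of the statement at hand.
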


In fact, the construction above is a special case of a more general one which we shall now describe. Recall
that an algebra $A$ over a field $F$ is a \textbf{left module-algebra} over a Hopf algebra $H$ if $A$ is a left $H$-module,
and the multiplication map $A \otimes A \to A$ and unit map $F \to A$ are both $H$-module homomorphisms, where $A \otimes A$
is made into an $H$-module via $\Delta$, and $F$ is made into an $H$-module via $\epsilon$. Explicitly, this means that:
\begin{equation*}
  h\cdot (ab) = \sum_{(h)}(h_{(1)}\cdot a)(h_{(2)}\cdot b) \qquad \text{and}\qquad
  h \cdot 1 = \epsilon(h)\qquad \text{for all}\qquad h \in H,\ a,b \in A.
\end{equation*}
Similarly, an algebra $A$ is called a \textbf{right module-algebra} over $H$ if $A$ is a right $H$-module such that the multiplication
and unit maps are both right $H$-module homomorphisms. Now, fix a group $G$, and let $H = F[G]$ be the group algebra of $G$ over $F$.
Let $A$ be an algebra which is also an $H$-$H$-bimodule, and such that the left $H$-module structure makes $A$ into
a left module-algebra over $H$, and the right $H$-module structure makes $A$ into a right module-algebra over $H$.

\begin{lemma}\label{lem:smash-product}
The following defines an algebra structure on $A \otimes H$:
\[
  \left( \sum_{g \in G}x_{g} \otimes g \right) \cdot \left( \sum_{h \in G}y_{h} \otimes h \right)
  = \sum_{g,h \in G}(x_{g}h)(gy_{h}) \otimes gh.
\]
\end{lemma}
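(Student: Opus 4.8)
The plan is to recognize the stated product as a \emph{double smash product} (or bismash product) and to verify associativity directly, reducing everything to the module-algebra axioms. First I would set up notation: for $x \in A$ and $g \in G$, I write $g\cdot x$ for the left action and $x\cdot g$ for the right action of $H=F[G]$ on $A$, so that the proposed multiplication reads $(x_g\otimes g)(y_h\otimes h)=(x_g\cdot h)(g\cdot y_h)\otimes gh$. Since $H=F[G]$ is spanned by grouplikes, it suffices to check the algebra axioms on elements of the form $x\otimes g$ with $x\in A$, $g\in G$; bilinearity then extends everything to all of $A\otimes H$. The unit will be $1_A\otimes e$ where $e\in G$ is the identity, and the unit axioms are immediate from $1_A\cdot g = \epsilon(g)1_A = 1_A$ and $g\cdot 1_A = 1_A$ (the unit maps being $H$-module homomorphisms on both sides), together with $e\cdot x = x = x\cdot e$.

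The substantive point is associativity. Taking three elements $x\otimes g$, $y\otimes h$, $z\otimes k$, I would expand $\big((x\otimes g)(y\otimes h)\big)(z\otimes k)$ and $(x\otimes g)\big((y\otimes h)(z\otimes k)\big)$ and compare. Both expansions land in the $ghk$-graded piece $A\otimes ghk$, so I only need to match the $A$-components. On the left one gets $\big(((x\cdot h)(g\cdot y))\cdot k\big)\big(gh\cdot z\big)$, and on the right one gets $\big(x\cdot hk\big)\big(g\cdot((y\cdot k)(h\cdot z))\big)$. Now I would use: (i) the right action is by algebra homomorphisms (right module-algebra axiom), so $((x\cdot h)(g\cdot y))\cdot k = (x\cdot hk)\big((g\cdot y)\cdot k\big)$; (ii) the left action is by algebra homomorphisms (left module-algebra axiom), so $g\cdot((y\cdot k)(h\cdot z)) = (g\cdot(y\cdot k))(g\cdot(h\cdot z))$ and $gh\cdot z = g\cdot(h\cdot z)$; (iii) the left and right actions commute since $A$ is an $H$-$H$-bimodule, giving $(g\cdot y)\cdot k = g\cdot(y\cdot k)$. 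After these substitutions both sides become $(x\cdot hk)\,(g\cdot(y\cdot k))\,(g\cdot(h\cdot z))$ as a product of three elements of $A$, which agree by associativity of $A$. I expect this bookkeeping to be the only real content, and the main (mild) obstacle is simply keeping the order of the three $A$-factors straight and invoking the bimodule compatibility $(g\cdot y)\cdot k = g\cdot(y\cdot k)$ at exactly the right moment.

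Finally I would note that for general (infinite) $G$ the displayed formula involves finite sums $\sum_{g\in G}x_g\otimes g$, so all manipulations are over finitely many terms and no convergence issue arises; the argument above applied termwise and extended bilinearly gives the result. If a cleaner exposition is desired, one may instead cite that $A\otimes H$ here is the iterated smash product $(A\#H)$ built from the right action, further smashed by the left action (the two actions commuting), and invoke the standard fact that a smash product of an $H$-module algebra by a Hopf algebra $H$ is associative; but since the verification is short I would prefer to spell it out as above.
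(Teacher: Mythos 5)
Your proof is correct and takes essentially the same route as the paper: expand both sides of the associativity identity on rank-one tensors, apply the left/right module-algebra axioms (valid here because $g,h,k$ are grouplike) together with the bimodule compatibility $(g\cdot y)\cdot k = g\cdot(y\cdot k)$, and observe both sides reduce to the common product $(x\cdot hk)(g\cdot(y\cdot k))(g\cdot(h\cdot z))$. The unit verification is also the same, modulo writing $1_A\otimes e$ versus the paper's $1\otimes 1$.
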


\begin{proof}
Since $1 \cdot g = g \cdot 1 = \epsilon(g) = 1$ for all $g \in G$, the element $1 \otimes 1$ is the unit.
To check associativity, let $x,y,z \in A$ and $g,h,k \in G$. Then
\[
  \left( (x \otimes g) \cdot (y \otimes h) \right) \cdot (z \otimes k) =
  \left( (xh)(gy) \otimes gh \right) \cdot (z \otimes k) = \left( ((xh)(gy))k \right)((gh)z) \otimes ghk,
\]
while
\[
  (x \otimes g) \cdot \left( (y \otimes h) \cdot (z \otimes k) \right) =
  (x \otimes g) \cdot \left( (yk)(hz) \otimes hk \right) = (x(hk))(g((yk)(hz))) \otimes ghk.
\]
Then since $A$ is a left and right module-algebra over $H$ and $g,h,k$ are grouplike, we have $((xh)(gy))k = ((xh)k)((gy)k)$
and $g((yk)(hz)) = (g(yk))(g(hz))$. Since $A$ is also an $H$-$H$-bimodule, we therefore have
\[
  \left( ((xh)(gy))k \right)((gh)z) = (x(hk))(g(yk))(g(hz)) = (x(hk))(g((yk)(hz))),
\]
and hence
  $\left( (x \otimes g) \cdot (y \otimes h) \right) \cdot (z \otimes k) = (x \otimes g) \cdot \left( (y \otimes h)\cdot (z \otimes k) \right)$,
as desired.
\end{proof}

Now suppose that $G$ is an abelian group (written additively), and $A$ is a $G$-bigraded Hopf algebra over an algebraically closed field $F$.
We shall write the elements of $F[G]$ as $\sum_{g \in G}\alpha_{g}e^{g}$, where $e^{g}\cdot e^{h} = e^{g + h}$. Let $\sigma\colon G \times G \to F^{*}$
be a skew bicharacter. Then we may define a left action of $F[G]$ on $A$ via $e^{g}\cdot x = \sigma(g,h + k)^{1/2}x$ for $x \in A_{h,k}$. Similarly, we can
define a right action of $F[G]$ on $A$ via $x\cdot e^{g} = \sigma(h + k,g)^{1/2}x$ for $x \in A_{h,k}$. It is clear that $A$ is an $F[G]$-$F[G]$-bimodule.
Moreover, since $1 \in A_{0,0}$, $A_{h,k}A_{h',k'} \subset A_{h+ h',k + k'}$ and $\sigma(g,-)^{1/2}$ is a group homomorphism for all $g \in G$,
$A$ is a left and right module-algebra over $F[G]$ with respect to these actions. Thus, we may define an algebra structure on $A \otimes F[G]$
as in Lemma~\ref{lem:smash-product}. Consider the subspace
\begin{equation}\label{eq:A'-subspace}
  A' = \bigoplus_{g,h \in G} A_{g,h} \otimes e^{g - h}.
\end{equation}
It is easy to see that $A'$ is a subalgebra of $A \otimes F[G]$. Moreover, for any $x \in A_{g,h}$ and $y \in A_{g',h'}$, we have:
\begin{multline*}
  (x \otimes e^{g-h}) \cdot (y \otimes e^{g'-h'}) = (xe^{g'-h'})(e^{g-h}y) \otimes e^{g + g' - h - h'} \\
  = \sigma(g + h,g' - h')^{1/2}\sigma(g - h,g' + h')^{1/2}xy \otimes e^{g + g' - h - h'}
  = \sigma(g,g')\sigma(h,h')^{-1}xy \otimes e^{g + g' - h - h'}.
\end{multline*}
Let $A_{\sigma}$ denote the algebra formed by twisting the multiplication of $A$ via the bicharacter $\sigma$,
see~\eqref{eq:twisted_product_general}. Then, the above discussion implies:

\begin{prop}\label{prop:subalgebra_interpretation_general}
The map $\varphi\colon A \to A'$ given by
\[
  \varphi(x) = x \otimes e^{g-h}\quad \text{for all}\ g,h \in G,\ x \in A_{g,h}
\]
defines an algebra isomorphism $\varphi\colon A_{\sigma}\iso A'$.
\end{prop}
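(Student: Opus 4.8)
The plan is to verify directly that the map $\varphi\colon A_\sigma \to A'$ defined by $\varphi(x) = x\otimes e^{g-h}$ for $x\in A_{g,h}$ is a bijective algebra homomorphism. First I would observe that $\varphi$ is well-defined and linear since it is defined on each bigraded component $A_{g,h}$ separately and the $A'$ of~\eqref{eq:A'-subspace} is precisely the direct sum of the images $A_{g,h}\otimes e^{g-h}$; bijectivity as a linear map is then immediate, since on the $(g,h)$-component $\varphi$ restricts to the obvious isomorphism $A_{g,h}\iso A_{g,h}\otimes e^{g-h}$, and these assemble to a linear isomorphism because the decomposition $A = \bigoplus_{g,h} A_{g,h}$ matches the decomposition $A' = \bigoplus_{g,h} A_{g,h}\otimes e^{g-h}$.

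Next I would check that $\varphi$ preserves the unit and the multiplication. For the unit, $1\in A_{0,0}$ maps to $1\otimes e^{0} = 1\otimes 1$, which is the unit of $A\otimes F[G]$ (hence of $A'$) by Lemma~\ref{lem:smash-product}. For multiplicativity, it suffices to check on homogeneous elements $x\in A_{g,h}$ and $y\in A_{g',h'}$. The computation is exactly the displayed one immediately preceding the statement: using the smash-product multiplication of Lemma~\ref{lem:smash-product} together with the left and right $F[G]$-actions $e^{a}\cdot z = \sigma(a, b+c)^{1/2} z$ and $z\cdot e^{a} = \sigma(b+c, a)^{1/2} z$ for $z\in A_{b,c}$, one finds
\[
  (x\otimes e^{g-h})\cdot(y\otimes e^{g'-h'}) = \sigma(g,g')\sigma(h,h')^{-1}\, xy \otimes e^{(g+g')-(h+h')},
\]
where the scalar simplifies by the bicharacter identities $\sigma(a+b,c) = \sigma(a,c)\sigma(b,c)$ and $\sigma(a,b) = \sigma(b,a)^{-1}$, and by $A_{g,h}A_{g',h'}\subset A_{g+g',h+h'}$. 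On the other hand, by the definition of the twisted product~\eqref{eq:twisted_product_general}, $x\circ y = \sigma(g,g')\sigma(h,h')^{-1}xy$ in $A_\sigma$, and this lies in $A_{g+g',h+h'}$, so $\varphi(x\circ y) = \sigma(g,g')\sigma(h,h')^{-1}xy\otimes e^{(g+g')-(h+h')}$. The two expressions agree, so $\varphi(x\circ y) = \varphi(x)\cdot\varphi(y)$, and by bilinearity this extends to all of $A_\sigma$.

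Finally, combining these observations: $\varphi$ is a linear bijection onto $A'$ which preserves units and products, hence an algebra isomorphism $A_\sigma\iso A'$. (One should also recall that $A'$ being a subalgebra of $A\otimes F[G]$ was already noted just before the statement, so the target is genuinely an algebra.) There is no real obstacle here — the content was already extracted in the paragraph preceding the proposition; the only thing to be careful about is bookkeeping with the square-root factors in the one-sided actions, namely checking that $\sigma(g+h,g'-h')^{1/2}\sigma(g-h,g'+h')^{1/2}$ collapses to $\sigma(g,g')\sigma(h,h')^{-1}$ via $\sigma(g+h,g'-h') = \sigma(g,g')\sigma(g,h')^{-1}\sigma(h,g')\sigma(h,h')^{-1}$ and the skew-symmetry $\sigma(g,h')\sigma(h,g')^{-1}$ cancelling against the analogous cross-terms from the second factor. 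This is routine and was displayed in the excerpt, so the proof is essentially a two-line verification plus the remark that $\varphi$ respects the grading and is therefore bijective.
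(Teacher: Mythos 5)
Your proposal is correct and follows the same route as the paper: the paper's own ``proof'' is simply the displayed computation immediately preceding the proposition (``Then, the above discussion implies''), and your write-up fills in precisely the remaining bookkeeping (bijectivity via the matching bigraded decompositions and preservation of the unit). One small remark: in your final paragraph you attribute the cancellation of the cross terms to skew-symmetry, but in fact $\sigma(g,h')^{\mp 1/2}$ and $\sigma(h,g')^{\pm 1/2}$ simply appear with opposite exponents in the two half-integral factors and cancel outright, with no need to invoke $\sigma(a,b)=\sigma(b,a)^{-1}$; this does not affect the correctness of the conclusion.
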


We shall now use this result to prove Proposition~\ref{prop:subalgebra_interpretation_qgp}.

\begin{proof}[Proof of Proposition~\ref{prop:subalgebra_interpretation_qgp}]
Part (a) is based on elementary calculations, which we leave to the interested reader.
Let us now prove part (b). For $\mu = \sum_{i = 1}^{n}c_{i}\alpha_{i} \in Q$, we define
\[
  \omega_{\mu,\zeta} = \prod_{1\leq i \leq n}\omega_{i,\zeta}^{c_{i}}.
\]
Note that for any $x \in U_{q,q^{-1}}(\fg)_{\alpha,\beta}$ and $\mu \in Q$, we have
\begin{equation}\label{eq:Ktail-ad}
  \omega_{\mu,\zeta}x\omega_{\mu,\zeta}^{-1} = \zeta(\mu,\alpha + \beta)x.
\end{equation}
Now, we define a linear map $\psi\colon U_{q,q^{-1}}(\fg) \otimes \bb{K}[Q] \to \wtd{U}_{q,q^{-1}}(\fg)$ by
\begin{equation}\label{eq:f-map-aux}
  \psi(x \otimes e^{\mu}) = \zeta(\mu,\alpha + \beta)^{1/2}x\omega_{\mu,\zeta} \quad \text{for all}\quad
  x \in U_{q,q^{-1}}(\fg)_{\alpha,\beta},\ \alpha,\beta,\mu \in Q,
\end{equation}
where $U_{q,q^{-1}}(\fg)$ is identified with its image in $\wtd{U}_{q,q^{-1}}(\fg)$. We endow
$U_{q,q^{-1}}(\fg) \otimes \bb{K}[Q]$ with the aforementioned algebra structure with $\sigma = \zeta$,
see the paragraph preceding~\eqref{eq:A'-subspace}.
For any $x \in U_{q,q^{-1}}(\fg)_{\alpha,\beta}$, $y \in U_{q,q^{-1}}(\fg)_{\alpha',\beta'}$, and $\mu,\nu \in Q$, we have:
\begin{equation*}
\begin{split}
  \psi((x \otimes e^{\mu}) \cdot (y \otimes e^{\nu})) &= \psi((xe^{\nu})(e^{\mu}y) \otimes e^{\mu + \nu}) \\
  &= \zeta(\alpha + \beta,\nu)^{1/2}\zeta(\mu,\alpha' + \beta')^{1/2}\psi(xy \otimes e^{\mu + \nu}) \\
  &= \zeta(\alpha + \beta,\nu)^{1/2}\zeta(\mu,\alpha' + \beta')^{1/2}\zeta(\mu + \nu,\alpha + \alpha' + \beta + \beta')^{1/2}xy\omega_{\mu + \nu,\zeta} \\
  &= \zeta(\mu,\alpha' + \beta')\zeta(\mu,\alpha + \beta)^{1/2}\zeta(\nu,\alpha' + \beta')^{1/2}xy\omega_{\mu + \nu,\zeta},
\end{split}
\end{equation*}
while
\begin{equation*}
\begin{split}
  \psi(x \otimes e^{\mu})\psi(y \otimes e^{\nu})
  &= \zeta(\mu,\alpha + \beta)^{1/2}\zeta(\nu,\alpha' + \beta')^{1/2}x\omega_{\mu,\zeta}y\omega_{\nu,\zeta} \\
  &\overset{\eqref{eq:Ktail-ad}}{=}
   \zeta(\mu,\alpha' + \beta')\zeta(\mu,\alpha + \beta)^{1/2}\zeta(\nu,\alpha' + \beta')^{1/2}xy\omega_{\mu + \nu,\zeta}.
\end{split}
\end{equation*}
This shows that $\psi$ of~\eqref{eq:f-map-aux} is an algebra homomorphism. In fact, it is an algebra isomorphism, with
the inverse map $\wtd{U}_{q,q^{-1}}(\fg) \to U_{q,q^{-1}}(\fg) \otimes \bb{K}[Q]$ explicitly given on the generators by
\[
  e_{i} \mapsto e_{i} \otimes 1, \quad f_{i} \mapsto f_{i} \otimes 1, \quad \omega_{i} \mapsto \omega_{i} \otimes 1, \quad
  \omega_{i}'\mapsto \omega_{i}' \otimes 1, \quad \omega_{i,\zeta} \mapsto 1 \otimes e^{\alpha_{i}} \quad \forall\, 1 \le i \le n.
\]

Following~\eqref{eq:A'-subspace}, we consider the subalgebra
\[
  U' = \bigoplus_{\mu,\nu \in Q}U_{q,q^{-1}}(\fg)_{\mu,\nu} \otimes e^{\mu - \nu} \subset U_{q,q^{-1}}(\fg) \otimes \bb{K}[Q].
\]
Explicitly, $U'$ is generated by
  $\{e_{i} \otimes e^{\alpha_{i}}, f_{i} \otimes e^{\alpha_{i}}, \omega_{i}^{\pm 1} \otimes e^{\pm 2\alpha_{i}},
     (\omega_{i}')^{\pm 1} \otimes e^{\pm 2\alpha_{i}}\}_{i=1}^{n}$,
and we have:
\[
  \psi(e_{i} \otimes e^{\alpha_{i}}) = \wtd{e}_{i}, \quad \psi(f_{i} \otimes e^{\alpha_{i}}) = \wtd{f}_{i},\quad
  \psi(\omega_{i}^{\pm 1} \otimes e^{\pm 2\alpha_{i}}) = (\wtd{\omega}_{i})^{\pm 1},\quad
  \psi((\omega_{i}')^{\pm 1} \otimes e^{\pm 2\alpha_{i}}) = (\wtd{\omega}_{i}')^{\pm 1}.
\]
Thus, we have an algebra isomorphism:
\[
  \psi\colon U'\iso \wtd{U}.
\]

Now, by Proposition~\ref{prop:subalgebra_interpretation_general}, we have an algebra isomorphism $\varphi\colon U_{q,\zeta}(\fg) \iso U'$
defined by $\varphi(x) = x \otimes e^{\mu-\nu}$ for all $x \in U_{q,q^{-1}}(\fg)_{\mu,\nu}$, and therefore
$\psi \circ \varphi\colon U_{q,\zeta}(\fg) \iso \wtd{U}$ is an algebra isomorphism that maps
\[
  x \mapsto \zeta(\mu - \nu,\mu + \nu)^{1/2}x\omega_{\mu - \nu,\zeta} \qquad \text{for all}\qquad x \in {U_{q,q^{-1}}(\fg)}_{\mu,\nu}.
\]
It is easy to see that this is precisely the isomorphism of Proposition~\ref{prop:subalgebra_interpretation_qgp}(b), which completes the proof.
\end{proof}


\subsection{Cartan-doubled quantum group via the key embedding}
\

We can also realize $U_{q,q^{-1}}(\fg)$, which has twice as many Cartan elements as the usual one-parameter quantum group $U_{q}(\fg)$,
as a subalgebra of a certain extension of $U_{q}(\fg)$, as indicated by the following result:

\begin{prop}\label{prop:double_cartan_DJ}
Let $L_{\alpha_{1}},\ldots ,L_{\alpha_{n}}$ be algebraically independent invertible transcendental elements. Then there is a unique injective
algebra homomorphism $\eta \colon U_{q,q^{-1}}(\fg) \to U_{q}(\fg) \otimes \bb{K}[L_{\alpha_{1}}^{\pm 1},\ldots ,L_{\alpha_{n}}^{\pm 1}]$
such that
\begin{equation}\label{eq:double_Cartan_DJ}
  \eta\colon \quad e_{i} \mapsto E_{i} \otimes L_{\alpha_{i}} , \quad f_{i} \mapsto F_{i} \otimes L_{\alpha_{i}}, \quad
  \omega_{i} \mapsto K_{i} \otimes L_{\alpha_{i}}^{2}, \quad \omega_{i}' \mapsto K_{i}^{-1} \otimes L_{\alpha_{i}}^{2}.
\end{equation}
\end{prop}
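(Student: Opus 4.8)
The plan is first to check that the assignment~\eqref{eq:double_Cartan_DJ} respects the defining relations~\eqref{eq:R1}--\eqref{eq:R5} of $U_{q,q^{-1}}(\fg)$, so that $\eta$ is a well-defined algebra homomorphism (uniqueness is immediate, since those elements generate $U_{q,q^{-1}}(\fg)$), and then to prove injectivity by a triangular-decomposition argument. The key preliminary observation is how the relations simplify when $(r,s)=(q,q^{-1})$: one has $r_is_i=1$, $rs=1$, $r_i-s_i=q_i-q_i^{-1}$, the symmetric quantum integer $[m]_{r_i,s_i}=[m]_{q_i}$, and the scalar appearing in~\eqref{eq:R2}--\eqref{eq:R3} equals $r^{\langle\alpha_j,\alpha_i\rangle}s^{-\langle\alpha_i,\alpha_j\rangle}=q^{\langle\alpha_j,\alpha_i\rangle+\langle\alpha_i,\alpha_j\rangle}=q^{(\alpha_i,\alpha_j)}$ (only the symmetrized sum enters, so the $D_n$-specific values of $\langle\cdot,\cdot\rangle$ cause no issue); in particular the $(r,s)$-Serre relations~\eqref{eq:R5} collapse to the ordinary quantum Serre relations of $U_q(\fg)$.

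Granting these identities, the verification is routine bookkeeping. Since $\bb{K}[L_{\alpha_1}^{\pm1},\ldots,L_{\alpha_n}^{\pm1}]$ is central in the target, it suffices to observe that in each relation the total power of every $L_{\alpha_k}$ is constant across all terms, after which the relation reduces to the corresponding relation among $E_i,F_i,K_i^{\pm1}$ in $U_q(\fg)$. For instance, $e_i^{1-a_{ij}-k}e_je_i^{k}$ has image $E_i^{1-a_{ij}-k}E_jE_i^{k}\otimes L_{\alpha_i}^{1-a_{ij}}L_{\alpha_j}$ whose $L$-part is independent of $k$, so the first Serre relation follows; likewise $e_if_i-f_ie_i$ maps to $(E_iF_i-F_iE_i)\otimes L_{\alpha_i}^{2}$, matching $\eta\bigl(\tfrac{\omega_i-\omega_i'}{r_i-s_i}\bigr)=\tfrac{(K_i-K_i^{-1})\otimes L_{\alpha_i}^{2}}{q_i-q_i^{-1}}$, while~\eqref{eq:R1}--\eqref{eq:R3} are immediate. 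Thus $\eta$ is a well-defined algebra homomorphism.

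For injectivity, recall the triangular decomposition $U_{q,q^{-1}}(\fg)=U^-_{q,q^{-1}}\otimes U^0_{q,q^{-1}}\otimes U^+_{q,q^{-1}}$ (standard, cf.\ the arguments behind Proposition~\ref{prop:dim-count}), with $\bb{K}$-basis $\{\omega_\mu\omega_\nu'\}_{\mu,\nu\in Q}$ of $U^0_{q,q^{-1}}$, and fix $Q$-homogeneous bases $\{y_a\}$ of $U^-_{q,q^{-1}}$ and $\{x_b\}$ of $U^+_{q,q^{-1}}$, with $y_a\in(U^-_{q,q^{-1}})_{-\lambda_a}$, $x_b\in(U^+_{q,q^{-1}})_{\kappa_b}$ and $\lambda_a,\kappa_b\in Q^+$; then $\{y_a\omega_\mu\omega_\nu'x_b\}$ is a $\bb{K}$-basis of $U_{q,q^{-1}}(\fg)$. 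Writing $L_\mu:=\prod_iL_{\alpha_i}^{c_i}$ for $\mu=\sum_ic_i\alpha_i\in Q$, one computes $\eta(y_a)=\pi(y_a)\otimes L_{\lambda_a}$ and $\eta(x_b)=\pi(x_b)\otimes L_{\kappa_b}$ (composing $\eta$ with $L_{\alpha_i}\mapsto1$ recovers the homomorphism $\pi$ of~\eqref{eq:Uq_quotient} on $U^\pm_{q,q^{-1}}$, while the $L$-part of a $Q$-homogeneous element is read off from its degree), and $\eta(\omega_\mu\omega_\nu')=K_{\mu-\nu}\otimes L_{\mu+\nu}^{2}$, whence
\[
  \eta(y_a\omega_\mu\omega_\nu'x_b)=\pi(y_a)\,K_{\mu-\nu}\,\pi(x_b)\otimes L_{\lambda_a+\kappa_b+2(\mu+\nu)}.
\]
By Corollary~\ref{cor:double_cartan_+-_iso}, $\{\pi(y_a)\}$ and $\{\pi(x_b)\}$ are bases of $U_q^-$ and $U_q^+$, so by the triangular decomposition of $U_q(\fg)$ the family $\{\pi(y_a)K_\sigma\pi(x_b)\otimes L_\eta\}_{a,\sigma,b,\eta}$ is a basis of the target. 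It remains to note that $(a,\mu,\nu,b)\mapsto(a,\,\mu-\nu,\,b,\,\lambda_a+\kappa_b+2(\mu+\nu))$ is injective: $a$ and $b$ (hence $\lambda_a,\kappa_b$) together with $\mu-\nu$ are read off at once, then $\mu+\nu$ is recovered from the last component, and since $Q$ is torsion-free $(\mu,\nu)$ is determined by $(\mu-\nu,\mu+\nu)$; hence $\eta$ maps a basis injectively into a basis and is therefore injective. The only genuinely non-formal point is this last step: the image retains only the combination $K_{\mu-\nu}$ of the doubled Cartan data, and what rescues injectivity is precisely that the Laurent exponent $L_{\lambda_a+\kappa_b+2(\mu+\nu)}$ records the complementary datum $\mu+\nu$.
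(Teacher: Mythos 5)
Your proof is correct and follows essentially the same route as the paper's: verify the homomorphism on relations (the paper leaves this to the reader), then use the triangular decomposition of $U_{q,q^{-1}}(\fg)$ together with Corollary~\ref{cor:double_cartan_+-_iso} to exhibit a basis whose $\eta$-image is linearly independent because $(\mu-\nu,\mu+\nu)$ determines $(\mu,\nu)$. The only difference is cosmetic: the paper cites~\cite[Corollary~2.6]{HP1} for the triangular decomposition, and it simply asserts the homomorphism property rather than spelling out the collapse of the $(r,s)$-relations at $r=q$, $s=q^{-1}$.
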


Henceforth, we shall often use the notation
\begin{equation}\label{eq:L_mu}
  L_{\mu} = L_{\alpha_{1}}^{c_{1}}\ldots L_{\alpha_{n}}^{c_{n}}\qquad \text{for all}\qquad \mu = \sum_{1\leq i\leq n}c_{i}\alpha_{i} \in Q.
\end{equation}

\begin{proof}
It is easy to verify that~\eqref{eq:double_Cartan_DJ} gives rise to an algebra homomorphism. For injectivity,
first recall from Corollary~\ref{cor:double_cartan_+-_iso} that $U_{q,q^{-1}}^{\pm} \simeq U_{q}^{\pm}$ via
$e_{i} \mapsto E_{i}$ and $f_{i} \mapsto F_{i}$, respectively. For each $\mu \in Q^{+}$, choose bases
$\{\wtd{x}_{i}^{\mu}\}_{i = 1}^{n_{\mu}}$ and $\{\wtd{y}_{i}^{\mu}\}_{i = 1}^{n_{\mu}}$ for $(U_{q,q^{-1}}^{+})_{\mu}$ and
$(U_{q,q^{-1}}^{-})_{-\mu}$, respectively. Under the aforementioned isomorphisms, these map to bases $\{x_{i}^{\mu}\}_{i = 1}^{n_{\mu}}$
and $\{y_{i}^{\mu}\}_{i = 1}^{n_{\mu}}$ for the respective subspaces $(U_{q}^{+})_{\mu}$ and $(U_{q}^{-})_{-\mu}$, and we have
$\eta(\wtd{x}_{i}^{\mu}) = x_{i}^{\mu} \otimes L_{\mu}$ and $\eta(\wtd{y}_{i}^{\mu}) = y_{i}^{\mu} \otimes L_{\mu}$ for all
$\mu \in Q^{+}$ and $1 \le i \le n_{\mu}$. Now, recalling from~\cite[Corollary~2.6]{HP1} that $U_{q,q^{-1}}(\fg)$ has a triangular
decomposition, we find that
\[
  \Big\{ \wtd{y}_{i}^{\mu}\omega_{\lambda}\omega_{\lambda'}'\wtd{x}_{j}^{\nu} \,\Big|\,
         \lambda,\lambda' \in Q,\, \mu,\nu \in Q^{+},\, 1 \le i \le n_{\mu},\, 1 \le j \le n_{\nu} \Big\}
\]
is a basis for $U_{q,q^{-1}}(\fg)$, and its image under $\eta$ consists of all
\[
  \eta(\wtd{y}_{i}^{\mu}\omega_{\lambda}\omega_{\lambda'}'\wtd{x}_{j}^{\nu})
  = y_{i}^{\mu}K_{\lambda - \lambda'}x_{j}^{\nu} \otimes L_{\mu + \nu + 2(\lambda + \lambda')}
\]
with $\lambda ,\lambda' \in Q$, $\mu,\nu \in Q^{+}$, $1 \le i \le n_{\mu}$ and $1 \le j \le n_{\nu}$. It is easy to see that this set
is linearly independent in $U_{q}(\fg) \otimes \bb{K}[L_{\alpha_{1}}^{\pm 1},\ldots ,L_{\alpha_{n}}^{\pm 1}]$, as one can determine
$\lambda$ and $\lambda'$ from $\lambda - \lambda'$ and $\lambda + \lambda'$. Therefore, $\eta$ is injective.
\end{proof}

We shall often use this result to derive theorems for $U_{q,q^{-1}}(\fg)$ from known results about $U_{q}(\fg)$.


\section{The FRT construction under the $R$-matrix twist}\label{sec:FRT construction finite}

Let $V$ be an $N$-dimensional vector space with a basis $\{v_{1},\ldots ,v_{N}\}$. Throughout this Section, we will often be working
with elements $A \in \End(V) \otimes \End(V)$, and we shall use the following notational convention to express them entry-wise:
\begin{equation}\label{eq:matrix_notation}
  A = \sum_{1 \le i,j,k,\ell \le N} A^{ij}_{k\ell}E_{ik} \otimes E_{j\ell}.
\end{equation}
In particular, we will usually be working with elements $R \in \End(V) \otimes \End(V)$ that satisfy
\begin{equation}\label{eq:YBE}
  \textbf{Yang-Baxter\ equation}\quad R_{12}R_{13}R_{23} = R_{23}R_{13}R_{12}.
\end{equation}
Recall that if $\tau\colon V \otimes V \to V \otimes V$ denotes the flip map, then $R$ satisfies~\eqref{eq:YBE}
if and only if  $\hat{R} = R \circ \tau$ satisfies
\begin{equation}\label{eq:braid_relations}
   \textbf{braid\ relation}\quad  \hat{R}_{12}\hat{R}_{23}\hat{R}_{12} = \hat{R}_{23}\hat{R}_{12}\hat{R}_{23}.
\end{equation}

In what follows, we shall need the following simple result:

\begin{lemma}\label{lem:YBE_twist}
Let $R$ be a solution of~\eqref{eq:YBE}, and let $D = \sum_{i,j = 1}^{N}d_{ij}E_{ii} \otimes E_{jj} \in \End(V) \otimes \End(V)$
be such that $D\hat{R}D^{-1}$ satisfies~\eqref{eq:braid_relations} and $d_{ij} = d_{ji}^{-1}$ for all $i,j$.
Then $DRD$ is also a solution of~\eqref{eq:YBE}.
\end{lemma}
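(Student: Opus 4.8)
The plan is to convert everything into the braid-relation picture, apply the hypothesis there, and convert back. First I would record the elementary bookkeeping: for a diagonal $D = \sum_{i,j} d_{ij} E_{ii}\otimes E_{jj}$ one has $D = \tau D' \tau$ where $D' = \sum_{i,j} d_{ij} E_{jj}\otimes E_{ii} = \sum_{i,j} d_{ji} E_{ii}\otimes E_{jj}$, and since $d_{ij} = d_{ji}^{-1}$ this gives $D' = D^{-1}$. Hence $D\tau = \tau D^{-1}$ and $\tau D = D^{-1}\tau$. Using $\hat{R} = R\tau$, i.e. $R = \hat{R}\tau$, I compute
\[
  DRD = D\hat{R}\tau D = D\hat{R} D^{-1}\tau = (D\hat{R}D^{-1})\tau.
\]
So if I set $\hat{S} := D\hat{R}D^{-1}$ and $S := \hat{S}\tau = DRD$, the claim ``$DRD$ solves~\eqref{eq:YBE}'' is exactly the claim ``$\hat{S}$ solves~\eqref{eq:braid_relations}'', by the standard equivalence recalled in~\eqref{eq:braid_relations}. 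But $\hat{S}$ satisfying the braid relation is precisely the hypothesis of the lemma. That finishes it.

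The one genuine point to verify carefully — the step I expect to be the only mild obstacle — is the identity $D\tau = \tau D^{-1}$ on $V\otimes V$, i.e. that conjugating a diagonal operator by the flip turns $d_{ij}$ into $d_{ji}$, combined with the symmetry hypothesis $d_{ij} = d_{ji}^{-1}$ to replace $d_{ji}$ by $d_{ij}^{-1}$. This is a one-line check on basis vectors: $D\tau(v_i\otimes v_j) = D(v_j\otimes v_i) = d_{ji}\, v_j\otimes v_i$, whereas $\tau D^{-1}(v_i\otimes v_j) = \tau(d_{ij}^{-1} v_i\otimes v_j) = d_{ij}^{-1} v_j\otimes v_i$, and these agree by hypothesis. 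Everything else is formal manipulation of the relations $\hat{R} = R\circ\tau$ and $\tau^2 = \mathrm{id}$ together with the cited equivalence between~\eqref{eq:YBE} and~\eqref{eq:braid_relations}.

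So the write-up will be: (1) establish $D\tau = \tau D^{-1}$; (2) deduce $DRD = (D\hat{R}D^{-1})\circ\tau$; (3) invoke the hypothesis that $D\hat{R}D^{-1}$ satisfies the braid relation and the stated equivalence to conclude $DRD$ satisfies the Yang--Baxter equation. No heavy computation is needed, and in particular one never has to expand the triple products $R_{12}R_{13}R_{23}$ in coordinates.
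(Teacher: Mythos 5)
Your proposal is correct and follows essentially the same route as the paper: establish $D\tau = \tau D^{-1}$ from the hypothesis $d_{ij} = d_{ji}^{-1}$, rewrite $DRD = (D\hat{R}D^{-1})\circ\tau$, and invoke the stated equivalence between~\eqref{eq:YBE} and~\eqref{eq:braid_relations}. The paper's proof is just the same three steps compressed into two lines.
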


\begin{proof}
Since $D\circ \tau = \sum_{i,j = 1}^{N}d_{ji}E_{ji} \otimes E_{ij} = \sum_{i,j = 1}^{N}d_{ij}^{-1}E_{ji} \otimes E_{ij} = \tau \circ D^{-1}$,
we have that $DRD \circ \tau = D\hat{R}D^{-1}$, which satisfies~\eqref{eq:braid_relations} by the assumption.
Thus, $DRD$ satisfies~\eqref{eq:YBE}, as claimed.
\end{proof}


\subsection{The bialgebra $A(R)$}\label{ssec:algebra_A(R)}
\

Given any solution $R\in \End(V) \otimes \End(V)$ to the Yang-Baxter equation~\eqref{eq:YBE}, one can form the $\bb{K}$-algebra $A(R)$,
which is generated by elements $\{t_{ij} \,|\, 1 \le i,j \le N\}$ subject to the following \textbf{RTT-relations}:
\begin{equation}\label{eq:RTT-rel}
  RT_{1}T_{2} = T_{2}T_{1}R,
\end{equation}
where $T_{1} = \sum_{i,j = 1}^{N}E_{ij} \otimes 1 \otimes t_{ij} \in \End(V)^{\otimes 2} \otimes A(R)$ and
$T_{2} = \sum_{i,j = 1}^{N} 1 \otimes E_{ij} \otimes t_{ij} \in \End(V)^{\otimes 2} \otimes A(R)$.
The algebra $A(R)$ has a bialgebra structure defined via the following formulas:
\begin{equation}\label{eq:A(R)_bialgebra}
  \Delta(t_{ij}) = \sum_{1\leq k\leq N} t_{ik} \otimes t_{kj} \quad \text{and}\quad
  \epsilon(t_{ij}) = \delta_{ij}\quad \text{for all}\quad 1 \le i,j \le N.
\end{equation}
Using the convention~\eqref{eq:matrix_notation}, the above relations~\eqref{eq:RTT-rel}
can be explicitly written as:
\begin{equation}\label{eq:FRT_relations}
  \sum_{1 \le k,\ell \le N} R_{k\ell}^{mp}t_{ki}t_{\ell j} \ = \sum_{1 \le k,\ell \le N}t_{p\ell}t_{mk}R_{ij}^{k\ell}
  \quad \text{for all} \quad 1 \le i,j,m,p \le N.
\end{equation}

Finally, we recall a part of~\cite[Theorem 10.7]{KS} that we shall need later:

\begin{theorem}\label{thm:A(R)_skew_pairings}
There are unique bilinear maps $\sigma,\bar{\sigma}\colon A(R) \times A(R) \to \bb{K}$ which satisfy the following structural
properties for all $a,b,c \in A(R)$:
\begin{equation}\label{eq:skew_pairing_counit}
  \sigma(a,1) = \bar{\sigma}(a,1) = \epsilon(a),\qquad \sigma(1,a) = \bar{\sigma}(1,a) = \epsilon(a),
\end{equation}
\begin{equation}\label{eq:skew_pairing_adjointness}
  \sigma(ab,c) = \sum_{(c)}\sigma(a,c_{(1)})\sigma(b,c_{(2)}),\qquad \sigma(a,bc) = \sum_{(a)}\sigma(a_{(1)},c)\sigma(a_{(2)},b),
\end{equation}
\begin{equation}\label{eq:inverse_skew_pairing_adjointness}
  \bar{\sigma}(ab,c) = \sum_{(c)}\bar{\sigma}(b,c_{(1)})\bar{\sigma}(a,c_{(2)}),\qquad
  \bar{\sigma}(a,bc) = \sum_{(a)}\bar{\sigma}(a_{(1)},b)\bar{\sigma}(a_{(2)},c),
\end{equation}
\begin{equation}\label{eq:convolution_inverse}
  \sum_{(a)(b)}\sigma(a_{(1)},b_{(1)})\bar{\sigma}(a_{(2)},b_{(2)})
  = \sum_{(a)(b)}\bar{\sigma}(a_{(1)},b_{(1)})\sigma(a_{(2)},b_{(2)}) = \epsilon(a)\epsilon(b),
\end{equation}
and are given on generators by
\begin{equation}\label{eq:skew_pairing_generators}
  \sigma(t_{ij},t_{k\ell}) = R^{ik}_{j\ell},\qquad \bar{\sigma}(t_{ij},t_{k\ell}) = (R^{-1})^{ik}_{j\ell}
  \qquad \text{for all}\qquad 1 \le i,j,k,\ell \le N.
\end{equation}
\end{theorem}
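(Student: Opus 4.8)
The plan is to construct $\sigma$ and $\bar\sigma$ first on the free algebra $\mathcal{A} := \bb{K}\langle t_{ij}\mid 1\le i,j\le N\rangle$, endowed with its matrix‑coalgebra bialgebra structure $\Delta(t_{ij})=\sum_k t_{ik}\otimes t_{kj}$, $\epsilon(t_{ij})=\delta_{ij}$, and then to show they descend to the quotient $A(R)=\mathcal{A}/\II$, where $\II$ is the ideal generated by the RTT‑relations $r^{mp}_{ij}:=\sum_{k,\ell}R^{mp}_{k\ell}t_{ki}t_{\ell j}-\sum_{k,\ell}t_{p\ell}t_{mk}R^{k\ell}_{ij}$ of~\eqref{eq:FRT_relations}. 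Uniqueness is the easy half: since $A(R)$ is generated as an algebra by the $t_{ij}$, the first identity in~\eqref{eq:skew_pairing_adjointness} strips one factor at a time off the left argument of $\sigma(a,b)$ until $a$ is a single generator, the second identity together with $\Delta(t_{ij})=\sum_m t_{im}\otimes t_{mj}$ then determines $\sigma(t_{ij},-)$ on all products, and combined with~\eqref{eq:skew_pairing_counit},~\eqref{eq:skew_pairing_generators} this pins down $\sigma$ completely; the same reasoning with~\eqref{eq:inverse_skew_pairing_adjointness} in place of~\eqref{eq:skew_pairing_adjointness} handles $\bar\sigma$.

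For existence I would run this very recursion as a definition of a bilinear form $\sigma_0$ on $\mathcal{A}\times\mathcal{A}$: set $\sigma_0(1,a)=\sigma_0(a,1)=\epsilon(a)$, $\sigma_0(t_{ij},t_{k\ell})=R^{ik}_{j\ell}$, and extend by double induction on the word lengths of the two arguments using the two formulas of~\eqref{eq:skew_pairing_adjointness}. Since $\mathcal{A}$ is \emph{free} as an algebra, there is no consistency constraint from relations in the left variable, while consistency in the right variable — that the two ways of reducing $\sigma_0(a,b)$ when $b$ factors internally agree — is exactly coassociativity of $\Delta$ on $\mathcal{A}$ together with $\Delta$ being an algebra map; this is the standard fact that a skew bialgebra pairing out of a free algebra is freely determined by its restriction to the generating (matrix) coalgebra. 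One then checks directly that $\sigma_0$ satisfies~\eqref{eq:skew_pairing_counit} and~\eqref{eq:skew_pairing_adjointness}. Repeating the construction with $R^{-1}$ in place of $R$ — which again solves the Yang–Baxter equation~\eqref{eq:YBE}, by inverting both sides — produces $\bar\sigma_0$ with $\bar\sigma_0(t_{ij},t_{k\ell})=(R^{-1})^{ik}_{j\ell}$ satisfying~\eqref{eq:skew_pairing_counit} and~\eqref{eq:inverse_skew_pairing_adjointness}.

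The heart of the matter is that $\sigma_0$ and $\bar\sigma_0$ vanish on $\mathcal{A}\otimes\II$ and on $\II\otimes\mathcal{A}$, so they descend. The structural input is that the span of the relations is a coideal of $\mathcal{A}$: a direct computation in which all the $R$‑dependent cross terms cancel yields
\[
  \Delta\!\left(r^{mp}_{ij}\right)=\sum_{a,b} r^{mp}_{ab}\otimes t_{ai}t_{bj}\;+\;\sum_{c,d} t_{pc}t_{md}\otimes r^{dc}_{ij},\qquad \epsilon\!\left(r^{mp}_{ij}\right)=0.
\]
Granting this, $\sigma_0(a,r^{mp}_{ij})=0$ for every $a$ follows by induction on the length of the word $a$: the base case $a=t_{ab}$ becomes, after expanding both halves of $r^{mp}_{ij}$ via~\eqref{eq:skew_pairing_adjointness}, precisely the Yang–Baxter equation~\eqref{eq:YBE} read off in the components of~\eqref{eq:matrix_notation}, and the inductive step feeds the displayed coproduct into the first identity of~\eqref{eq:skew_pairing_adjointness}. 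A further application of~\eqref{eq:skew_pairing_adjointness} promotes this to $\sigma_0(a,xry)=0$ for all $x,y$, giving $\sigma_0(\mathcal{A}\otimes\II)=0$; the mirror argument (peeling factors off the left variable and using the same coideal identity on the left) gives $\sigma_0(\II\otimes\mathcal{A})=0$. Hence $\sigma_0$ descends to $\sigma$ on $A(R)\times A(R)$, and the identical argument — with base case again a rearrangement of~\eqref{eq:YBE} — makes $\bar\sigma_0$ descend to $\bar\sigma$. Finally,~\eqref{eq:convolution_inverse} is obtained by noting that both $\sum_{(a)(b)}\sigma(a_{(1)},b_{(1)})\bar\sigma(a_{(2)},b_{(2)})$ and $\epsilon(a)\epsilon(b)$ are bilinear forms on $A(R)\times A(R)$ obeying the same left‑ and right‑multiplicativity recursions, and they agree on the generators because $\sum_{m,n}R^{ik}_{mn}(R^{-1})^{mn}_{j\ell}=\delta_{ij}\delta_{k\ell}$, i.e.\ $RR^{-1}=\mathrm{Id}$; an induction on word length then forces equality everywhere, and the other equality in~\eqref{eq:convolution_inverse} is symmetric.

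The step I expect to be the main obstacle is the descent: verifying the coideal identity for $\Delta(r^{mp}_{ij})$ and matching the base cases $\sigma_0(t_{ab},r^{mp}_{ij})=0$ and $\bar\sigma_0(t_{ab},r^{mp}_{ij})=0$ with the Yang–Baxter equation require careful index bookkeeping in the convention~\eqref{eq:matrix_notation}; everything before and after is a formal consequence of the bialgebra axioms. Alternatively, since this statement is a part of~\cite[Theorem 10.7]{KS}, one may simply cite that reference.
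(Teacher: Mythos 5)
The paper does not give a proof of this statement: it simply recalls it as ``a part of~\cite[Theorem 10.7]{KS}'', so the paper's ``proof'' is citation to Klimyk--Schm\"udgen. You note this option yourself at the end, so at the level of what the paper actually does, the two treatments agree.

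Your fuller reconstruction is nevertheless correct and is, in essence, the argument behind~\cite[Theorem 10.7]{KS}: build $\sigma_{0},\bar\sigma_{0}$ on the free matrix bialgebra $\mathcal{A}$, observe that the span of the RTT defining relations is a coideal, and check that vanishing of $\sigma_{0}$ on the relations in either slot reduces (via the two adjointness identities) to the componentwise Yang--Baxter equation~\eqref{eq:YBE}, after which~\eqref{eq:convolution_inverse} is the statement $RR^{-1}=\mathrm{Id}$ propagated by the same recursion. I verified the coideal identity
\[
  \Delta\bigl(r^{mp}_{ij}\bigr)=\sum_{a,b} r^{mp}_{ab}\otimes t_{ai}t_{bj}+\sum_{c,d} t_{pc}t_{md}\otimes r^{dc}_{ij}
\]
(after cancellation of the cross terms) and checked that both $\sigma_{0}(t_{ab},r^{mp}_{ij})=0$ and $\sigma_{0}(r^{mp}_{ij},t_{ab})=0$ are indeed restatements of~\eqref{eq:YBE} in the convention~\eqref{eq:matrix_notation}. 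The only place where you gloss over something that deserves a sentence is the claim that the recursive definition of $\sigma_{0}$ on the free algebra automatically satisfies \emph{both} identities of~\eqref{eq:skew_pairing_adjointness}: if you define $\sigma_{0}$ by peeling factors off the left slot, the second identity must be verified, which uses that $\Delta$ on $\mathcal{A}$ is an algebra map into $\mathcal{A}\otimes\mathcal{A}$ with the flipped tensor product (equivalently, the dual pairing between the free algebra and the cofree coalgebra); this is what you are invoking when you appeal to ``coassociativity together with $\Delta$ being an algebra map''. This is a standard point and your phrasing is adequate, but if you were to write this out in full you should make that dual-pairing argument explicit rather than just assert it.
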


Let us now examine the relationship between $A(R)$ and $A(DRD)$ for $D$ as in Lemma~\ref{lem:YBE_twist}. Since
\[
  DRD \ = \sum_{1 \le i,j,k,\ell \le N}R_{ij}^{k\ell}d_{ij}d_{k\ell}E_{ki} \otimes E_{\ell j}
\]
and $d_{ij} = d_{ji}^{-1}$, the defining relations in $A(DRD)$ read as follows (cf.~\eqref{eq:FRT_relations}):
\begin{equation}\label{eq:FRT_relations_twisted}
  d_{mp}\sum_{1 \le k,\ell \le N}d_{k\ell}R_{k\ell}^{mp}t_{ki}t_{\ell j}
  = d_{ij}\sum_{1 \le k, \ell \le N}d_{k\ell}t_{p\ell}t_{m k}R_{ij}^{k\ell}
  \quad \text{for all} \quad 1 \le i,j,m,n \le N.
\end{equation}

We shall now specialize to $R = R_{q} = \hat{R}_{q} \circ \tau$, where $\hat{R}_{q}$ is one of the matrices from
equations~\eqref{eq:1_parameter_R_B},~\eqref{eq:1_parameter_R_C}, or~\eqref{eq:1_parameter_R_D}. For these matrices,
we have the following result, which is crucial to the rest of this Section:

\begin{prop}\label{prop:A(R)_grading}
If $\hat{R}_{q}$ is the matrix of~\eqref{eq:1_parameter_R_B} (resp.~\eqref{eq:1_parameter_R_C},~\eqref{eq:1_parameter_R_D}),
and $R_{q} = \hat{R}_{q} \circ \tau$, then $A(R_{q})$ is a $P$-bigraded bialgebra with respect to the assignment
$\deg(t_{ij}) = (-\varepsilon_{i},\varepsilon_{j})$, where $P$ is the weight lattice of type $B_{n}$ (resp.\ $C_{n}$, $D_{n}$),
and $\varepsilon_{i'} = -\varepsilon_{i}$ for $1 \le i \le n$.
\end{prop}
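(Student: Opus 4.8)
The plan is to verify the two defining conditions in~\eqref{eq:bigraded-bialgebra} for the bigrading $\deg(t_{ij})=(-\varepsilon_i,\varepsilon_j)$, namely that (i) this assignment descends to a well-defined $P\times P$-algebra grading on $A(R_q)$, i.e.\ the relations~\eqref{eq:FRT_relations} are homogeneous, and (ii) the coproduct and counit satisfy the compatibility~\eqref{eq:bigraded-bialgebra}. The counit condition is immediate: $\epsilon(t_{ij})=\delta_{ij}$, and $\deg(t_{ij})=(-\varepsilon_i,\varepsilon_i)=(-\varepsilon_i,\varepsilon_i)$ has the two components summing to zero exactly when $i=j$ (using that $\{\varepsilon_i\}$ are linearly independent in each of the types $B_n,C_n,D_n$ with the convention $\varepsilon_{i'}=-\varepsilon_i$, $\varepsilon_{n+1}=0$ in type $B$). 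The coproduct condition is also quick: $\Delta(t_{ij})=\sum_k t_{ik}\otimes t_{kj}$, and $\deg(t_{ik})=(-\varepsilon_i,\varepsilon_k)$, $\deg(t_{kj})=(-\varepsilon_k,\varepsilon_j)$, so each summand lands in $A_{(-\varepsilon_i,\varepsilon_k)}\otimes A_{(\varepsilon_k,\varepsilon_j)}$, matching the pattern $A_{\alpha,\gamma}\otimes A_{-\gamma,\beta}$ with $\alpha=(-\varepsilon_i,\varepsilon_k)$—wait, one must be careful that $P$-bigrading means a $P\times P$-grading and the index $\gamma$ in~\eqref{eq:bigraded-bialgebra} runs over $P$; here the ``middle'' weight is $\varepsilon_k\in P$, and indeed $A_{(-\varepsilon_i,\varepsilon_k)}\otimes A_{(-\varepsilon_k,\varepsilon_j)}$ is the correct target, so this works verbatim. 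Hence the genuine content is step (i).

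For step (i), I would argue that in each relation~\eqref{eq:FRT_relations} every monomial $t_{ki}t_{\ell j}$ or $t_{p\ell}t_{mk}$ appearing with nonzero coefficient $R^{mp}_{k\ell}$ (resp.\ $R^{k\ell}_{ij}$) has the same bidegree. The bidegree of $t_{ki}t_{\ell j}$ is $(-\varepsilon_k-\varepsilon_\ell,\,\varepsilon_i+\varepsilon_j)$, and that of $t_{p\ell}t_{mk}$ is $(-\varepsilon_p-\varepsilon_m,\,\varepsilon_\ell+\varepsilon_k)$. The claim is therefore: whenever $R^{mp}_{k\ell}\neq 0$ one has $\varepsilon_k+\varepsilon_\ell=\varepsilon_m+\varepsilon_p$ (with indices read modulo the $\varepsilon_{i'}=-\varepsilon_i$ convention). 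This is exactly the statement that $R_q$, as an endomorphism of $V\otimes V$, preserves the $\mathfrak{h}$-weight decomposition where $v_i$ has weight $\varepsilon_i$ — a property one reads off directly from the explicit formulas~\eqref{eq:1_parameter_R_B}--\eqref{eq:1_parameter_R_D}: every term is of the form $E_{ab}\otimes E_{cd}$ with $\varepsilon_a+\varepsilon_c=\varepsilon_b+\varepsilon_d$ (e.g.\ $E_{ij}\otimes E_{ji}$ gives $\varepsilon_i+\varepsilon_j=\varepsilon_j+\varepsilon_i$; $E_{i'i'}\otimes E_{ii}$ gives $-\varepsilon_i+\varepsilon_i=0$; $E_{i'j}\otimes E_{ij'}$ gives $-\varepsilon_i+\varepsilon_i=\varepsilon_j-\varepsilon_j=0$). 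Thus both sides of each relation are homogeneous of the same bidegree, so the grading descends. Equivalently, and perhaps cleanest to write, $R_q$ commutes with $h\otimes 1+1\otimes h$ for every diagonal $h$ assigning $v_i\mapsto (\varepsilon_i,\lambda)$, which forces the weight-conservation on matrix entries and hence the homogeneity of the RTT-relations.

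I expect the only mild obstacle is bookkeeping the index conventions — the meaning of $\varepsilon_{i'}$, the role of the middle index $n+1$ in type $B_n$ (where $\varepsilon_{n+1}=0$ so $t_{n+1,j}$ has first component $0$), and checking that the ``exotic'' off-diagonal terms like $E_{i'j}\otimes E_{ij'}$ and $E_{ii}\otimes E_{jj}$ (the latter appearing in the sums $\sum_{i>j,\,j\neq i'}$) indeed conserve weight. None of these is hard: $E_{ii}\otimes E_{jj}$ trivially has equal source and target weights, and the $E_{i'j}\otimes E_{ij'}$ terms conserve weight because $-\varepsilon_i+\varepsilon_i=0$ on the left and $\varepsilon_j-\varepsilon_j=0$ on the right. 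So I would organize the proof as: (1) recall that $\deg$ gives a $P\times P$-grading on the free algebra; (2) observe weight conservation $\varepsilon_a+\varepsilon_c=\varepsilon_b+\varepsilon_d$ for every term $E_{ab}\otimes E_{cd}$ occurring in $\hat R_q$, hence in $R_q=\hat R_q\circ\tau$, by inspection of~\eqref{eq:1_parameter_R_B}--\eqref{eq:1_parameter_R_D}; (3) deduce that both sides of~\eqref{eq:FRT_relations} are bihomogeneous of equal bidegree, so the ideal of relations is bihomogeneous and $A(R_q)$ inherits the grading; (4) check~\eqref{eq:bigraded-bialgebra} for $\Delta$ and $\epsilon$ on generators as above, which extends to all of $A(R_q)$ since $\Delta,\epsilon$ are algebra maps and the conditions are closed under products.
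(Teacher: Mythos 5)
Your proposal is correct and takes essentially the same approach as the paper. The key observation in both is weight-conservation of $R_q$ — that $(R_q)^{ij}_{k\ell}\neq 0$ forces $\varepsilon_i+\varepsilon_j=\varepsilon_k+\varepsilon_\ell$ — which makes both sides of the RTT-relations homogeneous of equal bidegree; the paper extracts this from the structural fact that $(R_q)^{ij}_{k\ell}=0$ unless $(i,j)=(k,\ell)$, $(i,j)=(\ell,k)$, or $j=i',\ell=k'$, whereas you read it off term-by-term from the explicit formulas, which amounts to the same inspection. You also spell out the coproduct/counit compatibility, which the paper dismisses as clear; that part is fine, though note the stray typo $\deg(t_{ij})=(-\varepsilon_i,\varepsilon_i)$ should read $(-\varepsilon_i,\varepsilon_j)$, and what you really use is that the $\varepsilon_i\ (1\le i\le N)$ are pairwise \emph{distinct}, not linearly independent (they are not independent, since $\varepsilon_{i'}=-\varepsilon_i$).
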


\begin{proof}
Note that
$(R_{q})_{ij}^{k\ell} = 0$ unless $(i,j) = (k,\ell)$, $(i,j) = (\ell,k)$, or $j = i'$ and $\ell = k'$. Therefore,
$\varepsilon_{i} + \varepsilon_{j}=\varepsilon_{k} + \varepsilon_{\ell}$ whenever $(R_{q})_{ij}^{k\ell} \ne 0$.
Hence,
\[
  \deg\left (\sum_{1 \le k,\ell \le N} (R_{q})_{k\ell}^{mp}t_{ki}t_{\ell j}\right ) =
  (-\varepsilon_{m} - \varepsilon_{p},\varepsilon_{i}+  \varepsilon_{j}), \quad
  \deg\left (\sum_{1 \le k ,\ell \le N}t_{p\ell}t_{mk}(R_{q})_{ij}^{k\ell}\right ) =
  \deg(-\varepsilon_{m} - \varepsilon_{p},\varepsilon_{i} + \varepsilon_{j})
\]
for all $i,j,m,p$. This proves that the relations~\eqref{eq:FRT_relations} are homogeneous, and hence $A(R_{q})$
is a $P \times P$-graded algebra with $\deg(t_{ij}) = (-\varepsilon_{i},\varepsilon_{j})$. Moreover, it is clear from
the definitions of $\Delta$ and $\epsilon$ given in~\eqref{eq:A(R)_bialgebra} that $A(R_{q})$ is actually a $P$-bigraded bialgebra.
\end{proof}

Now, set
\begin{equation}\label{eq:D_zeta}
  D \, = \sum_{1 \le i,j \le N}d_{ij}E_{ii} \otimes E_{jj} \, =
  \sum_{1 \le i,j \le N}\zeta(\varepsilon_{j},\varepsilon_{i})E_{ii} \otimes E_{jj},
\end{equation}
where $\zeta\colon P \times P \to \bb{K}$ is the skew bicharacter satisfying~\eqref{eq:zeta_formula}. By the results of
Subsection~\ref{ssec:B-type}, $D\hat{R}_{q}D^{-1}$ satisfies~\eqref{eq:braid_relations}, so by Lemma~\ref{lem:YBE_twist}, $DR_{q}D$
satisfies~\eqref{eq:YBE}. Therefore, we can define the bialgebra $A(DR_{q}D)$ as above. On the other hand, we may also define a bialgebra
$A(R_{q})_{\zeta}$ as in Section~\ref{sec:twisted_R_matrices}, which has multiplication given by~\eqref{eq:twisted_product_general},
while the coproduct and counit are given by~\eqref{eq:A(R)_bialgebra}. We then have:

\begin{prop}\label{prop:A(R)_twisted}
There is a unique bialgebra isomorphism $\Psi\colon A(DR_{q}D) \iso A(R_{q})_{\zeta}$ such that $\Psi(t_{ij}) = t_{ij}$.
\end{prop}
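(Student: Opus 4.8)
The plan is to show that the identity map on generators $t_{ij}$ extends to an isomorphism by comparing defining relations and coalgebra structures, exploiting the fact that both algebras are built from the same underlying vector space of generators but with twisted multiplication. First I would observe that, by Proposition~\ref{prop:A(R)_grading}, $A(R_q)$ is a $P$-bigraded bialgebra with $\deg(t_{ij}) = (-\varepsilon_i,\varepsilon_j)$, so the twisted algebra $A(R_q)_\zeta$ is well-defined (its multiplication $\circ$ twists $t_{ki}t_{\ell j}$ by the scalar $\zeta(-\varepsilon_k,-\varepsilon_\ell)^{\pm}\cdots$ determined by~\eqref{eq:twisted_product_general}), and since $\Delta,\epsilon$ are unchanged and the bigrading conditions~\eqref{eq:bigraded-bialgebra} hold, $A(R_q)_\zeta$ is genuinely a bialgebra with the same $\Delta,\epsilon$ as $A(R_q)$.

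Next I would write out explicitly the defining relations of $A(DR_qD)$, which are~\eqref{eq:FRT_relations_twisted} with $d_{ij} = \zeta(\varepsilon_j,\varepsilon_i)$ as in~\eqref{eq:D_zeta}, and compare them term by term with the relations of $A(R_q)_\zeta$. In $A(R_q)_\zeta$, the relation~\eqref{eq:FRT_relations} becomes $\sum_{k,\ell}(R_q)^{mp}_{k\ell}\, t_{ki}\circ t_{\ell j} = \sum_{k,\ell} t_{p\ell}\circ t_{mk}\,(R_q)^{k\ell}_{ij}$; using~\eqref{eq:twisted_product_general} with $\deg(t_{ki}) = (-\varepsilon_k,\varepsilon_i)$ etc., each product $t_{ki}\circ t_{\ell j}$ acquires the scalar $\zeta(-\varepsilon_k,-\varepsilon_\ell)\zeta(\varepsilon_i,\varepsilon_j)^{-1}$, and similarly on the right-hand side. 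The key computation is then to check that the scalar $\zeta(-\varepsilon_k,-\varepsilon_\ell)\zeta(\varepsilon_i,\varepsilon_j)^{-1} = \zeta(\varepsilon_\ell,\varepsilon_k)\zeta(\varepsilon_j,\varepsilon_i)^{-1}$ matches (up to the common factor $d_{mp}d_{ij}^{-1}$ or $d_{ij}d_{mp}^{-1}$, which cancel consistently because $\varepsilon_i+\varepsilon_j = \varepsilon_k+\varepsilon_\ell$ and $\varepsilon_m+\varepsilon_p = \varepsilon_k + \varepsilon_\ell$ whenever $(R_q)^{mp}_{k\ell}\ne 0$) exactly the weighting by $d_{k\ell}d_{mp}$ resp.\ $d_{k\ell}d_{ij}$ appearing in~\eqref{eq:FRT_relations_twisted}. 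Since $d_{k\ell}d_{mp} = \zeta(\varepsilon_\ell,\varepsilon_k)\zeta(\varepsilon_p,\varepsilon_m)$ and $\zeta(\varepsilon_p,\varepsilon_m) = \zeta(\varepsilon_\ell+\varepsilon_k-\varepsilon_j, \varepsilon_m)\cdots$ — here one uses bicharacter additivity together with the support condition on $R_q$ to rewrite everything in terms of a single normalization — both relation sets coincide, so $t_{ij}\mapsto t_{ij}$ extends to a well-defined algebra homomorphism $\Psi\colon A(DR_qD)\to A(R_q)_\zeta$, and the inverse is built identically, giving an algebra isomorphism.

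Finally I would note that $\Psi$ is automatically a coalgebra map: both $A(DR_qD)$ and $A(R_q)_\zeta$ carry the coproduct $\Delta(t_{ij}) = \sum_k t_{ik}\otimes t_{kj}$ and counit $\epsilon(t_{ij}) = \delta_{ij}$ from~\eqref{eq:A(R)_bialgebra} (unchanged by the twist), so $\Psi$ intertwines $\Delta$ and $\epsilon$ on generators, hence everywhere. Uniqueness is clear since the $t_{ij}$ generate. The main obstacle I anticipate is the bookkeeping in the relation comparison: one must carefully track that the diagonal twisting scalars $d_{ij}=\zeta(\varepsilon_j,\varepsilon_i)$ in $DR_qD$ and the $\zeta$-twist in the $\circ$-multiplication combine so that all the $\zeta$-factors reorganize into the same total scalar on each monomial $t_{ki}t_{\ell j}$. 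This hinges entirely on the skew-bicharacter identities $\zeta(\alpha,\beta)=\zeta(\beta,\alpha)^{-1}$, $\zeta(\alpha,\alpha)=1$, and bilinearity, together with the support property $(R_q)^{k\ell}_{ij}\ne 0 \Rightarrow \{i,j\}$ and $\{k,\ell\}$ lie in the same weight (so $\varepsilon_i+\varepsilon_j=\varepsilon_k+\varepsilon_\ell$) established in the proof of Proposition~\ref{prop:A(R)_grading}; once that identity is verified, the rest is formal.
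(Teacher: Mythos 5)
Your overall strategy---rewriting the defining relations of $A(R_q)$ via the $\circ$-multiplication, comparing them with~\eqref{eq:FRT_relations_twisted}, and noting that the coalgebra structure is untouched by the twist---is exactly the paper's approach. However, the scalar identity at the heart of your ``key computation'' is wrong as written. From~\eqref{eq:twisted_product_general} with $\deg(t_{ki})=(-\varepsilon_k,\varepsilon_i)$ and $\deg(t_{\ell j})=(-\varepsilon_\ell,\varepsilon_j)$, one gets
$t_{ki}\circ t_{\ell j}=\zeta(-\varepsilon_k,-\varepsilon_\ell)\zeta(\varepsilon_i,\varepsilon_j)^{-1}t_{ki}t_{\ell j}$, which equals $\zeta(\varepsilon_k,\varepsilon_\ell)\zeta(\varepsilon_j,\varepsilon_i)\,t_{ki}t_{\ell j}=d_{ij}d_{k\ell}^{-1}\,t_{ki}t_{\ell j}$ (recall $d_{ij}=\zeta(\varepsilon_j,\varepsilon_i)$). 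You instead wrote the scalar as $\zeta(\varepsilon_\ell,\varepsilon_k)\zeta(\varepsilon_j,\varepsilon_i)^{-1}=d_{k\ell}d_{ij}^{-1}$, which is the reciprocal of the correct factor---both ``simplifications'' $\zeta(-\varepsilon_k,-\varepsilon_\ell)\rightsquigarrow\zeta(\varepsilon_\ell,\varepsilon_k)$ and $\zeta(\varepsilon_i,\varepsilon_j)^{-1}\rightsquigarrow\zeta(\varepsilon_j,\varepsilon_i)^{-1}$ flip the wrong way, and the errors do not cancel. With the correct scalar, the rewrite $\sum_{k,\ell}d_{ij}^{-1}d_{k\ell}(R_q)^{mp}_{k\ell}\,t_{ki}\circ t_{\ell j}=\sum_{k,\ell}(R_q)^{mp}_{k\ell}\,t_{ki}t_{\ell j}$ holds \emph{term by term}; in particular no appeal to the weight-preservation (``support'') property of $R_q$ is needed at this step---you are importing a hypothesis that is only used in Proposition~\ref{prop:A(R)_grading} to establish homogeneity of the relations. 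Lastly, the inverse is not ``built identically'': the paper constructs $\Psi'\colon A(R_q)\to A(DR_qD)_{\bar\zeta}$ with $\bar\zeta=\zeta^{-1}$, observes it is graded, passes to $\zeta$-twists, and identifies $(A(DR_qD)_{\bar\zeta})_\zeta$ with $A(DR_qD)$. Once you fix the reciprocal slip, drop the superfluous support-condition reasoning, and carry out the $\bar\zeta$-twist for the inverse, your proof coincides with the paper's.
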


\begin{proof}
In $A(R_{q})_{\zeta}$, we have
  $t_{ij} \circ t_{k\ell} = \zeta(\varepsilon_{i},\varepsilon_{k})\zeta(\varepsilon_{j},\varepsilon_{\ell})^{-1}t_{ij}t_{k\ell}
   = d_{ki}d_{j\ell}t_{ij}t_{k\ell}$,
so that
\[
  \sum_{1 \le k,\ell \le N}d_{ij}^{-1}d_{k\ell}(R_{q})_{k\ell}^{mp}t_{ki} \circ t_{\ell j} \,
  = \sum_{1 \le k ,\ell \le N}(R_{q})_{k\ell}^{mp}t_{ki}t_{\ell j}
\]
and
\[
  \sum_{1 \le k,\ell \le N}d_{mp}^{-1}d_{k\ell}(R_{q})_{ij}^{k\ell}t_{p\ell} \circ t_{mk} \,
  = \sum_{1 \le k,\ell \le N}(R_{q})_{ij}^{k\ell}t_{p\ell}t_{mk}.
\]
By~\eqref{eq:FRT_relations_twisted}, this implies that there is a well-defined algebra homomorphism $\Psi\colon A(DR_{q}D) \to A(R_{q})_{\zeta}$
mapping $t_{ij}\mapsto t_{ij}$. Likewise, if $\bar{\zeta}\colon P \times P \to \bb{K}$ is defined by $\bar{\zeta}(\lambda,\mu) = \zeta(\lambda,\mu)^{-1}$,
then there is a well-defined homomorphism $\Psi'\colon A(R_{q}) \to A(DR_{q}D)_{\bar{\zeta}}$ mapping $t_{ij} \mapsto t_{ij}$. Because $\Psi'$
preserves degrees, it also defines an algebra homomorphism from $A(R_{q})_{\zeta}$ to $(A(DR_{q}D)_{\bar{\zeta}})_{\zeta}$. Finally, since the
identity map on the underlying vector spaces is an algebra isomorphism $(A(DR_{q}D)_{\bar{\zeta}})_{\zeta} \iso A(DR_{q}D)$, we have an algebra
homomorphism $\Psi'\colon A(R_{q})_{\zeta} \to A(DR_{q}D)$ mapping $t_{ij} \mapsto t_{ij}$. Clearly, $\Psi'$ is the inverse to $\Psi$,
so $\Psi$ is an algebra isomorphism.

Since the coproduct and antipode in $A(R_{q})_{\zeta}$ are the same as the coproduct and antipode in $A(R_{q})$,
it is clear that $\Psi$ is in fact a bialgebra isomorphism.
\end{proof}

Now, consider any matrix of the form
\begin{equation}\label{eq:S_matrix}
  S = \sum_{1 \le i,j \le N} s_{i}s_{j}E_{ii} \otimes E_{jj}\qquad \text{with}\qquad s_{1},\ldots ,s_{N} \in \bb{K} \setminus \{0\}.
\end{equation}
Note that $S = S' \otimes S'$ with $S' = \sum_{i = 1}^{N} s_{i}E_{ii} \in \End(V)$. Thus, if $\hat{R}$ satisfies the braid
relations~\eqref{eq:braid_relations}, then a simple computation shows that $S\hat{R}S^{-1}$ does as well. Moreover, we have
$S\tau = \tau S$, so that if $R$ is a solution to the Yang-Baxter equation~\eqref{eq:YBE}, then $SRS^{-1}$ is also a solution
to~\eqref{eq:YBE}. Thus, we can form the algebra $A(SRS^{-1})$ as above. This algebra is related to the original $A(R)$ as follows:

\begin{prop}\label{prop:diagonal_conjugation_A(R)}
There is a unique bialgebra isomorphism $\Phi_{S}\colon A(R) \iso A(SRS^{-1})$ such that $\Phi_{S}(t_{ij}) = s_{i}^{-1}s_{j}t_{ij}$.
\end{prop}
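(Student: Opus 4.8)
The plan is to define $\Phi_{S}$ on the generators by $t_{ij}\mapsto s_{i}^{-1}s_{j}t_{ij}$, check that this respects the RTT-relations, exhibit an explicit two-sided inverse, and finally verify compatibility with the coproduct and counit. Uniqueness is automatic once $\Phi_{S}$ is known to be an algebra homomorphism, since the $t_{ij}$ generate $A(R)$. Note that $A(SRS^{-1})$ makes sense as a bialgebra because $SRS^{-1}$ again solves the Yang--Baxter equation~\eqref{eq:YBE}, as observed in the paragraph preceding the statement.

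First I would record, using the convention~\eqref{eq:matrix_notation} together with $S'E_{ik}(S')^{-1}=s_{i}s_{k}^{-1}E_{ik}$, that $(SRS^{-1})^{ij}_{k\ell}=s_{i}s_{j}s_{k}^{-1}s_{\ell}^{-1}R^{ij}_{k\ell}$. Hence, by~\eqref{eq:FRT_relations} applied to $SRS^{-1}$, the defining relations of $A(SRS^{-1})$ read
\[
  s_{m}s_{p}\sum_{1\le k,\ell\le N}s_{k}^{-1}s_{\ell}^{-1}R^{mp}_{k\ell}\,t_{ki}t_{\ell j}
  \;=\; s_{i}^{-1}s_{j}^{-1}\sum_{1\le k,\ell\le N}s_{k}s_{\ell}\,t_{p\ell}t_{mk}R^{k\ell}_{ij}
  \qquad(1\le i,j,m,p\le N).
\]
On the other hand, substituting $t_{ab}\mapsto s_{a}^{-1}s_{b}t_{ab}$ into the relations~\eqref{eq:FRT_relations} of $A(R)$ and then multiplying the resulting identity through by $s_{i}^{-1}s_{j}^{-1}s_{m}s_{p}$ yields precisely the displayed relation. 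Therefore the assignment $t_{ij}\mapsto s_{i}^{-1}s_{j}t_{ij}$ extends to an algebra homomorphism $\Phi_{S}\colon A(R)\to A(SRS^{-1})$.

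Next I would build the inverse. Applying the same construction with $S$ replaced by $S^{-1}=(S')^{-1}\otimes(S')^{-1}$, and using $S^{-1}(SRS^{-1})S=R$, gives an algebra homomorphism $\Psi\colon A(SRS^{-1})\to A(R)$ with $\Psi(t_{ij})=s_{i}s_{j}^{-1}t_{ij}$. Both composites $\Psi\circ\Phi_{S}$ and $\Phi_{S}\circ\Psi$ fix every generator and hence are the identity, so $\Phi_{S}$ is an algebra isomorphism. Finally, compatibility with the bialgebra structure is a one-line check from~\eqref{eq:A(R)_bialgebra}: $(\Phi_{S}\otimes\Phi_{S})\Delta(t_{ij})=\sum_{k}(s_{i}^{-1}s_{k}t_{ik})\otimes(s_{k}^{-1}s_{j}t_{kj})=s_{i}^{-1}s_{j}\sum_{k}t_{ik}\otimes t_{kj}=\Delta(\Phi_{S}(t_{ij}))$, and $\epsilon(\Phi_{S}(t_{ij}))=s_{i}^{-1}s_{j}\delta_{ij}=\delta_{ij}=\epsilon(t_{ij})$.

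There is no serious obstacle here: the only content is the bookkeeping of the scalar factors $s_{i}$ in the RTT-relations, which is the short direct computation indicated above — structurally identical to, but simpler than, the argument already used in the proof of Proposition~\ref{prop:A(R)_twisted} for the bicharacter twist. Everything else is formal.
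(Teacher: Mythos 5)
Your proof is correct and follows essentially the same route as the paper's: verify that the assignment $t_{ij}\mapsto s_i^{-1}s_j t_{ij}$ respects the RTT-relations (you make the scalar bookkeeping a bit more explicit by recording $(SRS^{-1})^{ij}_{k\ell}=s_is_js_k^{-1}s_\ell^{-1}R^{ij}_{k\ell}$ first), construct the inverse by replacing $S$ with $S^{-1}$, and check coproduct and counit compatibility on generators. The only difference is that you add the (true but unstated in the paper) remark that uniqueness is automatic from the generators; otherwise the two arguments coincide.
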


\begin{proof}
Since the relations in $A(SRS^{-1})$ take the form
\[
  \sum_{1 \le k,\ell \le N}R_{k\ell}^{mp}s_{m}s_{p}s_{k}^{-1}s_{\ell}^{-1}t_{ki}t_{\ell j}\, =
  \sum_{1 \le k,\ell \le N}R_{ij}^{k\ell}s_{k}s_{\ell}s_{i}^{-1}s_{j}^{-1}t_{p\ell}t_{mk}
  \quad \text{for all} \quad 1 \le i,j,m,p \le N,
\]
it follows that there is an algebra homomorphism $\Phi_{S}\colon A(R) \to A(SRS^{-1})$ which sends $t_{ij}$ to $s_{i}^{-1}s_{j}t_{ij}$
for all $i,j$. On the other hand, replacing $R$ by $SRS^{-1}$ and $S$ by $S^{-1}$ we also get an algebra homomorphism $A(SRS^{-1})\to A(R)$
sending $t_{ij} \mapsto s_{i}s_{j}^{-1}t_{ij}$. This is clearly the inverse to $\Phi_{S}$, so $\Phi_{S}$ is an algebra isomorphism.

Moreover, we have
\[
  (\Phi_{S} \otimes \Phi_{S})(\Delta(t_{ij})) = \sum_{1\leq k\leq N}  s_{i}^{-1}s_{k}t_{ik} \otimes s_{k}^{-1}s_{j}t_{kj}  =
  s_{i}^{-1}s_{j}\sum_{1\leq k\leq N}  t_{ik} \otimes t_{kj} = \Delta(\Phi_{S}(t_{ij}))
\]
and
\[
  \epsilon (\Phi_{S}(t_{ij})) = \delta_{ij}s_{i}^{-1}s_{j} = \delta_{ij} = \epsilon(t_{ij}),
\]
which proves that $\Phi_{S}$ is in fact a bialgebra isomorphism.
\end{proof}

Combining the above two results, we can now relate $A(R_{r,s})$ to $A(R_{q})$, where $R_{q} = \hat{R}_{q} \circ \tau$,
$R_{r,s} = \hat{R}_{r,s} \circ \tau$, and $\hat{R}_{q},\hat{R}_{r,s}$ are the one- and two-parameter $R$-matrices of type
$B_{n}$ (resp.\ $C_{n}$, $D_{n}$) from~\eqref{eq:1_parameter_R_B} and~\cite[(4.7)]{MT1}
(resp.~\eqref{eq:1_parameter_R_C} and~\cite[(4.9)]{MT1},~\eqref{eq:1_parameter_R_D} and~\cite[(4.11)]{MT1}).
Then we have the following result, where $\psi_{i}$ are given by~\eqref{eq:psi_values_B} (resp.~\eqref{eq:psi_values_C},~\eqref{eq:psi_values_D}):

\begin{cor}\label{cor:A(R)_2_vs_1_parameter}
There is a bialgebra isomorphism $\wtd{\Psi}\colon A(R_{r,s}) \iso A(R_{q})_{\zeta}$ given by $\wtd{\Psi}(t_{ij}) = \psi_{i}^{-1}\psi_{j}t_{ij}$.
\end{cor}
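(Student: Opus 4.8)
The plan is to combine the two isomorphisms we have already established: the bialgebra isomorphism $\Psi\colon A(DR_qD)\iso A(R_q)_\zeta$ of Proposition~\ref{prop:A(R)_twisted}, and the diagonal-conjugation isomorphism $\Phi_S\colon A(R)\iso A(SRS^{-1})$ of Proposition~\ref{prop:diagonal_conjugation_A(R)}. The key observation is that the two-parameter $R$-matrix $\hat{R}_{r,s}$ was obtained in Subsection~\ref{ssec:B-type} precisely as $\hat{R}_{r,s}=(\psi\otimes\psi)^{-1}\circ\xi\circ\hat{R}_q\circ\xi^{-1}\circ(\psi\otimes\psi)$, where $\xi=\xi_{V,V}$ acts diagonally by $v_i\otimes v_j\mapsto \zeta(\varepsilon_j,\varepsilon_i)v_i\otimes v_j$ and $\psi$ acts diagonally by $v_i\mapsto\psi_i v_i$. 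In matrix language, $\xi$ is exactly the matrix $D$ of~\eqref{eq:D_zeta}, and $\psi\otimes\psi$ is a matrix $S$ of the form~\eqref{eq:S_matrix} with $s_i=\psi_i$. Thus $\hat{R}_{r,s}=S^{-1}D\hat{R}_q D^{-1}S$ and correspondingly, since $D$ and $S$ commute with the flip $\tau$ in the appropriate sense ($D\tau=\tau D^{-1}$ and $S\tau=\tau S$), one gets $R_{r,s}=\hat R_{r,s}\circ\tau = S^{-1}\big(D R_q D\big)S^{-1}$, i.e.\ $R_{r,s}=S^{-1}(DR_qD)(S^{-1})$.

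First I would record this matrix identity carefully: writing $R_{r,s}=\hat R_{r,s}\circ\tau$ and using $\hat R_{r,s}=S^{-1}D\hat R_q D^{-1}S$ together with $D^{-1}S\circ\tau=\tau\circ DS^{-1}$ (which follows from $D\tau=\tau D^{-1}$, $S\tau=\tau S$), one obtains
\[
  R_{r,s}=S^{-1}D\hat R_q D^{-1}S\circ\tau = S^{-1}D\hat R_q\circ\tau\circ DS^{-1} = S^{-1}(DR_qD)S^{-1} = S^{-1}(DR_qD)(S^{-1}),
\]
so that with $\bar S:=S^{-1}$ we have $R_{r,s}=\bar S\,(DR_qD)\,\bar S^{-1}$ in the notation of Proposition~\ref{prop:diagonal_conjugation_A(R)} (note $\bar S$ again has the diagonal product form~\eqref{eq:S_matrix} with entries $\psi_i^{-1}$). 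Then Proposition~\ref{prop:diagonal_conjugation_A(R)} applied to $R=DR_qD$ and the matrix $\bar S$ yields a bialgebra isomorphism $\Phi_{\bar S}\colon A(DR_qD)\iso A(R_{r,s})$ sending $t_{ij}\mapsto \psi_i\psi_j^{-1}t_{ij}$ (since $s_i=\psi_i^{-1}$ there, so $s_i^{-1}s_j=\psi_i\psi_j^{-1}$). Composing with $\Psi$ of Proposition~\ref{prop:A(R)_twisted}, the map $\wtd\Psi:=\Psi\circ\Phi_{\bar S}^{-1}\colon A(R_{r,s})\iso A(R_q)_\zeta$ is a bialgebra isomorphism with $\wtd\Psi(t_{ij})=\Psi(\psi_i^{-1}\psi_j t_{ij})=\psi_i^{-1}\psi_j t_{ij}$, which is the claimed formula. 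Uniqueness is clear since $A(R_{r,s})$ is generated by the $t_{ij}$.

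I expect the main obstacle to be purely bookkeeping: making sure the matrices $D$, $S=\psi\otimes\psi$, $\xi$, and the diagonal matrices in the hypotheses of Lemma~\ref{lem:YBE_twist} and Propositions~\ref{prop:A(R)_twisted} and~\ref{prop:diagonal_conjugation_A(R)} are matched up with the correct variance (whether one needs $D$ or $D^{-1}$, $S$ or $S^{-1}$), and that the relation $\hat R_{r,s}=(\psi\otimes\psi)^{-1}\xi\hat R_q\xi^{-1}(\psi\otimes\psi)$ from Subsection~\ref{ssec:B-type} translates to an identity between the unhatted $R$-matrices of the precise form $\bar S(DR_qD)\bar S^{-1}$ needed to invoke Proposition~\ref{prop:diagonal_conjugation_A(R)}. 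One subtlety worth flagging: Proposition~\ref{prop:diagonal_conjugation_A(R)} requires that $S\hat R S^{-1}$ satisfy the braid relation and $SRS^{-1}$ the Yang--Baxter equation; here $\hat R=D\hat R_qD^{-1}$ already satisfies the braid relation (by Subsection~\ref{ssec:B-type}, via Lemma~\ref{lem:YBE_twist}), and conjugating further by the product-form matrix $\bar S$ preserves this, exactly as in the discussion preceding Proposition~\ref{prop:diagonal_conjugation_A(R)}, so all hypotheses are met. No genuinely new computation is required beyond what was done for the $R$-matrices themselves; everything else is assembling the already-proven pieces.
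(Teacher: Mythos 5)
Your approach is essentially the same as the paper's: establish the matrix identity relating $R_{r,s}$ to $DR_qD$ by diagonal conjugation, then compose the diagonal-conjugation isomorphism of Proposition~\ref{prop:diagonal_conjugation_A(R)} with the twisting isomorphism $\Psi$ of Proposition~\ref{prop:A(R)_twisted}. The only organizational difference is that the paper applies $\Phi_S$ directly to $A(R_{r,s})$ (getting $SR_{r,s}S^{-1}=DR_qD$), whereas you apply $\Phi_{S^{-1}}$ starting from $A(DR_qD)$ and invert; these are the same map.

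However, your intermediate computation contains a sign/inverse error that you should fix. You claim $D^{-1}S\circ\tau=\tau\circ DS^{-1}$; this is wrong. Since $S\tau=\tau S$ (not $S\tau = \tau S^{-1}$) and $D^{-1}\tau = \tau D$, one has $D^{-1}S\tau = D^{-1}\tau S = \tau DS$. Consequently the displayed chain should end with $R_{r,s}=S^{-1}DR_qDS$, not $S^{-1}(DR_qD)S^{-1}$. Note that the two formulas you wrote are mutually inconsistent: you derive $R_{r,s}=S^{-1}(DR_qD)S^{-1}$ but then assert $R_{r,s}=\bar S(DR_qD)\bar S^{-1}$ with $\bar S=S^{-1}$, which evaluates to $S^{-1}(DR_qD)S$. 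It is this last formula that is correct, matching the paper's~\eqref{eq:R-relation}. Since you proceed using the correct form $\bar S(DR_qD)\bar S^{-1}$, the remainder of the argument (applying $\Phi_{\bar S}$ to $A(DR_qD)$, inverting, composing with $\Psi$) goes through and gives the stated map $t_{ij}\mapsto\psi_i^{-1}\psi_j t_{ij}$; but the displayed derivation needs repair.
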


\begin{proof}
Recall that $\hat{R}_{r,s} = (\psi \otimes \psi)^{-1} \circ \xi \circ \hat{R}_{q} \circ \xi^{-1} \circ (\psi \otimes \psi)$,
where $\xi = \xi_{V,V}$ is the isomorphism of~\eqref{eq:xi_formula}. Note that the matrix of $\xi$ is precisely the matrix
$D$ of~\eqref{eq:D_zeta}, and $S = \sum_{1 \le i,j\le N}\psi_{i}\psi_{j}E_{ii} \otimes E_{jj}$ is the matrix of $\psi \otimes \psi$.
Then we have
\begin{equation}\label{eq:R-relation}
  R_{r,s} = \hat{R}_{r,s}\tau = S^{-1}D(R_{q} \tau)D^{-1}S \tau = S^{-1}(DR_{q}D \tau)S \tau = S^{-1}DR_{q}DS.
\end{equation}
By Proposition~\ref{prop:diagonal_conjugation_A(R)}, we have a bialgebra isomorphism $\Phi_{S}\colon A(R_{r,s}) \iso A(SR_{r,s}S^{-1}) = A(DR_{q}D)$
such that $\Phi_{S}(t_{ij}) = \psi_{i}^{-1}\psi_{j}t_{ij}$. By Proposition~\ref{prop:A(R)_twisted}, we have a bialgebra isomorphism
$\Psi\colon A(DR_{q}D) \iso A(R_{q})_{\zeta}$ with $\Psi(t_{ij}) = t_{ij}$. Thus,
$\wtd{\Psi} = \Psi \circ \Phi_{S}\colon A(R_{r,s}) \iso A(R_{q})_{\zeta}$ is a bialgebra isomorphism and
$\wtd{\Psi}(t_{ij}) = \psi_{i}^{-1}\psi_{j}t_{ij}$.
\end{proof}


\subsection{The Hopf algebra $U(R)$ and its variations}\label{ssec:algebra_U(R)}
\

For any solution $R\in \End(V) \otimes \End(V)$ to the Yang-Baxter equation~\eqref{eq:YBE}, one can construct a $\bb{K}$-algebra $U(R)$
which is generated by elements
  $\{\ell_{ji}^{+},\ell_{ij}^{-} \,|\, 1 \le i \le j \le N\}\cup \{(\ell_{ii}^{+})^{-1},(\ell_{ii}^{-})^{-1} \,|\, 1 \le i \le N\}$
subject to:
\begin{equation}\label{eq:RLL_relations}
  RL_{1}^{\pm}L_{2}^{\pm} = L_{2}^{\pm}L_{1}^{\pm}R,\qquad
  RL_{1}^{+}L_{2}^{-} = L_{2}^{-}L_{1}^{+}R,\qquad
  \ell_{ii}^{\pm}(\ell_{ii}^{\pm})^{-1} = (\ell_{ii}^{\pm})^{-1}\ell_{ii}^{\pm} = 1,
\end{equation}
where $L_{1}^{\pm} = \sum_{1 \le i,j \le N} E_{ij} \otimes 1 \otimes \ell_{ij}^{\pm} \in \End(V)^{\otimes 2} \otimes U(R)$ and
$L_{2}^{\pm} = \sum_{1 \le i,j \le N} 1 \otimes E_{ij} \otimes \ell_{ij}^{\pm} \in \End(V)^{\otimes 2} \otimes U(R)$, with
$\ell_{ij}^{+} = 0 = \ell_{ji}^{-}$ if $i < j$. There is a Hopf algebra structure on $U(R)$ given by
\begin{equation}\label{eq:U(R)_Hopf}
  \Delta(\ell_{ij}^{\pm}) = \sum_{1\leq k\leq N} \ell_{ik}^{\pm} \otimes \ell_{kj}^{\pm}, \qquad
  \epsilon(\ell_{ij}^{\pm}) = \delta_{ij},\qquad
  S\left (L^{\pm}\right ) = (L^{\pm})^{-1}\qquad \text{for all}\ \qquad 1 \le i,j \le N.
\end{equation}
The matrices $L^{\pm}$ are invertible because $L^{+}$ (resp.\ $L^{-}$) is lower (resp.\ upper) triangular, with invertible entries on the diagonal.
Henceforth, we shall denote the $(i,j)$-th entry of $(L^{\pm})^{-1}$ by $\wtd{\ell}_{ij}^{\pm}$ for all $1 \le i,j \le N$.

As in Subsection~\ref{ssec:algebra_A(R)}, we now specialize to the case $R = R_{q} = \hat{R}_{q} \circ \tau$, where $\hat{R}_{q}$ is one of the matrices
from equations~\eqref{eq:1_parameter_R_B},~\eqref{eq:1_parameter_R_C}, or~\eqref{eq:1_parameter_R_D}. Then $R_{q}$ is an $N \times N$-matrix satisfying
the Yang-Baxter equation~\eqref{eq:YBE}, where $N = 2n + 1$ in type $B_{n}$, and $N = 2n$ in types $C_{n}$ and $D_{n}$.

Below, we shall work with several quotients of $U(R_{q})$. First, we define the algebra $U'(R_{q}) = U(R_{q})/J$, where $J$ is the ideal of $U(R_{q})$
generated by the elements $\{(\ell_{ii}^{\pm})^{-1} - \ell_{ii}^{\mp}\}_{i=1}^N$. As the elements $\ell_{ii}^{\pm}$ are grouplike, it follows that
$U'(R_{q})$ is also a Hopf algebra with the coproduct, counit, and antipode given by~\eqref{eq:U(R)_Hopf}. Before introducing other quotient algebras
we need, let us set up some notation. For $BCD$-types, we define sequences $(\rho_{i})_{i=1}^{N}$ and corresponding matrices
$C_{q} = (c_{ij})_{i,j = 1}^{N}$ as follows (cf.~\cite[(1.11)]{JLM1},~\cite[(1.14)]{JLM2}):
\begin{itemize}

\item Type $B_{n}$:\footnote{The differences with~\cite[(1.11)]{JLM1} are due to some slightly differing conventions,
see Remark~\ref{rmk:JLM_comparison_B} for more details.}
\begin{equation}\label{eq:rho_B}
  (\rho_{1},\ldots ,\rho_{N}) =
  \left( n - \frac{1}{2},n - \frac{3}{2},\ldots ,\frac{1}{2},\frac{1}{2},-\frac{1}{2},-\frac{3}{2},\ldots ,-n + \frac{1}{2} \right),
\end{equation}
\begin{equation}\label{eq:Cq_B}
  c_{ij} = \delta_{ij}q^{2\rho_{i}}\qquad \text{for all}\qquad 1 \le i,j\le N.
\end{equation}

\item Type $C_{n}$:
\[
  (\rho_{1},\ldots ,\rho_{N}) = \left (n, n-1,\ldots ,1,-1,\ldots ,-n + 1,-n\right ),
\]
\begin{equation}\label{eq:Cq_C}
  c_{ij} = \delta_{ij}\sigma_{i}q^{\rho_{i}}\qquad \text{for all}\qquad 1 \le i,j \le N,
\end{equation}
where $\sigma_{i} = 1$ for $i \le n$, and $\sigma_{i} = -1$ if $i > n$.

\item Type $D_{n}$:
\[
  (\rho_{1},\ldots ,\rho_{N}) = \left( n-1,n-2,\ldots,1,0,0,-1,\ldots ,-n+2,-n+1 \right),
\]
\begin{equation}\label{eq:Cq_D}
  c_{ij} = \delta_{ij}q^{\rho_{i}}\qquad \text{for all}\qquad 1 \le i,j \le N.
\end{equation}

\end{itemize}
Now, let $\cal{I}_{q}$ (resp.\ $\cal{I}_{q}'$) be the ideal in $U(R_{q})$ (resp.\ $U'(R_{q})$) generated by the entries of the matrix
\begin{equation}\label{eq:LC_relations}
  (L^{\pm})C_{q}(L^{\pm})^{\mathsf{t}'}C_{q}^{-1} - I
\end{equation}
where $A^{\mathsf{t}'} = (a_{j'i'})_{i,j = 1}^{N}$ for any $N \times N$ matrix $A = (a_{ij})_{i,j = 1}^{N}$, cf.~\cite[(1.10)]{JLM1}.
Explicitly, one can check that $\cal{I}_{q}$ and $\cal{I}_{q}'$ are generated by the following elements:
\begin{equation}\label{eq:LC_explicit_1}
  \sum_{1\leq k\leq N}c_{kk}c_{jj}^{-1}\ell_{ik}^{\pm}\ell_{j'k'}^{\pm} -\delta_{ij}\qquad \text{for all}\qquad 1 \le i,j \le N.
\end{equation}

It turns out that the ideals $\cal{I}_{q}$ and $\cal{I}_{q}'$ are each generated by two elements:

\begin{prop}\label{prop:central_element}
In the algebras $U(R_{q})$ and $U'(R_{q})$, we have the following equalities:
\begin{equation}\label{eq:central_element}
  \sum_{1\leq k\leq N}c_{ii}c_{kk}^{-1}\ell_{k'i'}^{\pm}\ell_{ki}^{\pm}
  = \sum_{1\leq k\leq N}c_{jj}^{-1}c_{kk}\ell_{jk}^{\pm}\ell_{j'k'}^{\pm}\qquad \text{for all}\qquad 1 \le i,j \le N.
\end{equation}
If $z_{q}^{\pm}$ denote the common values above, then $z_{q}^{\pm} - 1$ generate the ideal $\cal{I}_{q} \subset U(R_{q})$
(resp.\ $\cal{I}_{q}' \subset U'(R_{q})$). Furthermore, $z_{q}^{\pm}$ are grouplike.
\end{prop}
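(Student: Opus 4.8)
The plan is to reduce everything to the single matrix identity
\[
  L^{\pm}C_{q}(L^{\pm})^{\mathsf{t}'}C_{q}^{-1} \;=\; C_{q}(L^{\pm})^{\mathsf{t}'}C_{q}^{-1}\,L^{\pm} \;=\; z_{q}^{\pm}\cdot I_{N}
\]
in $U(R_{q})$ (and verbatim in $U'(R_{q})$), for a single element $z_{q}^{\pm}\in U(R_{q})$, treating the two signs in parallel. Throughout write $\bar{L}^{\pm}=C_{q}(L^{\pm})^{\mathsf{t}'}C_{q}^{-1}$, whose $(i,j)$-entry is $c_{ii}c_{jj}^{-1}\ell^{\pm}_{j'i'}$; recall from~\eqref{eq:LC_explicit_1} that $\cal{I}_{q}$ (resp.\ $\cal{I}_{q}'$) is, by definition, generated by the entries of $L^{\pm}\bar{L}^{\pm}-I$. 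Granting the displayed identity, all three assertions of the proposition come out by inspecting matrix entries, as in the last paragraph below.

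There are two inputs. The first is the crossing symmetry of $R_{q}$ from Lemma~\ref{lem:crossing_symmetries}: for type $BCD$ it expresses the partial transpose $(R_{q})^{\mathsf{t}_{1}}$, after conjugation by $(C_{q})_{1}$, in terms of $(R_{q})^{-1}$ (equivalently of $R_{q}$ with the two spaces swapped) --- the finite, $q$-deformed analogue of the identities used in~\cite{JLM1,JLM2}. The second is that each $\hat{R}_{q}$ of type $BCD$ satisfies a cubic minimal polynomial one of whose root spaces is one-dimensional and is spanned by the image of a rank-one matrix $Q_{q}\in\End(V)\otimes\End(V)$ built from the invariant bilinear form encoded in $C_{q}$, and that $\hat{R}_{q}Q_{q}$ is a scalar multiple of $Q_{q}$. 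I would then apply $\mathsf{t}_{1}$ and conjugation by $(C_{q})_{1}$ to the $RLL$-relation $R_{q}L_{1}^{\pm}L_{2}^{\pm}=L_{2}^{\pm}L_{1}^{\pm}R_{q}$; by the crossing identity this becomes an auxiliary relation coupling $\bar{L}_{1}^{\pm}$ and $L_{2}^{\pm}$ through $R_{q}$.

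Next, sandwiching this auxiliary relation between copies of $Q_{q}$ and using that $\hat{R}_{q}$ is scalar on the image of $Q_{q}$, the one-dimensionality of that image produces an element $z_{q}^{\pm}\in U(R_{q})$ with $Q_{q}\bigl(L_{1}^{\pm}\bar{L}_{2}^{\pm}\bigr)Q_{q}=z_{q}^{\pm}Q_{q}$; unravelling this, together with the invertibility of $L^{\pm}$, forces both $L^{\pm}\bar{L}^{\pm}$ and $\bar{L}^{\pm}L^{\pm}$ to equal $z_{q}^{\pm}I_{N}$ for the same $z_{q}^{\pm}$ (the identification of the two scalars is part of what the $Q_{q}$-computation delivers, and in particular $z_{q}^{\pm}$ then commutes with all generators). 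This is the heart of the matter and is exactly the ``quantum contraction / Sklyanin determinant is scalar'' computation of~\cite[\S3]{JLM1} (cf.\ also Molev's book) specialized to the constant $R$-matrix. The one genuine obstacle I expect is the index bookkeeping in transposing and conjugating the $RLL$-relation so that the crossing identity applies cleanly, and in checking that the $Q_{q}$-sandwich pins down the same element on both sides; since the conventions for $C_{q}$ differ slightly from~\cite{JLM1,JLM2} (see Remark~\ref{rmk:JLM_comparison_B}), this must be carried out in the present normalization.

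Finally, granting the matrix identity, the rest is formal. Comparing the $(i,i)$-entries of $\bar{L}^{\pm}L^{\pm}=z_{q}^{\pm}I_{N}$ with the $(j,j)$-entries of $L^{\pm}\bar{L}^{\pm}=z_{q}^{\pm}I_{N}$ is precisely the assertion~\eqref{eq:central_element}, with common value $z_{q}^{\pm}$. Since $L^{\pm}\bar{L}^{\pm}-I=(z_{q}^{\pm}-1)I_{N}$, its entries generate the two-sided ideal $\langle z_{q}^{+}-1,\, z_{q}^{-}-1\rangle$, so $\cal{I}_{q}=\langle z_{q}^{+}-1,\, z_{q}^{-}-1\rangle$; the identical argument in $U'(R_{q})$ gives the statement for $\cal{I}_{q}'$. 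As for grouplikeness, from $z_{q}^{\pm}=\sum_{k}c_{jj}^{-1}c_{kk}\ell_{jk}^{\pm}\ell_{j'k'}^{\pm}$ one gets $\epsilon(z_{q}^{\pm})=\sum_{k}c_{jj}^{-1}c_{kk}\delta_{jk}\delta_{j'k'}=1$, while expanding $\Delta$ via $\Delta(\ell_{ab}^{\pm})=\sum_{c}\ell_{ac}^{\pm}\otimes\ell_{cb}^{\pm}$, recognizing the resulting inner sums as entries of $L^{\pm}C_{q}(L^{\pm})^{\mathsf{t}'}C_{q}^{-1}=z_{q}^{\pm}I_{N}$, and relabelling indices collapses $\Delta(z_{q}^{\pm})$ to $z_{q}^{\pm}\otimes z_{q}^{\pm}$; hence $z_{q}^{\pm}$ is grouplike in $U(R_{q})$, and the same computation applies in $U'(R_{q})$.
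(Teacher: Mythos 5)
Your proposal is mathematically plausible, but it takes a route that is genuinely different from the paper's, and in one respect more roundabout. The paper does \emph{not} invoke the crossing symmetry identities of Lemma~\ref{lem:crossing_symmetries} anywhere in this proof --- those are only used later, in Proposition~\ref{prop:skew_pairing_descent}, to show that the skew pairings $\sigma,\bar\sigma$ descend to the quotients $A_{\pm}(R_q)$. Instead, the paper observes that $\hat{R}_q$ satisfies a cubic polynomial whose three eigenvalues are distinct, so the projector $K$ of~\eqref{eq:K_matrix} onto the one-dimensional $q^{4n}$-eigenspace is itself a quadratic polynomial in $\hat{R}_q$ (via the Vandermonde resolution of the identity). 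Since the $RLL$-relation implies $\hat{R}_q L_2^\pm L_1^\pm = L_2^\pm L_1^\pm \hat{R}_q$, any polynomial in $\hat{R}_q$ commutes with $L_2^\pm L_1^\pm$; in particular $K$ does, and reading off entries of $K L_2^\pm L_1^\pm = L_2^\pm L_1^\pm K$ gives~\eqref{eq:KLL_relation_1}--\eqref{eq:KLL_relation_2} directly, from which~\eqref{eq:central_element} and the ideal description follow. Your route --- partially transposing and conjugating the $RLL$-relation, invoking crossing symmetry to produce a mixed $\bar{L}_1^\pm$-$L_2^\pm$ relation, then sandwiching by the rank-one projector --- is the argument of~\cite{JLM1,JLM2} and should go through, but it re-proves ``$K$ intertwines $L_2L_1$ with itself'' the long way around, while also importing Lemma~\ref{lem:crossing_symmetries} unnecessarily. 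The book-keeping you flag (partial transposes of products whose entries do not commute, and the normalization differences of Remark~\ref{rmk:JLM_comparison_B}) is indeed where the pain lives in that version.

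A small flag beyond this: you assert the full two-sided matrix identity $L^\pm \bar{L}^\pm = \bar{L}^\pm L^\pm = z_q^\pm I_N$ together with $z_q^\pm$ commuting with all generators, and lean on this for the ideal claim. The paper establishes less --- that the off-diagonal entries of $L^\pm \bar{L}^\pm$ vanish~\eqref{eq:KLL_relation_1} and that all diagonal entries of both $L^\pm \bar{L}^\pm$ and $\bar{L}^\pm L^\pm$ agree~\eqref{eq:KLL_relation_2} --- and this already suffices, since every entry of $L^\pm C_q(L^\pm)^{\mathsf{t}'}C_q^{-1}-I$ is then either $0$ or $z_q^\pm-1$, so they generate the same ideal as $z_q^\pm-1$ with no centrality needed. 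Your stronger claims are likely true (they are formally equivalent, given invertibility of $L^\pm$), but they aren't free and the proposition doesn't require them. The grouplike computation you sketch matches the paper's~\eqref{eq:z-coproduct}.
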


\begin{proof}
We shall only consider type $B_{n}$, since the proof is essentially the same for types $C_{n}$ and $D_{n}$.
First, we recall that $\hat{R}_{q} = R_{q} \circ \tau$ satisfies the cubic equation
$(\hat{R}_{q} - q^{-2})(\hat{R}_{q} + q^{2})(\hat{R}_{q} - q^{4n}) = 0$ by \cite[Lemma 4.8]{MT1}. This implies that
we can write $\hat{R}_{q} = q^{-2}P_{1} - q^{2}P_{2} + q^{4n}P_{3}$, where $P_{1}$ (resp.\ $P_{2}$, $P_{3}$) is the
projection onto the $q^{-2}$-eigenspace (resp.\ $-q^{2}$-eigenspace, $q^{4n}$-eigenspace) of $V \otimes V$ for $\hat{R}_{q}$.
As $(q^{-2} + q^{2})(q^{-2} - q^{4n})(q^{4n} + q^{2}) \neq 0$, it follows from the Vandermonde determinant formula that
each $P_{i}$ may be written as a linear combination of $\mathrm{Id}$, $\hat{R}_{q}$, and $\hat{R}_{q}^{2}$. On the other hand,
using the fact that the $q^{4n}$-eigenspace of $R_{q}$ is spanned by the vector
\[
  w = \sum_{1\leq i\leq n+1}q^{2(i - 1)}v_{i} \otimes v_{i'} + \sum_{1\leq i\leq n}q^{4n - 2i}v_{i'} \otimes v_{i}
\]
(cf.~\cite[Lemma 4.8, formula (3.2)]{MT1}), one can check that the matrix
\begin{equation}\label{eq:K_matrix}
  K = \sum_{1 \le i,j\le N}q^{2(\rho_{i} - \rho_{j})}E_{i'j} \otimes E_{ij'}
  = \sum_{1 \le i,j \le N}c_{ii}c_{jj}^{-1}E_{i'j} \otimes E_{ij'}
\end{equation}
is a multiple of $P_{3}$, and hence $K$ is a quadratic polynomial in $\hat{R}_{q}$. Moreover, due to the fact that $\hat{R}_{q} = R_q \circ \tau$,
one can deduce from the relations~\eqref{eq:RLL_relations} that $\hat{R}_{q}L_{2}^{\pm}L_{1}^{\pm} = L_{2}^{\pm}L_{1}^{\pm} \hat{R}_{q}$.
This equality implies that $\hat{R}_{q}^{m}L_{2}^{\pm}L_{1}^{\pm} = L_{2}^{\pm}L_{1}^{\pm}\hat{R}_{q}^{m}$ for all $m \ge 0$, and hence
$KL_{2}^{\pm}L_{1}^{\pm} = L_{2}^{\pm}L_{1}^{\pm}K$. It follows from these equalities that
\begin{equation}\label{eq:KLL_relation_1}
  \sum_{1\leq k\leq N}c_{kk}\ell_{ik}^{\pm}\ell_{jk'}^{\pm} = 0\qquad \text{for all}\qquad i \neq j',
\end{equation}
and
\begin{equation}\label{eq:KLL_relation_2}
  \sum_{1\leq k\leq N}c_{jj}c_{kk}^{-1}\ell_{k'i'}^{\pm}\ell_{ki}^{\pm}
  = \sum_{1\leq k\leq N}c_{kk}c_{ii}^{-1}\ell_{jk}^{\pm}\ell_{j'k'}^{\pm}
  \qquad \text{for all}\qquad 1 \le i,j\le N.
\end{equation}
It is easy to see that the equality~\eqref{eq:KLL_relation_2} is equivalent to~\eqref{eq:central_element}.
Moreover, comparing~\eqref{eq:KLL_relation_1} to~\eqref{eq:LC_explicit_1}, we find that the off-diagonal entries of the matrix
$L^{\pm}C_{q}(L^{\pm})^{\mathsf{t}'}C_{q}^{-1} - I$ are all $0$ in $U(R_{q})$ (resp.\ $U'(R_q)$), while each diagonal entry of
$L^{\pm}C_{q}(L^{\pm})^{\mathsf{t}'}C_{q}^{-1} - I$ is equal to $z_{q}^{\pm} - 1$ due to~\eqref{eq:central_element}. Finally,
we show that $z_{q}^{\pm}$ are grouplike by directly applying $\Delta$ to the right-hand side of~\eqref{eq:central_element} and
using~\eqref{eq:KLL_relation_1}:
\begin{equation}\label{eq:z-coproduct}
\begin{split}
  \Delta&(z_{q}^{\pm})
  = \sum_{1\leq k\leq N}c_{jj}^{-1}c_{kk}\Delta(\ell_{jk}^{\pm})\Delta(\ell_{j'k'}^{\pm})
   \ = \sum_{1 \le k,p_{1},p_{2} \le N} c_{jj}^{-1}c_{kk}\ell_{jp_{1}}^{\pm}\ell_{j'p_{2}}^{\pm} \otimes \ell_{p_{1}k}^{\pm}\ell_{p_{2}k'}^{\pm} \\
  &= \sum_{1 \le p_{1},p_{2} \le N} c_{jj}^{-1}\ell_{jp_{1}}^{\pm}\ell_{j'p_{2}}^{\pm} \otimes
      \left (\sum_{1\leq k\leq N} c_{kk}\ell_{p_{1}k}^{\pm}\ell_{p_{2}k'}^{\pm}\right )
   \overset{\eqref{eq:KLL_relation_1}}{=} \sum_{1\leq p\leq N} c_{jj}^{-1}\ell_{jp}^{\pm}\ell_{j'p'}^{\pm} \otimes \left
    (\sum_{1\leq k\leq N} c_{kk}\ell_{pk}^{\pm}\ell_{p'k'}^{\pm}\right ) \\
  &\overset{\eqref{eq:central_element}}{=} \sum_{1\leq p\leq N} c_{jj}^{-1}c_{pp}\ell_{jp}^{\pm}\ell_{j'p'}^{\pm} \otimes z_{q}^{\pm}
   = z_{q}^{\pm} \otimes z_{q}^{\pm}.
\end{split}
\end{equation}
This completes the proof.
\end{proof}

We now introduce the quotients $\wtd{U}(R_{q}) = U(R_{q})/\cal{I}_{q}$ and $\wtd{U}'(R_{q}) = U'(R_{q})/\cal{I}_{q}'$. The above result shows
that $\cal{I}_{q}$ and $\cal{I}_{q}'$ are both Hopf ideals, so that $\wtd{U}(R_{q})$ and $\wtd{U}'(R_{q})$ are Hopf algebras. We also note that
\begin{equation}\label{eq:diagonal_reflection}
  \ell_{i'i'}^{\pm} \ell_{ii}^{\pm} = z^\pm_q = 1 \ \ \mathrm{in} \ \ \wtd{U}(R_{q})
\end{equation}
for any $i$, which can be easily deduced from~\eqref{eq:LC_explicit_1}, along with the fact that $\ell_{ij}^{+} = \ell_{ji}^{-} = 0$ whenever $i < j$.

Then we have the following analogue of Proposition~\ref{prop:A(R)_grading}:

\begin{prop}\label{prop:U(R)_grading}
If $\hat{R}_{q}$ is the matrix of~\eqref{eq:1_parameter_R_B} (resp.~\eqref{eq:1_parameter_R_C},~\eqref{eq:1_parameter_R_D}),
and $R_{q} = \hat{R}_{q} \circ \tau$, then $U(R_{q})$ and $\wtd{U}(R_{q})$ are both $P$-bigraded Hopf algebras with respect to the assignment
$\deg(\ell_{ij}^{\pm}) = (-\varepsilon_{i},\varepsilon_{j})$, where $P$ is the weight lattice of type $B_{n}$ (resp.\ $C_{n}$, $D_{n}$),
and $\varepsilon_{i'} = -\varepsilon_{i}$ for $1 \le i \le n$ (and $\varepsilon_{n+1}=0$ in type $B_n$).
\end{prop}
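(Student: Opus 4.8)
The plan is to run the argument of Proposition~\ref{prop:A(R)_grading} essentially verbatim to establish the algebra grading, then to check the coproduct, counit, and antipode conditions~\eqref{eq:bigraded-bialgebra}--\eqref{eq:bigraded-antipode}, and finally to observe that $\cal{I}_q$ is a homogeneous ideal so that the structure descends to $\wtd{U}(R_q)$. First I would assign $\deg(\ell_{ij}^\pm)=(-\varepsilon_i,\varepsilon_j)$ and $\deg((\ell_{ii}^\pm)^{-1})=(\varepsilon_i,-\varepsilon_i)$, and check that all defining relations of $U(R_q)$ in~\eqref{eq:RLL_relations} are homogeneous. The relations $\ell_{ii}^\pm(\ell_{ii}^\pm)^{-1}=(\ell_{ii}^\pm)^{-1}\ell_{ii}^\pm=1$ and $\ell_{ij}^+=\ell_{ji}^-=0$ for $i<j$ are homogeneous by inspection, while the $RLL$-relations, written entry-wise as in~\eqref{eq:FRT_relations} (with $t_{ij}$ replaced by $\ell_{ij}^+$, by $\ell_{ij}^-$, or by the mixed pair $\ell^+,\ell^-$), have left-hand side $\sum_{k,\ell}(R_q)^{mp}_{k\ell}\ell_{ki}^{\pm}\ell_{\ell j}^{\pm}$ and right-hand side $\sum_{k,\ell}\ell_{p\ell}^{\pm}\ell_{mk}^{\pm}(R_q)^{k\ell}_{ij}$; since $(R_q)^{ab}_{cd}\ne 0$ forces $\varepsilon_a+\varepsilon_b=\varepsilon_c+\varepsilon_d$ (recalled in the proof of Proposition~\ref{prop:A(R)_grading}), every nonzero term on each side carries the common degree $(-\varepsilon_m-\varepsilon_p,\varepsilon_i+\varepsilon_j)$, so $U(R_q)$ is $P\times P$-graded as claimed.

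Next I would verify the bigraded bialgebra/Hopf axioms on generators and then extend. From $\deg(\ell_{ik}^\pm)+\deg(\ell_{kj}^\pm)=(-\varepsilon_i,\varepsilon_k)+(-\varepsilon_k,\varepsilon_j)$ the coproduct formula $\Delta(\ell_{ij}^\pm)=\sum_k\ell_{ik}^\pm\otimes\ell_{kj}^\pm$ lands in $\sum_{\gamma}U(R_q)_{-\varepsilon_i,\gamma}\otimes U(R_q)_{-\gamma,\varepsilon_j}$ (take $\gamma=\varepsilon_k$), and since $\Delta$ is an algebra homomorphism this yields the first condition in~\eqref{eq:bigraded-bialgebra} for all homogeneous elements. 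The counit $\epsilon(\ell_{ij}^\pm)=\delta_{ij}$ vanishes unless $\varepsilon_i=\varepsilon_j$; as $\varepsilon_1,\dots,\varepsilon_N$ are pairwise distinct (they run over $\varepsilon_1,\dots,\varepsilon_n,-\varepsilon_n,\dots,-\varepsilon_1$, with an extra $0$ in the middle in type $B_n$), $\varepsilon_i=\varepsilon_j$ forces $-\varepsilon_i+\varepsilon_j=0$, which is the second condition in~\eqref{eq:bigraded-bialgebra}. For the antipode, $S(\ell_{ij}^\pm)=\wtd{\ell}_{ij}^\pm$ is the $(i,j)$-entry of $(L^\pm)^{-1}$; using the triangularity of $L^\pm$ together with the recursions $\wtd{\ell}_{ii}^{\pm}=(\ell_{ii}^\pm)^{-1}$ and $\wtd{\ell}_{ij}^{+}=-(\ell_{ii}^+)^{-1}\sum_{j\le k<i}\ell_{ik}^+\wtd{\ell}_{kj}^+$ (and the mirror identity for $L^-$), a short induction shows $\deg(\wtd{\ell}_{ij}^\pm)=(\varepsilon_j,-\varepsilon_i)$, so $S$ sends $U(R_q)_{\alpha,\beta}$ into $U(R_q)_{\beta,\alpha}$ on generators and hence everywhere, $S$ being an anti-homomorphism. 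This establishes that $U(R_q)$ is a $P$-bigraded Hopf algebra.

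Finally, to pass to $\wtd{U}(R_q)=U(R_q)/\cal{I}_q$, I would note that by~\eqref{eq:LC_explicit_1} the ideal $\cal{I}_q$ is generated by the elements $\sum_k c_{kk}c_{jj}^{-1}\ell_{ik}^\pm\ell_{j'k'}^\pm-\delta_{ij}$, and that $\deg(\ell_{ik}^\pm\ell_{j'k'}^\pm)=(-\varepsilon_i-\varepsilon_{j'},\varepsilon_k+\varepsilon_{k'})=(\varepsilon_j-\varepsilon_i,0)$ is independent of $k$ (using $\varepsilon_{j'}=-\varepsilon_j$ and $\varepsilon_{k'}=-\varepsilon_k$); thus each such generator is homogeneous of degree $(\varepsilon_j-\varepsilon_i,0)$, which coincides with $\deg(\delta_{ij})=(0,0)$ exactly when $i=j$, while $\delta_{ij}=0$ for $i\ne j$. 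Hence $\cal{I}_q$ is homogeneous, and being a Hopf ideal by Proposition~\ref{prop:central_element}, the quotient $\wtd{U}(R_q)$ inherits the $P$-bigraded Hopf algebra structure. The only step requiring genuine attention is the degree computation for the entries of $(L^\pm)^{-1}$ in the antipode check, but thanks to the triangularity of $L^\pm$ this reduces to a one-line induction; everything else is the same bookkeeping with the $\varepsilon$'s used in Proposition~\ref{prop:A(R)_grading}.
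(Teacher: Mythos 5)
Your proof is correct and follows essentially the same route as the paper: homogeneity of the RLL relations via the observation that $(R_q)^{ab}_{cd}\ne 0$ forces $\varepsilon_a+\varepsilon_b=\varepsilon_c+\varepsilon_d$, the coproduct/counit checks on generators, the induction on $i-j$ via $\widetilde{\ell}_{ij}^{+}=-(\ell_{ii}^{+})^{-1}\sum_{j\le k<i}\ell_{ik}^{+}\widetilde{\ell}_{kj}^{+}$ for the antipode, and homogeneity of $\mathcal{I}_q$ using $\varepsilon_{i'}=-\varepsilon_i$. The only cosmetic difference is in the last step: you verify directly that every generator $\sum_k c_{kk}c_{jj}^{-1}\ell_{ik}^\pm\ell_{j'k'}^\pm-\delta_{ij}$ of $\mathcal{I}_q$ is homogeneous (degree $(\varepsilon_j-\varepsilon_i,0)$, vanishing off-diagonal), whereas the paper first invokes Proposition~\ref{prop:central_element} to reduce to the two grouplike generators $z_q^\pm-1$ and then checks degree $(0,0)$; both are equally valid, and your version avoids the dependence on that reduction for this particular step.
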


\begin{proof}
To prove that $U(R_{q})$ is a $P$-bigraded bialgebra, we can apply arguments similar to those used in the proof of Proposition~\ref{prop:A(R)_grading}.
To show that it is a $P$-bigraded Hopf algebra, it suffices to show that $\deg(\wtd{\ell}_{ij}^{\pm}) = (\varepsilon_{j},-\varepsilon_{i})$.
We only present the proof for $\wtd{\ell}_{ij}^{+}$, as the arguments for $\wtd{\ell}_{ij}^{-}$ are similar. To prove the claim,
we proceed by induction on $i - j$. The base case $i=j$ follows from $\wtd{\ell}_{ii}^{+} = (\ell_{ii}^{+})^{-1}$ for all $i$.
For the induction step, suppose that $i - j = m > 0$ and that $\deg(\wtd{\ell}_{kt}^{+}) = (\varepsilon_{t},-\varepsilon_{k})$
if $0 \le k - t < m$. Using $L^{+}(L^{+})^{-1} = I$, we get
\[
  \wtd{\ell}_{ij}^{+} = -(\ell_{ii}^{+})^{-1}\sum_{j\leq k\leq i-1}\ell_{ik}^{+}\wtd{\ell}_{kj}^{+}.
\]
By the induction hypothesis,
  $\deg((\ell_{ii}^{+})^{-1} \ell_{ik}^{+}\wtd{\ell}_{kj}^{+})
   = (\varepsilon_{i},-\varepsilon_{i}) + (-\varepsilon_{i},\varepsilon_{k}) + (\varepsilon_{j},-\varepsilon_{k})
   = (\varepsilon_{j},-\varepsilon_{i})$
for $j \le k < i$. Hence, $\wtd{\ell}_{ji}^{+}$ is homogeneous of the same bidegree.

To prove the claim for $\wtd{U}(R_{q})$, we need to prove additionally that $\cal{I}_{q}$ is homogeneous. Since $\varepsilon_{i'} = -\varepsilon_{i}$
for all $1 \le i \le n+1$, we have $\deg(\ell_{k'i'}^{\pm}\ell_{ki}^{\pm}) = (-\varepsilon_{k'}-\varepsilon_{k},\varepsilon_{i'}+\varepsilon_{i}) = (0,0)$
for all $1 \le i,k \le N$. Thus, the elements $z_{q}^{\pm}$ of~\eqref{eq:central_element} are homogeneous of degree $(0,0)$, which completes the proof as
$z_{q}^{\pm} - 1$ generate $\cal{I}_{q}$ by Proposition~\ref{prop:central_element}.
\end{proof}

The above result implies that we can form algebras $U(R_{q})_{\zeta}$ and $\wtd{U}(R_{q})_{\zeta}$ as in~\eqref{eq:twisted_product_general}.
On the other hand, if $D$ is the matrix of~\eqref{eq:D_zeta}, we may also consider algebras $U(DR_{q}D)$ and $\wtd{U}(DR_{q}D)$, where the latter
is the quotient of $U(DR_{q}D)$ by the ideal generated by the entries of the matrices~\eqref{eq:LC_relations}. We then get the following analogue
of Proposition~\ref{prop:A(R)_twisted}:

\begin{prop}\label{prop:U(R)_twisted}
There is a unique Hopf algebra isomorphism $\Psi\colon \wtd{U}(DR_{q}D) \iso \wtd{U}(R_{q})_{\zeta}$ mapping $\ell_{ij}^{\pm} \mapsto \ell_{ij}^{\pm}$.
\end{prop}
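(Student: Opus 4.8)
The plan is to mimic the proof of Proposition~\ref{prop:A(R)_twisted} at the level of the full (Cartan-including) RLL-algebras, but now taking into account the reflection ideals $\cal{I}_q$ and $\cal{I}_q(DR_qD)$. First I would establish the algebra isomorphism $U(DR_qD)\iso U(R_q)_\zeta$ fixing each generator $\ell_{ij}^\pm$. Exactly as in Proposition~\ref{prop:A(R)_twisted}: in $U(R_q)_\zeta$ one computes
\[
  \ell_{ij}^\pm\circ\ell_{k\ell}^\pm
  = \zeta(\varepsilon_i,\varepsilon_k)\zeta(\varepsilon_j,\varepsilon_\ell)^{-1}\ell_{ij}^\pm\ell_{k\ell}^\pm
  = d_{ki}d_{j\ell}\,\ell_{ij}^\pm\ell_{k\ell}^\pm,
\]
and similarly for the mixed $L^+L^-$ products, using $\deg(\ell_{ij}^\pm)=(-\varepsilon_i,\varepsilon_j)$ from Proposition~\ref{prop:U(R)_grading}. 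Rewriting the defining relations~\eqref{eq:RLL_relations} for $DR_qD$ in entry form — exactly as~\eqref{eq:FRT_relations_twisted} was derived from~\eqref{eq:FRT_relations} — shows they become precisely the $\circ$-relations, so $\ell_{ij}^\pm\mapsto\ell_{ij}^\pm$ gives a well-defined homomorphism $U(DR_qD)\to U(R_q)_\zeta$. The inverse is built symmetrically via $\bar\zeta(\lambda,\mu)=\zeta(\lambda,\mu)^{-1}$ together with the identity isomorphism $(U(DR_qD)_{\bar\zeta})_\zeta\iso U(DR_qD)$, just as in the proof of Proposition~\ref{prop:A(R)_twisted}.

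Next I would pass to the quotients. The key point is that the isomorphism $\Psi_0\colon U(DR_qD)\iso U(R_q)_\zeta$ must carry the ideal $\cal{I}(DR_qD)$ (generated by the entries of $(L^\pm)C_{DR_qD}(L^\pm)^{\mathsf{t}'}C_{DR_qD}^{-1}-I$) onto the image of $\cal{I}_q$ in $U(R_q)_\zeta$. Here one must check that the correct symmetry matrix $C$ for $DR_qD$ is still (a diagonal rescaling of) $C_q$ — in fact since $D$ is diagonal and the eigenspace structure of $\hat R_q$ versus $D\hat R_q D^{-1}$ differs only by the conjugation, the top-eigenspace vector $w$ and hence the matrix $K$ of~\eqref{eq:K_matrix} transform by $D$, and the reflection equation relations~\eqref{eq:KLL_relation_1}--\eqref{eq:KLL_relation_2} go over to their $\circ$-twisted counterparts. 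The cleanest route: by Proposition~\ref{prop:central_element} both $\cal{I}(DR_qD)$ and $\cal{I}_q$ are principal, generated by $z^\pm-1$ with $z^\pm=\sum_k c_{jj}^{-1}c_{kk}\ell_{jk}^\pm\ell_{j'k'}^\pm$; since $\deg(\ell_{jk}^\pm\ell_{j'k'}^\pm)=(0,0)$, the $\circ$-product equals the ordinary product on these bidegree-$(0,0)$ elements, so $z_q^\pm$ (computed in $U(R_q)$) and $z_{DR_qD}^\pm$ coincide under $\Psi_0$. Hence $\Psi_0(\cal{I}(DR_qD))=\cal{I}_q\cdot U(R_q)_\zeta$ and $\Psi_0$ descends to an algebra isomorphism $\wtd U(DR_qD)\iso \wtd U(R_q)_\zeta$.

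Finally, the Hopf-algebra statement. Since the coproduct, counit, and antipode on $U(R_q)_\zeta$ are literally those of $U(R_q)$ (the twist changes only multiplication), and $\Psi$ sends $\ell_{ij}^\pm$ to $\ell_{ij}^\pm$, compatibility with $\Delta,\epsilon,S$ as given by~\eqref{eq:U(R)_Hopf} is immediate — exactly as in the last paragraph of the proof of Proposition~\ref{prop:A(R)_twisted}. One only needs that $\Delta$ remains an algebra map $\wtd U(R_q)_\zeta\to\wtd U(R_q)_\zeta\otimes\wtd U(R_q)_\zeta$ for the twisted product, which holds because $U(R_q)$ is $P$-bigraded as a \emph{bialgebra} (Proposition~\ref{prop:U(R)_grading}) and the general twist of Section~\ref{sec:twisted_R_matrices} preserves this.

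I expect the main obstacle to be the bookkeeping in the second paragraph: verifying that the reflection/symmetry ideal for $DR_qD$ is governed by the \emph{same} (up to diagonal conjugation, which is absorbed by $D$) matrix $C_q$, so that the principal generators $z^\pm-1$ match under $\Psi_0$. Everything else is a direct transcription of the $A(R)$-case. A clean way to sidestep ad hoc computation is to observe that $K$ (and hence the relations~\eqref{eq:KLL_relation_1}--\eqref{eq:central_element}) is a polynomial in $\hat R_q$; conjugating by $D$ turns it into the corresponding polynomial in $D\hat R_qD^{-1}$, and since $D$ is diagonal, $DKD^{-1}=\sum c_{ii}c_{jj}^{-1}d_{i'i'}^{\,-1}d_{jj}\,E_{i'j}\otimes E_{ij'}$, which is again of the form $\sum \tilde c_{ii}\tilde c_{jj}^{-1}E_{i'j}\otimes E_{ij'}$; one checks $\tilde c_{ii}$ differs from $c_{ii}$ only by a global scalar, leaving the ideal unchanged. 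This reduces the whole proof to invoking Propositions~\ref{prop:central_element},~\ref{prop:U(R)_grading} and the argument of Proposition~\ref{prop:A(R)_twisted}.
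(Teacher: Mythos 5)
Your overall strategy matches the paper exactly: establish the algebra isomorphism $\Psi_0\colon U(DR_qD)\iso U(R_q)_\zeta$ by transcribing the argument of Proposition~\ref{prop:A(R)_twisted}, upgrade it to a Hopf-algebra isomorphism (the paper does this by noting $\Psi_0^{-1}\circ S\circ\Psi_0$ is an antipode and invoking uniqueness), and then show the reflection ideals match under $\Psi_0$ so that the isomorphism descends to the quotients. Your final Hopf-compatibility observation also matches.

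The one place where you diverge, and where there is a small gap, is the ideal-matching step. You route through Proposition~\ref{prop:central_element}, asserting that \emph{both} $\cal{I}_q$ and the corresponding ideal in $U(DR_qD)$ are principal, generated by $z^\pm-1$. However, Proposition~\ref{prop:central_element} is stated and proved only for $U(R_q)$ and $U'(R_q)$; to apply it in $U(DR_qD)$ you would have to re-run its proof in that algebra. You gesture at this (the remark that $K$ is a polynomial in $\hat R_q$, so $DKD^{-1}$ plays the role of $K$ for $D\hat R_qD^{-1}$ — in fact $DKD^{-1}=K$ exactly, since $d_{i'i}=\zeta(\varepsilon_i,\varepsilon_{i'})=1$), but that verification is not carried out, and as written it is the load-bearing step. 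The paper sidesteps this entirely: it applies $\Psi_0^{-1}$ directly to the full generating set~\eqref{eq:LC_explicit_1} of $\cal{I}_q$, observes that the $(i,j)$-entry is multiplied by the scalar $\zeta(\varepsilon_i,\varepsilon_{j'})^{-1}$ (uniform in $k$), and notes that this scalar equals $1$ precisely when $i=j$ — the only case where the constant term $-\delta_{ij}$ is nonzero. Hence $\Psi_0^{-1}(\cal{I}_q)$ is literally the ideal defining $\wtd U(DR_qD)$, with no appeal to its being principal. That is both shorter and logically self-contained, and I would recommend you adopt it; your computation that the $\circ$-product of $\ell_{ik}^\pm$ and $\ell_{j'k'}^\pm$ introduces exactly the factor $\zeta(\varepsilon_i,\varepsilon_j)^{-1}$ already contains the whole content. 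Finally, a small expository point: your justification ``since $\deg(\ell_{jk}^\pm\ell_{j'k'}^\pm)=(0,0)$, the $\circ$-product equals the ordinary product'' is correct for a \emph{two-factor} product in which the bidegrees of the factors are opposite, but this is a property of the factors' degrees, not of the degree of the product, so it should be phrased that way.
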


\begin{proof}
We first show that there is a Hopf algebra isomorphism $\Psi_{0}\colon U(DR_{q}D) \to U(R_{q})_{\zeta}$ mapping $\ell_{ij}^{\pm} \mapsto \ell_{ij}^{\pm}$.
The proof that $\Psi_{0}$ is a bialgebra isomorphism is analogous to the proof of Proposition~\ref{prop:A(R)_twisted}. Then $\Psi_{0}$
is automatically an isomorphism of Hopf algebras. Indeed, a general argument then shows that $\Psi_{0}^{-1}\circ S\circ \Psi_{0}$ is
an antipode for $U(DR_{q}D)$, so the uniqueness of the antipode then implies that $S \circ \Psi_{0} = \Psi_{0} \circ S$.

Next, since the quotient map $\pi\colon U(R_{q}) \to \wtd{U}(R_{q})$ also defines an algebra homomorphism $\pi\colon U(R_{q})_{\zeta} \to \wtd{U}(R_{q})_{\zeta}$,
it follows that $\Psi^{-1}_{0}$ induces a Hopf algebra isomorphism $\wtd{U}(R_{q})_{\zeta} \iso U(DR_{q}D)/\Psi_{0}^{-1}(\cal{I}_{q})$ sending
$\ell_{ij}^{\pm} \mapsto \ell_{ij}^{\pm}$. Since $\zeta(\varepsilon_{i},\varepsilon_{i'})^{-1} = 1$, it follows that applying $\Psi_{0}^{-1}$ to the elements
in~\eqref{eq:LC_explicit_1}  multiplies them by $\zeta(\varepsilon_{i},\varepsilon_{j'})^{-1}$, and therefore $\Psi_{0}^{-1}(\cal{I}_{q})$ is also the ideal
generated by the entries of the matrix~\eqref{eq:LC_relations}. Thus, $\Psi_{0}$ induces a Hopf algebra isomorphism
$\Psi\colon \wtd{U}(DR_{q}D) \iso \wtd{U}(R_{q})_{\zeta}$, as claimed.
\end{proof}

Next, let $s_{1},\ldots ,s_{N} \in \bb{K} \setminus \{0\}$ and set $S' = \sum_{i=1}^{N} s_{i}E_{ii}$, so that $S = S' \otimes S'$
is a matrix of the form~\eqref{eq:S_matrix}. We also set $\bar{S}' = \sum_{i=1}^{N} s_{i'}E_{ii}$. Then we have the following analogue
of Proposition~\ref{prop:diagonal_conjugation_A(R)}:

\begin{prop}\label{prop:diagonal_conjugation_U(R)}
(a) There is a unique Hopf algebra isomorphism $\Phi_S\colon U(R) \iso U(SRS^{-1})$ mapping $\ell_{ij}^{\pm}\mapsto s^{-1}_{i}s_{j}\ell_{ij}^{\pm}$.

\medskip
\noindent
(b) There is a unique Hopf algebra isomorphism $\wtd{\Phi}_{S}\colon \wtd{U}(R_{q}) \iso \wtd{U}(SR_{q}S^{-1})$ mapping
$\ell_{ij}^{\pm} \mapsto s_{i}^{-1}s_{j}\ell_{ij}^{\pm}$, where $\wtd{U}(SR_{q}S^{-1}) = U(SR_{q}S^{-1})/\cal{I}_{S}$
with the ideal $\cal{I}_{S}\subseteq U(SR_{q}S^{-1})$ generated by the entries of the matrix
\begin{equation}\label{eq:LCS_relations}
  (L^{\pm})(S'C_{q}\bar{S}')(L^{\pm})^{\mathsf{t}'}(S'C_{q}\bar{S}')^{-1} - I .
\end{equation}
\end{prop}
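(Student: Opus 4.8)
The plan is to follow the proof of Proposition~\ref{prop:diagonal_conjugation_A(R)}. Write $S' = \sum_{i=1}^{N} s_i E_{ii} \in \End(V)$, which is invertible since every $s_i \ne 0$; the assignment $\ell_{ij}^{\pm} \mapsto s_i^{-1}s_j\ell_{ij}^{\pm}$ amounts to replacing the matrix $L^{\pm}$ by $(S')^{-1}L^{\pm}S'$.

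For part~(a) I would first verify that the elements $\wtd{\ell}_{ij}^{\pm} := s_i^{-1}s_j\ell_{ij}^{\pm}$ of $U(SRS^{-1})$ satisfy the defining relations~\eqref{eq:RLL_relations} of $U(R)$. Setting $\Lambda_1 = S'\otimes 1\otimes 1$ and $\Lambda_2 = 1\otimes S'\otimes 1$ in $\End(V)^{\otimes 2}\otimes U(SRS^{-1})$ and $\wtd{L}^{\pm}_k = \Lambda_k^{-1}L^{\pm}_k\Lambda_k$, one has $\Lambda_1\Lambda_2 = S\otimes 1$ with $S = S'\otimes S'$ as in~\eqref{eq:S_matrix}; since $\Lambda_1$ commutes with $L^{\pm}_2$ and $\Lambda_2$ with $L^{\pm}_1$, a short rearrangement gives
\begin{equation*}
  R\,\wtd{L}^{\pm}_1\wtd{L}^{\pm}_2 = R\,S^{-1}L^{\pm}_1 L^{\pm}_2\,S, \qquad
  \wtd{L}^{\pm}_2\wtd{L}^{\pm}_1\,R = S^{-1}L^{\pm}_2 L^{\pm}_1\,S\,R,
\end{equation*}
and likewise with $(L^{+}_1,L^{-}_2)$ in place of $(L^{\pm}_1,L^{\pm}_2)$. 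Multiplying the relations of $U(SRS^{-1})$, which read $(SRS^{-1})L^{\pm}_1 L^{\pm}_2 = L^{\pm}_2 L^{\pm}_1(SRS^{-1})$ (and similarly for the mixed relation), on the left by $S$ and on the right by $S^{-1}$ yields exactly $R\,\wtd{L}^{\pm}_1\wtd{L}^{\pm}_2 = \wtd{L}^{\pm}_2\wtd{L}^{\pm}_1\,R$ and its mixed analogue, while $\wtd{\ell}_{ii}^{\pm} = \ell_{ii}^{\pm}$ is invertible and $\wtd{\ell}_{ij}^{+} = \wtd{\ell}_{ji}^{-} = 0$ for $i < j$. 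Hence there is an algebra homomorphism $\Phi_S\colon U(R)\to U(SRS^{-1})$ with $\Phi_S(\ell_{ij}^{\pm}) = s_i^{-1}s_j\ell_{ij}^{\pm}$ and $\Phi_S((\ell_{ii}^{\pm})^{-1}) = (\ell_{ii}^{\pm})^{-1}$. Applying the same construction with $R$ replaced by $SRS^{-1}$ and $S$ by $S^{-1}$ (and noting $S^{-1}(SRS^{-1})S = R$) produces a two-sided inverse $\ell_{ij}^{\pm} \mapsto s_i s_j^{-1}\ell_{ij}^{\pm}$, so $\Phi_S$ is an algebra isomorphism. It is a bialgebra isomorphism by a direct check against~\eqref{eq:U(R)_Hopf}: $(\Phi_S\otimes\Phi_S)(\Delta(\ell_{ij}^{\pm})) = \sum_k s_i^{-1}s_k\ell_{ik}^{\pm}\otimes s_k^{-1}s_j\ell_{kj}^{\pm} = s_i^{-1}s_j\,\Delta(\ell_{ij}^{\pm}) = \Delta(\Phi_S(\ell_{ij}^{\pm}))$ and $\epsilon(\Phi_S(\ell_{ij}^{\pm})) = s_i^{-1}s_j\delta_{ij} = \delta_{ij}$, and therefore a Hopf algebra isomorphism by uniqueness of the antipode, exactly as in the proof of Proposition~\ref{prop:U(R)_twisted}. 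Uniqueness of $\Phi_S$ is immediate, as it is prescribed on generators.

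For part~(b) I would show that $\Phi_S$ maps the ideal $\cal{I}_q\subseteq U(R_q)$ onto $\cal{I}_S\subseteq U(SR_qS^{-1})$. The matrix $S'C_q\bar{S}'$ in~\eqref{eq:LCS_relations} is diagonal and invertible with $(S'C_q\bar{S}')_{ii} = s_i c_{ii}s_{i'}$, so the computation that produced~\eqref{eq:LC_explicit_1} shows that $\cal{I}_S$ is generated by the elements $\sum_k (s_k c_{kk}s_{k'})(s_j c_{jj}s_{j'})^{-1}\ell_{ik}^{\pm}\ell_{j'k'}^{\pm} - \delta_{ij}$, $1\le i,j\le N$. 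Applying $\Phi_S$ to the generator $\sum_k c_{kk}c_{jj}^{-1}\ell_{ik}^{\pm}\ell_{j'k'}^{\pm} - \delta_{ij}$ of $\cal{I}_q$ produces $s_i^{-1}s_{j'}^{-1}\sum_k c_{kk}c_{jj}^{-1}s_k s_{k'}\ell_{ik}^{\pm}\ell_{j'k'}^{\pm} - \delta_{ij}$, which equals the corresponding generator of $\cal{I}_S$ when $i = j$ and is $(s_j/s_i)$ times it when $i \ne j$. In either case the image is a nonzero scalar multiple of a generator of $\cal{I}_S$, so $\Phi_S(\cal{I}_q)\subseteq\cal{I}_S$; the reverse inclusion follows by symmetry, whence $\Phi_S(\cal{I}_q) = \cal{I}_S$. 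Since $\cal{I}_q$ is a Hopf ideal by Proposition~\ref{prop:central_element}, so is $\cal{I}_S$, and $\Phi_S$ descends to a Hopf algebra isomorphism $\wtd{\Phi}_S\colon\wtd{U}(R_q) = U(R_q)/\cal{I}_q\iso U(SR_qS^{-1})/\cal{I}_S = \wtd{U}(SR_qS^{-1})$ with $\wtd{\Phi}_S(\ell_{ij}^{\pm}) = s_i^{-1}s_j\ell_{ij}^{\pm}$; uniqueness is again clear.

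The whole argument is routine. The two points that require care are getting the order of conjugation of the $L$-matrices right so that it produces $SRS^{-1}$ rather than $S^{-1}RS$, and bookkeeping the scalar factors in part~(b) to confirm that the twisted matrix $S'C_q\bar{S}'$ of~\eqref{eq:LCS_relations} is precisely the one that makes $\Phi_S$ respect the reflection ideals $\cal{I}_q$ and $\cal{I}_S$. I do not anticipate a genuine obstacle.
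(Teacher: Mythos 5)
Your proof is correct and follows the same route as the paper: part~(a) reduces to the arguments of Propositions~\ref{prop:diagonal_conjugation_A(R)} and~\ref{prop:U(R)_twisted}, and part~(b) tracks exactly the scalars the paper uses to identify $\Phi_S(\cal{I}_q)$ with $\cal{I}_S$. One small slip in~(a): to pass from the $SRS^{-1}$-relation to the $R$-relation for $\wtd{L}_k^{\pm}=\Lambda_k^{-1}L_k^{\pm}\Lambda_k$ you should pre-multiply by $S^{-1}$ and post-multiply by $S$ (not the reverse), though your final identity is written correctly.
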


\begin{proof}
(a) An argument analogous to the proof of Proposition~\ref{prop:diagonal_conjugation_A(R)} shows that $\Phi_{S}$ is a bialgebra isomorphism.
Arguing as in the proof of Proposition~~\ref{prop:U(R)_twisted}, it is automatically a Hopf algebra isomorphism.

(b) The isomorphism $\Phi_{S}$ induces a Hopf algebra isomorphism $\wtd{\Phi}_{S}\colon \wtd{U}(R_{q}) \iso U(SR_{q}S^{-1})/\Phi_{S}(\cal{I}_{q})$,
so we just need to show that $\Phi_{S}(\cal{I}_{q}) = \cal{I}_{S}$. Applying $\Phi_{S}$ to~\eqref{eq:LC_explicit_1}, we find that
$\Phi_{S}(\cal{I}_{q})$ is generated by the elements
\[
  \sum_{1\leq k\leq N}c_{kk}c_{jj}^{-1}s_{i}^{-1}s_{k}s_{j'}^{-1}s_{k'}\ell_{ik}^{\pm}\ell_{j'k'}^{\pm} - \delta_{ij} =
  s_{i}^{-1}s_{j}\sum_{1\leq k\leq N}s_{k}s_{k'}c_{kk}s_{j}^{-1}s_{j'}^{-1}c_{jj}^{-1}\ell_{ik}^{\pm}\ell_{j'k'}^{\pm} - \delta_{ij}
\]
for all $1 \le i,j \le N$. It is easy to check that these elements are multiples of the corresponding $(i,j)$-th entries of
the matrix~\eqref{eq:LCS_relations}. This completes the proof.
\end{proof}

Combining the previous two results, we can relate $U(R_{q})$ and $\wtd{U}(R_{q})$ to their two-parameter analogues, as in
Corollary~\ref{cor:A(R)_2_vs_1_parameter}. Below, we shall use the notation introduced in the paragraph preceding Corollary~\ref{cor:A(R)_2_vs_1_parameter}.

First, we recall that $R_{r,s} =  S^{-1}DR_{q}DS$, see~\eqref{eq:R-relation}, where $S = \sum_{i,j=1}^{N} \psi_{i}\psi_{j}E_{ii} \otimes E_{jj}$,
with $\psi_{i}$ given by either~\eqref{eq:psi_values_B},~\eqref{eq:psi_values_C}, or~\eqref{eq:psi_values_D} in types $B_{n}$, $C_{n}$,
or~$D_{n}$, respectively. Then we define matrices $C_{r,s} = (\wtd{c}_{ij})_{i,j = 1}^{N}$ as follows:

\begin{itemize}

\item Type $B_{n}$:
\begin{equation}\label{eq:Crs_B}
  \tilde{c}_{ij}
  = \delta_{ij}\psi_{i}^{-1}\psi_{j'}^{-1}r^{\rho_{i}}s^{-\rho_{i}}
  = \delta_{ij}
    \begin{cases}
      r^{\frac{1}{2}}s^{2(i - n) - \frac{1}{2}} & \text{if}\ 1 \le i \le n \\
      r^{\frac{1}{2}}s^{-\frac{1}{2}} & \text{if}\ i = n +1 \\
      r^{2(n + 2 - i) - \frac{1}{2}}s^{\frac{1}{2}} & \text{if}\ n + 2 \le i \le 2n + 1
    \end{cases}\,,
\end{equation}

\item Type $C_{n}$:
\begin{equation}\label{eq:Crs_C}
  \wtd{c}_{ij}
  = \delta_{ij}\psi_{i}^{-1}\psi_{j'}^{-1}\sigma_{i}r^{\frac{1}{2}\rho_{i}}s^{-\frac{1}{2}\rho_{i}}
  = \delta_{ij}\cdot
    \begin{cases}
      r^{\frac{1}{2}}s^{i-n-\frac{1}{2}} & \text{if}\ i \le n \\
      -r^{n - i + \frac{1}{2}}s^{\frac{1}{2}} & \text{if}\ i > n
    \end{cases}\,,
\end{equation}

\item Type $D_{n}$:
\begin{equation}\label{eq:Crs_D}
  \wtd{c}_{ij}
  = \delta_{ij}\psi_{i}^{-1}\psi_{j'}^{-1}r^{\frac{1}{2}\rho_{i}}s^{-\frac{1}{2}\rho_{i}}
  = \delta_{ij}\cdot
    \begin{cases}
      r^{\frac{1}{2}}s^{i-n+\frac{1}{2}} & \text{if}\ i \le n \\
      r^{n + \frac{3}{2} - i}s^{\frac{1}{2}} & \text{if}\ i > n
    \end{cases}\,.
\end{equation}
\end{itemize}
We now define $\wtd{U}(R_{r,s}) = U(R_{r,s})/\cal{I}_{r,s}$, where $\cal{I}_{r,s}$ is the ideal generated by the entries of the matrices
\begin{equation}\label{eq:LC_relations_2_parameter}
  (L^{\pm})C_{r,s}(L^{\pm})^{\mathsf{t}'}C_{r,s}^{-1} - I .
\end{equation}
Then, we get the following analogue of Corollary~\ref{cor:A(R)_2_vs_1_parameter}:

\begin{cor}\label{cor:U(R)_2_vs_1_parameter}
There is a unique Hopf algebra isomorphism $\wtd{\Psi}\colon \wtd{U}(R_{r,s}) \iso \wtd{U}(R_{q})_{\zeta}$ mapping
$\ell_{ij}^{\pm} \mapsto \psi^{-1}_{i}\psi_{j}\ell_{ij}^{\pm}$.
\end{cor}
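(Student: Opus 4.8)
The plan is to combine the two isomorphisms established in Propositions~\ref{prop:diagonal_conjugation_U(R)} and~\ref{prop:U(R)_twisted}, exactly as was done for the bialgebra $A(R)$ in Corollary~\ref{cor:A(R)_2_vs_1_parameter}. First I would recall the factorization $R_{r,s} = S^{-1}DR_{q}DS$ from~\eqref{eq:R-relation}, where $S = \sum_{i,j=1}^N \psi_i\psi_j E_{ii}\otimes E_{jj}$ and $D$ is the matrix of~\eqref{eq:D_zeta}. Applying Proposition~\ref{prop:diagonal_conjugation_U(R)}(b) with this $S$ gives a Hopf algebra isomorphism $\wtd{\Phi}_S\colon \wtd{U}(R_{r,s}) \iso \wtd{U}(SR_{r,s}S^{-1}) = \wtd{U}(DR_qD)$ sending $\ell_{ij}^{\pm}\mapsto \psi_i^{-1}\psi_j\ell_{ij}^{\pm}$, where the target quotient is taken with respect to the ideal generated by the entries of $(L^{\pm})(S'C_{r,s}\bar{S}')(L^{\pm})^{\mathsf{t}'}(S'C_{r,s}\bar{S}')^{-1} - I$. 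Then applying Proposition~\ref{prop:U(R)_twisted} gives a Hopf algebra isomorphism $\Psi\colon \wtd{U}(DR_qD)\iso \wtd{U}(R_q)_\zeta$ sending $\ell_{ij}^{\pm}\mapsto \ell_{ij}^{\pm}$, and the composition $\wtd{\Psi} = \Psi\circ \wtd{\Phi}_S$ will be the desired isomorphism with $\wtd{\Psi}(\ell_{ij}^{\pm}) = \psi_i^{-1}\psi_j\ell_{ij}^{\pm}$.

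The one point requiring genuine care is the bookkeeping of the reflection ideals: Proposition~\ref{prop:diagonal_conjugation_U(R)}(b) produces $\wtd{U}(SR_qS^{-1})$ as the quotient by the ideal attached to the matrix $S'C_q\bar{S}'$, not $C_q$ itself, whereas Proposition~\ref{prop:U(R)_twisted} is stated for $\wtd{U}(DR_qD)$ with the ideal attached to $C_q$ (equivalently, via that proposition's proof, to $D'C_qD'$ — but $D$ being the bicharacter matrix $\zeta(\varepsilon_j,\varepsilon_i)$ does not split as $D'\otimes D'$). So I must check that the quotient of $U(DR_qD)$ by the ideal generated by the entries of $(L^{\pm})(S'C_q\bar{S}')(L^{\pm})^{\mathsf{t}'}(S'C_q\bar{S}')^{-1}-I$ coincides with $\wtd{U}(DR_qD)$ as used in Proposition~\ref{prop:U(R)_twisted}. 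By the explicit formulas~\eqref{eq:Crs_B}--\eqref{eq:Crs_D}, the matrix $C_{r,s}$ is precisely chosen so that $\wtd{c}_{ij} = \delta_{ij}\psi_i^{-1}\psi_{j'}^{-1}(\text{the corresponding entry of }C_q\text{ with }r^{1/2}s^{-1/2}\text{ in place of }q)$ — i.e. $C_{r,s} = (S')^{-1}C_q^{(r,s)}(\bar{S}')^{-1}$ after setting $q = r^{1/2}s^{-1/2}$, where $C_q^{(r,s)}$ is the one-parameter $C_q$ at that value of $q$ — so that $S'C_{r,s}\bar{S}' = C_q^{(r,s)}$ and the two reflection ideals literally agree once $q$ is specialized. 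I would make this identity explicit, noting that by Proposition~\ref{prop:central_element} each such ideal is generated by $z^{\pm}-1$ for the appropriate grouplikes, and that $\wtd{\Phi}_S$ carries $z^{\pm}$ for the $C_{r,s}$-ideal to $z^{\pm}$ for the $S'C_{r,s}\bar{S}' = C_q^{(r,s)}$-ideal, so the two quotients are matched.

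I expect the main obstacle, such as it is, to be purely notational: disentangling the three rescalings $\psi$, $D$, $S$ and the fact that $\wtd{U}(DR_qD)$ appears with two a priori different reflection ideals (one coming from Proposition~\ref{prop:U(R)_twisted} via $C_q$, one coming from Proposition~\ref{prop:diagonal_conjugation_U(R)}(b) via $S'C_{r,s}\bar{S}'$), and verifying these coincide via the explicit tables~\eqref{eq:Crs_B}--\eqref{eq:Crs_D}. Once that is done, the composition is immediate and the claim follows. I would therefore organize the proof as: (i) invoke~\eqref{eq:R-relation}; (ii) apply Proposition~\ref{prop:diagonal_conjugation_U(R)}(b) to get $\wtd{\Phi}_S$; (iii) verify $S'C_{r,s}\bar{S}' = C_q$ (at $q = r^{1/2}s^{-1/2}$) from the explicit formulas so that the target of $\wtd{\Phi}_S$ is exactly $\wtd{U}(DR_qD)$; (iv) apply Proposition~\ref{prop:U(R)_twisted} to get $\Psi$; (v) set $\wtd{\Psi} = \Psi\circ\wtd{\Phi}_S$ and read off the action on generators; uniqueness is clear since the $\ell_{ij}^{\pm}$ generate.
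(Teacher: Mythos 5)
Your proposal is correct and follows essentially the same route as the paper: both compose Proposition~\ref{prop:U(R)_twisted} with Proposition~\ref{prop:diagonal_conjugation_U(R)}(b), with the crux being the identity $C_{r,s} = (S')^{-1}C_q(\bar S')^{-1}$ matching the reflection ideals. The only cosmetic difference is that the paper applies Proposition~\ref{prop:diagonal_conjugation_U(R)}(b) with source $\wtd U(DR_qD)$ and scaling matrix $S^{-1}$ (so the literal $C_q$-ideal hypothesis holds) and then inverts, whereas you apply it directly to $\wtd U(R_{r,s})$ — a mild generalization of the stated proposition that the paper itself notes is available — arriving at the same $\wtd\Psi = \Psi\circ\wtd\Phi_S$.
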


\begin{proof}
By Proposition~\ref{prop:U(R)_twisted}, we get a Hopf algebra isomorphism $\Psi\colon \wtd{U}(DR_{q}D) \to \wtd{U}(R_{q})_{\zeta}$
mapping $\ell_{ij}^{\pm} \mapsto \ell_{ij}^{\pm}$. Moreover, by definition, we have $C_{r,s} = (S')^{-1}C_{q}(\bar{S}')^{-1}$ for
$S' = \sum_{i=1}^{N} \psi_{i}E_{ii}$ and $\bar{S}' = \sum_{i=1}^{N} \psi_{i'}E_{ii}$. Thus, it follows from Proposition~\ref{prop:diagonal_conjugation_U(R)}(b)
(with $C_{r,s}$ instead of $C_q$, see formula~\eqref{eq:LCS_relations}) that there is a Hopf algebra isomorphism
$\wtd{\Phi}_{S^{-1}} = \wtd{\Phi}_{S}^{-1}\colon \wtd{U}(DR_{q}D) \iso \wtd{U}(R_{r,s})$ mapping $\ell_{ij}^{\pm} \mapsto \psi_{i}\psi_{j}^{-1}\ell_{ij}^{\pm}$
(we can apply Proposition~\ref{prop:diagonal_conjugation_U(R)}(b) with $\wtd{U}(DR_{q}D)$ in place of $\wtd{U}(R_{q})$ as $\wtd{U}(DR_{q}D)$ is also formed by
killing the elements~\eqref{eq:LC_explicit_1}). Thus, $\wtd{\Psi} = \Psi \circ \wtd{\Phi}_{S}\colon \wtd{U}(R_{r,s}) \iso \wtd{U}(R_{q})_{\zeta}$ is
a Hopf algebra isomorphism mapping $\ell_{ij}^{\pm} \mapsto \psi_{i}^{-1}\psi_{j}\ell_{ij}^{\pm}$.
\end{proof}

\begin{remark}\label{rmk:square_root_zq}
In type $B_{n}$, the $i=n+1$ case of~\eqref{eq:diagonal_reflection} yields $(\ell_{n+1,n+1}^{\pm})^{2} = z_{q}^{\pm} = 1$ in
$\wtd{U}(R_{q}), \wtd{U}'(R_{q}), \wtd{U}(R_{r,s})$. However, to obtain isomorphisms between these algebras and the corresponding Drinfeld-Jimbo
quantum groups, one actually needs $\ell_{n+1,n+1}^{\pm} = 1$. As such, for the remainder of this section we shall impose these additional relations
in each of the algebras $\wtd{U}(R_{q})$, $\wtd{U}'(R_{q})$, $\wtd{U}(R_{r,s})$:
\begin{equation}\label{eq:rel_B_extra}
  \ell_{n+1,n+1}^{\pm} = 1.
\end{equation}
It is easily verified that Propositions~\ref{prop:U(R)_grading},~\ref{prop:U(R)_twisted},~\ref{prop:diagonal_conjugation_U(R)}(b),
and Corollary~\ref{cor:U(R)_2_vs_1_parameter} still hold with~\eqref{eq:rel_B_extra} imposed.
\end{remark}


\subsection{Triangular decomposition and the key embedding: finite FRT}
\

We shall now develop a couple of structural results for $\wtd{U}(R_{q})$ and $\wtd{U}'(R_{q})$, which will ultimately be needed to obtain
an analogue of Proposition~\ref{prop:double_cartan_DJ}. In what follows, we shall need the following elements:
\begin{equation}\label{eq:cal-EF-generators}
  \cal{E}_{i} = \ell_{i + 1,i}^{+}(\ell_{ii}^{+})^{-1}, \qquad
  \cal{F}_{i} = (\ell_{ii}^{-})^{-1}\ell_{i,i + 1}^{-} \qquad \forall\, 1\leq i\leq n.
\end{equation}
Let us also introduce the following subalgebras of $\wtd{U}(R_{q})$ and $\wtd{U}'(R_{q})$:
\begin{itemize}

\item $\wtd{U}_{\ge}(R_{q}) \subset \wtd{U}(R_{q})$ and $\wtd{U}'_{\ge}(R_{q}) \subset \wtd{U}'(R_{q})$, both generated by all $\ell_{ij}^{+}$ with $i \ge j$,

\item $\wtd{U}_{\le}(R_{q}) \subset \wtd{U}(R_{q})$ and $\wtd{U}'_{\le}(R_{q}) \subset \wtd{U}'(R_{q})$, both generated by all $\ell_{ij}^{-}$ with $i \le j$,

\item $\wtd{U}_{+}(R_{q}) \subset \wtd{U}(R_{q})$ and $\wtd{U}'_{+}(R_{q}) \subset \wtd{U}'(R_{q})$, both generated by all $\cal{E}_{i}$
of~\eqref{eq:cal-EF-generators} with $1 \le i \le n$,

\item $\wtd{U}_{-}(R_{q}) \subset \wtd{U}(R_{q})$ and $\wtd{U}'_{-}(R_{q}) \subset \wtd{U}'(R_{q})$, both generated by all $\cal{F}_{i}$
of~\eqref{eq:cal-EF-generators} with $1 \le i \le n$,

\item $\wtd{U}_{0}(R_{q}) \subset \wtd{U}(R_{q})$ and $\wtd{U}'_{0}(R_{q}) \subset \wtd{U}'(R_{q})$, generated by all
$(\ell_{ii}^{+})^{\pm 1},(\ell_{ii}^{-})^{\pm 1}$ with $1 \le i \le N$,

\item $\wtd{U}_{0,+}(R_{q}) \subset \wtd{U}(R_{q})$, generated by all $(\ell_{ii}^{+})^{\pm 1}$ with $ 1 \le i \le N$,

\item $\wtd{U}_{0,-}(R_{q}) \subset \wtd{U}(R_{q})$, generated by all $(\ell_{ii}^{-})^{\pm 1}$ with $ 1 \le i \le N$.

\end{itemize}
We first prove the following result:

\begin{lemma}\label{lem:cartan_subalgebras_RTT}
There is an isomorphism from the Laurent polynomial algebra $H = \bb{K}[x_{1}^{\pm 1},\ldots ,x_{n}^{\pm 1}]$ to $\wtd{U}_{0,\pm}(R_{q})$
defined by $x_{i} \mapsto \ell_{ii}^{\pm}$.
\end{lemma}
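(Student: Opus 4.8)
The plan is to show that $x_{i}\mapsto\ell_{ii}^{\pm}$ ($1\le i\le n$) extends to an algebra isomorphism $H\iso\wtd{U}_{0,\pm}(R_{q})$; I will write out the ``$+$'' case, the ``$-$'' case being identical (with the upper‑triangular matrix $\ell^{-}$ replacing the lower‑triangular $\ell^{+}$, and with $\Delta(\ell_{ii}^{-})=\ell_{ii}^{-}\otimes\ell_{ii}^{-}$ by the same triangularity argument). First I would record that the diagonal entries $\ell_{11}^{+},\ldots,\ell_{NN}^{+}$ commute pairwise in $\wtd{U}(R_{q})$. This is extracted from the RTT‑relations~\eqref{eq:FRT_relations} (with $t_{ij}$ replaced by $\ell_{ij}^{+}$) using the sparsity pattern of $R_{q}$ recorded in the proof of Proposition~\ref{prop:A(R)_grading}: for $i<j$ with $j\ne i'$ the $(m,p)=(i,j)$ component collapses to $\ell_{ii}^{+}\ell_{jj}^{+}$ on one side and to $\ell_{jj}^{+}\ell_{ii}^{+}$ on the other (the only further a priori surviving term being $\ell_{ji}^{+}\ell_{ij}^{+}$, which vanishes since $\ell_{ij}^{+}=0$ for $i<j$), while for $j=i'$ commutativity is automatic because $\ell_{i'i'}^{+}=(\ell_{ii}^{+})^{-1}$ by~\eqref{eq:diagonal_reflection}. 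Since each $\ell_{ii}^{+}$ is invertible (its inverse is a generator of $U(R_{q})$) and $\wtd{U}(R_{q})\ne 0$ (the counit descends to it), the assignment $x^{\mathbf{a}}\mapsto\prod_{i=1}^{n}(\ell_{ii}^{+})^{a_{i}}$ defines an algebra homomorphism $\Lambda_{+}\colon H\to\wtd{U}_{0,+}(R_{q})$, which is surjective: $\wtd{U}_{0,+}(R_{q})$ is generated by the $(\ell_{ii}^{+})^{\pm1}$ with $1\le i\le N$, and by~\eqref{eq:diagonal_reflection} together with~\eqref{eq:rel_B_extra} in type $B_{n}$, each such generator equals $(\ell_{jj}^{+})^{\pm1}=\Lambda_{+}(x_{j}^{\pm1})$ for a suitable $1\le j\le n$, or else equals $1$.

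The injectivity of $\Lambda_{+}$ is the crux, and it comes for free from the $P$‑bigrading. By Proposition~\ref{prop:U(R)_grading} we have $\deg(\ell_{ii}^{+})=(-\varepsilon_{i},\varepsilon_{i})$, hence $\Lambda_{+}(x^{\mathbf{a}})$ is homogeneous of bidegree $\bigl(-\sum_{i}a_{i}\varepsilon_{i},\ \sum_{i}a_{i}\varepsilon_{i}\bigr)$. Since $\varepsilon_{1},\ldots,\varepsilon_{n}$ are linearly independent in $P$, distinct exponent vectors $\mathbf{a}\in\BZ^{n}$ give rise to distinct bidegrees; moreover each $\Lambda_{+}(x^{\mathbf{a}})$ is nonzero, being invertible. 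Nonzero elements of a graded algebra that lie in pairwise distinct homogeneous components are automatically linearly independent, so the images $\{\Lambda_{+}(x^{\mathbf{a}})\}_{\mathbf{a}\in\BZ^{n}}$ of the $\bb{K}$‑basis $\{x^{\mathbf{a}}\}$ of $H$ are linearly independent. Therefore $\Lambda_{+}$ is injective, and combined with surjectivity it is the desired isomorphism.

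I do not expect a genuine obstacle here. The only mildly delicate points are the commutativity of the diagonal $\ell_{ii}^{\pm}$ — a finite but somewhat tedious inspection of~\eqref{eq:FRT_relations}, which could alternatively be imported from the standard FRT formalism for weight‑type $R$‑matrices — and the bookkeeping via~\eqref{eq:diagonal_reflection} and~\eqref{eq:rel_B_extra} that reduces the $N$ diagonal generators to the $n$ units $\ell_{11}^{\pm},\ldots,\ell_{nn}^{\pm}$. The essential structural input, namely that these $n$ commuting units satisfy no further relations, is supplied by the bigrading together with the independence of the $\varepsilon_{i}$, so no representation‑theoretic argument or PBW‑type result is needed.
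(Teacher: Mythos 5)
Your proof is correct and follows essentially the same route as the paper: establish commutativity of the diagonal Gaussian generators from the RTT‑relations and the sparsity pattern of $R_q$, note surjectivity via~\eqref{eq:diagonal_reflection} and~\eqref{eq:rel_B_extra}, and then use the $P$-bigrading for injectivity. The one place you diverge is in how injectivity is finished: the paper explicitly constructs a retraction $\phi'\colon \wtd{U}(R_q) \to H$ by $\phi'(u) = \epsilon(u)x_{-\lambda}$ for $u \in \wtd{U}(R_q)_{\lambda,\lambda'}$ (an algebra map because $\epsilon$ is multiplicative and the bigrading is additive), whereas you argue directly that the images $\Lambda_{+}(x^{\mathbf{a}})$ are nonzero (being invertible units in a nonzero algebra) and sit in pairwise distinct homogeneous components, hence are linearly independent. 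Both arguments rest on the same two inputs — the bigrading of Proposition~\ref{prop:U(R)_grading} and the descent of $\epsilon$ to $\wtd{U}(R_q)$ (via Proposition~\ref{prop:central_element}) — and are essentially interchangeable; the paper's retraction has the small advantage of making the inverse explicit, while your version is slightly more economical in notation.
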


\begin{proof}
Let us first show that the subalgebras $\wtd{U}_{0,\pm}(R_{q})$ are commutative. Due to~\eqref{eq:diagonal_reflection}, it suffices to show
that $\ell_{ii}^{\pm}$ commutes with $\ell_{jj}^{\pm}$ whenever $i < j \ne i'$. To do so, we consider the $(m,p) = (i,j)$ case of the
defining relations (cf.~\eqref{eq:FRT_relations})
\begin{equation}\label{eq:RLL_relations_explicit}
  \sum_{1 \le k,t \le N}(R_{q})^{mp}_{kt}\ell_{ki}^{\pm}\ell_{tj}^{\pm} \ = \sum_{1 \le k,t \le N}(R_{q})_{ij}^{kt}\ell_{pt}^{\pm}\ell_{mk}^{\pm},
\end{equation}
and using the aforementioned fact that $(R_{q})^{ij}_{k\ell} = 0$ unless $\{i,j\} = \{k,\ell\}$, we obtain
\[
  (R_{q})_{ij}^{ij}\ell_{ii}^{\pm}\ell_{jj}^{\pm} + (R_{q})_{ji}^{ij}\ell_{ji}^{\pm}\ell_{ij}^{\pm} =
  (R_{q})_{ij}^{ij}\ell_{jj}^{\pm}\ell_{ii}^{\pm} + (R_{q})_{ij}^{ji}\ell_{ji}^{\pm}\ell_{ij}^{\pm},
\]
which reduces to the claimed equality $\ell_{ii}^{\pm}\ell_{jj}^{\pm} = \ell_{jj}^{\pm}\ell_{ii}^{\pm}$, due to $(R_{q})_{ij}^{ij} \neq  0$ and
$\ell_{ij}^{+} = 0 = \ell_{ji}^{-}$ whenever $i < j$.

Thus, because $(\ell_{ii}^{\pm})^{-1} = \ell_{i'i'}^{\pm} \in \wtd{U}_{0,\pm}$ by~\eqref{eq:diagonal_reflection}, there are algebra homomorphisms
$\phi_{\pm}\colon H \to \wtd{U}_{0,\pm}(R_{q})$ sending $x_{i} \mapsto \ell_{ii}^{\pm}$. On the other hand, we can define an algebra homomorphism
$\phi'\colon \wtd{U}(R_{q}) \to H$ by $\phi'(u) = \epsilon(u)x_{-\lambda}$ for all $u \in \wtd{U}(R_{q})_{\lambda,\lambda'}$, where
$x_{\lambda} = x_{1}^{c_{1}}\ldots x_{n}^{c_{n}}$ for any $\lambda = \sum_{i = 1}^{n}c_{i}\varepsilon_{i}$.
Then the restriction of $\phi'$ to $\wtd{U}_{0,\pm}$ is the inverse to the corresponding map $\phi_{\pm}$ (note that this uses
the relation~\eqref{eq:rel_B_extra} in type $B_{n}$), and hence $\phi_{\pm}$ are algebra isomorphisms.

This completes the proof.
\end{proof}

As a consequence, the elements $\ell_{\mu}^{\pm} = (\ell_{11}^{\pm})^{c_{1}}\ldots (\ell_{nn}^{\pm})^{c_{n}}$, ranging over all
$\mu = \sum_{i = 1}^{n}c_{i}\varepsilon_{i} \in \bb{Z}\varepsilon_{1} \oplus \cdots \oplus \bb{Z}\varepsilon_{n}$, form bases for
$\wtd{U}_{0,\pm}(R_{q})$. We also have the following result:

\begin{lemma}\label{lem:borel_spanning_sets}
The elements
  $\{\ell_{\mu}^{+}\cal{E}_{i_{1}}\ldots \cal{E}_{i_{k}} \,|\,
     k \in \bb{Z}_{\ge 0},\, 1 \le i_{p} \le n,\, \mu \in \bb{Z}\varepsilon_{1} \oplus \cdots \oplus \bb{Z}\varepsilon_{n}\}$
span $\wtd{U}_{\ge}(R_q)$, while the elements
  $\{\cal{F}_{i_{1}}\ldots \cal{F}_{i_{k}}\ell_{\mu}^{-} \,|\,
     k \in \bb{Z}_{\ge 0},\, 1 \le i_{p} \le n,\, \mu \in \bb{Z}\varepsilon_{1} \oplus \cdots \oplus \bb{Z}\varepsilon_{n}\}$
span $\wtd{U}_{\le}(R_{q})$.
\end{lemma}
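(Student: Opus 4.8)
The plan is to show that the $\bb{K}$-span $W$ of the monomials $\ell_{\mu}^{+}\cal{E}_{i_{1}}\cdots \cal{E}_{i_{k}}$ (over $k\ge 0$, $1\le i_{p}\le n$, $\mu\in\bb{Z}\varepsilon_{1}\oplus\cdots\oplus\bb{Z}\varepsilon_{n}$) is all of $\wtd{U}_{\ge}(R_{q})$; the argument for $\wtd{U}_{\le}(R_{q})$ is entirely parallel, with the lower triangular $L^{+}$ replaced by the upper triangular $L^{-}$, the elements $\cal{E}_{i}=\ell_{i+1,i}^{+}(\ell_{ii}^{+})^{-1}$ replaced by $\cal{F}_{i}=(\ell_{ii}^{-})^{-1}\ell_{i,i+1}^{-}$, and the two factors reordered so that the product of $\cal{F}$'s stands on the left. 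Since $\wtd{U}_{\ge}(R_{q})$ is generated by $\{\ell_{ij}^{+}\}_{i\ge j}$ (equivalently, using $\ell_{i'i'}^{+}=(\ell_{ii}^{+})^{-1}$ from~\eqref{eq:diagonal_reflection}, by $\{(\ell_{ii}^{+})^{\pm 1}\}_{i=1}^{N}$ together with $\{\ell_{ij}^{+}\}_{i>j}$) and the inclusion $W\subseteq\wtd{U}_{\ge}(R_{q})$ is clear, it suffices to prove that $W$ is a subalgebra containing all of these generators.

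First I would note that $W=\wtd{U}_{0,+}(R_{q})\cdot\wtd{U}_{+}(R_{q})$ as subspaces, using the basis $\{\ell_{\mu}^{+}\}$ of $\wtd{U}_{0,+}(R_{q})$ furnished by Lemma~\ref{lem:cartan_subalgebras_RTT}, and that $W$ is a subalgebra: choosing the outer indices in~\eqref{eq:RLL_relations_explicit} appropriately yields relations of the form $\ell_{jj}^{+}\ell_{i+1,i}^{+}=c_{ij}\,\ell_{i+1,i}^{+}\ell_{jj}^{+}$ with $c_{ij}\in\bb{K}^{\ast}$, so (together with the commutativity of $\wtd{U}_{0,+}(R_{q})$ from Lemma~\ref{lem:cartan_subalgebras_RTT}) each $\ell_{jj}^{+}$ normalizes every $\cal{E}_{i}$ up to a nonzero scalar; hence $\wtd{U}_{0,+}(R_{q})$ normalizes $\wtd{U}_{+}(R_{q})$ and $(\ell_{\mu}^{+}a)(\ell_{\nu}^{+}b)=\ell_{\mu+\nu}^{+}(a'b)$ for a suitable $a'\in\wtd{U}_{+}(R_{q})$. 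The same normalization shows $\ell_{i+1,i}^{+}=\cal{E}_{i}\ell_{ii}^{+}\in W$ for $1\le i\le n$, while $(\ell_{ii}^{+})^{\pm 1}\in\wtd{U}_{0,+}(R_{q})\subseteq W$ for all $i$ by~\eqref{eq:diagonal_reflection} and Lemma~\ref{lem:cartan_subalgebras_RTT}. The remaining subdiagonal entries $\ell_{i+1,i}^{+}$ with $n<i\le N-1$ I would bring into $W$ by descending induction on $i$ using the relations~\eqref{eq:KLL_relation_1} (and their $C_{n}$, $D_{n}$ analogues), which express $\ell_{i+1,i}^{+}$ in terms of entries $\ell_{ab}^{+}$ with $a\le n$ and diagonal factors, all already known to lie in $W$.

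It then remains to show $\ell_{ij}^{+}\in W$ for all $i>j$, which I would prove by induction on $i-j$, the cases $i-j\le 1$ being settled above. For $i-j\ge 2$, a suitable component of the $RLL$-relation~\eqref{eq:RLL_relations_explicit}, with the four outer indices chosen so that $\ell_{ij}^{+}$ occurs with a nonzero coefficient, rewrites $\ell_{ij}^{+}$ as a $\bb{K}$-linear combination of products $\ell_{ab}^{+}\ell_{cd}^{+}$ with $a-b,\,c-d<i-j$; by the inductive hypothesis these lie in $W$, and since $W$ is a subalgebra, $\ell_{ij}^{+}\in W$. This gives $W=\wtd{U}_{\ge}(R_{q})$, as desired. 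The main obstacle is precisely this last step (together with the analogous off-diagonal-block reduction in the previous paragraph): one must, uniformly across types $B_{n}$, $C_{n}$, $D_{n}$, select for each pair $(i,j)$ the correct component of~\eqref{eq:RLL_relations_explicit} and verify that the coefficient of $\ell_{ij}^{+}$ does not vanish---this is where the argument becomes genuinely type-dependent and where the eigenvalue structure of $\hat{R}_{q}$ exploited in the proof of Proposition~\ref{prop:central_element} re-enters. Everything else is formal.
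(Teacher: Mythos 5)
Your proposal mirrors the paper's own proof: both establish the $q$-commutation of $\ell_{\mu}^{+}$ with the generators of $\wtd{U}_{+}(R_q)$ (so that $W=\wtd{U}_{0,+}(R_q)\cdot\wtd{U}_{+}(R_q)$ is a subalgebra and the lemma reduces to algebra generation), then produce the diagonals, the near subdiagonal $\ell_{i+1,i}^{+}$ with $i\le n$, the far subdiagonal from the LC-type relations~\eqref{eq:LC_explicit_1}, and finally the remaining $\ell_{ij}^{+}$ via the $RLL$-relations. The "main obstacle" you identify — selecting a component of~\eqref{eq:RLL_relations_explicit} with a nonvanishing coefficient of $\ell_{ij}^{+}$ — is exactly what the paper outsources to an analogue of~\cite[Proposition~8.29]{KS}, so your approach is the same one spelled out in slightly more detail.
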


\begin{proof}
First, we shall prove that
\begin{equation}\label{eq:RTT_q_commute}
  \ell_{ii}^{\pm}\ell_{kj}^{\pm} = v^{\pm (\delta_{ij} - \delta_{ik})}\ell_{kj}^{\pm}\ell_{ii}^{\pm}
  \qquad \text{for all}\qquad k \neq j,\qquad i \neq k',j',
\end{equation}
where $v = q^{-2}$ in type $B_{n}$ and $v = q^{-1}$ in types $C_{n}$ and $D_{n}$.
If $i = k'$ or $i = j'$ and $i \neq \frac{N + 1}{2}$, we may combine the equality above with~\eqref{eq:diagonal_reflection} to get
$\ell_{ii}^{\pm}\ell_{kj}^{\pm} = v^{\pm (\delta_{i'k} - \delta_{i'j})}\ell_{kj}^{\pm}\ell_{ii}^{\pm}$.
Finally, if $i = k'$ or $i = j'$ with $i = \frac{N + 1}{2}$, then we are in type $B_{n}$ and $\ell_{ii}^{\pm} = \ell_{n + 1,n+1}^{\pm} = 1$,
so that $\ell_{ii}^{\pm}\ell_{kj}^{\pm} = \ell_{kj}^{\pm}\ell_{ii}^{\pm}$. Thus, because $\deg(\ell_{kj}^{\pm}) = (-\varepsilon_{k},\varepsilon_{j})$,
$\deg(\ell_{ii}^{\pm}) = (-\varepsilon_{i},\varepsilon_{i})$ and $\varepsilon_{t'} = -\varepsilon_{t}$ for all $t$, it follows that
\begin{equation}\label{eq:RTT_q_commute_general}
  \ell_{\mu}^{+}u = v^{(\mu,\lambda + \lambda')}u\ell_{\mu}^{+},\qquad \ell_{\mu}^{-}u'=v^{-(\mu,\lambda + \lambda')}u'\ell_{\mu}^{-}
  \qquad \text{for all}\qquad u \in \wtd{U}_{\ge}(R_{q})_{\lambda,\lambda'},\ u' \in \wtd{U}_{\le}(R_{q})_{\lambda,\lambda'}.
\end{equation}
We shall only present the proof for the sign $+$, since the arguments are similar for the sign $-$. We may further assume that $k > j$, for otherwise
$\ell_{kj}^{+} = 0$. Consider the relations~\eqref{eq:RLL_relations_explicit} with sign $+$, $m = i$, and $p = k$. If $i = k \neq j$, these read
$(R_{q})_{ii}^{ii}\ell_{ii}^{+}\ell_{ij}^{+} = (R_{q})^{ij}_{ij}\ell_{ij}^{+}\ell_{ii}^{+} + (R_{q})^{ji}_{ij}\ell_{ii}^{+}\ell_{ij}^{+}$.
An inspection of the formulas for $R_{q}$ implies that $(R_{q})^{ii}_{ii} = v$, $(R_{q})^{ij}_{ij} = 1$ for $i\ne j$, and $(R_{q})^{ji}_{ij} = 0$ when $i > j$.
Since we also have $\ell_{ij}^{+} = 0$ for $i < j$, we conclude that $\ell_{ii}^{+}\ell_{ij}^{+} = v^{-1}\ell_{ij}^{+}\ell_{ii}^{+}$, as desired.
Similarly, if $i = j \neq k$, we get $\ell_{ii}^{+}\ell_{ki}^{+} = v\ell_{ki}^{+}\ell_{ii}^{+}$. Finally, if $i\neq j$ and $i \neq k$, then we have
\begin{equation}\label{eq:RTT_commutation_relations}
  (R_{q})^{ik}_{ik}\ell_{ii}^{+}\ell_{kj}^{+} + (R_{q})^{ik}_{ki}\ell_{ki}^{+}\ell_{ij}^{+} =
  (R_{q})^{ij}_{ij}\ell_{kj}^{+}\ell_{ii}^{+} + (R_{q})^{ji}_{ij}\ell_{ki}^{+}\ell_{ij}^{+}.
\end{equation}
If $i < j$, then $\ell_{ij}^{+} = 0$, and hence the expression above reduces to
$(R_{q})^{ik}_{ik}\ell_{ii}^{+}\ell_{kj}^{+} = (R_{q})^{ij}_{ij}\ell_{kj}^{+}\ell_{ii}^{+}$.
Since $(R_{q})^{ij}_{ij} = (R_{q})^{ik}_{ik} = 1$ in this case, we get the desired equality~\eqref{eq:RTT_q_commute}. If $i > k > j$,
then we have $\ell_{ki}^{+} = 0$, so that~\eqref{eq:RTT_commutation_relations} again yields~\eqref{eq:RTT_q_commute}. Finally, if $k > i > j$,
then~\eqref{eq:RTT_q_commute} follows from the fact that $(R_{q})^{ji}_{ij} = (R_{q})^{ik}_{ki} = 0$.

Due to equation~\eqref{eq:RTT_q_commute_general}, it is enough to prove that $\wtd{U}_{\ge}(R_{q})$ (resp.\ $\wtd{U}_{\le}(R_{q})$) is generated as an algebra
by all $\cal{E}_{i}$ and $\ell_{ii}^{+}$ with $1 \le i \le n$ (resp.\ all $\cal{F}_{i}$ and $\ell_{ii}^{-}$ with $1 \le i \le n$). We shall sketch the proof
for $\wtd{U}_{\ge}(R_{q})$. First, it is clear that one can obtain all $\ell_{i+1,i}^{+}$ with $1 \le i \le n$ from $\cal{E}_{i}$ and $\ell_{ii}^{+}$. Next,
using~\eqref{eq:LC_explicit_1} with $i + 1$ in place of $i$ and $i$ in place of $j$, we find that all $\ell_{i',(i + 1)'}^{+}$ can be expressed via
$\ell_{i+1,i}^{+}$ and $\ell_{\mu}^{+}$ for any $1 \le i \le n$. Finally, one obtains the remaining $\ell_{ij}^{+}$ via an analogue
of~\cite[Proposition 8.29]{KS}.
\end{proof}

Now, we can define a left action of $\wtd{U}_{0,-}(R_{q})$ on $\wtd{U}_{-}(R_{q})$ via
$\ell_{\mu}^{-}\cdot u = v^{-(\mu,\lambda + \lambda')}u$ for all $u \in \wtd{U}_{-}(R_{q})_{\lambda,\lambda'}$. It is easy to see that this
makes $\wtd{U}_{-}(R_{q})$ into a left module-algebra over $\wtd{U}_{0,-}(R_{q})$, and thus we can form the \emph{left crossed product algebra}
$\wtd{U}_{-}(R_{q}) \rtimes_{v} \wtd{U}_{0,-}(R_{q})$, which is $\wtd{U}_{-}(R_{q}) \otimes \wtd{U}_{0,-}(R_{q})$ as a vector space, and multiplication
is given by (see~\cite[Proposition 10.18]{KS}):
\begin{equation}\label{eq:left_crossed_product}
  (u \otimes h)(u' \otimes h') = \sum_{(h)} u(h_{(1)}\cdot u') \otimes h_{(2)}h'
  \qquad \text{for all} \qquad u,u' \in \wtd{U}_{-}(R_{q}),\ h,h' \in \wtd{U}_{0,-}(R_{q}).
\end{equation}
Similarly, we can define a right action of $\wtd{U}_{0,+}(R_{q})$ on $\wtd{U}_{+}(R_{q})$ via
$u\cdot \ell_{\mu}^{+} = v^{-(\mu,\lambda + \lambda')}u$ for all $u \in \wtd{U}_{+}(R_{q})_{\lambda,\lambda'}$, which makes $\wtd{U}_{+}(R_{q})$ into a
right module-algebra over $\wtd{U}_{0,+}(R_{q})$. Then we can form the \emph{right crossed product algebra} $\wtd{U}_{0,+}(R_{q}) \ltimes_{v} \wtd{U}_{+}(R_{q})$,
which is $\wtd{U}_{0,+}(R_{q}) \otimes \wtd{U}_{+}(R_{q})$ as a vector space, with multiplication given by (cf.~\cite[\S10.2.1]{KS}):
\begin{equation}\label{eq:right_crossed_product}
  (h \otimes u)(h' \otimes u') = \sum_{(h')} hh_{(1)}' \otimes (u\cdot h_{(2)}')u'
  \qquad \text{for all} \qquad u,u' \in \wtd{U}_{+}(R_{q}),\ h,h' \in \wtd{U}_{0,+}(R_{q}).
\end{equation}

Then we have the following result:

\begin{prop}\label{prop:RTT_crossed_products}
Let $v = q^{-2}$ in type $B_{n}$ and $v = q^{-1}$ in types $C_{n}$ and $D_{n}$. Then, there are algebra isomorphisms
$\psi_{+}\colon \wtd{U}_{\ge}(R_{q}) \iso \wtd{U}_{0,+}(R_{q}) \ltimes_{v} \wtd{U}_{+}(R_{q})$ and
$\psi_{-}\colon \wtd{U}_{\le}(R_{q}) \iso \wtd{U}_{-}(R_{q}) \rtimes_{v} \wtd{U}_{0,-}(R_{q})$ such that
\begin{align*}
    &\psi_{+}(\cal{E}_{i}) = 1 \otimes \cal{E}_{i}, & & \psi_{+}(\ell_{ii}^{+}) = \ell_{ii}^{+} \otimes 1 & & 1 \le i \le n, \\
    &\psi_{-}(\cal{F}_{i}) = \cal{F}_{i} \otimes 1, & & \psi_{-}(\ell_{ii}^{-}) = 1 \otimes \ell_{ii}^{-} & & 1 \le i \le n.
\end{align*}
\end{prop}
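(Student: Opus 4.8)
\textbf{Proof proposal for Proposition~\ref{prop:RTT_crossed_products}.}

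The plan is to construct the isomorphisms $\psi_\pm$ by exhibiting mutually inverse algebra homomorphisms in each direction, then checking they are inverse to each other on generators. I will only spell out the argument for $\psi_+$, since the $\psi_-$ case is entirely parallel (with the left crossed product replacing the right one). First I would verify that the recipe $\psi_+(\cal{E}_i) = 1 \otimes \cal{E}_i$, $\psi_+(\ell_{ii}^+) = \ell_{ii}^+ \otimes 1$ does extend to an algebra homomorphism $\wtd{U}_{\ge}(R_q) \to \wtd{U}_{0,+}(R_q) \ltimes_v \wtd{U}_+(R_q)$. By Lemma~\ref{lem:borel_spanning_sets}, $\wtd{U}_{\ge}(R_q)$ is generated by the $\cal{E}_i$ together with the $(\ell_{ii}^+)^{\pm 1}$, $1 \le i \le n$ (using also Lemma~\ref{lem:cartan_subalgebras_RTT} to invert them), so it suffices to see that all the defining relations among these generators are respected. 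The relevant relations are: the commutativity of the $\ell_{ii}^+$ among themselves (Lemma~\ref{lem:cartan_subalgebras_RTT}); the $q$-commutation $\ell_{ii}^+ \cal{E}_j = v^{-(\varepsilon_i, \alpha_j)}\cal{E}_j\ell_{ii}^+$ or rather the precise form dictated by~\eqref{eq:RTT_q_commute}; and the $\cal{E}$-only relations (quantum Serre-type relations plus whatever higher relations come from~\cite[Proposition 8.29]{KS}-type expressions). The point is that $\wtd{U}_+(R_q)$ is \emph{by definition} the subalgebra generated by the $\cal{E}_i$, so any relation among the $\cal{E}_i$ alone holds verbatim in $1 \otimes \wtd{U}_+(R_q)$; and the mixed relations translate, via the multiplication rule~\eqref{eq:right_crossed_product} and the right action $u \cdot \ell_\mu^+ = v^{-(\mu,\lambda+\lambda')}u$, into exactly the same scalars that appear in~\eqref{eq:RTT_q_commute_general}. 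So this direction is a bookkeeping check that the crossed-product multiplication reproduces the $R_q$-commutation scalars — I expect no surprises here.

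Next I would construct the inverse homomorphism $\wtd{U}_{0,+}(R_q) \ltimes_v \wtd{U}_+(R_q) \to \wtd{U}_{\ge}(R_q)$. Using the universal property of a crossed product (see~\cite[Proposition 10.18]{KS} and the analogous statement for right crossed products), it is enough to give algebra homomorphisms $\wtd{U}_{0,+}(R_q) \to \wtd{U}_{\ge}(R_q)$ and $\wtd{U}_+(R_q) \to \wtd{U}_{\ge}(R_q)$ — namely the inclusions — that are compatible with the right action in the sense that $\iota_0(h)\,\iota_+(u) = \sum_{(h)} \iota_+(u \cdot h_{(1)})\,\iota_0(h_{(2)})$, i.e.\ that the action of $\wtd{U}_{0,+}(R_q)$ on $\wtd{U}_+(R_q)$ used to form the crossed product is literally the adjoint action inside $\wtd{U}_{\ge}(R_q)$. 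This compatibility is precisely~\eqref{eq:RTT_q_commute_general} restricted to $u \in \wtd{U}_+(R_q)$, so it holds. One subtlety worth noting: for the inclusions $\wtd{U}_{0,+}(R_q) \hookrightarrow \wtd{U}_{\ge}(R_q)$ and $\wtd{U}_+(R_q) \hookrightarrow \wtd{U}_{\ge}(R_q)$ to genuinely be \emph{injective} (so that the crossed product is not collapsing), one uses Lemma~\ref{lem:cartan_subalgebras_RTT} for the Cartan part; injectivity of $\wtd{U}_+(R_q) \hookrightarrow \wtd{U}_{\ge}(R_q)$ is automatic since it is a subalgebra by definition.

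Finally I would check that the two homomorphisms are mutually inverse. Composing one way, $\cal{E}_i \mapsto 1 \otimes \cal{E}_i \mapsto \cal{E}_i$ and $\ell_{ii}^+ \mapsto \ell_{ii}^+ \otimes 1 \mapsto \ell_{ii}^+$, so the composite is the identity on generators of $\wtd{U}_{\ge}(R_q)$, hence the identity. The other composite sends $1 \otimes \cal{E}_i \mapsto \cal{E}_i \mapsto 1 \otimes \cal{E}_i$ and $\ell_{ii}^+ \otimes 1 \mapsto \ell_{ii}^+ \mapsto \ell_{ii}^+ \otimes 1$, and since $\{1 \otimes \cal{E}_i\} \cup \{\ell_{ii}^+ \otimes 1\}$ generate the crossed product as an algebra, this composite is also the identity. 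The main obstacle — and the only place real content enters beyond invoking the cited $KS$ machinery — is verifying in the first step that the mixed commutation relations in $\wtd{U}_{\ge}(R_q)$ coincide on the nose with the crossed-product scalars, i.e.\ extracting~\eqref{eq:RTT_q_commute} and~\eqref{eq:RTT_q_commute_general} cleanly from the $RLL$-relations; but this has essentially already been done in the proof of Lemma~\ref{lem:borel_spanning_sets}, so in practice the proof is a short assembly of Lemmas~\ref{lem:cartan_subalgebras_RTT} and~\ref{lem:borel_spanning_sets} together with the universal properties of crossed products.
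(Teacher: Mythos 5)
Your step~2 (constructing the ``inverse'' $\psi_+'\colon \wtd{U}_{0,+}(R_q)\ltimes_v \wtd{U}_+(R_q)\to \wtd{U}_{\ge}(R_q)$ via the universal property of a crossed product, using the inclusions and the compatibility from~\eqref{eq:RTT_q_commute_general}) and your step~3 (checking the compositions on generators) are both sound, and agree with the paper. The problem is your step~1. You propose to build $\psi_+\colon \wtd{U}_{\ge}(R_q)\to \wtd{U}_{0,+}(R_q)\ltimes_v \wtd{U}_+(R_q)$ by declaring the images of the generators $\cal{E}_i$ and $(\ell_{ii}^+)^{\pm1}$ and then ``verifying that all the defining relations among these generators are respected.'' But $\wtd{U}_{\ge}(R_q)$ is defined as a \emph{subalgebra} of the quotient $\wtd{U}(R_q)$, not by a presentation. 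Lemma~\ref{lem:borel_spanning_sets} tells you these elements generate (and gives a spanning set), but gives no presentation; your own hedge about ``whatever higher relations come from~\cite[Proposition~8.29]{KS}-type expressions'' signals exactly this difficulty without resolving it. Without a presentation, one simply cannot define a homomorphism \emph{out of} $\wtd{U}_{\ge}(R_q)$ by specifying images of generators — the method only applies to defining maps out of algebras presented by generators and relations, or into algebras with a universal property (as you correctly do with the crossed product in step~2, going the other way).

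The paper sidesteps this. It constructs $\psi_+'$ as you do (the easy direction), and then defines $\psi_+$ not by generators and relations but directly as a linear map on homogeneous elements, exploiting the $P\times P$-bigrading: for $u\in \wtd{U}_{\ge}(R_q)_{\lambda,\mu}$ one sets $\psi_+(u) = \ell_{\mu}^{+}\otimes(\ell_{\mu}^{+})^{-1}u$. The subtle point — and the genuine content of the argument — is that $(\ell_{\mu}^{+})^{-1}u$ lands in $\wtd{U}_{+}(R_q)$, which follows by combining Lemma~\ref{lem:borel_spanning_sets} with the observation that $\deg(\cal{E}_i)=(\varepsilon_i-\varepsilon_{i+1},0)$, so that $\wtd{U}_{+}(R_q)$ consists of the elements of $\wtd{U}_{\ge}(R_q)$ with trivial second grading component. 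Once $\psi_+$ is defined as a linear map, its multiplicativity is a direct check using~\eqref{eq:RTT_q_commute_general}, and being visibly inverse to $\psi_+'$ it is an isomorphism. If you replace your step~1 with this construction, the rest of your argument goes through; as written, step~1 is a real gap.
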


\begin{proof}
First, we note that we have a linear map $\psi_{-}'\colon \wtd{U}_{-}(R_{q}) \rtimes_{v} \wtd{U}_{0,-}(R_{q}) \to \wtd{U}_{\le}(R_{q})$
given by $u \otimes h \mapsto uh$, which is an algebra homomorphism due to~\eqref{eq:RTT_q_commute_general}. Likewise, the linear map
$\psi_{+}'\colon \wtd{U}_{0,+}(R_{q}) \ltimes_{v}\wtd{U}_{+}(R_{q}) \to \wtd{U}_{\ge}(R_{q})$ given by $h \otimes u \mapsto hu$ is
an algebra homomorphism. On the other hand, note that $\wtd{U}_{\ge}(R_{q})$ and $\wtd{U}_{\le}(R_{q})$ both inherit bigradings from
$\wtd{U}(R_{q})$, as each algebra is generated by homogeneous elements relative to the bigrading. Moreover, since
$\deg(\cal{E}_{i}) = (\varepsilon_{i} - \varepsilon_{i+1},0)$ and $\deg(\cal{F}_{i}) = (0,\varepsilon_{i+1} - \varepsilon_{i})$ for all
$1\leq i\leq n$, it follows from Lemma~\ref{lem:borel_spanning_sets} that $(\ell_{\mu}^{+})^{-1}u \in \wtd{U}_{+}(R_{q})$ and
$u'\ell_{\lambda}^{-} \in \wtd{U}_{-}(R_{q})$ for any $u \in \wtd{U}_{\ge}(R_{q})_{\lambda,\mu}, u' \in \wtd{U}_{\le}(R_{q})_{\lambda,\mu}$.
Thus, we can define linear maps
$\psi_{+}\colon \wtd{U}_{\ge}(R_{q}) \to \wtd{U}_{0,+}(R_q) \ltimes_{v}\wtd{U}_{+}(R_{q})$,
$\psi_{-}\colon \wtd{U}_{\le}(R_{q}) \to \wtd{U}_{-}(R_{q}) \rtimes_{v} \wtd{U}_{0,-}(R_{q})$ via
\[
  \psi_{+}(u) =\ell_{\mu}^{+} \otimes (\ell_{\mu}^{+})^{-1}u,\qquad \psi_{-}(u') =  u'\ell_{\lambda}^{-} \otimes (\ell_{\lambda}^{-})^{-1}
  \qquad \text{for all}\qquad u \in \wtd{U}_{\ge}(R_{q})_{\lambda,\mu},\ u' \in \wtd{U}_{\le}(R_{q})_{\lambda,\mu}.
\]
It is easy to check using~\eqref{eq:RTT_q_commute_general} that $\psi_{\pm}$ are algebra homomorphisms which are inverse to the above homomorphisms
$\psi_{\pm}'$. Thus, $\psi_{\pm}$ are algebra isomorphisms, and they map $\cal{E}_{i}$, $\cal{F}_{i}$, $\ell_{ii}^{\pm}$ as specified in the statement.
\end{proof}

We shall next realize $\wtd{U}(R_{q})$ as a Drinfeld double of $\wtd{U}_{\le}(R_{q})$ and $\wtd{U}_{\ge}(R_{q})$. To this end, we first introduce algebras
$A_{\pm}(R_{q}) = A(R_{q})/\cal{I}_{\pm}$, where $\cal{I}_{+}$ (resp.\ $\cal{I}_{-}$) is the ideal generated by all $t_{ij}$ with $i < j$ (resp.\ $i > j$),
along with the entries of the matrix (cf.~\eqref{eq:LC_relations})
\begin{equation}\label{eq:TC_relations}
  TC_{q}T^{\mathsf{t}'}C_{q}^{-1} - I.
\end{equation}
In type $B_{n}$, we impose the additional relation $t_{n+1,n+1} = 1$, in accordance with Remark~\ref{rmk:square_root_zq}.
We shall denote the images of the elements $t_{ij}$ in $A_{\pm}(R_{q})$ by $t_{ij}^{\pm}$. The same arguments as those used for the ideal
$\cal{I}_{q} \subset U(R_{q})$ show that $\cal{I}_{\pm}$ are also coideals, so that $A_{\pm}(R_{q})$ are both bialgebras. In fact, since
the matrices $T^{\pm} = (t_{ij}^{\pm})_{i, j = 1}^{N}$ are invertible, the bialgebras $A_{\pm}(R_{q})$ are actually Hopf algebras, with
the antipode $S(T^{\pm}) = (T^{\pm})^{-1}$.

To prove the next proposition, we first need to record a few identities among $R_{q}$ and $C_{q}$, which involve the partial transpositions
on $\End(V) \otimes \End(V)$ defined via $(A \otimes B)^{\mathsf{t}_{1}'} = A^{\mathsf{t}'} \otimes B$ and
$(A \otimes B)^{\mathsf{t}_{2}'} = A \otimes B^{\mathsf{t}'}$ for any $A,B \in \End(V)$. We also write
$A_{1} = A \otimes I$ and $A_{2} = I \otimes A$ for any $A \in \End(V)$.

\begin{lemma}\label{lem:crossing_symmetries}
The following \textbf{crossing symmetry} identities hold:
\begin{equation}\label{eq:crossing_symmetries_1}
  R_{q}(C_{q})_{1}R_{q}^{\mathsf{t}_{1}'}(C_{q})_{1}^{-1} = I,\qquad
  (C_{q}^{\mathsf{t}'})_{1}^{-1}R_{q}^{-1}(C_{q}^{\mathsf{t}'})_{1}(R_{q}^{-1})^{\mathsf{t}_{1}'} = I,
\end{equation}
\begin{equation}\label{eq:crossing_symmetries_2}
  (C_{q}^{\mathsf{t}'})^{-1}_{2}R_{q}(C_{q}^{\mathsf{t}'})_{2}R_{q}^{\mathsf{t}_{2}'} = I,
  \qquad R_{q}^{-1}(C_{q})_{2}(R_{q}^{-1})^{\mathsf{t}_{2}'}(C_{q}^{-1})_{2} = I.
\end{equation}
\end{lemma}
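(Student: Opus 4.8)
The plan is to verify the four crossing symmetry identities by reducing them to a single "base" identity and then transferring it across the various partial transpositions and inverses. First I would observe that all four identities in Lemma~\ref{lem:crossing_symmetries} are equivalent to assertions about the matrix $\hat{R}_q = R_q \circ \tau$: writing things out entry-wise using the notation~\eqref{eq:matrix_notation}, the first identity in~\eqref{eq:crossing_symmetries_1} is equivalent to the statement that for each $m,p,i,j$ one has $\sum_{k,\ell} (R_q)^{mk}_{p\ell} c_{kk} (R_q^{\mathsf{t}_1'})^{\ell j}_{?} c_{jj}^{-1} = \delta_{mp}\delta_{?}$ — i.e. a "partial inverse" statement. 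The cleanest route is to recall (as is done in the proof of Proposition~\ref{prop:central_element}) that $\hat R_q$ satisfies a cubic (type $B_n$) or quadratic (types $C_n, D_n$, after accounting for the relevant projectors) minimal polynomial, and that the matrix $K$ of~\eqref{eq:K_matrix}, built from the $c_{ii}$, is proportional to the top eigenprojector $P_3$ (resp.\ $P_3$ in $BCD$). Concretely, $K = (C_q)_1 (\text{flip-in-second-tensor-factor}) (C_q)_1^{-1}$-type expression, and the partial transpose relations for $R_q$ are exactly the algebraic shadow of "$K$ is a rank-one-type projector compatible with the braiding".

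The key steps, in order, would be: (1) fix one type, say $B_n$, and write $\hat R_q$ in terms of its spectral projectors $P_1,P_2,P_3$; (2) compute $R_q^{\mathsf{t}_1'}$ and $R_q^{\mathsf{t}_2'}$ explicitly from the formula~\eqref{eq:1_parameter_R_B}, using $\varepsilon_{i'} = -\varepsilon_i$ and the definition~\eqref{eq:Cq_B} of $C_q$, and check that conjugating by $(C_q)_1$ (resp.\ $(C_q^{\mathsf{t}'})_2$, etc.) interchanges $R_q^{-1}$ with $(R_q^{\mathsf{t}'})$ up to the correction by $C_q$ — this is the content of the first identity in each line; (3) derive the second identity in each line from the first by taking inverses and using $(A^{\mathsf{t}_1'})^{-1} = (A^{-1})^{\mathsf{t}_1'}$ together with $(C_q^{\mathsf{t}'})_1 = (C_q)_1$ in types where $C_q$ is symmetric under the $i \mapsto i'$ transpose (for type $B_n$ note $\rho_{i'} = -\rho_i$, for $C_n$ the extra sign $\sigma_i$, for $D_n$ similarly), and by using the Yang-Baxter equation~\eqref{eq:YBE} to move partial transposes past each other; (4) repeat the (largely identical) bookkeeping for $C_n$ and $D_n$, where the analogue of~\cite[Lemma 4.8]{MT1} supplies the needed quadratic relation on $\hat R_q$ and hence the proportionality of the corresponding $K$-matrix to a spectral projector. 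In practice I expect to only do the first line~\eqref{eq:crossing_symmetries_1} fully and obtain~\eqref{eq:crossing_symmetries_2} by applying the flip $\tau_{12}$ together with $R_{21} = \tau R_{12}\tau$ and the fact that $R_q$ is symmetric under simultaneous transposition in both factors plus the $i\mapsto i'$ relabeling (which holds for these specific $R$-matrices).

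The main obstacle will be the explicit bookkeeping in step (2): the matrices $R_q^{\mathsf{t}_1'}$ and $R_q^{-1}$ each have four qualitatively different blocks of entries (the "$E_{ii}\otimes E_{ii}$" diagonal, the "$E_{ii'}\otimes E_{i'i}$" anti-diagonal, the permutation-type "$E_{ij}\otimes E_{ji}$" block, and the two triangular correction sums), and one must check the identity block-by-block, keeping careful track of the powers of $q$ coming from $t_i(q)$, from $c_{ii} = q^{2\rho_i}$ (or its $C/D$ analogues), and from the reflection $i \mapsto i'$. The reason this is tractable rather than hopeless is that the "hard" mixed terms are precisely governed by the identity $t_i(q) t_j(q)^{-1}$ combined with $q^{2(\rho_i - \rho_j)}$, and one sees after substitution that all $q$-exponents cancel, leaving a purely combinatorial check that the index patterns match (exactly the kind of cancellation already visible in the computations of Subsection~\ref{ssec:B-type}). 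I would therefore present the proof as: reduce to~\eqref{eq:crossing_symmetries_1}, verify it entry-wise in type $B_n$ using the spectral decomposition of $\hat R_q$ and the $K$-matrix of~\eqref{eq:K_matrix}, note that $C_n$ and $D_n$ are completely analogous with~\cite[Lemma 4.8]{MT1} replaced by its $C/D$ counterparts, and deduce~\eqref{eq:crossing_symmetries_2} by symmetry ($\tau$-conjugation and the transpose-symmetry of $R_q$), leaving the routine index-chasing to the reader.
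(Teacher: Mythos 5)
Your proposal takes a genuinely different route from the paper. The paper's actual proof of Lemma~\ref{lem:crossing_symmetries} is one line: it specializes $z=0$ in the affine crossing-symmetry identities of Lemma~\ref{lem:affine_crossing_symmetries} (which the paper attributes to~\cite{FR} and does not re-derive), and uses the Baxterization formulas~\eqref{eq:Baxterization-B}--\eqref{eq:Baxterization-D} together with $f(0)=1$ to extract the spectral-parameter-free versions. In other words, the paper buys the finite result entirely from the affine one. Your plan is a direct, entry-wise verification of the finite identities via the eigenprojector/$K$-matrix structure of $\hat R_q$, plus symmetry arguments to cut the number of identities to be checked. That is a legitimate alternative and, if pushed through, is more self-contained than the paper's proof (which outsources the real content to~\cite{FR}); but it is substantially heavier, and the paper's route is preferable once the affine lemma is available, since the affine crossing symmetries are needed in Section~8 anyway.

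There is, however, a concrete error in your step (3). You assert $(C_{q}^{\mathsf{t}'})_{1} = (C_{q})_{1}$. This is false: since $C_q$ is diagonal with $(C_q)_{ii} = c_{ii}$ and $\rho_{i'} = -\rho_i$, one has $(C_q^{\mathsf{t}'})_{ii} = c_{i'i'} = q^{-2\rho_i} = c_{ii}^{-1}$ in type $B_n$, so in fact $C_q^{\mathsf{t}'} = C_q^{-1}$ (and $C_q^{\mathsf{t}'} = -C_q^{-1}$ in type $C_n$ because of the signs $\sigma_i$). You even note $\rho_{i'} = -\rho_i$ in the same sentence, so this looks like a slip rather than a conceptual confusion — but as written the step that carries the second identity of~\eqref{eq:crossing_symmetries_1} off the first would not compile correctly. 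More generally, be careful when "taking inverses and moving partial transposes past each other": the partial transpose $\mathsf{t}_1'$ is not anti-multiplicative on $\End(V)^{\otimes 2}$, so the reduction of the four identities to one is not formal; it really does require the specific symmetries $(R_q^{ij}_{k\ell})$ you invoke (relating $(R_q)_{21}$, $R_q^{-1}$, and $R_q$ via the $i \mapsto i'$ relabeling), and those need to be stated and checked rather than asserted. If you fix the $C_q^{\mathsf{t}'} = \pm C_q^{-1}$ identity and spell out the symmetry of $R_q$ under the combined flip/reflection, the plan should go through; but given that the paper already proves (or cites) the stronger affine statement, it is more economical to derive the finite version from it as the paper does.
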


\begin{proof}
These relations follow by specializing $z=0$ in their affine counterparts from Lemma~\ref{lem:affine_crossing_symmetries} and using the
formulas~\eqref{eq:Baxterization-B}--\eqref{eq:Baxterization-D} as well as $f(0)=1$ for $f(z)$ of~\eqref{eq:f_prefactor_formula}.
\end{proof}

\begin{prop}\label{prop:skew_pairing_descent}
There are unique bilinear maps $\sigma,\bar{\sigma}\colon A_{+}(R_{q}) \times A_{-}(R_{q}) \to \bb{K}$
satisfying~\eqref{eq:skew_pairing_counit}--\eqref{eq:convolution_inverse} and
\[
  \sigma(t_{ij}^{+}, t_{k\ell}^{-}) = (R_{q})_{j\ell}^{ik}\qquad \text{and}\qquad
  \bar{\sigma}(t_{ij}^{+} , t_{k\ell}^{-}) = (R_{q}^{-1})_{j\ell}^{ik}
  \qquad \text{for all}\qquad 1 \le i,j,k,\ell \le N.
\]
\end{prop}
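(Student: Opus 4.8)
The statement is about descending the skew pairings $\sigma, \bar\sigma$ on $A(R_q) \times A(R_q)$ of Theorem~\ref{thm:A(R)_skew_pairings} to the quotients $A_+(R_q) \times A_-(R_q)$. Since $A_\pm(R_q) = A(R_q)/\mathcal{I}_\pm$, it suffices to show that $\sigma$ (resp.\ $\bar\sigma$) annihilates $\mathcal{I}_+$ in the first slot and $\mathcal{I}_-$ in the second slot; the resulting induced bilinear map on the quotient will then automatically inherit properties~\eqref{eq:skew_pairing_counit}--\eqref{eq:convolution_inverse} since those are expressed via $\Delta$ and $\epsilon$, which pass to the quotient bialgebras $A_\pm(R_q)$. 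Uniqueness is immediate since $A_+(R_q)$ is generated by the $t_{ij}^+$ and $A_-(R_q)$ by the $t_{k\ell}^-$, and the values on generators are prescribed.

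The plan is to verify the vanishing on the two types of generators of $\mathcal{I}_\pm$. First, for the off-diagonal generators: $\mathcal{I}_+$ contains all $t_{ij}$ with $i<j$, and $\mathcal{I}_-$ contains all $t_{k\ell}$ with $k>\ell$. Using~\eqref{eq:skew_pairing_adjointness} and~\eqref{eq:skew_pairing_generators} together with the coproduct~\eqref{eq:A(R)_bialgebra}, one reduces $\sigma(t_{ij}, a)$ and $\sigma(a, t_{k\ell})$ for arbitrary $a$ to polynomial expressions in the matrix entries $(R_q)^{**}_{**}$; the key input is the triangularity structure of $\hat R_q = R_q\circ\tau$ recorded in the proof of Proposition~\ref{prop:central_element} — namely $(R_q)^{k\ell}_{ij} = 0$ unless $\{i,j\}=\{k,\ell\}$ or ($j=i'$, $\ell=k'$) — which forces the relevant sums to vanish when the index inequality is violated. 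I would handle this by induction on the degree (height of $\mu$ where $t_{ij}$ has bidegree $(-\varepsilon_i,\varepsilon_j)$ as in Proposition~\ref{prop:A(R)_grading}), using the adjointness axioms to peel off one generator at a time.

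Second, and this is where the crossing symmetry identities of Lemma~\ref{lem:crossing_symmetries} become essential: $\mathcal{I}_\pm$ also contains the entries of $TC_qT^{\mathsf{t}'}C_q^{-1} - I$ (see~\eqref{eq:TC_relations}), i.e.\ the elements $\sum_k c_{kk}c_{jj}^{-1} t_{ik}t_{j'k'} - \delta_{ij}$. To show $\sigma$ and $\bar\sigma$ annihilate these, I would compute $\sigma\big(\sum_k c_{kk}c_{jj}^{-1}t_{ik}t_{j'k'}, a\big)$ using~\eqref{eq:skew_pairing_adjointness}, reducing it via the coproduct to a contraction of two copies of the $R$-matrix against $C_q$; the crossing symmetry relation $R_q(C_q)_1 R_q^{\mathsf{t}_1'}(C_q)_1^{-1} = I$ (and its partner for the second tensor slot) is exactly the statement that this contraction collapses to the identity matrix entry $\delta_{ij}\epsilon(a)$, which is also the value of $\sigma(\delta_{ij}\cdot 1, a)$. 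The computation for $\bar\sigma$ uses the second identity in each of~\eqref{eq:crossing_symmetries_1}--\eqref{eq:crossing_symmetries_2} involving $R_q^{-1}$. In type $B_n$ one additionally checks that the extra relation $t_{n+1,n+1}=1$ is respected, which follows from $\sigma(t_{n+1,n+1}, t_{k\ell}^-) = (R_q)^{n+1,k}_{n+1,\ell} = \delta_{k\ell}$ (since $(R_q)^{n+1,n+1}_{n+1,n+1}=1$ in~\eqref{eq:1_parameter_R_B}) agreeing with $\epsilon$, and similarly for $\bar\sigma$.

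\textbf{Main obstacle.} The routine part is the triangularity bookkeeping for the off-diagonal generators; the genuinely nontrivial step is translating the matrix-form crossing symmetry identities of Lemma~\ref{lem:crossing_symmetries} into the correct index-wise statements about $\sigma$ and $\bar\sigma$ evaluated on the generators of the ideal $TC_qT^{\mathsf{t}'}C_q^{-1}-I$. The subtlety is bookkeeping the partial transpositions $\mathsf{t}_1', \mathsf{t}_2'$ and the placement of $C_q$ versus $C_q^{\mathsf{t}'}$ correctly when one expands the adjointness axiom — one must match the slot in which the coproduct is applied ($\sigma(ab,c)$ splits $c$, $\sigma(a,bc)$ splits $a$) to the slot in which the crossing identity is stated. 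I expect once the dictionary between "entries of $TC_qT^{\mathsf{t}'}C_q^{-1}-I$ paired against a generator" and "entries of $R_q(C_q)_1 R_q^{\mathsf{t}_1'}(C_q)_1^{-1}-I$" is set up carefully, the vanishing is immediate, but getting that dictionary exactly right (including in type $B_n$ with the $n+1$ row) is the crux.
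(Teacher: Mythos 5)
Your overall strategy matches the paper's: descend $\sigma,\bar\sigma$ from $A(R_q)\times A(R_q)$ to the quotients by showing the ideals $\cal{I}_\pm$ are annihilated, using the triangularity of $R_q$ for the off-diagonal generators and the crossing symmetry identities of Lemma~\ref{lem:crossing_symmetries} for the $C_q$-relations. Your uniqueness observation and your type-$B_n$ check of $t_{n+1,n+1}-1$ are both correct.

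There is, however, one genuine gap in how you propose to handle the $C_q$-ideal $\cal{I}'$. You write that the crossing symmetry collapses $\sigma\bigl(\sum_k c_{kk}c_{jj}^{-1}t_{ik}t_{j'k'},\,a\bigr)$ to $\delta_{ij}\,\epsilon(a)$, but the "contraction of two copies of the $R$-matrix" only materializes when $a$ is a single generator $t_{k\ell}$, giving $\sigma(\cdot,t_{k\ell})=\delta_{ij}\delta_{k\ell}$. To pass from generators $a=t_{k\ell}$ to arbitrary $a\in A(R_q)$ one must apply the adjointness axiom on the first slot, which requires knowing the coproduct of the ideal generator — and for a general $(i,j)$-entry that coproduct does not simplify nicely. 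The paper's route is to first invoke Proposition~\ref{prop:central_element} (which also holds in $A(R_q)$): the off-diagonal entries of $TC_qT^{\mathsf{t}'}C_q^{-1}-I$ vanish identically, and all diagonal entries equal a single \emph{grouplike} element $z_q-1$. The grouplike property $\Delta(z_q)=z_q\otimes z_q$ is exactly what closes the induction: $\sigma(z_q,bc)=\sigma(z_q,c)\sigma(z_q,b)$, so $\sigma(z_q,t_{k\ell})=\delta_{k\ell}$ propagates to $\sigma(z_q,x)=\epsilon(x)$ for all $x$. Without reducing $\cal{I}'$ to the grouplike generator $z_q-1$ (or $t_{n+1,n+1}-1$ in type $B_n$, since there $z_q=t_{n+1,n+1}^2$), the extension to arbitrary $a$ does not go through. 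You should add this reduction explicitly; everything else in your outline is sound.
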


\begin{proof}
Due to Theorem~\ref{thm:A(R)_skew_pairings}, we just need to show that $\sigma(\cal{I}_{+},A(R_{q})) = \sigma( A(R_{q}), \cal{I}_{-}) = 0$ and
$\bar{\sigma}(\cal{I}_{+}, A(R_{q})) = \bar{\sigma}(A(R_{q}), \cal{I}_{-}) = 0$. First, let $\cal{I}'$ be the ideal generated by the entries of the
matrix~\eqref{eq:TC_relations}, and the additional element $t_{n+1,n+1} - 1$ if we are in type $B_{n}$. Also, let $\cal{I}_{+}''$ (resp.\ $\cal{I}_{-}''$)
be the ideal generated by all $t_{ij}$ with $i < j$ (resp.\ $i > j$). Then $\cal{I_{\pm}} = \cal{I}' + \cal{I}_{\pm}''$. Since the proof of
Proposition~\ref{prop:central_element} also goes through for $A(R_{q})$, we find that the ideal generated by the entries of~\eqref{eq:TC_relations}
is in fact generated by a single element $z_{q} -1$, where $z_{q}$ is a grouplike element given by
\[
  z_{q} = \sum_{1\leq k\leq N}c_{ii}c_{kk}^{-1}t_{k'i'}t_{ki}
  = \sum_{1\leq k\leq N}c_{jj}^{-1}c_{kk}t_{jk}t_{j'k'} \qquad \text{for all}\qquad 1 \le i,j \le N.
\]
Note that $z_{q} = t_{n+1,n+1}^{2}$ if we are in type $B_{n}$. Thus, in types $C_{n}$ and $D_{n}$, the ideal $\cal{I}'$ is generated by $z_{q} - 1$, while
in type $B_{n}$, the ideal $\cal{I}'$ is generated by $t_{n+1,n+1} - 1$. In particular, $\sigma(z_{q},1) = \bar{\sigma}(z_{q},1) = \epsilon(z_{q}) = 1$.
Next, let us prove that $\sigma(z_{q},t_{i\ell}) = \delta_{i\ell} = \bar{\sigma}(z_{q},t_{i\ell})$ for all $i,\ell$.
First, using~\eqref{eq:skew_pairing_adjointness}, we have
\[
  \sigma(z_{q},t_{i\ell}) = \sum_{1\leq k\leq N}c_{jj}^{-1}c_{kk} \sum_{1\leq p\leq N}\sigma(t_{jk},t_{ip})\sigma(t_{j'k'},t_{p\ell}) \, =
  \sum_{1 \le k,p \le N}c_{jj}^{-1}c_{kk}(R_{q})^{ji}_{kp}(R_{q})^{j'p}_{k'\ell} = \delta_{i\ell},
\]
where the last equality follows from the first identity in~\eqref{eq:crossing_symmetries_1}.
Similarly, using~\eqref{eq:inverse_skew_pairing_adjointness}, we have
\[
  \bar{\sigma}(z_{q},t_{i\ell}) = \sum_{1\leq k\leq N}c_{jj}^{-1}c_{kk} \sum_{1\leq p\leq N}\bar{\sigma}(t_{j'k'},t_{ip})\bar{\sigma}(t_{jk},t_{p\ell}) \,
  = \sum_{1 \le k,p \le N}c_{jj}^{-1}c_{kk}(R_{q}^{-1})^{j'i}_{k'p}(R_{q}^{-1})^{jp}_{k\ell} = \delta_{i\ell},
\]
where the last equality follows from the second identity in~\eqref{eq:crossing_symmetries_1}. Thus, because the $t_{i\ell}$ generate $A(R_{q})$, we may
then use~\eqref{eq:skew_pairing_adjointness},~\eqref{eq:inverse_skew_pairing_adjointness}, the property $\Delta(z_q)=z_q\otimes z_q$, and the definition
of $\epsilon$ to conclude that $\sigma(z_{q},x) = \bar{\sigma}(z_{q},x) = \epsilon(x)$ for all $x \in A(R_{q})$. Evoking~\eqref{eq:skew_pairing_counit},
this implies that $\sigma(z_{q} - 1,x) = \bar{\sigma}(z_{q} - 1,x) = 0$ for all $x \in A(R_{q})$. Since $(R_{q})^{n+1,k}_{n + 1,j} = \delta_{kj}$
in type $B_{n}$, a similar argument shows that $\sigma(t_{n+1,n+1} - 1,x) = \bar{\sigma}(t_{n+1,n+1} - 1,x) = 0$ for all $x \in A(R_{q})$.
Using~\eqref{eq:skew_pairing_adjointness} and~\eqref{eq:inverse_skew_pairing_adjointness} again, we get
$\sigma(\cal{I}',A(R_{q})) = \bar{\sigma}(\cal{I}',A(R_{q})) = 0$. A similar argument, this time using~\eqref{eq:crossing_symmetries_2}, shows that
$\sigma(A(R_{q}),\cal{I}') = \bar{\sigma}(A(R_{q}),\cal{I}') = 0$.

We now need to prove that $\sigma(\cal{I}_{+}'',A(R_{q})) = \bar{\sigma}(\cal{I}_{+}'',A(R_{q})) = 0$ and
$\sigma(A(R_{q}),\cal{I}_{-}'') = \bar{\sigma}(A(R_{q}),\cal{I}_{-}'') = 0$. Note first that $\sigma(t_{ij},1) = \bar{\sigma}(t_{ij},1) = 0$
and $\sigma(1,t_{ij}) = \bar{\sigma}(1,t_{ij}) = 0$ for all $i \neq j$. Thus, as above, it is enough to prove that for any $k,\ell$, we have
$\sigma(t_{ij},t_{k\ell}) = \bar{\sigma}(t_{ij},t_{k\ell}) = 0$ when $i < j$, and $\sigma(t_{k\ell},t_{ij}) = \bar{\sigma}(t_{k\ell},t_{ij}) = 0$
when $i > j$. We shall only prove the first set of equalities, leaving the other two to the reader.

By~\eqref{eq:skew_pairing_generators}, we have $\sigma(t_{ij},t_{k\ell}) =(R_{q})^{ik}_{j\ell}$. Suppose first that $i \neq k'$, so that
$(R_{q})^{ik}_{j\ell} = 0$ unless $\{i,k\} = \{j,\ell\}$. Then we are reduced to consideration of $\sigma(t_{ij},t_{ji}) = (R_{q})^{ij}_{ji}$,
which vanishes for $i < j$, as desired. If $i = k'$, then $\sigma(t_{ij},t_{k\ell}) = 0$ unless $j = \ell'$, so we need only to consider
$\sigma(t_{ij},t_{i'j'}) = (R_{q})^{ii'}_{jj'}$. In this case, a direct inspection of the formulas for $R_{q}$ shows that $(R_{q})^{ii'}_{jj'} = 0$
unless $i \ge j$. Therefore, we again have the desired equality $\sigma(t_{ij},t_{i'j'}) = 0$ for $i<j$.

For $\bar{\sigma}$, we proceed as above, using explicit formulas for $R_{q}^{-1}$ that can be easily obtained from~\cite[(7.7)--(7.9)]{MT1}. Indeed,
as with $R_{q}$, we have $(R_{q}^{-1})^{ij}_{k\ell} = 0$ unless $\{i,j\} = \{k,\ell\}$ or $i = j'$ and $k = \ell'$. Furthermore,  we also have
$(R_{q}^{-1})^{ij}_{ji} = 0$ if $i < j \ne i'$ and $(R_{q}^{-1})^{ii'}_{jj'} = 0$ if $i < j$.
Thus, $\bar{\sigma}(t_{ij},t_{k\ell}) = 0$ whenever $i < j$.

This completes the proof.
\end{proof}

As a consequence of the above result, we can form the Drinfeld double $\cal{D}(A_{+}(R_{q}),A_{-}(R_{q});\sigma)$ in the sense of~\cite[Definition 8.4]{KS}.
Following the procedure of~\cite[Example 10.16]{KS}, we then obtain:

\begin{prop}\label{prop:Drinfeld_double_RTT}
There is a Hopf algebra isomorphism $\psi\colon \wtd{U}(R_{q}) \iso \cal{D}(A_{+}(R_{q}),A_{-}(R_{q});\sigma)$ which sends
$\ell_{ij}^{+} \mapsto 1 \otimes t_{ij}^{+}$ and $\ell_{ij}^{-} \mapsto t_{ij}^{-} \otimes 1$.
\end{prop}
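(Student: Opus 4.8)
The plan is to run the classical $RLL$-versus-Drinfeld-double dictionary, following~\cite[Example 10.16]{KS}, in the presence of the crossing-symmetry corrections that are built into the algebras $A_{\pm}(R_{q})$.

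First I would recall the construction of $\cal{D}(A_{+}(R_{q}),A_{-}(R_{q});\sigma)$ from~\cite[Definition 8.4]{KS}. Since $\sigma$ is convolution-invertible, with inverse $\bar\sigma$ (Proposition~\ref{prop:skew_pairing_descent}), the double is a Hopf algebra into which $A_{+}(R_{q})$ and $A_{-}(R_{q})$ embed as Hopf subalgebras via $a\mapsto 1\otimes a$ and $b\mapsto b\otimes 1$ respectively, it is generated as an algebra by these two images, and the only relations beyond those inherited from $A_{\pm}(R_{q})$ are the cross-commutation relations governed by $\sigma$ and $\bar\sigma$. Writing these out on the generators by means of the coproducts $\Delta(t^{\pm}_{ij})=\sum_{k}t^{\pm}_{ik}\otimes t^{\pm}_{kj}$ and the values $\sigma(t^{+}_{ij},t^{-}_{k\ell})=(R_{q})^{ik}_{j\ell}$, $\bar\sigma(t^{+}_{ij},t^{-}_{k\ell})=(R_{q}^{-1})^{ik}_{j\ell}$, and packaging the generators into matrices $L^{+}=\big(1\otimes t^{+}_{ij}\big)$ and $L^{-}=\big(t^{-}_{ij}\otimes 1\big)$ over $\cal{D}$, one checks that the cross-commutation relations are equivalent to the single matrix identity $R_{q}L_{1}^{+}L_{2}^{-}=L_{2}^{-}L_{1}^{+}R_{q}$.

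Next I would construct the forward map $\psi$. Recall that $\wtd{U}(R_{q})$ is generated by the entries $\ell^{\pm}_{ij}$ (with $\ell^{+}_{ij}=0=\ell^{-}_{ji}$ for $i<j$, and $\ell^{\pm}_{n+1,n+1}=1$ in type $B_{n}$ by Remark~\ref{rmk:square_root_zq}) subject to $R_{q}L_{1}^{\pm}L_{2}^{\pm}=L_{2}^{\pm}L_{1}^{\pm}R_{q}$, $R_{q}L_{1}^{+}L_{2}^{-}=L_{2}^{-}L_{1}^{+}R_{q}$, invertibility of the $\ell^{\pm}_{ii}$, and the vanishing of the entries of $L^{\pm}C_{q}(L^{\pm})^{\mathsf{t}'}C_{q}^{-1}-I$ (which, by Proposition~\ref{prop:central_element}, cuts out the ideal $\cal{I}_{q}$). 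I would check that the assignment $\ell^{+}_{ij}\mapsto 1\otimes t^{+}_{ij}$, $\ell^{-}_{ij}\mapsto t^{-}_{ij}\otimes 1$ respects each family of relations: the $RTT$-relations among the $t^{\pm}_{ij}$ inside $A_{\pm}(R_{q})$ give $R_{q}L_{1}^{\pm}L_{2}^{\pm}=L_{2}^{\pm}L_{1}^{\pm}R_{q}$; the crossing-symmetry relations $L^{\pm}C_{q}(L^{\pm})^{\mathsf{t}'}C_{q}^{-1}=I$ hold in $A_{\pm}(R_{q})$ by construction; invertibility of $t^{\pm}_{ii}$ follows from $t^{\pm}_{i'i'}t^{\pm}_{ii}=1$ (the $A(R_{q})$-analogue of Proposition~\ref{prop:central_element}) together with $t^{\pm}_{n+1,n+1}=1$ in type $B_{n}$; and the cross relation $R_{q}L_{1}^{+}L_{2}^{-}=L_{2}^{-}L_{1}^{+}R_{q}$ is exactly the identity established in the previous paragraph. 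Hence $\psi$ is a well-defined algebra homomorphism, and since $\Delta(\ell^{\pm}_{ij})=\sum_{k}\ell^{\pm}_{ik}\otimes\ell^{\pm}_{kj}$ matches $\Delta(t^{\pm}_{ij})$ inside the Hopf subalgebras $A_{\pm}(R_{q})\subset\cal{D}$, it is a homomorphism of Hopf algebras.

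Finally I would exhibit the inverse. The defining relations of $A_{+}(R_{q})$ (the $RTT$-relations, $t_{ij}=0$ for $i<j$, the entries of $TC_{q}T^{\mathsf{t}'}C_{q}^{-1}-I$, and $t_{n+1,n+1}=1$ in type $B_{n}$) all hold among the $\ell^{+}_{ij}$ in $\wtd{U}(R_{q})$, and likewise for $A_{-}(R_{q})$ and the $\ell^{-}_{ij}$; thus there are Hopf-algebra homomorphisms $A_{+}(R_{q})\to\wtd{U}(R_{q})$, $t^{+}_{ij}\mapsto\ell^{+}_{ij}$ and $A_{-}(R_{q})\to\wtd{U}(R_{q})$, $t^{-}_{ij}\mapsto\ell^{-}_{ij}$. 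Since $\cal{D}$ is generated by its two Hopf subalgebras subject only to the cross-commutation relations, which by the first paragraph amount to $R_{q}L_{1}^{+}L_{2}^{-}=L_{2}^{-}L_{1}^{+}R_{q}$ and therefore hold in $\wtd{U}(R_{q})$ by definition, these two homomorphisms assemble into an algebra homomorphism $\phi\colon\cal{D}\to\wtd{U}(R_{q})$. As $\phi$ and $\psi$ are mutually inverse on generators, $\psi$ is an isomorphism. The one genuinely technical point is the dictionary of the second paragraph: reconciling the abstract $\sigma/\bar\sigma$ cross-commutation relations of~\cite[Definition 8.4]{KS} with the concrete relation $R_{q}L_{1}^{+}L_{2}^{-}=L_{2}^{-}L_{1}^{+}R_{q}$, which is precisely where the explicit values of $\sigma,\bar\sigma$ from Proposition~\ref{prop:skew_pairing_descent} (themselves a consequence of the crossing symmetries of Lemma~\ref{lem:crossing_symmetries}) are used; everything else reduces to a routine verification of relations.
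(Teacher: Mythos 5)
Your proof is correct and fills in exactly the procedure the paper defers to, namely that of~\cite[Example 10.16]{KS}: the paper cites this without further detail, while you carry out the identification of the abstract cross-commutation relations of~\cite[Definition 8.4]{KS} with the concrete $RLL$-relation $R_{q}L_{1}^{+}L_{2}^{-}=L_{2}^{-}L_{1}^{+}R_{q}$, using the values of $\sigma,\bar\sigma$ from Proposition~\ref{prop:skew_pairing_descent}. For comparison, the same reconciliation is written out explicitly, with spectral parameters, in the paper's proof of the affine analogue, Proposition~\ref{prop:RTT_drinfeld_double_affine}; specializing there recovers your second paragraph.
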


Combining this with Proposition~\ref{prop:RTT_crossed_products}, we finally obtain:

\begin{theorem}\label{thm:quadrilangular_decomposition}
The multiplication map $\wtd{U}_{-}(R_{q}) \otimes \wtd{U}_{0,-}(R_{q}) \otimes \wtd{U}_{0,+}(R_{q}) \otimes \wtd{U}_{+}(R_{q}) \to \wtd{U}(R_{q})$
is an isomorphism of vector spaces.
\end{theorem}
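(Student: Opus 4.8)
The plan is to deduce Theorem~\ref{thm:quadrilangular_decomposition} by assembling the structural results established just above. First, by Proposition~\ref{prop:Drinfeld_double_RTT} we have a Hopf algebra isomorphism $\psi\colon \wtd{U}(R_{q}) \iso \cal{D}(A_{+}(R_{q}),A_{-}(R_{q});\sigma)$ sending $\ell_{ij}^{-}\mapsto t_{ij}^{-}\otimes 1$ and $\ell_{ij}^{+}\mapsto 1\otimes t_{ij}^{+}$. By the very definition of the Drinfeld double (see~\cite[Definition 8.4]{KS}), the underlying vector space of $\cal{D}(A_{+}(R_{q}),A_{-}(R_{q});\sigma)$ is $A_{-}(R_{q})\otimes A_{+}(R_{q})$, with both tensor factors embedded as subalgebras and the multiplication map $A_{-}(R_{q})\otimes A_{+}(R_{q})\to \cal{D}$ being a linear isomorphism. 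Transporting this along $\psi^{-1}$, and noting that $\psi^{-1}$ identifies $A_{-}(R_{q})$ with $\wtd{U}_{\le}(R_{q})$ and $A_{+}(R_{q})$ with $\wtd{U}_{\ge}(R_{q})$ (these are precisely the subalgebras generated by the $\ell^{-}_{ij}$ with $i\le j$ and the $\ell^{+}_{ij}$ with $i\ge j$, matching the generators $t^{-}_{ij}$ and $t^{+}_{ij}$ of $A_{\mp}(R_q)$ respectively), we conclude that the multiplication map
\[
  \wtd{U}_{\le}(R_{q}) \otimes \wtd{U}_{\ge}(R_{q}) \longrightarrow \wtd{U}(R_{q})
\]
is an isomorphism of vector spaces.

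Next I would refine each of the two factors using Proposition~\ref{prop:RTT_crossed_products}. That result gives algebra isomorphisms $\psi_{-}\colon \wtd{U}_{\le}(R_{q}) \iso \wtd{U}_{-}(R_{q}) \rtimes_{v} \wtd{U}_{0,-}(R_{q})$ and $\psi_{+}\colon \wtd{U}_{\ge}(R_{q}) \iso \wtd{U}_{0,+}(R_{q}) \ltimes_{v} \wtd{U}_{+}(R_{q})$. By construction of the crossed product algebras in~\eqref{eq:left_crossed_product} and~\eqref{eq:right_crossed_product}, the underlying vector spaces are $\wtd{U}_{-}(R_{q}) \otimes \wtd{U}_{0,-}(R_{q})$ and $\wtd{U}_{0,+}(R_{q}) \otimes \wtd{U}_{+}(R_{q})$ respectively, and the inverse isomorphisms $\psi_{\pm}'$ are simply the multiplication maps $u\otimes h\mapsto uh$ and $h\otimes u\mapsto hu$. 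Hence the multiplication maps
\[
  \wtd{U}_{-}(R_{q}) \otimes \wtd{U}_{0,-}(R_{q}) \longrightarrow \wtd{U}_{\le}(R_{q}),
  \qquad
  \wtd{U}_{0,+}(R_{q}) \otimes \wtd{U}_{+}(R_{q}) \longrightarrow \wtd{U}_{\ge}(R_{q})
\]
are both vector-space isomorphisms.

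Finally I would splice these together. Tensoring the first pair of isomorphisms with the identity and composing, the multiplication map
\[
  \wtd{U}_{-}(R_{q}) \otimes \wtd{U}_{0,-}(R_{q}) \otimes \wtd{U}_{0,+}(R_{q}) \otimes \wtd{U}_{+}(R_{q}) \longrightarrow \wtd{U}(R_{q})
\]
factors as the tensor product $\psi_{-}'\otimes \psi_{+}'$ (a vector-space isomorphism onto $\wtd{U}_{\le}(R_{q})\otimes \wtd{U}_{\ge}(R_{q})$) followed by the multiplication map $\wtd{U}_{\le}(R_{q})\otimes \wtd{U}_{\ge}(R_{q})\to \wtd{U}(R_{q})$ (a vector-space isomorphism by the Drinfeld double description). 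Since associativity of multiplication in $\wtd{U}(R_q)$ guarantees that this composite really is the four-fold multiplication map, the composite is a vector-space isomorphism, which is exactly the assertion.

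The only delicate point is the bookkeeping in the first paragraph: one must be careful that under the isomorphism $\psi$ of Proposition~\ref{prop:Drinfeld_double_RTT} the subalgebra $A_{+}(R_{q})$ of the double corresponds to $\wtd{U}_{\ge}(R_{q})$ and $A_{-}(R_{q})$ to $\wtd{U}_{\le}(R_{q})$ — i.e.\ that $\psi$ restricts to algebra isomorphisms on these subalgebras — and that the multiplication map $A_{-}(R_q)\otimes A_{+}(R_q)\to\cal D$ being a linear bijection is part of the general Drinfeld double formalism of~\cite[\S8]{KS} rather than something requiring a new argument here. Granting that, the proof is a short diagram-chase with no computation.
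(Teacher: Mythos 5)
The proposal is correct and takes essentially the same approach as the paper. The paper states the theorem immediately after Proposition~\ref{prop:Drinfeld_double_RTT} with the one-line justification ``Combining this with Proposition~\ref{prop:RTT_crossed_products}, we finally obtain:''; your proof simply spells out that combination — using the Drinfeld double realization to get the factorization $\wtd{U}_{\le}(R_q)\otimes\wtd{U}_{\ge}(R_q)\iso\wtd{U}(R_q)$, refining each half via the crossed product isomorphisms of Proposition~\ref{prop:RTT_crossed_products}, and splicing by associativity — which is precisely the intended argument.
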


\begin{cor}\label{cor:triangular_decomposition_RTT}
The multiplication map $\mu\colon \wtd{U}_{-}(R_{q}) \otimes \wtd{U}_{0}(R_{q}) \otimes \wtd{U}_{+}(R_{q}) \to \wtd{U}(R_{q})$
is an isomorphism of vector spaces.
\end{cor}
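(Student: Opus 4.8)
The plan is to deduce Corollary~\ref{cor:triangular_decomposition_RTT} from Theorem~\ref{thm:quadrilangular_decomposition} by rearranging the two Cartan factors. The key point is that $\wtd{U}_0(R_q)$ is generated by $\wtd{U}_{0,-}(R_q)$ and $\wtd{U}_{0,+}(R_q)$, and by Lemma~\ref{lem:cartan_subalgebras_RTT} each of the latter is a Laurent polynomial algebra in $n$ variables; moreover all these elements commute, since $\ell_{ii}^{\pm}$ commutes with $\ell_{jj}^{\pm}$ by the proof of Lemma~\ref{lem:cartan_subalgebras_RTT}, while $\ell_{ii}^{+}$ commutes with $\ell_{jj}^{-}$ by an entirely analogous inspection of the $RL^{+}L^{-}$-relations in~\eqref{eq:RLL_relations} (again using that $(R_q)^{ij}_{k\ell}=0$ unless $\{i,j\}=\{k,\ell\}$ or $j=i',\ell=k'$, together with the triangularity $\ell_{ij}^{+}=\ell_{ji}^{-}=0$ for $i<j$). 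Hence $\wtd{U}_0(R_q)$ is a commutative algebra, and in fact the multiplication map $\wtd{U}_{0,-}(R_q)\otimes \wtd{U}_{0,+}(R_q)\to \wtd{U}_0(R_q)$ is surjective.

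First I would establish that this last map is also injective, i.e.\ an isomorphism of vector spaces. Using the bigrading of Proposition~\ref{prop:U(R)_grading}, $\deg(\ell_{ii}^{\pm})=(-\varepsilon_i,\varepsilon_i)$, so the subalgebra $\wtd{U}_0(R_q)$ is spanned by the products $\ell_\mu^{-}\ell_\nu^{+}$ with $\mu,\nu\in\bigoplus_i\BZ\varepsilon_i$, and such a product has bidegree $(-\mu-\nu,\mu+\nu)$. One recovers $\mu+\nu$ from the bidegree; the remaining ambiguity is resolved using the algebra homomorphism $\phi'\colon \wtd{U}(R_q)\to \bb{K}[x_1^{\pm1},\dots,x_n^{\pm1}]$ from the proof of Lemma~\ref{lem:cartan_subalgebras_RTT} (sending $u\in\wtd{U}(R_q)_{\lambda,\lambda'}$ to $\epsilon(u)x_{-\lambda}$) — no wait, that only sees $\lambda$; instead I would use that $\phi'$ restricted to $\wtd{U}_{0,+}$ is an isomorphism onto $H$ and is identically the counit on $\wtd{U}_{0,-}$ only up to the grading shift, so it is cleaner to argue directly: the map $\wtd{U}_{0,-}\otimes\wtd{U}_{0,+}\to\wtd{U}_0$ is a surjection of commutative domains, and by Theorem~\ref{thm:quadrilangular_decomposition} the image $\wtd{U}_0(R_q)\subset\wtd{U}(R_q)$ has, in each bidegree $(-\kappa,\kappa)$ with $\kappa\in\bigoplus\BZ\varepsilon_i$, dimension at least the number of decompositions $\kappa=\mu+\nu$ counted appropriately; comparing with the number of monomials $\ell_\mu^-\ell_\nu^+$ and using that quadrilateral decomposition exhibits $\{\ell_\mu^-\}\otimes\{\ell_\nu^+\}$ as part of a basis forces linear independence.

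Having shown $\wtd{U}_{0,-}(R_q)\otimes\wtd{U}_{0,+}(R_q)\iso\wtd{U}_0(R_q)$ as vector spaces, I would then simply factor the multiplication map of Corollary~\ref{cor:triangular_decomposition_RTT} through the one of Theorem~\ref{thm:quadrilangular_decomposition}: the composite
\[
  \wtd{U}_{-}(R_q)\otimes\wtd{U}_{-}(R_q)^{0}\ \text{—ignore—}
\]
— more precisely, the diagram
\[
  \wtd{U}_{-}(R_q)\otimes \wtd{U}_{0,-}(R_q)\otimes \wtd{U}_{0,+}(R_q)\otimes \wtd{U}_{+}(R_q)
  \xrightarrow{\ \id\otimes\, m\,\otimes\id\ }
  \wtd{U}_{-}(R_q)\otimes \wtd{U}_{0}(R_q)\otimes \wtd{U}_{+}(R_q)
  \xrightarrow{\ \mu\ }
  \wtd{U}(R_q)
\]
commutes (since multiplication is associative in $\wtd{U}(R_q)$), the left arrow is a vector-space isomorphism by the previous paragraph, and the composite is the quadrilateral isomorphism of Theorem~\ref{thm:quadrilangular_decomposition}; hence $\mu$ is a vector-space isomorphism, as claimed.

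The main obstacle I anticipate is the injectivity claim for $\wtd{U}_{0,-}(R_q)\otimes\wtd{U}_{0,+}(R_q)\to\wtd{U}_0(R_q)$: one must rule out hidden relations of the form $\ell_\mu^{-}=$ (Laurent polynomial in the $\ell_{jj}^{+}$) not visible in a single bidegree. The clean way around this is to observe that under the quadrilateral decomposition of Theorem~\ref{thm:quadrilangular_decomposition} the tensor factor $\wtd{U}_{0,-}(R_q)\otimes\wtd{U}_{0,+}(R_q)$ embeds as the $(\wtd{U}_-,\wtd{U}_+)$-bidegree-zero slice, so any relation among the $\ell_\mu^-\ell_\nu^+$ inside $\wtd{U}(R_q)$ is already a relation in that tensor factor, i.e.\ trivial; this reduces the whole matter to the freeness statements of Lemma~\ref{lem:cartan_subalgebras_RTT}. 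Once that is in place, everything else is the routine associativity bookkeeping sketched above.
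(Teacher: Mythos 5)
Your proposal is correct and takes essentially the same route as the paper: both deduce the corollary from Theorem~\ref{thm:quadrilangular_decomposition} by showing that the multiplication map $\wtd{U}_{0,-}(R_{q}) \otimes \wtd{U}_{0,+}(R_{q}) \to \wtd{U}_{0}(R_{q})$ is a vector-space isomorphism, with injectivity read off from the quadrilateral decomposition restricted to its middle two factors and surjectivity from commutativity of $\ell_{ii}^{+}$ with $\ell_{jj}^{-}$ obtained from the $RL^{+}L^{-}$-relations. The one small point to spell out (which your ``entirely analogous'' elides) is that, as in Lemma~\ref{lem:cartan_subalgebras_RTT}, one must first reduce to $j\ne i'$ using $\ell_{ii}^{\pm}=(\ell_{i'i'}^{\pm})^{-1}$ from~\eqref{eq:diagonal_reflection} before the $R$-matrix relation collapses to the commutator; otherwise your argument matches the paper's, and the lengthy middle discussion of degree slices is unnecessary once one notices that the quadrilateral decomposition directly exhibits $\bb{K}\otimes\wtd{U}_{0,-}\otimes\wtd{U}_{0,+}\otimes\bb{K}$ as an injective slice of $\wtd{U}(R_{q})$.
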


\begin{proof}
Due to Theorem~\ref{thm:quadrilangular_decomposition}, we need to show that the multiplication map
$\wtd{U}_{0,-}(R_{q}) \otimes \wtd{U}_{0,+}(R_{q}) \to \wtd{U}_{0}(R_{q})$ is an isomorphism of vector spaces. The multiplication map
$\bb{K} \otimes \wtd{U}_{0,-}(R_{q}) \otimes \wtd{U}_{0,+}(R_{q}) \otimes \bb{K} \to \wtd{U}(R_{q})$ is injective by
Theorem~\ref{thm:quadrilangular_decomposition}, and it clearly maps into $\wtd{U}_{0}(R_{q})$. To complete the proof, it remains to show
that the image is all of $\wtd{U}_{0}(R_{q})$, which amounts to establishing the commutativity of $\ell_{ii}^{+}$ with $\ell_{jj}^{-}$ for
any $i,j$. Since $\ell_{ii}^{\pm} = (\ell_{i'i'}^{\pm})^{-1}$ by~\eqref{eq:diagonal_reflection}, we may further assume that $j \ne i'$.
As in the proof of Lemma~\ref{lem:cartan_subalgebras_RTT}, we consider the $(m,p) = (i,j)$ case of the defining relations
\begin{equation}\label{eq:RLL_cross_relations}
  \sum_{1 \le k,t \le N}(R_{q})^{mp}_{kt}\ell_{ki}^{+}\ell_{tj}^{-} \, = \sum_{1 \le k,t \le N}(R_{q})^{kt}_{ij}\ell_{pt}^{-}\ell_{mk}^{+}.
\end{equation}
If $i = j$, then we get $(R_{q})^{ii}_{ii}\ell_{ii}^{+}\ell_{ii}^{-} = (R_{q})^{ii}_{ii}\ell_{ii}^{-}\ell_{ii}^{+}$, so that
$\ell_{ii}^{+}\ell_{ii}^{-} = \ell_{ii}^{-}\ell_{ii}^{+}$ as $(R_{q})^{ii}_{ii} \neq 0$. If $i \neq j$, then
\[
  (R_{q})_{ij}^{ij}\ell_{ii}^{+}\ell_{jj}^{-} + (R_{q})_{ji}^{ij}\ell_{ji}^{+}\ell_{ij}^{-}
  = (R_{q})_{ij}^{ij}\ell_{jj}^{-}\ell_{ii}^{+} + (R_{q})_{ij}^{ji}\ell_{ji}^{-}\ell_{ij}^{+},
\]
so that $\ell_{ii}^{+}\ell_{jj}^{-} = \ell_{jj}^{-}\ell_{ii}^{+}$, since $(R_{q})_{ji}^{ij} = 0 = \ell_{ij}^{+}$ if $i < j$ and
$(R_{q})_{ij}^{ji} = 0 = \ell_{ji}^{+}$ if $i > j$.
\end{proof}

\begin{cor}\label{cor:double_cartan_subalgebra_RTT}
The assignment $x_{i} \mapsto \ell_{ii}^{+}, y_{i} \mapsto \ell_{ii}^{-}\ (1\leq i\leq n)$ gives rise to an algebra isomorphism
from the Laurent polynomial algebra $H' = \bb{K}[x_{1}^{\pm 1},\ldots ,x_{n}^{\pm 1},y_{1}^{\pm 1},\ldots ,y_{n}^{\pm 1}]$ to $\wtd{U}_{0}(R_{q})$.
\end{cor}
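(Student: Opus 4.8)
The plan is to show that the evident map out of $H'$ is a well-defined algebra homomorphism, and then to identify it with a composite of maps already known to be bijective.

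First I would check well-definedness: since $H'$ is a Laurent polynomial ring, it suffices to verify that the target elements $\ell_{ii}^{\pm}$ are invertible in $\widetilde{U}_0(R_q)$ and pairwise commute. Invertibility is immediate from~\eqref{eq:diagonal_reflection}, which gives $(\ell_{ii}^{\pm})^{-1} = \ell_{i'i'}^{\pm} \in \widetilde{U}_0(R_q)$. The relations $\ell_{ii}^{+}\ell_{jj}^{+} = \ell_{jj}^{+}\ell_{ii}^{+}$ and $\ell_{ii}^{-}\ell_{jj}^{-} = \ell_{jj}^{-}\ell_{ii}^{-}$ follow from Lemma~\ref{lem:cartan_subalgebras_RTT}, which identifies each $\widetilde{U}_{0,\pm}(R_q)$ with a Laurent polynomial algebra; and the mixed commutativity $\ell_{ii}^{+}\ell_{jj}^{-} = \ell_{jj}^{-}\ell_{ii}^{+}$ was established in the course of proving Corollary~\ref{cor:triangular_decomposition_RTT}. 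Hence there is a well-defined algebra homomorphism $\Theta\colon H' \to \widetilde{U}_0(R_q)$ with $x_i \mapsto \ell_{ii}^{+}$, $y_i \mapsto \ell_{ii}^{-}$, and it is surjective because its image contains all the generators $(\ell_{ii}^{\pm})^{\pm 1}$ of $\widetilde{U}_0(R_q)$.

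For injectivity I would write $H' = \mathbb{K}[y_1^{\pm 1},\ldots ,y_n^{\pm 1}] \otimes \mathbb{K}[x_1^{\pm 1},\ldots ,x_n^{\pm 1}]$ and invoke Lemma~\ref{lem:cartan_subalgebras_RTT} to get a vector-space isomorphism $H' \iso \widetilde{U}_{0,-}(R_q) \otimes \widetilde{U}_{0,+}(R_q)$ carrying the monomial basis $\prod_i y_i^{b_i}\prod_i x_i^{a_i}$ to $\ell_\mu^{-} \otimes \ell_\nu^{+}$ with $\mu = \sum_i b_i\varepsilon_i$, $\nu = \sum_i a_i\varepsilon_i$. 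Composing with the multiplication map $\widetilde{U}_{0,-}(R_q) \otimes \widetilde{U}_{0,+}(R_q) \to \widetilde{U}_0(R_q)$, which is a vector-space isomorphism by the proof of Corollary~\ref{cor:triangular_decomposition_RTT}, yields a vector-space isomorphism $H' \iso \widetilde{U}_0(R_q)$; and since $\ell_\mu^{\pm} = (\ell_{11}^{\pm})^{c_1}\cdots(\ell_{nn}^{\pm})^{c_n}$ by definition, this composite sends $\prod_i y_i^{b_i}\prod_i x_i^{a_i} \mapsto \ell_\mu^{-}\ell_\nu^{+} = \Theta\big(\prod_i y_i^{b_i}\prod_i x_i^{a_i}\big)$. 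Thus $\Theta$ coincides with the composite on a basis of $H'$, so $\Theta$ is bijective and hence an algebra isomorphism. There is no real obstacle here; the only point requiring care is to respect the ordering convention ($\widetilde{U}_{0,-}$ before $\widetilde{U}_{0,+}$, matching the order of the tensor factors used in Corollary~\ref{cor:triangular_decomposition_RTT} and Theorem~\ref{thm:quadrilangular_decomposition}) so that $\Theta$ equals the composite on the nose rather than merely up to a harmless rearrangement.
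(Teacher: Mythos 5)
Your proof is correct and takes essentially the same route as the paper's: establish that the map is a well-defined algebra homomorphism using commutativity of $\wtd{U}_0(R_q)$ and invertibility of the $\ell_{ii}^{\pm}$, then use Lemma~\ref{lem:cartan_subalgebras_RTT} together with the vector-space isomorphism $\wtd{U}_{0,-}(R_q)\otimes\wtd{U}_{0,+}(R_q)\iso\wtd{U}_0(R_q)$ (from Theorem~\ref{thm:quadrilangular_decomposition} and the argument in Corollary~\ref{cor:triangular_decomposition_RTT}) to see that the map carries a basis of $H'$ to a basis of $\wtd{U}_0(R_q)$. You simply spell out more explicitly the ordering convention and the mixed commutativity, both of which the paper dispatches more tersely.
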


\begin{proof}
Since $\wtd{U}_{0}(R_{q})$ is commutative by above and all $\ell_{ii}^{\pm}$ are invertible, we have an algebra homomorphism
$f\colon H' \to \wtd{U}_{0}(R_{q})$ sending $x_{i} \mapsto \ell_{ii}^{+}$ and $y_{i} \mapsto \ell_{ii}^{-}$. On the other hand,
by Theorem~\ref{thm:quadrilangular_decomposition} and the argument in Corollary~\ref{cor:triangular_decomposition_RTT}, the multiplication
map takes $\bb{K} \otimes \wtd{U}_{0,-}(R_{q}) \otimes \wtd{U}_{0,+}(R_{q}) \otimes \bb{K}$ isomorphically onto $\wtd{U}_{0}(R_{q})$,
so using Lemma~\ref{lem:cartan_subalgebras_RTT}, it follows that $f$ takes a basis of $H'$ to a basis of $\wtd{U}_{0}(R_{q})$.
Hence $f$ is an algebra isomorphism, as desired.
\end{proof}

\begin{cor}\label{cor:positive_negative_RTT}
There are algebra isomorphisms $\wtd{U}_{\pm}(R_{q}) \to \wtd{U}'_{\pm}(R_{q})$ given by $\cal{E}_{i} \mapsto \cal{E}_{i}$ and
$\cal{F}_{i} \mapsto \cal{F}_{i}$.
\end{cor}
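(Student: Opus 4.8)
The plan is to deduce the statement from the structure theory already established for $\wtd{U}(R_q)$. Let $\pi\colon\wtd{U}(R_q)\twoheadrightarrow\wtd{U}'(R_q)$ be the canonical surjection (sending each $\ell_{ij}^\pm$ to $\ell_{ij}^\pm$); by definition it restricts to surjections $\wtd{U}_\pm(R_q)\twoheadrightarrow\wtd{U}'_\pm(R_q)$ with $\cal{E}_i\mapsto\cal{E}_i$ and $\cal{F}_i\mapsto\cal{F}_i$, so it suffices to show these restrictions are injective, i.e.\ that $\ker\pi$ meets $\wtd{U}_+(R_q)$ and $\wtd{U}_-(R_q)$ trivially. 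First I would identify $\ker\pi$ explicitly: since $\wtd{U}'(R_q)=U'(R_q)/\cal{I}'_q$ with $U'(R_q)=U(R_q)/J$ and $\cal{I}'_q$ the image of $\cal{I}_q$, the kernel $\bar{J}:=\ker\pi$ is the image of $J$ in $\wtd{U}(R_q)$. Because $(\ell_{ii}^\pm)^{-1}=\ell_{i'i'}^\pm$ already holds in $\wtd{U}(R_q)$ by~\eqref{eq:diagonal_reflection}, multiplying the generators $(\ell_{ii}^\pm)^{-1}-\ell_{ii}^\mp$ of $J$ by the invertible elements $\ell_{ii}^\pm$ shows that $\bar{J}$ is generated, as a two-sided ideal, by the elements $h_i:=1-\ell_{ii}^+\ell_{ii}^-$ for $1\le i\le n$ (the generators indexed by larger $i$ differ from these by units, and $h_{n+1}=0$ in type $B_n$ by~\eqref{eq:rel_B_extra}).

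Next I would prove that each $\ell_{ii}^+\ell_{ii}^-$ is central in $\wtd{U}(R_q)$. For this one extends the computation in the proof of Lemma~\ref{lem:borel_spanning_sets}: the same analysis applied to the cross RLL-relations $R_qL_1^+L_2^-=L_2^-L_1^+R_q$ (cf.~\eqref{eq:RLL_cross_relations}) in place of $R_qL_1^\pm L_2^\pm=L_2^\pm L_1^\pm R_q$ gives $\ell_{ii}^-\ell_{ab}^+=v^{\mp(\delta_{ib}-\delta_{ia})}\ell_{ab}^+\ell_{ii}^-$, mirroring $\ell_{ii}^+\ell_{ab}^+=v^{\pm(\delta_{ib}-\delta_{ia})}\ell_{ab}^+\ell_{ii}^+$ from~\eqref{eq:RTT_q_commute}, with the same modifications at the indices where $i\in\{a',b'\}$ (and with $\ell_{ii}^\pm=1$ at the middle index in type $B_n$). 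Multiplying the two relations, the powers of $v$ cancel, so $\ell_{ii}^+\ell_{ii}^-$ commutes with every generator $\ell_{ab}^\pm$, hence with all of $\wtd{U}(R_q)$. Since each $h_i$ is then central, $\bar{J}=\sum_{i=1}^n h_i\,\wtd{U}(R_q)$.

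Finally I would invoke the triangular decomposition. By Corollary~\ref{cor:triangular_decomposition_RTT} the multiplication map $m\colon\wtd{U}_-(R_q)\otimes\wtd{U}_0(R_q)\otimes\wtd{U}_+(R_q)\to\wtd{U}(R_q)$ is a linear isomorphism, and by Corollary~\ref{cor:double_cartan_subalgebra_RTT} we have $\wtd{U}_0(R_q)\cong\bb{K}[x_1^{\pm1},\dots,x_n^{\pm1},y_1^{\pm1},\dots,y_n^{\pm1}]$ via $\ell_{ii}^+\mapsto x_i$, $\ell_{ii}^-\mapsto y_i$. Since the $h_i$ lie in $\wtd{U}_0(R_q)$ and are central, $\bar{J}=\wtd{U}_-(R_q)\cdot I_0\cdot\wtd{U}_+(R_q)$, where $I_0:=\sum_i h_i\,\wtd{U}_0(R_q)$ is an ideal of $\wtd{U}_0(R_q)$, so that $m^{-1}(\bar{J})=\wtd{U}_-(R_q)\otimes I_0\otimes\wtd{U}_+(R_q)$. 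Under the isomorphism of Corollary~\ref{cor:double_cartan_subalgebra_RTT}, $I_0$ corresponds to the ideal $(1-x_1y_1,\dots,1-x_ny_n)$, whose quotient ring $\bb{K}[x_1^{\pm1},\dots,x_n^{\pm1}]$ is nonzero and sends $1$ to $1$; hence $\bb{K}\cdot 1\cap I_0=0$. Using that $m^{-1}(\wtd{U}_+(R_q))=\bb{K}1\otimes\bb{K}1\otimes\wtd{U}_+(R_q)$ and $m^{-1}(\wtd{U}_-(R_q))=\wtd{U}_-(R_q)\otimes\bb{K}1\otimes\bb{K}1$, together with the elementary identity $(A_1\otimes A_2\otimes A_3)\cap(B_1\otimes B_2\otimes B_3)=(A_1\cap B_1)\otimes(A_2\cap B_2)\otimes(A_3\cap B_3)$ for subspaces, we conclude $\bar{J}\cap\wtd{U}_\pm(R_q)=0$. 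Thus $\pi$ restricts to injective, and hence bijective, algebra homomorphisms $\wtd{U}_\pm(R_q)\iso\wtd{U}'_\pm(R_q)$ sending $\cal{E}_i\mapsto\cal{E}_i$ and $\cal{F}_i\mapsto\cal{F}_i$.

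The main obstacle is the centrality step: one must verify that the cross-sign commutation scalars between a diagonal Cartan generator $\ell_{ii}^\pm$ and an off-diagonal $\ell_{ab}^\mp$ come out to be precisely the inverses of the same-sign scalars in~\eqref{eq:RTT_q_commute} in every case, including the degenerate behaviour at the middle index in type $B_n$ where $\ell_{ii}^\pm=1$. Everything after that is a clean bookkeeping argument using the triangular decomposition and the Laurent-polynomial description of $\wtd{U}_0(R_q)$.
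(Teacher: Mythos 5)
Your proof is correct and, at its core, uses the same ingredients as the paper's proof: the triangular decomposition of Corollary~\ref{cor:triangular_decomposition_RTT}, the Laurent-polynomial description of $\wtd{U}_{0}(R_{q})$ from Corollary~\ref{cor:double_cartan_subalgebra_RTT}, and the cross-sign $v$-commutation formula between diagonal Cartan generators and $\wtd{U}_{\geq}(R_{q})$ (the paper's~\eqref{eq:RTT_q_commute_cross}). The only genuine difference is a small but pleasant repackaging: the paper keeps the generators $\ell_{ii}^{+}-(\ell_{ii}^{-})^{-1}$ of $J$ and proves that they \emph{skew}-commute with $\wtd{U}_{+}$ (both summands pick up the common scalar $v^{-(\varepsilon_{i},\lambda+\lambda')}$), which suffices to show that $J'=\mu(\wtd{U}_{-}\otimes J_{0}\otimes\wtd{U}_{+})$ is a two-sided ideal; you instead multiply by the unit $\ell_{ii}^{-}$ to pass to $h_{i}=1-\ell_{ii}^{+}\ell_{ii}^{-}$, so that the same-sign factor $v^{(\varepsilon_{i},\lambda+\lambda')}$ from~\eqref{eq:RTT_q_commute_general} exactly cancels the cross-sign factor, yielding honest centrality. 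Centrality makes the identification $\bar{J}=\wtd{U}_{-}\cdot I_{0}\cdot\wtd{U}_{+}$ immediate and makes it unnecessary to check ``$J'$ is an ideal'' as a separate step. Both routes then finish identically: the triangular decomposition plus $\bb{K}\cap J_{0}=0$ gives $\bar{J}\cap\wtd{U}_{\pm}(R_{q})=0$. Your closing intersection identity for tensor products is indeed valid (it follows from $(A\otimes W)\cap(C\otimes W)=(A\cap C)\otimes W$ applied coordinate-by-coordinate), so no gap there. In short: same proof, presented through the slightly cleaner lens of centrality rather than scalar skew-commutation.
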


\begin{proof}
It suffices to show that the ideal $J$ of $\wtd{U}(R_{q})$ generated by $\{\ell_{ii}^{+} - (\ell_{ii}^{-})^{-1}\}_{i=1}^N$ is equal to
$J' = \mu(\wtd{U}_{-}(R_{q}) \otimes J_{0} \otimes \wtd{U}_{+}(R_{q}))$, where $J_{0}$ is the ideal of $\wtd{U}_{0}(R_{q})$ generated by all
$\{\ell_{ii}^{+} - (\ell_{ii}^{-})^{-1}\}_{i=1}^N$. Indeed, we have $\wtd{U}_{\pm}'(R_{q}) = \wtd{U}_{\pm}(R_{q})/(J \cap \wtd{U}_{\pm}(R_{q}))$,
but if $J = \mu(\wtd{U}_{-}(R_{q}) \otimes J_{0} \otimes \wtd{U}_{+}(R_{q}))$, then Corollary~\ref{cor:triangular_decomposition_RTT} implies that
$J \cap \wtd{U}_{+}(R_{q}) = \mu((\wtd{U}_{-}(R_{q}) \cap J_{0} \cap \wtd{U}_{+}(R_{q})) \cap (\bb{K} \otimes \bb{K} \otimes \wtd{U}_{+}(R_{q}))) = 0$,
and a similar equality holds for $\wtd{U}_{-}(R_{q})$.

Since we obviously have $\{\ell_{ii}^{+} - (\ell_{ii}^{-})^{-1}\}_{i = 1}^{N}\subset J' \subset J$, it is enough to prove that $J'$ is an ideal of $\wtd{U}(R_{q})$.
Note that for any $u \in \wtd{U}(R_{q})$ and $w \in \wtd{U}_{-}(R_{q})$, we have $uw \in \wtd{U}_{-}(R_{q})\wtd{U}_{0}(R_{q})\wtd{U}_{+}(R_{q})$ by the
triangular decomposition of Corollary~\ref{cor:triangular_decomposition_RTT}. Thus, to show that $uJ' \subset J'$ for any $u \in \wtd{U}(R_{q})$, it is
enough to prove that $u_{+}J' \subset J'$ for any $u_{+} \in \wtd{U}_{+}(R_{q})$. For this, we need the following formula:
\begin{equation}\label{eq:RTT_q_commute_cross}
  \ell_{\mu}^{-}u = v^{-(\mu,\lambda + \lambda')}u\ell_{\mu}^{-}\qquad \text{for all}\qquad u \in \wtd{U}_{\ge}(R_{q})_{\lambda,\lambda'},
\end{equation}
where $v = q^{-2}$ in type $B_{n}$ and $v = q^{-1}$ in types $C_{n}$ and $D_{n}$. As in the first part of the proof of Lemma~\ref{lem:borel_spanning_sets},
it is enough to show that
\[
  \ell_{ii}^{-}\ell_{kj}^{+} = v^{\delta_{ik} - \delta_{ij}}\ell_{kj}^{+}\ell_{ii}^{-}\qquad \text{for all}\qquad k \neq j,\ i \neq k',j'.
\]
This is done by following the same procedure as in the proof of Lemma~\ref{lem:borel_spanning_sets},
but instead using the following particular case of relations~\eqref{eq:RLL_cross_relations}:
\[
  \sum_{1 \le p,t \le N}(R_{q})^{ki}_{pt}\ell_{pj}^{+}\ell_{ti}^{-} = \sum_{1 \le p,t \le N}(R_{q})^{pt}_{ji}\ell_{it}^{-}\ell_{kp}^{+}.
\]
We leave further details to the reader. Thus, equalities~\eqref{eq:RTT_q_commute_general} and~\eqref{eq:RTT_q_commute_cross} imply that
$u_{+}(\ell_{ii}^{+} - (\ell_{ii}^{-})^{-1}) = v^{-(\varepsilon_{i},\lambda + \lambda')}(\ell_{ii}^{+} - (\ell_{ii}^{-})^{-1})u_{+}$
for all $1\leq i\leq N$ and $u_{+} \in \wtd{U}_{+}(R_{q})_{\lambda,\lambda'}$, so that
$u_{+}(\ell_{ii}^{+} - (\ell_{ii}^{-})^{-1}) \in J'$, as desired.

A similar argument shows that $J'u \subset J'$ for all $u \in \wtd{U}(R_{q})$. This completes the proof.
\end{proof}

As an immediate application of Corollary~\ref{cor:positive_negative_RTT} and its proof, we obtain:

\begin{prop}\label{prop:triangular_decomposition_RTT'}
The multiplication map $\wtd{U}_{-}'(R_{q}) \otimes \wtd{U}_{0}'(R_{q}) \otimes \wtd{U}_{+}'(R_{q}) \to \wtd{U}'(R_{q})$ is an isomorphism of
vector spaces. Moreover, $\wtd{U}_{0}'(R_{q}) = \wtd{U}_{0}(R_{q})/(J \cap \wtd{U}_{0}(R_{q})) = \wtd{U}_{0}(R_{q})/J_{0}$.
\end{prop}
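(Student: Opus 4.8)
The plan is to transport the triangular decomposition of Corollary~\ref{cor:triangular_decomposition_RTT} across the quotient map $\pi\colon \wtd{U}(R_{q})\to \wtd{U}'(R_{q})$, relying on the structural facts assembled in the proof of Corollary~\ref{cor:positive_negative_RTT}. That proof shows $\wtd{U}'(R_{q})=\wtd{U}(R_{q})/J$ with
\[
  J=\mu\big(\wtd{U}_{-}(R_{q})\otimes J_{0}\otimes \wtd{U}_{+}(R_{q})\big),
\]
and Corollary~\ref{cor:positive_negative_RTT} itself says that $\pi$ restricts to algebra isomorphisms $\pi_{\pm}\colon \wtd{U}_{\pm}(R_{q})\iso \wtd{U}'_{\pm}(R_{q})$; moreover $\pi$ restricts to a surjection $\pi_{0}\colon \wtd{U}_{0}(R_{q})\twoheadrightarrow \wtd{U}'_{0}(R_{q})$ with kernel $J\cap \wtd{U}_{0}(R_{q})$, since $\wtd{U}'_{0}(R_{q})$ is by definition the image of $\wtd{U}_{0}(R_{q})$ in $\wtd{U}'(R_{q})$.

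First I would settle the ``Moreover'' clause. Using the isomorphism $\mu$ of Corollary~\ref{cor:triangular_decomposition_RTT} to identify $\wtd{U}(R_{q})$ with $\wtd{U}_{-}(R_{q})\otimes \wtd{U}_{0}(R_{q})\otimes \wtd{U}_{+}(R_{q})$ (under which $\wtd{U}_{0}(R_{q})$ becomes $\bb{K}\otimes \wtd{U}_{0}(R_{q})\otimes \bb{K}$), the description of $J$ above yields
\[
  J\cap \wtd{U}_{0}(R_{q})
  =\big(\wtd{U}_{-}(R_{q})\otimes J_{0}\otimes \wtd{U}_{+}(R_{q})\big)\cap\big(\bb{K}\otimes \wtd{U}_{0}(R_{q})\otimes \bb{K}\big)
  =\bb{K}\otimes J_{0}\otimes \bb{K}=J_{0},
\]
the second equality being the elementary fact that intersecting $A\otimes B_{0}\otimes C$ with $\bb{K}1\otimes B\otimes \bb{K}1$ inside $A\otimes B\otimes C$ yields $\bb{K}1\otimes B_{0}\otimes \bb{K}1$ (pick a complement to $\bb{K}\cdot 1$ in each of $\wtd{U}_{-}(R_{q})$ and $\wtd{U}_{+}(R_{q})$ and a complement to $J_{0}$ in $\wtd{U}_{0}(R_{q})$). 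Hence $\ker\pi_{0}=J_{0}$ and $\wtd{U}'_{0}(R_{q})=\wtd{U}_{0}(R_{q})/(J\cap \wtd{U}_{0}(R_{q}))=\wtd{U}_{0}(R_{q})/J_{0}$.

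For the triangular decomposition itself, I would use the commutative square
\[
\begin{CD}
  \wtd{U}_{-}(R_{q})\otimes \wtd{U}_{0}(R_{q})\otimes \wtd{U}_{+}(R_{q}) @>{\mu}>> \wtd{U}(R_{q})\\
  @V{\pi_{-}\otimes\pi_{0}\otimes\pi_{+}}VV @VV{\pi}V\\
  \wtd{U}'_{-}(R_{q})\otimes \wtd{U}'_{0}(R_{q})\otimes \wtd{U}'_{+}(R_{q}) @>{\mu'}>> \wtd{U}'(R_{q})
\end{CD}
\]
whose bottom row $\mu'$ is the multiplication map in question (it commutes because $\pi$ is an algebra homomorphism whose restrictions are the $\pi_{\bullet}$). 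Since $\mu$ is an isomorphism and $\pi$ is surjective, $\mu'\circ(\pi_{-}\otimes\pi_{0}\otimes\pi_{+})=\pi\circ\mu$ is surjective, whence $\mu'$ is surjective. For injectivity, observe that, $\pi_{\pm}$ being bijective and $\ker\pi_{0}=J_{0}$, one has $\ker(\pi_{-}\otimes\pi_{0}\otimes\pi_{+})=\wtd{U}_{-}(R_{q})\otimes J_{0}\otimes \wtd{U}_{+}(R_{q})=\mu^{-1}(J)$; hence if $\mu'(z)=0$ and $z=(\pi_{-}\otimes\pi_{0}\otimes\pi_{+})(w)$, then $\pi(\mu(w))=0$, so $\mu(w)\in J$, so $w\in\mu^{-1}(J)=\ker(\pi_{-}\otimes\pi_{0}\otimes\pi_{+})$ and $z=0$. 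Thus $\mu'$ is an isomorphism of vector spaces.

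The argument is essentially a diagram chase, so I do not expect a genuine obstacle; the one point demanding care is the compatibility of the component identifications $\wtd{U}_{\pm}(R_{q})\cong\wtd{U}'_{\pm}(R_{q})$ and $\wtd{U}_{0}(R_{q})/J_{0}\cong\wtd{U}'_{0}(R_{q})$ with the respective multiplication maps, which is exactly what the commutative square encodes, together with the bookkeeping of the tensor-product intersection in the ``Moreover'' step.
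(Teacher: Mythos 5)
Your proposal is correct and matches the paper's intent: the paper states this proposition as an "immediate application of Corollary~\ref{cor:positive_negative_RTT} and its proof," and the key input you use — namely $J=\mu\big(\wtd{U}_{-}(R_{q})\otimes J_{0}\otimes\wtd{U}_{+}(R_{q})\big)$ — is precisely what that proof establishes. Your diagram chase together with the tensor-intersection computation is exactly the bookkeeping the paper leaves implicit.
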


We also note that $\wtd{U}(R_{q})$ and $\wtd{U}'(R_{q})$ are both $Q$-graded algebras via the assignment $\deg(\ell_{ij}^{\pm}) = \varepsilon_{j} - \varepsilon_{i}$.
For the former, this follows because the $P \times P$-grading on $\wtd{U}(R_{q})$ from Proposition~\ref{prop:U(R)_grading} induces a $P$-grading where
$\wtd{U}(R_{q})_{\mu} = \bigoplus_{\mu_{1} + \mu_{2} = \mu}\wtd{U}(R_{q})_{\mu_{1},\mu_{2}}$, and it is easy to see from the definition of the
$P \times P$-grading that $\wtd{U}(R_{q})_{\mu_{1},\mu_{2}} = 0$ unless $\mu_{1} + \mu_{2} \in Q$. Then $\wtd{U}'(R_{q})$ inherits this grading
because the ideal $J$ is homogeneous.

We are finally ready to prove an analogue of Proposition~\ref{prop:double_cartan_DJ} for $\wtd{U}(R_{q})$:

\begin{prop}\label{prop:double_cartan_RTT_finite}
Let $L_{\varepsilon_{i}}$ be algebraically independent invertible transcendental elements. There is a unique injective algebra homomorphism
$\eta_{R}\colon \wtd{U}(R_{q}) \to \wtd{U}'(R_{q}) \otimes \bb{K}[L_{\varepsilon_{1}}^{\pm 1},\ldots ,L_{\varepsilon_{n}}^{\pm 1}]$ such that
\begin{equation}\label{eq:double_cartan_RTT_finite}
  \eta_{R}(\ell_{ij}^{\pm}) = \ell_{ij}^{\pm} \otimes L_{-\varepsilon_{i}-\varepsilon_{j}}\qquad \text{for all}\qquad 1 \le i,j \le N,
\end{equation}
where we write $L_{\mu} = L_{\varepsilon_{1}}^{c_{1}}L_{\varepsilon_{2}}^{c_{2}}\ldots L_{\varepsilon_{n}}^{c_{n}}$ for all
$\mu = \sum_{i = 1}^{n}c_{i}\varepsilon_{i}\in P$.
\end{prop}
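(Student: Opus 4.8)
The plan is to follow the blueprint of Proposition~\ref{prop:double_cartan_DJ}, with the triangular decomposition of $U_{q,q^{-1}}(\fg)$ there replaced by the quadrilateral decomposition of $\wtd{U}(R_{q})$ (Theorem~\ref{thm:quadrilangular_decomposition}). First I would check that~\eqref{eq:double_cartan_RTT_finite}, together with the forced assignment $(\ell_{ii}^{\pm})^{-1}\mapsto(\ell_{ii}^{\pm})^{-1}\otimes L_{2\varepsilon_{i}}$, respects all defining relations of $\wtd{U}(R_{q})$: the RTT-relations~\eqref{eq:RLL_relations}, the relation $z_{q}^{\pm}=1$ generating $\cal{I}_{q}$, and (in type $B_{n}$) the relation~\eqref{eq:rel_B_extra}. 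This is immediate from the $P\times P$-grading of Proposition~\ref{prop:U(R)_grading}: since $\ell_{ij}^{\pm}\in\wtd{U}(R_{q})_{-\varepsilon_{i},\varepsilon_{j}}$ and $-\varepsilon_{i}-\varepsilon_{j}=(-\varepsilon_{i})-\varepsilon_{j}$, a check on generators plus multiplicativity shows that $\eta_{R}$ sends every homogeneous $u\in\wtd{U}(R_{q})_{\mu_{1},\mu_{2}}$ to $\ol{u}\otimes L_{\mu_{1}-\mu_{2}}$, where $\ol{u}$ denotes the image of $u$ in $\wtd{U}'(R_{q})$. As all defining relations of $\wtd{U}(R_{q})$ are $(P\times P)$-homogeneous and $\wtd{U}(R_{q})$ only occupies bidegrees $(\mu_{1},\mu_{2})$ with $\mu_{1},\mu_{2}\in\bb{Z}\varepsilon_{1}\oplus\cdots\oplus\bb{Z}\varepsilon_{n}$, applying $\eta_{R}$ rescales each relation by a single well-defined Laurent monomial in the $L_{\varepsilon_{i}}$, so they all persist in $\wtd{U}'(R_{q})\otimes\bb{K}[L_{\varepsilon_{1}}^{\pm 1},\ldots,L_{\varepsilon_{n}}^{\pm 1}]$; in particular $\eta_{R}(\ell_{n+1,n+1}^{\pm})=1\otimes 1$ in type $B_{n}$ and $\eta_{R}(z_{q}^{\pm}-1)=0$, since $\ell_{n+1,n+1}^{\pm}$ and $z_{q}^{\pm}$ are homogeneous of bidegree $(0,0)$ and map to $1$ in $\wtd{U}'(R_{q})$. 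Thus $\eta_{R}$ is a well-defined algebra homomorphism.

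For injectivity I would proceed exactly as in Proposition~\ref{prop:double_cartan_DJ}. By Theorem~\ref{thm:quadrilangular_decomposition} and Lemma~\ref{lem:cartan_subalgebras_RTT}, the products $\{f\,\ell_{\lambda}^{-}\ell_{\mu}^{+}\,e\}$ form a basis of $\wtd{U}(R_{q})$, where $\{f\}$, $\{e\}$ are fixed \emph{homogeneous} bases of $\wtd{U}_{-}(R_{q})$, $\wtd{U}_{+}(R_{q})$ and $\lambda,\mu$ run over $\bb{Z}\varepsilon_{1}\oplus\cdots\oplus\bb{Z}\varepsilon_{n}$; here $f\in\wtd{U}_{-}(R_{q})_{0,\beta_{f}}$, $e\in\wtd{U}_{+}(R_{q})_{\alpha_{e},0}$, $\ell_{\lambda}^{-}\in\wtd{U}_{0}(R_{q})_{-\lambda,\lambda}$ and $\ell_{\mu}^{+}\in\wtd{U}_{0}(R_{q})_{-\mu,\mu}$. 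By the formula of the previous paragraph, $\eta_{R}(f\,\ell_{\lambda}^{-}\ell_{\mu}^{+}\,e)=u'\otimes L_{w}$, where $u'$ is the image of $f\,\ell_{\lambda}^{-}\ell_{\mu}^{+}\,e$ in $\wtd{U}'(R_{q})$ and $w=\alpha_{e}-\beta_{f}-2(\lambda+\mu)$. In $\wtd{U}'(R_{q})$ one has $\ell_{ii}^{+}=(\ell_{ii}^{-})^{-1}$ and $\wtd{U}_{0}(R_{q})$ is commutative (Corollary~\ref{cor:double_cartan_subalgebra_RTT}), hence $u'=f\,\ell_{\lambda-\mu}^{-}\,e$ (images in $\wtd{U}'(R_{q})$); moreover, combining Corollaries~\ref{cor:positive_negative_RTT} and~\ref{cor:double_cartan_subalgebra_RTT} with Proposition~\ref{prop:triangular_decomposition_RTT'}, the set $\{f\,\ell_{\nu}^{-}\,e : \nu\in\bb{Z}\varepsilon_{1}\oplus\cdots\oplus\bb{Z}\varepsilon_{n}\}$ is a basis of $\wtd{U}'(R_{q})$. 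So two vectors $\eta_{R}(f\,\ell_{\lambda}^{-}\ell_{\mu}^{+}\,e)$ coincide only when $f$, $e$, $\nu=\lambda-\mu$ and $w$ all agree; but with $f,e$ (hence $\alpha_{e},\beta_{f}$) fixed, $w$ determines $\lambda+\mu$, and therefore $\lambda$ and $\mu$ themselves. Thus $\eta_{R}$ carries a basis of $\wtd{U}(R_{q})$ to a linearly independent subset, so it is injective.

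The load-bearing part here is not any single computation but the stock of structural results assembled earlier in this subsection — the quadrilateral decomposition of $\wtd{U}(R_{q})$, the isomorphisms $\wtd{U}_{\pm}(R_{q})\iso\wtd{U}_{\pm}'(R_{q})$ of Corollary~\ref{cor:positive_negative_RTT}, and the compatible descriptions of the Cartan subalgebras of $\wtd{U}(R_{q})$ and $\wtd{U}'(R_{q})$ — which are precisely what make the ``recover $(\lambda,\mu)$ from $\lambda-\mu$ and $\lambda+\mu$'' device of Proposition~\ref{prop:double_cartan_DJ} go through verbatim; once they are in place, only bookkeeping remains. The one subtlety to keep in mind is that the $L$-degree $w$ of a basis vector depends, for fixed $f$ and $e$, only on $\lambda+\mu$: this is why it is essential that $\{f\}$ and $\{e\}$ be chosen homogeneous, so that $\alpha_{e}$ and $\beta_{f}$ are genuine constants rather than data that could conflate distinct $(\lambda,\mu)$.
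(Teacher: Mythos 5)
Your proposal is correct and follows essentially the same route as the paper: define $\eta_R$ on homogeneous elements by $u\mapsto \bar u\otimes L_{\mu_1-\mu_2}$ (the paper simply takes this as the definition, while you verify the relations from the $P\times P$-homogeneity, which amounts to the same thing), then prove injectivity by sending the basis of $\wtd{U}(R_q)$ coming from the quadrilateral/triangular decomposition to elements of the form (basis element of $\wtd{U}'(R_q)$) $\otimes$ (Laurent monomial), recovering $(\lambda,\mu)$ from $\lambda-\mu$ and $\lambda+\mu$. The references you invoke (Theorem~\ref{thm:quadrilangular_decomposition}, Lemma~\ref{lem:cartan_subalgebras_RTT}, Corollaries~\ref{cor:positive_negative_RTT} and~\ref{cor:double_cartan_subalgebra_RTT}, Proposition~\ref{prop:triangular_decomposition_RTT'}) are the same structural results used in the paper's proof.
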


\begin{proof}
Define $\eta_{R}\colon \wtd{U}(R_{q}) \to \wtd{U}'(R_{q}) \otimes \bb{K}[L_{\varepsilon_{1}}^{\pm 1},\ldots ,L_{\varepsilon_{n}}^{\pm 1}]$
by $\eta_{R}(u) = \bar{u} \otimes L_{\lambda - \mu}$ for all $u \in \wtd{U}(R_{q})_{\lambda,\mu}$, where $\bar{u}$ denotes the image of $u$
under the quotient map $\wtd{U}(R_{q}) \twoheadrightarrow \wtd{U}'(R_{q})$. Then, by definition, $\eta_{R}$ is an algebra homomorphism,
and we clearly have~\eqref{eq:double_cartan_RTT_finite}.

For the injectivity of $\eta_R$, let $\{b_{i}^{\mu,+}\}_{i \in \frak{I}_{\mu}^{+}}$ be a basis for $\wtd{U}_{+}(R_{q})_{\mu,0}$ and
$\{b_{i}^{\mu,-}\}_{i \in \frak{I}_{\mu}^{-}}$ be a basis for $\wtd{U}_{-}(R_{q})_{0,\mu}$. Note that $\wtd{U}_{+}(R_{q})_{\mu,\nu} = 0$
if $\nu \neq 0$ and $\wtd{U}_{-}(R_{q})_{\mu,\nu} = 0$ if $\mu \neq 0$, so by Corollaries~\ref{cor:triangular_decomposition_RTT}
and~\ref{cor:double_cartan_subalgebra_RTT}, the set
\begin{equation}\label{eq:PBW_basis_RTT_double_cartan}
  \big\{b_{i}^{\mu,-}\ell_{\lambda}^{-}\ell_{\lambda'}^{+}b_{j}^{\nu,+} \,\big|\,
        \mu,\nu,\lambda,\lambda' \in Q,\, i \in \frak{I}_{\mu}^{-},\, j \in \frak{I}_{\nu}^{+}\big\}
\end{equation}
is a basis for $\wtd{U}(R_{q})$. By Corollary~\ref{cor:positive_negative_RTT}, the sets
$\{b_{i}^{\mu,\pm}\}_{i \in \frak{I}^\pm_{\mu}}$ also form bases for $\wtd{U}_{\pm}'(R_{q})_{\mu}$, where we identify $b_{i}^{\mu,\pm}$
with their images in $\wtd{U}_{\pm}'(R_{q})$. Thus, the set
$\{b_{i}^{\mu,-}\ell_{\lambda}^{+}b_{j}^{\nu,+} \,|\, \mu,\nu,\lambda \in Q,\, i \in \frak{I}_{\mu}^{-},\, j \in \frak{I}_{\nu}^{+}\}$
is a basis for $\wtd{U}'(R_{q})$, due to Proposition~\ref{prop:triangular_decomposition_RTT'}. Since
\[
  \eta_{R}(b_{i}^{\mu,-}\ell_{\lambda}^{-}\ell_{\lambda'}^{+}b_{j}^{\nu,+}) =
  b_{i}^{\mu,-}\ell_{\lambda' - \lambda}^{+}b_{j}^{\nu,+} \otimes L_{\nu - \mu - 2(\lambda + \lambda')},
\]
it follows that $\eta_{R}$ sends the basis~\eqref{eq:PBW_basis_RTT_double_cartan} to a set of linearly independent elements,
because one can determine $\lambda$ and $\lambda'$ from $\lambda' - \lambda$ and $\lambda + \lambda'$.
\end{proof}


\subsection{Isomorphism between RTT and Drinfeld-Jimbo realizations in classical types}\label{ssec:finite_RTT_BCD}
\

In this Subsection, we construct isomorphisms between Drinfeld-Jimbo and RTT-type two-parameter quantum groups in types $B_n,C_n,D_n$
by combining the above discussion with isomorphisms of~\cite{JLM1,JLM2}.

\noindent
$\bullet$ \textbf{Type $B_n$.}

Let $R_{q} = \hat{R}_{q} \circ \tau$, where $\hat{R}_{q}$ is the one-parameter $B_{n}$-type $R$-matrix of~\eqref{eq:1_parameter_R_B}.
Similarly, let $R_{r,s} = \hat{R}_{r,s} \circ \tau$, where $\hat{R}_{r,s}$ is the two-parameter $B_{n}$-type $R$-matrix of~\cite[(4.7)]{MT1}.
In this Subsection, we recall the relationship between $\wtd{U}'(R_{q})$ and $U_{q}(\frak{so}_{2n+1})$, and upgrade it to the one
between $\wtd{U}(R_{r,s})$ and $U_{r,s}(\frak{so}_{2n+1})$, which comes naturally from the twisting procedure developed above.
This corrects~\cite[Theorem 3.17]{HXZ1}.

For the one-parameter case, we mainly follow~\cite{JLM1}, though our exposition is adapted to fit our present conventions,
which are slightly different to theirs (see Remark~\ref{rmk:JLM_comparison_B} below). First, we recall the following result
(which is essentially~\cite[Main Theorem]{JLM1} with some minor adjustments made to ensure that the formulas are compatible with the bigradings):

\begin{theorem}\label{thm:DJ=RTT_Btype_1param}
There is a unique Hopf algebra isomorphism $\theta_{q}'\colon U_{q}(\frak{so}_{2n+1}) \iso \wtd{U}'(R_{q})$ such that
\begin{align*}
  &\theta_{q}'(K_{i}) = \ell_{i+1,i+1}^{+}(\ell_{ii}^{+})^{-1}  & & ~ & & \text{for}\quad 1 \le i \le n,\\
  &\theta_{q}'(E_{i}) = \frac{1}{q^{2} - q^{-2}}\ell_{i+1,i}^{+}(\ell_{ii}^{+})^{-1}, & &
   \theta_{q}'(F_{i}) = \frac{1}{q^{-2} - q^{2}}(\ell_{ii}^{-})^{-1}\ell_{i,i+1}^{-}  & &
    \text{for} \quad 1 \le i < n, \\
  &\theta_{q}'(E_{n}) = \frac{q^{-1/2}}{[2]_{q}^{1/2}(q - q^{-1})}\ell_{n+1,n}^{+}(\ell_{nn}^{+})^{-1}, & &
   \theta_{q}'(F_{n}) = \frac{q^{1/2}}{[2]_{q}^{1/2}(q^{-1} - q)}(\ell_{nn}^{-})^{-1}\ell_{n,n+1}^{-}.
\end{align*}
\end{theorem}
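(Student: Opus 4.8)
The plan is to deduce this theorem from the known one-parameter RTT-realization of $U_q(\mathfrak{so}_{2n+1})$ (the Main Theorem of~\cite{JLM1}) together with the structural results established above in Subsection~\ref{ssec:algebra_U(R)} and Subsection 6.3, which guarantee that $\wtd U'(R_q)$ has the ``right size'' and a triangular decomposition compatible with the proposed map. More precisely, I would proceed as follows. First I would set up the assignment on generators as stated and verify that it respects the defining relations of $U_q(\mathfrak{so}_{2n+1})$ (Drinfeld--Jimbo presentation) inside $\wtd U'(R_q)$; this is the RLL-to-Serre computation that is essentially carried out in~\cite{JLM1}, and I would point out precisely which conventions differ (the normalization of the bilinear form, the placement of the $q$-shift in the $C_q$-matrix, and the extra relation $\ell^{\pm}_{n+1,n+1}=1$ from Remark~\ref{rmk:square_root_zq}) so that the formulas here are the ones compatible with the $P$-bigrading of Proposition~\ref{prop:U(R)_grading}. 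That the map is a Hopf algebra homomorphism then follows from comparing the coproducts: $\Delta(\ell^{\pm}_{ij})=\sum_k \ell^{\pm}_{ik}\otimes\ell^{\pm}_{kj}$ of~\eqref{eq:U(R)_Hopf} restricts on the ``Borel-type'' generators $\cal E_i,\cal F_i,\ell^{\pm}_{ii}$ to exactly the Drinfeld--Jimbo coproduct formulas, after the normalization constants are absorbed.

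Next I would prove surjectivity. By Corollary~\ref{cor:triangular_decomposition_RTT} and Proposition~\ref{prop:triangular_decomposition_RTT'}, $\wtd U'(R_q)$ has a triangular decomposition $\wtd U'_-(R_q)\otimes \wtd U'_0(R_q)\otimes \wtd U'_+(R_q)$, and by Lemma~\ref{lem:borel_spanning_sets} (which descends to the primed algebras via Corollary~\ref{cor:positive_negative_RTT}) the subalgebra $\wtd U'_+(R_q)$ is generated by the $\cal E_i$, $\wtd U'_-(R_q)$ by the $\cal F_i$, while $\wtd U'_0(R_q)$ is the Laurent polynomial algebra in the $\ell^+_{ii}$ after imposing $\ell^+_{ii}=(\ell^-_{ii})^{-1}$, i.e.\ generated by the $K_i=\ell^+_{i+1,i+1}(\ell^+_{ii})^{-1}$ once we use $\ell^+_{n+1,n+1}=1$ to solve for the individual $\ell^+_{ii}$. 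Hence all generators of $\wtd U'(R_q)$ lie in the image of $\theta'_q$, so $\theta'_q$ is surjective. For injectivity I would invoke a dimension count: by Corollary~\ref{cor:double_cartan_+-_iso} and Proposition~\ref{prop:dim-count}, the graded pieces $(U^{\pm}_q)_{\pm\mu}$ have dimension $\dim U(\mathfrak n^{\pm})_{\mu}$, and the same must be shown for the graded pieces of $\wtd U'_{\pm}(R_q)$; combined with the triangular decompositions on both sides, $\theta'_q$ is a graded surjection between spaces of equal (finite) graded dimension in each degree, hence an isomorphism.

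The main obstacle I anticipate is precisely the comparison of graded dimensions of $\wtd U'_{\pm}(R_q)$ with $\dim U(\mathfrak n^{\pm})_{\mu}$: Lemma~\ref{lem:borel_spanning_sets} only gives a \emph{spanning} set for $\wtd U_{\ge}(R_q)$, not a basis, so one needs an upper bound on the size of $\wtd U'_+(R_q)$, and the cleanest way to get it is exactly to run the homomorphism $\theta'_q$ the other way — i.e.\ the argument is mildly circular unless one imports the injectivity of the one-parameter RTT-construction from~\cite{JLM1} as a black box. My plan is therefore to cite~\cite{JLM1} for the fact that their $\theta'_q$ (in their conventions) is an isomorphism, then argue that the change of conventions described above is implemented by rescaling generators and adjoining/solving for a square root, which preserves being an isomorphism; this reduces the whole proof to (i) checking the explicit normalization constants in the displayed formulas are the correct ones for our bigraded $R_q$, $C_q$, and (ii) checking compatibility with the Hopf structure, both of which are routine but must be done carefully because the $i=n$ short-root node carries the $[2]_q^{1/2}$ and $q^{\mp 1/2}$ factors. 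A secondary, purely bookkeeping obstacle is keeping the two sign/index conventions ($i'=2n+2-i$, $\varepsilon_{n+1}=0$, $\varepsilon_{i'}=-\varepsilon_i$) consistent with~\cite{JLM1}'s $(\rho_i)$ sequence; Remark~\ref{rmk:JLM_comparison_B} is the place to record this.
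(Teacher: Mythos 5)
Your final plan --- import the isomorphism of \cite[Main Theorem]{JLM1} as a black box, translate conventions, and verify Hopf compatibility on generators --- matches the strategy of the paper, which simply composes the \cite{JLM1} isomorphism with the Cartan involution~\eqref{eq:cartan_involution} and then checks~\eqref{eq:Hopf_compatibility} directly. However, there is one genuine omission and one unnecessary detour in your write-up.

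The omission: your description of the translation (``the normalization of the bilinear form, the placement of the $q$-shift in $C_q$, the extra relation $\ell^\pm_{n+1,n+1}=1$'') amounts to rescaling generators and a diagonal conjugation of the $R$-matrix, which is what Remark~\ref{rmk:JLM_comparison_B} and Proposition~\ref{prop:diagonal_conjugation_U(R)}(b) handle. But the paper additionally composes the \cite{JLM1} isomorphism with the \emph{Cartan involution} $E_i\mapsto F_i$, $F_i\mapsto E_i$, $K_i\mapsto K_i^{-1}$, and this is not a rescaling or a square-root adjustment: without it, the $E_i$ would land in $\ell^-$-type generators and the map would be incompatible with the $P\times P$-bigrading of Proposition~\ref{prop:U(R)_grading}. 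The paper explicitly flags this as ``the reason for applying the Cartan involution.'' If you tried to carry out step (ii) of your plan without it, the formulas would not come out as stated, so this must be named.

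The detour: the triangular-decomposition/dimension-count argument you sketch for injectivity is not used by the paper, and you correctly identify that it is circular (Lemma~\ref{lem:borel_spanning_sets} gives only a spanning set, and the cleanest way to upgrade it to a basis is to run $\theta_q'$ the other way). The paper does not attempt it: for Theorem~\ref{thm:DJ=RTT_Btype_1param} one cites \cite{JLM1} directly; the triangular decomposition and Lemma~\ref{lem:borel_spanning_sets} enter only later, in Corollary~\ref{cor:DJ=RTT_Btype_1param_double_cartan}, to establish surjectivity of the \emph{Cartan-doubled} upgrade $\theta_q$, which is a separate statement. Reorganizing around this division of labor would make your argument both shorter and non-circular.
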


\begin{proof}
If we compose the restriction of the algebra isomorphism of~\cite[Main Theorem]{JLM1} with the Cartan involution~\eqref{eq:cartan_involution}, then
we get a (unique) algebra isomorphism $\theta_{q}'\colon U_{q}(\frak{so}_{2n+1}) \iso \wtd{U}'(R_{q})$ defined on generators by the above formulas.
We also emphasize that $\theta'_{q}$ is compatible with the $P\times P$-grading of both algebras (which is the reason for applying the Cartan involution above).
It is easy to verify on generators that
\begin{equation}\label{eq:Hopf_compatibility}
  (\theta_{q}' \otimes \theta_{q}') \circ \Delta = \Delta \circ \theta_{q}' \qquad \mathrm{and} \qquad
  \epsilon = \epsilon\circ \theta_{q}',
\end{equation}
so it follows that $\theta_{q}'\colon U_{q}(\frak{so}_{2n+1}) \to \wtd{U}'(R_{q})$ is in fact a Hopf algebra isomorphism.
\end{proof}

Using Propositions~\ref{prop:double_cartan_DJ} and~\ref{prop:double_cartan_RTT_finite}, this can be upgraded to a Cartan-doubled version:

\begin{cor}\label{cor:DJ=RTT_Btype_1param_double_cartan}
There is a unique Hopf algebra isomorphism $\theta_{q}\colon U_{q,q^{-1}}(\frak{so}_{2n + 1}) \iso \wtd{U}(R_{q})$ such that
\begin{align*}
  &\theta_{q}(\omega_{i}) = \ell_{i+1,i+1}^{+}(\ell_{ii}^{+})^{-1},
    & & \theta_{q}(\omega_{i}') = (\ell_{ii}^{-})^{-1}\ell_{i+1,i+1}^{-}  & & \text{for}\quad 1 \le i \le n,\\
  &\theta_{q}(e_{i}) = \frac{1}{q^{2} - q^{-2}}\ell_{i+1,i}^{+}(\ell_{ii}^{+})^{-1}, & &
   \theta_{q}(f_{i}) = \frac{1}{q^{-2} - q^{2}}(\ell_{ii}^{-})^{-1}\ell_{i,i+1}^{-} & &
    \text{for} \quad 1 \le i < n, \\
  &\theta_{q}(e_{n}) = \frac{q^{-1/2}}{[2]_{q}^{1/2}(q - q^{-1})}\ell_{n+1,n}^{+}(\ell_{nn}^{+})^{-1}, & &
   \theta_{q}(f_{n}) = \frac{q^{1/2}}{[2]_{q}^{1/2}(q^{-1} - q)}(\ell_{nn}^{-})^{-1}\ell_{n,n+1}^{-}.
\end{align*}
\end{cor}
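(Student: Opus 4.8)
The plan is to transport the one-parameter Hopf isomorphism $\theta_q'$ of Theorem~\ref{thm:DJ=RTT_Btype_1param} along the two ``Cartan-doubling'' embeddings: $\eta$ of Proposition~\ref{prop:double_cartan_DJ} on the Drinfeld--Jimbo side and $\eta_R$ of Proposition~\ref{prop:double_cartan_RTT_finite} on the RTT side. In type $B_n$ one has $\bigoplus_i\bb{Z}\alpha_i=\bigoplus_i\bb{Z}\varepsilon_i=Q$, so we may identify the Laurent polynomial rings occurring in $\eta$ and $\eta_R$ and write both as $\bb{K}[L_\mu\,|\,\mu\in Q]$. First I would form the composite
\[
  \Xi \ :=\ (\theta_q'\otimes\id)\circ\eta\colon\quad U_{q,q^{-1}}(\frak{so}_{2n+1})\ \longrightarrow\ \wtd U'(R_q)\otimes\bb{K}[L_\mu\,|\,\mu\in Q],
\]
which is an injective algebra homomorphism since $\eta$ is injective and $\theta_q'\otimes\id$ is an isomorphism (note that $\eta$ need not be a coalgebra map, so $\Xi$ need not be one either).

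Next I would evaluate $\Xi$ on the generators $e_i,f_i,\omega_i^{\pm1},(\omega_i')^{\pm1}$, using the explicit formulas~\eqref{eq:double_Cartan_DJ} for $\eta$, the formulas of Theorem~\ref{thm:DJ=RTT_Btype_1param} for $\theta_q'$, the $P\times P$-bidegrees $\deg(\ell_{ij}^\pm)=(-\varepsilon_i,\varepsilon_j)$ from Proposition~\ref{prop:U(R)_grading} (with $\varepsilon_{n+1}=0$ and $\varepsilon_{i'}=-\varepsilon_i$), and the rule $\eta_R(u)=\bar u\otimes L_{\lambda-\mu}$ for $u\in\wtd U(R_q)_{\lambda,\mu}$ (with $\bar u$ the image of $u$ in $\wtd U'(R_q)$). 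A direct bidegree computation gives, for instance,
\[
  \Xi(e_i)=\eta_R\!\Big(\tfrac{1}{q^2-q^{-2}}\ell^+_{i+1,i}(\ell^+_{ii})^{-1}\Big),\qquad
  \Xi(\omega_i)=\eta_R\big(\ell^+_{i+1,i+1}(\ell^+_{ii})^{-1}\big),
\]
and likewise for $f_i,\omega_i'$ and for $e_n,f_n$ (where the scalars $q^{\pm1/2}/([2]_q^{1/2}(q-q^{-1}))$ are carried through unchanged). In particular $\im(\Xi)\subseteq\eta_R(\wtd U(R_q))$, so $\theta_q:=\eta_R^{-1}\circ\Xi$ is a well-defined injective algebra homomorphism $U_{q,q^{-1}}(\frak{so}_{2n+1})\to\wtd U(R_q)$, and the computation above shows it satisfies the formulas in the statement; uniqueness is immediate since those formulas pin $\theta_q$ down on algebra generators.

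It then remains to prove surjectivity, i.e.\ $\im(\Xi)=\eta_R(\wtd U(R_q))$. By Corollary~\ref{cor:triangular_decomposition_RTT}, Lemma~\ref{lem:borel_spanning_sets} and Corollary~\ref{cor:double_cartan_subalgebra_RTT}, the algebra $\wtd U(R_q)$ is generated by $\cal{E}_i=\ell^+_{i+1,i}(\ell^+_{ii})^{-1}$, $\cal{F}_i=(\ell^-_{ii})^{-1}\ell^-_{i,i+1}$ $(1\le i\le n)$ together with all $(\ell^\pm_{jj})^{\pm1}$. Since $\theta_q(e_i),\theta_q(f_i)$ are nonzero scalar multiples of $\cal{E}_i,\cal{F}_i$, the subalgebra $\im(\Xi)$ contains $\eta_R(\cal{E}_i)$ and $\eta_R(\cal{F}_i)$; moreover the telescoping products $\theta_q(\omega_1\cdots\omega_k)=\ell^+_{k+1,k+1}(\ell^+_{11})^{-1}$ and $\theta_q(\omega_1'\cdots\omega_k')=\ell^-_{k+1,k+1}(\ell^-_{11})^{-1}$ (valid since $\wtd U_0(R_q)$ is commutative), the type-$B_n$ relation $\ell^\pm_{n+1,n+1}=1$ of Remark~\ref{rmk:square_root_zq}, and the reflection $(\ell^\pm_{jj})^{-1}=\ell^\pm_{j'j'}$ of~\eqref{eq:diagonal_reflection} together allow one to recover $\eta_R\big((\ell^\pm_{jj})^{\pm1}\big)$ for all $1\le j\le N$. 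Hence $\im(\Xi)$ contains $\eta_R$ of a generating set of $\wtd U(R_q)$, giving $\im(\Xi)=\eta_R(\wtd U(R_q))$ and the surjectivity of $\theta_q$.

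Finally, I would check the Hopf compatibility $(\theta_q\otimes\theta_q)\circ\Delta=\Delta\circ\theta_q$ and $\epsilon\circ\theta_q=\epsilon$ directly on generators, exactly as in~\eqref{eq:Hopf_compatibility}: using $\Delta(\ell^\pm_{ij})=\sum_k\ell^\pm_{ik}\otimes\ell^\pm_{kj}$ and the vanishing $\ell^+_{ab}=\ell^-_{ba}=0$ for $a<b$, one gets e.g.\ $\Delta(\ell^+_{i+1,i})=\ell^+_{i+1,i}\otimes\ell^+_{ii}+\ell^+_{i+1,i+1}\otimes\ell^+_{i+1,i}$, which after applying $\theta_q$ matches $\Delta(e_i)=e_i\otimes1+\omega_i\otimes e_i$, and similarly for the other generators. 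I expect the main obstacle to be not any single step in isolation but the careful bidegree bookkeeping showing that $\Xi$ actually lands in $\im(\eta_R)$, together with the use of the type-$B_n$ relation $\ell^\pm_{n+1,n+1}=1$, without which the telescoping recovers the diagonal Cartan elements only up to the ambiguous square root $\ell^\pm_{n+1,n+1}$ and surjectivity would fail.
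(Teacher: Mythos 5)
Your proposal is correct and follows essentially the same route as the paper's proof: you transport $\theta_q'$ across the two Cartan-doubling embeddings $\eta$ and $\eta_R$, verify by bidegree bookkeeping that the composite lands in $\operatorname{im}(\eta_R)$, obtain injectivity from injectivity of the three maps, deduce surjectivity from having $\cal{E}_i,\cal{F}_i$ and the $\ell_{jj}^{\pm}$ in the image (via~\eqref{eq:rel_B_extra},~\eqref{eq:diagonal_reflection}, and Lemma~\ref{lem:borel_spanning_sets}), and check Hopf compatibility on generators. The only cosmetic difference is that the paper packages the identification $L_{\alpha_i}\mapsto L_{\varepsilon_i-\varepsilon_{i+1}}$ as an explicit map $g$ and tensors $\theta_q'\otimes g$, whereas you absorb this identification silently by noting $\bigoplus_i\bb{Z}\alpha_i=\bigoplus_i\bb{Z}\varepsilon_i=Q$; you also spell out the telescoping recovery of the $\ell_{jj}^\pm$ a bit more explicitly than the paper, which merely cites the extra relation and Lemma~\ref{lem:borel_spanning_sets}.
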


\begin{proof}
First, we define a map
\begin{equation}\label{eq:g-map}
  g\colon
  \bb{K}[L_{\alpha_{1}}^{\pm 1},\ldots ,L_{\alpha_{n}}^{\pm 1}] \longrightarrow \bb{K}[L_{\varepsilon_{1}}^{\pm 1},\ldots ,L_{\varepsilon_{n}}^{\pm 1}]
  \quad \mathrm{via} \quad L_{\alpha_{i}}\mapsto L_{\alpha_{i}} = L_{\varepsilon_{i} - \varepsilon_{i+1}}.
\end{equation}
We note that for $i = n$, the second equality is due to our convention $\varepsilon_{n+1} = 0$. By Theorem~\ref{thm:DJ=RTT_Btype_1param}
we have an injective algebra homomorphism
  $$\theta_{q}' \otimes g\colon U_{q}(\frak{so}_{2n+1}) \otimes \bb{K}[L_{\alpha_{1}}^{\pm 1},\ldots ,L_{\alpha_{n}}^{\pm 1}]
    \longrightarrow \wtd{U}'(R_{q}) \otimes \bb{K}[L_{\varepsilon_{1}}^{\pm 1},\ldots ,L_{\varepsilon_{n}}^{\pm 1}].$$
Then, if $\eta\colon U_{q,q^{-1}}(\frak{so}_{2n+1}) \to U_{q}(\frak{so}_{2n+1}) \otimes \bb{K}[L_{\alpha_{1}}^{\pm 1},\ldots ,L_{\alpha_{n}}^{\pm 1}]$
is the algebra embedding of Proposition~\ref{prop:double_cartan_DJ} and
$\eta_R\colon \wtd{U}(R_{q}) \to \wtd{U}'(R_{q}) \otimes \bb{K}[L_{\varepsilon_{1}}^{\pm 1},\ldots ,L_{\varepsilon_{n}}^{\pm 1}]$
is the algebra embedding of Proposition~\ref{prop:double_cartan_RTT_finite}, we can define $\theta_{q}\colon  U_{q,q^{-1}}(\frak{so}_{2n+1}) \to \wtd{U}(R_{q})$
by $\theta_{q} = \eta_{R}^{-1}(\theta_{q}' \otimes g)\eta$, provided that $(\theta_{q}' \otimes g)\eta$ maps $U_{q,q^{-1}}(\frak{so}_{2n + 1})$ into
$\eta_{R}(\wtd{U}(R_{q}))$. It suffices to verify the latter on generators. For $e_{i}$ with $i < n$, we have
\begin{align*}
  (\theta_{q}' \otimes g)\eta(e_{i})
  &= (\theta_{q}' \otimes g)(E_i \otimes L_{\alpha_{i}})
   = \frac{1}{q^{2} - q^{-2}}\ell_{i+1,i}^{+}(\ell_{ii}^{+})^{-1}\otimes L_{\varepsilon_{i} - \varepsilon_{i+1}} \\
  &= \frac{1}{q^{2} - q^{-2}}(\ell_{i+1,i}^{+} \otimes L_{-\varepsilon_{i+1} - \varepsilon_{i}})(\ell_{ii}^{+} \otimes L_{-2\varepsilon_{i}})^{-1}
   = \frac{1}{q^{2} - q^{-2}}\eta_{R}(\ell_{i+1,i}^{+})\eta_{R}(\ell_{ii}^{+})^{-1},
\end{align*}
which shows that $(\theta_{q}' \otimes g)\eta (e_{i}) \in \eta_{R}(\wtd{U}(R_{q}))$ and that
$\theta_{q}(e_{i}) = \frac{1}{q^{2} - q^{-2}}\ell_{i+1,i}^{+}(\ell_{ii}^{+})^{-1}$, as desired.
The verifications for $e_{n}$ and $f_{i}$ with $1 \le i \le n$ are similar. For $\omega_{i}$, we have
\begin{align*}
  (\theta_{q}' \otimes g)\eta(\omega_{i})
  &= (\theta_{q}' \otimes g)(K_i \otimes L_{\alpha_{i}}^{2}) = \ell_{i+1,i+1}^{+}(\ell_{ii}^{+})^{-1} \otimes L_{2\varepsilon_{i} - 2\varepsilon_{i+1}} \\
  &= (\ell_{i+1,i+1}^{+} \otimes L_{-2\varepsilon_{i+1}}) (\ell_{ii}^{+} \otimes L_{-2\varepsilon_{i}})^{-1}
   = \eta_{R}(\ell_{i+1,i+1}^{+}) \eta_{R}(\ell_{ii}^{+})^{-1},
\end{align*}
so that $\theta_{q}(\omega_{i}) = \ell_{i+1,i+1}^{+} (\ell_{ii}^{+})^{-1}$. Finally, for $\omega_{i}'$, we have
\begin{align*}
  (\theta_{q}' \otimes g)\eta(\omega_{i}')
  &= (\theta_{q}' \otimes g)(K_i^{-1} \otimes L_{\alpha_{i}}^{2})
   = (\ell_{ii}^{-})^{-1} \ell_{i+1,i+1}^{-} \otimes L_{2\varepsilon_{i} - 2\varepsilon_{i+1}} \\
  &= (\ell_{ii}^{-} \otimes L_{-2\varepsilon_{i}})^{-1} (\ell_{i+1,i+1}^{-} \otimes L_{-2\varepsilon_{i+1}})
   = \eta_R(\ell_{ii}^{-})^{-1} \eta_{R}(\ell_{i+1,i+1}^{-}),
\end{align*}
so that $\theta_{q}(\omega_{i}') = (\ell_{ii}^{-})^{-1} \ell_{i+1,i+1}^{-}$. This shows that $\theta_{q} = \eta_{R}^{-1}(\theta_{q}'\otimes g)\eta$
is well-defined, and it is injective because each of $\eta,\eta_R,\theta_{q}'$ is injective. For surjectivity, it suffices to show that all $\ell_{ij}^{\pm}$
lie in the image of $\theta_{q}$. Since we clearly have all $\cal{E}_{i}$, $\cal{F}_{i}$, and $\{\ell_{ii}^{\pm}\}_{i=1}^n$ in the image of $\theta_{q}$
(due to the extra relation~\eqref{eq:rel_B_extra} in Remark~\ref{rmk:square_root_zq}),
the result follows from Lemma~\ref{lem:borel_spanning_sets}.

Therefore, $\theta_q$ is an algebra isomorphism.  Since one can easily check on generators that~\eqref{eq:Hopf_compatibility} holds,
$\theta_{q}$ is actually a Hopf algebra isomorphism, which completes the proof.
\end{proof}

Now, using Corollary~\ref{cor:U(R)_2_vs_1_parameter}, we can immediately derive the two-parameter generalization of
Theorem~\ref{thm:DJ=RTT_Btype_1param} and Corollary~\ref{cor:DJ=RTT_Btype_1param_double_cartan}:

\begin{theorem}\label{thm:DJ=RTT_Btype_2_param}
There is a unique Hopf algebra isomorphism $\theta_{r,s}\colon U_{r,s}(\frak{so}_{2n+1}) \iso \wtd{U}(R_{r,s})$ such that
\begin{align*}
  &\theta_{r,s}(\omega_{i}) = \ell_{i+1,i+1}^{+} (\ell_{ii}^{+})^{-1},   & & \theta_{r,s}(\omega_{i}') = (\ell_{ii}^{-})^{-1} \ell_{i+1,i+1}^{-} & &
    \text{for} \quad 1 \le i \le n, \\
  &\theta_{r,s}(e_{i}) = \frac{1}{r^{2} - s^{2}}\ell_{i+1,i}^{+}(\ell_{ii}^{+})^{-1}, & &
    \theta_{r,s}(f_{i}) = \frac{1}{s^{2} - r^{2}}(\ell_{ii}^{-})^{-1}\ell_{i,i+1}^{-} & &
     \text{for} \quad 1 \le i < n, \\
  &\theta_{r,s}(e_{n}) = \frac{r^{1/2}s}{[2]_{r,s}^{1/2}(r - s)}\ell_{n+1,n}^{+}(\ell_{nn}^{+})^{-1}, & &
   \theta_{r,s}(f_{n}) = \frac{r^{1/2}}{[2]_{r,s}^{1/2}(s - r)}(\ell_{nn}^{-})^{-1}\ell_{n,n+1}^{-}.
\end{align*}
\end{theorem}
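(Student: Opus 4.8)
The plan is to realize $\theta_{r,s}$ as a composition of three Hopf algebra isomorphisms that are already available. First I would record that the isomorphism $\theta_{q}\colon U_{q,q^{-1}}(\frak{so}_{2n+1}) \iso \wtd{U}(R_{q})$ of Corollary~\ref{cor:DJ=RTT_Btype_1param_double_cartan} respects the $Q$-bigradings on both sides --- namely the bigrading~\eqref{eq:our-bigrading} on the source and (the restriction to $Q$ of) the $P$-bigrading of Proposition~\ref{prop:U(R)_grading} on the target. Since both algebras are generated by bihomogeneous elements, it is enough to check that $\theta_{q}$ sends generators to elements of matching bidegree, which is immediate from $\deg(\ell_{ij}^{\pm}) = (-\varepsilon_{i},\varepsilon_{j})$: for instance $\ell_{i+1,i}^{+}(\ell_{ii}^{+})^{-1}$ has bidegree $(\varepsilon_{i}-\varepsilon_{i+1},0) = (\alpha_{i},0) = \deg(e_{i})$, and analogously for $f_{i},\omega_{i},\omega_{i}'$.

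Because twisting by a skew bicharacter is functorial with respect to bidegree-preserving algebra maps and leaves the coalgebra structure and antipode unchanged, $\theta_{q}$ induces a Hopf algebra isomorphism $(\theta_{q})_{\zeta}\colon U_{q,\zeta}(\frak{so}_{2n+1}) \iso \wtd{U}(R_{q})_{\zeta}$. Composing with the Hopf algebra isomorphism $\varphi\colon U_{r,s}(\frak{so}_{2n+1}) \iso U_{q,\zeta}(\frak{so}_{2n+1})$ of Proposition~\ref{prop:twisted_algebra} and with the inverse $\wtd{\Psi}^{-1}\colon \wtd{U}(R_{q})_{\zeta} \iso \wtd{U}(R_{r,s})$ of the isomorphism of Corollary~\ref{cor:U(R)_2_vs_1_parameter}, I obtain the desired Hopf algebra isomorphism
\[
  \theta_{r,s} := \wtd{\Psi}^{-1} \circ (\theta_{q})_{\zeta} \circ \varphi \colon \ U_{r,s}(\frak{so}_{2n+1}) \iso \wtd{U}(R_{r,s}).
\]
Its uniqueness is automatic since $\{e_{i},f_{i},\omega_{i}^{\pm 1},(\omega_{i}')^{\pm 1}\}_{i=1}^{n}$ generate $U_{r,s}(\frak{so}_{2n+1})$.

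It then remains to evaluate $\theta_{r,s}$ on generators. The map $\varphi$ contributes the scalars of~\eqref{eq:phi-map}, so that $e_{i}\mapsto e_{i}$, $f_{i}\mapsto (r_{i}s_{i})^{-1/2}f_{i}$, and $\omega_{i}^{\pm 1},(\omega_{i}')^{\pm 1}$ are fixed. Applying $\theta_{q}$ reproduces the monomials of Corollary~\ref{cor:DJ=RTT_Btype_1param_double_cartan}, but rewriting each two-term product $\ell_{ab}^{\pm}\ell_{cd}^{\pm}$ through the twisted multiplication $\circ$ of $\wtd{U}(R_{q})_{\zeta}$ produces a factor $\zeta(\varepsilon_{a},\varepsilon_{c})^{-1}\zeta(\varepsilon_{b},\varepsilon_{d})$, which one computes explicitly from~\eqref{eq:zeta_formula} and~\eqref{eq:B-pairing}; finally $\wtd{\Psi}^{-1}$ rescales $\ell_{ij}^{\pm}$ by $\psi_{i}\psi_{j}^{-1}$, with $\psi_{i}$ as in~\eqref{eq:psi_values_B}. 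Collecting these three sources of scalars and substituting $q = r^{1/2}s^{-1/2}$ (so that $q^{2}-q^{-2} = (r^{2}-s^{2})/(rs)$, $[2]_{q} = (r+s)/(rs)^{1/2}$, etc.), the telescoping powers of $rs$ coming from consecutive $\psi_{i}$'s cancel the $(rs)$-denominators produced by the substitution, and one is left exactly with the coefficients in the statement.

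The main obstacle is this last step: although conceptually routine, the bookkeeping of the scalar factors must be carried out carefully, in particular for the short-root generators $e_{n},f_{n}$, where the factor $[2]_{r,s}^{1/2}$ arising from $[2]_{q}^{1/2}$, the half-integer powers of $rs$ appearing in $\psi_{n}$ and in $(r_{n}s_{n})^{-1/2}$, and the relevant values of $\zeta(\varepsilon_{n},\cdot)$ must recombine into the stated expressions. This computation runs entirely parallel to --- and is no more involved than --- the verification of the two-parameter $B$-type $R$-matrix carried out in Subsection~\ref{ssec:B-type}.
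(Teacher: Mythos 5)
Your proposal is correct and takes essentially the same route as the paper: define $\theta_{r,s} = \wtd{\Psi}^{-1} \circ \theta_{q} \circ \varphi$ (observing that $\theta_{q}$ is bidegree-preserving so that it descends to the twisted algebras), then compute the scalar factors coming from $\varphi$, the bicharacter twist of the product, and the rescaling in $\wtd{\Psi}^{-1}$. The only difference is that the paper carries out the arithmetic explicitly for $e_{i},f_{i}$ with $i<n$ (using $\psi_{i+1}\psi_{i}^{-1} = (rs)^{-1/2} = \zeta(\varepsilon_{i+1},\varepsilon_{i})$), whereas you only sketch it.
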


\begin{proof}
Note that the map $\theta_{q}$ of Corollary~\ref{cor:DJ=RTT_Btype_1param_double_cartan} preserves $P$-bidegrees, so it induces a Hopf algebra isomorphism
$\theta_{q}\colon U_{q,\zeta}(\frak{so}_{2n+1}) \iso \wtd{U}(R_{q})_{\zeta}$. Then we define $\theta_{r,s} = \wtd{\Psi}^{-1} \circ \theta_{q}\circ \varphi$,
where $\wtd{\Psi}$ and $\varphi$ are the Hopf algebra isomorphisms of Corollary~\ref{cor:U(R)_2_vs_1_parameter} and Proposition~\ref{prop:twisted_algebra},
respectively. By definition, $\theta_{r,s}$ is a Hopf algebra isomorphism, so it only remains to show that it maps the generators of $U_{r,s}(\frak{so}_{2n+1})$
to the elements indicated above. The latter is a straightforward computation, so we shall only carry it out for $e_{i}$ and $f_{i}$ with $i < n$, leaving the rest
to the interested reader. For $e_{i}$ with $i<n$, we have:
\begin{multline*}
  \theta_{r,s}(e_{i}) = \wtd{\Psi}^{-1}(\theta_{q}(\varphi(e_{i})))
  = \wtd{\Psi}^{-1}\left (\frac{1}{q^{2} - q^{-2}} \ell_{i+1,i}^{+}(\ell_{ii}^{+})^{-1} \right ) \\
  = \frac{rs}{r^{2} - s^{2}}\zeta(\varepsilon_{i+1},\varepsilon_{i})\psi_{i+1}\psi_{i}^{-1}\ell_{i+1,i}^{+}(\ell_{ii}^{+})^{-1}
  = \frac{1}{r^{2} - s^{2}}\ell_{i+1,i}^{+}(\ell_{ii}^{+})^{-1},
\end{multline*}
where in the last equality we used $\psi_{i+1}\psi_{i}^{-1} = (rs)^{-1/2}$ and $\zeta(\varepsilon_{i+1},\varepsilon_{i}) = (rs)^{-1/2}$
(the former follows from~\eqref{eq:psi_values_B}, while the latter is due to~\cite[(5.33)]{MT1} and~\eqref{eq:zeta_formula}).
Similarly, for $f_{i}$ with $i<n$, we get:
\begin{multline*}
  \theta_{r,s}(f_{i}) = \wtd{\Psi}^{-1}(\theta_{q}(\varphi(f_{i})))
  = \wtd{\Psi}^{-1}\left ((rs)^{-1}\frac{1}{q^{-2} - q^{2}}(\ell_{ii}^{-})^{-1}\ell_{i,i+1}^{-}  \right ) \\
  = \frac{1}{s^{2} - r^{2}}\zeta(\varepsilon_{i+1},\varepsilon_{i})\psi_{i}\psi_{i+1}^{-1}(\ell_{ii}^{-})^{-1}\ell_{i,i+1}^{-}
  = \frac{1}{s^{2} - r^{2}}(\ell_{ii}^{-})^{-1}\ell_{i,i+1}^{-}.
\end{multline*}
This completes the proof.
\end{proof}

\begin{remark}\label{rmk:JLM_comparison_B}
The formulas above look slightly different to those of~\cite{JLM1} for two reasons. First, they use $q_{i} = q$ for $i < n$ and $q_{n} = q^{1/2}$, while we use
$q_{i} = q^{2}$ for $i < n$ and $q_{n} = q$, which affects the matrix $C_{q}$ and the formulas of Theorem~\ref{thm:DJ=RTT_Btype_1param}. Second, the formula
for their $R$-matrix $R$ is slightly different from our $R_{q}$, which affects the value of $\rho_{n+1}$ in~\eqref{eq:rho_B}. However, their $R$-matrix is
related to ours via $R_{q^{1/2}} = SRS^{-1}$ with $S = \sum_{i,j=1}^{N} q^{\frac{1}{4}(\delta_{i,n+1} + \delta_{j,n+1})}E_{ii} \otimes E_{jj}$,
so we can pass from their isomorphism to ours by replacing $q$ with $q^{2}$ and using Proposition~\ref{prop:diagonal_conjugation_U(R)}(b).
We also note that~\cite{JLM1} seem to forget to impose~\eqref{eq:rel_B_extra} in type~$B_n$.
\end{remark}


\noindent
$\bullet$
\textbf{Type $C_n$.}

Let $R_{q} = \hat{R}_{q} \circ \tau$, where $\hat{R}_{q}$ is the one-parameter $C_{n}$-type $R$-matrix of~\eqref{eq:1_parameter_R_C}. Similarly,
let $R_{r,s} = \hat{R}_{r,s} \circ \tau$, where $\hat{R}_{r,s}$ is the two-parameter $C_{n}$-type $R$-matrix of~\cite[(4.9)]{MT1}. In this Subsection,
we recall the relationship between $\wtd{U}'(R_{q})$ and $U_{q}(\frak{sp}_{2n})$, and using the methods of the previous Subsection, upgrade it
to the one between $\wtd{U}(R_{r,s})$ and $U_{r,s}(\frak{sp}_{2n})$, which comes naturally from the twisting procedure developed above.
This corrects~\cite[Theorem 9]{HJZ}.

For type $C_{n}$, we actually need to work with an extension of $U_{r,s}(\frak{sp}_{2n})$, which is obtained by adjoining generators
$\omega_{n}^{1/2},(\omega_{n}')^{1/2}$, and imposing the obvious additional relations. We also need a similarly defined extension of
$U_{q}(\frak{sp}_{2n})$. Throughout this Subsection, we shall always assume that $U_{r,s}(\frak{sp}_{2n})$ contains these additional
elements (including the case $r = q$, $s = q^{-1}$), and we make a similar assumption for $U_{q}(\frak{sp}_{2n})$.
We then have the following result (see~\cite[Main Theorem]{JLM2}):

\begin{theorem}\label{thm:DJ=RTT_Ctype_1param}
There is a unique Hopf algebra isomorphism $\theta_{q}'\colon U_{q}(\frak{sp}_{2n}) \iso \wtd{U}'(R_{q})$ such that
\begin{align*}
  &\theta_{q}'(K_{i}) = \ell_{i+1,i+1}^{+} (\ell_{ii}^{+})^{-1}, & & \theta_{q}'(K_{n}^{1/2}) = (\ell_{nn}^{+})^{-1} & &
    \text{for} \quad 1 \le i < n, \\
  &\theta_{q}'(E_{i}) = \frac{1}{q - q^{-1}}\ell_{i+1,i}^{+}(\ell_{ii}^{+})^{-1}, & &
   \theta_{q}'(F_{i}) = \frac{1}{q^{-1} - q}(\ell_{ii}^{-})^{-1}\ell_{i,i+1}^{-}  & &
    \text{for} \quad 1 \le i < n, \\
  &\theta_{q}'(E_{n}) = \frac{1}{q^{2} - q^{-2}}\ell_{n+1,n}^{+}(\ell_{nn}^{+})^{-1},
    & & \theta_{q}'(F_{n}) = \frac{1}{q^{-2} - q^{2}}(\ell_{nn}^{-})^{-1}\ell_{n,n+1}^{-}.
\end{align*}
\end{theorem}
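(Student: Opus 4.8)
The plan is to deduce the statement from~\cite[Main Theorem]{JLM2}, proceeding exactly as in the proof of Theorem~\ref{thm:DJ=RTT_Btype_1param}. That result yields an algebra isomorphism between $\wtd{U}'(R_q)$ (in the normalization of~\cite{JLM2}) and $U_q(\frak{sp}_{2n})$, where the latter is taken with the square-root generators $K_n^{\pm 1/2}$ adjoined. Composing the restriction of this isomorphism with the Cartan involution~\eqref{eq:cartan_involution} of $U_q(\frak{sp}_{2n})$ produces a candidate map $\theta_q'$ that sends $E_i,F_i$ to the off-diagonal $L$-operator expressions in the statement and $K_i^{\pm 1},K_n^{\pm 1/2}$ to the displayed products of diagonal entries. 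As in the $B_n$ case, the role of the Cartan involution is precisely to make $\theta_q'$ compatible with the $P\times P$-bigradings of both sides: without it the images of $E_i$ and $F_i$ would be interchanged, and their bidegrees would fail to match $\deg(E_i)=(\alpha_i,0)$ and $\deg(F_i)=(0,-\alpha_i)$ coming from~\eqref{eq:our-bigrading}.

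The first task is to reconcile the conventions of~\cite{JLM2} with ours. Just as flagged for type $B_n$ in Remark~\ref{rmk:JLM_comparison_B}, there may be a discrepancy in the normalization of the Cartan labels $q_i$ and in the precise form of the $R$-matrix; on our side the relevant data is the matrix $C_q$ with the signs $\sigma_i$ of~\eqref{eq:Cq_C}, the sequence $\rho_i$, and the $R$-matrix $R_q=\hat{R}_q\circ\tau$ with $\hat{R}_q$ of~\eqref{eq:1_parameter_R_C}. If the $R$-matrix used in~\cite{JLM2} differs from $R_q$ by a diagonal conjugation $R\mapsto SRS^{-1}$ with $S=S'\otimes S'$ and $S'$ diagonal, I would pass between the two presentations via Proposition~\ref{prop:diagonal_conjugation_U(R)}(b); this only rescales $\ell_{ij}^{\pm}\mapsto s_i^{-1}s_j\ell_{ij}^{\pm}$, so after the substitution one reads off that the scalar prefactors are exactly $\tfrac{1}{q-q^{-1}}$ for $E_i$ with $i<n$, $\tfrac{1}{q^{2}-q^{-2}}$ for $E_n$, and analogously for the $F_i$'s. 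Uniqueness of $\theta_q'$ with the prescribed values is automatic, since the listed elements generate $U_q(\frak{sp}_{2n})$.

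Once $\theta_q'$ is established as an algebra isomorphism with the stated formulas, it remains to check that it is a morphism of Hopf algebras, i.e.\ that~\eqref{eq:Hopf_compatibility} holds. This is verified directly on the generators $E_i,F_i,K_i^{\pm 1},K_n^{\pm 1/2}$, using that the coproduct on $\wtd{U}'(R_q)$ is the standard $\Delta(\ell_{ij}^{\pm})=\sum_{k}\ell_{ik}^{\pm}\otimes\ell_{kj}^{\pm}$ of~\eqref{eq:U(R)_Hopf} and that the diagonal entries $\ell_{ii}^{\pm}$ are grouplike (cf.~\eqref{eq:diagonal_reflection}); the computation is the same short one as in the $B_n$ case. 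Since $\theta_q'$ is then an algebra isomorphism intertwining $\Delta$ and $\epsilon$, it automatically intertwines the antipodes by uniqueness of the antipode, hence is a Hopf algebra isomorphism.

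I expect the only real obstacle to be bookkeeping: correctly translating the conventions of~\cite{JLM2} --- the normalization of $C_q$ encoded by the $\sigma_i$ in~\eqref{eq:Cq_C}, the sequence $\rho_i$, the square-root extension at the $n$-th node, and any diagonal rescaling of the $R$-matrix --- into the present framework, so that the scalar coefficients in $\theta_q'(E_i),\theta_q'(F_i)$ come out precisely as stated, rather than any genuine structural difference.
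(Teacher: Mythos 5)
Your proof takes essentially the same route as the paper's: compose the restriction of the isomorphism from \cite[Main Theorem]{JLM2} with the Cartan involution~\eqref{eq:cartan_involution} to get an algebra isomorphism with the displayed formulas, and then verify~\eqref{eq:Hopf_compatibility} on generators. The paper's only additional observation is that $\ell_{n+1,n+1}^{\pm}\ell_{nn}^{\pm}=1$ in $\wtd{U}'(R_q)$ (from~\eqref{eq:diagonal_reflection}), which is what makes the assignment $\theta_q'(K_n^{1/2})=(\ell_{nn}^{+})^{-1}$ consistent; otherwise the two arguments agree.
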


\begin{proof}
First, we note that $\ell_{n+1,n+1}^{\pm}\ell_{nn}^{\pm} = \ell_{nn}^{\pm}\ell_{n+1,n+1}^{\pm} = 1$ in $\wtd{U}'(R_{q})$ by~\eqref{eq:diagonal_reflection},
and thus, by composing the restriction of the isomorphism of~\cite[Main Theorem]{JLM2} with the Cartan involution~\eqref{eq:cartan_involution},
we get the algebra isomorphism $\theta_{q}'$. It is easy to verify~\eqref{eq:Hopf_compatibility} on generators, so that $\theta_{q}'$ is
in fact a Hopf algebra isomorphism.
\end{proof}

Similarly to Corollary~\ref{cor:DJ=RTT_Btype_1param_double_cartan}, we can upgrade this result to a Cartan-doubled version by applying
Propositions~\ref{prop:double_cartan_DJ} and~\ref{prop:double_cartan_RTT_finite}, where we extend $\eta$ via
\begin{align*}
  \eta\colon \quad
  \omega_{n}^{1/2}\mapsto K_{n}^{1/2} \otimes L_{\alpha_{n}},\quad
 (\omega_{n}')^{1/2} \mapsto K_{n}^{-1/2} \otimes L_{\alpha_{n}}.
\end{align*}

\begin{cor}\label{thm:DJ=RTT_Ctype_1param_double_cartan}
There is a unique Hopf algebra isomorphism $\theta_{q}\colon U_{q,q^{-1}}(\frak{sp}_{2n}) \iso \wtd{U}(R_{q})$ such that
\begin{align*}
  &\theta_{q}(\omega_{i}) = \ell_{i+1,i+1}^{+} (\ell_{ii}^{+})^{-1}, & & \theta_{q}(\omega_{i}') = (\ell_{ii}^{-})^{-1} \ell_{i+1,i+1}^{-} & &
    \text{for} \quad 1 \le i < n, \\
 &\theta_{q}(\omega_{n}^{1/2}) = (\ell_{nn}^{+})^{-1}, & & \theta_{q}((\omega_{n}')^{1/2}) = (\ell_{nn}^{-})^{-1}, \\
  &\theta_{q}(e_{i}) = \frac{1}{q - q^{-1}}\ell_{i+1,i}^{+}(\ell_{ii}^{+})^{-1}, & &
   \theta_{q}(f_{i}) = \frac{1}{q^{-1} - q}(\ell_{ii}^{-})^{-1}\ell_{i,i+1}^{-} & &
    \text{for} \quad 1 \le i < n, \\
  &\theta_{q}(e_{n}) = \frac{1}{q^{2} - q^{-2}}\ell_{n+1,n}^{+}(\ell_{nn}^{+})^{-1}, &
  & \theta_{q}(f_{n}) = \frac{1}{q^{-2} - q^{2}}(\ell_{nn}^{-})^{-1}\ell_{n,n+1}^{-}.
\end{align*}
\end{cor}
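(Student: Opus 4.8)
The proof follows the same pattern as that of Corollary~\ref{cor:DJ=RTT_Btype_1param_double_cartan}: transport the one-parameter isomorphism $\theta_q'$ of Theorem~\ref{thm:DJ=RTT_Ctype_1param} through the two key embeddings $\eta$ and $\eta_R$ of Propositions~\ref{prop:double_cartan_DJ} and~\ref{prop:double_cartan_RTT_finite}. First I would introduce the algebra homomorphism
\[
  g\colon \bb{K}[L_{\alpha_1}^{\pm 1},\ldots,L_{\alpha_n}^{\pm 1}]\longrightarrow \bb{K}[L_{\varepsilon_1}^{\pm 1},\ldots,L_{\varepsilon_n}^{\pm 1}],
  \qquad L_{\alpha_i}\mapsto L_{\alpha_i},
\]
where on the right-hand side $\alpha_i$ is expanded in the $C_n$-realization of $\Phi$, so that $g(L_{\alpha_i})=L_{\varepsilon_i-\varepsilon_{i+1}}$ for $i<n$ while $g(L_{\alpha_n})=L_{2\varepsilon_n}$. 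Using the extension of $\eta$ to the adjoined square roots $\omega_n^{1/2},(\omega_n')^{1/2}$ recorded just before the statement, together with $\theta_q'$ of Theorem~\ref{thm:DJ=RTT_Ctype_1param}, I get an injective algebra map $(\theta_q'\otimes g)\circ\eta$ from $U_{q,q^{-1}}(\frak{sp}_{2n})$ to $\wtd{U}'(R_q)\otimes\bb{K}[L_{\varepsilon_1}^{\pm1},\ldots,L_{\varepsilon_n}^{\pm1}]$, and I then set $\theta_q:=\eta_R^{-1}\circ(\theta_q'\otimes g)\circ\eta$ once I have checked that $(\theta_q'\otimes g)\circ\eta$ lands inside $\eta_R(\wtd{U}(R_q))$.

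That verification need only be done on generators, using $\deg(\ell_{ij}^\pm)=(-\varepsilon_i,\varepsilon_j)$ and $\eta_R(\ell_{ij}^\pm)=\ell_{ij}^\pm\otimes L_{-\varepsilon_i-\varepsilon_j}$ from~\eqref{eq:double_cartan_RTT_finite}. For $e_i,f_i,\omega_i,\omega_i'$ with $1\le i<n$ this is verbatim the type-$B_n$ computation. The genuinely new cases are the square-root Cartan generators and the node-$n$ root vectors. For the former, $\eta(\omega_n^{1/2})=K_n^{1/2}\otimes L_{\alpha_n}$ with $g(L_{\alpha_n})=L_{2\varepsilon_n}$ and $\theta_q'(K_n^{1/2})=(\ell_{nn}^+)^{-1}$, while on the RTT side $\deg((\ell_{nn}^+)^{-1})=(\varepsilon_n,-\varepsilon_n)$ gives $\eta_R((\ell_{nn}^+)^{-1})=(\ell_{nn}^+)^{-1}\otimes L_{2\varepsilon_n}$; these agree, whence $\theta_q(\omega_n^{1/2})=(\ell_{nn}^+)^{-1}$, and similarly $\theta_q((\omega_n')^{1/2})=(\ell_{nn}^-)^{-1}$ — here one uses the identification $\ell_{nn}^+=(\ell_{nn}^-)^{-1}$ valid in $\wtd{U}'(R_q)$, which $\eta_R$ separates back apart in $\wtd{U}(R_q)$ by assigning $\ell_{nn}^+$ and $(\ell_{nn}^-)^{-1}$ the opposite $L$-weights $L_{-2\varepsilon_n}$ and $L_{2\varepsilon_n}$. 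For $e_n$, since $\alpha_n=2\varepsilon_n$ we have $\eta(e_n)=E_n\otimes L_{\alpha_n}$ with $g(L_{\alpha_n})=L_{2\varepsilon_n}$, $\theta_q'(E_n)=\tfrac{1}{q^2-q^{-2}}\ell_{n+1,n}^+(\ell_{nn}^+)^{-1}$, and $\deg(\ell_{n+1,n}^+)=(-\varepsilon_{n+1},\varepsilon_n)=(\varepsilon_n,\varepsilon_n)$, so $\eta_R(\ell_{n+1,n}^+(\ell_{nn}^+)^{-1})=\ell_{n+1,n}^+(\ell_{nn}^+)^{-1}\otimes L_{2\varepsilon_n}$; matching these yields the stated formula for $\theta_q(e_n)$, and the computation for $f_n$ is analogous. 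This establishes all the listed formulas for $\theta_q$.

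It remains to show that $\theta_q$ is a Hopf algebra isomorphism. Injectivity is automatic, since $\eta$, $\eta_R$, and $\theta_q'$ are all injective. For surjectivity I would argue as in the $B_n$ case: the elements $\cal{E}_i=\ell_{i+1,i}^+(\ell_{ii}^+)^{-1}$ and $\cal{F}_i=(\ell_{ii}^-)^{-1}\ell_{i,i+1}^-$ for $1\le i\le n$ lie in the image by the formulas just computed; the elements $(\ell_{nn}^\pm)^{-1}=\theta_q(\omega_n^{1/2}),\theta_q((\omega_n')^{1/2})$ lie in the image; and then the formulas for $\theta_q(\omega_i),\theta_q(\omega_i')$ with $i<n$, together with $\ell_{ii}^\pm=(\ell_{i'i'}^\pm)^{-1}$ from~\eqref{eq:diagonal_reflection}, inductively put every diagonal $\ell_{ii}^\pm$ in the image. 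By Lemma~\ref{lem:borel_spanning_sets} this forces $\wtd{U}_{\ge}(R_q)$ and $\wtd{U}_{\le}(R_q)$ — hence all of $\wtd{U}(R_q)$, by the triangular decomposition of Corollary~\ref{cor:triangular_decomposition_RTT} — into the image of $\theta_q$. Finally, $\theta_q$ intertwines coproducts and counits because~\eqref{eq:Hopf_compatibility} is straightforward to check on the generators. The one point demanding care throughout is the bookkeeping around the adjoined half-powers $\omega_n^{1/2},(\omega_n')^{1/2}$: one must consistently track whether a diagonal entry is being viewed in $\wtd{U}(R_q)$ or in its quotient $\wtd{U}'(R_q)$, where $\ell_{nn}^+$ and $(\ell_{nn}^-)^{-1}$ are identified, and note that the $P$-valued bigrading on $\wtd{U}(R_q)$ from Proposition~\ref{prop:U(R)_grading} comfortably accommodates the $\varepsilon_n$-weights these square roots produce. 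Everything else is a routine transcription of the $B_n$ argument.
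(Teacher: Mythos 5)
Your proof is correct and follows exactly the route the paper intends: the paper simply states this corollary by analogy with the $B_n$ case (Corollary~\ref{cor:DJ=RTT_Btype_1param_double_cartan}), and you have transcribed that argument faithfully, defining $g$ with $g(L_{\alpha_n})=L_{2\varepsilon_n}$, checking on generators (including the adjoined $\omega_n^{1/2},(\omega_n')^{1/2}$ and the long-root node $n$) that $(\theta_q'\otimes g)\circ\eta$ lands in $\eta_R(\wtd U(R_q))$ via the $L$-weight bookkeeping and the identification $\ell_{jj}^+=(\ell_{jj}^-)^{-1}$ in $\wtd U'(R_q)$, and deducing surjectivity from Lemma~\ref{lem:borel_spanning_sets}. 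The only nit is that the appeal to Corollary~\ref{cor:triangular_decomposition_RTT} in the surjectivity step is superfluous, since $\wtd U(R_q)$ is already generated by $\wtd U_{\ge}(R_q)$ and $\wtd U_{\le}(R_q)$; this does not affect the argument.
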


Combining this result with Corollary~\ref{cor:U(R)_2_vs_1_parameter}, we obtain the two-parameter generalization of Theorem~\ref{thm:DJ=RTT_Ctype_1param}:

\begin{theorem}\label{thm:DJ=RTT_Ctype_2param}
There is a unique Hopf algebra isomorphism $\theta_{r,s}\colon U_{r,s}(\frak{sp}_{2n}) \iso \wtd{U}(R_{r,s})$ such that
\begin{align*}
  &\theta_{r,s}(\omega_{i}) = \ell_{i+1,i+1}^{+} (\ell_{ii}^{+})^{-1}, & & \theta_{r,s}(\omega_{i}') = (\ell_{ii}^{-})^{-1} \ell_{i+1,i+1}^{-} & &
    \text{for} \quad 1 \le i < n, \\
  &\theta_{r,s}(\omega_{n}^{1/2}) = (\ell_{nn}^{+})^{-1}, & & \theta_{r,s}((\omega_{n}')^{1/2}) = (\ell_{nn}^{-})^{-1}, \\
  &\theta_{r,s}(e_{i}) = \frac{1}{r - s}\ell_{i+1,i}^{+}(\ell_{ii}^{+})^{-1}, & &
   \theta_{r,s}(f_{i}) = \frac{1}{s - r}(\ell_{ii}^{-})^{-1}\ell_{i,i+1}^{-} & &
    \text{for} \quad 1 \le i < n, \\
  &\theta_{r,s}(e_{n}) = \frac{rs}{r^{2} - s^{2}}\ell_{n+1,n}^{+}(\ell_{nn}^{+})^{-1}, &
  & \theta_{r,s}(f_{n}) = \frac{1}{s^{2} - r^{2}}(\ell_{nn}^{-})^{-1}\ell_{n,n+1}^{-}.
\end{align*}
\end{theorem}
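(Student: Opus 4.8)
The plan is to follow verbatim the strategy used for type $B_n$ in the proof of Theorem~\ref{thm:DJ=RTT_Btype_2_param}. The starting point is the Cartan-doubled one-parameter isomorphism $\theta_q\colon U_{q,q^{-1}}(\frak{sp}_{2n}) \iso \wtd{U}(R_q)$ of Corollary~\ref{thm:DJ=RTT_Ctype_1param_double_cartan}. First I would check that $\theta_q$ preserves the $P$-bigradings on both sides: reading off bidegrees via $\deg(\ell^{\pm}_{ij}) = (-\varepsilon_i,\varepsilon_j)$, and using $\alpha_n = 2\varepsilon_n$ together with $(n+1)' = n$ (so $\varepsilon_{n+1} = -\varepsilon_n$), one sees that each of $\theta_q(e_i),\theta_q(f_i),\theta_q(\omega_i^{\pm1}),\theta_q((\omega_i')^{\pm1})$ — as well as $\theta_q(\omega_n^{\pm1/2}) = (\ell^+_{nn})^{\mp1}$ and $\theta_q((\omega_n')^{\pm1/2}) = (\ell^-_{nn})^{\mp1}$ — is homogeneous of precisely the bidegree attached in~\eqref{eq:our-bigrading} (extended to $P$) to the corresponding generator of the extended algebra $U_{q,q^{-1}}(\frak{sp}_{2n})$. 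This is routine, being essentially the $C_n$ instance of Proposition~\ref{prop:U(R)_grading}. Consequently, $\theta_q$ also defines a Hopf algebra isomorphism $\theta_q\colon U_{q,\zeta}(\frak{sp}_{2n}) \iso \wtd{U}(R_q)_\zeta$ between the bicharacter twists.

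I would then set
\[
  \theta_{r,s} \;=\; \wtd{\Psi}^{-1}\circ \theta_q\circ \varphi,
\]
where $\varphi\colon U_{r,s}(\frak{sp}_{2n})\iso U_{q,\zeta}(\frak{sp}_{2n})$ is the Hopf algebra isomorphism of Proposition~\ref{prop:twisted_algebra}, extended to the adjoined generators by $\omega_n^{1/2}\mapsto\omega_n^{1/2}$, $(\omega_n')^{1/2}\mapsto(\omega_n')^{1/2}$ (consistent since $(\omega_n^{1/2})^{\circ2} = \omega_n = \varphi(\omega_n)$ still holds in $U_{q,\zeta}(\frak{sp}_{2n})$), and $\wtd{\Psi}\colon \wtd{U}(R_{r,s})\iso \wtd{U}(R_q)_\zeta$, $\ell^{\pm}_{ij}\mapsto \psi_i^{-1}\psi_j\ell^{\pm}_{ij}$, is the Hopf algebra isomorphism of Corollary~\ref{cor:U(R)_2_vs_1_parameter} for type $C_n$ (with $\psi_i$ of~\eqref{eq:psi_values_C} and $C_{r,s}$ of~\eqref{eq:Crs_C}). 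As a composite of three Hopf algebra isomorphisms, $\theta_{r,s}$ is automatically a Hopf algebra isomorphism, so the only remaining task is to compute its effect on the generators of $U_{r,s}(\frak{sp}_{2n})$.

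This last step is a direct calculation, parallel to the $B_n$ case, with the relevant bookkeeping data being: $q = r^{1/2}s^{-1/2}$, so that $q-q^{-1} = (rs)^{-1/2}(r-s)$ and $q^2-q^{-2} = (rs)^{-1}(r^2-s^2)$; the values $\psi_{i+1}\psi_i^{-1} = (rs)^{-1/4}$ for $1\le i<n$ and $\psi_n = \psi_{n+1} = 1$ from~\eqref{eq:psi_values_C}; the rescalings $\varphi(f_i) = (r_is_i)^{-1/2}f_i$ with $(r_is_i)^{-1/2} = (rs)^{-1/2}$ at the short nodes and $(r_ns_n)^{-1/2} = (rs)^{-1}$ at the long node; and the values of $\zeta$ on the pairs $(\varepsilon_{i+1},\varepsilon_i)$ and at the $n$-th node, obtained from~\eqref{eq:zeta_formula} together with~\cite[(5.33)]{MT1}. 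The one point requiring care — the same as in type $B_n$ — is that the displayed formulas for $\theta_q$ express its values as \emph{ordinary} products of the $\ell^{\pm}_{ij}$ inside $\wtd{U}(R_q)$, so before applying $\wtd{\Psi}^{-1}$ one must rewrite, e.g., $\ell^+_{i+1,i}(\ell^+_{ii})^{-1} = \zeta(\varepsilon_{i+1},\varepsilon_i)\bigl(\ell^+_{i+1,i}\circ(\ell^+_{ii})^{-1}\bigr)$ via~\eqref{eq:twisted_product_general} and the bicharacter identities $\zeta(-\mu,\nu)=\zeta(\mu,\nu)^{-1}$, $\zeta(\mu,\mu)=1$; then $\wtd{\Psi}^{-1}$ converts $\circ$ back to the honest product and contributes $\psi_{i+1}\psi_i^{-1}$, and the product of all these scalars collapses to the constants stated in the theorem (e.g. $\tfrac{1}{r-s}$ for $e_i$ with $i<n$, and $\tfrac{rs}{r^2-s^2}$ for $e_n$, where the long-node computation additionally uses $\theta_q(\omega_n^{1/2}) = (\ell^+_{nn})^{-1}$). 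The main obstacle is therefore purely administrative: correctly tracking these twist- and rescaling-scalars separately at the short nodes and at the long node $n$, where the square-root Cartan elements enter; no new structural input beyond Corollaries~\ref{cor:U(R)_2_vs_1_parameter} and~\ref{thm:DJ=RTT_Ctype_1param_double_cartan} and Proposition~\ref{prop:twisted_algebra} is needed.
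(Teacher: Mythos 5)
Your proposal is correct and follows exactly the same route as the paper's proof: set $\theta_{r,s}=\wtd{\Psi}^{-1}\circ\theta_q\circ\varphi$ using Corollary~\ref{thm:DJ=RTT_Ctype_1param_double_cartan}, Corollary~\ref{cor:U(R)_2_vs_1_parameter}, and Proposition~\ref{prop:twisted_algebra}, after observing that $\theta_q$ is $P$-bidegree preserving and hence descends to the twists, and then evaluate on generators as in the type-$B_n$ case. The bookkeeping you list ($\psi_{i+1}\psi_i^{-1}=(rs)^{-1/4}$, $\psi_n=\psi_{n+1}=1$, $(r_n s_n)^{-1/2}=(rs)^{-1}$, and the $\zeta$-factors from~\eqref{eq:zeta_formula}) is exactly what is needed and matches the paper's implicit computation.
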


\begin{proof}
Note that the map $\theta_{q}$ of Corollary~\ref{thm:DJ=RTT_Ctype_1param_double_cartan} preserves $P$-bidegrees, so it induces a Hopf algebra isomorphism
$\theta_{q}\colon U_{q,\zeta}(\frak{sp}_{2n}) \iso \wtd{U}(R_{q})_{\zeta}$. We define $\theta_{r,s} = \wtd{\Psi}^{-1} \circ \theta_{q} \circ \varphi$
with $\wtd{\Psi}$ of Corollary~\ref{cor:U(R)_2_vs_1_parameter} and $\varphi$ of Proposition~\ref{prop:twisted_algebra}. Then $\theta_{r,s}$ is a Hopf
algebra isomorphism, and the $\theta_{r,s}$-images of the generators are computed exactly as in the proof of Theorem~\ref{thm:DJ=RTT_Btype_2_param}.
\end{proof}


\noindent
$\bullet$
\textbf{Type $D_n$.}

Let $R_{q} = \hat{R}_{q} \circ \tau$, where $\hat{R}_{q}$ is the one-parameter $D_{n}$-type $R$-matrix of~\eqref{eq:1_parameter_R_D}, and let
$R_{r,s} = \hat{R}_{r,s} \circ \tau$, where $\hat{R}_{r,s}$ is the two-parameter $D_{n}$-type $R$-matrix of~\cite[(4.11)]{MT1}. Following
the above procedure applied in types $B_n$ and $C_n$, we use the relationship between $\wtd{U}'(R_{q})$ and $U_{q}(\frak{so}_{2n})$ to obtain
the corresponding relationship between $\wtd{U}(R_{r,s})$ and $U_{r,s}(\frak{so}_{2n})$, which corrects~\cite[Theorem 4.3]{HXZ2}.

As in type $C_{n}$, we shall need to work with an extended version of $U_{r,s}(\frak{so}_{2n})$, which is obtained by adjoining additional
generators $(\omega_{n-1}\omega_{n})^{1/2},(\omega_{n-1}^{-1}\omega_{n})^{1/2},(\omega_{n-1}'\omega_{n}')^{1/2},((\omega_{n-1}')^{-1}\omega_{n}')^{1/2}$,
and imposing the obvious additional relations. We shall also make a similar extension to $U_{q}(\frak{so}_{2n})$. For the remainder of this Subsection,
we will assume that $U_{r,s}(\frak{so}_{2n})$ contains these additional elements (including the case $r=q, s=q^{-1}$), and we make
similar assumptions for $U_{q}(\frak{so}_{2n})$. We then have the following result (see~\cite[Main Theorem]{JLM1}):

\begin{theorem}\label{thm:DJ=RTT_Dtype_1_param}
There is a unique Hopf algebra isomorphism $\theta_{q}'\colon U_{q}(\frak{so}_{2n}) \iso \wtd{U}'(R_{q})$ such that
\begin{align*}
  &\theta_{q}'(K_{i}) = \ell_{i+1,i+1}^{+} (\ell_{ii}^{+})^{-1} & & ~ & &
    \text{for} \quad 1 \le i < n, \\
  &\theta_{q}'\left ((K_{n-1}K_{n})^{1/2}\right ) = (\ell_{n-1,n-1}^{+})^{-1},
    & & \theta_{q}'\left ((K_{n-1}^{-1} K_{n})^{1/2}\right ) = (\ell_{nn}^{+})^{-1}, \\
  &\theta_{q}'(E_{i}) = \frac{1}{q - q^{-1}}\ell_{i+1,i}^{+}(\ell_{ii}^{+})^{-1}, & &
   \theta_{q}'(F_{i}) = \frac{1}{q^{-1} - q}(\ell_{ii}^{-})^{-1}\ell_{i,i+1}^{-}  & &
    \text{for} \quad 1 \le i < n, \\
  &\theta_{q}'(E_{n}) = \frac{1}{q - q^{-1}}\ell_{n+1,n-1}^{+}(\ell_{n-1,n-1}^{+})^{-1},
    & & \theta_{q}'(F_{n}) = \frac{1}{q^{-1} - q}(\ell_{n-1,n-1}^{-})^{-1}\ell_{n-1,n+1}^{-}.
\end{align*}
\end{theorem}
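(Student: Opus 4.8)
The plan is to follow the template of the proofs of Theorems~\ref{thm:DJ=RTT_Btype_1param} and~\ref{thm:DJ=RTT_Ctype_1param}. I would begin from the algebra isomorphism of~\cite[Main Theorem]{JLM1} in type $D_n$, which identifies an RTT-type algebra (in the conventions of~\cite{JLM1}) with the extended quantum group $U_q(\frak{so}_{2n})$. As in Remark~\ref{rmk:JLM_comparison_B}, the $R$-matrix and the matrix $C_q$ used in~\cite{JLM1} differ from ours in~\eqref{eq:1_parameter_R_D} and~\eqref{eq:Cq_D} only by a diagonal conjugation, which I would absorb using Proposition~\ref{prop:diagonal_conjugation_U(R)}(b) exactly as done there for type $B_n$; after this step their RTT-algebra is identified with $\wtd{U}'(R_q)$. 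Unlike type $B_n$, here $N=2n$ has no fixed index $i=i'$, so no analogue of the extra relation~\eqref{eq:rel_B_extra} needs to be imposed and this reconciliation is actually somewhat cleaner. I would then compose the restriction of the resulting map with the Cartan involution~\eqref{eq:cartan_involution} $\omega\colon U_q(\frak{so}_{2n})\to U_q(\frak{so}_{2n})$; as in type $B_n$, applying $\omega$ is precisely what renders the composite compatible with the $P\times P$-bigradings on the two algebras, since without it $E_i$ and $F_i$ would be sent to the opposite bidegree components. This yields an algebra isomorphism $\theta_q'\colon U_q(\frak{so}_{2n}) \iso \wtd{U}'(R_q)$, and its uniqueness is automatic since the listed generators generate the (extended) algebra $U_q(\frak{so}_{2n})$.

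Next I would verify directly on generators that $\theta_q'$ is given by the displayed formulas. The one place needing attention is the Cartan part: the extended $U_q(\frak{so}_{2n})$ carries the square-root group-likes $(K_{n-1}K_n)^{1/2}$ and $(K_{n-1}^{-1}K_n)^{1/2}$, which I would match with $(\ell_{n-1,n-1}^{+})^{-1}$ and $(\ell_{nn}^{+})^{-1}$ using the reflection relations~\eqref{eq:diagonal_reflection}, that in type $D_n$ read $\ell_{n+1,n+1}^{\pm}\ell_{nn}^{\pm}=\ell_{n+2,n+2}^{\pm}\ell_{n-1,n-1}^{\pm}=1$. Consistency with the formula $\theta_q'(K_i)=\ell_{i+1,i+1}^{+}(\ell_{ii}^{+})^{-1}$ for $i<n$ then follows because $(K_{n-1}K_n)^{1/2}(K_{n-1}^{-1}K_n)^{1/2}=K_n$ while the quotient of the two square roots equals $K_{n-1}$, so $\theta_q'(K_{n-1})=\ell_{nn}^{+}(\ell_{n-1,n-1}^{+})^{-1}$ as it should be. Finally, a routine generator-by-generator computation establishes~\eqref{eq:Hopf_compatibility}, i.e.\ $(\theta_q'\otimes\theta_q')\circ\Delta=\Delta\circ\theta_q'$ and $\epsilon=\epsilon\circ\theta_q'$ (using that the $\ell_{ii}^{\pm}$ are group-like and $\Delta(\ell_{ij}^{\pm})=\sum_k \ell_{ik}^{\pm}\otimes\ell_{kj}^{\pm}$), whence $\theta_q'$ is a Hopf algebra isomorphism.

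The main obstacle I expect is purely a matter of bookkeeping: faithfully translating the conventions of~\cite{JLM1} into the present framework — their indexing, their normalization of $C_q$, and the exact diagonal conjugation relating their $R$-matrix to $R_q$ of~\eqref{eq:1_parameter_R_D} — and confirming that the square-root extension of $U_q(\frak{so}_{2n})$ used in the statement corresponds precisely to the diagonal subalgebra of $\wtd{U}'(R_q)$. This is the same subtlety already addressed for type $B_n$ in Remark~\ref{rmk:JLM_comparison_B}, and I would handle it identically.
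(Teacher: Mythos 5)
Your proposal is correct and follows essentially the same route as the paper's proof: restrict the isomorphism from~\cite[Main Theorem]{JLM1} to obtain the underlying algebra isomorphism (noting $\ell_{n+1,n+1}^{\pm}\ell_{nn}^{\pm}=1$ via~\eqref{eq:diagonal_reflection}), compose with the Cartan involution~\eqref{eq:cartan_involution} to make the map compatible with the $P\times P$-bigradings, and then verify the Hopf compatibility~\eqref{eq:Hopf_compatibility} on generators. Your preemptive invocation of Proposition~\ref{prop:diagonal_conjugation_U(R)}(b) for convention reconciliation is not actually needed in type $D_n$ (unlike $B_n$, where the $q_i$ normalizations and the middle index $n+1$ cause genuine discrepancies as flagged in Remark~\ref{rmk:JLM_comparison_B}), but this is harmless bookkeeping and does not alter the argument.
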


\begin{proof}
We first note that $\ell_{n+1,n+1}^{\pm}\ell_{nn}^{\pm} = \ell_{nn}^{\pm}\ell_{n+1,n+1}^{\pm} = 1$ in $\wtd{U}'(R_{q})$ by~\eqref{eq:diagonal_reflection},
hence, composing the restriction of the isomorphism from~\cite[Main Theorem]{JLM1} with the Cartan involution~\eqref{eq:cartan_involution}
implies that the above assignment gives rise to an algebra isomorphism $\theta_{q}'$. It is easy to verify~\eqref{eq:Hopf_compatibility} on generators,
so that $\theta_{q}'$ is actually a Hopf algebra isomorphism.
\end{proof}

As for types $B_{n}$ and $C_{n}$, we can upgrade this result to a Cartan-doubled version by applying
Propositions~\ref{prop:double_cartan_DJ} and~\ref{prop:double_cartan_RTT_finite}, where we extend $\eta$ via
\begin{align*}
  \eta\colon
  & (\omega_{n-1}\omega_{n})^{1/2} \mapsto (K_{n-1}K_{n})^{1/2} \otimes L_{\alpha_{n-1}+\alpha_{n}},
    & & (\omega_{n-1}^{-1}\omega_{n})^{1/2} \mapsto (K_{n-1}^{-1}K_{n})^{1/2}\otimes L_{-\alpha_{n-1}+\alpha_{n}}, \\
  & (\omega_{n-1}'\omega_{n}')^{1/2} \mapsto (K_{n-1}K_{n})^{-1/2} \otimes L_{\alpha_{n-1}+\alpha_{n}},
   & & ((\omega_{n-1}')^{-1}\omega_{n}')^{1/2} \mapsto (K_{n-1}^{-1}K_{n})^{-1/2} \otimes L_{-\alpha_{n-1}+\alpha_{n}}.
\end{align*}

\begin{cor}\label{thm:DJ=RTT_Dtype_1_param_double_cartan}
There is a unique Hopf algebra isomorphism $\theta_{q}\colon U_{q,q^{-1}}(\frak{so}_{2n}) \iso \wtd{U}(R_{q})$ such that
\begin{align*}
  &\theta_{q}(\omega_{i}) = \ell_{i+1,i+1}^{+} (\ell_{ii}^{+})^{-1}, & & \theta_{q}(\omega_{i}') = (\ell_{ii}^{-})^{-1} \ell_{i+1,i+1}^{-} & &
    \text{for} \quad 1 \le i < n, \\
  &\theta_{q}\left ((\omega_{n-1}\omega_{n})^{1/2}\right ) = (\ell_{n-1,n-1}^{+})^{-1},
    & &  \theta_{q}\left ((\omega_{n-1}'\omega_{n})^{1/2}\right ) = (\ell_{n-1,n-1}^{-})^{-1},\\
  &\theta_{q}\left ((\omega_{n-1}^{-1}\omega_{n})^{1/2}\right ) = (\ell_{nn}^{+})^{-1},
    & & \theta_{q}\left (((\omega_{n-1}')^{-1}\omega_{n}')^{1/2}\right ) = (\ell_{nn}^{-})^{-1}, \\
  &\theta_{q}(e_{i}) = \frac{1}{q - q^{-1}}\ell_{i+1,i}^{+}(\ell_{ii}^{+})^{-1}, & &
   \theta_{q}(f_{i}) = \frac{1}{q^{-1} - q}(\ell_{ii}^{-})^{-1}\ell_{i,i+1}^{-} & &
    \text{for} \quad 1 \le i < n, \\
  &\theta_{q}(e_{n}) = \frac{1}{q - q^{-1}}\ell_{n+1,n-1}^{+}(\ell_{n-1,n-1}^{+})^{-1},
    & & \theta_{q}(f_{n}) = \frac{1}{q^{-1} - q}(\ell_{n-1,n-1}^{-})^{-1}\ell_{n-1,n+1}^{-}.
\end{align*}
\end{cor}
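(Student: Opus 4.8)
The plan is to mirror the proofs of Corollaries~\ref{cor:DJ=RTT_Btype_1param_double_cartan} and~\ref{thm:DJ=RTT_Ctype_1param_double_cartan} essentially verbatim, the only type‑specific input being the shape $\alpha_n=\varepsilon_{n-1}+\varepsilon_n$ of the last simple root. First I would introduce the algebra homomorphism $g\colon \bb{K}[L_{\alpha_1}^{\pm 1},\ldots,L_{\alpha_n}^{\pm 1}]\to \bb{K}[L_{\varepsilon_1}^{\pm 1},\ldots,L_{\varepsilon_n}^{\pm 1}]$ sending $L_{\alpha_i}\mapsto L_{\varepsilon_i-\varepsilon_{i+1}}$ for $i<n$ and $L_{\alpha_n}\mapsto L_{\varepsilon_{n-1}+\varepsilon_n}$, which is well defined since $\Pi_{D_n}\subset \bb{Z}\varepsilon_1\oplus\cdots\oplus\bb{Z}\varepsilon_n$. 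I would then extend the embedding $\eta$ of Proposition~\ref{prop:double_cartan_DJ} to the adjoined square‑root generators exactly as recorded in the paragraph preceding the statement; note that $\alpha_{n-1}+\alpha_n=2\varepsilon_{n-1}$ and $-\alpha_{n-1}+\alpha_n=2\varepsilon_n$ lie in $Q$, so the prescribed images use genuine elements of $\bb{K}[L_{\alpha_i}^{\pm 1}]$ (only the $K$‑factors are square roots, and those live in the extended $U_q(\frak{so}_{2n})$), and a bookkeeping of the PBW basis exactly as in the proof of Proposition~\ref{prop:double_cartan_DJ} shows $\eta$ remains an algebra embedding. By Theorem~\ref{thm:DJ=RTT_Dtype_1_param}, $\theta_q'\otimes g$ is then an injective algebra homomorphism of the extended $U_q(\frak{so}_{2n})\otimes\bb{K}[L_{\alpha_i}^{\pm 1}]$ into $\wtd{U}'(R_q)\otimes\bb{K}[L_{\varepsilon_i}^{\pm 1}]$, and $\eta_R$ of Proposition~\ref{prop:double_cartan_RTT_finite} is an injective algebra homomorphism $\wtd{U}(R_q)\hookrightarrow \wtd{U}'(R_q)\otimes\bb{K}[L_{\varepsilon_i}^{\pm 1}]$.

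The key step is to verify that $(\theta_q'\otimes g)\circ\eta$ carries $U_{q,q^{-1}}(\frak{so}_{2n})$ into $\eta_R(\wtd{U}(R_q))$, whereupon one sets $\theta_q=\eta_R^{-1}\circ(\theta_q'\otimes g)\circ\eta$. As for types $B_n$ and $C_n$ this is a generator‑by‑generator computation; the cases $e_i,f_i,\omega_i,\omega_i'$ with $i<n$ go through word for word, and the one genuinely new case is node $n$: there $\eta(e_n)=E_n\otimes L_{\alpha_n}$ is sent by $\theta_q'\otimes g$ to $\tfrac{1}{q-q^{-1}}\,\ell_{n+1,n-1}^{+}(\ell_{n-1,n-1}^{+})^{-1}\otimes L_{\varepsilon_{n-1}+\varepsilon_n}$, which one matches against $\eta_R(\ell_{n+1,n-1}^{+})\eta_R(\ell_{n-1,n-1}^{+})^{-1}$ using $\deg(\ell_{n+1,n-1}^{+})=(\varepsilon_n,\varepsilon_{n-1})$ and the $D_n$‑convention $\varepsilon_{n+1}=-\varepsilon_n$; similarly for $f_n$, and the four square‑root generators are treated the same way, with $(\theta_q'\otimes g)\eta$‑images $(\ell_{n-1,n-1}^{\pm})^{-1}$ and $(\ell_{nn}^{\pm})^{-1}$. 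Injectivity of $\theta_q$ is then automatic, since $\eta,\eta_R,\theta_q'$ are all injective.

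For surjectivity I would observe that $\theta_q$ visibly hits all $\cal{E}_i=\ell_{i+1,i}^{+}(\ell_{ii}^{+})^{-1}$, all $\cal{F}_i$, and all $\ell_{ii}^{\pm}$ — the last point being exactly why the passage to the extended quantum groups is needed, since e.g.\ $\theta_q\big((\omega_{n-1}^{-1}\omega_n)^{1/2}\big)=(\ell_{nn}^{+})^{-1}$ is not reached by $\theta_q$ on the unextended Cartan subalgebra — and then invoke Lemma~\ref{lem:borel_spanning_sets} to conclude that every $\ell_{ij}^{\pm}$ lies in the image. Finally, Hopf compatibility follows by checking $(\theta_q\otimes\theta_q)\circ\Delta=\Delta\circ\theta_q$ and $\epsilon\circ\theta_q=\epsilon$ directly on generators, as in~\eqref{eq:Hopf_compatibility}. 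I do not anticipate a real obstacle: all the structural work (the triangular and quadrilateral decompositions of $\wtd{U}(R_q)$, the crossing symmetries of Lemma~\ref{lem:crossing_symmetries}, and the embedding $\eta_R$) is already in place from Section~\ref{sec:FRT construction finite}, and the only point demanding care is the $P$‑bidegree bookkeeping around node $n$ forced by $\alpha_n=\varepsilon_{n-1}+\varepsilon_n$, together with the compatibility of the square‑root extensions of $\eta$.
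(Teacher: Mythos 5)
Your outline faithfully follows the paper's $B_n$/$C_n$ template (the paper itself omits the $D_n$ proof, pointing only to the displayed extension of $\eta$): the definition of $g$, the verification that $(\theta_q'\otimes g)\circ\eta$ lands in $\eta_R(\wtd{U}(R_q))$ on each generator (with the bookkeeping $\varepsilon_{n+1}=-\varepsilon_n$ at node $n$), and the resulting injectivity are all correct as you present them.

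The surjectivity step, however, does not work as written for $D_n$. You assert that $\theta_q$ ``visibly hits all $\cal{E}_i$'' and then cite Lemma~\ref{lem:borel_spanning_sets}, exactly as the paper argues in type $B_n$. But here $\theta_q(e_n)=\tfrac{1}{q-q^{-1}}\,\ell_{n+1,n-1}^{+}(\ell_{n-1,n-1}^{+})^{-1}$, which is \emph{not} a multiple of $\cal{E}_n=\ell_{n+1,n}^{+}(\ell_{nn}^{+})^{-1}$. Worse, $\cal{E}_n$ actually vanishes in $\wtd{U}(R_q)$ when $\fg=\frak{so}_{2n}$: the entry $\ell_{n+1,n}^{+}$ has weight $-\varepsilon_{n+1}+\varepsilon_n=2\varepsilon_n$, which is not a $D_n$-root, and one checks it is zero by combining~\eqref{eq:LC_explicit_1} at $(i,j)=(n+1,n)$ (which, since $\rho_n=\rho_{n+1}=0$, gives $\ell_{n+1,n}^{+}\ell_{n+1,n+1}^{+}+\ell_{n+1,n+1}^{+}\ell_{n+1,n}^{+}=0$) with the RLL relation~\eqref{eq:RLL_relations_explicit} at $(m,p,i,j)=(n+1,n+1,n+1,n)$ (which yields $\ell_{n+1,n+1}^{+}\ell_{n+1,n}^{+}=q^{2}\ell_{n+1,n}^{+}\ell_{n+1,n+1}^{+}$, forcing $\ell_{n+1,n}^{+}=0$). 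Thus $\cal{E}_n$ is hit only trivially, and Lemma~\ref{lem:borel_spanning_sets} read with $\cal{E}_n=0$ cannot manufacture the $\alpha_n$-weight elements such as $\ell_{n+1,n-1}^{+}$ from $\cal{E}_1,\ldots,\cal{E}_{n-1}$ and $\ell_{\mu}^{+}$ alone, simply because $\alpha_n\notin\bb{Z}\alpha_1+\cdots+\bb{Z}\alpha_{n-1}$. To repair the argument you should record that $\theta_q$ hits $\cal{E}_1,\ldots,\cal{E}_{n-1}$, all $\ell_{ii}^{\pm}$, \emph{and} the $D_n$-specific node-$n$ generator $\ell_{n+1,n-1}^{+}(\ell_{n-1,n-1}^{+})^{-1}$, and then either re-prove the spanning lemma with that generator in place of $\cal{E}_n$, or (cleaner) deduce surjectivity from a bidegree-by-bidegree dimension count using Theorem~\ref{thm:quadrilangular_decomposition}, Corollary~\ref{cor:positive_negative_RTT}, and the isomorphism $\theta_q'$ of Theorem~\ref{thm:DJ=RTT_Dtype_1_param}, since $\theta_q$ is already known to be injective and bidegree-preserving.
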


Similarly to Theorems~\ref{thm:DJ=RTT_Btype_2_param} and~\ref{thm:DJ=RTT_Ctype_2param}, we then obtain a two-parameter analogue of
Theorem~\ref{thm:DJ=RTT_Dtype_1_param} and Corollary~\ref{thm:DJ=RTT_Dtype_1_param_double_cartan}:

\begin{theorem}\label{thm:DJ=RTT_Dtype_2param}
There is a unique Hopf algebra isomorphism $\theta_{r,s}\colon U_{r,s}(\frak{so}_{2n}) \iso \wtd{U}(R_{r,s})$ such that
\begin{align*}
  &\theta_{r,s}(\omega_{i}) = \ell_{i+1,i+1}^{+} (\ell_{ii}^{+})^{-1}, & & \theta_{r,s}(\omega_{i}') = (\ell_{ii}^{-})^{-1} \ell_{i+1,i+1}^{-} & &
    \text{for} \quad 1 \le i < n, \\
  &\theta_{r,s}\left ((\omega_{n-1}\omega_{n})^{1/2}\right ) = (\ell_{n-1,n-1}^{+})^{-1},
    & & \theta_{r,s}\left ((\omega_{n-1}'\omega_{n}')^{1/2}\right ) = (\ell_{n-1,n-1}^{-})^{-1}, \\
  &\theta_{r,s}\left ((\omega_{n-1}^{-1}\omega_{n})^{1/2}\right ) = (\ell_{nn}^{+})^{-1},
    & & \theta_{r,s}\left (\left ((\omega_{n-1}')^{-1}\omega_{n}'\right )^{1/2}\right ) = (\ell_{nn}^{-})^{-1}, \\
  &\theta_{r,s}(e_{i}) = \frac{1}{r - s}\ell_{i+1,i}^{+}(\ell_{ii}^{+})^{-1}, & &
   \theta_{r,s}(f_{i}) = \frac{1}{s - r}(\ell_{ii}^{-})^{-1}\ell_{i,i+1}^{-} & &
    \text{for} \quad 1 \le i < n, \\
  &\theta_{r,s}(e_{n}) = \frac{1}{r - s}\ell_{n+1,n-1}^{+}(\ell_{n-1,n-1}^{+})^{-1},
    & & \theta_{r,s}(f_{n}) = \frac{rs}{s - r}(\ell_{n-1,n-1}^{-})^{-1}\ell_{n-1,n+1}^{-}.
\end{align*}
\end{theorem}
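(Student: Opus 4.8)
The plan is to follow verbatim the strategy of the proofs of Theorems~\ref{thm:DJ=RTT_Btype_2_param} and~\ref{thm:DJ=RTT_Ctype_2param}. The starting point is the Cartan-doubled one-parameter isomorphism $\theta_{q}\colon U_{q,q^{-1}}(\frak{so}_{2n}) \iso \wtd{U}(R_{q})$ of Corollary~\ref{thm:DJ=RTT_Dtype_1_param_double_cartan} (together with its extension to the square-root Cartan generators $(\omega_{n-1}\omega_{n})^{1/2}$, $(\omega_{n-1}^{-1}\omega_{n})^{1/2}$, etc.). First I would observe that $\theta_{q}$ is homogeneous with respect to the $P\times P$-gradings on both sides: the grading on $U_{q,q^{-1}}(\frak{so}_{2n})$ is that of~\eqref{eq:our-bigrading} (extended so that the square-root Cartan elements have bidegree $(0,0)$), and the grading on $\wtd{U}(R_{q})$ is the one from Proposition~\ref{prop:U(R)_grading}; homogeneity is checked directly on generators using $\deg(\ell_{ij}^{\pm})=(-\varepsilon_{i},\varepsilon_{j})$. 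Consequently $\theta_{q}$ descends to a Hopf algebra isomorphism $\theta_{q}\colon U_{q,\zeta}(\frak{so}_{2n}) \iso \wtd{U}(R_{q})_{\zeta}$ of the bicharacter twists.

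Next I would set $\theta_{r,s} = \wtd{\Psi}^{-1} \circ \theta_{q} \circ \varphi$, where $\varphi\colon U_{r,s}(\frak{so}_{2n}) \iso U_{q,\zeta}(\frak{so}_{2n})$ is the Hopf algebra isomorphism of Proposition~\ref{prop:twisted_algebra} (extended to the square-root Cartan generators in the evident way) and $\wtd{\Psi}\colon \wtd{U}(R_{r,s}) \iso \wtd{U}(R_{q})_{\zeta}$ is the Hopf algebra isomorphism of Corollary~\ref{cor:U(R)_2_vs_1_parameter}. Being a composite of Hopf algebra isomorphisms, $\theta_{r,s}$ is automatically a Hopf algebra isomorphism, so the only remaining content is to evaluate $\theta_{r,s}$ on the generators of $U_{r,s}(\frak{so}_{2n})$.

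For the generator computation I would proceed exactly as in the proof of Theorem~\ref{thm:DJ=RTT_Btype_2_param}: apply $\varphi$ (which rescales $f_{i}$ by $(r_{i}s_{i})^{-1/2}$ and fixes $e_{i},\omega_{i},\omega_{i}'$ as well as the square-root Cartan elements), then $\theta_{q}$ from Corollary~\ref{thm:DJ=RTT_Dtype_1_param_double_cartan}, then $\wtd{\Psi}^{-1}$ (which rescales $\ell_{ij}^{\pm}$ by $\psi_{i}\psi_{j}^{-1}$, with $\psi_{i}$ given by~\eqref{eq:psi_values_D}). The relevant scalar identities are that $\psi_{i+1}\psi_{i}^{-1}$ coincides with $\zeta(\varepsilon_{i+1},\varepsilon_{i})$ for $1\le i<n$, together with the analogous matchings at the fork node (involving the indices $n-1$, $n$, $(n-1)'=n+1$ and $n'=n+2$, and the values $\zeta(\varepsilon_{n-1},\varepsilon_{n})$ and $\zeta(-\varepsilon_{n-1},\varepsilon_{n})$); these follow from~\eqref{eq:psi_values_D}, from~\eqref{eq:zeta_formula}, and from~\cite[(5.33)]{MT1}. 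Since the square-root Cartan generators sit in bidegree $(0,0)$, the corresponding $\zeta$-factors are trivial, and one checks that they are mapped to $(\ell_{n-1,n-1}^{\pm})^{-1}$ and $(\ell_{nn}^{\pm})^{-1}$ respectively. Substituting $q=r^{1/2}s^{-1/2}$ then converts every coefficient in Corollary~\ref{thm:DJ=RTT_Dtype_1_param_double_cartan} into the two-parameter coefficient in the statement.

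The main obstacle I anticipate is purely bookkeeping at the spin node $\alpha_{n}=\varepsilon_{n-1}+\varepsilon_{n}$: one must track the $\psi$- and $\zeta$-factors attached to the four indices $n-1,n,n+1,n+2$ rather than to a single consecutive pair $\{i,i+1\}$, and one must verify that the extra square-root Cartan generators behave compatibly under both $\varphi$ and the twist so that the extended $U_{r,s}(\frak{so}_{2n})$ really maps into $\wtd{U}(R_{r,s})$. All of this is routine once the $D_n$-specific values of $\psi_{i}$ and $\zeta(\varepsilon_{i},\varepsilon_{j})$ are plugged in, so I expect no genuine difficulty beyond careful indexing, exactly as in types $B_{n}$ and $C_{n}$.
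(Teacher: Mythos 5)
Your proposal is correct and follows exactly the paper's own strategy: the paper gives no separate proof for type $D_n$, stating only that Theorem~\ref{thm:DJ=RTT_Dtype_2param} follows ``similarly to Theorems~\ref{thm:DJ=RTT_Btype_2_param} and~\ref{thm:DJ=RTT_Ctype_2param},'' so your construction $\theta_{r,s} = \wtd{\Psi}^{-1}\circ\theta_q\circ\varphi$ plus the generator bookkeeping is precisely what is intended. One small slip: the square-root Cartan generators do not sit in bidegree $(0,0)$ — for instance $(\omega_{n-1}\omega_n)^{1/2}$ has bidegree $(\varepsilon_{n-1},-\varepsilon_{n-1})$ — but your conclusion still holds because elements of bidegree $(\gamma,-\gamma)$ are unaffected by the twist, and because $\theta_q$ sends these generators to single elements $(\ell_{kk}^{\pm})^{-1}$ (not products), so no $\zeta$-factor arises and $\psi_k\psi_k^{-1}=1$ renders the $\wtd{\Psi}^{-1}$-rescaling trivial.
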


\begin{remark}\label{rmk:JLM_comparison}
Comparing our Theorems~\ref{thm:DJ=RTT_Btype_2_param},~\ref{thm:DJ=RTT_Ctype_2param},~\ref{thm:DJ=RTT_Dtype_2param} to those of~\cite{HJZ,HXZ1,HXZ2},
first let us note that the latter references had errors in not imposing the extra relations~\eqref{eq:LC_relations_2_parameter} and~\eqref{eq:rel_B_extra},
and second that their formulas look slightly different from our $\theta_{r,s}^{-1}$ as they consider $U(R)$ with $R=\tau\circ R_{r,s}\circ \tau$ instead.
\end{remark}


\section{Two-parameter quantum affine groups}\label{sec:loop-realization}

In this Section, we recall the notion of two-parameter quantum affine groups. We realize them as bicharacter twists of
the one-parameter ones, which allows to establish the isomorphism between their two realizations.


\subsection{The new Drinfeld realization}\label{ssec:loop-realization}
\

We first recall the definition of the new Drinfeld realization $U_{r,s}^{D}(\what{\fg})$ in the two-parameter case.
In the formulas below, we extend the Ringel form to $\what{Q} = \bb{Z}\alpha_{0} \oplus Q$ via
\[
  \langle c\alpha_{0} + \mu,c'\alpha_{0} + \mu'\rangle = \langle -c\theta + \mu,-c'\theta + \mu'\rangle
  \quad \text{for all} \quad \mu,\mu' \in Q,\ c,c' \in \bb{Z},
\]
where $\theta \in \Phi^{+}$ is the highest root.

\begin{definition}\label{def:drinfeld_2_parameter}
The \textbf{two-parameter quantum group of $\what{\fg}$ in the new Drinfeld presentation} is the associative $\bb{K}$-algebra
$U_{r,s}^{D}(\what{\fg})$ with generators
  $\{\omega_{i}^{\pm 1},(\omega_{i}')^{\pm 1},x_{i,m}^{\pm},a_{i,\ell} \,|\, 1 \le i \le n,\, m \in \bb{Z},\, \ell \in \bb{Z} \setminus \{0\}\}
   \cup \{\gamma^{\pm 1/2},(\gamma')^{\pm 1/2}\}$,
subject to the following relations (for all $1 \le i, j \le n$, $m,m' \in \bb{Z}$, $\ell,\ell' \in \bb{Z} \setminus \{0\}$):
\begin{equation}\label{eq:D1}
  [\omega_{i},\omega_{j}] = [\omega_{i},\omega_{j}'] = [\omega_{i}',\omega_{j}'] = 0,\qquad
  \omega_{i}^{\pm 1}\omega_{i}^{\mp 1} = 1 = (\omega_{i}')^{\pm 1}(\omega_{i}')^{\mp 1},
\end{equation}
\begin{equation}\label{eq:D2}
  \gamma^{1/2}\ \text{and}\ (\gamma')^{1/2}\ \text{are central}, \qquad
  \gamma^{\pm 1/2}\gamma^{\mp 1/2} = 1 = (\gamma')^{\pm 1/2}(\gamma')^{\mp 1/2},
\end{equation}
\begin{equation}\label{eq:D3}
  [\omega_{i},a_{j,\ell}] = [\omega_{i}',a_{j,\ell}] = 0,
\end{equation}
\begin{equation}\label{eq:D4}
  \omega_{i}x_{j,m}^{\pm } = r^{\pm \langle \alpha_{j},\alpha_{i}\rangle}s^{\mp \langle \alpha_{i},\alpha_{j}\rangle}x_{j,m}^{\pm }\omega_{i},\qquad
  \omega_{i}'x_{j,m}^{\pm} = r^{\mp \langle \alpha_{i},\alpha_{j}\rangle}s^{\pm \langle \alpha_{j},\alpha_{i}\rangle}x_{j,m}^{\pm}\omega_{i}',
\end{equation}
\begin{equation}\label{eq:D5}
  [a_{i,\ell},a_{j,\ell'}] = \delta_{\ell + \ell',0}(r_{i}s_{i})^{-\frac{\ell a_{ij}}{2}}\frac{[\ell a_{ij}]_{r_{i},s_{i}}}{|\ell|}
    (\gamma\gamma')^{\frac{|\ell|}{2}}\frac{\gamma^{|\ell|} - (\gamma')^{|\ell|}}{r_{j} - s_{j}},
\end{equation}
\begin{equation}\label{eq:D6}
  [a_{i,\ell},x_{j,m}^{\pm}] =
  \begin{cases}
    \pm \frac{1}{\ell}(r_{i}s_{i})^{-\frac{\ell a_{ij}}{2}}[\ell a_{ij}]_{r_{i},s_{i}}(\gamma')^{\pm \ell/2}(\gamma\gamma')^{\ell/2}x_{j,m + \ell}^{\pm}
      & \text{if}\ \ell > 0 \\
    \pm \frac{1}{\ell}(r_{i}s_{i})^{-\frac{\ell a_{ij}}{2}}[\ell a_{ij}]_{r_{i},s_{i}}\gamma^{\pm \ell/2}(\gamma\gamma')^{-\ell /2}x_{j,m + \ell}^{\pm}
      & \text{if}\ \ell < 0
  \end{cases},
\end{equation}
\begin{equation}\label{eq:D7}
\begin{split}
  &x_{i,m+1}^{\pm}x_{j,m'}^{\pm} - r^{\pm \langle \alpha_{j},\alpha_{i}\rangle}s^{\mp \langle \alpha_{i},\alpha_{j}\rangle}x_{j,m'}^{\pm}x_{i,m + 1}^{\pm} \\
  &\qquad = (rs)^{\pm \frac{1}{2}(\langle \alpha_{j},\alpha_{i}\rangle - \langle \alpha_{i},\alpha_{j}\rangle)}
     \left( r^{\pm \langle \alpha_{i},\alpha_{j}\rangle}s^{\mp \langle \alpha_{j},\alpha_{i}\rangle}x_{i,m}^{\pm}x_{j,m' + 1}^{\pm}
            - x_{j,m' + 1}^{\pm}x_{i,m}^{\pm} \right),
\end{split}
\end{equation}
\begin{equation}\label{eq:D8}
  [x_{i,m}^{+},x_{j,m'}^{-}] =
  \delta_{ij}\frac{(\gamma')^{-m}\gamma^{-\frac{m + m'}{2}}\omega_{i,m+m'} - \gamma^{m'}(\gamma')^{\frac{m + m'}{2}}\omega_{i,m+m'}'}{r_{i} - s_{i}},
\end{equation}
and, finally, for any $i \neq j$, $m \in \bb{Z}$, and sequence $(d_{1},\ldots ,d_{N})$ of integers (where $N = 1 - a_{ij}$):
\begin{equation}\label{eq:D9+}
  \sum_{\pi \in S_{N}}\sum_{k = 0}^{N} (-1)^{k}\qbinom{N}{k}_{r_{i},s_{i}} (r_{i}s_{i})^{\frac{1}{2}k(k-1)} (rs)^{k\langle \alpha_{j},\alpha_{i}\rangle}
  x_{i,d_{\pi(1)}}^{+}\ldots x_{i,d_{\pi(N-k)}}^{+}x_{j,m}^{+}x_{i,d_{\pi(N-k + 1)}}^{+}\ldots x_{i,d_{\pi(N)}}^{+} = 0,
\end{equation}
and
\begin{equation}\label{eq:D9-}
  \sum_{\pi \in S_{N}}\sum_{k = 0}^{N} (-1)^{k}\qbinom{N}{k}_{r_{i},s_{i}}(r_{i}s_{i})^{\frac{1}{2}k(k-1)}(rs)^{k\langle \alpha_{j},\alpha_{i}\rangle}
  x_{i,d_{\pi(1)}}^{-}\ldots x_{i,d_{\pi(k)}}^{-}x_{j,m}^{-}x_{i,d_{\pi(k+1)}}^{-}\ldots x_{i,d_{\pi(N)}}^{-} = 0.
\end{equation}
The elements $\omega_{i,m}$, $\omega_{i,-m}'$ appearing in~\eqref{eq:D8} are defined via $\omega_{i,m} = 0 = \omega_{i,-m}'$ for $m < 0$ and
\begin{align*}
  \omega_{i}(z) &= \sum_{m \geq 0} \omega_{i,m}z^{-m} = \omega_{i}\exp\left ((r_{i} - s_{i})\sum_{\ell > 0} a_{i,\ell}z^{-\ell}\right ), \\
  \omega_{i}'(z) &= \sum_{m \geq 0} \omega_{i,-m}'z^{m} = \omega_{i}'\exp\left (-(r_{i} - s_{i})\sum_{\ell > 0} a_{i,-\ell}z^{\ell}\right ).
\end{align*}
\end{definition}

\begin{remark}
(a) We note that~\eqref{eq:D5} corrects a typo in~\cite[(D2)]{HZ} but is in fact equivalent to~\cite[(3.5)]{HZ}
(as well as is compatible with~\cite[Definition 5.1]{HZ}).

\medskip
\noindent
(b) We note that~(\ref{eq:D9+},~\ref{eq:D9-}) correct~\cite[(D$9_2$, D$9_3$)]{HZ}, while~\cite[(D$9_1$)]{HZ} is a special case
of~(\ref{eq:D9+},~\ref{eq:D9-}).
\end{remark}

There are a few important subalgebras of $U_{r,s}^{D}(\what{\fg})$ that we will need later in this Section. We define
$U_{r,s}^{D,\pm} = U_{r,s}^{D,\pm}(\what{\fg})$ as the subalgebras of $U_{r,s}^{D}(\what{\fg})$ generated by
$\{x_{i,m}^{\pm} \,|\, 1 \le i \le n,\, m \in \bb{Z}\}$, and we define $U_{r,s}^{D,0} = U_{r,s}^{D,0}(\what{\fg})$ as the subalgebra generated by
  $\{\gamma^{\pm 1/2},(\gamma')^{\pm 1/2},\omega_{i}^{\pm 1},(\omega_{i}')^{\pm 1},a_{i,\ell} \,|\, 1 \le i \le n,\, \ell \in \bb{Z} \setminus \{0\}\}$.

The algebra $U_{r,s}^{D}(\what{\fg})$ is naturally $Q \times Q$-graded via
\begin{equation}\label{eq:Drinfeld-grading}
\begin{split}
  &\deg(\gamma^{1/2}) = \deg((\gamma')^{1/2}) = (0,0), \\
  &\deg(a_{i,\ell}) = (0,0)\quad \text{for all}\ 1 \le i \le n,\ \ell \in \bb{Z} \setminus \{0\}, \\
  &\deg(\omega_{i}) = \deg(\omega_{i}') = (\alpha_{i},-\alpha_{i})\quad \text{for all}\ 1 \le i \le n, \\
  &\deg(x_{i,m}^{+}) = (\alpha_{i},0),\ \deg(x_{i,m}^{-}) = (0,-\alpha_{i})\quad \text{for all}\ 1 \le i \le n,\ m \in \bb{Z}.
\end{split}
\end{equation}
We may thus define an algebra $U_{q,\zeta}^{D}(\what{\fg}) = (U_{q,q^{-1}}^{D}(\what{\fg}))_{\zeta}$ by changing the multiplication
via~\eqref{eq:twisted_product_general}.

Our first key result is:

\begin{theorem}\label{thm:twisted_algebra_loop}
Let $\zeta\colon Q \times Q \to \bb{K}$ be the bicharacter satisfying~\eqref{eq:zeta_formula}, and let $q = r^{1/2}s^{-1/2}$.
Then there is an algebra isomorphism $\varphi_{D}\colon U_{r,s}^{D}(\what{\fg}) \iso U_{q,\zeta}^{D}(\what{\fg})$ given by
\begin{equation*}
\begin{split}
  &\varphi_{D}(a_{i,\ell}) = (r_{i}s_{i})^{-1/2}a_{i,\ell}, \qquad \varphi_{D}(x_{i,m}^{+}) = x_{i,m}^{+},
    \qquad \varphi_{D}(x_{i,m}^{-}) = (r_{i}s_{i})^{-1/2}x_{i,m}^{-}, \\
  &\varphi_{D}(\omega_{i}) = \omega_{i}, \quad \varphi_{D}(\omega_{i}') = \omega_{i}',
    \quad \varphi_{D}(\gamma^{1/2}) = \gamma^{1/2}, \quad \varphi_{D}((\gamma')^{1/2}) = (\gamma')^{1/2},
\end{split}
\end{equation*}
for all $1 \le i \le n$, $\ell \in \bb{Z} \setminus \{0\}$, $m \in \bb{Z}$.
\end{theorem}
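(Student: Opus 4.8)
The plan is to verify directly that the proposed assignment $\varphi_D$ is a well-defined algebra homomorphism into the twisted algebra $U_{q,\zeta}^D(\what{\fg}) = (U_{q,q^{-1}}^D(\what{\fg}))_\zeta$, and then to exhibit an inverse of the same shape. Since both algebras are presented by generators and relations (Definition~\ref{def:drinfeld_2_parameter}), and since the map is invertible on generators by inspection (each generator is rescaled by a nonzero scalar, and the scalars $(r_is_i)^{-1/2}$ are units), the only real content is checking that the defining relations \eqref{eq:D1}--\eqref{eq:D9-} of $U_{r,s}^D(\what{\fg})$ are sent to consequences of the defining relations of $U_{q,q^{-1}}^D(\what{\fg})$ once the product is deformed via \eqref{eq:twisted_product_general} using the $Q\times Q$-bigrading~\eqref{eq:Drinfeld-grading}. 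The symmetry of the construction (the same rescaling applied with $r,s$ swapped for $q,q^{-1}$, together with $\zeta \mapsto \zeta^{-1}$) then produces the inverse, exactly as in the proof strategy used for Proposition~\ref{prop:twisted_algebra}, so that $\varphi_D$ is an isomorphism.

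First I would record, for each pair of generators, the scalar $\zeta(\deg_1(a),\deg_1(b))\zeta(\deg_2(a),\deg_2(b))^{-1}$ by which the twisted product $a\circ b$ differs from $ab$; by~\eqref{eq:zeta_formula} this is always a monomial in $r^{1/2},s^{1/2},q^{1/2}$ expressible through $(\omega'_\bullet,\omega_\bullet)$ and $q^{(\cdot,\cdot)}$. The Cartan-type relations \eqref{eq:D1}--\eqref{eq:D3} and the centrality \eqref{eq:D2} are immediate since the relevant generators have bidegree $(0,0)$ or $(\alpha_i,-\alpha_i)$, and $\zeta$ restricted to such degrees against $(0,0)$ is trivial; relation~\eqref{eq:D4} reduces to matching $r^{\pm\langle\alpha_j,\alpha_i\rangle}s^{\mp\langle\alpha_i,\alpha_j\rangle}$ against its one-parameter value $q^{\pm d_i a_{ij}}$ times the twist scalar $\zeta(\alpha_i,\alpha_j)^{\pm 2}=p_{ij}^{\pm 1}$, which is precisely the definition of $p_{ij}$ from Subsection~\ref{ssec:quantum_group_twisting}. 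The purely ``positive half'' relation~\eqref{eq:D7}, the Serre-type relations \eqref{eq:D9+}--\eqref{eq:D9-}, and the current relation \eqref{eq:D5} in the Heisenberg subalgebra are checked the same way: each monomial appearing has a fixed total bidegree, the twist rescales all terms of a homogeneous relation by the same scalar, and one only needs to see that the coefficients $(rs)^{\pm\frac12(\cdots)}$, $(rs)^{k\langle\alpha_j,\alpha_i\rangle}$, etc., arise from the one-parameter coefficients $q^{(\cdots)}$ after multiplying by the appropriate power of $\zeta(\alpha_i,\alpha_j)$ coming from $\varphi_D$ and from $\circ$; the extra $(r_is_i)^{-1/2}$ normalizations on $x^-_{i,m}$ and $a_{i,\ell}$ are designed exactly so that the mixed relations \eqref{eq:D6} and \eqref{eq:D8} come out right.

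The main obstacle, as with Proposition~\ref{prop:twisted_algebra}, will be relation~\eqref{eq:D8}, the $[x^+_{i,m},x^-_{j,m'}]$ relation, because the right-hand side involves the non-obvious series elements $\omega_{i,m+m'},\omega'_{i,m+m'}$ defined through the exponential generating functions $\omega_i(z),\omega'_i(z)$ in terms of the $a_{i,\ell}$, and one must check that $\varphi_D$ intertwines these series. Here the point is that $\varphi_D$ multiplies each $a_{i,\ell}$ by the constant $(r_is_i)^{-1/2}$ while replacing $(r_i-s_i)$ in the exponent by $(q_i-q_i^{-1})$, and since the bidegrees of the $a_{i,\ell}$ are $(0,0)$ these factors do not interact with the twist; one then verifies that $(r_i-s_i)a_{i,\ell} \mapsto (q_i-q_i^{-1})\cdot(r_is_i)^{-1/2}a_{i,\ell}$ combines with the $\varphi_D$-images and twist scalars of $\omega_i,\omega_i',\gamma,\gamma'$ to reproduce the one-parameter version of~\eqref{eq:D8} scaled by $(r_is_i)^{-1/2}$, matching $\varphi_D([x^+_{i,m},x^-_{j,m'}]) = \varphi_D(x^+_{i,m})\circ\varphi_D(x^-_{j,m'}) - \varphi_D(x^-_{j,m'})\circ\varphi_D(x^+_{i,m})$. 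Once \eqref{eq:D8} is dispatched, the remaining relations are routine bookkeeping, and the existence of the two-sided inverse (built by the analogous formulas with the roles of $(r,s)$ and $(q,q^{-1})$ interchanged, using $\zeta^{-1}$ in place of $\zeta$, so that the composite twist by $\zeta$ then $\zeta^{-1}$ is the identity on the underlying vector space) finishes the proof that $\varphi_D$ is an algebra isomorphism.
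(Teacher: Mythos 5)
Your proposal takes essentially the same approach as the paper: a direct relation-by-relation verification that the formulas for $\varphi_D$ respect the twisted multiplication, using the $Q\times Q$-bigrading~\eqref{eq:Drinfeld-grading} and the bicharacter identity~\eqref{eq:zeta_formula} to match the $r,s$-coefficients against their $q,q^{-1}$-counterparts, with the observation for~\eqref{eq:D8} that $(r_i-s_i)\varphi_D(a_{i,\ell})=(q_i-q_i^{-1})a_{i,\ell}$ forcing $\varphi_D(\omega_{i,m})=\omega_{i,m}$ and $\varphi_D(\omega'_{i,m})=\omega'_{i,m}$. The one slip is the display ``$(r_i-s_i)a_{i,\ell}\mapsto (q_i-q_i^{-1})\cdot(r_is_i)^{-1/2}a_{i,\ell}$'': the correct statement is $\varphi_D\bigl((r_i-s_i)a_{i,\ell}\bigr)=(r_i-s_i)(r_is_i)^{-1/2}a_{i,\ell}=(q_i-q_i^{-1})a_{i,\ell}$, so the extra $(r_is_i)^{-1/2}$ in your line is spurious; this is a typo-level issue and does not affect the validity of the argument.
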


\begin{proof}
This result amounts to verifying that the relations of Definition~\ref{def:drinfeld_2_parameter} hold for the $\varphi_{D}$-images
of the same-named generators of $U_{r,s}^{D}(\what{\fg})$. Relations~\eqref{eq:D1}--\eqref{eq:D3} are immediate from the definition
of $\zeta$ and the grading~\eqref{eq:Drinfeld-grading}. For relation~\eqref{eq:D4}, we use the skew-symmetry of $\zeta$ and
equation~\eqref{eq:zeta_formula} to obtain
\begin{align*}
  \varphi_{D}(\omega_{i}) \circ \varphi_{D}(x_{j,m}^{+}) &= \omega_{i} \circ x_{j,m}^{+} =
  \zeta(\alpha_{i},\alpha_{j})\omega_{i}x_{j,m}^{+} = \zeta(\alpha_{i},\alpha_{j})q^{(\alpha_{i},\alpha_{j})}x_{j,m}^{+}\omega_{i} \\
  &= \zeta(\alpha_{i},\alpha_{j})^{2}q^{(\alpha_{i},\alpha_{j})}x_{j,m}^{+} \circ \omega_{i} = (\omega_{j}',\omega_{i})x_{j,m}^{+} \circ \omega_{i} =
   r^{\langle \alpha_{j},\alpha_{i}\rangle} s^{-\langle \alpha_{i},\alpha_{j}\rangle}\varphi_{D}(x_{j,m}^{+}) \circ \varphi_{D}(\omega_{i})
\end{align*}
as desired. Similarly,
\begin{align*}
  \varphi_{D}(\omega_{i}) \circ \varphi_{D}(x_{j,m}^{-}) &= (r_js_j)^{-1/2} \omega_{i} \circ x_{j,m}^{-}
  = (r_{j}s_{j})^{-1/2}\zeta(\alpha_{i},\alpha_{j})^{-1}\omega_{i}x_{j,m}^{-} \\
  &= (r_{j}s_{j})^{-1/2}\zeta(\alpha_{i},\alpha_{j})^{-1}q^{-(\alpha_{i},\alpha_{j})}x_{j,m}^{-}\omega_{i}
   = (r_{j}s_{j})^{-1/2}\zeta(\alpha_{i},\alpha_{j})^{-2}q^{-(\alpha_{i},\alpha_{j})}x_{j,m}^{-} \circ \omega_{i} \\
  &= r^{-\langle \alpha_{j},\alpha_{i}\rangle} s^{\langle \alpha_{i},\alpha_{j}\rangle} \varphi_{D}(x_{j,m}^{-}) \circ \varphi_{D}(\omega_{i}).
\end{align*}
Analogous computations show that the second relations in~\eqref{eq:D4} hold for $\varphi_D(x_{j,m}^{\pm})$ and $\varphi_D(\omega_{i}')$.

For~\eqref{eq:D5}, we get
\begin{equation*}
\begin{split}
  [\varphi_{D}(a_{i,\ell}),\varphi_{D}(a_{j,\ell'})] &= (r_{i}s_{i})^{-1/2}(r_{j}s_{j})^{-1/2}[a_{i,\ell},a_{j,\ell'}] \\
  &= (r_{i}s_{i})^{-1/2}(r_{j}s_{j})^{-1/2}\delta_{\ell+\ell',0}
     \frac{[\ell a_{ij}]_{q_{i}}}{|\ell|}(\gamma\gamma')^{\frac{|\ell|}{2}}\frac{\gamma^{|\ell|} - (\gamma')^{|\ell|}}{q_{j} - q_{j}^{-1}}.
\end{split}
\end{equation*}
Then since
\[
  [\ell a_{ij}]_{q_{i}} =
  \frac{r_{i}^{\frac{1}{2}\ell a_{ij}}s_{i}^{-\frac{1}{2}\ell a_{ij}} - r_{i}^{-\frac{1}{2}\ell a_{ij}}s_{i}^{\frac{1}{2}\ell a_{ij}}}
       {r_{i}^{1/2}s_{i}^{-1/2} - r_{i}^{-1/2}s_{i}^{1/2}} =
  (r_{i}s_{i})^{\frac{1}{2}(1 - \ell a_{ij})}
  \frac{r_{i}^{\ell a_{ij}} - s_{i}^{\ell a_{ij}}}{r_{i} - s_{i}} = (r_{i}s_{i})^{\frac{1}{2}(1 - \ell a_{ij})}[\ell a_{ij}]_{r_{i},s_{i}}
\]
and $\frac{1}{q_{j} - q_{j}^{-1}} = (r_{j}s_{j})^{1/2}\frac{1}{r_{j} - s_{j}}$, we find that~\eqref{eq:D5} holds
for $\varphi_{D}(a_{i,\ell})$ and $\varphi_{D}(a_{j,\ell'})$.

Next, for $\ell > 0$, we have
\begin{multline*}
  [\varphi_{D}(a_{i,\ell}), \varphi_{D}(x_{j,m}^{+})]
  = (r_{i}s_{i})^{-1/2}[a_{i,\ell},x_{j,m}^{+}]
  = \frac{1}{\ell}(r_{i}s_{i})^{-1/2}[\ell a_{ij}]_{q_{i}}(\gamma')^{\frac{\ell}{2}}(\gamma\gamma')^{\frac{\ell}{2}}x_{j,m + \ell}^{+} \\
  = \frac{1}{\ell}(r_{i}s_{i})^{-\frac{1}{2}\ell a_{ij}}[\ell a_{ij}]_{r_{i},s_{i}}(\gamma')^{\frac{\ell}{2}}(\gamma\gamma')^{\frac{\ell}{2}}x_{j,m + \ell}^{+}
  = \frac{1}{\ell}(r_{i}s_{i})^{-\frac{1}{2}\ell a_{ij}}[\ell a_{ij}]_{r_{i},s_{i}}(\gamma')^{\frac{\ell}{2}}
    (\gamma\gamma')^{\frac{\ell}{2}} \varphi_D(x_{j,m + \ell}^{+}),
\end{multline*}
so relation~\eqref{eq:D6} holds for $x^+_{j,m}$ when $\ell > 0$. The proof for $x^{-}_{j,m}$ as well as for $\ell < 0$ proceeds similarly.

To prove~\eqref{eq:D7}, we shall need the following simple identity:
\begin{equation}\label{eq:aux-7.7}
  (rs)^{\pm \frac{1}{2}(\langle \alpha_{j},\alpha_{i}\rangle - \langle \alpha_{i},\alpha_{j}\rangle)} =
  (\omega_{j}',\omega_{i})^{\pm 1/2}(\omega_{i}',\omega_{j})^{ \mp 1/2} \overset{\eqref{eq:zeta_formula}}{=}
  \zeta(\alpha_{i},\alpha_{j})^{\pm 1}\zeta(\alpha_{j},\alpha_{i})^{\mp 1} = \zeta(\alpha_{i},\alpha_{j})^{\pm 2}.
\end{equation}
Now, we have:
\begin{align*}
  &x_{i,m + 1}^{\pm} \circ x_{j,m'}^{\pm} - (\omega_{j}',\omega_{i})^{\pm 1}x_{j,m'}^{\pm} \circ x_{i,m + 1}^{\pm}
  = \zeta(\alpha_{i},\alpha_{j})^{\pm 1}x_{i,m + 1}^{\pm}x_{j,m'}^{\pm} - (\omega_{j}',\omega_{i})^{\pm 1}
    \zeta(\alpha_{i},\alpha_{j})^{\mp 1}x_{j,m'}^{\pm}x_{i,m+1}^{\pm} \\
  &\overset{\eqref{eq:zeta_formula}}{=} \zeta(\alpha_{i},\alpha_{j})^{\pm 1}
    \left (x_{i,m+1}^{\pm}x_{j,m'}^{\pm} - q^{\pm (\alpha_{i},\alpha_{j})}x_{j,m'}^{\pm}x_{i,m+1}^{\pm}\right)
  = \zeta(\alpha_{i},\alpha_{j})^{\pm 1}\left(q^{\pm(\alpha_{i},\alpha_{j})}x_{i,m}^{\pm}x_{j,m'+1}^{\pm} - x_{j,m'+1}^{\pm}x_{i,m}^{\pm}\right ) \\
  &= \zeta(\alpha_{i},\alpha_{j})^{\pm 1}\left (q^{\pm (\alpha_{i},\alpha_{j})}\zeta(\alpha_{i},\alpha_{j})^{\mp 1}x_{i,m}^{\pm} \circ x_{j,m' + 1}^{\pm} -
     \zeta(\alpha_{i},\alpha_{j})^{\pm 1}x_{j,m'+1}^{\pm} \circ x_{i,m}^{\pm}\right ) \\
  &= \zeta(\alpha_{i},\alpha_{j})^{\pm 2}
     \left (q^{\pm (\alpha_{i},\alpha_{j})}\zeta(\alpha_{j},\alpha_{i})^{\pm 2}x_{i,m}^{\pm} \circ x_{j,m' + 1}^{\pm} - x_{j,m'+1}^{\pm} \circ x_{i,m}^{\pm}\right ) \\
  &\overset{\eqref{eq:aux-7.7}}{=} (rs)^{\pm \frac{1}{2}(\langle \alpha_{j},\alpha_{i}\rangle - \langle \alpha_{i},\alpha_{j}\rangle)}
     \left( (\omega_{i}',\omega_{j})^{\pm 1}x_{i,m}^{\pm} \circ x_{j,m'+1}^{\pm} - x_{j,m'+1}^{\pm}\circ x_{i,m}^{\pm} \right),
\end{align*}
so~\eqref{eq:D7} holds among the elements $\varphi_{D}(x_{i,m}^{\pm})$.

To prove~\eqref{eq:D8}, we first note that
\begin{align*}
  \exp \left( \pm (q_{i} - q_{i}^{-1}) \sum_{\ell = 1}^{\infty} a_{i,\pm \ell}u^{\mp\ell} \right)
  &= \exp \left( \pm \frac{r_{i} - s_{i}}{(r_{i}s_{i})^{1/2}}\sum_{\ell = 1}^{\infty} a_{i,\pm \ell}u^{\mp \ell} \right)
  &= \exp \left( \pm(r_{i} - s_{i})\sum_{\ell = 1}^{\infty}\varphi_{D}(a_{i,\pm \ell})u^{\mp \ell} \right),
\end{align*}
so that $\varphi_D(\omega_{i,m})=\omega_{i,m}$ and $\varphi_D(\omega'_{i,m})=\omega'_{i,m}$ for all $i,m$. Thus,
\begin{align*}
  [\varphi_{D}(x_{i,m}^{+}), \varphi_{D}(x_{j,m'}^{-})]
  &= (r_js_j)^{-1/2} (x_{i,m}^{+} \circ x_{j,m'}^{-} - x_{j,m'}^{-} \circ x_{i,m}^{+}) = (r_{j}s_{j})^{-1/2}[x_{i,m}^{+},x_{j,m'}^{-}] \\
  &= \delta_{ij}(r_{j}s_{j})^{-1/2}
     \frac{(\gamma')^{-m}\gamma^{-\frac{m + m'}{2}}\varphi_{D}(\omega_{i,m + m'}) - \gamma^{m'}(\gamma')^{\frac{m + m'}{2}}
     \varphi_{D}(\omega_{i,m+m'}')}{q_{i} - q_{i}^{-1}} \\
  &= \varphi_{D} \left(\delta_{ij}
     \frac{(\gamma')^{-m}\gamma^{-\frac{m + m'}{2}} \omega_{i,m + m'}- \gamma^{m'}(\gamma')^{\frac{m + m'}{2}} \omega_{i,m+m'}'}{r_{i} - s_{i}} \right),
\end{align*}
as desired.

Finally, let us verify that $\varphi_D$ is compatible with~\eqref{eq:D9+} and~\eqref{eq:D9-}.
To do so, we first note that for any integers $m',m_{1},\ldots ,m_{k + \ell}$, we have
\begin{multline*}
  x_{i,m_{1}}^{\pm} \circ \ldots \circ x_{i,m_{k}}^{\pm} \circ x_{j,m'}^{\pm} \circ x_{i,m_{k+1}}^{\pm} \circ \ldots \circ x_{i,m_{k + \ell}}^{\pm}
  = (x_{i,m_{1}}^{\pm}x_{i,m_{2}}^{\pm}\ldots x_{i,m_{k}}^{\pm}) \circ x_{j,m'}^{\pm} \circ (x_{i,m_{k+1}}^{\pm}\ldots x_{i,m_{k + \ell}}^{\pm}) \\
  = \zeta(\alpha_{i},\alpha_{j})^{\pm (k - \ell)}x_{i,m_{1}}^{\pm}\ldots x_{i,m_{k}}^{\pm}x_{j,m'}^{\pm}x_{i,m_{k+1}}^{\pm}\ldots x_{i,m_{k +\ell}}^{\pm}.
\end{multline*}
Evoking~\eqref{eq:zeta_formula} and the identity
\[
  \qbinom{N}{k}_{r_{i},s_{i}} =\, (r_{i}s_{i})^{\frac{k(N - k)}{2}}\qbinom{N}{k}_{q_{i}},
\]
we get for $N = 1 - a_{ij}$:
\begin{align*}
  & \qbinom{N}{k}_{r_{i},s_{i}} (r_{i}s_{i})^{\frac{1}{2}k(k - 1)} (rs)^{k\langle \alpha_{j},\alpha_{i}\rangle} \zeta(\alpha_{i},\alpha_{j})^{N - 2k} \\
  &= \qbinom{N}{k}_{q_{i}}(r_{i}s_{i})^{\frac{k(N - k)}{2}} (r_{i}s_{i})^{\frac{1}{2}k(k-1)} (rs)^{k\langle \alpha_{j},\alpha_{i}\rangle}
     (\omega_{j}',\omega_{i})^{\frac{N}{2}-k} q^{-\frac{1}{2}(N-2k)(\alpha_{i},\alpha_{j})} \\
  &= (\omega_{j}',\omega_{i})^{\frac{N}{2}} q^{-\frac{1}{2}N(\alpha_{i},\alpha_{j})}
     \left( \qbinom{N}{k}_{q_{i}}(r_{i}s_{i})^{-\frac{1}{2}ka_{ij}}(rs)^{k\langle \alpha_{j},\alpha_{i}\rangle} r^{-k\langle \alpha_{j},\alpha_{i}\rangle}
            s^{k \langle \alpha_{i},\alpha_{j}\rangle}r_{i}^{\frac{1}{2}ka_{ij}}s_{i}^{-\frac{1}{2}ka_{ij}} \right) \\
  &= (\omega_{j}',\omega_{i})^{\frac{N}{2}}q^{-\frac{1}{2}N(\alpha_{i},\alpha_{j})}\qbinom{N}{k}_{q_{i}},
\end{align*}
where we used $d_{i}a_{ij} = \langle \alpha_{i},\alpha_{j}\rangle + \langle \alpha_{j},\alpha_{i}\rangle$ in the last equality.
Thus, the above identities imply:
\begin{align*}
  \varphi_{D}& \left( \sum_{\pi \in S_{N}}\sum_{k = 0}^{N} (-1)^{k}\qbinom{N}{k}_{r_{i},s_{i}} (r_{i}s_{i})^{\frac{1}{2}k(k-1)}
  (rs)^{k\langle \alpha_{j},\alpha_{i}\rangle}
    x_{i,d_{\pi(1)}}^{+}\ldots x_{i,d_{\pi(N-k)}}^{+}x_{j,m}^{+}x_{i,d_{\pi(N-k + 1)}}^{+}\ldots x_{i,d_{\pi(N)}}^{+} \right) \\
  &= (\omega_{j}',\omega_{i})^{\frac{N}{2}} q^{-\frac{1}{2}N(\alpha_{i},\alpha_{j})} \sum_{\pi \in S_{N}}\sum_{k = 0}^{N} (-1)^{k}\qbinom{N}{k}_{q_{i}}
    x_{i,d_{\pi(1)}}^{+}\ldots x_{i,d_{\pi(N-k)}}^{+}x_{j,m}^{+}x_{i,d_{\pi(N-k + 1)}}^{+}\ldots x_{i,d_{\pi(N)}}^{+} = 0,
\end{align*}
and similarly
\begin{align*}
  \varphi_{D} & \left( \sum_{\pi \in S_{N}}\sum_{k = 0}^{N} (-1)^{k}\qbinom{N}{k}_{r_{i},s_{i}} (r_{i}s_{i})^{\frac{1}{2}k(k-1)}
  (rs)^{k\langle \alpha_{j},\alpha_{i}\rangle}
    x_{i,d_{\pi(1)}}^{-}\ldots x_{i,d_{\pi(k)}}^{-}x_{j,m}^{-}x_{i,d_{\pi(k+1)}}^{-}\ldots x_{i,d_{\pi(N)}}^{-} \right) \\
  &= (\omega_{j}',\omega_{i})^{\frac{N}{2}} q^{-\frac{1}{2}N(\alpha_{i},\alpha_{j})} \sum_{\pi \in S_{N}}\sum_{k = 0}^{N} (-1)^{k}\qbinom{N}{k}_{q_{i}}
    x_{i,d_{\pi(1)}}^{-}\ldots x_{i,d_{\pi(k)}}^{-}x_{j,m}^{-}x_{i,d_{\pi(k+1)}}^{-}\ldots x_{i,d_{\pi(N)}}^{-} = 0.
\end{align*}

This completes the proof.
\end{proof}

\begin{remark}\label{rem:finite-dimensional-2parameter}
As an immediate application, one can derive the main results of~\cite{JZ} from their standard one-parameter counterparts.
To this end, we note that if $V$ is a simple finite-dimensional $U_{q,q^{-1}}^{D}(\what{\fg})$-module, then $\omega_i\omega'_i$
act by some constants $\nu_i$. Therefore, twisting $V$ by an algebra automorphism of $U_{q,q^{-1}}^{D}(\what{\fg})$ sending
$x^\pm_{i,m}\mapsto \nu_i^{-1/4}\cdot x^\pm_{i,m}, \omega_i\mapsto \nu_i^{-1/2}\cdot \omega_i, \omega'_i\mapsto \nu_i^{-1/2}\cdot \omega'_i$
and $a_{i,\ell}\mapsto a_{i,\ell}, \gamma^{1/2}\mapsto \gamma^{1/2}, (\gamma')^{1/2}\mapsto (\gamma')^{1/2}$, we get a new
$U_{q,q^{-1}}^{D}(\what{\fg})$-module structure on $V$ that factors through the quotient
$U^D_{q,q^{-1}}(\what{\fg}) \twoheadrightarrow U^D_{q}(\what{\fg})$. The latter are classified
(up to further twisting by some algebra automorphisms, see~\cite[Proposition~12.2.3]{CP}) by Drinfeld polynomials, that is
$\Phi^{\pm}_i(z^{-1})v=q_i^{\deg(P_i)}\left(\frac{P_i(q_i^{-2}z)}{P_i(z)}\right)^{\pm}\cdot v$ (with $\Phi_{i}(z)$ as in~\eqref{eq:Phi_current})
for unique monic polynomials $P_i(z)$, where $v\in V$ is the pseudo-highest weight vector and $(\cdots)^{\pm}$ is a series expansion in $z^{\pm 1}$,
see~\cite[Theorem~12.2.6]{CP}. According to Theorem~\ref{thm:twisted_algebra_loop}, we have $\varphi_D(\omega_i(z^{-1}))=\omega_i(z^{-1})$ and
$\varphi_{D}(\omega_{i}'(z^{-1})) = \omega_{i}'(z^{-1})$, while evoking Proposition~\ref{prop:twisted_representation}, we see that
$(U_{q,q^{-1}}^{D}(\what{\fg}))_\zeta$-action $\cdot_\zeta$ on $V$ satisfies
$\omega_i(z^{-1})\cdot_{\zeta} v=\zeta(\alpha_i,\lambda)^2 q_i^{\deg(P_i)}\left(\frac{P_i(q_i^{-2}z)}{P_i(z)}\right)^{+}\cdot v$ and
$\omega_{i}'(z^{-1}) \cdot_{\zeta} v = \zeta(\alpha_{i},\lambda)^{2}q_{i}^{\deg(P_{i})}\left (\frac{P_{i}(q_{i}^{-2}z)}{P_{i}(z)}\right)^{-}\cdot v$,
with $\lambda=\sum_{i=1}^{n} \deg(P_i) \varpi_i$. Thus, up to above automorphisms, $\omega_i(z^{-1})$ and $\omega_{i}'(z^{-1})$ act via
$\left(\frac{P_i(s_i r_i^{-1} z)}{P_i(z)}\right)^{+}$ and $\left(\frac{P_i(s_i r_i^{-1} z)}{P_i(z)}\right)^{-}$, respectively, on the pseudo-highest
weight vector $v$. This result essentially recovers~\cite[Theorem~3.2]{JZ} (we note that an extra factor $(rs)^{\deg(P_i)}$ for $\epsilon=-$
in~\emph{loc.cit} is due to a slightly different presentation in~\cite[Definition~2.4]{JZ}). Likewise,~\cite[Proposition 3.6]{JZ} immediately
follows from the corresponding well-known result for $U_q^D(\what{\fg})$.
\end{remark}

\begin{remark}\label{rem:vertex-2parameter}
As another application of Theorem~\ref{thm:twisted_algebra_loop}, one can derive the construction of level-one vertex representations
(\cite[\S5]{HZ}) of $U_{r,s}^{D}(\what{\fg})$ for simply-laced $\fg$ from the corresponding construction (\cite{FJ}) for $U_{q}^{D}(\what{\fg})$.
Besides a slightly different presentation in~\cite[Definition~3.4]{HZ}, this recovers~\cite[Theorem~5.2]{HZ}, up to further twisting by
an algebra automorphism $\varrho_{\lambda}$ (with $\lambda=(rs)^{1/2}$) of $U_{q,q^{-1}}^{D}(\what{\fg})$ defined on the generators by
\[
  \varrho_\lambda\colon
  \gamma\mapsto \lambda\gamma,\quad \gamma'\mapsto \lambda\gamma',\quad a_{i,\ell}\mapsto \lambda^{|\ell|}a_{i,\ell},\quad
  \omega_{i}\mapsto \omega_{i},\quad \omega'_{i}\mapsto \omega'_{i},\quad x^{+}_{i,m} \mapsto \lambda^{-m/2}x^+_{i,m},\quad
  x^{-}_{i,m} \mapsto \lambda^{m/2}x^-_{i,m}.
\]
In particular, after that twist central elements $\gamma,\gamma'$ act respectively by scalars $q\lambda=r, q^{-1}\lambda=s$ as in~\emph{loc.cit}.
\end{remark}


\subsection{Triangular decomposition and the key embedding: new Drinfeld realization}
\

We shall now prove an analogue of Propositions~\ref{prop:double_cartan_DJ} and~\ref{prop:double_cartan_RTT_finite} for the new Drinfeld realization, which
we will need to derive an algebra isomorphism $\Psi_{r,s}\colon U_{r,s}(\what{\fg}) \iso U_{r,s}^{D}(\what{\fg})$ from the corresponding one-parameter isomorphism
$\Psi_{q}\colon U_{q}(\what{\fg}) \iso U_{q}^{D}(\what{\fg})$ of~\cite{D}. To avoid confusion, we denote the generators of $U_{q}^{D}(\what{\fg})$ by
$\gamma^{\pm 1/2}$, $K_{i}^{\pm 1}$, $X_{i,m}^{\pm}$, and $H_{i,\ell}$, which correspond, respectively, to the generators
$\gamma^{\pm 1/2}$, $\omega_{i}^{\pm 1}$, $x_{i,m}^{\pm}$, and $a_{i,\ell}$ in $U_{q,q^{-1}}^{D}(\what{\fg})$. Note that $U_{q}^{D}(\what{\fg})$ is
the quotient of $U_{q,q^{-1}}^{D}(\what{\fg})$ by the ideal generated by $\omega_{i} - (\omega_{i}')^{-1}$ and $\gamma^{1/2} - (\gamma')^{-1/2}$.

We define subalgebras $U_{q}^{D,\pm} = U_{q}^{D,\pm}(\what{\fg})$ and $U_{q}^{D,0} = U_{q}^{D,0}(\what{\fg})$ of $U_{q}^{D}(\what{\fg})$ similarly to the
corresponding subalgebras $U_{r,s}^{D,\pm}$ and $U_{r,s}^{D,0}$ of $U_{r,s}^{D}(\what{\fg})$. We start with a couple of preliminary results, the first of which is:

\begin{lemma}\label{lem:Q_hat_bigrading}
The algebra $U_{q,q^{-1}}^{D}(\what{\fg})$ is $\frac{1}{2}\what{Q} \times \frac{1}{2}\what{Q}$-graded via the assignment
\begin{equation*}
  \deg(\gamma^{1/2}) = \deg((\gamma')^{1/2}) = \left(-\frac{1}{2}\delta,\frac{1}{2}\delta\right),\qquad
  \deg(\omega_{i}) = \deg(\omega_{i}') = (\alpha_{i},-\alpha_{i}),
\end{equation*}
\begin{equation*}
  \deg(x_{i,m}^{+}) = (\alpha_{i} + m\delta,0),\qquad
  \deg(x_{i,m}^{-}) = (0,-\alpha_{i} + m\delta),
\end{equation*}
\begin{equation*}
  \deg(a_{i,\ell}) =
  \begin{cases}
    \left (-\frac{\ell}{2}\delta, \frac{3\ell}{2}\delta\right ) & \text{if}\ \ell > 0 \\
    \left (\frac{3\ell}{2}\delta,-\frac{\ell}{2}\delta\right) & \text{if}\ \ell < 0
  \end{cases}\,,
\end{equation*}
for all $1 \le i \le n$, $m \in \bb{Z}$, $\ell \in \bb{Z}\setminus \{0\}$.
\end{lemma}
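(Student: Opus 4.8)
The assertion is that the prescribed degree assignment on the generators is compatible with all the defining relations \eqref{eq:D1}--\eqref{eq:D9-} of $U^D_{q,q^{-1}}(\what\fg)$, i.e. that each relation is homogeneous with respect to the $\tfrac12\what Q\times\tfrac12\what Q$-grading. So the plan is simply to check relation by relation that both sides carry the same bidegree; this is routine, but there are a few things worth highlighting in the write-up. First I would record the immediate consequences of the assignment: $\deg(\omega_i(z))=\deg(\omega_i)=(\alpha_i,-\alpha_i)$ and $\deg(\omega'_i(z))=\deg(\omega'_i)=(\alpha_i,-\alpha_i)$, since the currents $\omega_i(z),\omega'_i(z)$ are built from $\omega_i,\omega'_i$ and the $a_{i,\ell}$, and the degrees of the $a_{i,\ell}$ combine correctly with the shift in $z$. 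More precisely, $a_{i,\ell}z^{-\ell}$ should be thought of as carrying an extra $(0,0)$ (or one checks that the generating-series conventions make $\omega_{i,m}$ homogeneous of degree $(\alpha_i+m\delta?,\ldots)$)---here one must be careful: since $\omega_i(z)=\sum_m\omega_{i,m}z^{-m}$ and the coefficient $\omega_{i,m+m'}$ appears in \eqref{eq:D8}, the bookkeeping forces $\deg(\omega_{i,k})=(\alpha_i-\tfrac{k}{2}\delta,-\alpha_i-\tfrac{k}{2}\delta)$ for the term $(\gamma')^{-m}\gamma^{-(m+m')/2}\omega_{i,m+m'}$ to match $\deg(x^+_{i,m}x^-_{j,m'})=(\alpha_i+m\delta,-\alpha_j+m'\delta)$ when $i=j$. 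I would verify this consistency explicitly since it is the one place where the fractional powers of $\delta$ in $\deg(\gamma^{1/2})$ and $\deg(a_{i,\ell})$ genuinely interact.

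\textbf{Key steps, in order.} (1) Relations \eqref{eq:D1}--\eqref{eq:D3}: trivial, as $\omega_i,\omega'_i$ have degree $(\alpha_i,-\alpha_i)$, $\gamma^{\pm1/2},(\gamma')^{\pm1/2}$ have degree $\mp(\tfrac12\delta,-\tfrac12\delta)$, and $a_{j,\ell}$ has degree supported on multiples of $\delta$ in each slot, so all the listed commutators are between elements whose product is manifestly the same in either order---homogeneity is automatic since each monomial appearing is a single generator. (2) Relation \eqref{eq:D4}: both sides equal (up to scalar) $\omega_i x^{\pm}_{j,m}$ vs $x^{\pm}_{j,m}\omega_i$, same bidegree. (3) Relation \eqref{eq:D5}: LHS $[a_{i,\ell},a_{j,\ell'}]$ has bidegree $\deg(a_{i,\ell})+\deg(a_{j,\ell'})$; for $\ell'=-\ell$ with $\ell>0$ this is $(-\tfrac\ell2\delta+\tfrac{3(-\ell)}2\delta?,\ldots)$---here one must plug in the two cases $\ell>0$, $\ell<0$ from the definition of $\deg(a_{j,\ell'})$ and check it matches $\deg\big((\gamma\gamma')^{|\ell|/2}(\gamma^{|\ell|}-(\gamma')^{|\ell|})\big)$; the delta-degree of $(\gamma\gamma')^{|\ell|/2}$ is $0$ and that of $\gamma^{|\ell|}$ is $(-\tfrac{|\ell|}2\delta,\tfrac{|\ell|}2\delta)$, which one confirms equals $\deg(a_{i,\ell})+\deg(a_{i,-\ell})$. (4) Relation \eqref{eq:D6}: $\deg(a_{i,\ell})+\deg(x^{\pm}_{j,m})$ must equal $\deg(x^{\pm}_{j,m+\ell})+\deg\big(\gamma^{\text{stuff}}\big)$; plug in the two sign cases. (5) Relation \eqref{eq:D7}: every monomial is of the form $x^{\pm}_{i,a}x^{\pm}_{j,b}$ with $a+b=m+m'+1$ fixed, so all six monomials share bidegree $(\alpha_i+\alpha_j+(m+m'+1)\delta,0)$ (resp.\ with signs for $-$); the scalar prefactor $(rs)^{\pm\frac12(\langle\cdot\rangle-\langle\cdot\rangle)}$ has degree zero. (6) Relation \eqref{eq:D8}: the consistency check described above. (7) Relations \eqref{eq:D9+}, \eqref{eq:D9-}: every monomial is a product of $N=1-a_{ij}$ factors $x^+_{i,\cdot}$ and one $x^+_{j,m}$, so---regardless of the permutation $\pi$ and the split $k$---each term has bidegree $\big(N\alpha_i+\alpha_j+(\sum d_p+m)\delta,0\big)$ (resp.\ the mirror image for $-$), so the whole sum is homogeneous.

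\textbf{Main obstacle.} There is no real obstacle---this is a direct verification---but the only subtlety, and the one I would be careful to spell out, is the generating-series bookkeeping in \eqref{eq:D8}: one has to make the convention for $\deg(\omega_{i,m})$ (and $\deg(\omega'_{i,-m})$) precise, reconcile it with the exponential formulas defining $\omega_i(z),\omega'_i(z)$ in terms of $\omega_i,\omega'_i,a_{i,\ell}$, and then check that with the factors $(\gamma')^{-m}\gamma^{-(m+m')/2}$ and $\gamma^{m'}(\gamma')^{(m+m')/2}$ the two numerator terms in \eqref{eq:D8} really do land in bidegree $(\alpha_i+m\delta,-\alpha_i+m'\delta)$, matching the LHS $[x^+_{i,m},x^-_{i,m'}]$. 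This is precisely the point at which the half-integer multiples of $\delta$ appearing in the degrees of $\gamma^{1/2}$ and $a_{i,\ell}$ are forced, so I would present that computation in full while leaving the remaining relations to a one-line remark that they are homogeneous because each monomial involved is a product of generators with a fixed total multiset of root/loop degrees.
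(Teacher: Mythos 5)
Your plan is the same as the paper's: check relation by relation that the stated degree assignment makes each of \eqref{eq:D1}--\eqref{eq:D9-} homogeneous, with \eqref{eq:D5}, \eqref{eq:D6}, \eqref{eq:D8} singled out as the nontrivial cases and \eqref{eq:D8} correctly identified as the one where the fractional $\delta$-shifts of $\gamma^{1/2}$, $a_{i,\ell}$ are really forced. That is exactly the structure of the paper's verification.

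However, the one quantitative assertion you commit to in writing is wrong. You claim
\[
  \deg(\omega_{i,k}) = \left(\alpha_i-\tfrac{k}{2}\delta,\,-\alpha_i-\tfrac{k}{2}\delta\right),
\]
but the exponential formula $\omega_i(z)=\omega_i\exp\big((r_i-s_i)\sum_{\ell>0}a_{i,\ell}z^{-\ell}\big)$ together with $\deg(a_{i,\ell})=(-\tfrac{\ell}{2}\delta,\tfrac{3\ell}{2}\delta)$ for $\ell>0$ gives
\[
  \deg(\omega_{i,k}) = \left(\alpha_i-\tfrac{k}{2}\delta,\,-\alpha_i+\tfrac{3k}{2}\delta\right)
  \qquad (k\ge 0),
\]
and dually $\deg(\omega'_{i,-k})=\big(\alpha_i-\tfrac{3k}{2}\delta,-\alpha_i+\tfrac{k}{2}\delta\big)$. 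With your stated degree, $\deg\!\big((\gamma')^{-m}\gamma^{-(m+m')/2}\omega_{i,m+m'}\big)$ comes out to $\big(\alpha_i+m\delta,\,-\alpha_i-(2m+m')\delta\big)$, which does \emph{not} match $\deg\!\big([x^+_{i,m},x^-_{i,m'}]\big)=(\alpha_i+m\delta,-\alpha_i+m'\delta)$ unless $m=-m'$. With the corrected formula the bidegrees agree for all $m,m'$. So the verification you intend to ``present in full'' would fail as written; the fix is the sign correction above, after which the argument closes exactly as in the paper. The remaining items (1)--(5), (7) in your outline are fine.
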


\begin{proof}
One just needs to check that relations~\eqref{eq:D1}--\eqref{eq:D9-} are homogeneous with respect to the above grading. The only ones that
are not immediately clear are~\eqref{eq:D5},~\eqref{eq:D6}, and~\eqref{eq:D8}. The relation~\eqref{eq:D5} is obviously homogeneous if
$\ell' \neq -\ell$, and if $\ell' = -\ell$, then the left-hand side of~\eqref{eq:D5} has degree
  $\left (-\frac{|\ell|}{2}\delta,\frac{3|\ell|}{2}\delta\right ) + \left (-\frac{3|\ell|}{2}\delta,\frac{|\ell|}{2}\delta\right )
   = (-2|\ell|\delta,2|\ell|\delta)$,
which is easily seen to be the degree of the right-hand side of~\eqref{eq:D5}.

For~\eqref{eq:D6}, we have
\[
  \deg([a_{i,\ell},x_{j,m}^{+}]) =
  \begin{cases}
    (\alpha_{j} + (m - \frac{\ell}{2})\delta,\frac{3\ell}{2}\delta) & \text{if}\ \ell > 0 \\
    (\alpha_{j} + (m + \frac{3\ell}{2})\delta,-\frac{\ell}{2}\delta) & \text{if}\ \ell < 0
  \end{cases}\,.
\]
Since
\[
  \deg\left ((\gamma')^{\ell/2}(\gamma\gamma')^{\ell/2}x_{j,m + \ell}^{+}\right )
  = \left (-\tfrac{\ell}{2}\delta,\tfrac{\ell}{2}\delta\right ) + \left (-\ell \delta,\ell \delta\right ) + \left (\alpha_{j} + (m + \ell)\delta,0\right )
  = \left (\alpha_{j} + \left (m - \tfrac{\ell}{2}\right )\delta,\tfrac{3\ell}{2}\delta\right ),
\]
\[
  \deg\left (\gamma^{\ell/2}(\gamma\gamma')^{-\ell/2}x_{j,m+\ell}^{+}\right )
  = \left (-\tfrac{\ell}{2}\delta,\tfrac{\ell}{2}\delta\right ) + \left (\ell \delta,-\ell\delta \right ) + \left (\alpha_{j} + (m + \ell)\delta,0\right )
  = \left (\alpha_{j} + \left (m + \tfrac{3\ell}{2}\delta \right )\delta , -\tfrac{\ell}{2}\delta\right ),
\]
this shows that the relation~\eqref{eq:D6} for $x_{j,m}^{+}$ is homogeneous. The computation for $x_{j,m}^{-}$ is similar.

For~\eqref{eq:D8}, we may assume that $i = j$, as otherwise the relation is obviously homogeneous. Then the left-hand side has degree
$(\alpha_{i} + m\delta,-\alpha_{i} + m'\delta)$. To compute the degree of the right-hand side, we first note that, for all $k \ge 0$,
we have
\begin{align*}
  &\deg(\omega_{i,k}) = (\alpha_{i},-\alpha_{i}) + \left (-\tfrac{1}{2}k\delta,\tfrac{3}{2}k\delta\right )
    = \left (\alpha_{i} - \tfrac{1}{2}k\delta,-\alpha_{i} + \tfrac{3}{2}k\delta\right ), \\
  &\deg(\omega_{i,-k}') = (\alpha_{i},-\alpha_{i}) - \left (\tfrac{3}{2}k\delta,-\tfrac{1}{2}k\delta\right )
    = \left (\alpha_{i} - \tfrac{3}{2}k\delta,-\alpha_{i} + \tfrac{1}{2}k\delta\right ).
\end{align*}
If $m + m' > 0$, then the degree of the right-hand side of~\eqref{eq:D8} is
\begin{align*}
  \deg \left((\gamma')^{-m}\gamma^{-\frac{m + m'}{2}}\omega_{i,m+m'}\right)
  &= m(\delta,-\delta) + \tfrac{m + m'}{2}\left (\delta,-\delta\right ) +
     \left (\alpha_{i} - \tfrac{m + m'}{2}\delta,-\alpha_{i} + \tfrac{3(m + m')}{2}\delta\right ) \\
  &= (\alpha_{i} + m\delta, -\alpha_{i} + m'\delta),
\end{align*}
and if $m + m' < 0$, then the degree of the right-hand side of~\eqref{eq:D8} is
\begin{align*}
  \deg \left((\gamma)^{m'}(\gamma')^{\frac{m + m'}{2}}\omega_{i,m +m'}'\right)
  &= m'(-\delta,\delta) + \tfrac{m + m'}{2}(-\delta,\delta) + \left (\alpha_{i} + \tfrac{3(m + m')}{2}\delta,-\alpha_{i} - \tfrac{m + m'}{2}\delta\right )   \\
  &= (\alpha_{i} + m\delta, -\alpha_{i} + m'\delta).
\end{align*}
If $m + m' = 0$, then the right-hand side of~\eqref{eq:D8} is a multiple of $(\gamma')^{-m}\omega_{i} - \gamma^{m'}\omega_{i}'$, which is
homogeneous of degree $(\alpha_{i} + m\delta, -\alpha_{i} - m\delta) = (\alpha_{i} + m\delta,-\alpha_{i} +m'\delta)$. Thus, relation~\eqref{eq:D8}
is also homogeneous.

This completes the proof.
\end{proof}

The next result we need is a triangular decomposition for $U_{q,q^{-1}}^{D}(\what{\fg})$:

\begin{lemma}\label{lem:triangular_decomposition_D}
(a) The multiplication map $U_{q,q^{-1}}^{D,-} \otimes U_{q,q^{-1}}^{D,0} \otimes U_{q,q^{-1}}^{D,+} \to U_{q,q^{-1}}^{D}(\what{\fg})$
is an isomorphism of vector spaces.

\medskip
\noindent
(b) The subalgebras $U_{q,q^{-1}}^{D,\pm}$ are isomorphic to the algebras generated by $\{x_{i,m}^{\pm}\}_{1 \le i \le n}^{m \in \bb{Z}}$
with defining relations~\eqref{eq:D7} and~\eqref{eq:D9+} for sign $+$ (resp.\ \eqref{eq:D9-} for sign $-$).

\medskip
\noindent
(c) The subalgebra $U_{q,q^{-1}}^{D,0}$ is isomorphic to the algebra generated by
$\{\gamma^{\pm 1/2},(\gamma')^{\pm 1/2},\omega_{i}^{\pm 1},(\omega_{i}')^{\pm 1},a_{i,\ell}\}_{1 \le i \le n}^{\ell \in \bb{Z} \setminus \{0\}}$,
with relations~\eqref{eq:D1},~\eqref{eq:D2},~\eqref{eq:D3}, and~\eqref{eq:D5}.
\end{lemma}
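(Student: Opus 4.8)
The plan is to deduce this triangular decomposition for $U_{q,q^{-1}}^D(\what{\fg})$ from the corresponding well-known result for the one-parameter algebra $U_q^D(\what{\fg})$ (due to Drinfeld~\cite{D}, with a full proof available in the literature), using the same two-step mechanism employed throughout the paper: first pass from $U_q^D(\what{\fg})$ to $U_{q,q^{-1}}^D(\what{\fg})$ by the ``Cartan-doubling'' trick of Proposition~\ref{prop:double_cartan_DJ}, and second reduce the structure of $U_{q,q^{-1}}^D(\what{\fg})$ to that of $U_q^D(\what{\fg})$ tensored with a Laurent polynomial ring. Concretely, I would first record that there is a surjective algebra homomorphism $\pi_D\colon U_{q,q^{-1}}^D(\what{\fg}) \twoheadrightarrow U_q^D(\what{\fg})$ sending $x_{i,m}^\pm\mapsto X_{i,m}^\pm$, $\omega_i\mapsto K_i$, $\omega_i'\mapsto K_i^{-1}$, $a_{i,\ell}\mapsto H_{i,\ell}$, $\gamma^{1/2}\mapsto\gamma^{1/2}$, $(\gamma')^{1/2}\mapsto\gamma^{-1/2}$ (this follows by inspecting the relations~\eqref{eq:D1}--\eqref{eq:D9-} and specializing $\omega_i'=\omega_i^{-1}$, $\gamma'=\gamma^{-1}$, $r=q$, $s=q^{-1}$, exactly as in the excerpt's discussion preceding Lemma~\ref{lem:Q_hat_bigrading}).

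\textbf{Key steps.} (b) For the positive/negative subalgebras, let $\wtd{U}^{D,\pm}$ denote the abstract algebras on generators $\{x_{i,m}^\pm\}$ with only relations~\eqref{eq:D7} and~\eqref{eq:D9+} (resp.~\eqref{eq:D9-}); there are obvious surjections $\wtd{U}^{D,\pm}\twoheadrightarrow U_{q,q^{-1}}^{D,\pm}$, and I must show these are injective. Composing with $\pi_D$ gives maps $\wtd{U}^{D,\pm}\to U_q^{D,\pm}$, and by the known one-parameter triangular decomposition the analogous abstract algebra surjects isomorphically onto $U_q^{D,\pm}$; since relations~\eqref{eq:D7},~\eqref{eq:D9+},~\eqref{eq:D9-} involve only the $x^\pm$-generators and are formally identical in the one- and two-parameter cases (after the substitution $r=q,s=q^{-1}$), the abstract algebras $\wtd{U}^{D,\pm}$ coincide with their one-parameter analogues, whence $\wtd{U}^{D,\pm}\iso U_q^{D,\pm}$ and in particular $\wtd{U}^{D,\pm}\twoheadrightarrow U_{q,q^{-1}}^{D,\pm}$ has a section; combined with the a priori surjectivity this forces it to be an isomorphism, once we know (from (a)) that $U_{q,q^{-1}}^{D,\pm}$ is ``not too small''. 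To break this potential circularity I would instead argue directly: the monomials forming the PBW-type basis of $U_q^{D,\pm}$ pull back along $\pi_D$ to linearly independent elements of $U_{q,q^{-1}}^{D,\pm}$ (their images are a basis), and they clearly span $\wtd{U}^{D,\pm}$, so $\wtd{U}^{D,\pm}\to U_{q,q^{-1}}^{D,\pm}\to U_q^{D,\pm}$ are all isomorphisms. (c) For $U_{q,q^{-1}}^{D,0}$, let $\wtd{U}^{D,0}$ be the abstract algebra on $\{\gamma^{\pm1/2},(\gamma')^{\pm1/2},\omega_i^{\pm1},(\omega_i')^{\pm1},a_{i,\ell}\}$ with relations~\eqref{eq:D1},~\eqref{eq:D2},~\eqref{eq:D3},~\eqref{eq:D5}; I would exhibit a linear basis of $\wtd{U}^{D,0}$ in ordered-monomial form (ordered products of $a_{i,\ell}$ times a monomial in the $\omega_i^{\pm1},(\omega_i')^{\pm1},\gamma^{\pm1/2},(\gamma')^{\pm1/2}$ — note~\eqref{eq:D5} shows the $a_{i,\ell}$ commute modulo the central Heisenberg term $\delta_{\ell+\ell',0}(\cdots)$, so one needs a PBW/Heisenberg argument here), then map it into $U_{q,q^{-1}}^D(\what{\fg})$ and use the grading of Lemma~\ref{lem:Q_hat_bigrading} together with~\eqref{eq:Drinfeld-grading} to see the images are linearly independent (the $\omega$'s and $\gamma$'s are group-like and distinguished by their bidegrees and the $\frtfrac{1}{2}\what{Q}$-grading, while the $a_{i,\ell}$-part is pinned down by the $\delta$-component of the degree). (a) Finally, for the full triangular decomposition: surjectivity of the multiplication map $\mu\colon U^{D,-}\otimes U^{D,0}\otimes U^{D,+}\to U_{q,q^{-1}}^D(\what{\fg})$ follows by a standard normal-ordering argument — relations~\eqref{eq:D4},~\eqref{eq:D6},~\eqref{eq:D8} let one move all $x^-$-generators to the left, all $x^+$-generators to the right, and all $U^{D,0}$-generators to the middle (relation~\eqref{eq:D8} is the one that produces $U^{D,0}$-terms, via the $\omega_{i,m},\omega_{i,-m}'$, which lie in $U^{D,0}$ by the formulas in Definition~\ref{def:drinfeld_2_parameter}). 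For injectivity, I would apply $\id\otimes\pi_D\otimes\id$ followed by $\pi_D$ and compare with the one-parameter triangular decomposition, using parts (b) and (c) already established together with the extra Laurent-variable bookkeeping of Proposition~\ref{prop:double_cartan_DJ}: the Cartan-doubled algebra has ``twice as many'' group-like generators, and one recovers $\lambda,\lambda'$ from $\lambda-\lambda'$ (seen in $U_q^D$) and $\lambda+\lambda'$ (seen in the $L_\mu$-tail), precisely as in the proof of Proposition~\ref{prop:double_cartan_DJ}.

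\textbf{Main obstacle.} The subtle point is part (c) — or rather, making parts (a)--(c) fit together without circularity. The cleanest route is probably to establish (c) first by a direct Heisenberg-algebra PBW argument (the subalgebra generated by the $a_{i,\ell}$ is a quantum Heisenberg algebra, and one must check its relations~\eqref{eq:D5} impose no collapse — this is where a genuine, if routine, computation lives), then (b) by pulling back the one-parameter PBW basis, and only then (a) by the normal-ordering span argument plus an injectivity comparison with $U_q^D(\what{\fg})$. An alternative, which I would mention as a remark, is to simply cite that all of~\eqref{eq:D1}--\eqref{eq:D9-} are the $r=q,s=q^{-1}$ specialization of the two-parameter relations and invoke the isomorphism $\varphi_D$ of Theorem~\ref{thm:twisted_algebra_loop} to transport the triangular decomposition of $U_{q,q^{-1}}^D(\what{\fg})$ to $U_{r,s}^D(\what{\fg})$ — but since $\varphi_D$ is an isomorphism of $U_{r,s}^D$ with a twist of $U_{q,q^{-1}}^D$, not with $U_{q,q^{-1}}^D$ itself, and since twisting preserves the underlying vector space and the bigrading, the triangular decomposition is immediately inherited; so really the whole content is the $U_{q,q^{-1}}^D(\what{\fg})$ case, and the whole content of that is the classical $U_q^D(\what{\fg})$ case plus the Laurent-polynomial bookkeeping. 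I expect the write-up to be short, citing~\cite{D} (and a standard reference for the one-parameter PBW theorem) for the hard input and spending most of its words on the degree-counting that separates the doubled Cartan from the single one.
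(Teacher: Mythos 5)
The paper's own proof of this lemma is a one-line citation: ``These results may be obtained by following the proof of [H, Theorem 3.2]'', i.e.\ Hernandez's direct construction of triangular decompositions for general quantum affinizations. Your proposal takes a genuinely different route — reduce to the known one-parameter result for $U_q^D(\what{\fg})$ via the surjection $\pi_D$ and then recover the doubled Cartan data from the $\what{Q}\times\what{Q}$-bigrading — in the same spirit as the paper's Propositions~\ref{prop:double_cartan_DJ} and~\ref{prop:double_cartan_D}. This inversion (prove the triangular decomposition \emph{by} the bigrading-plus-$\pi_D$ argument, rather than prove the embedding \emph{from} an already-cited triangular decomposition) is legitimate and, I think, cleaner and more self-contained than outsourcing the whole lemma to~\cite{H}. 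Your part (b) is correct and the identification of $\wtd{U}^{D,\pm}$ with the one-parameter abstract algebra (since relations \eqref{eq:D7},~\eqref{eq:D9+},~\eqref{eq:D9-} literally specialize at $r=q$, $s=q^{-1}$) is exactly right.

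One genuine glitch in the early sketch: for part (c) you suggest the images of ordered monomials can be seen linearly independent ``using the grading of Lemma~\ref{lem:Q_hat_bigrading} ... the $a_{i,\ell}$-part is pinned down by the $\delta$-component of the degree.'' That is false as stated: $\deg(a_{i,1}^2)=\deg(a_{i,2})=(-\delta,3\delta)$, so the $\what{Q}\times\what{Q}$-bigrading cannot separate products of $a$'s of the same total level (and of course cannot distinguish the index $i$). You correct yourself in the ``main obstacle'' paragraph by noting a genuine Heisenberg-PBW input is needed, but the cleanest version of your argument actually sidesteps this entirely: take the \emph{spanning} set $\{A_S^-\gamma^{m/2}\omega_\mu\omega'_{\mu'}(\gamma')^{m'/2}A_{S'}^+\}$ of the abstract $\wtd{U}^{D,0}$ (spanning is an easy reordering consequence of \eqref{eq:D5}), multiply by bases of $\wtd{U}^{D,\mp}$ from (b), and show the resulting products are linearly independent in $U^D_{q,q^{-1}}(\what{\fg})$ by fixing the $\what{Q}\times\what{Q}$-bidegree and applying $\pi_D$. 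Once in $U_q^D(\what{\fg})$ the one-parameter PBW theorem pins down $S$, $S'$, $\mu-\mu'$, $m-m'$, and the fixed bidegree then recovers $\mu+\mu'$ and $m+m'$ (using $\delta\notin Q$), hence all parameters; since a spanning set mapping to a linearly independent set is automatically a basis, this proves (a), (b), and (c) simultaneously. This is essentially the $\eta_D$-injectivity bookkeeping of Proposition~\ref{prop:double_cartan_D}, but run in the opposite logical direction. The only substantive external input your route requires is the one-parameter triangular decomposition and Heisenberg PBW for $U_q^{D,0}$ — exactly the special case of Hernandez that the paper cites — so the two approaches are comparable in dependency, but yours is more aligned with the Cartan-doubling philosophy the paper uses everywhere else, and it makes the mechanism of the proof explicit instead of referring the reader to another source.
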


\begin{proof}
These results may be obtained by following the proof of~\cite[Theorem 3.2]{H}. We leave details to the reader.
\end{proof}

The last preliminary result we need amounts to a more refined triangular decomposition of $U_{q,q^{-1}}^{D}(\what{\fg})$.
To cleanly state the result, we introduce some additional notation. For any set $S = \{(i_{1},\ell_{1},N_{1}),\ldots ,(i_{k},\ell_{k},N_{k})\}$
with $1 \le i_{p} \le n$, $\ell_{p} \in \bb{Z}_{> 0}$, and $N_{p} \in \bb{Z}_{> 0}$, we define
\begin{equation}\label{eq:A_S_def}
  A_{S}^{\pm} = a_{i_{1},\pm \ell_{1}}^{N_{1}}\ldots a_{i_{k},\pm \ell_{k}}^{N_{k}} \in U_{q,q^{-1}}^{D,0}(\what{\fg})\qquad \text{and}\qquad
  H_{S}^{\pm} = H_{i_{1},\pm\ell_{1}}^{N_{1}}\ldots H_{i_{k},\pm\ell_{k}}^{N_{k}}\in U_{q}^{D,0}(\what{\fg}).
\end{equation}

\begin{lemma}\label{lem:cartan_basis_D}
The subalgebra $U_{q,q^{-1}}^{D,0}$ has a basis consisting of all $A_{S}^{-}\gamma^{m/2}\omega_{\mu}\omega_{\mu'}'(\gamma')^{m'/2}A_{S'}^{+}$
with $m,m' \in \bb{Z}$, $\mu,\mu' \in Q$, and sets $S,S'$ as above.
\end{lemma}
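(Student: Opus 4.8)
The plan is to deduce this from Lemma~\ref{lem:triangular_decomposition_D}(c), which gives a presentation of $U_{q,q^{-1}}^{D,0}$ by generators $\{\gamma^{\pm1/2},(\gamma')^{\pm1/2},\omega_i^{\pm1},(\omega_i')^{\pm1},a_{i,\ell}\}$ and relations~\eqref{eq:D1},~\eqref{eq:D2},~\eqref{eq:D3},~\eqref{eq:D5}. First I would observe that the claimed spanning property is almost immediate: relations~\eqref{eq:D1} and~\eqref{eq:D3} let us move all $\omega_\mu,\omega_\mu'$ past each other and past the $a_{i,\ell}$, relation~\eqref{eq:D2} says the $\gamma^{\pm1/2},(\gamma')^{\pm1/2}$ are central, and relation~\eqref{eq:D5} shows that the commutator $[a_{i,\ell},a_{j,\ell'}]$ lies in the subalgebra generated by $\gamma^{\pm1/2},(\gamma')^{\pm1/2}$ alone (it is $0$ unless $\ell'=-\ell$, in which case it is a Laurent polynomial in $\gamma,\gamma'$). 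Hence any monomial in the generators can be rewritten, using these relations, as a linear combination of the ordered products $A_S^-\,\gamma^{m/2}\omega_\mu\omega_{\mu'}'(\gamma')^{m'/2}\,A_{S'}^+$, where in $A_S^\pm$ we agree to list the pairs $(i,\ell)$ in some fixed total order (and similarly for $A_{S'}^+$). So these elements span $U_{q,q^{-1}}^{D,0}$.

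The real content is linear independence. Here I would pass to the one-parameter algebra $U_q^{D,0}(\what{\fg})$ and use the twisting isomorphism. Actually, the cleanest route is to compare with the well-understood one-parameter Heisenberg-type subalgebra: by Theorem~\ref{thm:twisted_algebra_loop}, $\varphi_D$ restricts to an algebra isomorphism $U_{q,q^{-1}}^{D,0}\iso (U_{q,q^{-1}}^{D,0})_\zeta$ sending $a_{i,\ell}\mapsto (r_is_i)^{-1/2}a_{i,\ell}$ and fixing $\omega_i,\omega_i',\gamma^{1/2},(\gamma')^{1/2}$ up to the bicharacter twist; since the $Q\times Q$-degree of every element of $U_{q,q^{-1}}^{D,0}$ is $(0,0)$ under the grading~\eqref{eq:Drinfeld-grading} (the $a_{i,\ell}$ and $\gamma^{\pm1/2}$ are degree $(0,0)$ and $\omega_\mu\omega_{\mu'}'$ has degree $(\mu-\mu',-\mu+\mu')$, wait — that is not $(0,0)$), I should instead work directly. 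Let me restructure: the correct reduction is that $U_{q,q^{-1}}^{D,0}$ is, as an algebra, the tensor product $\bb{K}[\gamma^{\pm1/2},(\gamma')^{\pm1/2}]\otimes \bb{K}[Q]\otimes \bb{K}[Q]\otimes \mathcal{H}$, where $\bb{K}[Q]^{\otimes2}$ corresponds to the commuting $\omega_\mu,\omega_{\mu'}'$ and $\mathcal{H}$ is the subalgebra generated by the $a_{i,\ell}$; the only nontrivial structure is inside $\mathcal{H}$, where relation~\eqref{eq:D5} makes $\mathcal{H}$ a quotient of the tensor algebra on $\{a_{i,\ell}\}$ by the relations $[a_{i,\ell},a_{j,\ell'}]=c_{i,\ell}\delta_{\ell+\ell',0}$ for central elements $c_{i,\ell}\in\bb{K}[\gamma^{\pm1/2},(\gamma')^{\pm1/2}]$. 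This is precisely a (multivariable, $\bb{K}[\gamma^{\pm1/2},(\gamma')^{\pm1/2}]$-linear) Weyl/Heisenberg algebra, for which the monomials $A_S^-\cdot A_{S'}^+$ (with the creation operators $a_{i,\ell'}$, $\ell'>0$, on the right and annihilation operators $a_{i,-\ell}$ on the left, each family internally ordered) form a PBW-type basis over $\bb{K}[\gamma^{\pm1/2},(\gamma')^{\pm1/2}]$ by the Diamond Lemma, or equivalently by faithfulness of the Fock-space representation.

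Concretely I would argue as follows. Give $\mathcal H$ the filtration by total degree in the generators $a_{i,\ell}$; relation~\eqref{eq:D5} shows $\gr\mathcal H$ is the polynomial algebra over $\bb{K}[\gamma^{\pm1/2},(\gamma')^{\pm1/2}]$ on the symbols $\bar a_{i,\ell}$. Therefore the ordered monomials $A_S^-A_{S'}^+$ span each filtered piece and their symbols are linearly independent in $\gr\mathcal H$, hence they themselves are linearly independent in $\mathcal H$ over $\bb{K}[\gamma^{\pm1/2},(\gamma')^{\pm1/2}]$. Combining this with the fact (from~\eqref{eq:D1},~\eqref{eq:D3}) that $\bb{K}[Q]\otimes\bb{K}[Q]$ embeds in the centralizer and with the triangular decomposition $U_{q,q^{-1}}^{D,0}\cong \bb{K}[Q]^{\otimes2}\otimes\mathcal H$ — which itself follows from Lemma~\ref{lem:triangular_decomposition_D}(c) exactly as in the proof of~\cite[Theorem 3.2]{H} — we conclude that $\{A_S^-\gamma^{m/2}\omega_\mu\omega_{\mu'}'(\gamma')^{m'/2}A_{S'}^+\}$ is a $\bb{K}$-basis of $U_{q,q^{-1}}^{D,0}$. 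The main obstacle is making the Diamond-Lemma / associated-graded argument rigorous in the multivariable Heisenberg setting — i.e., verifying that all overlap ambiguities among the relations~\eqref{eq:D5} resolve — but this is standard and can also be bypassed by invoking faithfulness of the bosonic Fock representation of $\mathcal H$ (in which distinct ordered monomials act by linearly independent operators), which is available since $rs^{-1}$ is not a root of unity so the structure constants $c_{i,\ell}$ are nonzero.
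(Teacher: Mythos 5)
Your strategy — reduce to a PBW theorem for the Heisenberg-type subalgebra $\mathcal{H}$ and appeal to a Fock representation — is indeed the same underlying idea as the paper's proof, but as written the argument has a genuine gap in the linear-independence step. Relation~\eqref{eq:D5} shows only that the symbols $\bar a_{i,\ell}$ \emph{commute} in $\gr\mathcal H$, i.e.\ that $\gr\mathcal H$ is a \emph{quotient} of the polynomial algebra over $\bb{K}[\gamma^{\pm 1/2},(\gamma')^{\pm 1/2}]$; asserting that this quotient map is injective is literally the PBW statement you are trying to prove, so that step is circular. You recognize this (``the main obstacle'') and propose to fall back on faithfulness of a Fock representation. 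That is exactly the substance of the paper's proof, but you give no construction: you would still have to exhibit the module, specify how $\gamma^{\pm 1/2},(\gamma')^{\pm 1/2},\omega_\mu,\omega'_{\mu'}$ act (not just the $a_{i,\ell}$), check that the structure-constant matrix $\bigl([a_{i,\ell},a_{j,-\ell}]\bigr)_{i,j}$ — note that these constants depend on the pair $(i,j)$ through $[\ell a_{ij}]_{r_i,s_i}$, not on $i$ alone as your notation $c_{i,\ell}$ suggests — is nondegenerate, and then argue that distinct ordered monomials act by $\bb{K}$-linearly independent operators. That last verification for a \emph{single} Fock module requires extracting leading coefficients after applying to suitable monomials, which is doable but fiddly. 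The paper avoids this by constructing two Fock-type modules $\mathcal F^{\mp}_{c,c'}$ (on $\bb{K}[x_{i,\ell},y_i^{\pm 1},z_i^{\pm1}]$, with $a_{i,-\ell}$ resp.\ $a_{i,\ell}$ acting by multiplication), defining an explicit algebra map $\Delta\colon U^{D,0}_{q,q^{-1}}\to U^{D,0}_{q,q^{-1}}\otimes U^{D,0}_{q,q^{-1}}$ that is coassociative-like on the $a_{i,\ell}$, and then acting with the proposed basis on $1\otimes 1\in \mathcal F^-_{c,c'}\otimes\mathcal F^+_{c,c'}$: the two halves $A_S^-$ and $A_{S'}^+$ land in the two different tensor factors as distinct monomials, so linear independence is visible term by term without any operator comparison. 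Also note a small formal slip: your tensor-product decomposition lists $\bb{K}[\gamma^{\pm1/2},(\gamma')^{\pm1/2}]$ as a factor separately from $\mathcal H$, but the relations defining $\mathcal H$ already involve $\gamma,\gamma'$, so $\mathcal H$ must be a $\bb{K}[\gamma^{\pm1/2},(\gamma')^{\pm1/2}]$-algebra containing those generators (or equivalently $c,c'$ must be specialized to generic scalars, as the paper does). None of this is unfixable, but what you have is an outline of the paper's argument with the actual Fock-module construction — the load-bearing step — left unverified.
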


\begin{proof}
We shall only sketch the proof, since the arguments are standard.
The fact that the specified elements span $U_{q,q^{-1}}^{D,0}$ is obvious.
To prove their linear independence, we follow the method of~\cite[Proposition 4.16]{J}.

First, let $\cal{F} = \bb{K}[x_{i,\ell},y_{i}^{\pm 1},z_{i}^{\pm 1} \,|\, 1 \le i \le n,\, \ell \in \bb{Z}_{> 0}]$. Then, using
Lemma~\ref{lem:triangular_decomposition_D}(c), one can easily check that, for any $c,c' \in \bb{K}$, the following formulas define
a $U_{q,q^{-1}}^{D,0}$-module $\cal{F}^{-}_{c,c'}$ with underlying vector space $\cal{F}$:
\[
  \gamma^{1/2}\cdot f = cf,\qquad (\gamma')^{1/2}\cdot f = c'f,
\]
\[
  \omega_{i}^{\pm 1}\cdot f = y_{i}^{\pm 1}f,\qquad (\omega_{i}')^{\pm 1}\cdot f = z_{i}^{\pm 1}f,
\]
\[
  a_{i,\ell}\cdot f =
  \begin{cases}
    x_{i,-\ell}f & \text{if}\ \ell < 0 \\
    \sum_{j = 1}^{n}\frac{[\ell a_{ij}]_{q_{i}}}{\ell}(cc')^{\ell/2}\frac{c^{\ell} - (c')^{\ell}}{q_{j} - q_{j}^{-1}}\frac{\partial f}{\partial x_{j,\ell}}
      & \text{if}\ \ell > 0
  \end{cases}\,.
\]
Next, we note that there is an algebra automorphism of $U_{q,q^{-1}}^{D,0}$ given by $\omega_{i} \mapsto \omega_{i}$, $\omega_{i}' \mapsto \omega_{i}'$,
$\gamma^{1/2} \mapsto (\gamma')^{1/2}$, $(\gamma')^{1/2} \mapsto \gamma^{1/2}$, and $a_{i,\ell} \mapsto a_{i,-\ell}$. Using this, we may define another
$U_{q,q^{-1}}^{D,0}$-module $\cal{F}^{+}_{c,c'}$, obtained from $\cal{F}_{c,c'}^{-}$ by first composing with the aforementioned automorphism. Note that
$1 \in \cal{F}_{c,c'}^{-}$ is annihilated by all $a_{i,\ell}$ with $\ell > 0$, while $1 \in \cal{F}_{c,c'}^{+}$ is annihilated by all $a_{i,\ell}$ with $\ell < 0$.
Finally, we also note that there is an algebra homomorphism $\Delta\colon U_{q,q^{-1}}^{D,0} \to U_{q,q^{-1}}^{D,0} \otimes U_{q,q^{-1}}^{D,0}$ such that
$\Delta(\omega_{i}) = \omega_{i} \otimes \omega_{i}$, $\Delta(\omega_{i}') = \omega_{i}' \otimes \omega_{i}'$,
$\Delta(\gamma^{1/2}) = \gamma^{1/2} \otimes \gamma^{1/2}$, $\Delta((\gamma')^{1/2}) = (\gamma')^{1/2} \otimes (\gamma')^{1/2}$, and
\[
  \Delta(a_{i,\ell}) =
  \begin{cases}
    a_{i,\ell} \otimes \gamma^{|\ell|/2}(\gamma\gamma')^{|\ell|/2} + (\gamma')^{|\ell|/2}(\gamma\gamma')^{|\ell|/2} \otimes a_{i,\ell} & \text{if}\ \ell > 0 \\
    a_{i,\ell} \otimes \gamma^{|\ell|/2} + (\gamma')^{|\ell|/2} \otimes a_{i,\ell} & \text{if}\ \ell < 0
  \end{cases}\,.
\]
Thus, the tensor product $\cal{F}^{-}_{c,c'} \otimes \cal{F}^{+}_{c,c'}$, is also a $U_{q,q^{-1}}^{D,0}$-module through $\Delta$. Then by taking
$c,c' \in \bb{K}$ to be algebraically independent and considering the action of $U_{q,q^{-1}}^{D,0}$ on $1 \otimes 1$, one can easily show by
mimicking the proof of~\cite[Proposition 4.16]{J} that the elements $A_{S}^{-} \gamma^{m/2}\omega_{\mu}\omega_{\mu'}'(\gamma')^{m'/2}A_{S'}^{+}$
are linearly independent.
\end{proof}

Similarly, $U_{q}^{D,0}$ has a basis consisting of all $H_{S}^{-}\gamma^{m/2}K_{\mu}H_{S'}^{+}$, and the corresponding proof is completely analogous to
the one above. We are now ready to prove an analogue of Propositions~\ref{prop:double_cartan_DJ} and~\ref{prop:double_cartan_RTT_finite}:

\begin{prop}\label{prop:double_cartan_D}
Let $L_{\alpha_{0}},\ldots ,L_{\alpha_{n}}$ be algebraically independent invertible transcendental elements. Then there is an injective algebra homomorphism
  $\eta_{D}\colon U_{q,q^{-1}}^{D}(\what{\fg}) \to U_{q}^{D}(\what{\fg}) \otimes \bb{K}[L_{\alpha_{0}}^{\pm 1},\ldots ,L_{\alpha_{n}}^{\pm 1}]$
such that
\begin{align*}
  &\eta_{D}(\omega_{i}) = K_{i} \otimes L_{\alpha_{i}}^{2},
    & & \eta_{D}(\omega_{i}')  = K_{i}^{-1} \otimes L_{\alpha_{i}}^{2},
    & & \eta_{D}(\gamma^{1/2}) = \gamma^{1/2} \otimes L_{\delta}^{-1}, \\
  &\eta_{D}((\gamma')^{1/2}) = \gamma^{-1/2} \otimes L_{\delta}^{-1},
    & &\eta_{D}(a_{i,\ell}) = H_{i,\ell} \otimes L_{\delta}^{-2|\ell|},
    & & \eta_{D}(x_{i,m}^{\pm}) = X_{i,m}^{\pm} \otimes L_{\alpha_{i} \pm m\delta},
\end{align*}
for all $1 \le i \le n$, $\ell \in \bb{Z} \setminus \{0\}$, and $m \in \bb{Z}$.
\end{prop}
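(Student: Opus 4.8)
The plan is to follow the proof of Proposition~\ref{prop:double_cartan_DJ} almost verbatim, replacing the finite-type bigrading by the $\tfrac12\what{Q}\times\tfrac12\what{Q}$-bigrading of Lemma~\ref{lem:Q_hat_bigrading} and the finite-type triangular decomposition by Lemmas~\ref{lem:triangular_decomposition_D} and~\ref{lem:cartan_basis_D}. Let $\pi_D\colon U_{q,q^{-1}}^{D}(\what{\fg}) \twoheadrightarrow U_{q}^{D}(\what{\fg})$ be the quotient map recalled before Lemma~\ref{lem:Q_hat_bigrading}, and let $\chi\colon \tfrac12\what{Q}\times\tfrac12\what{Q} \to \tfrac12\what{Q}$ be the group homomorphism $\chi(\lambda,\mu)=\lambda-\mu$. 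First I would observe that, by inspection of the bidegrees in Lemma~\ref{lem:Q_hat_bigrading}, $\chi$ sends the bidegree of every generator into $\what{Q}=\bb{Z}\alpha_{0}\oplus Q$ --- e.g. $\chi(\deg\gamma^{1/2})=-\delta$, $\chi(\deg\omega_{i})=2\alpha_{i}$, $\chi(\deg x_{i,m}^{\pm})=\alpha_{i}\pm m\delta$, $\chi(\deg a_{i,\ell})=-2|\ell|\delta$ --- and hence $\chi$ takes values in $\what{Q}$ on all of $U_{q,q^{-1}}^{D}(\what{\fg})$. Then the assignment $u\mapsto \pi_{D}(u)\otimes L_{\chi(\lambda,\mu)}$ for homogeneous $u\in U_{q,q^{-1}}^{D}(\what{\fg})_{\lambda,\mu}$ is well-defined, and it is an algebra homomorphism because $\pi_{D}$ is one, the bigrading is multiplicative, and $\chi$ is additive. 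Comparing with the formulas in the statement identifies this homomorphism with $\eta_{D}$, which settles well-definedness.

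For injectivity I would argue exactly as in Proposition~\ref{prop:double_cartan_DJ}. Choose homogeneous bases $\{b_{k}^{+}\}$ of $U_{q,q^{-1}}^{D,+}$ and $\{b_{k}^{-}\}$ of $U_{q,q^{-1}}^{D,-}$, of bidegrees $(\partial_{k}^{+},0)$ and $(0,-\partial_{k}^{-})$ with $\partial_{k}^{\pm}\in\what{Q}$; by Lemma~\ref{lem:triangular_decomposition_D}(b) and the matching presentation of $U_{q}^{D,\pm}$, their $\pi_{D}$-images are bases of $U_{q}^{D,\pm}$. Combining Lemma~\ref{lem:triangular_decomposition_D}(a) with the Cartan basis of Lemma~\ref{lem:cartan_basis_D}, the elements $b_{k}^{-}A_{S}^{-}\gamma^{m/2}\omega_{\mu}\omega_{\mu'}'(\gamma')^{m'/2}A_{S'}^{+}b_{l}^{+}$ (over all $k,l$, sets $S,S'$, $m,m'\in\bb{Z}$, $\mu,\mu'\in Q$) form a basis of $U_{q,q^{-1}}^{D}(\what{\fg})$, and $\eta_{D}$ sends such an element to
\[
  \bigl(\pi_{D}(b_{k}^{-})\,H_{S}^{-}\gamma^{(m-m')/2}K_{\mu-\mu'}H_{S'}^{+}\,\pi_{D}(b_{l}^{+})\bigr)\otimes
  L_{\,\partial_{k}^{-}+\partial_{l}^{+}+2(\mu+\mu')-(m+m'+2|S|+2|S'|)\delta},
\]
where $|S|=\sum_{p}N_{p}\ell_{p}$ for $S=\{(i_p,\ell_p,N_p)\}$. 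By the triangular decomposition of $U_{q}^{D}(\what{\fg})$ (which holds by the same argument as Lemma~\ref{lem:triangular_decomposition_D}, or is classical) and the basis of $U_{q}^{D,0}$ recorded after Lemma~\ref{lem:cartan_basis_D}, the first tensor factors are linearly independent, and from one such factor one reads off $k,l,S,S'$, the central exponent $m-m'$, and $\mu-\mu'$. Subtracting the now-known quantity $\partial_{k}^{-}+\partial_{l}^{+}-(2|S|+2|S'|)\delta$ from the $L$-exponent leaves $2(\mu+\mu')-(m+m')\delta$; since $\mu,\mu'\in Q$ carry no $\alpha_{0}$-component while $\delta=\alpha_0+\theta$ has $\alpha_{0}$-component $1$, the $\alpha_{0}$-coefficient recovers $m+m'$ and then the remainder recovers $\mu+\mu'$. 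Together with $m-m'$ and $\mu-\mu'$ this pins down $m,m',\mu,\mu'$, so $\eta_{D}$ maps a basis to a linearly independent set and is therefore injective.

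The main obstacle is this last bookkeeping step: passing to $U_{q}^{D}(\what{\fg})$ identifies $\gamma^{1/2}$ with $(\gamma')^{-1/2}$ and $\omega_{i}$ with $(\omega_{i}')^{-1}$, so in the $U_{q}^{D}(\what{\fg})$-tensor factor the ``doubled'' Cartan and central data collapse and must be reconstructed from the $L$-exponent. The reconstruction works only because $\delta$ has $\alpha_{0}$-coefficient $1$, which lets the $\alpha_{0}$-grading disentangle the loop/central part ($m+m'$, $|S|$, $|S'|$) from the finite Cartan part ($\mu+\mu'$); everything else --- checking $\eta_{D}$ on the generators and fixing homogeneous bases of $U_{q,q^{-1}}^{D,\pm}$ --- is routine and parallels the finite-type arguments of Propositions~\ref{prop:double_cartan_DJ} and~\ref{prop:double_cartan_RTT_finite}.
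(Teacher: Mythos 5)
Your proposal is correct and follows essentially the same route as the paper: construct $\eta_D$ by $u\mapsto \pi_D(u)\otimes L_{\lambda-\mu}$ using the $\tfrac12\what{Q}\times\tfrac12\what{Q}$-bigrading of Lemma~\ref{lem:Q_hat_bigrading}, then deduce injectivity from Lemmas~\ref{lem:triangular_decomposition_D} and~\ref{lem:cartan_basis_D} by showing the explicit $\eta_D$-images of the resulting basis elements are linearly independent. The only difference is cosmetic: you spell out that the $\alpha_0$-coefficient of the $L$-exponent recovers $m+m'$ (because $\delta=\alpha_0+\theta$ while $\mu,\mu'\in Q$), whereas the paper leaves this disambiguation implicit.
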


Here, in analogy with~\eqref{eq:L_mu}, we use the notation $L_{\mu} = L_{\alpha_{0}}^{c_{0}}\ldots L_{\alpha_{n}}^{c_{n}}$ for any
$\mu = \sum_{i = 0}^{n}c_{i}\alpha_{i} \in \what{Q}$.

\begin{proof}
Note that the elements $\gamma^{1/2} - (\gamma')^{-1/2}$ and $\omega_{i} - (\omega_{i}')^{-1}$ are homogeneous relative to the $\what{Q}$-grading
given by $U_{q,q^{-1}}^{D}(\what{\fg})_{\mu} = \bigoplus_{\mu_{1} + \mu_{2} = \mu}U_{q,q^{-1}}^{D}(\what{\fg})_{\mu_{1},\mu_{2}}$ for all $\mu \in \what{Q}$.
We define $\eta_{R}\colon U_{q,q^{-1}}^{D}(\what{\fg}) \to U_{q}^{D}(\what{\fg}) \otimes \bb{K}[L_{\alpha_{0}}^{\pm 1},\ldots ,L_{\alpha_{n}}^{\pm 1}]$
by $\eta_{R}(u) = \bar{u} \otimes L_{\mu - \nu}$ for all $u \in U_{q,q^{-1}}(\fg)_{\mu,\nu}$, where $\bar{u}$ denotes the image of $u$ under the
quotient map $U_{q,q^{-1}}^{D}(\what{\fg}) \to U_{q}^{D}(\what{\fg})$. Then, by construction, $\eta_{R}$ is an algebra homomorphism, and it is easy
to see from the definition of the $\frac{1}{2}\what{Q}$-bigrading of Lemma~\ref{lem:Q_hat_bigrading} that $\eta_{R}$ maps
the generators of $U_{q,q^{-1}}^{D}(\what{\fg})$ as specified above.

For injectivity, we first note that, due to Lemma~\ref{lem:triangular_decomposition_D}(b) and its analogue for $U_{q}^{D}(\what{\fg})$,
the restriction of the quotient map $U_{q,q^{-1}}^{D}(\what{\fg}) \to U_{q}^{D}(\what{\fg})$ to $U_{q,q^{-1}}^{D,\pm}$ defines an isomorphism
$U_{q,q^{-1}}^{D,\pm} \iso U_{q}^{D,\pm}$, and it clearly also preserves the $\what{Q}$-grading on both algebras. Thus, if for all $\mu \in \what{Q}$
we choose bases $\{b_{j}^{\mu,+}\}_{j \in J^{+}_{\mu}}$ and $\{b_{j}^{\mu,-}\}_{j \in J^{-}_{\mu}}$ for $(U_{q,q^{-1}}^{D,+})_{\mu,0}$ and
$(U_{q,q^{-1}}^{D,-})_{0,\mu}$, then $\{\bar{b}_{j}^{\mu, \pm}\}_{j \in J^{\pm}_{\mu}}$ are bases for $(U_{q}^{D,\pm})_{\mu}$. Then by
Lemmas~\ref{lem:triangular_decomposition_D} and~\ref{lem:cartan_basis_D}, we have a basis of $U_{q,q^{-1}}^{D}(\what{\fg})$ consisting of
all elements $b_{j}^{\mu,-}A_{S}^{-}\gamma^{m/2}\omega_{\lambda}\omega_{\lambda'}'(\gamma')^{m'/2}A_{S'}^{+}b_{k}^{\nu,+}$ with $m,m' \in \bb{Z}$,
$\mu,\nu \in \what{Q}$, $j\in J^-_{\mu}, k\in J^+_\nu$, $\lambda,\lambda' \in Q$,
and $S,S'$ sets of the form $\{(i_{1},\ell_{1},N_{1}),\ldots ,(i_{t},\ell_{t},N_{t})\}$ with
$t \ge 0$, $1 \le i_{p} \le n$, $\ell_{p} \in \bb{Z}_{> 0}$ and $N_{p} \in \bb{Z}_{> 0}$, where
$A_{S}^{\pm}$ are defined in the first part of~\eqref{eq:A_S_def}. Likewise, we have a basis of $U_{q}^{D}(\what{\fg})$ consisting of all
elements of the form $\bar{b}_{j}^{\mu,-}H_{S}^{-}\gamma^{m/2}K_{\lambda}H_{S'}^{+}\bar{b}_{k}^{\nu,+}$ with $\mu,\nu,j,k,S,S',m,\lambda$
as above and $H_{S}^{\pm}$ introduced in the second part of~\eqref{eq:A_S_def}.

Now, for any set of the form $S = \{(i_{1},\ell_{1},N_{1}),\ldots ,(i_{t},\ell_{t},N_{t})\}$, set
$d_{S} = \sum_{r = 1}^{t} \ell_{r}N_{r}$. Then we have
\begin{multline}\label{eq:etaD_explicit}
  \eta_{D}(b_{j}^{\mu,-}A_{S}^{-}\gamma^{m/2}\omega_{\lambda}\omega_{\lambda'}'(\gamma')^{m'/2}A_{S'}^{+}b_{k}^{\nu,+}) = \\
  \bar{b}_{j}^{\mu, -}H_{S}^{-}\gamma^{(m -m')/2}K_{\lambda - \lambda'}H_{S'}^{+}\bar{b}_{k}^{\nu,+} \otimes
  L_{\nu - \mu + 2\lambda + 2\lambda' - (2d_{S} + 2d_{S'} + m + m')\delta}.
\end{multline}
Note that because $\lambda,\lambda' \in Q$, one can determine $\lambda,\lambda',m,m'$ from $\lambda - \lambda'$, $m - m'$,
and $2(\lambda + \lambda') - (m + m')\delta$.

Hence, it follows from~\eqref{eq:etaD_explicit} that $\eta_{D}$-images of the above basis of $U_{q,q^{-1}}^{D}(\what{\fg})$ are linearly independent.
\end{proof}


\subsection{Isomorphism between Drinfeld-Jimbo and new Drinfeld realizations}
\

We shall now use the results above to establish a two-parameter analogue of the isomorphism between the Drinfeld-Jimbo and new Drinfeld realizations.
First, we need to set up some notation, cf.~\cite{Ji}. For any $y_{1},\ldots ,y_{n} \in U_{r,s}^{D}(\what{\fg})$ and
$v_{1},\ldots ,v_{n-1} \in \bb{K}$, we define $[y_{1},\ldots ,y_{n}]_{(v_{1},\ldots ,v_{n-1})}$ and
$[y_{1},\ldots ,y_{n}]_{(v_{1},\ldots ,v_{n-1})}'$ via
\[
  [y_{1},y_{2}]_{v_{1}} = y_{1}y_{2} - v_{1}y_{2}y_{1} = [y_{1},y_{2}]_{v_{1}}',
\]
and for $n>2$:
\begin{equation}\label{eq:q_brackets}
\begin{split}
  &[y_{1},\ldots ,y_{n}]_{(v_{1},\ldots ,v_{n-1})} = [y_{1},[y_{2},\ldots ,y_{n}]_{(v_{1},\ldots ,v_{n-2})}]_{v_{n-1}}, \\
  &[y_{1},\ldots ,y_{n}]_{(v_{1},\ldots ,v_{n-1})}' = [[y_{1},\ldots ,y_{n-1}]_{(v_{1},\ldots ,v_{n-2})}',y_{n}]_{v_{n-1}}'.
\end{split}
\end{equation}
We note that the two bracketings are related by the following formula:
\begin{equation}\label{eq:bracket_to_bracket'}
  [y_{1},\ldots ,y_{n}]_{(v_{1},\ldots ,v_{n-1})} = (-1)^{n-1}v_{1}v_{2}\ldots v_{n-1} \cdot [y_{n},\ldots ,y_{1}]_{(v_{1}^{-1},\ldots ,v_{n-1}^{-1})}'.
\end{equation}

We also define $[y_{1},\ldots ,y_{n}]_{(v_{1},\ldots ,v_{n-1})}^{\circ}$ and $[y_{1},\ldots ,y_{n}]_{(v_{1},\ldots ,v_{n-1})}'^{\circ}$
by the same formulas, but using the multiplication $\circ$ in $U_{q,\zeta}^{D}(\what{\fg})$. Then, we have the following important technical result:

\begin{lemma}\label{lem:twisted_bracket}
Let $\lambda_{i},\mu_{i} \in Q$ and $y_{i} \in {U_{q,\zeta}^{D}(\what{\fg})}_{\lambda_{i},\mu_{i}}$ for $1 \le i \le n$.
Then, for any $v_{1},\ldots ,v_{n-1} \in \bb{K}$, we have:
\begin{equation}\label{eq:twisted_bracket}
  [y_{1},\ldots ,y_{n}]_{(v_{1},\ldots ,v_{n-1})} =
  \prod_{1 \le i < j \le n} \Big(\zeta(\lambda_{i},\lambda_{j})^{-1}\zeta(\mu_{i},\mu_{j}) \Big) \cdot
  [y_{1},\ldots ,y_{n}]_{(\wtd{v}_{1},\ldots ,\wtd{v}_{n-1})}^{\circ},
\end{equation}
where
\[
  \wtd{v}_{i} = \zeta(\lambda_{n-i},\lambda_{n-i+1} + \ldots + \lambda_{n})^{2}\zeta(\mu_{n-i},\mu_{n-i + 1} + \ldots + \mu_{n})^{-2}v_{i}
  \qquad \text{for all}\qquad 1 \le i \le n.
\]
\end{lemma}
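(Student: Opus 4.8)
The plan is to prove~\eqref{eq:twisted_bracket} by induction on $n$, reducing everything to a single scalar identity between the two products. For homogeneous $a \in {U_{q,\zeta}^{D}(\what{\fg})}_{\alpha,\beta}$ and $b \in {U_{q,\zeta}^{D}(\what{\fg})}_{\alpha',\beta'}$, formula~\eqref{eq:twisted_product_general} together with the skew-symmetry of $\zeta$ gives $ab = \zeta(\alpha,\alpha')^{-1}\zeta(\beta,\beta')\,(a\circ b)$ and, likewise, $ba = \zeta(\alpha,\alpha')\zeta(\beta,\beta')^{-1}\,(b\circ a)$. Both $ab$ and $a\circ b$ lie in bidegree $(\alpha+\alpha',\beta+\beta')$, so any iterated bracket of the $y_i$'s — formed from $\cdot$ or from $\circ$ — is homogeneous of bidegree $(\lambda_1+\cdots+\lambda_n,\ \mu_1+\cdots+\mu_n)$; I will use this homogeneity repeatedly.

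For the base case $n=2$ I would expand $[y_1,y_2]_{v_1} = y_1y_2 - v_1 y_2 y_1$, substitute the two identities above, and factor out $\zeta(\lambda_1,\lambda_2)^{-1}\zeta(\mu_1,\mu_2)$; the coefficient of the surviving $y_2\circ y_1$ becomes $v_1\,\zeta(\lambda_1,\lambda_2)^{2}\zeta(\mu_1,\mu_2)^{-2} = \wtd{v}_1$, which is exactly~\eqref{eq:twisted_bracket} in this case. For the inductive step, set $w = [y_2,\ldots,y_n]_{(v_1,\ldots,v_{n-2})}$, so that $[y_1,\ldots,y_n]_{(v_1,\ldots,v_{n-1})} = [y_1,w]_{v_{n-1}}$ by the definition~\eqref{eq:q_brackets}. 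Applying the inductive hypothesis to the $(n-1)$-tuple $(y_2,\ldots,y_n)$ writes $w = c\cdot w^{\circ}$, where $c = \prod_{2\le i<j\le n}\bigl(\zeta(\lambda_i,\lambda_j)^{-1}\zeta(\mu_i,\mu_j)\bigr)$ is a scalar and $w^{\circ} = [y_2,\ldots,y_n]^{\circ}_{(\wtd{v}_1,\ldots,\wtd{v}_{n-2})}$, the parameters matching the statement's $\wtd{v}_i$ for $i\le n-2$ after the reindexing $\lambda_{(n-1)-i} = \lambda_{n-i}$, etc. Since $c$ is central, $[y_1,w]_{v_{n-1}} = c\,[y_1,w^{\circ}]_{v_{n-1}}$, and as $y_1$ and $w^{\circ}$ are homogeneous of bidegrees $(\lambda_1,\mu_1)$ and $(\Lambda,M) := (\lambda_2+\cdots+\lambda_n,\ \mu_2+\cdots+\mu_n)$ respectively, the $n=2$ computation applies verbatim: $[y_1,w^{\circ}]_{v_{n-1}} = \zeta(\lambda_1,\Lambda)^{-1}\zeta(\mu_1,M)\,[y_1,w^{\circ}]^{\circ}_{\hat v}$ with $\hat v = \zeta(\lambda_1,\Lambda)^{2}\zeta(\mu_1,M)^{-2}v_{n-1}$, and $\hat v = \wtd{v}_{n-1}$ is precisely the $i=n-1$ instance of the statement's formula. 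Finally $[y_1,w^{\circ}]^{\circ}_{\hat v} = [y_1,\ldots,y_n]^{\circ}_{(\wtd{v}_1,\ldots,\wtd{v}_{n-1})}$ by unwinding the definition of the $\circ$-bracket.

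It then remains to check that the accumulated prefactor telescopes correctly. Using that $\zeta$ is a character in its second slot, $\zeta(\lambda_1,\Lambda)^{-1} = \prod_{j=2}^{n}\zeta(\lambda_1,\lambda_j)^{-1}$ and $\zeta(\mu_1,M) = \prod_{j=2}^{n}\zeta(\mu_1,\mu_j)$, so $c\cdot\zeta(\lambda_1,\Lambda)^{-1}\zeta(\mu_1,M) = \prod_{1\le i<j\le n}\bigl(\zeta(\lambda_i,\lambda_j)^{-1}\zeta(\mu_i,\mu_j)\bigr)$, which is the prefactor in~\eqref{eq:twisted_bracket}. The argument is purely formal; the one place that genuinely needs care — and the step I would write out in full — is the index bookkeeping aligning the inductive hypothesis for $(y_2,\ldots,y_n)$ with the $\wtd{v}_i$ of the statement, together with the verification that $\hat v$ above is the correct $\wtd{v}_{n-1}$.
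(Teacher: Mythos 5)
Your proposal is correct and follows essentially the same route as the paper: induction on $n$, with the same base case computation $[y_1,y_2]_{v_1} = \zeta(\lambda_1,\lambda_2)^{-1}\zeta(\mu_1,\mu_2)[y_1,y_2]^\circ_{\wtd{v}_1}$, the same inductive step applying this to the pair $(y_1, [y_2,\ldots,y_n]^\circ)$, and the same telescoping of the prefactor via biadditivity of $\zeta$. The only minor notational slip is writing ``$\lambda_{(n-1)-i}=\lambda_{n-i}$'' for the reindexing, where you mean the $(n-1)$-tuple's shifted labels $\lambda'_{(n-1)-i}=\lambda_{n-i}$; the substance is right.
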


\begin{proof}
We proceed by induction on $n$. For $n = 2$, we have
\begin{align*}
  [y_{1},y_{2}]_{v_{1}} & = y_{1}y_{2} - v_{1}y_{2}y_{1}
  = \zeta(\lambda_{1},\lambda_{2})^{-1}\zeta(\mu_{1},\mu_{2})y_{1} \circ y_{2} - v_{1}\zeta(\lambda_{2},\lambda_{1})^{-1}\zeta(\mu_{2},\mu_{1})y_{2} \circ y_{1} \\
  &= \zeta(\lambda_{1},\lambda_{2})^{-1}\zeta(\mu_{1},\mu_{2})\left (y_{1} \circ y_{2} - v_{1}
     \zeta(\lambda_{1},\lambda_{2})^{2}\zeta(\mu_{1},\mu_{2})^{-2}y_{2} \circ y_{1}\right )
  = \zeta(\lambda_{1},\lambda_{2})^{-1}\zeta(\mu_{1},\mu_{2})[y_{1},y_{2}]_{\wtd{v}_{1}}^{\circ},
\end{align*}
which proves the base case. Now, suppose that $n > 2$ and~\eqref{eq:twisted_bracket} holds for $m< n$.
Then by~\eqref{eq:q_brackets} and the induction hypothesis, we have:
\begin{align*}
  [y_{1},\ldots ,y_{n}]_{(v_{1},\ldots ,v_{n-1})} &= [y_{1},[y_{2},\ldots ,y_{n}]_{(v_{1},\ldots ,v_{n-2})}]_{v_{n-1}} \\
  &= \prod_{2 \le i < j \le n} \Big( \zeta(\lambda_{i},\lambda_{j})^{-1}\zeta(\mu_{i},\mu_{j}) \Big) \cdot
  [y_{1},[y_{2},\ldots ,y_{n}]_{(\wtd{v}_{1},\ldots ,\wtd{v}_{n-2})}^{\circ}]_{v_{n-1}}.
\end{align*}
Because $[y_{2},\ldots ,y_{n}]_{(\wtd{v}_{1},\ldots ,\wtd{v}_{n-2})}^{\circ}$ has bidegree
$(\lambda_{2} + \ldots + \lambda_{n},\mu_{2} + \ldots + \mu_{n})$, we can apply the $n = 2$ case to get
\[
  [y_{1},[y_{2},\ldots ,y_{n}]_{(\wtd{v}_{1},\ldots ,\wtd{v}_{n-2})}^{\circ}]_{v_{n-1}} =
  \zeta(\lambda_{1},\lambda_{2} + \ldots + \lambda_{n})^{-1}\zeta(\mu_{1},\mu_{2} + \ldots + \mu_{n})
  [y_{1},[y_{2},\ldots ,y_{n}]_{(\wtd{v}_{1},\ldots ,\wtd{v}_{n-2})}^{\circ}]_{\wtd{v}_{n-1}}^{\circ},
\]
and combining this with the expression above completes the proof.
\end{proof}

Finally, we need to extend $U_{q,q^{-1}}(\what{\fg})$ by adjoining central elements $\gamma^{1/2},(\gamma')^{1/2}$ that satisfy
$(\gamma^{1/2})^{2} = \omega_{\delta}$ and $((\gamma')^{1/2})^{2} = \omega_{\delta}'$, where $\delta = \alpha_{0} + \theta$.
By abuse of notation, we denote the extended algebra by $U_{q,q^{-1}}(\what{\fg})$ again.
To state the next theorem, let $i_{1},\ldots ,i_{h-1} \in \{1,2,\ldots ,n\}$ be any sequence such that
$\alpha_{i_{1}} + \ldots + \alpha_{i_{h-1}} = \theta$, and $\alpha_{i_{1}} + \ldots + \alpha_{i_{k}} \in \Phi^{+}$ for all
$1 \le k \le h-1$. Given this data, define $\sigma_{k} = (\alpha_{i_{1}} + \ldots + \alpha_{i_{k}},\alpha_{i_{k+1}})$
for all $1 \le k \le h - 2$, and let $\sigma = \sigma_{1} + \ldots + \sigma_{h-2}$ (in particular, $\sigma_{k} = -1$ for all $k$
when $\fg$ is simply-laced).

\begin{theorem}\label{thm:DJ=D_1_parameter}
The following defines an algebra isomorphism $\Psi_{q}\colon U_{q,q^{-1}}(\what{\fg}) \iso U_{q,q^{-1}}^{D}(\what{\fg})$:
\begin{equation*}
  \Psi_{q}(\omega_{i}) = \omega_{i},\qquad \Psi_{q}(\omega_{i}') = \omega_{i}',\qquad \Psi_{q}(e_{i}) = x^+_{i,0},\qquad
  \Psi_{q}(f_{i}) = x^-_{i,0} \qquad \forall\ 1 \le i \le n,
\end{equation*}
\begin{equation*}
  \Psi_{q}(\gamma^{1/2}) = \gamma^{1/2},\qquad \Psi_{q}((\gamma')^{1/2}) = (\gamma')^{1/2},\qquad
  \Psi_{q}(\omega_{0}) = (\gamma')^{-1}\omega_{\theta}^{-1},\qquad \Psi_{q}(\omega_{0}') = \gamma^{-1}(\omega_{\theta}')^{-1},
\end{equation*}
\begin{equation*}
  \Psi_{q}(e_{0}) = [x_{i_{h-1},0}^{-},\ldots , x_{i_{2},0}^{-}, x_{i_{1},1}^{-}]_{(q^{\sigma_{1}},\ldots ,q^{\sigma_{h-2}})}(\gamma')^{-1}\omega_{\theta}^{-1},
\end{equation*}
\begin{equation*}
  \Psi_{q}(f_{0}) = a_{q}(-q)^{-\sigma}\gamma^{-1}(\omega_{\theta}')^{-1}
  [x_{i_{h-1},0}^{+},\ldots ,x_{i_{2},0}^{+},x_{i_{1},-1}^{+}]_{(q^{\sigma_{1}},\ldots ,q^{\sigma_{h-2}})},
\end{equation*}
for some nonzero constant $a_{q}$. For classical types: $a_q=1$ in types $A_{n}$ or $D_{n}$, $a_{q} = [2]_{q}$ in type $C_{n}$,
and $a_{q} = [2]_{q^{2}}^{1 - \delta_{1,i_{1}}}$ in type $B_{n}$.
\end{theorem}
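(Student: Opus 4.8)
The plan is to bootstrap from the classical one-parameter Drinfeld isomorphism $\Psi_q'\colon U_q(\what{\fg})\iso U_q^D(\what{\fg})$ of~\eqref{eq:DJ=Dr_1param} (asserted in~\cite{D}, with inverse constructed by Beck in~\cite{B}), transporting it through the Cartan-doubling embeddings on both sides. First I would record the affine counterpart of Proposition~\ref{prop:double_cartan_DJ}: by the identical argument (now invoking the triangular decomposition of the affine Drinfeld--Jimbo algebra together with the central extension by $\gamma^{1/2},(\gamma')^{1/2}$ with $(\gamma^{1/2})^2=\omega_\delta$), there is an injective algebra homomorphism $\eta\colon U_{q,q^{-1}}(\what{\fg})\to U_q(\what{\fg})\otimes\bb{K}[L_{\alpha_0}^{\pm 1},\ldots,L_{\alpha_n}^{\pm 1}]$ with $e_i\mapsto E_i\otimes L_{\alpha_i}$, $f_i\mapsto F_i\otimes L_{\alpha_i}$, $\omega_i\mapsto K_i\otimes L_{\alpha_i}^2$, $\omega_i'\mapsto K_i^{-1}\otimes L_{\alpha_i}^2$ for $0\le i\le n$, and a matching assignment on $\gamma^{\pm 1/2},(\gamma')^{\pm 1/2}$. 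On the new Drinfeld side, Proposition~\ref{prop:double_cartan_D} already provides the embedding $\eta_D$ into the same Laurent ring $\bb{K}[L_{\alpha_0}^{\pm 1},\ldots,L_{\alpha_n}^{\pm 1}]$.

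Next I would tensor $\Psi_q'$ with the identity of the Laurent ring and set $\Psi_q:=\eta_D^{-1}\circ(\Psi_q'\otimes\mathrm{id})\circ\eta$, once it is checked that $(\Psi_q'\otimes\mathrm{id})\circ\eta$ carries $U_{q,q^{-1}}(\what{\fg})$ into the image of $\eta_D$. This is verified on the generators $e_i,f_i,\omega_i^{\pm 1},(\omega_i')^{\pm 1}$ and $\gamma^{\pm 1/2},(\gamma')^{\pm 1/2}$. For $1\le i\le n$ it is immediate --- e.g.\ $(\Psi_q'\otimes\mathrm{id})(\eta(e_i))=X_{i,0}^+\otimes L_{\alpha_i}=\eta_D(x_{i,0}^+)$, so $\Psi_q(e_i)=x_{i,0}^+$, and likewise one reads off $\Psi_q(f_i)=x_{i,0}^-$, $\Psi_q(\omega_i^{\pm 1})=\omega_i^{\pm 1}$, $\Psi_q((\omega_i')^{\pm 1})=(\omega_i')^{\pm 1}$. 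The case $i=0$ is the substantial one: one substitutes Beck's explicit expressions for $\Psi_q'(E_0),\Psi_q'(F_0)$ as iterated $q$-brackets of the currents (with structure constants $q^{\sigma_k}$ and overall constant $a_q(-q)^{-\sigma}$), tensors through with the $L$-bookkeeping from $\eta$ (using $\alpha_0=\delta-\theta$), and recognizes the outcome as $\eta_D$ applied to the bracket expressions in the displayed formulas for $\Psi_q(e_0),\Psi_q(f_0)$; the precise matching of $L$-exponents through the nested brackets is governed by the $\tfrac12\what{Q}$-grading of Lemma~\ref{lem:Q_hat_bigrading}. The images $\Psi_q(\omega_0)=(\gamma')^{-1}\omega_\theta^{-1}$ and $\Psi_q(\omega_0')=\gamma^{-1}(\omega_\theta')^{-1}$ come out the same way from $\omega_0=\omega_\delta\omega_\theta^{-1}$ and the conventions for $\eta,\eta_D$ on the central elements.

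Then $\Psi_q$ is an algebra homomorphism, injective as a composite of injective maps, with the asserted values on generators. For surjectivity I would run the symmetric check: using Beck's explicit inverse, one verifies that $((\Psi_q')^{-1}\otimes\mathrm{id})\circ\eta_D$ carries $U_{q,q^{-1}}^D(\what{\fg})$ into the image of $\eta$ --- immediate on $x_{i,0}^\pm$, the Cartan generators and $\gamma^{\pm 1/2},(\gamma')^{\pm 1/2}$, and reduced on the remaining loop generators $x_{i,m}^\pm,a_{i,\ell}$ to Beck's generation statement together with the same bigrading bookkeeping used for $\eta,\eta_D$. Equivalently, the image of $\Psi_q$ contains $x_{i,0}^\pm$, all $\omega_i^{\pm 1},(\omega_i')^{\pm 1},\gamma^{\pm 1/2},(\gamma')^{\pm 1/2}$, and --- via $\Psi_q(e_0),\Psi_q(f_0)$ --- the elements $x_{i_1,\pm 1}^\pm$, whence all $x_{i,m}^\pm$ by iterating the loop generators and hence all $a_{i,\ell}$ through~\eqref{eq:D6}. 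This produces a two-sided inverse, so $\Psi_q$ is an isomorphism.

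I expect the main obstacle to be the $i=0$ verification: it requires Beck's precise $q$-bracket formulas for $\Psi_q'(E_0),\Psi_q'(F_0)$ and careful propagation of the Laurent-polynomial tensor factor through the nested brackets, in particular keeping the $\tfrac12\what{Q}$-bidegrees of Lemma~\ref{lem:Q_hat_bigrading} aligned and handling the central extension $\gamma^{1/2}=\omega_\delta^{1/2}$ consistently on both sides. The constant $a_q$, together with the sign $(-q)^{-\sigma}$, is inherited verbatim from the one-parameter theorem (cf.~\cite{B,Ji}), specializing to $a_q=1$ in types $A_n,D_n$, $a_q=[2]_q$ in type $C_n$, and $a_q=[2]_{q^2}^{1-\delta_{1,i_1}}$ in type $B_n$.
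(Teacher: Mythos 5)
Your proposal follows the paper's proof essentially verbatim: bootstrap from the one-parameter isomorphism of~\cite{D} (as stated in~\cite{Ji}), observe that Proposition~\ref{prop:double_cartan_DJ} applies in the affine Kac--Moody setting (with the extension by $\gamma^{\pm 1/2},(\gamma')^{\pm 1/2}$), define $\Psi_q=\eta_D^{-1}\circ(\bar{\Psi}\otimes\mathrm{id})\circ\eta$ after checking the image condition on generators (with the $i=0$ case being the nontrivial one), and establish surjectivity by extracting the missing loop generators from $\Psi_q(e_0),\Psi_q(f_0)$ via $q$-bracket identities as in~\cite[p.~7]{Ji}. This is exactly the paper's argument, so no further comment is needed.
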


\begin{proof}
We shall deduce this from the corresponding one-parameter algebra isomorphism $\bar{\Psi}_{q}\colon U_{q}(\fg) \iso U_{q}^{D}(\fg)$
of~\cite{D} (see~\cite[Theorem 2.2]{Ji}), in conjunction with Proposition~\ref{prop:double_cartan_D}.

If we show that $(\bar{\Psi} \otimes \mathrm{Id}) \circ \eta$ maps $U_{q,q^{-1}}(\what{\fg})$ into $\eta_{D}(U_{q,q^{-1}}^{D}(\what{\fg}))$, for $\eta_{D}$
of Proposition~\ref{prop:double_cartan_D} and $\eta$ of Proposition~\ref{prop:double_cartan_DJ} (which holds for any symmetrizable Kac-Moody algebra $\fg$),
then we obtain an algebra homomorphism
  $\Psi_{q} = \eta_{D}^{-1} \circ (\bar{\Psi} \otimes \mathrm{Id}) \circ \eta\colon U_{q,q^{-1}}(\what{\fg}) \to U_{q,q^{-1}}^{D}(\what{\fg})$.
It suffices to check the former on the generators of $U_{q,q^{-1}}(\what{\fg})$. This is clear for $\gamma^{1/2}$, $(\gamma')^{1/2}$, and
$\omega_{i},\omega_{i}',e_{i},f_{i}$ with $1 \le i \le n$. It is also easy to verify that
$\Psi_{q} = \eta_{D}^{-1} \circ (\bar{\Psi} \otimes \mathrm{Id}) \circ \eta$ maps these generators to the elements specified above.
For $\omega_{0}$, we have:
\[
  (\bar{\Psi}\otimes \mathrm{Id})\eta(\omega_{0}) = \bar{\Psi}(K_{0}) \otimes L_{\alpha_{0}}^{2} = \gamma K_{\theta}^{-1}  \otimes L_{\alpha_{0}}^{2}
  = (\gamma^{-1} \otimes L_{\delta}^{-2})^{-1}(K_{\theta} \otimes L_{\theta}^{2})^{-1} = \eta_{D}(\gamma')^{-1}\eta_{D}(\omega_{\theta})^{-1},
\]
which shows that $(\bar{\Psi} \otimes \mathrm{Id})\eta(\omega_{0}) \in \eta_{D}(U_{q,q^{-1}}^{D}(\what{\fg}))$ and moreover
$\Psi_{q}(\omega_{0}) = (\gamma')^{-1}\omega_{\theta}^{-1}$. Similarly, one can show that $\omega_{0}'$ is mapped into
$\eta_{D}(U_{q,q^{-1}}^{D}(\what{\fg}))$ and that $\Psi_{q}(\omega_{0}') = \gamma^{-1}(\omega_{\theta}')^{-1}$.

For $e_{0}$, we have:
\begin{align*}
  (\bar{\Psi} \otimes \mathrm{Id})\eta(e_{0}) &= \bar{\Psi}(E_{0}) \otimes L_{\alpha_{0}}
  = [X_{i_{h-1},0}^{-},\ldots ,X_{i_{2},0}^{-},X_{i_{1},1}^{-}]_{(q^{\sigma_{1}},\ldots ,q^{\sigma_{h - 2}})}\gamma K_{\theta}^{-1} \otimes L_{\alpha_{0}} \\
  &= ([X_{i_{h-1},0}^{-},\ldots ,X_{i_{2},0}^{-},X_{i_{1},1}^{-}]_{(q^{\sigma_{1}},\ldots ,q^{\sigma_{h - 2}})} \otimes
     L_{\theta - \delta})(\gamma^{-1} \otimes L_{\delta}^{-2})^{-1}(K_{\theta} \otimes L_{\theta}^{2})^{-1} \\
  &= [X_{i_{h-1},0}^{-} \otimes L_{\alpha_{i_{h-1}}},\ldots ,X_{i_{2},0}^{-} \otimes L_{\alpha_{i_{2}}},X_{i_{1},1}^{-} \otimes
     L_{\alpha_{i_{1}} - \delta}]_{(q^{\sigma_{1}},\ldots ,q^{\sigma_{h-2}})}(\gamma^{-1} \otimes L_{\delta}^{-2})^{-1}(K_{\theta} \otimes L_{\theta}^{2})^{-1} \\
  &= \eta_{D}([x_{i_{h-1},0}^{-},\ldots ,x_{i_{2},0}^{-},x_{i_{1},1}^{-}]_{(q^{\sigma_{1}},\ldots ,q^{\sigma_{h-2}})}(\gamma')^{-1}\omega_{\theta}^{-1}),
\end{align*}
and hence $\Psi_{q}$ maps $e_{0}$ to the element of $U_{q,q^{-1}}^{D}(\what{\fg})$ specified above. The verification for $f_{0}$ is similar.

This shows that we have an injective algebra homomorphism $\Psi_{q}$ as claimed in the theorem. For surjectivity, one just needs
to show that the images of the generators of $U_{q,q^{-1}}(\what{\fg})$ under $\Psi_{q}$ generate all $U_{q,q^{-1}}^{D}(\what{\fg})$.
As $U_{q,q^{-1}}^{D}(\what{\fg})$ is generated by
  $\{(\omega_{j})^{\pm 1},(\omega_{j}')^{\pm 1},x_{j,0}^{+},x_{j,0}^{-} \,|\, 1 \le j \le n\} \cup \{\gamma^{\pm 1/2},(\gamma')^{\pm 1/2},a_{i_{1},\pm 1}\}$,
it suffices to express $a_{i_{1},\pm 1}$ in terms of
  $\{(\omega_{j})^{\pm 1},(\omega_{j}')^{\pm 1},x_{j,0}^{+},x_{j,0}^{-} \,|\, 1 \le j \le n\} \cup \{\gamma^{\pm 1/2},(\gamma')^{\pm 1/2}\}
    \cup \{\Psi_q(e_0),\Psi_q(f_0)\}$.
This can be done (cf.~\cite[page 7]{Ji}) by iteratively applying the $q$-bracket identities
\begin{align*}
  [a,[b,c]_{u}]_{v} &= [[a,b]_{x},c]_{\frac{uv}{x}} + x[b,[a,c]_{\frac{v}{x}}]_{\frac{u}{x}}, \\
  [[a,b]_{u},c]_{v} &= [a,[b,c]_{x}]_{\frac{uv}{x}} + x[[a,c]_{\frac{v}{x}},b]_{\frac{u}{x}}.
\end{align*}
We leave details to the interested reader.
\end{proof}

To obtain the two-parameter analogue of this result, we first observe that we can extend the $Q \times Q$-grading
\eqref{eq:our-bigrading} from $U_{q,q^{-1}}(\fg)$ to $U_{q,q^{-1}}(\what{\fg})$ via
\[
  \deg(e_{0}) = (-\theta,0),\qquad \deg(f_{0}) = (0,\theta),\qquad \deg(\omega_{0}) = \deg(\omega_{0}') = (-\theta,\theta).
\]
Then it is easy to show that we have the following analogue of Proposition~\ref{prop:twisted_algebra}:

\begin{prop}\label{prop:twisted_DJ_affine}
There is a unique Hopf algebra isomorphism $\what{\varphi}\colon U_{r,s}(\what{\fg}) \iso U_{q,\zeta}(\what{\fg})$ such that
\[
  \what{\varphi}(e_{i}) = e_{i},\qquad \what{\varphi}(f_{i}) = (r_{i}s_{i})^{-1/2}f_{i},\qquad
  \what{\varphi}(\omega_{i}) = \omega_{i},\qquad \what{\varphi}(\omega_{i}') = \omega_{i}'\qquad \text{for all}\qquad 0 \le i \le n,
\]
where $r_{0} = r^{(\theta,\theta)/2}$ and $s_{0} = s^{(\theta,\theta)/2}$.
\end{prop}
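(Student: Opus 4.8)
The plan is to establish Proposition~\ref{prop:twisted_DJ_affine} by exactly the same strategy used to prove Proposition~\ref{prop:twisted_algebra} (which is stated as being essentially due to~\cite{HP}), now applied to the affine Drinfeld--Jimbo quantum group $U_{r,s}(\what{\fg})$ with the extended $Q\times Q$-grading. First I would recall that a two-parameter affine quantum group $U_{r,s}(\what{\fg})$ is defined by the Drinfeld--Jimbo presentation on generators $\{e_i,f_i,\omega_i^{\pm1},(\omega_i')^{\pm1}\}_{i=0}^n$ (plus the adjoined $\gamma^{\pm1/2},(\gamma')^{\pm1/2}$) with relations of the same shape as \eqref{eq:R1}--\eqref{eq:R5}, but governed by the affine Cartan matrix and the extended Ringel form $\langle\cdot,\cdot\rangle$ on $\what{Q}$. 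The point is that, from the algebraic standpoint, the node $0$ behaves exactly like nodes $1,\dots,n$: one checks that with $\deg(e_0)=(-\theta,0)$, $\deg(f_0)=(0,\theta)$, $\deg(\omega_0)=\deg(\omega_0')=(-\theta,\theta)$ the algebra $U_{q,q^{-1}}(\what{\fg})$ becomes a $\what{Q}$-bigraded Hopf algebra in the sense of \eqref{eq:bigraded-bialgebra}--\eqref{eq:bigraded-antipode}, the verification being identical to the finite case for the nodes $1,\dots,n$ and the new checks at node $0$ being formally the same as at any other node once one substitutes $\alpha_0\leftrightarrow -\theta$ in the bilinear form.

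Next I would extend the skew bicharacter $\zeta$ from $Q$ to $\what{Q}$. Concretely, set $p_{0j}=r^{\langle\alpha_j,\alpha_0\rangle}s^{-\langle\alpha_0,\alpha_j\rangle}q^{-(\alpha_0,\alpha_j)}$ and similarly $p_{00}$, using the extended Ringel form; since $\langle\alpha_0,\alpha_0\rangle=\langle\theta,\theta\rangle$ and $(\alpha_0,\alpha_0)=0$ (as $\alpha_0=\delta-\theta$ is imaginary, or rather since the relevant form is the one induced on $\what{Q}$ via $-\theta$), one still gets $p_{00}=1$ and $p_{0j}p_{j0}=1$, so $\zeta(\alpha_i,\alpha_j)=p_{ij}^{1/2}$ (now $0\le i,j\le n$) remains a skew bicharacter on $\what{Q}$, and formula \eqref{eq:zeta_formula} continues to hold with $(\omega_\mu',\omega_\lambda)$ interpreted via the affine Cartan pairing. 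Then $U_{q,\zeta}(\what{\fg})=(U_{q,q^{-1}}(\what{\fg}))_{\zeta}$ is again a Hopf algebra, and I would define $\what{\varphi}$ on generators by the stated formulas and verify it is an algebra homomorphism: for each defining relation of $U_{r,s}(\what{\fg})$ one rewrites the product $ab$ of two homogeneous generators as $\zeta(\cdots)^{-1}\, a\circ b$ via \eqref{eq:twisted_product_general} and checks, using \eqref{eq:zeta_formula} and the bicharacter identities, that the relation is transported to the corresponding relation in $U_{q,\zeta}(\what{\fg})$ — exactly the computation already carried out for nodes $1,\dots,n$ in proving Proposition~\ref{prop:twisted_algebra}, with the scaling $\what{\varphi}(f_i)=(r_is_i)^{-1/2}f_i$ (now for $0\le i\le n$, with $r_0=r^{(\theta,\theta)/2}$, $s_0=s^{(\theta,\theta)/2}$) chosen precisely so that the $[e_i,f_i]$ relation \eqref{eq:R4} matches. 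Bijectivity is immediate since the inverse is given by the obvious rescaling $e_i\mapsto e_i$, $f_i\mapsto (r_is_i)^{1/2}f_i$, $\omega_i^{\pm1}\mapsto\omega_i^{\pm1}$, $(\omega_i')^{\pm1}\mapsto(\omega_i')^{\pm1}$, and compatibility with the Hopf structure follows because the coproduct, counit, and antipode of $U_{q,\zeta}(\what{\fg})$ agree with those of $U_{q,q^{-1}}(\what{\fg})$ and one checks the comultiplication formulas on generators (which are degree-homogeneous, so no twisting ambiguity arises).

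The only genuinely new point compared to the finite case is bookkeeping at the affine node, so I would spend care on the following two items. First, one must confirm that the quantum affine Serre relations \eqref{eq:R5} involving node $0$ (i.e.\ $i=0$ or $j=0$) are correctly transported; this is the same Serre-relation computation as in the finite case, where the prefactors $(r_is_i)^{k(k-1)/2}(rs)^{k\langle\alpha_j,\alpha_i\rangle}$ and the $q$-binomial rescaling $\qbinom{N}{k}_{r_i,s_i}=(r_is_i)^{k(N-k)/2}\qbinom{N}{k}_{q_i}$ conspire with the $\zeta$-factors from reordering $e_i^{N-k}e_je_i^k$ — this is in fact the computation done in detail in the proof of Theorem~\ref{thm:twisted_algebra_loop} for relations \eqref{eq:D9+}--\eqref{eq:D9-}, so I would simply cite/imitate it. Second, one must treat the central elements: $\gamma^{1/2},(\gamma')^{1/2}$ are degree $(0,0)$ after the adjunction (or one can note $(\gamma^{1/2})^2=\omega_\delta$ has degree $(0,0)$ since $\delta=\alpha_0+\theta$ and $\deg(\omega_0)+\deg(\omega_\theta)=(-\theta,\theta)+(\theta,-\theta)=(0,0)$), so they are fixed by both $\zeta$-twisting and $\what{\varphi}$, and the relation $(\gamma^{1/2})^2=\omega_\delta$ is preserved. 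I expect no real obstacle here — the whole proposition is ``the same proof as Proposition~\ref{prop:twisted_algebra}, now with the index set $\{0,1,\dots,n\}$ and the affine Cartan data'' — and the brief proof in the paper would presumably just say so, perhaps writing out the node-$0$ cases of the $\omega$--$e$, $\omega$--$f$, and Serre relations to reassure the reader.
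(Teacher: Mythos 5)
Your proposal is correct and takes essentially the same approach as the paper, which offers no proof beyond the remark that this is an easy analogue of Proposition~\ref{prop:twisted_algebra}: one extends the $Q\times Q$-bigrading to node $0$ as in the displayed formulas preceding the proposition (with $e_0$ in degree $(-\theta,0)$), notes the original $\zeta$ on $Q$ already does the job since $\theta\in Q$, and verifies that $\what\varphi$ transports each defining relation exactly as in the finite rank case. One small wrinkle in your write-up: the parenthetical justification ``$\langle\alpha_0,\alpha_0\rangle=\langle\theta,\theta\rangle$ and $(\alpha_0,\alpha_0)=0$, hence $p_{00}=1$'' is internally inconsistent — if $(\alpha_0,\alpha_0)$ were $0$ while $\langle\alpha_0,\alpha_0\rangle=d_\theta$, then $p_{00}=r^{d_\theta}s^{-d_\theta}q^0\ne 1$. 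The correct statement is that the form relevant to the $Q\times Q$-grading is obtained from $\what Q$ by $\alpha_0\mapsto -\theta$, so that $(\alpha_0,\alpha_0)=(\theta,\theta)=2d_\theta$ and then $p_{00}=r^{d_\theta}s^{-d_\theta}q^{-2d_\theta}=1$ since $q=(r/s)^{1/2}$; equivalently, $\zeta(\theta,\theta)=1$ is just the skew-bicharacter axiom $\zeta(\alpha,\alpha)=1$. This does not affect the validity of your argument, and the detour through extending $\zeta$ to $\what Q$ is a harmless reformulation of the paper's use of $\zeta$ on $Q$ with $\deg(e_0)=(-\theta,0)$.
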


Combining this and Theorem~\ref{thm:twisted_algebra_loop}, we obtain the following two-parameter analogue of Theorem~\ref{thm:DJ=D_1_parameter}:

\begin{theorem}\label{thm:DJ=D_2_parameter}
The following defines an algebra isomorphism $\Psi_{r,s}\colon U_{r,s}(\what{\fg}) \iso U_{r,s}^{D}(\what{\fg})$:
\begin{equation*}
  \Psi_{r,s}(\omega_{i}) = \omega_{i},\qquad \Psi_{r,s}(\omega_{i}') = \omega_{i}',\qquad
  \Psi_{r,s}(e_{i}) = x^+_{i,0},\qquad \Psi_{r,s}(f_{i}) = x^-_{i,0} \qquad \forall\ 1 \le i \le n,
\end{equation*}
\begin{equation*}
  \Psi_{r,s}(\gamma^{1/2}) = \gamma^{1/2},\qquad \Psi_{r,s}((\gamma')^{1/2}) = (\gamma')^{1/2},\qquad
  \Psi_{r,s}(\omega_{0}) = (\gamma')^{-1}\omega_{\theta}^{-1},\qquad \Psi_{r,s}(\omega_{0}') = \gamma^{-1}(\omega_{\theta}')^{-1},
\end{equation*}
\begin{equation*}
  \Psi_{r,s}(e_{0})
  = [x_{i_{h-1},0}^{-},\ldots , x_{i_{2},0}^{-}, x_{i_{1},1}^{-}]_{(t_{1},\ldots ,t_{h-2})}(\gamma')^{-1}\omega_{\theta}^{-1},
\end{equation*}
\begin{equation*}
  \Psi_{r,s}(f_{0})
  = a_{r,s}(-r)^{-\sigma}\gamma^{-1}(\omega_{\theta}')^{-1}[x_{i_{h-1},0}^{+},\ldots ,x_{i_{2},0}^{+},x_{i_{1},-1}^{+}]_{(t_{1}',\ldots ,t_{h-2}')},
\end{equation*}
where
\[
  t_{k} = (\omega_{i_{k+1}}',\omega_{\alpha_{i_{1}} + \ldots + \alpha_{i_{k}}})\qquad \text{and}\qquad
  t_{k}' = (\omega_{\alpha_{i_{1}} + \ldots + \alpha_{i_{k}}}',\omega_{i_{k+1}})\qquad \text{for all}\qquad 1 \le k \le h - 2,
\]
and $a_{r,s}$ is a nonzero constant (which is equal to $a_{q}$ of Theorem~\ref{thm:DJ=D_1_parameter} with $q = r^{1/2}s^{-1/2}$).
For classical types: $a_{r,s} = 1$ in types $A_{n}$ or $D_{n}$, $a_{r,s} = (rs)^{-1/2}[2]_{r,s}$ in type $C_{n}$,
and $a_{r,s} = ((rs)^{-1}[2]_{r^{2},s^{2}})^{1 - \delta_{1,i_{1}}}$ in type $B_{n}$.
\end{theorem}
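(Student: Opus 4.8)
The plan is to deduce Theorem~\ref{thm:DJ=D_2_parameter} from its one-parameter counterpart Theorem~\ref{thm:DJ=D_1_parameter} by the same twisting machinery used throughout the paper, namely by conjugating $\Psi_q$ through the isomorphisms $\what{\varphi}\colon U_{r,s}(\what{\fg})\iso U_{q,\zeta}(\what{\fg})$ of Proposition~\ref{prop:twisted_DJ_affine} and $\varphi_D\colon U_{r,s}^{D}(\what{\fg})\iso U_{q,\zeta}^{D}(\what{\fg})$ of Theorem~\ref{thm:twisted_algebra_loop}. The first step is to observe that the map $\Psi_q$ of Theorem~\ref{thm:DJ=D_1_parameter} is homogeneous with respect to the $Q\times Q$-bigradings: on $U_{q,q^{-1}}(\what{\fg})$ we use the bigrading \eqref{eq:our-bigrading} extended by $\deg(e_0)=(-\theta,0)$, $\deg(f_0)=(0,\theta)$, $\deg(\omega_0)=\deg(\omega_0')=(-\theta,\theta)$, and on $U_{q,q^{-1}}^{D}(\what{\fg})$ the bigrading \eqref{eq:Drinfeld-grading}; one checks on generators that $\Psi_q$ preserves bidegrees (e.g.\ $\Psi_q(e_0)$ is the bracket of $x^-$'s, which is $\deg(0,\theta)$, matching $\deg(e_0)$ only after one also tracks the extra $(\gamma')^{-1}\omega_\theta^{-1}$ factor, whose bidegree is $(-\theta,\theta)-(0,\theta)$ cancelled against the bracket's $(0,-\theta)$ piece — a short bookkeeping check using that $\gamma^{1/2},(\gamma')^{1/2}$ have bidegree $(0,0)$ in \eqref{eq:Drinfeld-grading}). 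Hence $\Psi_q$ descends to a well-defined algebra isomorphism $\Psi_q\colon U_{q,\zeta}(\what{\fg})\iso U_{q,\zeta}^{D}(\what{\fg})$ of the twisted algebras, since twisting by a skew bicharacter only changes multiplication in a way that depends on bidegrees.

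Then I would set $\Psi_{r,s}=\varphi_D^{-1}\circ\Psi_q\circ\what{\varphi}$. This is automatically an algebra isomorphism $U_{r,s}(\what{\fg})\iso U_{r,s}^{D}(\what{\fg})$, so the only remaining work is to compute the images of the generators. For $\omega_i,\omega_i'$ with $1\le i\le n$ and for $\gamma^{1/2},(\gamma')^{1/2}$ the computation is immediate from the explicit formulas for $\what{\varphi}$ and $\varphi_D$ (all these are fixed). For $e_i,f_i$ with $1\le i\le n$, $\omega_0,\omega_0'$ the computation is short: $\what{\varphi}(e_i)=e_i\mapsto x^+_{i,0}\mapsto x^+_{i,0}$, and $\what{\varphi}(f_i)=(r_is_i)^{-1/2}f_i\mapsto (r_is_i)^{-1/2}x^-_{i,0}\mapsto (r_is_i)^{-1/2}\cdot(r_is_i)^{1/2}x^-_{i,0}=x^-_{i,0}$ using that $\varphi_D(x^-_{i,0})=(r_is_i)^{-1/2}x^-_{i,0}$, hence $\varphi_D^{-1}(x^-_{i,0})=(r_is_i)^{1/2}x^-_{i,0}$; and the $\omega_0,\omega_0'$ images follow from $\Psi_q(\omega_0)=(\gamma')^{-1}\omega_\theta^{-1}$ together with the fact that all of $\what{\varphi},\varphi_D$ fix $\omega_\theta,\omega_\theta',\gamma,\gamma'$.

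The main computational step — and the one where care is needed — is $e_0$ and $f_0$, where the iterated $q$-brackets interact nontrivially with the bicharacter twist. Here the plan is to invoke Lemma~\ref{lem:twisted_bracket}: applying $\varphi_D^{-1}\circ\Psi_q\circ\what{\varphi}$ to $e_0$ produces the ordinary bracket $[x_{i_{h-1},0}^-,\ldots,x_{i_2,0}^-,x_{i_1,1}^-]_{(q^{\sigma_1},\ldots,q^{\sigma_{h-2}})}$ times $(\gamma')^{-1}\omega_\theta^{-1}$, and Lemma~\ref{lem:twisted_bracket} rewrites this ordinary bracket as a scalar times the $\circ$-bracket with the twisted coefficients $\wtd{v}_k$; these twisted coefficients, together with the definition \eqref{eq:zeta_formula} of $\zeta$, collapse exactly to $t_k=(\omega_{i_{k+1}}',\omega_{\alpha_{i_1}+\ldots+\alpha_{i_k}})$, since $\zeta(\lambda,\mu)^2=(\omega_\mu',\omega_\lambda)q^{-(\lambda,\mu)}$ converts the extra $\zeta^2$ factors and the $q^{\sigma_k}$ into $(\omega_{i_{k+1}}',\omega_{\alpha_{i_1}+\ldots+\alpha_{i_k}})$. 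The global prefactor $\prod_{i<j}\bigl(\zeta(\lambda_i,\lambda_j)^{-1}\zeta(\mu_i,\mu_j)\bigr)$ from Lemma~\ref{lem:twisted_bracket}, combined with the scalars $(r_is_i)^{\pm 1/2}$ introduced by $\what{\varphi}$ and $\varphi_D^{-1}$ on the various $x^\pm$ factors, must be absorbed; I expect all of it to cancel against the corresponding prefactor on the target side except for a residual scalar, which is precisely how the constant $a_{q}$ gets renormalized to $a_{r,s}$. The final step is to pin down $a_{r,s}$ in classical types by tracking this residual scalar: in types $A_n,D_n$ one has $a_q=1$ and the twisting scalars cancel, giving $a_{r,s}=1$; in type $C_n$, $a_q=[2]_q$ and the substitution $q=r^{1/2}s^{-1/2}$ together with one leftover $(r_{i_1}s_{i_1})^{-1/2}$-type factor yields $a_{r,s}=(rs)^{-1/2}[2]_{r,s}$ (using $[2]_q=(rs)^{-1/2}[2]_{r,s}$ up to the conventions of \eqref{eq:rs_gamma}); and in type $B_n$, $a_q=[2]_{q^2}^{1-\delta_{1,i_1}}$ becomes $a_{r,s}=((rs)^{-1}[2]_{r^2,s^2})^{1-\delta_{1,i_1}}$ by the analogous identity $[2]_{q^2}=(rs)^{-1}[2]_{r^2,s^2}$. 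The analogous computation for $f_0$ — using the $'$-bracket version of Lemma~\ref{lem:twisted_bracket} and the relation \eqref{eq:bracket_to_bracket'} if needed, which produces $t_k'=(\omega_{\alpha_{i_1}+\ldots+\alpha_{i_k}}',\omega_{i_{k+1}})$ and the sign $(-r)^{-\sigma}$ from $(-q)^{-\sigma}$ — is entirely parallel, so I would carry out $e_0$ in detail and indicate that $f_0$ follows by the same argument. The hard part is really just disciplined bookkeeping of the three interlocking sources of scalars (the $\what{\varphi}$-scalars on $f_i$, the $\varphi_D$-scalars on $x^-_{i,m}$ and $a_{i,\ell}$, and the bicharacter prefactors from Lemma~\ref{lem:twisted_bracket}); there is no conceptual obstacle, since $\Psi_{r,s}$ is an isomorphism by construction the moment $\Psi_q$ is known to descend to the twisted algebras.
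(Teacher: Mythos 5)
Your approach is the right one — conjugating $\Psi_q$ through the twisting isomorphisms $\what{\varphi}$ and $\varphi_D$, and handling the iterated brackets via Lemma~\ref{lem:twisted_bracket} — and your computations for $\omega_i,\omega_i',e_i,f_i\ (1\le i\le n)$, $\gamma^{1/2},(\gamma')^{1/2},\omega_0,\omega_0'$ are fine. But there is a genuine gap in how you account for the scalars on $e_0$ and $f_0$. The composite $\bar\Psi_{r,s}:=\varphi_D^{-1}\circ\Psi_q\circ\what\varphi$ does \emph{not} give the stated formulas: if you carry out the bookkeeping carefully using Lemma~\ref{lem:twisted_bracket} and the scalars from $\what\varphi,\varphi_D$, you find
\[
  \bar\Psi_{r,s}(e_0)\ =\ c\cdot[x^-_{i_{h-1},0},\ldots,x^-_{i_1,1}]_{(t_1,\ldots,t_{h-2})}(\gamma')^{-1}\omega_\theta^{-1},\qquad
  \bar\Psi_{r,s}(f_0)\ =\ c^{-1}\cdot a_{r,s}(-r)^{-\sigma}\gamma^{-1}(\omega_\theta')^{-1}[\ldots]_{(t_1',\ldots,t_{h-2}')},
\]
where $c=\prod_{1\le j<k\le h-1}\zeta(\alpha_{i_k},\alpha_{i_j})\cdot(rs)^{\frac{1}{2}N(\theta)}$ with $N(\theta)=\sum_{k=1}^{h-1}\frac{(\alpha_{i_k},\alpha_{i_k})}{2}$, and where $a_{r,s}$ is already exactly $a_q|_{q=r^{1/2}s^{-1/2}}$ (the $(-q)^{-\sigma}(r_0s_0)^{-1/2}$ factor simplifies to $(-r)^{-\sigma}(rs)^{-\frac{1}{2}N(\theta)}$, which is what produces the $c^{-1}$).

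Your claim that the residual scalar from Lemma~\ref{lem:twisted_bracket} ``is precisely how the constant $a_q$ gets renormalized to $a_{r,s}$'' is therefore incorrect: the passage $a_q\rightsquigarrow a_{r,s}$ is nothing but substitution $q=r^{1/2}s^{-1/2}$ into the explicit formulas for $a_q$ (e.g.\ $[2]_q=(rs)^{-1/2}[2]_{r,s}$), and the factor $c$ is an \emph{additional} nontrivial scalar; it is generically $\ne 1$ even in types $A_n,D_n$ where $a_q=a_{r,s}=1$, so under your reading one would get $a_{r,s}=c\ne 1$, contradicting the statement. The missing idea is that one must post-compose $\bar\Psi_{r,s}$ with the algebra automorphism $\varrho$ of $U_{r,s}(\what{\fg})$ fixing all generators except $e_0\mapsto c^{-1}e_0$, $f_0\mapsto c\,f_0$; then $\Psi_{r,s}:=\bar\Psi_{r,s}\circ\varrho$ realizes exactly the stated formulas, and the observation that $c$ shows up with opposite exponents on $e_0$ and $f_0$ is what makes $\varrho$ a well-defined automorphism (it preserves the Serre relation $[e_0,f_0]=\cdots$). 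Without this extra twist by $\varrho$, your $\Psi_{r,s}$ is still an isomorphism, but it does not send $e_0,f_0$ to the elements written in the theorem.
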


\begin{proof}
Consider the composition
  $\bar{\Psi}_{r,s} = \varphi_{D}^{-1} \circ \Psi_{q} \circ \what{\varphi}\colon U_{r,s}(\what{\fg}) \iso U_{r,s}^{D}(\what{\fg})$
with the algebra isomorphisms $\varphi_{D}$ of Theorem~\ref{thm:twisted_algebra_loop}, $\Psi_{q}$ of Theorem~\ref{thm:DJ=D_1_parameter},
and $\what{\varphi}$ of Proposition~\ref{prop:twisted_DJ_affine}. It is easy to see that $\bar{\Psi}_{r,s}$ agrees with $\Psi_{r,s}$
on the generators $\{\omega_{i},\omega_{i}',e_{i},f_{i}\}_{i = 1}^{n} \cup \{\gamma^{1/2},(\gamma')^{1/2},\omega_{0},\omega_{0}'\}$.
For $e_{0}$, using Theorem~\ref{thm:DJ=D_1_parameter} and Lemma~\ref{lem:twisted_bracket} we get:
\begin{align*}
  \bar{\Psi}_{r,s}(e_{0})
  &= \varphi_{D}^{-1}\left ( [x_{i_{h-1},0}^{-},\ldots , x_{i_{2},0}^{-}, x_{i_{1},1}^{-}]_{(q^{\sigma_{1}},\ldots ,q^{\sigma_{h-2}})}
     (\gamma')^{-1}\omega_{\theta}^{-1}\right ) \\
  &= \prod_{1 \le j < k \le h-1}\zeta(\alpha_{i_{k}},\alpha_{i_{j}}) \cdot
    \varphi_{D}^{-1}\left ([x_{i_{h-1},0}^{-},\ldots ,x_{i_{2},0}^{-},x_{i_{1},1}^{-}]_{(t_{1},\ldots ,t_{h-2})}^{\circ}\circ
    (\gamma')^{-1} \circ \omega_{\theta}^{-1}\right ) \\
  &= \prod_{1 \le j < k \le h-1}\zeta(\alpha_{i_{k}},\alpha_{i_{j}}) \cdot
    [\varphi_{D}^{-1}(x_{i_{h-1},0}^{-}),\ldots ,\varphi_{D}^{-1}(x_{i_{2},0}^{-}),\varphi_{D}^{-1}(x_{i_{1},1}^{-})]_{(t_{1},\ldots ,t_{h-2})}
    (\gamma')^{-1}\omega_{\theta}^{-1} \\
  &= \prod_{1 \le j < k \le h-1}\zeta(\alpha_{i_{k}},\alpha_{i_{j}}) \cdot
    (rs)^{\frac{1}{2}N(\theta)}[x_{i_{h-1},0}^{-},\ldots ,x_{i_{2},0}^{-},x_{i_{1},1}^{-}]_{(t_{1},\ldots ,t_{h-2})}(\gamma')^{-1}\omega_{\theta}^{-1},
\end{align*}
where $N(\theta) = \sum_{k = 1}^{h - 1}\frac{(\alpha_{i_{k}},\alpha_{i_{k}})}{2} = \frac{(\theta,\theta)}{2} - \sigma$ and
\[
  t_{k} = \zeta(\alpha_{i_{k+1}},\alpha_{i_{1}} + \ldots + \alpha_{i_{k}})^{-2}q^{\sigma_{k}} =
  \zeta(\alpha_{i_{1}} + \ldots + \alpha_{i_{k}},\alpha_{i_{k + 1}})^{2}q^{\sigma_{k}} =
  (\omega_{i_{k+1}}',\omega_{\alpha_{i_{1}} + \ldots + \alpha_{i_{k}}})
\]
for all $1 \le k \le h-2$. Similarly, for $f_{0}$, we have
(evoking that $a_{r,s}$ coincides with $a_{(r/s)^{1/2}}$ from Theorem~\ref{thm:DJ=D_1_parameter}):
\begin{align*}
  &\bar{\Psi}_{r,s}(f_{0}) =
    a_{r,s}(-q)^{-\sigma}(r_{0}s_{0})^{-\frac{1}{2}}\varphi_{D}^{-1}
    \left (\gamma^{-1}(\omega_{\theta}')^{-1}[x_{i_{h-1},0}^{+},\ldots ,x_{i_{2},0}^{+},x_{i_{1},-1}^{+}]_{(q^{\sigma_{1}},\ldots ,q^{\sigma_{h-2}})}\right ) \\
  &= \prod_{1 \le j < k \le h-1} \zeta(\alpha_{i_{k}},\alpha_{i_{j}})^{-1} \cdot a_{r,s}(-q)^{-\sigma}(r_{0}s_{0})^{-\frac{1}{2}}
     \varphi_{D}^{-1}\left( \gamma^{-1} \circ (\omega_{\theta}')^{-1} \circ
                            [x_{i_{h-1},0}^{+},\ldots ,x_{i_{2},0}^{+},x_{i_{1},-1}^{+}]_{(t_{1}',\ldots ,t_{h-2}')}^{\circ} \right) \\
  &= \prod_{1 \le j < k \le h - 1}\zeta(\alpha_{i_{k}},\alpha_{i_{j}})^{-1} \cdot a_{r,s}(-q)^{-\sigma}(r_{0}s_{0})^{-\frac{1}{2}}
     \gamma^{-1}(\omega_{\theta}')^{-1}[x_{i_{h-1},0}^{+},\ldots ,x_{i_{2},0}^{+},x_{i_{1},-1}^{+}]_{(t_{1}',\ldots ,t_{h-2}')},
\end{align*}
where
\[
  t_{k}' = \zeta(\alpha_{i_{k+1}},\alpha_{i_{1}} + \ldots + \alpha_{i_{k}})^{2}q^{\sigma_{k}} =
  (\omega_{\alpha_{i_{1}} + \ldots + \alpha_{i_{k}}}',\omega_{i_{k +1}})
\]
for all $1 \le k \le h - 2$. Note that
\[
  (-q)^{-\sigma}(r_{0}s_{0})^{-\frac{1}{2}} = (-1)^{-\sigma}r^{-\frac{1}{2}\sigma}s^{\frac{1}{2}\sigma}(rs)^{-\frac{(\theta,\theta)}{4}}
  = (-r)^{-\sigma}(rs)^{-\frac{1}{2}(\frac{(\theta,\theta)}{2} - \sigma)} = (-r)^{-\sigma}(rs)^{-\frac{1}{2}N(\theta)}.
\]
Now, set
\[
  c \ = \prod_{1 \le j < k \le h - 1}\zeta(\alpha_{i_{k}},\alpha_{i_{j}}) \cdot (rs)^{\frac{1}{2}N(\theta)}
\]
and consider the automorphism $\varrho$ of $U_{r,s}(\what{\fg})$ which fixes all the generators of $U_{r,s}(\what{\fg})$ other than $e_{0}$ and $f_{0}$,
and satisfies $\varrho(e_{0}) = c^{-1}e_{0}$, $\varrho(f_{0}) = cf_{0}$. Then the above computations show that $\Psi_{r,s} = \bar{\Psi}_{r,s} \circ \varrho$
is an algebra isomorphism that maps the generators of $U_{r,s}(\what{\fg})$ as specified above. The values of the constant $a_{r,s}$ for $\fg$ of classical
type are determined by substituting $q = r^{1/2}s^{-1/2}$ for the corresponding constant $a_{q}$.
\end{proof}

\begin{remark}
For simply-laced $\fg$, the above theorem exactly recovers~\cite[Theorem 3.9]{HZ}. Indeed, the images of all generators
except $f_{0}$ match on the nose. For $f_{0}$, we note that
\[
  [x_{i_{h-1},0}^{+},\ldots ,x_{i_{2},0}^{+},x_{i_{1},-1}^{+}]_{(t_{1}',\ldots ,t_{h-2}')} =
  (-1)^{h-2}t_{1}'\ldots t_{h-2}'[x_{i_{1},-1}^{+},\ldots ,x_{i_{h-1},0}^{+}]'_{((t_{1}')^{-1},\ldots ,(t_{h-2}')^{-1})}
\]
by~\eqref{eq:bracket_to_bracket'}, as well as
\[
  t_{1}'\ldots t_{h-2}' = r^{\sum_{1 \le j < k \le h - 1}
    \langle \alpha_{i_{j}},\alpha_{i_{k}}\rangle}s^{-\sum_{1 \le j < k \le h - 1}\langle \alpha_{i_{k}},\alpha_{i_{j}}\rangle}
  = r^{\sigma}(rs)^{-\sum_{1 \le j < k \le h - 1}\langle \alpha_{i_{k}},\alpha_{i_{j}}\rangle},
\]
so that $\Psi_{r,s}(f_{0})$ can be written as
\begin{equation}\label{eq:f0_image_modified}
  \Psi_{r,s}(f_{0}) =
  a_{r,s}'(-1)^{h-2-\sigma}\gamma^{-1}(\omega_{\theta}')^{-1} [x_{i_{1},-1}^{+},\ldots ,x_{i_{h-1},0}^{+}]'_{((t_{1}')^{-1},\ldots ,(t_{h-2}')^{-1})},
\end{equation}
where
\[
  a_{r,s}' = a_{r,s}(rs)^{-\sum_{1 \le j < k \le h - 1}\langle \alpha_{i_{k}},\alpha_{i_{j}}\rangle}.
\]
The reader may now check that the right-hand side of~\eqref{eq:f0_image_modified} is equal to the image of $f_{0}$ in~\cite[Theorem~3.9]{HZ}.
\end{remark}


\section{Affine RTT}\label{sec:affine_RTT}

In this Section, we develop the affine analogue of Section~\ref{sec:FRT construction finite}. To this end, we start by evoking the FRT formalism
for solutions of the Yang-Baxter equation with a spectral parameter, and develop the twisting procedure for them. We finally apply this in the context
of $BCD$-type $R$-matrices with a spectral parameter to derive the isomorphism between this presentation and the new Drinfeld realizations from
Section~\ref{sec:loop-realization} for the two-parameter setup through twisting the one-parameter counterpart of~\cite{JLM1,JLM2}.


\subsection{The Hopf algebra $U(R(z))$ and its variations}\label{ssec:affine_FRT}
\

In this Section, we shall often be working with elements $A(z) \in \End(V)^{\otimes 2}[[z]]$, where $V$ is an $N$-dimensional vector space.
Regarding these elements as $N\times N$ matrices with coefficients in $\bb{K}[[z]]$, we adopt the following convention for denoting the entries
of $A(z)$ and $A(z)^{-1}$ (assuming the latter exists):
\begin{equation}
\begin{split}
  A(z) &= \sum_{1 \le i,j \le N}A^{ij}_{k\ell}(z)E_{ik} \otimes E_{j\ell}
  \qquad \text{with}\qquad A^{ij}_{k\ell}(z) = \sum_{m \geq 0} A^{ij}_{k\ell}[-m]z^{m}, \\
  A(z)^{-1} &= \sum_{1 \le i,j \le N}(A^{-1})^{ij}_{k\ell}(z)E_{ik} \otimes E_{j\ell}
  \qquad \text{with}\qquad (A^{-1})^{ij}_{k\ell}(z) = \sum_{m \geq 0} (A^{-1})^{ij}_{k\ell}[-m]z^{m}.
\end{split}
\end{equation}
In particular, we shall often be working with $R(z) \in \End(V)^{\otimes 2}[[z]]$ that satisfy
the \textbf{Yang-Baxter equation with a spectral parameter}:
\begin{equation}\label{eq:YBE_affine}
  R_{12}(z)R_{13}(zw)R_{23}(w) = R_{23}(w)R_{13}(zw)R_{12}(z).
\end{equation}
Associated to any such $R(z)$, we define $U(R(z))$ as the associative $\bb{K}$-algebra generated by
\[
  \big\{\gamma^{\pm 1/2},(\gamma')^{\pm 1/2}\big\} \cup
  \big\{\ell_{ij}^{\pm}[\mp m] \,\big|\, 1 \le i,j \le N,\ m \in \bb{Z}_{\ge 0}\big\} \cup
  \big\{(\ell_{ii}^{\pm}[0])^{-1} \,\big|\, 1\le i \le N\big\} ,
\]
where $\gamma^{\pm 1/2}$ and $(\gamma')^{\pm 1/2}$ are central, and the remaining defining relations are:
\begin{equation}\label{eq:zero_mode_relations}
  \ell_{ij}^{+}[0] = \ell_{ji}^{-}[0] = 0,\qquad
  \ell_{ii}^{\pm}[0](\ell_{ii}^{\pm}[0])^{-1} = (\ell_{ii}^{\pm}[0])^{-1}\ell_{ii}^{\pm}[0] = 1 \qquad
  \text{for all}\qquad 1 \le i < j \le N,
\end{equation}
and
\begin{equation}\label{eq:Affine_RLL}
\begin{split}
   R(z/w)L^{\pm}_{1}(z)L_{2}^{\pm}(w) &= L_{2}^{\pm}(w)L_{1}^{\pm}(z)R(z/w), \\
   R(z\gamma(\gamma\gamma')^{1/2}/w)L_{1}^{+}(z)L_{2}^{-}(w) &= L_{2}^{-}(w)L_{1}^{+}(z)R(z\gamma'(\gamma\gamma')^{1/2}/w),
\end{split}
\end{equation}
where
\begin{equation*}
\begin{split}
  & L_{1}^{\pm}(w) = \sum_{1 \le i,j \le N}E_{ij} \otimes 1 \otimes \ell_{ij}^{\pm}(w) \in \End(V)^{\otimes 2}\otimes U(R(z))[[w]],\\
  & L_{2}^{\pm}(w) = \sum_{1 \le i,j \le N} 1 \otimes E_{ij} \otimes \ell_{ij}^{\pm}(w) \in \End(V)^{\otimes 2} \otimes U(R(z))[[w]],
\end{split}
\end{equation*}
with
\[
  \ell_{ij}^{\pm}(w) = \sum_{m \geq 0} \ell_{ij}^{\pm}[\mp m]w^{\pm m}.
\]
One can check directly that there is a Hopf algebra structure on $U(R(z))$ such that
\begin{equation*}
\begin{split}
  & \Delta(\gamma^{\pm 1/2})=\gamma^{\pm 1/2}\otimes \gamma^{\pm 1/2}, \quad
    S(\gamma^{\pm 1/2})=\gamma^{\mp 1/2}, \quad
    \epsilon(\gamma^{\pm 1/2})=1, \\
  & \Delta((\gamma')^{\pm 1/2})=(\gamma')^{\pm 1/2}\otimes (\gamma')^{\pm 1/2}, \quad
    S((\gamma')^{\pm 1/2})=(\gamma')^{\mp 1/2}, \quad \epsilon((\gamma')^{\pm 1/2})=1,
\end{split}
\end{equation*}
and for the remaining generators, the coproduct, counit, and antipode are given by the following formulas:
\begin{equation*}
  \Delta(\ell_{ij}^{+}(z)) = \sum_{1\leq k\leq N}\ell_{ik}^{+}(z(1 \otimes \gamma^{3/2})) \otimes \ell_{kj}^{+}(z(\gamma^{1/2} \otimes 1)), \quad
  S(L^{+}(z)) = (L^{+}(z\gamma^{-2}))^{-1}, \quad \epsilon(\ell_{ij}^{+}(z)) = \delta_{ij},
\end{equation*}
\begin{equation*}
  \Delta(\ell_{ij}^{-}(z)) = \sum_{1\leq k\leq N}\ell_{ik}^{-}(z(1 \otimes (\gamma')^{-1/2})) \otimes \ell_{kj}^{-}(z((\gamma')^{-3/2} \otimes 1)), \quad
  S(L^{-}(z)) = (L^{-}(z(\gamma')^{2}))^{-1}, \quad \epsilon(\ell_{ij}^{-}(z)) = \delta_{ij}.
\end{equation*}
To see why $L^{\pm}(z)^{-1}$ (and hence also $L^{+}(z\gamma^{-2})^{-1}$ and $L^{-}(z(\gamma')^{2})^{-1}$) exist, note that we can regard
$L^{\pm}(z)^{-1}$ as series with coefficients being $N\times N$ matrices with entries in $U(R(z))$. Since the constant term
$(\ell_{ij}^{\pm}[0])_{i,j = 1}^{N}$ is invertible due to the relations~\eqref{eq:zero_mode_relations}, the entire series
$L^{\pm}(z)$ must also be invertible.

\begin{remark}\label{rem:central-powers}
We should emphasize right away that the specific choice of powers of $\gamma,\gamma'$ featuring in the second line of~\eqref{eq:Affine_RLL}
follows from its one-parameter single-Cartan version of~\cite[(3.12)]{DF} (when $\gamma'=\gamma^{-1}$) via a combination of
Proposition~\ref{prop:2_vs_1_parameter_affine_RTT} (reducing to the case $r=q=s^{-1}$) and Proposition~\ref{prop:double_cartan_RTT_affine}
(which is essentially requesting that the defining relations are homogeneous with respect to the algebra bigrading of
Lemma~\ref{lem:affine_RTT_bigradings}(a)). On the other hand, our choice of powers of $\gamma,\gamma'$ in the above formulas for
$\Delta(\ell^\pm_{ij}(z))$ slightly differs from that of~\cite[(3.13)]{DF} and is made to ensure that $U(\wtd{R}_{q}(z))$ is a
$\what{P}$-bigraded Hopf algebra with respect to the $\what{P}$-bigrading of Lemma~\ref{lem:affine_RTT_bigradings}(a) (which in turn
is chosen to be compatible with Lemma~\ref{lem:Q_hat_bigrading}, see Remark~\ref{rem:rtt=Dr-hatP-compatibility}).
\end{remark}

For the rest of this Section, we will mainly work with $R(z) = \wtd{R}_{q}(z)$, where $\wtd{R}_{q}(z)$ is a certain normalization of
$R_{q}(z) = \hat{R}_{q}(z) \circ \tau$, with $\hat{R}_{q}(z)$ being one of the matrices of~\cite[(6.11)--(6.13)]{MT1} with $r = q$ and $s = q^{-1}$.
Recall that $R_{q}(z)$ is an $N^{2} \times N^{2}$-matrix (with $N = 2n +1$ in type $B_{n}$, and $N = 2n$ in types $C_{n}$ and $D_{n}$) that
satisfies~\eqref{eq:YBE_affine}. Let also $\xi = q^{-4n + 2}$ and $v = q^{-2}$ in type $B_{n}$,  $\xi = q^{-2-2n}$ and $v = q^{-1}$ in type $C_{n}$,
and $\xi = q^{2-2n}$ and $v = q^{-1}$ in type $D_{n}$. Then we define
\begin{equation}\label{eq:f_prefactor}
  \wtd{R}_{q}(z) = f(z)R_{q}(z),
\end{equation}
where $f(z)$ is the formal power series in $z$ given by the following infinite product formula:
\begin{equation}\label{eq:f_prefactor_formula}
  f(z) = \prod_{k \geq 0}
  \frac{(1 - z\xi^{2k})(1 - zv^{2}\xi^{2k + 1})(1 - zv^{-2}\xi^{2k + 1})(1 - z\xi^{2k + 2})}
       {(1 - z\xi^{2k - 1})(1 - z\xi^{2k + 1})(1 - zv^{-2}\xi^{2k})(1 - zv^{2}\xi^{2k})}.
\end{equation}

In this Section, we shall also consider several quotients of $U(\wtd{R}_{q}(z))$. First, let $U'(\wtd{R}_{q}(z))$ be the quotient obtained by imposing
the additional relations $(\gamma')^{1/2} = \gamma^{-1/2}$ and $(\ell_{ii}^{\pm}[0])^{-1} = \ell_{ii}^{\mp}[0]$ for all $1 \le i \le N$. Furthermore,
we define $\wtd{U}(\wtd{R}_{q}(z))$, $\wtd{U}'(\wtd{R}_{q}(z))$ to be the algebras formed from $U(\wtd{R}_{q}(z))$, $U'(\wtd{R}_{q}(z))$, respectively,
by further imposing (cf.~\eqref{eq:LC_relations})
\begin{equation}\label{eq:LC_affine}
  L^{\pm}(z)C_{q}L^{\pm}(z\xi)^{\mathsf{t}'}C_{q}^{-1} = I.
\end{equation}
Here, as before, $A^{\mathsf{t}'} = (a_{j'i'})_{i,j = 1}^{N}$ for any $N \times N$-matrix $A = (a_{ij})_{i,j = 1}^{N}$, and $C_{q}$ is one of the matrices
defined in Section~\ref{sec:FRT construction finite} (see~\eqref{eq:Cq_B} for type $B_{n}$,~\eqref{eq:Cq_C} for type $C_{n}$, and~\eqref{eq:Cq_D} for type $D_{n}$).
In type $B_{n}$, we also impose the additional relation (cf.~\eqref{eq:rel_B_extra})
\begin{equation}\label{eq:B_extra_relation_affine}
  \ell_{n + 1,n+1}^{\pm}[0] = 1
\end{equation}
in both $\wtd{U}(\wtd{R}_{q}(z))$ and $\wtd{U}'(\wtd{R}_{q}(z))$. More explicitly,~\eqref{eq:LC_affine} is equivalent to the following set of relations:
\begin{equation}\label{eq:LC_explicit_series}
  \sum_{1\leq k\leq N}c_{kk}c_{jj}^{-1}\ell_{ik}^{\pm}(z)\ell_{j'k'}^{\pm}(z\xi) = \delta_{ij}
  \qquad \mathrm{for\ all} \qquad 1\leq i,j\leq N.
\end{equation}

We also have the following analogue of Proposition~\ref{prop:central_element}:

\begin{prop}\label{prop:central_element_affine}
In the algebras $U(\wtd{R}_{q}(z))$ and $U'(\wtd{R}_{q}(z))$, we have the following equalities:
\begin{equation}\label{eq:central_element_affine}
  \sum_{1\leq k\leq N}c_{ii}c_{kk}^{-1}\ell_{k'i'}^{\pm}(z\xi)\ell_{ki}^{\pm}(z) =
  \sum_{1\leq k\leq N}c_{jj}^{-1}c_{kk}\ell_{jk}^{\pm}(z)\ell_{j'k'}^{\pm}(z\xi)
  \qquad \text{for all}\qquad 1 \le i,j \le N.
\end{equation}
If $\frak{z}^{\pm}_q(z)$ denotes the common series above, then the relations~\eqref{eq:LC_affine} are equivalent to the relations
$\frak{z}^{\pm}_q(z) = 1$. Furthermore, we have
\begin{equation}\label{eq:zq_coproduct}
\begin{split}
  &\Delta(\frak{z}_{q}^{+}(z)) = \frak{z}_{q}^{+}(z(1 \otimes \gamma^{3/2})) \otimes \frak{z}_{q}^{+}(z(\gamma^{1/2} \otimes 1)), \\
  &\Delta(\frak{z}_{q}^{-}(z)) = \frak{z}_{q}^{-}(z(1 \otimes (\gamma')^{-1/2})) \otimes \frak{z}_{q}^{-}(z((\gamma')^{-3/2} \otimes 1)),
\end{split}
\end{equation}
\begin{equation}\label{eq:zq_counit_antipode}
  \epsilon(\frak{z}_{q}^{\pm}(z)) = 1,\qquad S(\frak{z}_{q}^{+}(z)) = \frak{z}_{q}^{+}(z\gamma^{-2})^{-1},\qquad
  S(\frak{z}_{q}^{-}(z)) = \frak{z}_{q}^{-}(z(\gamma')^{2})^{-1}.
\end{equation}
\end{prop}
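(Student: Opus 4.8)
The argument follows the pattern of the finite-type Proposition~\ref{prop:central_element}, now upgraded to include spectral parameters and the central elements $\gamma,\gamma'$. The plan is to first establish the symmetry identity~\eqref{eq:central_element_affine}, then deduce that the ideal generated by the entries of~\eqref{eq:LC_affine} is generated by a single series $\frak{z}^{\pm}_q(z)-1$, and finally to compute the Hopf-algebraic data~\eqref{eq:zq_coproduct}--\eqref{eq:zq_counit_antipode}. As in the finite case, I will only treat type $B_n$ in detail, the other two classical types being entirely analogous with the appropriate values of $\xi$ and $v$.

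\emph{Step 1: the symmetry~\eqref{eq:central_element_affine}.} I will mimic the finite-type argument. The affine $R$-matrix $\hat{R}_q(z)=R_q(z)\circ\tau$ satisfies a ``Baxterized'' version of the cubic equation for $\hat{R}_q$ (obtainable from~\eqref{eq:Baxterization-B} and~\cite[Lemma 4.8]{MT1}), and the image of the top eigenprojector, suitably evaluated, is a scalar multiple of the matrix $K$ from~\eqref{eq:K_matrix}, with $K$ expressed through $R_q(z)$ at the distinguished point $z=\xi$. The key input is that, because of the first relation in~\eqref{eq:Affine_RLL}, one has $\hat{R}_q(z/w)L_2^{\pm}(w)L_1^{\pm}(z)=L_2^{\pm}(w)L_1^{\pm}(z)\hat{R}_q(z/w)$, and hence the same holds after replacing $\hat{R}_q(z/w)$ by any polynomial in it with scalar coefficients; specializing the resulting $K$-intertwining relation at the appropriate ratio of spectral parameters gives, on the one hand, the vanishing $\sum_k c_{kk}\ell_{ik}^{\pm}(z)\ell_{jk'}^{\pm}(z\xi)=0$ for $i\ne j'$ (the analogue of~\eqref{eq:KLL_relation_1}) and, on the other, exactly~\eqref{eq:central_element_affine} (the analogue of~\eqref{eq:KLL_relation_2}). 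Comparing with~\eqref{eq:LC_explicit_series} then shows the off-diagonal entries of $L^{\pm}(z)C_qL^{\pm}(z\xi)^{\mathsf{t}'}C_q^{-1}-I$ vanish identically, while each diagonal entry equals $\frak{z}^{\pm}_q(z)-1$; this proves the equivalence of~\eqref{eq:LC_affine} with $\frak{z}^{\pm}_q(z)=1$.

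\emph{Step 2: the Hopf data.} For the coproduct~\eqref{eq:zq_coproduct}, I will apply $\Delta$ to the right-hand side of~\eqref{eq:central_element_affine}, using the comultiplication formula for $\ell^{\pm}_{ij}(z)$ and the fact that $\gamma^{1/2},(\gamma')^{1/2}$ are grouplike and central; after reindexing and applying the vanishing $\sum_k c_{kk}\ell^{\pm}_{ik}(w)\ell^{\pm}_{jk'}(w\xi)=0$ for $i\ne j'$ from Step~1 (to collapse the double sum to a single one) and then~\eqref{eq:central_element_affine} again, one lands on $\frak{z}^{+}_q(z(1\otimes\gamma^{3/2}))\otimes\frak{z}^{+}_q(z(\gamma^{1/2}\otimes 1))$, and similarly for the minus case with $(\gamma')^{-1/2},(\gamma')^{-3/2}$; here one must track carefully how the spectral-parameter shifts in $\Delta(\ell^\pm_{ij}(z))$ distribute, which is exactly why the particular powers of $\gamma,\gamma'$ were chosen in the coproduct (cf.\ Remark~\ref{rem:central-powers}). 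The counit $\epsilon(\frak{z}^{\pm}_q(z))=1$ is immediate from $\epsilon(\ell^{\pm}_{ij}(z))=\delta_{ij}$ and $\sum_k c_{kk}c_{jj}^{-1}\delta_{jk}\delta_{j'k'}=c_{j'j'}c_{jj}^{-1}$ together with the identity $c_{ii}c_{i'i'}=1$ valid for all three types (in type $B_n$ this also uses the relation~\eqref{eq:B_extra_relation_affine} at the middle index). For the antipode formulas in~\eqref{eq:zq_counit_antipode}, since $\frak{z}^{\pm}_q(z)$ is grouplike by~\eqref{eq:zq_coproduct} and the counit is $1$, it is automatically invertible with $S(\frak{z}^{\pm}_q(z))=\frak{z}^{\pm}_q(z)^{-1}$; one then only needs to identify this inverse with the claimed shifted series, which follows by applying $S$ to the defining expression $\frak{z}^{+}_q(z)=\sum_k c_{jj}^{-1}c_{kk}\ell^{+}_{jk}(z)\ell^{+}_{j'k'}(z\xi)$, using $S(L^{+}(z))=(L^{+}(z\gamma^{-2}))^{-1}$ together with the $\frak{z}$-relation (or equivalently~\eqref{eq:LC_affine}), which expresses $(L^{+}(w))^{-1}$ through $C_q(L^{+}(w\xi))^{\mathsf{t}'}C_q^{-1}$ and converts the product back into $\frak{z}^{+}_q$ evaluated at a shifted argument; the minus case is parallel with $(\gamma')^2$ in place of $\gamma^{-2}$.

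\emph{Main obstacle.} The genuinely delicate part is Step~1 — specifically, pinning down that the correct Baxterized combination of $\hat{R}_q(z)$ collapses to the matrix $K$ exactly at the shift $z\mapsto z\xi$, and simultaneously that this same $\xi$ is the one appearing in~\eqref{eq:LC_affine}. This amounts to a bookkeeping check involving the explicit $R$-matrices of~\cite[(6.11)--(6.13)]{MT1}, the normalization factor $f(z)$ of~\eqref{eq:f_prefactor_formula}, and the values of $\xi$ in each classical type; everything else (the coproduct/counit/antipode computations of Step~2) is routine once Step~1 is in place. I expect to handle this by a direct eigenvector computation as in the finite case (identifying the image of the top eigenvector $w$ of $R_q$ under $R_q(z)$), with the spectral-parameter dependence making the eigenvalue of $K$ a specific monomial in $z$ that forces the shift $z\xi$.
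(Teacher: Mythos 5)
Your high-level plan is sound and Steps~1 and the coproduct part of Step~2 essentially match the paper: the symmetry~\eqref{eq:central_element_affine} is obtained by specializing the first $RLL$ relation at $z=w\xi$ and invoking the degeneration of $\wtd{R}_q$ at $\xi$ to a multiple of the rank-one matrix $Q=K\circ\tau$ (the paper just cites this fact directly from~\cite[(1.5)]{JLM1} rather than deducing it from the Baxterization~\eqref{eq:Baxterization-B}; your detour through ``polynomials in $\hat{R}_q(z/w)$'' is an unnecessary relic of the finite-type argument, since here you need only the evaluation $\wtd{R}_q(\xi)$, not a polynomial in $\hat{R}_q$ at a generic point). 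The coproduct formula~\eqref{eq:zq_coproduct} is derived exactly as you describe.

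However, your argument for the antipode in~\eqref{eq:zq_counit_antipode} has two genuine gaps. First, the series $\frak{z}^{\pm}_q(z)$ are \emph{not} grouplike in the ordinary sense: the formula $\Delta(\frak{z}^{+}_q(z))=\frak{z}^{+}_q(z(1\otimes\gamma^{3/2}))\otimes\frak{z}^{+}_q(z(\gamma^{1/2}\otimes 1))$ involves nontrivial $\gamma$-shifts of the argument, so the ``grouplike $\Rightarrow$ $S(g)=g^{-1}$'' shortcut is simply false here; indeed the claimed answer $S(\frak{z}^{+}_q(z))=\frak{z}^{+}_q(z\gamma^{-2})^{-1}$ already has the shift $z\mapsto z\gamma^{-2}$ built in. Second, your back-up computation — applying $S$ to the defining expression and then ``using the $\frak{z}$-relation (or equivalently~\eqref{eq:LC_affine})'' to convert $(L^{+}(w))^{-1}$ into $C_q(L^{+}(w\xi))^{\mathsf{t}'}C_q^{-1}$ — is circular: the proposition lives in $U(\wtd{R}_q(z))$ and $U'(\wtd{R}_q(z))$, where the relations~\eqref{eq:LC_affine} (equivalently $\frak{z}^{\pm}_q(z)=1$) are \emph{not} imposed; they only hold after passing to the quotient $\wtd{U}(\wtd{R}_q(z))$. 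The paper instead works coefficient by coefficient with the antipode axiom $\mu(S\otimes 1)\Delta=\mu(1\otimes S)\Delta=\eta\epsilon$ applied to $\frak{z}_q^{\pm}[\mp m]$, which produces convolution equations that uniquely pin down $S(\frak{z}^{\pm}_q(z))$ as the claimed shifted inverse without appealing to any extra relations. You should replace your antipode argument by this one. (As a minor correction in the counit step: the computation gives $\sum_k c_{jj}^{-1}c_{kk}\delta_{jk}\delta_{j'k'}=c_{jj}^{-1}c_{jj}=1$ directly; no identity of the form $c_{ii}c_{i'i'}=1$ is needed, and in fact that identity fails at the middle index $i=n+1$ in type $B_n$.)
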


This result implies that $\wtd{U}(\wtd{R}_{q}(z))$ and $\wtd{U}'(\wtd{R}_{q}(z))$ are in fact Hopf algebras.

\begin{proof}
We shall only consider type $B_{n}$, since the arguments are essentially the same for types $C_{n}$ and $D_{n}$. Note that it makes sense to set $z = w\xi$
in the first equation of~\eqref{eq:Affine_RLL}. Then as~\cite[(1.5)]{JLM1} holds for $\wtd{R}_{q}(z)$ (with $\rho_{\imath}$ of~\eqref{eq:rho_B} instead of
$\bar{\imath}$ and $q^{2}$ instead of $q$ from \emph{loc.cit.}), we obtain
\begin{equation}\label{eq:Q_relation}
  QL_{1}^{\pm}(w\xi)L_{2}^{\pm}(w) = L_{2}^{\pm}(w)L_{1}^{\pm}(w\xi)Q,
\end{equation}
where
\[
  Q \,= \sum_{1 \le i,j \le N}q^{2(\rho_{i} - \rho_{j})}E_{i'j'} \otimes E_{ij}.
\]
Since $Q = K \circ \tau$ with $K$ of~\eqref{eq:K_matrix}, it follows from~\eqref{eq:Q_relation} that
\[
  KL_{2}^{\pm}(z\xi)L_{1}^{\pm}(z) = L_{2}^{\pm}(z)L_{1}^{\pm}(z\xi)K,
\]
which yields
\begin{equation}\label{eq:off_diagonal_LC}
  \sum_{1\leq k\leq N}c_{kk}\ell_{ik}^{\pm}(z)\ell_{j'k'}^{\pm}(z\xi) = 0 \qquad \text{for all}\qquad i \neq j,
\end{equation}
and
\begin{equation}\label{eq:diagonal_LC}
  \sum_{1\leq k\leq N} c_{jj}c_{kk}^{-1}\ell_{k'i'}^{\pm}(z\xi)\ell_{ki}^{\pm}(z) =
  \sum_{1\leq k\leq N} c_{kk}c_{ii}^{-1}\ell_{jk}^{\pm}(z)\ell_{j'k'}^{\pm}(z\xi)
  \qquad \text{for all}\qquad 1 \le i,j \le N,
\end{equation}
cf.~\eqref{eq:KLL_relation_1}--\eqref{eq:KLL_relation_2}. Clearly,~\eqref{eq:diagonal_LC} is equivalent to the equalities~\eqref{eq:central_element_affine}.
Moreover,~\eqref{eq:off_diagonal_LC} implies that all off-diagonal entries of $L^{\pm}(z)C_{q}L^{\pm}(z\xi)^{\mathsf{t}'}C_{q}^{-1} - I$ are zero, while
its diagonal entries are all equal to $\frak{z}^{\pm}_q(z)-1$ due to~\eqref{eq:diagonal_LC}. Thus,~\eqref{eq:LC_affine} is equivalent to the relations
$\frak{z}^{\pm}_q(z) = 1$.

Next, applying $\epsilon$ to~\eqref{eq:central_element_affine}, we immediately get $\epsilon(\frak{z}_{q}^{\pm}(z)) = 1$.
To derive~\eqref{eq:zq_coproduct}, one applies $\Delta$ to the right-hand side of~\eqref{eq:central_element_affine} and utilizes~\eqref{eq:off_diagonal_LC},
in analogy with~\eqref{eq:z-coproduct} in the finite case (note that~\eqref{eq:off_diagonal_LC} implies that
$\sum_{k = 1}^{N}c_{kk}1 \otimes \ell_{ik}^{\pm}(z(a \otimes 1))\ell_{j'k'}^{\pm}(z\xi(a \otimes 1)) = 0$ for $i\ne j$ and any invertible $a$ in
$U(\wtd{R}_{q}(z))$ or $U'(\wtd{R}_{q}(z))$). Finally, to prove the formulas for the antipode, we shall write
\[
  \frak{z}_{q}^{\pm}(z) = \sum_{m \ge 0}\frak{z}_{q}^{\pm}[\mp m]z^{\pm m} \qquad \text{and}\qquad
  S(\frak{z}_{q}^{\pm}(z)) = \sum_{m \ge 0}\wtd{\frak{z}}_{q}^{\pm}[\mp m]z^{\pm m},
\]
and use the equalities
  $\mu(S \otimes 1)\Delta(\frak{z}_{q}^{\pm}[\mp m]) = \mu(1 \otimes S)\Delta(\frak{z}_{q}^{\pm}[\mp m]) = \epsilon(\frak{z}_{q}^{\pm}[\mp m]) = \delta_{m,0}$
(where $\mu$ denotes the multiplication map) to get
\[
  \gamma^{3m/2}\sum_{d_1,d_2\geq 0}^{d_{1} + d_{2} = m} \wtd{\frak{z}}_{q}^{+}[-d_{1}]\gamma^{-2d_{2}}\frak{z}_{q}^{+}[-d_{2}] =
  \gamma^{m/2}\sum_{d_1,d_2\geq 0}^{d_{1} + d_{2} = m} \frak{z}_{q}^{+}[-d_{1}]\wtd{\frak{z}}_{q}^{+}[-d_{2}]\gamma^{-2d_{1}} =
  \delta_{m,0}
\]
as well as
\[
  (\gamma')^{m/2}\sum_{d_1,d_2\geq 0}^{d_{1} + d_{2} = m} \wtd{\frak{z}}_{q}^{-}[d_{1}](\gamma')^{-2d_{2}}\frak{z}_{q}^{-}[d_{2}] =
  (\gamma')^{3m/2}\sum_{d_1,d_2\geq 0}^{d_{1} + d_{2} = m} \frak{z}_{q}^{-}[d_{1}]\wtd{\frak{z}}_{q}^{-}[d_{2}](\gamma')^{-2d_{1}} =
  \delta_{m,0}
\]
which imply that $S(\frak{z}_{q}^{+}(z)) = \frak{z}_{q}^{+}(z\gamma^{-2})^{-1}$ and $S(\frak{z}_{q}^{-}(z)) = \frak{z}_{q}^{-}(z(\gamma')^{2})^{-1}$,
as claimed in~\eqref{eq:zq_counit_antipode}.
\end{proof}

Following the ideas of Section~\ref{sec:FRT construction finite}, let us now develop results that will yield two-parameter upgrades
of~\cite[Main Theorem]{JLM1} and~\cite[Main Theorem]{JLM2}, establishing isomorphisms $U_{q}^{D}(\what{\fg}) \iso \wtd{U}'(\wtd{R}_{q}(z))$ in $BCD$-types.
To this end, we first define $\what{P}$-bigradings on $U(\wtd{R}_{q}(z))$ and $\wtd{U}(\wtd{R}_{q}(z))$, where $\what{P} = P \oplus \bb{Z} \frac{\delta}{2}$.

\begin{lemma}\label{lem:affine_RTT_bigradings}
(a) The following assignment makes $U(\wtd{R}_{q}(z))$ and $\wtd{U}(\wtd{R}_{q}(z))$ into $\what{P}$-bigraded Hopf algebras:
\[
  \deg(\gamma^{1/2}) = \deg((\gamma')^{1/2}) = \left (-\frac{1}{2}\delta,\frac{1}{2}\delta\right ),
\]
\[
  \deg(\ell_{ij}^{+}[-m]) = \left (-\varepsilon_{i} - \frac{m}{2}\delta,\varepsilon_{j} + \frac{3m}{2}\delta\right ),\qquad
  \deg(\ell_{ij}^{-}[m]) = \left (-\varepsilon_{i} - \frac{3m}{2}\delta,\varepsilon_{j} + \frac{m}{2}\delta\right ),
\]
for all $1 \le i,j \le N$, $m \in \bb{Z}_{\ge 0}$.

\medskip
\noindent
(b) The following assignment makes $U(\wtd{R}_{q}(z))$ and $\wtd{U}(\wtd{R}_{q}(z))$ into $P$-bigraded Hopf algebras:
\[
  \deg(\gamma^{1/2}) = \deg((\gamma')^{1/2}) = (0,0),
\]
\[
  \deg(\ell_{ij}^{\pm}[\mp m]) = (-\varepsilon_{i},\varepsilon_{j})\qquad \text{for all}\qquad 1 \le i,j \le N,\ m \in \bb{Z}_{\ge 0}.
\]
\end{lemma}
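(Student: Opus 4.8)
The plan is to verify directly that each of the two proposed assignments respects the defining relations, hence extends to a well-defined grading, and then to check compatibility with the Hopf structure. For part (a), the starting point is the observation that the bigrading on $\ell_{ij}^{\pm}[\mp m]$ is obtained from the bigrading of $\ell^\pm_{ij}$ on the finite FRT side (Proposition~\ref{prop:U(R)_grading}) by the same ``loop'' prescription used in Lemma~\ref{lem:Q_hat_bigrading}: each spectral shift $z\mapsto z$ carries weight $\delta$, and the two factors $\gamma^{1/2},(\gamma')^{1/2}$ carry the same bidegree as in Lemma~\ref{lem:Q_hat_bigrading}. Concretely, one packages the assignment as $\deg(\ell^\pm_{ij}(z)) = (-\varepsilon_i,\varepsilon_j)$ once one declares that multiplying a series coefficient by $z^{\pm m}$ lowers/raises the bidegree by $\mp m\cdot(\tfrac{1}{2}\delta,-\tfrac{3}{2}\delta)$ for the $+$ series and by $\mp m\cdot(\tfrac{3}{2}\delta,-\tfrac{1}{2}\delta)$ for the $-$ series; the point is that these are exactly the degrees forced by relations~\eqref{eq:Affine_RLL} once we know that $\wtd R_q(z)$ has entries supported in a single ``$\varepsilon$-weight'', i.e.\ $\wtd R_q(z)^{ij}_{k\ell}=0$ unless $\varepsilon_i+\varepsilon_j=\varepsilon_k+\varepsilon_\ell$ (which is inherited from the corresponding statement for $R_q$ proved in Proposition~\ref{prop:A(R)_grading}, since $f(z)$ is a scalar series).

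The concrete steps I would carry out, in order: (1) Record that the prefactor $f(z)$ of~\eqref{eq:f_prefactor_formula} is a scalar, so $\wtd R_q(z)$ has the same support pattern as $R_q(z)$; in particular $\wtd R_q(z/w)^{ij}_{k\ell}$ contributes only when $\varepsilon_i+\varepsilon_j=\varepsilon_k+\varepsilon_\ell$, and the $z/w$-dependence means that any surviving monomial $z^aw^b$ satisfies $a+b=0$, i.e.\ the spectral weight is ``conserved.'' (2) Expand the two families of relations in~\eqref{eq:Affine_RLL} coefficient-by-coefficient, and check that both sides of each coefficient equation are homogeneous of the claimed bidegree; the delicate bookkeeping is in the mixed relation, where the arguments $z\gamma(\gamma\gamma')^{1/2}/w$ and $z\gamma'(\gamma\gamma')^{1/2}/w$ introduce extra powers of $\gamma^{\pm 1/2}$, and one must confirm that the powers of $\delta$ coming from these central insertions match on the two sides — this is precisely why the powers of $\gamma,\gamma'$ in~\eqref{eq:Affine_RLL} were chosen as they were, cf.\ Remark~\ref{rem:central-powers}. (3) Likewise check~\eqref{eq:zero_mode_relations} and, for $\wtd U(\wtd R_q(z))$, the extra relations~\eqref{eq:LC_affine}: by Proposition~\ref{prop:central_element_affine} the latter amount to $\frak z^\pm_q(z)=1$, so it suffices to note that $\frak z^\pm_q(z)$ is homogeneous of bidegree $(0,0)$ — using $\varepsilon_{i'}=-\varepsilon_i$ and the spectral-weight cancellation between the $z$ and $z\xi$ arguments — hence the ideal it generates is homogeneous; the type-$B_n$ relation~\eqref{eq:B_extra_relation_affine} is plainly of bidegree $(0,0)$. (4) Verify the bigraded-Hopf conditions~\eqref{eq:bigraded-bialgebra}, \eqref{eq:bigraded-antipode}: for $\Delta$, inspect the explicit coproduct formulas — the rescaled arguments $z(1\otimes\gamma^{3/2})$, $z(\gamma^{1/2}\otimes 1)$, etc.\ are exactly what makes $\Delta(\ell^+_{ij}(z))$ land in $\sum_\gamma A_{\alpha,\gamma}\otimes A_{-\gamma,\beta}$, and by Proposition~\ref{prop:central_element_affine} the ideals are coideals; for $S$, use the formulas $S(L^+(z))=(L^+(z\gamma^{-2}))^{-1}$ etc.\ together with the fact (already invoked in the proof of Proposition~\ref{prop:U(R)_grading}) that the inverse of a triangular matrix with homogeneous entries has homogeneous entries of the transposed bidegree.

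Part (b) is then a formal consequence of part (a): the $P$-bigrading is obtained from the $\what P$-bigrading by applying the projection $\what P=P\oplus\bb Z\tfrac{\delta}{2}\twoheadrightarrow P$ that kills $\delta$. Under this projection $\gamma^{1/2},(\gamma')^{1/2}$ become degree $(0,0)$ and $\ell^\pm_{ij}[\mp m]$ become degree $(-\varepsilon_i,\varepsilon_j)$ independently of $m$, which is precisely the claimed assignment; since~\eqref{eq:bigraded-bialgebra}--\eqref{eq:bigraded-antipode} are preserved under pushing a bigrading forward along a group quotient, nothing further needs checking. Alternatively one can prove (b) directly, which is genuinely easier than (a) because the spectral parameter is now invisible to the grading, so the coefficient equations of~\eqref{eq:Affine_RLL} all carry the same $P$-bidegree as the finite-type relations~\eqref{eq:RLL_relations} treated in Proposition~\ref{prop:U(R)_grading}.

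The main obstacle I anticipate is step~(2) for part~(a): tracking the half-integral powers of $\delta$ produced by the central rescalings $z\gamma(\gamma\gamma')^{1/2}/w$ and $z\gamma'(\gamma\gamma')^{1/2}/w$ in the mixed $RLL$ relation, and confirming that the $\delta$-component of the bidegree balances on both sides for every coefficient. This is not conceptually hard, but it is the one place where a sign or a factor of $\tfrac12$ could go wrong, and it is the reason the specific powers of $\gamma,\gamma'$ were pinned down the way they were; everything else (support of $\wtd R_q$, homogeneity of $\frak z^\pm_q(z)$, the coproduct/antipode bookkeeping) reduces cleanly to facts already established in Sections~\ref{sec:FRT construction finite} and~\ref{sec:loop-realization}.
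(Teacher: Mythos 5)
Your plan follows essentially the same route as the paper's proof: establish the support pattern of $\wtd{R}_q(z)$ (which shows $\varepsilon_k+\varepsilon_t=\varepsilon_i+\varepsilon_j$ whenever $(\wtd{R}_q)^{kt}_{ij}$ contributes), check the coefficient equations of~\eqref{eq:Affine_RLL} are homogeneous, verify the ideal generating $\wtd{U}(\wtd{R}_q(z))$ is homogeneous, and handle the antipode via the bidegree of $\wtd{\ell}^\pm_{ij}[\mp m]$ using the triangular-matrix-inverse argument; part (b) is then obtained by pushing the $\what{P}$-bigrading forward along $\what{P}\twoheadrightarrow P$.

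One imprecision in step (3): the coefficients of $\frak{z}_q^{\pm}(z)$ are \emph{not} all of bidegree $(0,0)$. For the $+$ series, the coefficient of $z^t$ in $\frak{z}_q^{+}(z)$ has bidegree $(-\tfrac{t}{2}\delta,\tfrac{3t}{2}\delta)$: the $\varepsilon$-components cancel thanks to $\varepsilon_{j'}=-\varepsilon_j$, $\varepsilon_{k'}=-\varepsilon_k$, but the $\delta$-components from $\ell^+_{ik}[-m]$ and $\ell^+_{j'k'}[-p]$ (with $m+p=t$) \emph{add} rather than cancel; there is no ``spectral-weight cancellation'' between the $z$ and $z\xi$ arguments, since $\xi$ is a scalar. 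What actually matters, and what the paper checks, is merely that each coefficient of $\frak{z}_q^{\pm}(z)-1$ is homogeneous of \emph{some} bidegree, which is enough for the ideal to be homogeneous. Your conclusion is right; the stated reason for the $\delta$-part is wrong, and worth fixing so that a reader reproducing the bookkeeping in step~(2) doesn't import the same mistaken cancellation intuition.

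Everything else — the observation that $f(z)$ is scalar so $\wtd{R}_q$ inherits the support pattern of $R_q$, the identification of the mixed relation as the delicate step where the $\gamma,\gamma'$ insertions must be tracked, the coproduct/antipode check — matches the paper's proof.
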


\begin{proof}
Because $P = \what{P}/\bb{Z}\frac{\delta}{2}$, part (a) implies part (b), so we just need to prove (a).
As in the finite case, we have $(\wtd{R}_{q})_{ij}^{kt}(z) = 0$ unless $(i,j) = (k,t)$, $(i,j) = (t,k)$, or $i = j',k = t'$. Therefore
\begin{equation}\label{eq:weight_preserve_affine}
   (\wtd{R}_{q})_{ij}^{kt}[-d_1] \ne 0 \Longrightarrow \varepsilon_k+\varepsilon_t=\varepsilon_i+\varepsilon_j.
\end{equation}
Now, one can easily check that the matrix relation $\wtd{R}_{q}(z/w)L_{1}^{+}(z)L_{2}^{+}(w) = L_{2}^{+}(w)L_{1}^{+}(z)\wtd{R}_{q}(z/w)$ is equivalent
to the following set of relations (for all $a,b \in \bb{Z}$ and $1 \le i,j,m,p \le N$):
\[
  \sum_{1 \le k,t \le N} \sum_{\substack{d_{1} + d_{2} = a \\ d_{3}-d_{1} = b}}^{d_{1},d_{2},d_{3} \ge 0}
    (\wtd{R}_{q})_{kt}^{mp}[-d_{1}]\ell_{ki}^{+}[-d_{2}]\ell_{tj}^{+}[- d_{3}] =
  \sum_{1 \le k,t \le N} \sum_{\substack{d_{1} + d_{2} = a \\ d_{3} - d_{1} = b}}^{d_{1},d_{2},d_{3} \ge 0}
    (\wtd{R}_{q})_{ij}^{kt}[-d_{1}]\ell_{pt}^{+}[-d_{3}]\ell_{mk}^{+}[-d_{2}].
\]
If $(\wtd{R}_{q})_{kt}^{mp}[-d_{1}]\ne 0$, then $\varepsilon_k+\varepsilon_t=\varepsilon_m+\varepsilon_p$ by~\eqref{eq:weight_preserve_affine}, and so
\begin{equation*}
  \deg(\ell_{ki}^{+}[-d_{2}]\ell_{tj}^{+}[-d_{3}])
  = (-\varepsilon_{k}-\varepsilon_{t},\varepsilon_{i} + \varepsilon_{j}) + (d_{2} + d_{3})\left (-\tfrac{1}{2}\delta,\tfrac{3}{2}\delta\right ) \\
  = (-\varepsilon_{m}-\varepsilon_{p},\varepsilon_{i} + \varepsilon_{j}) + (a + b)\left (-\tfrac{1}{2}\delta,\tfrac{3}{2}\delta\right )
\end{equation*}
Similarly, $(\wtd{R}_{q})_{ij}^{kt}[-d_{1}]\ne 0$ implies $\varepsilon_k+\varepsilon_t=\varepsilon_i+\varepsilon_j$ by~\eqref{eq:weight_preserve_affine}, so that
\begin{align*}
  \deg(\ell_{pt}^{+}[-d_{3}]\ell_{mk}^{+}[-d_{2}])
  = (-\varepsilon_{p}-\varepsilon_{m},\varepsilon_{k} + \varepsilon_{t}) + (a + b)\left (-\tfrac{1}{2}\delta,\tfrac{3}{2}\delta\right )
  = (-\varepsilon_{p}-\varepsilon_{m},\varepsilon_{i} + \varepsilon_{j}) + (a + b)\left (-\tfrac{1}{2}\delta,\tfrac{3}{2}\delta\right )
\end{align*}
for all $d_{1},d_{2},d_{3}$ with $d_{1} + d_{2} = a, d_{3} - d_{1} = b$. Thus, the relation
$\wtd{R}_{q}(z/w)L_{1}^{+}(z)L_{2}^{+}(w) = L_{2}^{+}(w)L_{1}^{+}(z)\wtd{R}_{q}(z/w)$ is homogeneous.
A similar argument works for the corresponding relation with $-$.

Likewise, the second relation of~\eqref{eq:Affine_RLL} can be explicitly written as (for all $a,b \in \bb{Z}_{\geq 0}$ and $1 \le i,j,m,p \le N$)
\begin{multline*}
  \sum_{ k,t =1}^{N}\sum_{\substack{d_{1} + d_{2} = a \\ d_{1} + d_{3} = -b}}^{d_{1},d_{2},d_{3} \ge 0}
    (\wtd{R}_{q})_{kt}^{mp}[-d_{1}]\gamma^{d_{1}}(\gamma\gamma')^{d_{1}/2}\ell_{ki}^{+}[-d_{2}]\ell_{tj}^{-}[d_{3}] \\
  = \sum_{k,t = 1}^{N}\sum_{\substack{d_{1} + d_{2} = a \\ d_{1} + d_{3} = -b}}^{d_{1},d_{2},d_{3} \ge 0}
    (\wtd{R}_{q})_{ij}^{kt}[-d_{1}](\gamma')^{d_{1}}(\gamma\gamma')^{d_{1}/2}\ell_{pt}^{-}[d_{3}]\ell_{mk}^{+}[-d_{2}].
\end{multline*}
Then
\begin{align*}
  \deg\left(\gamma^{d_{1}}(\gamma\gamma')^{d_{1}/2}\ell_{ki}^{+}[-d_{2}]\ell_{tj}^{-}[d_{3}]\right)
  &= (-2d_{1}\delta,2d_{1}\delta) + \left (-\varepsilon_{k} -\tfrac{1}{2}d_{2}\delta, \varepsilon_{i} + \tfrac{3}{2}d_{2}\delta\right ) +
    \left (-\varepsilon_{t} - \tfrac{3}{2}d_{3}\delta,\varepsilon_{j} + \tfrac{1}{2}d_{3}\delta\right ) \\
  &= (-\varepsilon_{k} - \varepsilon_{t}, \varepsilon_{i} + \varepsilon_{j}) +
     \left (-\left (-b + \tfrac{1}{2}(a - b)\right )\delta, \left (a + \tfrac{1}{2}(a - b)\right )\delta \right )
\end{align*}
and
\begin{align*}
  \deg\left((\gamma')^{d_{1}}(\gamma\gamma')^{d_{1}/2}\ell_{pt}^{-}[d_{3}]\ell_{mk}^{+}[-d_{2}]\right)
  = (-\varepsilon_{p} - \varepsilon_{m},\varepsilon_{t} + \varepsilon_{k}) +
    \left ( -\left (-b + \tfrac{1}{2}(a - b)\right )\delta, \left (a + \tfrac{1}{2}(a - b)\right )\delta \right )
\end{align*}
for all $d_{1},d_{2},d_{3}$ with $d_{1} + d_{2} = a$ and $d_{1} + d_{3} = -b$. Thus, due to~\eqref{eq:weight_preserve_affine},
this relation is also homogeneous.

To prove the result for $\wtd{U}(\wtd{R}_{q}(z))$, we note that $L^{\pm}(z)C_{q}L^{\pm}(z\xi)^{\mathsf{t}'}C_{q}^{-1} = I$ is equivalent
to the following set of relations:
\begin{equation}\label{eq:LC_explicit_affine_1}
  \sum_{k = 1}^{N}c_{kk}c_{jj}^{-1}\sum_{m,p \ge 0}^{m + p = t} \xi^{\pm p}\ell_{ik}^{\pm}[\mp m]\ell_{j'k'}^{\pm}[\mp p] = 0
  \qquad \text{for all}\qquad t > 0,\ 1 \le i,j \le N,
\end{equation}
and
\begin{equation}\label{eq:LC_explicit_affine_2}
  \sum_{1\leq k\leq N} c_{kk}c_{jj}^{-1}\ell_{ik}^{\pm}[0]\ell_{j'k'}^{\pm}[0] = \delta_{ij}
  \qquad \text{for all}\qquad 1 \le i,j \le N.
\end{equation}
It is easy to verify that these expressions are homogeneous, and since $\deg(\ell_{n+1,n+1}^{\pm}[0]) = (0,0)$ in type $B_{n}$,
the relation~\eqref{eq:B_extra_relation_affine} is also homogeneous. Therefore, $\wtd{U}(\wtd{R}_{q}(z))$ inherits the
$\what{P} \times \what{P}$-grading from $U(\wtd{R}_{q}(z))$.

We shall now show that this bigrading makes $U(\wtd{R}_{q}(z))$ and $\wtd{U}(\wtd{R}_{q}(z))$ into $\what{P}$-bigraded Hopf algebras, in the sense
of~\eqref{eq:bigraded-bialgebra} and~\eqref{eq:bigraded-antipode}. The former of this is checked directly on the generators. To show that the antipode
satisfies~\eqref{eq:bigraded-antipode}, we first set up the following notation:
\[
  L^{\pm}(z) = \sum_{m \ge 0}L^{\pm}[\mp m]z^{\pm m} \qquad \text{where}\qquad L^{\pm}[\mp m] = (\ell_{ij}^{\pm}[\mp m])_{i,j = 1}^{N},
\]
and
\[
  L^{\pm}(z)^{-1} = \sum_{m \ge 0}\wtd{L}^{\pm}[\mp m]z^{\pm m}\qquad \text{where}\qquad
  \wtd{L}^{\pm}[\mp m] = (\wtd{\ell}_{ij}^{\pm}[\mp m])_{i,j = 1}^{N},
\]
for some $\wtd{\ell}_{ij}^{\pm}[\mp m]$ in $U(\wtd{R}_{q}(z))$ or $\wtd{U}(\wtd{R}_{q}(z))$. We shall show that
$\deg(\wtd{\ell}_{ij}^{+}[- m]) = (\varepsilon_{j} - \frac{m}{2}\delta,-\varepsilon_{i} + \frac{3m}{2}\delta)$ and
$\deg(\wtd{\ell}_{ij}^{-}[m]) = (\varepsilon_{j} - \frac{3m}{2}\delta,-\varepsilon_{i} + \frac{m}{2}\delta)$.
Since the imaginary part of the $\what{P}\times \what{P}$-degree of each entry of $L^{+}[-m]$ (resp.\ $L^{-}[m]$) is $(-\frac{m}{2}\delta ,\frac{3m}{2}\delta)$
(resp.\ $(-\frac{3m}{2}\delta,\frac{m}{2}\delta)$), it is easy to see that the same is true for each entry of $\wtd{L}^{\pm}[\mp m]$. Thus, it remains to check
that $\wtd{\ell}_{ij}^{\pm}[\mp m]$ has degree $(\varepsilon_{j},-\varepsilon_{i})$ with respect to the $P \times P$-grading of part (b). For this, the same
argument as in the proof of Proposition~\ref{prop:U(R)_grading} shows that $\deg(\wtd{\ell}_{ij}^{\pm}[0]) = (\varepsilon_{j},-\varepsilon_{i})$, and using this
one can check that the $(i,j)$-th entry of $L^{\pm}[0]^{-1}L^{\pm}[\mp d]$ has degree $(0,\varepsilon_{j} -\varepsilon_{i})$ with respect to the $P \times P$-grading.
Hence, all coefficients of the $(i,j)$-th entry of $(L^{\pm}[0]^{-1}L^{\pm}(z))^{-1}$ have degree $(0,\varepsilon_{j} - \varepsilon_{i})$ as well. Combining this
with the equality
\[
  L^{\pm}(z)^{-1} = (L^{\pm}[0]^{-1}L^{\pm}(z))^{-1}L^{\pm}[0]^{-1},
\]
one then finds that $\deg(\wtd{\ell}_{ij}^{\pm}[\mp m]) = (\varepsilon_{j}, -\varepsilon_{i})$ with respect to the $P\times P$-grading of part (b),
as desired. Since $S(\ell_{ij}^{+}[- m]) = \wtd{\ell}_{ij}^{+}[-m]\gamma^{-2m}$ and $S(\ell_{ij}^{-}[m]) = \wtd{\ell}_{ij}^{-}[m](\gamma')^{-2m}$,
we get $\deg(S(\ell_{ij}^{+}[-m])) = (\varepsilon_{j} + \frac{3m}{2}\delta,-\varepsilon_{i} - \frac{m}{2}\delta)$ and
$\deg(S(\ell_{ij}^{-}[m])) = (\varepsilon_{j} + \frac{m}{2}\delta,-\varepsilon_{i} -\frac{3m}{2}\delta)$, as required for~\eqref{eq:bigraded-antipode}.
This completes the proof.
\end{proof}

Due to Lemma~\ref{lem:affine_RTT_bigradings}(b), we can define the twisted algebra $\wtd{U}(\wtd{R}_{q}(z))_{\zeta}$ for $\zeta$
satisfying~\eqref{eq:zeta_formula}, which has multiplication given by~\eqref{eq:twisted_product_general}. Moreover, if $D$ is the matrix
of~\eqref{eq:D_zeta}, then $D\wtd{R}_{q}(z)D$ satisfies~\eqref{eq:YBE_affine} by the results of Subsection~\ref{ssec:affine_R_twisting},
Lemma~\ref{lem:YBE_twist}, and the fact that $D$ is independent of $z$. Thus, we can form the algebras $U(D\wtd{R}_{q}(z)D)$ and
$\wtd{U}(D\wtd{R}_{q}(z)D)$, where the latter is obtained from the former by imposing the additional relations~\eqref{eq:LC_affine},
as well as~\eqref{eq:B_extra_relation_affine} in type $B_{n}$. Then, we have the following analogue of Proposition~\ref{prop:U(R)_twisted}:

\begin{prop}\label{prop:U(R)_twisted_affine}
There is a unique Hopf algebra isomorphism $\what{\Psi}\colon \wtd{U}(D\wtd{R}_{q}(z)D) \iso \wtd{U}(\wtd{R}_{q}(z))_{\zeta}$
mapping $\gamma^{1/2} \mapsto \gamma^{1/2}$, $(\gamma')^{1/2} \mapsto (\gamma')^{1/2}$, and
$\ell_{ij}^{\pm}[\mp m] \mapsto \ell_{ij}^{\pm}[\mp m]$ for all $1 \le i,j \le N$, $m \in \bb{Z}_{\ge 0}$.
\end{prop}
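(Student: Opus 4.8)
The plan is to carry over the proof of Proposition~\ref{prop:U(R)_twisted} to the spectral-parameter setting. As there, the first step is to produce an identity-on-generators Hopf algebra isomorphism $\Psi_{0}\colon U(D\wtd{R}_{q}(z)D) \iso U(\wtd{R}_{q}(z))_{\zeta}$ — the affine analogue of Proposition~\ref{prop:A(R)_twisted} — and the second step is to descend it to the quotients $\wtd{U}$.

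For Step~1, I would check that $\gamma^{\pm 1/2}\mapsto\gamma^{\pm 1/2}$, $(\gamma')^{\pm 1/2}\mapsto(\gamma')^{\pm 1/2}$, $\ell_{ij}^{\pm}[\mp m]\mapsto\ell_{ij}^{\pm}[\mp m]$ extends to an algebra isomorphism $\Psi_{0}$. Writing the $RLL$-relations~\eqref{eq:Affine_RLL} for $D\wtd{R}_{q}(z)D$ entry-wise and converting every ordinary product $\ell_{ab}^{\pm}[\cdot]\ell_{cd}^{\pm}[\cdot]$ into the twisted product $\circ$ of $U(\wtd{R}_{q}(z))_{\zeta}$, the scalars $d_{ab}d_{cd}$ coming from conjugating $\wtd{R}_{q}(z)$ by $D$ cancel against the twisting scalars — this is the same cancellation $d_{ab}d_{cd}\cdot(\text{twist scalar})=1$ as in the finite case, using $d_{ij}=\zeta(\varepsilon_{j},\varepsilon_{i})$ and the $P$-bigrading of Lemma~\ref{lem:affine_RTT_bigradings}(b) (with respect to which $\ell_{ij}^{\pm}[\mp m]$ has degree $(-\varepsilon_{i},\varepsilon_{j})$ for all $m$ and $\gamma^{\pm 1/2},(\gamma')^{\pm 1/2}$ have degree $(0,0)$); the relations~\eqref{eq:zero_mode_relations} are clear. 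The central shifts $z\gamma(\gamma\gamma')^{1/2}/w$ and $z\gamma'(\gamma\gamma')^{1/2}/w$ in the mixed relation are harmless because $D$ is $z$-independent, so conjugation by $D$ commutes with these substitutions. The inverse of $\Psi_{0}$ is obtained from the analogous construction with $\bar{\zeta}=\zeta^{-1}$. Since the coproduct and counit of $U(R(z))$ are given by formulas not involving $R(z)$ and the twisting~\eqref{eq:twisted_product_general} leaves the coalgebra structure unchanged, $\Psi_{0}$ intertwines $\Delta$ and $\epsilon$; being a bialgebra isomorphism, it is then automatically a Hopf algebra isomorphism by uniqueness of the antipode, exactly as in the proof of Proposition~\ref{prop:U(R)_twisted}.

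For Step~2, the quotient maps $U(\wtd{R}_{q}(z))\twoheadrightarrow\wtd{U}(\wtd{R}_{q}(z))$ and $U(D\wtd{R}_{q}(z)D)\twoheadrightarrow\wtd{U}(D\wtd{R}_{q}(z)D)$ remain algebra maps after $\zeta$-twisting, so $\Psi_{0}^{-1}$ induces a Hopf algebra isomorphism $\wtd{U}(\wtd{R}_{q}(z))_{\zeta}\iso U(D\wtd{R}_{q}(z)D)/\Psi_{0}^{-1}(\cal{I})$, where $\cal{I}\subset U(\wtd{R}_{q}(z))$ is the ideal generated by the entries of~\eqref{eq:LC_affine} together with $\ell_{n+1,n+1}^{\pm}[0]-1$ in type $B_{n}$. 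I would then identify $\Psi_{0}^{-1}(\cal{I})$ with the corresponding ideal of $U(D\wtd{R}_{q}(z)D)$: by~\eqref{eq:LC_explicit_affine_1}--\eqref{eq:LC_explicit_affine_2} each coefficient of the defining series is a combination of products $\ell_{ik}^{\pm}[\mp m]\ell_{j'k'}^{\pm}[\mp p]$ whose $P$-bidegree is independent of $m,p$, so the scalar relating such a product to $\ell_{ik}^{\pm}[\mp m]\circ\ell_{j'k'}^{\pm}[\mp p]$ is the single value $\zeta(\varepsilon_{i},\varepsilon_{j})^{-1}$; since $\zeta(\varepsilon_{i},\varepsilon_{i'})=\zeta(\varepsilon_{i},\varepsilon_{i})^{-1}=1$, the diagonal generators (carrying the $\delta_{ij}$-term) are preserved and the off-diagonal ones are rescaled by the unit $\zeta(\varepsilon_{i},\varepsilon_{j})$, while~\eqref{eq:B_extra_relation_affine} involves only the degree-$(0,0)$ element $\ell_{n+1,n+1}^{\pm}[0]$ and is preserved verbatim. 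Hence $\Psi_{0}^{-1}(\cal{I})$ is exactly the ideal cutting out $\wtd{U}(D\wtd{R}_{q}(z)D)$, and $\Psi_{0}$ descends to the asserted Hopf algebra isomorphism $\what{\Psi}$ with $\what{\Psi}(\gamma^{\pm 1/2})=\gamma^{\pm 1/2}$, $\what{\Psi}((\gamma')^{\pm 1/2})=(\gamma')^{\pm 1/2}$, $\what{\Psi}(\ell_{ij}^{\pm}[\mp m])=\ell_{ij}^{\pm}[\mp m]$.

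The main obstacle is the entry-wise bookkeeping in Step~1, complicated by the fact that the two sides of the mixed $RLL$-relation~\eqref{eq:Affine_RLL} carry different central arguments and that one works with coefficient series in $z$ rather than with single matrices. But since $D$ is diagonal and $z$-independent, this reduces to the same combinatorial identity already verified in the finite case, so I would present Step~1 as a straightforward, if lengthy, adaptation of the proof of Proposition~\ref{prop:A(R)_twisted} rather than redoing it in full.
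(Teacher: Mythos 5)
Your proof is correct and follows essentially the same route as the paper's: first build the identity-on-generators algebra isomorphism $\Psi_0\colon U(D\wtd{R}_{q}(z)D) \iso U(\wtd{R}_{q}(z))_{\zeta}$ by the entry-wise scalar cancellation as in Proposition~\ref{prop:A(R)_twisted}, then descend to the quotients by checking that applying $\Psi_0$ to the generators~\eqref{eq:LC_explicit_affine_1}--\eqref{eq:LC_explicit_affine_2} multiplies each by a scalar that is a unit (and in particular equals $1$ on the diagonal because $\zeta(\varepsilon_i,\varepsilon_{i'})=1$), and that $\ell^{\pm}_{n+1,n+1}[0]-1$ is preserved in type $B_n$. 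You spell out a few steps the paper compresses (the harmlessness of the central shifts in the mixed $RLL$ relation, the $\bar\zeta$-inverse, the automatic upgrade from bialgebra to Hopf), but the underlying argument is identical.
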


\begin{proof}
Applying the same arguments as those used in the proof of Proposition~\ref{prop:A(R)_twisted}, we find that there is an algebra isomorphism
$\what{\Psi}\colon U(D\wtd{R}_{q}(z)D) \iso U(\wtd{R}_{q}(z))_{\zeta}$ sending $\ell_{ij}^{\pm}[\mp m] \mapsto \ell_{ij}^{\pm}[\mp m]$
as well as $\gamma^{1/2}\mapsto \gamma^{1/2}, (\gamma')^{1/2}\mapsto (\gamma')^{1/2}$. To see
that this map descends to an isomorphism $\what{\Psi}\colon \wtd{U}(D\wtd{R}_{q}(z)D) \iso \wtd{U}(\wtd{R}_{q}(z))_{\zeta}$, we observe that
applying $\what{\Psi}$ to the left-hand sides of~\eqref{eq:LC_explicit_affine_1} and~\eqref{eq:LC_explicit_affine_2} multiplies the expressions
by $\zeta(\varepsilon_{i},\varepsilon_{j'})$, and since $\zeta(\varepsilon_{i},\varepsilon_{i'}) = 1$, this implies that $\what{\Psi}$ maps
the ideal in $U(D\wtd{R}_{q}(z)D)$ generated by the relations~\eqref{eq:LC_affine} onto the corresponding ideal in $U(\wtd{R}_{q}(z))$.
Additionally, in type $B_{n}$, we also have $\what{\Psi}(\ell_{n+1,n+1}^{\pm}[0] - 1) = \ell_{n+1,n+1}^{\pm}[0] - 1$. This completes the proof.
\end{proof}

Moreover, for any $s_{1},\ldots ,s_{N} \in \bb{K}\setminus \{0\}$, let $S' = \sum_{i=1}^{N} s_{i}E_{ii}$ and
$\bar{S}' = \sum_{i=1}^{N} s_{i'}E_{ii}$. Then the matrix $S = S' \otimes S'$ is of the form~\eqref{eq:S_matrix},
and we have the following analogue of Proposition~\ref{prop:diagonal_conjugation_U(R)}(b):

\begin{prop}\label{prop:diagonal_conjugation_affine}
There is a unique Hopf algebra isomorphism $\what{\Phi}_{S}\colon \wtd{U}(\wtd{R}_{q}(z)) \iso \wtd{U}(S\wtd{R}_{q}(z)S^{-1})$
mapping $\gamma^{1/2} \mapsto \gamma^{1/2}$, $(\gamma')^{1/2} \mapsto (\gamma')^{1/2}$,
$\ell_{ij}^{\pm}[\mp m] \mapsto s_{i}^{-1}s_{j}\ell_{ij}^{\pm}[\mp m]$, where $\wtd{U}(S\wtd{R}_{q}(z)S^{-1}) = U(S\wtd{R}_{q}(z)S^{-1})/\cal{I}_{S}$,
with the ideal $\cal{I}_{S}$ of $U(S\wtd{R}_{q}(z)S^{-1})$ being generated by the relations
\[
  L^{\pm}(z)(S'C_{q}\bar{S}')L^{\pm}(z\xi)^{\mathsf{t}'}(S'C_{q}\bar{S}')^{-1} = I.
\]
\end{prop}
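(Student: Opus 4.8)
The plan is to mimic the proof of Proposition~\ref{prop:diagonal_conjugation_U(R)}(b), using the fact that the diagonal conjugation matrix $S=S'\otimes S'$ is independent of the spectral parameter $z$. First I would check that $S\wtd{R}_{q}(z)S^{-1}$ still satisfies the Yang--Baxter equation with spectral parameter~\eqref{eq:YBE_affine}: since $S=S'\otimes S'$ with $S'$ diagonal, we have $S_{12}S_{13}S_{23}=S_{23}S_{13}S_{12}$ trivially, so conjugating all three factors in~\eqref{eq:YBE_affine} by the appropriate products of $S$'s gives the analogous equation for $S\wtd{R}_{q}(z)S^{-1}$ (the Baxterization formulas~\eqref{eq:Baxterization-B}--\eqref{eq:Baxterization-D} are not even needed here, only that $S$ commutes with the tensor legs appropriately). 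This legitimizes forming the algebra $U(S\wtd{R}_{q}(z)S^{-1})$ and its quotient $\wtd{U}(S\wtd{R}_{q}(z)S^{-1})$.

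Next I would verify that the assignment $\gamma^{1/2}\mapsto\gamma^{1/2}$, $(\gamma')^{1/2}\mapsto(\gamma')^{1/2}$, $\ell^{\pm}_{ij}[\mp m]\mapsto s_i^{-1}s_j\,\ell^{\pm}_{ij}[\mp m]$ respects the defining relations. Writing out $S\wtd{R}_{q}(z)S^{-1}=\sum (\wtd{R}_q)^{k\ell}_{ij}(z)\,s_k s_\ell s_i^{-1}s_j^{-1}\,E_{ki}\otimes E_{\ell j}$, one inserts this into both halves of~\eqref{eq:Affine_RLL}; the spectral arguments $z\gamma(\gamma\gamma')^{1/2}/w$ and $z\gamma'(\gamma\gamma')^{1/2}/w$ are unaffected by the rescaling, and a bookkeeping check shows the factors $s_i^{-1}s_j$ attached to each $\ell^{\pm}$ match on both sides exactly as in the proof of Proposition~\ref{prop:diagonal_conjugation_U(R)}(a). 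The zero-mode relations~\eqref{eq:zero_mode_relations} are obviously preserved (the diagonal entries get rescaled by $s_i^{-1}s_i=1$). Applying $\what\Phi_S$ to the relations~\eqref{eq:LC_explicit_series} defining the ideal, the $(i,j)$-th relation $\sum_k c_{kk}c_{jj}^{-1}\ell^{\pm}_{ik}(z)\ell^{\pm}_{j'k'}(z\xi)=\delta_{ij}$ is sent to $s_i^{-1}s_j\sum_k s_k s_{k'}c_{kk}\,s_j^{-1}s_{j'}^{-1}c_{jj}^{-1}\,\ell^{\pm}_{ik}(z)\ell^{\pm}_{j'k'}(z\xi)$, which, up to the nonzero scalar $s_i^{-1}s_j$, is precisely the $(i,j)$-th entry of $L^{\pm}(z)(S'C_q\bar S')L^{\pm}(z\xi)^{\mathsf t'}(S'C_q\bar S')^{-1}-I$; hence $\what\Phi_S$ carries the ideal $\cal I_q$ of $U(\wtd R_q(z))$ onto $\cal I_S$. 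In type $B_n$ the extra relation~\eqref{eq:B_extra_relation_affine} is preserved since $s_{n+1}^{-1}s_{n+1}=1$. To get the inverse, replace $\wtd R_q(z)$ by $S\wtd R_q(z)S^{-1}$ and $S$ by $S^{-1}$; this produces an algebra homomorphism $\wtd U(S\wtd R_q(z)S^{-1})\to\wtd U(\wtd R_q(z))$ sending $\ell^{\pm}_{ij}[\mp m]\mapsto s_i s_j^{-1}\ell^{\pm}_{ij}[\mp m]$, clearly inverse to $\what\Phi_S$, so $\what\Phi_S$ is an algebra isomorphism; uniqueness is immediate since the generators' images are prescribed.

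Finally I would check that $\what\Phi_S$ is a Hopf algebra map. For the grouplike generators $\gamma^{\pm1/2},(\gamma')^{\pm1/2}$ this is trivial. For the others one computes, using the coproduct formulas preceding~\eqref{eq:zq_coproduct}, that $(\what\Phi_S\otimes\what\Phi_S)\Delta(\ell^{+}_{ij}(z))=\sum_k s_i^{-1}s_k\,\ell^+_{ik}(z(1\otimes\gamma^{3/2}))\otimes s_k^{-1}s_j\,\ell^+_{kj}(z(\gamma^{1/2}\otimes1))=s_i^{-1}s_j\,\Delta(\ell^+_{ij}(z))=\Delta(\what\Phi_S(\ell^+_{ij}(z)))$, the middle $s_k^{\pm1}$ telescoping away, and symmetrically for the $-$ case; compatibility with $\epsilon$ is $\delta_{ij}s_i^{-1}s_j=\delta_{ij}$. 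For the antipode, since the bialgebra isomorphism property is already established and the antipode of a Hopf algebra is unique, the standard argument (as in the proof of Proposition~\ref{prop:U(R)_twisted}) shows $\what\Phi_S\circ S=S\circ\what\Phi_S$ automatically. The main obstacle I anticipate is purely the bookkeeping in the second step: tracking the scalars $s_k,s_{k'}$ through the partial-transpose matrix $L^{\pm}(z\xi)^{\mathsf t'}$ and confirming they assemble into the conjugated matrix $S'C_q\bar S'$ (rather than $S'C_qS'$ or some other combination) — the appearance of $\bar S'=\sum_i s_{i'}E_{ii}$ is exactly what makes this work, since the $\mathsf t'$ operation swaps $i\leftrightarrow i'$.
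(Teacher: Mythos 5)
Your proposal is correct and follows the same route the paper takes: the paper's own proof consists of the single sentence ``The proof is completely analogous to those of Propositions~\ref{prop:diagonal_conjugation_A(R)} and~\ref{prop:diagonal_conjugation_U(R)},'' and you have accurately reconstructed the content of that analogy — verifying the rescaled RLL relations (noting the spectral arguments are untouched), tracing the ideal $\cal{I}_q$ onto $\cal{I}_S$ via the $s_k s_{k'}$ bookkeeping that produces $S'C_q\bar S'$, building the inverse by swapping $R$ and $S$ for their conjugates, and handling bialgebra compatibility directly with the antipode following by uniqueness as in Proposition~\ref{prop:U(R)_twisted}.
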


\begin{proof}
The proof is completely analogous to those of Propositions~\ref{prop:diagonal_conjugation_A(R)} and~\ref{prop:diagonal_conjugation_U(R)}.
\end{proof}

We may now relate $\wtd{U}(\wtd{R}_{q}(z))$ to its two-parameter counterpart. For this, let $R_{r,s}(z) = \hat{R}_{r,s}(z) \circ \tau$, where
$\hat{R}_{r,s}(z)$ is the two-parameter $B_{n}$-type (resp.\ $C_{n}$-type or $D_{n}$-type) affine $R$-matrix of~\cite[(6.11)]{MT1}
(resp.~\cite[(6.12)]{MT1} or~\cite[(6.13)]{MT1}). We then define
\[
  \wtd{R}_{r,s}(z) = f(z)R_{r,s}(z),
\]
where $f(z)$ is the formal power series defined by~\eqref{eq:f_prefactor_formula} with $v = r^{-1}s$ in type $B_{n}$ and $v = r^{-1/2}s^{1/2}$ in types
$C_{n}$ and $D_{n}$. Then it follows from Subsection~\ref{ssec:affine_R_twisting}, cf.~\eqref{eq:R-relation}, that $\wtd{R}_{r,s}(z) = S^{-1}D\wtd{R}_{q}(z)DS$,
where $q = r^{1/2}s^{-1/2}$, $D$ is the matrix of~\eqref{eq:D_zeta}, and $S = \sum_{i,j=1}^{N} \psi_{i}\psi_{j}E_{ii} \otimes E_{jj}$ with $\psi_{i}$
given by~\eqref{eq:psi_values_B} in type $B_{n}$,~\eqref{eq:psi_values_C} in type $C_{n}$, and~\eqref{eq:psi_values_D} in type $D_{n}$. Now, we define
$\wtd{U}(\wtd{R}_{r,s}(z)) = U(\wtd{R}_{r,s}(z))/\cal{I}_{r,s}$, where $\cal{I}_{r,s}$ is the ideal generated by the relations
\begin{equation}\label{eq:LC_affine_2_param}
  L^{\pm}(z)C_{r,s}L^{\pm}(z\xi)^{\mathsf{t}'}C_{r,s}^{-1} = I,
\end{equation}
with the matrix $C_{r,s}$ of~\eqref{eq:Crs_B} in type $B_{n}$,~\eqref{eq:Crs_C} in type $C_{n}$, and~\eqref{eq:Crs_D} in type $D_{n}$. As above, for type $B_{n}$
we also impose the relations $\ell_{n+1,n+1}^{\pm}[0] = 1$ in $\wtd{U}(\wtd{R}_{r,s}(z))$. Then, combining Propositions~\ref{prop:U(R)_twisted_affine}
and~\ref{prop:diagonal_conjugation_affine}, we obtain the following affine analogue of Corollary~\ref{cor:U(R)_2_vs_1_parameter}:

\begin{prop}\label{prop:2_vs_1_parameter_affine_RTT}
There is a unique Hopf algebra isomorphism $\what{\Psi}' \colon \wtd{U}(\wtd{R}_{r,s}(z)) \iso \wtd{U}(\wtd{R}_{q}(z))_{\zeta}$ mapping
$\gamma^{1/2} \mapsto \gamma^{1/2}$, $(\gamma')^{1/2} \mapsto (\gamma')^{1/2}$, and $\ell_{ij}^{\pm}[\mp m] \mapsto \psi_{i}^{-1}\psi_{j}\ell_{ij}^{\pm}[\mp m]$
for all $1 \le i,j \le N$, $m \in \bb{Z}_{\ge 0}$.
\end{prop}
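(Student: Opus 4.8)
The plan is to mirror the proof of Corollary~\ref{cor:U(R)_2_vs_1_parameter} verbatim, replacing the finite-type inputs by their spectral-parameter analogues established in the preceding discussion. The only new ingredient is that we must keep track of the central elements $\gamma^{1/2},(\gamma')^{1/2}$, but these are handled trivially since both $D$ and $S$ are scalar-free of the central generators and every map in sight fixes $\gamma^{1/2},(\gamma')^{1/2}$.

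First I would invoke Proposition~\ref{prop:U(R)_twisted_affine} to obtain a Hopf algebra isomorphism $\what{\Psi}\colon \wtd{U}(D\wtd{R}_{q}(z)D)\iso \wtd{U}(\wtd{R}_{q}(z))_{\zeta}$ sending $\ell_{ij}^{\pm}[\mp m]\mapsto \ell_{ij}^{\pm}[\mp m]$ and fixing $\gamma^{1/2},(\gamma')^{1/2}$. Next, I would record the identity $\wtd{R}_{r,s}(z)=S^{-1}D\wtd{R}_{q}(z)DS$ (this is exactly the normalized version of~\eqref{eq:R-relation}, which was already noted in the paragraph preceding the statement, using that $f(z)$ is the same prefactor in the one- and two-parameter cases up to the substitution $q=r^{1/2}s^{-1/2}$, $v=r^{-1}s$ or $v=r^{-1/2}s^{1/2}$). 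Since $\wtd{U}(D\wtd{R}_q(z)D)$ is formed from $U(D\wtd{R}_q(z)D)$ by killing precisely the expressions~\eqref{eq:LC_explicit_affine_1}, \eqref{eq:LC_explicit_affine_2} (and $\ell_{n+1,n+1}^\pm[0]-1$ in type $B_n$), Proposition~\ref{prop:diagonal_conjugation_affine} applies with $\wtd{U}(D\wtd{R}_q(z)D)$ in place of $\wtd{U}(\wtd{R}_q(z))$ and with $S'=\sum_i\psi_iE_{ii}$, $\bar S'=\sum_i\psi_{i'}E_{ii}$: it yields a Hopf algebra isomorphism
\[
  \what{\Phi}_{S^{-1}}=\what{\Phi}_S^{-1}\colon \wtd{U}(D\wtd{R}_{q}(z)D)\iso \wtd{U}(\wtd{R}_{r,s}(z))
\]
mapping $\ell_{ij}^{\pm}[\mp m]\mapsto \psi_i\psi_j^{-1}\ell_{ij}^{\pm}[\mp m]$ and fixing $\gamma^{1/2},(\gamma')^{1/2}$, where one uses $C_{r,s}=(S')^{-1}C_q(\bar S')^{-1}$ (which is precisely how the matrices $C_{r,s}$ in~\eqref{eq:Crs_B}--\eqref{eq:Crs_D} were defined) to identify the ideal $\cal{I}_{r,s}$ of~\eqref{eq:LC_affine_2_param} with the image of the $\cal{I}_S$ of Proposition~\ref{prop:diagonal_conjugation_affine}.

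Then I would set $\what{\Psi}'=\what{\Psi}\circ\what{\Phi}_{S}\colon \wtd{U}(\wtd{R}_{r,s}(z))\iso \wtd{U}(\wtd{R}_{q}(z))_{\zeta}$ and compute on generators: $\what{\Psi}'(\ell_{ij}^{\pm}[\mp m])=\what{\Psi}(\psi_i^{-1}\psi_j\ell_{ij}^{\pm}[\mp m])=\psi_i^{-1}\psi_j\ell_{ij}^{\pm}[\mp m]$, while $\what{\Psi}'(\gamma^{1/2})=\gamma^{1/2}$ and $\what{\Psi}'((\gamma')^{1/2})=(\gamma')^{1/2}$; uniqueness follows since these elements generate $\wtd{U}(\wtd{R}_{r,s}(z))$. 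I do not expect a serious obstacle here: the entire argument is structurally identical to Corollary~\ref{cor:U(R)_2_vs_1_parameter}, and the two minor points of care are (i) checking that the normalization factor $f(z)$ behaves correctly under the diagonal conjugations — which is immediate since $D$ and $S$ are independent of $z$, so $D\wtd{R}_q(z)D=f(z)\,DR_q(z)D$ and similarly for $S$ — and (ii) verifying that $\wtd{U}(D\wtd R_q(z)D)$ really is obtained by killing the same explicit LC-relations~\eqref{eq:LC_explicit_affine_1}--\eqref{eq:LC_explicit_affine_2}, which was arranged when that algebra was defined. If anything is mildly delicate it is bookkeeping the $\psi_i$ versus $\psi_{i'}$ in $S'$ versus $\bar S'$ and confirming $C_{r,s}=(S')^{-1}C_q(\bar S')^{-1}$, exactly as in the finite case.
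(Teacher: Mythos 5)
Your proposal is correct and is exactly the argument the paper intends: the paper simply states that the proposition follows ``by combining Propositions~\ref{prop:U(R)_twisted_affine} and~\ref{prop:diagonal_conjugation_affine},'' mirroring the finite case of Corollary~\ref{cor:U(R)_2_vs_1_parameter}, and your write-up spells out that composition $\what{\Psi}'=\what{\Psi}\circ\what{\Phi}_S$ with the same bookkeeping of $C_{r,s}=(S')^{-1}C_q(\bar S')^{-1}$ and the central elements.
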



\subsection{Triangular decomposition and the key embedding: affine FRT}
\

Next, we shall prove triangular decompositions for the algebras $\wtd{U}(\wtd{R}_{q}(z))$ and $\wtd{U}'(\wtd{R}_{q}(z))$, in analogy with those for
$\wtd{U}(R_{q})$ and $\wtd{U}'(R_{q})$ from Section~\ref{sec:FRT construction finite}, which are needed for the proof of an analogue of
Proposition~\ref{prop:double_cartan_RTT_finite}. First, we define the following elements of $\wtd{U}(\wtd{R}_{q}(z))$ and $\wtd{U}'(\wtd{R}_{q}(z))$:
\begin{equation}\label{eq:X+-_generators}
  X_{ij}^{+}[- m] = \ell_{ij}^{+}[-m]\ell_{jj}^{\pm}[0]^{-1}\gamma^{-3m/2}, \quad
  X_{ij}^{-}[m] = (\gamma')^{-3m/2}\ell_{ii}^{-}[0]^{-1}\ell_{ij}^{-}[m] \quad \forall\ 1 \le i,j \le N,\ m \in \bb{Z}_{\ge 0}.
\end{equation}

We also introduce the following subalgebras of $\wtd{U}(\wtd{R}_{q}(z))$ and $\wtd{U}'(\wtd{R}_{q}(z))$:

\begin{itemize}

\item $\wtd{U}_{\ge}(\wtd{R}_{q}(z)) \subset \wtd{U}(\wtd{R}_{q}(z))$ and $\wtd{U}'_{\ge}(\wtd{R}_{q}(z)) \subset \wtd{U}'(\wtd{R}_{q}(z))$,
both generated by $\gamma^{\pm 1/2}$ and all $\ell_{ij}^{+}[-m]$ with $1 \le i,j \le N$ and $m \in \bb{Z}_{\ge 0}$,

\item $\wtd{U}_{\le}(\wtd{R}_{q}(z)) \subset \wtd{U}(\wtd{R}_{q}(z))$ and $\wtd{U}'_{\le}(\wtd{R}_{q}(z)) \subset \wtd{U}'(\wtd{R}_{q}(z))$,
both generated by $(\gamma')^{\pm 1/2}$ and all $\ell_{ij}^{-}[m]$ with $1 \le i, j \le N$ and $m \in \bb{Z}_{\ge 0}$,

\item $\wtd{U}_{\pm}(\wtd{R}_{q}(z)) \subset \wtd{U}(\wtd{R}_{q}(z))$ and $\wtd{U}'_{\pm}(\wtd{R}_{q}(z)) \subset \wtd{U}'(\wtd{R}_{q}(z))$,
both generated by all $X_{ij}^{\pm}[\mp m]$ of~\eqref{eq:X+-_generators} with $1 \le i,j \le N$ and $m \in \bb{Z}_{\ge 0}$

\item $\wtd{U}_{0}(\wtd{R}_{q}(z)) \subset \wtd{U}(\wtd{R}_{q}(z))$ and $\wtd{U}'_{0}(\wtd{R}_{q}(z)) \subset \wtd{U}'(\wtd{R}_{q}(z))$,
both generated by $\gamma^{\pm 1/2}, (\gamma')^{\pm 1/2}$, and all $\ell_{ii}^{+}[0]^{\pm 1},\ell_{ii}^{-}[0]^{\pm 1}$ with $1 \le i \le N$,

\item $\wtd{U}_{0,+}(\wtd{R}_{q}(z)) \subset \wtd{U}(\wtd{R}_{q}(z))$, generated by $\gamma^{\pm 1/2}$ and all $\ell_{ii}^{+}[0]^{\pm 1}$ with $ 1 \le i \le N$,

\item $\wtd{U}_{0,-}(\wtd{R}_{q}(z)) \subset \wtd{U}(\wtd{R}_{q}(z))$, generated by $(\gamma')^{\pm 1/2}$ and all $\ell_{ii}^{-}[0]^{\pm 1}$ with $ 1 \le i \le N$.

\end{itemize}
We then have the following result, which is proved in the same way as Lemma~\ref{lem:cartan_subalgebras_RTT}:

\begin{lemma}
There is an isomorphism from the Laurent polynomial algebra $\bb{K}[x_{0}^{\pm 1}, \ldots ,x_{n}^{\pm 1}]$ to $\wtd{U}_{0,+}(\wtd{R}_{q}(z))$
(resp.\ $\wtd{U}_{0,-}(\wtd{R}_{q}(z))$) given by $x_{0} \mapsto \gamma^{1/2}$ (resp.\ $x_{0} \mapsto (\gamma')^{1/2}$) and $x_{i} \mapsto \ell_{ii}^{+}[0]$
(resp.\ $x_{i} \mapsto \ell_{ii}^{-}[0]$) for $1 \le i \le n$.
\end{lemma}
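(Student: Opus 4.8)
The plan is to mimic the proof of Lemma~\ref{lem:cartan_subalgebras_RTT} essentially verbatim, since the structure of the argument carries over directly to the affine setting. First I would establish that the subalgebras $\wtd{U}_{0,\pm}(\wtd{R}_{q}(z))$ are commutative. The element $\gamma^{1/2}$ (resp.\ $(\gamma')^{1/2}$) is central by~\eqref{eq:D2}-type relations built into the definition of $U(R(z))$, so it suffices to check that $\ell_{ii}^{+}[0]$ commutes with $\ell_{jj}^{+}[0]$ (and similarly for the $-$ generators). This follows by extracting the $z^{0}w^{0}$-coefficient of the first relation in~\eqref{eq:Affine_RLL}, which reduces exactly to the finite-type relation~\eqref{eq:RLL_relations}: using that $(\wtd{R}_{q})^{kt}_{ij}[0]$ vanishes unless $\{i,j\}=\{k,t\}$ together with $\ell_{ij}^{+}[0] = 0 = \ell_{ji}^{-}[0]$ for $i<j$ from~\eqref{eq:zero_mode_relations}, the cross terms drop out and we obtain $\ell_{ii}^{+}[0]\ell_{jj}^{+}[0] = \ell_{jj}^{+}[0]\ell_{ii}^{+}[0]$.

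Next, because $\ell_{ii}^{+}[0]$ is invertible (and likewise $\gamma^{1/2}$), we get algebra homomorphisms $\phi_{\pm}\colon \bb{K}[x_{0}^{\pm 1},\ldots,x_{n}^{\pm 1}] \to \wtd{U}_{0,\pm}(\wtd{R}_{q}(z))$ sending $x_{0}\mapsto\gamma^{1/2}$ (resp.\ $(\gamma')^{1/2}$) and $x_{i}\mapsto\ell_{ii}^{\pm}[0]$. Here I must be mildly careful: in Lemma~\ref{lem:cartan_subalgebras_RTT} the relation $(\ell_{ii}^{+})^{-1} = \ell_{i'i'}^{+}$ was used to see that all diagonal entries already live in the subalgebra generated by indices $1,\dots,n$; in the present affine setup the generating set for $\wtd{U}_{0,\pm}(\wtd{R}_{q}(z))$ as listed allows all $\ell_{ii}^{\pm}[0]^{\pm 1}$ with $1\le i\le N$, but by~\eqref{eq:LC_explicit_affine_2} (the $t=0$ part of~\eqref{eq:LC_explicit_affine}) together with~\eqref{eq:B_extra_relation_affine} in type $B_{n}$, one has $\ell_{i'i'}^{\pm}[0] = (\ell_{ii}^{\pm}[0])^{-1}$, so indeed the images of $x_{1},\dots,x_{n}$ and $x_{0}$ generate the whole of $\wtd{U}_{0,\pm}(\wtd{R}_{q}(z))$, making $\phi_{\pm}$ surjective.

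For injectivity of $\phi_{\pm}$, I would construct a one-sided inverse exactly as in the finite case: using the $P$-bigrading of Lemma~\ref{lem:affine_RTT_bigradings}(b), define $\phi'\colon\wtd{U}(\wtd{R}_{q}(z))\to\bb{K}[x_{0}^{\pm 1},\ldots,x_{n}^{\pm 1}]$ by $\phi'(u) = \epsilon(u)x_{0}^{0}x_{-\lambda}$ for $u$ homogeneous of $P\times P$-degree $(\lambda,\lambda')$ — more precisely one needs to keep track of the $\gamma^{1/2}$-exponent as well, so the cleaner bookkeeping uses the full $\what{P}$-bigrading of Lemma~\ref{lem:affine_RTT_bigradings}(a) to separate the $\delta$-part (recording the power of $\gamma^{1/2}$) from the $\varepsilon_{i}$-part (recording $\ell_{ii}^{\pm}[0]$). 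Restricting $\phi'$ to $\wtd{U}_{0,\pm}(\wtd{R}_{q}(z))$ gives a left inverse to $\phi_{\pm}$, so $\phi_{\pm}$ is an isomorphism.

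The only genuine subtlety — the step I expect to require the most care — is correctly matching the grading bookkeeping so that $\phi'$ distinguishes $\gamma^{1/2}$ from the $\ell_{ii}^{\pm}[0]$: since $\deg(\gamma^{1/2}) = (-\tfrac12\delta,\tfrac12\delta)$ and $\deg(\ell_{ii}^{\pm}[0]) = (-\varepsilon_{i},\varepsilon_{i})$ in the $\what{P}$-bigrading, the $\delta$-component and the root-lattice component of the degree are independent, so one can read off both the $\gamma^{1/2}$-power and the tuple of $\ell_{ii}^{\pm}[0]$-powers of any monomial from its bidegree and counit value; this is precisely the analogue of ``determining $\mu$ from the weight'' in the finite case, and it is what forces $\phi_{\pm}$ to be injective. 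Everything else is a routine transcription of the finite-type argument.
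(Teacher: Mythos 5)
Your proposal is essentially correct and follows exactly what the paper intends: the paper does not spell out a separate proof but simply states that the result "is proved in the same way as Lemma~\ref{lem:cartan_subalgebras_RTT}," and your outline is a faithful transcription of that finite-type argument, together with the one genuinely new ingredient (using the $\delta$-component of the $\what{P}$-bigrading from Lemma~\ref{lem:affine_RTT_bigradings}(a) to read off the $\gamma^{1/2}$-exponent, which the $P$-bigrading alone cannot see since $\deg(\gamma^{1/2})=(0,0)$ there). The only minor imprecisions are cosmetic: you should explicitly note, as the finite-type proof does, that commutativity for $j=i'$ is trivial from $\ell_{i'i'}^{\pm}[0]=(\ell_{ii}^{\pm}[0])^{-1}$ and the $z^{0}w^{0}$-extraction only needs to handle $j\ne i'$ (using that $\wtd{R}_q(0)=f(0)R_q(0)$ is a nonzero scalar multiple of $R_q$, so the finite-type computation applies verbatim); and your initial formula $\phi'(u)=\epsilon(u)x_0^{0}x_{-\lambda}$ should just be replaced outright by the $\what{P}$-graded version you describe afterwards.
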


For any $\mu = \sum_{i = 1}^{n}c_{i}\varepsilon_{i} \in P$, we define
\[
  \ell_{\mu}^{\pm}[0] = \ell_{11}^{\pm}[0]^{c_{1}}\ldots \ell_{nn}^{\pm}[0]^{c_{n}}.
\]
By the above lemma, the elements $\ell_{\mu}^{+}[0]\gamma^{m/2}$ (resp.\ $\ell_{\mu}^{-}[0](\gamma')^{m/2}$), ranging over all $m \in \bb{Z}$ and
$\mu \in P$, form a basis for $\wtd{U}_{0,+}(\wtd{R}_{q}(z))$ (resp.\ $\wtd{U}_{0,-}(\wtd{R}_{q}(z))$). Moreover, we also have the following analogue
of Lemma~\ref{lem:borel_spanning_sets}:

\begin{lemma}\label{lem:borel_spanning_sets_affine}
The elements
  $\{\ell_{\mu}^{+}[0]\gamma^{m/2}X_{i_{1}j_{1}}^{+}[-m_{1}]\ldots X_{i_{k}j_{k}}^{+}[-m_{k}] \,|\,
     k,m_{\ell} \in \bb{Z}_{\ge 0},\, 1 \le i_{\ell},j_\ell \le N,\, m \in \bb{Z},\, \mu \in P \}$
span $\wtd{U}_{\ge}(\wtd{R}_{q}(z))$, while the elements
  $\{X^-_{i_{1}j_{1}}[m_{1}]\ldots X^-_{i_{k}j_{k}}[m_{k}](\gamma')^{m/2}\ell_{\mu}^{-}[0] \,|\,
     k,m_{\ell} \in \bb{Z}_{\ge 0},\, \mu \in P,\, 1 \le i_{\ell},j_\ell \le N,\, m \in \bb{Z}\}$
span $\wtd{U}_{\le}(\wtd{R}_{q}(z))$.
\end{lemma}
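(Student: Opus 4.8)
The plan is to mimic the proof of Lemma~\ref{lem:borel_spanning_sets} from the finite case, carrying out the two main steps: first establishing the relevant commutation relations between the diagonal zero-mode elements $\ell_{ii}^{\pm}[0]$, $\gamma^{\pm 1/2}$, $(\gamma')^{\pm 1/2}$ and the off-diagonal generators $\ell_{ij}^{\pm}[\mp m]$, and second showing that $\wtd{U}_{\ge}(\wtd{R}_{q}(z))$ (resp.\ $\wtd{U}_{\le}(\wtd{R}_{q}(z))$) is generated as an algebra by the $X_{ij}^{\pm}[\mp m]$ together with $\wtd{U}_{0,\pm}(\wtd{R}_{q}(z))$.

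For the first step, I would extract from the defining relations~\eqref{eq:Affine_RLL} (written out explicitly in the spirit of~\eqref{eq:RLL_relations_explicit}, as was done in the proof of Lemma~\ref{lem:affine_RTT_bigradings}) the analogues of~\eqref{eq:RTT_q_commute}, namely that $\ell_{ii}^{\pm}[0]$ $v$-commutes with $\ell_{kj}^{\pm}[\mp m]$ up to a power $v^{\pm(\delta_{ij} - \delta_{ik})}$, where $v = q^{-2}$ in type $B_n$ and $v = q^{-1}$ in types $C_n, D_n$; this uses the vanishing pattern of $(\wtd{R}_q)^{ij}_{k\ell}[-d_1]$ observed in~\eqref{eq:weight_preserve_affine} and the structure of $f(z)$. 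The cases $i = k'$ or $i = j'$ are handled by combining with~\eqref{eq:B_extra_relation_affine} and the diagonal relations $\ell_{ii}^{\pm}[0]\ell_{i'i'}^{\pm}[0] = 1$ coming from~\eqref{eq:LC_explicit_affine_2}, exactly as in the finite case. The centrality of $\gamma^{1/2}, (\gamma')^{1/2}$ makes their part trivial. Combining these with the $P$-bigrading of Lemma~\ref{lem:affine_RTT_bigradings}(b) yields the general commutation formula $\ell_{\mu}^{+}[0]\gamma^{m/2}\cdot u = v^{(\mu,\lambda+\lambda')}u\cdot\ell_{\mu}^{+}[0]\gamma^{m/2}$ for $u \in \wtd{U}_{\ge}(\wtd{R}_q(z))_{\lambda,\lambda'}$, and its $-$ analogue, which reduces the spanning claim to showing algebra generation by $\{X_{ij}^{\pm}[\mp m]\} \cup \wtd{U}_{0,\pm}(\wtd{R}_q(z))$.

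For the second step, I would argue as in the end of the proof of Lemma~\ref{lem:borel_spanning_sets}: starting from $X_{i+1,i}^{+}[-m]$ and $\ell_{ii}^{+}[0]$ one recovers $\ell_{i+1,i}^{+}[-m]$; then using the explicit form~\eqref{eq:LC_explicit_affine_1}--\eqref{eq:LC_explicit_affine_2} of the relation~\eqref{eq:LC_affine} (with $i+1$ in place of $i$, $i$ in place of $j$) one expresses $\ell_{i',(i+1)'}^{+}[-m]$ in terms of the already-obtained elements; and finally the remaining $\ell_{ij}^{+}[-m]$ are produced inductively via an affine analogue of~\cite[Proposition 8.29]{KS}, pushing indices apart using the $RLL$-relations~\eqref{eq:Affine_RLL} at fixed spectral parameter. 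The argument for the $-$ side is symmetric.

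The main obstacle I anticipate is the bookkeeping in the affine analogue of~\cite[Proposition 8.29]{KS}: in the finite setting the off-diagonal entries $\ell_{ij}^{+}$ form a single finite array, whereas here each $\ell_{ij}^{+}[-m]$ carries a mode index $m$, so one must verify that the recursive extraction of $\ell_{ij}^{+}[-m]$ from lower-index generators stays inside the subalgebra generated by the $X_{ij}^{+}[\mp m']$ and $\wtd{U}_{0,+}$ for all $m$ simultaneously, keeping track of the $\gamma^{-3m/2}$ twists built into the definition~\eqref{eq:X+-_generators}. Once the $v$-commutation relations of the first step are in hand, this is a routine but careful induction, and the proof concludes by noting that the spanning sets in the statement are exactly the monomials obtained from the algebra generators after moving all $\ell_{\mu}^{\pm}[0]\gamma^{m/2}$-factors (resp.\ $(\gamma')^{m/2}\ell_{\mu}^{-}[0]$-factors) to one side via the commutation formula.
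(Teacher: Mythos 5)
Your overall plan is sound, and the first step of your proof (establishing the $v$-commutation relations $\ell_{\mu}^{\pm}[0]\,u = v^{\pm(\mu,\lambda+\lambda')}u\,\ell_{\mu}^{\pm}[0]$ for $u$ in the relevant bidegree, using the vanishing pattern of the $\wtd{R}_q$-entries and the $P$-bigrading) is exactly what the paper does. However, your second step is built on a misreading of the affine definitions: whereas in the finite case the subalgebras $\wtd{U}_{\pm}(R_q)$ are generated only by the near-diagonal elements $\cal{E}_{i} = \ell_{i+1,i}^{+}(\ell_{ii}^{+})^{-1}$ and $\cal{F}_{i} = (\ell_{ii}^{-})^{-1}\ell_{i,i+1}^{-}$ with $1 \le i \le n$ (so one must laboriously recover the remaining $\ell_{ij}^{\pm}$ via an analogue of \cite[Proposition 8.29]{KS}), in the affine setting the subalgebras $\wtd{U}_{\pm}(\wtd{R}_q(z))$ are defined as generated by all $X_{ij}^{\pm}[\mp m]$ with $1 \le i,j \le N$ and $m \in \bb{Z}_{\ge 0}$, see~\eqref{eq:X+-_generators}. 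In particular, every generator of $\wtd{U}_{\ge}(\wtd{R}_q(z))$ is recovered instantly via $\ell_{ij}^{+}[-m] = X_{ij}^{+}[-m]\,\gamma^{3m/2}\,\ell_{jj}^{+}[0]$ (and similarly for $-$), so the algebra generation statement is immediate and your anticipated "main obstacle" of pushing indices apart through an inductive extraction does not arise. The paper simply notes this generation is clear and then reduces the lemma to the $v$-commutation formulas, which is where the only real work lies. Your argument is therefore correct but contains an unnecessary and overcomplicated second step; the streamlined route is to observe the generation claim is trivial and proceed directly to moving the Cartan factors across via the $v$-commutation formula.
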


\begin{proof}
It is clear that $\wtd{U}_{\ge}(\wtd{R}_{q}(z))$ is generated as an algebra by all $X_{ij}^{+}[-m]$, $\gamma^{\pm 1/2}$, and $\ell_{\mu}^{+}[0]$,
and likewise, $\wtd{U}_{\le}(\wtd{R}_{q}(z))$ is generated as an algebra by all $X_{ij}^{-}[m]$, $(\gamma')^{\pm 1/2}$, and $\ell_{\mu}^{-}[0]$. Thus,
as in the proof of Lemma~\ref{lem:borel_spanning_sets}, it is enough to prove the following formulas:
\begin{equation}\label{eq:RTT_q_commute_affine}
  \ell_{\mu}^{+}[0]u = v^{(\mu,\lambda + \lambda')}u\ell_{\mu}^{+}[0] \qquad \mathrm{and} \qquad
  \ell_{\mu}^{-}[0]u' = v^{-(\mu,\lambda + \lambda')}u'\ell_{\mu}^{-}[0]
\end{equation}
for all $u \in \wtd{U}_{\ge}(\wtd{R}_{q}(z))_{\lambda,\lambda'}$, $u' \in \wtd{U}_{\le}(\wtd{R}_{q}(z))_{\lambda,\lambda'}$, where $v=q^{-2}$ in type
$B_n$ and $v=q^{-1}$ in types $C_n,D_n$. Since the arguments are very similar to those used in the proof of Lemma~\ref{lem:borel_spanning_sets}, we leave
details to the reader.
\end{proof}

Next, we can define a left action of $\wtd{U}_{0,-}(\wtd{R}_{q}(z))$ on $\wtd{U}_{-}(\wtd{R}_{q}(z))$ by
$\ell_{\mu}^{-}[0]\cdot u = v^{-(\mu,\lambda + \lambda')}u$ for any $u \in \wtd{U}_{-}(\wtd{R}_{q}(z))_{\lambda,\lambda'}$, and this clearly makes
$\wtd{U}_{-}(\wtd{R}_{q}(z))$ into a left module-algebra over $\wtd{U}_{0,-}(\wtd{R}_{q}(z))$, and thus we can form the left crossed product algebra
$\wtd{U}_{-}(\wtd{R}_{q}(z)) \rtimes_{v} \wtd{U}_{0,-}(\wtd{R}_{q}(z))$, which is $\wtd{U}_{-}(\wtd{R}_{q}(z)) \otimes \wtd{U}_{0,-}(\wtd{R}_{q}(z))$
as a vector space, and multiplication is given by~\eqref{eq:left_crossed_product}. Likewise, we can define a right action of $\wtd{U}_{0,+}(\wtd{R}_{q}(z))$
on $\wtd{U}_{+}(\wtd{R}_{q}(z))$ by $u \cdot \ell_{\mu}^{+} = v^{-(\mu,\lambda + \lambda')}u$ for all $u \in \wtd{U}_{+}(\wtd{R}_{q}(z))_{\lambda,\lambda'}$.
Since this makes $\wtd{U}_{+}(\wtd{R}_{q}(z))$ into a right module-algebra $\wtd{U}_{0,+}(\wtd{R}_{q}(z))$, we can form the right crossed product algebra
$\wtd{U}_{0,+}(\wtd{R}_{q}(z)) \ltimes_{v} \wtd{U}_{+}(\wtd{R}_{q}(z))$, which is $\wtd{U}_{0,+}(\wtd{R}_{q}(z)) \otimes \wtd{U}_{+}(\wtd{R}_{q}(z))$ as
a vector space, and multiplication is given by~\eqref{eq:right_crossed_product}.

We then get the following analogue of Proposition~\ref{prop:RTT_crossed_products}:

\begin{prop}\label{prop:RTT_crossed_products_affine}
Let $v = q^{-2}$ in type $B_{n}$ and $v = q^{-1}$ in types $C_{n}$ and $D_{n}$. Then, there are algebra isomorphisms
$\what{\psi}_{+}\colon \wtd{U}_{\ge}(\wtd{R}_{q}(z)) \iso \wtd{U}_{0,+}(\wtd{R}_{q}(z)) \ltimes_{v} \wtd{U}_{+}(\wtd{R}_{q}(z))$ and
$\what{\psi}_{-}\colon \wtd{U}_{\le}(\wtd{R}_{q}(z)) \iso \wtd{U}_{-}(\wtd{R}_{q}(z)) \rtimes_{v} \wtd{U}_{0,-}(\wtd{R}_{q}(z))$ such that
\begin{align*}
  & \what{\psi}_{+}(X_{ij}^{+}[-m]) = 1 \otimes X_{ij}^{+}[-m], &
  & \what{\psi}_{+}(\ell_{ii}^{+}[0]) = \ell_{ii}^{+}[0] \otimes 1, &
  & \what{\psi}_{+}(\gamma^{\pm 1/2}) = \gamma^{\pm 1/2} \otimes 1, \\
  & \what{\psi}_{-}(X_{ij}^{-}[m]) = X_{ij}^{-}[m] \otimes 1, &
  & \what{\psi}_{-}(\ell_{ii}^{-}[0]) = 1 \otimes \ell_{ii}^{-}[0], &
  & \what{\psi}_{-}((\gamma')^{\pm 1/2}) = 1\otimes (\gamma')^{\pm 1/2},
\end{align*}
for all $1 \le i,j \le N$ and $m \in \bb{Z}_{\geq 0}$.
\end{prop}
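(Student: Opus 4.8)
\textbf{Proof plan for Proposition~\ref{prop:RTT_crossed_products_affine}.}
The plan is to mimic the proof of Proposition~\ref{prop:RTT_crossed_products}, treating the affine generators $X^{\pm}_{ij}[\mp m]$ and $\gamma^{\pm 1/2},(\gamma')^{\pm 1/2}$ in place of $\mathcal{E}_i,\mathcal{F}_i$ and the finite Cartan elements. First I would construct the obvious ``multiplication'' maps in the reverse direction: the linear map $\what{\psi}_-'\colon \wtd{U}_-(\wtd{R}_q(z))\rtimes_v \wtd{U}_{0,-}(\wtd{R}_q(z))\to \wtd{U}_\le(\wtd{R}_q(z))$ sending $u\otimes h\mapsto uh$, and $\what{\psi}_+'\colon \wtd{U}_{0,+}(\wtd{R}_q(z))\ltimes_v \wtd{U}_+(\wtd{R}_q(z))\to \wtd{U}_\ge(\wtd{R}_q(z))$ sending $h\otimes u\mapsto hu$. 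That these are algebra homomorphisms is exactly the content of the commutation formulas~\eqref{eq:RTT_q_commute_affine} established in Lemma~\ref{lem:borel_spanning_sets_affine}, combined with the definitions~\eqref{eq:left_crossed_product} and~\eqref{eq:right_crossed_product} of the crossed products; surjectivity of $\what{\psi}_{\pm}'$ is immediate since their images contain all the algebra generators of $\wtd{U}_\ge(\wtd{R}_q(z))$ and $\wtd{U}_\le(\wtd{R}_q(z))$ respectively.

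The key point is then to produce two-sided inverses. Since $\wtd{U}_\ge(\wtd{R}_q(z))$ and $\wtd{U}_\le(\wtd{R}_q(z))$ inherit the $\what{P}\times\what{P}$-bigrading from Lemma~\ref{lem:affine_RTT_bigradings}(a) (each being generated by homogeneous elements), and since $\deg(X^{+}_{ij}[-m])=(-\varepsilon_i-\tfrac m2\delta,\,\varepsilon_j+\tfrac{3m}{2}\delta)$ has real Cartan part lying in $Q$ — more precisely $\deg(\gamma^{1/2})$ and $\deg(\ell^+_{ii}[0])$ span the ``$\wtd{U}_{0,+}$-directions'' — I would argue via Lemma~\ref{lem:borel_spanning_sets_affine} that for any homogeneous $u\in \wtd{U}_\ge(\wtd{R}_q(z))_{\lambda,\lambda'}$ one has $(\ell^+_\mu[0]\gamma^{m/2})^{-1}u\in \wtd{U}_+(\wtd{R}_q(z))$ for the unique $(\mu,m)$ matching the $\wtd{U}_{0,+}$-part of the bidegree (the imaginary shift $\tfrac m2\delta$ is recovered from the coefficient of $\delta$, and $\mu\in P$ from the $P$-part). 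Dually $u'\ell^-_\mu[0](\gamma')^{m/2}\in \wtd{U}_-(\wtd{R}_q(z))$ for $u'\in\wtd{U}_\le(\wtd{R}_q(z))_{\lambda,\lambda'}$. This lets me define
\[
  \what{\psi}_+(u)=\ell^+_\mu[0]\gamma^{m/2}\otimes (\ell^+_\mu[0]\gamma^{m/2})^{-1}u,\qquad
  \what{\psi}_-(u')=u'\ell^-_\mu[0](\gamma')^{m/2}\otimes (\ell^-_\mu[0](\gamma')^{m/2})^{-1},
\]
and a direct check using~\eqref{eq:RTT_q_commute_affine} shows these are algebra homomorphisms inverse to $\what{\psi}_\pm'$, hence isomorphisms; by construction they send $X^{\pm}_{ij}[\mp m]$, $\ell^\pm_{ii}[0]$, $\gamma^{\pm1/2}$, $(\gamma')^{\pm1/2}$ as claimed.

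The main obstacle I anticipate is the bookkeeping needed to split off the $\wtd{U}_{0,\pm}$-part cleanly: unlike the finite case, the Cartan subalgebra $\wtd{U}_{0,+}(\wtd{R}_q(z))$ now includes the central $\gamma^{1/2}$, and the bidegrees carry an imaginary component $\tfrac m2\delta$, so I must verify that the pair $(\mu,m)$ is genuinely \emph{determined} by the bidegree of $u$ — that is, that $\wtd{U}_+(\wtd{R}_q(z))_{\lambda,\lambda'}=0$ unless $\lambda',\lambda$ lie in the appropriate ``no $\gamma^{1/2}$, no $\ell^+_{ii}[0]$'' cone, which follows from $\deg(X^+_{ij}[-m])$ having vanishing $\wtd{U}_{0,+}$-component once one quotients by the purely imaginary and purely-$\mathbb{Z}\varepsilon_i$ parts. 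Everything else is a routine transcription of the argument for Proposition~\ref{prop:RTT_crossed_products}, so I would state the analogues of~\eqref{eq:RTT_q_commute}–\eqref{eq:RTT_q_commute_general} (already packaged as Lemma~\ref{lem:borel_spanning_sets_affine}) and leave the remaining elementary verifications to the reader, exactly as is done there.
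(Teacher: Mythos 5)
Your overall plan matches the paper's proof (and the finite analogue, Proposition~\ref{prop:RTT_crossed_products}): build $\what{\psi}_\pm'$ as multiplication maps, certify them as algebra homomorphisms via~\eqref{eq:RTT_q_commute_affine}, then invert by peeling off the $\wtd{U}_{0,\pm}$-factor from the bidegree, using the spanning sets of Lemma~\ref{lem:borel_spanning_sets_affine}. However, you have a genuine error in the key degree computation: you state $\deg(X^{+}_{ij}[-m])=(-\varepsilon_i-\tfrac m2\delta,\,\varepsilon_j+\tfrac{3m}{2}\delta)$, which is actually the degree of $\ell^{+}_{ij}[-m]$, not of $X^{+}_{ij}[-m]$. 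From the definition~\eqref{eq:X+-_generators} and Lemma~\ref{lem:affine_RTT_bigradings}(a), the correct value is
\[
  \deg(X^{+}_{ij}[-m]) = (\varepsilon_j - \varepsilon_i + m\delta,\ 0), \qquad
  \deg(X^{-}_{ij}[m]) = (0,\ \varepsilon_j - \varepsilon_i - m\delta).
\]
This is not a mere bookkeeping slip: the vanishing of the second bidegree component for $X^{+}_{ij}[-m]$ (resp.\ the first for $X^{-}_{ij}[m]$) is exactly what makes your anticipated ``main obstacle'' vanish. It shows at once that $\wtd{U}_{+}(\wtd{R}_q(z))$ sits entirely in bidegrees of the form $(\cdot,0)$, so in the spanning elements $\ell^{+}_{\mu}[0]\gamma^{m/2}\,X^{+}_{i_1j_1}[-m_1]\cdots X^{+}_{i_kj_k}[-m_k]$ the $\wtd{U}_{0,+}$-factor $\ell^{+}_{\mu}[0]\gamma^{m/2}$ is uniquely read off from the second component of the bidegree (which lies in $\what{P}=P\oplus\mathbb{Z}\tfrac\delta 2$, and $\mu\in P$, $m\in\mathbb{Z}$ are recovered from the $P$-part and the $\delta$-coefficient respectively). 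Your attempt to recover this via ``vanishing $\wtd{U}_{0,+}$-component once one quotients by the purely imaginary and purely-$\mathbb{Z}\varepsilon_i$ parts'' is circular in the form stated, since with your incorrect degree the second component of $\deg(X^{+}_{ij}[-m])$ is nonzero and the separation does not happen cleanly. With the corrected degrees, the rest of your argument goes through exactly as you describe, and coincides with the paper's proof.
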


\begin{proof}
Since $\deg(X_{ij}^{+}[-m]) = (\varepsilon_{j} - \varepsilon_{i} + m\delta,0)$ and $\deg(X_{ij}^{-}[m]) = (0,\varepsilon_{j} - \varepsilon_{i} - m\delta)$,
it follows from Lemma~\ref{lem:borel_spanning_sets_affine} that $u(\gamma')^{k/2}\ell_{\lambda}^{-}[0] \in \wtd{U}_{-}(\wtd{R}_{q}(z))$ whenever
$u \in \wtd{U}_{\le}(\wtd{R}_{q}(z))_{\lambda + \frac{k}{2}\delta,\mu + \frac{\ell}{2}\delta}$ and
$\gamma^{-\ell/2}(\ell_{\mu}^{+}[0])^{-1}u \in \wtd{U}_{+}(\wtd{R}_{q}(z))$ whenever
$u \in \wtd{U}_{\ge}(\wtd{R}_{q}(z))_{\lambda + \frac{k}{2}\delta,\mu + \frac{\ell}{2}\delta}$. Thus, the result follows
by using~\eqref{eq:RTT_q_commute_affine} and arguments similar to those in the proof of Proposition~\ref{prop:RTT_crossed_products}.
\end{proof}

As in Section~\ref{sec:FRT construction finite}, we shall now realize $\wtd{U}(\wtd{R}_{q}(z))$ as a Drinfeld double of $\wtd{U}_{\le}(\wtd{R}_{q}(z))$
and $\wtd{U}_{\ge}(\wtd{R}_{q}(z))$. To do so, we first introduce algebras $A_{\pm}(R(z))$ associated to any solution $R(z)$ of the Yang-Baxter equation
with a spectral parameter~\eqref{eq:YBE_affine}. The generators of $A_{\pm}(R(z))$ are
  $\{\gamma_{\pm}^{1/2},\gamma_{\pm}^{-1/2}\} \cup \{t_{ij}^{\pm}[\mp m] \,|\, 1 \le i,j \le N,\, m \in \bb{Z}_{\geq 0}\}$,
where
\begin{equation}\label{eq:central_elements_A(R(z))}
  \gamma_{\pm}^{1/2},\gamma_{\pm}^{-1/2}-\text{central}, \qquad \gamma_{\pm}^{1/2}\gamma_{\pm}^{-1/2} = 1,
\end{equation}
and the remaining defining relations take the form
\begin{equation}\label{eq:RTT_affine}
  R(z/w)T_{1}^{\pm}(z)T_{2}^{\pm}(w) = T_{2}^{\pm}(w)T_{1}^{\pm}(z)R(z/w),
\end{equation}
where $T_{1}^{\pm}(z) = \sum_{i,j=1}^{N} E_{ij} \otimes 1 \otimes t_{ij}^{\pm}(z)$ and
$T_{2}^{\pm}(z) = \sum_{i,j=1}^{N} 1 \otimes E_{ij} \otimes t_{ij}^{\pm}(z)$, with
\[
  t_{ij}^{\pm}(z) = \sum_{m \geq 0} t_{ij}^{\pm}[\mp m]z^{\pm m}.
\]
The algebras $A_{\pm}(R(z))$ have bialgebra structures with coproduct $\Delta_{\pm}$ given by
\begin{equation}\label{eq:affine_A(R)_coproducts}
\begin{split}
  &\Delta_{+}(t_{ij}^{+}(z)) =  \sum_{1\leq k\leq N} t_{ik}^{+}(z(1 \otimes \gamma_{+}^{3/2})) \otimes t_{kj}^{+}(z(\gamma_{+}^{1/2} \otimes 1)), \\
  &\Delta_{-}(t_{ij}^{-}(z)) = \sum_{1\leq k\leq N} t_{ik}^{-}(z(1 \otimes \gamma_{-}^{-1/2})) \otimes t_{kj}^{-}(z(\gamma_{-}^{-3/2} \otimes 1)),
\end{split}
\end{equation}
as well as $\Delta_\pm(\gamma_{\pm}^{1/2})=\gamma_{\pm}^{1/2}\otimes \gamma_{\pm}^{1/2}$, and counit $\epsilon_{\pm}$ given by
$\epsilon_{\pm}(t_{ij}^{\pm}(z)) = \delta_{ij}$, $\epsilon_\pm(\gamma_{\pm}^{1/2})=1$. We shall mainly be working with the quotient algebras
$\wtd{A}_{\pm}(\wtd{R}_{q}(z)) = A_{\pm}(\wtd{R}_{q}(z))/\cal{I}_{\pm}$, where $\cal{I}_{\pm} = \cal{I}_{\pm}' + \cal{I}_{\pm}''$ with $\cal{I}_{\pm}'$
being the ideals generated by the entries of the matrix
\begin{equation}\label{eq:TC_relations_affine}
  T^{\pm}(z)C_{q}T^{\pm}(z\xi)^{\mathsf{t}'}C_{q}^{-1} - I,
\end{equation}
and $\cal{I}_{+}''$ (resp.\ $\cal{I}_{-}''$) is the ideal generated by $\{t_{ij}^{+}[0]\}_{i<j}$ (resp.\ $\{t_{ij}^{-}[0]\}_{i>j}$), and the additional
element $t_{n+1,n+1}^{+}[0] - 1$ (resp.\ $t_{n+1,n+1}^{-}[0] - 1$) if we are in type $B_{n}$. Since one can prove an analogue of
Proposition~\ref{prop:central_element_affine} for the ideals $\cal{I}_{\pm}'$ in $A_{\pm}(\wtd{R}_{q}(z))$, they are
coideals. As $\cal{I}_{\pm}''$ are also coideals, $\wtd{A}_{\pm}(\wtd{R}_{q}(z))$ are both bialgebras. We note that the matrices
$(t_{ij}^{\pm}[0])_{i,j = 1}^{N}$ are invertible in $\wtd{A}_{\pm}(\wtd{R}_{q}(z))$ as $t^\pm_{ii}[0]^{-1}=t^\pm_{i'i'}[0]$, hence,
these bialgebras are actually Hopf algebras, with antipodes determined by
\begin{equation*}
  S_\pm(T^{\pm}(z)) = (T^{\pm}(z\gamma_{\pm}^{\mp 2}))^{-1}, \qquad S_{\pm}(\gamma_{\pm}^{1/2})=\gamma_{\pm}^{-1/2}.
\end{equation*}

For the next result, we first need to record a few identities among $\wtd{R}_{q}(z)$ and $C_{q}$ (essentially due to~\cite{FR}):

\begin{lemma}\label{lem:affine_crossing_symmetries}
The following \textbf{crossing symmetry} identities hold:
\begin{equation}\label{eq:affine_crossing_symmetries_1}
  \wtd{R}_{q}(z)(C_{q})_{1}\wtd{R}_{q}(z\xi)^{\mathsf{t}_{1}'}(C_{q}^{-1})_{1} = \xi^{2} v^{2}I, \qquad
  (C_{q}^{\mathsf{t}'})^{-1}_{1}\wtd{R}_{q}(z\xi)^{-1}(C_{q}^{\mathsf{t}'})_{1}(\wtd{R}_{q}(z)^{-1})^{\mathsf{t}'_1} = \xi^{-2}v^{-2}I,
\end{equation}
\begin{equation}\label{eq:affine_crossing_symmetries_2}
  (C_{q}^{\mathsf{t}'})_{2}^{-1}\wtd{R}_{q}(z\xi^{-1})(C_{q}^{\mathsf{t}'})_{2}\wtd{R}_{q}(z)^{\mathsf{t}'_2} =
  \xi^{2}v^{2}I, \qquad
  \wtd{R}_{q}(z)^{-1}(C_{q})_{2}(\wtd{R}_{q}(z\xi^{-1})^{-1})^{\mathsf{t}_{2}'}(C_{q}^{-1})_{2} = \xi^{-2}v^{-2}I.
\end{equation}
\end{lemma}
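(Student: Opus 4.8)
The plan is to derive the four crossing symmetry identities of Lemma~\ref{lem:affine_crossing_symmetries} from the explicit form of $\wtd{R}_{q}(z)=f(z)R_{q}(z)$ together with the structure of the $R$-matrices $R_{q}(z)$ recorded in~\cite[(6.11)--(6.13)]{MT1}, following the strategy of~\cite{FR}. First I would observe that, after clearing the scalar prefactor, each of the desired relations is equivalent to a matrix identity for $R_{q}(z)$ alone (with appropriate powers of $\xi$ and $v$ absorbed into the comparison of $f(z)$ at $z$ versus $z\xi$ and $z\xi^{-1}$); the relevant ratios $f(z\xi)/f(z)$ and $f(z\xi^{-1})/f(z)$ are computable directly from the infinite product formula~\eqref{eq:f_prefactor_formula} by telescoping, so one reduces the scalar bookkeeping to a short calculation. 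The point of normalizing by $f(z)$ is precisely to make these ratios clean, so the choices $\xi=q^{-4n+2}$ (type $B_n$), $\xi=q^{-2-2n}$ (type $C_n$), $\xi=q^{2-2n}$ (type $D_n$) and the matching $v$ are exactly what makes the product collapse.

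Next I would establish the first identity in~\eqref{eq:affine_crossing_symmetries_1}, namely $\wtd{R}_{q}(z)(C_{q})_{1}\wtd{R}_{q}(z\xi)^{\mathsf{t}_{1}'}(C_{q}^{-1})_{1}=\xi^{2}v^{2}I$, by a direct entry-wise check using the convention~\eqref{eq:matrix_notation}: $\wtd{R}_{q}(z)$ has nonzero entries $(\wtd{R}_{q})^{ij}_{k\ell}(z)$ only when $(i,j)=(k,\ell)$, $(i,j)=(\ell,k)$, or $i=j'$ and $k=\ell'$ (as used already in the proof of Lemma~\ref{lem:affine_RTT_bigradings}), so the left-hand side reduces to a small sum over the indices coupled by the partial transposition $\mathsf{t}_1'$ and the diagonal matrix $C_q$. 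Here I would use the known weight-preservation property, the explicit diagonal, off-diagonal, and ``reflection'' entries of $R_q(z)$ from~\cite[(6.11)--(6.13)]{MT1}, and the inversion formula for $R_q(z)$ obtainable from~\cite[(7.7)--(7.9)]{MT1}. Once the first identity of~\eqref{eq:affine_crossing_symmetries_1} is in hand, the second identity of~\eqref{eq:affine_crossing_symmetries_1} follows by multiplying on suitable sides by $\wtd{R}_{q}(z\xi)^{-1}$ and $(\wtd{R}_{q}(z)^{\mathsf{t}_1'})^{-1}$ and rearranging, together with the elementary fact that $(A^{\mathsf{t}_1'})^{-1}=(A^{-1})^{\mathsf{t}_1'}$ up to the conjugation by $C_q$ already built into the statement; the two identities in~\eqref{eq:affine_crossing_symmetries_2} are obtained from those in~\eqref{eq:affine_crossing_symmetries_1} by applying the flip $\tau$ (i.e. interchanging the two tensor factors), which swaps $\mathsf{t}_1'\leftrightarrow\mathsf{t}_2'$ and $(C_q)_1\leftrightarrow(C_q)_2$, and by using the symmetry $R_{q,21}(z)=R_q(z)$ that these $R$-matrices enjoy (equivalently, that $\hat R_q(z)\circ\tau$ is symmetric under $1\leftrightarrow 2$ relabeling), together with the same scalar ratio computation evaluated at $z\xi^{-1}$ instead of $z\xi$.

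Throughout, I would treat the three classical types uniformly wherever possible, reducing the case analysis to the handful of distinct entry types of $R_q(z)$, and only at the very end specialize the values of $\xi$ and $v$ to verify that the scalar $\xi^{2}v^{2}$ (respectively $\xi^{-2}v^{-2}$) emerges correctly in each type. I expect the main obstacle to be the careful verification of the off-diagonal and reflection entries: the partial transposition $\mathsf{t}'$ together with the primed indices $i'=N+1-i$ (or $2n+2-i$ in type $B_n$) and the nontrivial ``long-root'' corrections in the $R$-matrices of~\cite{MT1} make the bookkeeping delicate, and one must be attentive that the identity holds as stated with the conjugation by $C_q$ in the correct tensor slot. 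This is, however, a finite and mechanical check once the scalar ratios $f(z\xi)/f(z)$ and $f(z\xi^{-1})/f(z)$ are pinned down, and I would relegate the routine entry-by-entry computations to the reader, presenting only the reduction and the key scalar identity in full.
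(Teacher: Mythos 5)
The paper does not actually prove Lemma~\ref{lem:affine_crossing_symmetries}; it simply records it as ``essentially due to \cite{FR}'', and then deduces the finite version (Lemma~\ref{lem:crossing_symmetries}) from it by setting $z=0$. Your proposal of a direct entry-wise verification of the affine identities (together with the telescoping of $f(z\xi)/f(z)$ and $f(z\xi^{-1})/f(z)$ from~\eqref{eq:f_prefactor_formula}) is therefore a genuinely different route, and the general strategy of reducing four identities to one is sensible. Your derivation of the second identity in~\eqref{eq:affine_crossing_symmetries_1} from the first essentially works: rewriting the first as $\wtd{R}_q(z\xi)^{\mathsf{t}_1'}=\xi^2v^2(C_q^{-1})_1\wtd{R}_q(z)^{-1}(C_q)_1$, applying $\mathsf{t}_1'$, and using the easy identity $\big((C_q)_1 A^{\mathsf{t}_1'}(C_q^{-1})_1\big)^{\mathsf{t}_1'}=(C_q^{\mathsf{t}'})^{-1}_1 A(C_q^{\mathsf{t}'})_1$ (valid for any diagonal $C_q$ and any $A$) produces exactly the second identity; the same manipulation in the second tensor slot shows that the two identities in~\eqref{eq:affine_crossing_symmetries_2} are equivalent to each other.

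However, the step that deduces~\eqref{eq:affine_crossing_symmetries_2} from~\eqref{eq:affine_crossing_symmetries_1} has a concrete gap: the symmetry ``$R_{q,21}(z)=R_q(z)$'' that you invoke is false for these $R$-matrices. Writing $(R_{q,21})^{ij}_{k\ell}=(R_q)^{ji}_{\ell k}$, the equality $R_{q,21}=R_q$ would force $(\hat R_q)^{ij}_{ij}=(\hat R_q)^{ji}_{ji}$ for all $i,j$, but in~\eqref{eq:1_parameter_R_B} the coefficient of $E_{ii}\otimes E_{jj}$ equals $q^{-2}-q^2$ when $i>j,\ j\neq i'$ and equals $0$ when $i<j,\ i\neq j'$, so $\hat R_q(z)\circ\tau$ is manifestly \emph{not} invariant under the flip $1\leftrightarrow 2$ (and the same asymmetry occurs in types $C_n$, $D_n$). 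Thus applying $\tau$-conjugation to the first identity of~\eqref{eq:affine_crossing_symmetries_1} gives a relation involving $\wtd{R}_q(z)_{21}$ and $(\wtd{R}_q(z\xi)_{21})^{\mathsf{t}_2'}$ rather than the relation you claim. To make this step work you would need the correct ``self-duality'' of the $BCD$-type $R$-matrix (which relates $R_q(z)_{21}$ to a $C_q$-conjugate of $R_q(z)^{\mathsf{t}_1'\mathsf{t}_2'}$, not to $R_q(z)$ itself) or a combination with the unitarity $R_q(z)_{21}R_q(z^{-1})=\alpha(z)I$, or else simply repeat the direct entry-wise check for one of the identities in~\eqref{eq:affine_crossing_symmetries_2}. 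As written, the proposal cannot reach~\eqref{eq:affine_crossing_symmetries_2}.
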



\begin{prop}
There are unique bilinear maps $\sigma,\bar{\sigma}\colon \wtd{A}_{+}(\wtd{R}_{q}(z)) \times \wtd{A}_{-}(\wtd{R}_{q}(z)) \to \bb{K}$ such that
\begin{equation}\label{eq:sigma_generators}
  \sigma(t_{ij}^{+}(z),t_{k\ell}^{-}(w)) = \xi^{-1} v^{-1}(\wtd{R}_{q})^{ik}_{j\ell}(z/w),\qquad
  \sigma(\gamma_{+}^{\pm 1/2},u_{-}) = \epsilon_{-}(u_{-}),\qquad \sigma(u_{+},\gamma_{-}^{\pm 1/2}) = \epsilon_{+}(u_{+}),
\end{equation}
\begin{equation}\label{eq:sigma_bar_generators}
  \bar{\sigma}(t_{ij}^{+}(z),t_{k\ell}^{-}(w)) = \xi v(\wtd{R}_{q}^{-1})^{ik}_{j\ell}(z/w), \qquad
  \bar{\sigma}(\gamma_{+}^{\pm 1/2},u_{-}) = \epsilon_{-}(u_{-}),\qquad \bar{\sigma}(u_{+},\gamma_{-}^{\pm 1/2}) = \epsilon_{+}(u_{+}),
\end{equation}
for all $1 \le i,j \le N$, $u_{\pm} \in \wtd{A}_{\pm}(\wtd{R}_{q}(z))$, and which satisfy~\eqref{eq:skew_pairing_counit}--\eqref{eq:convolution_inverse}.
\end{prop}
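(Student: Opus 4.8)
The plan is to follow the same pattern as the proof of the finite-type analogue (Proposition~\ref{prop:skew_pairing_descent}), now adapted to the spectral-parameter setting. First I would invoke the existence of skew-pairings on the free FRT bialgebras: there are unique bilinear maps $\sigma,\bar\sigma\colon A_+(\wtd R_q(z))\times A_-(\wtd R_q(z))\to\bb K$ satisfying~\eqref{eq:skew_pairing_counit}--\eqref{eq:convolution_inverse} and given on generators by~\eqref{eq:sigma_generators}--\eqref{eq:sigma_bar_generators} with $t^\pm_{ij}[\mp m]$ in place of $t^\pm_{ij}$. This is the affine counterpart of Theorem~\ref{thm:A(R)_skew_pairings}; its proof is the standard FRT/QISM verification that the defining relations~\eqref{eq:RTT_affine} are compatible with the adjointness axioms, the only new feature being the power-series bookkeeping in $z$ and the presence of the central elements $\gamma^{\pm1/2}_\pm$, for which one sets $\sigma(\gamma^{\pm1/2}_+,u_-)=\epsilon_-(u_-)$, $\sigma(u_+,\gamma^{\pm1/2}_-)=\epsilon_+(u_+)$ (and similarly for $\bar\sigma$) and checks multiplicativity against the comultiplication formulas~\eqref{eq:affine_A(R)_coproducts}. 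One should also record here that $\sigma$ and $\bar\sigma$ are convolution-inverse to each other, which forces the prefactors $\xi^{\mp 1}v^{\mp 1}$ to be consistent — this is exactly where the normalization $\wtd R_q(z)=f(z)R_q(z)$ of~\eqref{eq:f_prefactor} enters, since the crossing symmetries of Lemma~\ref{lem:affine_crossing_symmetries} hold for $\wtd R_q(z)$ with precisely the scalars $\xi^{\pm2}v^{\pm2}$.

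Next I would descend these pairings through the quotients $\wtd A_\pm(\wtd R_q(z))=A_\pm(\wtd R_q(z))/\cal I_\pm$. Writing $\cal I_\pm=\cal I'_\pm+\cal I''_\pm$ as in the statement, one must show $\sigma(\cal I_+,A_-(\wtd R_q(z)))=\sigma(A_+(\wtd R_q(z)),\cal I_-)=0$ and the same for $\bar\sigma$. For the part $\cal I'_\pm$ — generated by the entries of~\eqref{eq:TC_relations_affine} — the key input is the affine analogue of Proposition~\ref{prop:central_element_affine} for $A_\pm(\wtd R_q(z))$, which reduces $\cal I'_\pm$ to the single series relation $\frak z^\pm_q(z)=1$ together with $t^\pm_{n+1,n+1}[0]=1$ in type $B_n$; then, mimicking the computation in Proposition~\ref{prop:skew_pairing_descent}, one pairs $\frak z^+_q(z)$ against a generator, applies~\eqref{eq:skew_pairing_adjointness}, and uses the first crossing-symmetry identity of~\eqref{eq:affine_crossing_symmetries_1} to get $\sigma(\frak z^+_q(z),t^-_{i\ell}(w))=\delta_{i\ell}$; the $\bar\sigma$ case uses the second identity of~\eqref{eq:affine_crossing_symmetries_1}, and the symmetric statements (pairing on the right) use~\eqref{eq:affine_crossing_symmetries_2}. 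Because $\frak z^\pm_q(z)$ is grouplike by~\eqref{eq:zq_coproduct}--\eqref{eq:zq_counit_antipode}, the adjointness axioms then propagate this to $\sigma(\frak z^\pm_q(z)-1,\,\cdot\,)=0$ on all of $A_\pm(\wtd R_q(z))$. For the triangular part $\cal I''_\pm$ — the entries $t^+_{ij}[\mp m]$ with $i<j$ and $t^-_{ij}[\mp m]$ with $i>j$ — one argues exactly as in the finite case by inspecting when $(\wtd R_q)^{ik}_{j\ell}(z/w)$ and $(\wtd R_q^{-1})^{ik}_{j\ell}(z/w)$ vanish: these matrix entries are zero unless $\{i,j\}=\{k,\ell\}$ or $i=j',k=\ell'$, and a direct check against~\cite[(6.11)--(6.13)]{MT1} and~\cite[(7.7)--(7.9)]{MT1} shows the relevant coefficients vanish in the required index range, so $\sigma(t^+_{ij}(z),\,\cdot\,)=\bar\sigma(t^+_{ij}(z),\,\cdot\,)=0$ for $i<j$, etc. Uniqueness of the descended maps is automatic since the $t^\pm_{ij}[\mp m]$ and $\gamma^{\pm1/2}_\pm$ generate.

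I expect the main obstacle to be the bookkeeping in the first step — verifying the adjointness axioms~\eqref{eq:skew_pairing_adjointness}--\eqref{eq:inverse_skew_pairing_adjointness} and the convolution-inverse property~\eqref{eq:convolution_inverse} for the spectral-parameter FRT bialgebras $A_\pm(\wtd R_q(z))$, keeping careful track of the shifts $z\mapsto z(1\otimes\gamma^{3/2}_+)$, $z\mapsto z(\gamma^{1/2}_+\otimes 1)$ in~\eqref{eq:affine_A(R)_coproducts} and the matching shifts $z\gamma(\gamma\gamma')^{1/2}/w$, $z\gamma'(\gamma\gamma')^{1/2}/w$ in the mixed relation of~\eqref{eq:Affine_RLL}. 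This is the affine upgrade of~\cite[Theorem 10.7]{KS} / Theorem~\ref{thm:A(R)_skew_pairings}, and while conceptually routine (it is the standard construction underlying the QISM definition of the quantum double of an affine RTT algebra), the verification is lengthy; since an essentially identical computation appears in~\cite{DF} and~\cite{FR} for the one-Cartan one-parameter case, I would present the argument by reducing to, or directly citing, that literature and then indicating the straightforward modifications needed for the doubled-Cartan two-central-element setup, leaving the full index chase to the reader as is done elsewhere in this paper.
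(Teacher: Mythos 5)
Your outline matches the paper's proof quite closely: you build the pairings on the free spectral-parameter FRT bialgebras first, then descend through the quotients by $\mathcal{I}_\pm=\mathcal{I}'_\pm+\mathcal{I}''_\pm$, handling $\mathcal{I}'_\pm$ via the single generating series $\frak{z}_q^\pm(z)$ and the crossing symmetries of Lemma~\ref{lem:affine_crossing_symmetries}, and $\mathcal{I}''_\pm$ by inspecting vanishing entries of $\wtd{R}_q$ and $\wtd{R}_q^{-1}$ (the latter using that $\wtd{R}_q(0)$ is a scalar multiple of $R_q$ and that $\sigma$ is concentrated on equal modes $m=p$). The one spot where the paper is genuinely slicker than what you propose: rather than grinding through the $\gamma$-shift bookkeeping in~\eqref{eq:affine_A(R)_coproducts} or citing~\cite{DF,FR} for it, the paper first defines $\sigma_0$ on the free algebras $A_\pm$ \emph{without} the central elements (using coproducts with $\gamma_\pm$ set to $1$), then pulls back along the algebra maps $\varphi_\pm\colon A_\pm[\gamma_\pm^{\pm 1/2}]\to A_\pm$, $\gamma_\pm\mapsto 1$, which intertwine the two coproducts; since $\sigma$ is required to pair $\gamma_\pm^{\pm 1/2}$ with anything via the counit, this transfer is tautological and disposes of precisely the obstacle you flag. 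One small inaccuracy: $\frak{z}_q^\pm(z)$ is not literally grouplike — its coproduct involves $\gamma$-shifts as in~\eqref{eq:zq_coproduct} — but this twisted version suffices because the $\gamma$-shifts are invisible to $\sigma$; the argument you want goes through verbatim.
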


\begin{proof}
The arguments showing that there are bilinear maps $\sigma,\bar{\sigma}\colon A_{+}(\wtd{R}_{q}(z)) \times A_{-}(\wtd{R}_{q}(z)) \to \bb{K}$
satisfying~\eqref{eq:skew_pairing_counit}--\eqref{eq:convolution_inverse} and~\eqref{eq:sigma_generators}--\eqref{eq:sigma_bar_generators} are
similar to the proof of Theorem~\ref{thm:A(R)_skew_pairings}, cf.~\cite[Theorem 10.7]{KS}. Indeed, we first consider the free algebras
$A_{\pm} = \bb{K}\langle t_{ij}^{\pm}[\mp m] \,|\, 1 \le i,j \le N,\, m \in \bb{Z}_{\ge 0}\rangle$ generated by all $t_{ij}^{\pm}[\mp m]$.
Then the Laurent polynomial algebras $A_{\pm}[\gamma_{\pm}^{ 1/2},\gamma_{\pm}^{-1/2}]$ have bialgebra structures with counits given by
$\epsilon_{\pm}(t_{ij}^{\pm}(z)) = \delta_{ij}, \epsilon_\pm(\gamma_{\pm}^{1/2})=1$, and coproducts $\Delta_{\pm}$ given by the
formulas~\eqref{eq:affine_A(R)_coproducts} and $\Delta_\pm(\gamma_{\pm}^{1/2})=\gamma_{\pm}^{1/2}\otimes \gamma_{\pm}^{1/2}$.
We also have bialgebra structures on $A_{\pm}$ with counits $\epsilon_{\pm}(t_{ij}^{\pm}(z)) = \delta_{ij}$
and the coproducts $\Delta_{\pm}$ obtained from the formulas~\eqref{eq:affine_A(R)_coproducts} by setting $\gamma_{\pm} = 1$.
We now define a bilinear form $\sigma_{0}\colon A_{+} \times A_{-} \to \bb{K}$ inductively, by first setting
$\sigma_{0}(t_{ij}^{+}[-m],1) = \sigma_{0}(1,t_{ij}^{-}[m]) = \delta_{m,0}\delta_{ij}$ and
$\sigma_{0}(t_{ij}^{+}[-m],t_{k\ell}^{-}[p]) = \delta_{m,p}\xi^{-1} v^{-1}(\wtd{R}_{q})^{ik}_{j\ell}[-m]$, and then
extending $\sigma_{0}$ to all of $A_{\pm}$ using~\eqref{eq:skew_pairing_adjointness}. Next, let
$\varphi_{\pm}\colon A_{\pm}[\gamma_{\pm}^{ 1/2},\gamma_{\pm}^{-1/2}] \to A_{\pm}$ be the algebra homomorphisms defined by
$\gamma_{\pm} \mapsto 1$. It is clear that $(\varphi_{\pm} \otimes \varphi_{\pm})\Delta_{\pm} = \Delta_{\pm}\varphi_{\pm}$, and hence
if we define $\sigma(a,b) = \sigma_{0}(\varphi_{\pm}(a),\varphi_{\pm}(b))$, then $\sigma$ is a bilinear form on
$A_{\pm}[\gamma_{\pm}^{1/2},\gamma_{\pm}^{-1/2}]$ such that~\eqref{eq:skew_pairing_counit},~\eqref{eq:skew_pairing_adjointness},
and~\eqref{eq:sigma_generators} hold. A similar argument shows that there is a bilinear form $\bar{\sigma}$ such
that~\eqref{eq:skew_pairing_counit},~\eqref{eq:inverse_skew_pairing_adjointness}, and~\eqref{eq:sigma_bar_generators} hold. Next, we need
to show that $\sigma$ and $\bar{\sigma}$ induce bilinear forms on $A_{+}(\wtd{R}_{q}(z)) \times A_{-}(\wtd{R}_{q}(z))$ such
that~\eqref{eq:skew_pairing_counit}--\eqref{eq:convolution_inverse},~\eqref{eq:sigma_generators}, and~\eqref{eq:sigma_bar_generators} hold.
First, this requires showing that the ideals $\cal{J}_{\pm}$ generated by the relations~\eqref{eq:RTT_affine}, with $\wtd{R}_{q}(z/w)$ in
place of $R(z/w)$, satisfy $\sigma(\cal{J}_{+},A_{-}(\wtd{R}_{q}(z))) = \sigma(A_{+}(\wtd{R}_{q}(z)),\cal{J}_{-}) = 0$, along with
similar identities for $\bar{\sigma}$. As in the proof of~\cite[Theorem 10.7]{KS}, a computation shows that this is equivalent to the
fact that $\wtd{R}_{q}(z)$ satisfies the Yang-Baxter equation with a spectral parameter~\eqref{eq:YBE_affine}. Moreover, it is easy to see
that~\eqref{eq:convolution_inverse} is equivalent to the identities
$\wtd{R}_{q}(z/w)\wtd{R}_{q}(z/w)^{-1} = \wtd{R}_{q}(z/w)^{-1}\wtd{R}_{q}(z/w) = I$.

Next, we need to prove that
\[
  \sigma(\cal{I}_{+}',A_{-}(\wtd{R}_{q}(z))) = \sigma(A_{+}(\wtd{R}_{q}(z)),\cal{I}_{-}') = 0 =
  \sigma(\cal{I}_{+}'',A_{-}(\wtd{R}_{q}(z))) = \sigma(A_{+}(\wtd{R}_{q}(z)),\cal{I}_{-}''),
\]
along with similar identities for $\bar{\sigma}$. The arguments for $\cal{I}_{\pm}''$ are the same as in the proof of
Proposition~\ref{prop:skew_pairing_descent}, since $\wtd{R}_{q}(0)$ is a multiple of $R_{q}$, and $\sigma(t_{ij}^{+}[-m],t_{k\ell}^{-}[p]) = 0$
unless $m = p$. For $\cal{I}_{\pm}'$, the arguments use the crossing symmetry identities of Lemma~\ref{lem:affine_crossing_symmetries},
similarly to the first part of the proof of Proposition~\ref{prop:skew_pairing_descent}. Because an analogue of
Proposition~\ref{prop:central_element_affine} holds for the ideals $\cal{I}_{\pm}'$, we know that they are generated by the coefficients
of a series $\frak{z}_{q}^{\pm}(z) - 1$, with $\frak{z}_{q}^{\pm}(z)$ given by
\[
  \frak{z}_{q}^{\pm}(z) = \sum_{1\leq k\leq N} c_{ii}c_{kk}^{-1}t_{k'i'}^{\pm}(z\xi)t_{ki}^{\pm}(z) =
  \sum_{1\leq k\leq N} c_{jj}^{-1}c_{kk}t_{jk}^{\pm}(z)t_{j'k'}^{\pm}(z\xi)  \qquad \text{for all}\qquad 1 \le i,j \le N.
\]
As $\frak{z}_{q}^{\pm}(z)$ satisfy the analogue of~\eqref{eq:zq_coproduct},
to prove the vanishing of $\sigma(\cal{I}_{+}',A_{-}(\wtd{R}_{q}(z)))$, it suffices to show
\begin{equation}\label{eq:sigma_I+'_vanishing}
  \delta_{i\ell} = \sigma(\frak{z}_{q}^{+}(z),t_{i\ell}^{-}(w)).
\end{equation}
Since
\begin{align*}
  \sigma(\frak{z}_{q}^{+}(z),t_{i\ell}^{-}(w)) &= \sum_{1\leq k\leq N} c_{jj}^{-1}c_{kk}\sigma(t_{jk}^{+}(z)t_{j'k'}^{+}(z\xi),t_{i\ell}^{-}(w))
  \, = \sum_{1 \le k,p \le N}c_{jj}^{-1}c_{kk}\sigma(t_{jk}^{+}(z),t_{ip}^{-}(w))\sigma(t_{j'k'}^{+}(z\xi),t_{p\ell}^{-}(w)) \\
  &= \sum_{1 \le k,p \le N} c_{jj}^{-1}c_{kk}\xi^{-2}v^{-2}(\wtd{R}_{q})^{ji}_{kp}(z/w)(\wtd{R}_{q})^{j'p}_{k'\ell}(z\xi/w)
   = \delta_{i \ell},
\end{align*}
where the last equality follows from the first identity of~\eqref{eq:affine_crossing_symmetries_1}. Similarly,
\begin{align*}
  \bar{\sigma}(\frak{z}_{q}^{+}(z),t_{i\ell}^{-}(w))
  &= \sum_{1 \le k,p \le N}c_{jj}^{-1}c_{kk}\bar{\sigma}(t_{j'k'}^{+}(z\xi),t_{ip}^{-}(w))\bar{\sigma}(t_{jk}^{+}(z),t_{p\ell}^{-}(w)) \\
  &= \sum_{1 \le k,p \le N}c_{jj}^{-1}c_{kk}\xi^{2}v^{2}
   (\wtd{R}_{q}^{-1})^{j'i}_{k'p}(z\xi/w)(\wtd{R}_{q}^{-1})^{jp}_{k\ell}(z/w) = \delta_{i \ell},
\end{align*}
where the last equality follows from the second identity of~\eqref{eq:affine_crossing_symmetries_1}.

One similarly uses~\eqref{eq:affine_crossing_symmetries_2} to show
$\sigma(t_{i\ell}^{+}(z),\frak{z}_{q}^{-}(w)) = \bar{\sigma}(t_{i\ell}^{+}(z),\frak{z}_{q}^{-}(w)) = \delta_{i\ell}$. This completes the proof.
\end{proof}

As a consequence of the above result, we can form the Drinfeld double $\cal{D}(\wtd{A}_{+}(\wtd{R}_{q}(z)),\wtd{A}_{-}(\wtd{R}_{q}(z));\sigma)$,
in the sense of~\cite[Definition 8.4]{KS}. Then we obtain the following result:

\begin{prop}\label{prop:RTT_drinfeld_double_affine}
There is a Hopf algebra isomorphism $\what{\psi}\colon \wtd{U}(\wtd{R}_{q}(z)) \to  \cal{D}(\wtd{A}_{+}(\wtd{R}_{q}(z)),\wtd{A}_{-}(\wtd{R}_{q}(z));\sigma)$
such that $\ell_{ij}^{+}[-m] \mapsto 1 \otimes t_{ij}^{+}[-m]$, $\ell_{ij}^{-}[m] \mapsto t_{ij}^{-}[m] \otimes 1$,
$\gamma^{\pm 1/2} \mapsto 1 \otimes \gamma_{+}^{\pm 1/2}$, and $(\gamma')^{\pm 1/2} \mapsto \gamma_{-}^{\pm 1/2} \otimes 1$.
\end{prop}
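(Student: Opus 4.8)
The plan is to follow the procedure of~\cite[Example 10.16]{KS}, running the argument behind the finite-type Proposition~\ref{prop:Drinfeld_double_RTT} in the present spectral-parameter setting. Recall that $\cal{D}(\wtd{A}_{+}(\wtd{R}_{q}(z)),\wtd{A}_{-}(\wtd{R}_{q}(z));\sigma)$ (in the sense of~\cite[Definition 8.4]{KS}) has underlying vector space $\wtd{A}_{+}(\wtd{R}_{q}(z)) \otimes \wtd{A}_{-}(\wtd{R}_{q}(z))$, contains $\wtd{A}_{\pm}(\wtd{R}_{q}(z))$ as subalgebras (as $\wtd{A}_{+}(\wtd{R}_{q}(z)) \otimes 1$ and $1 \otimes \wtd{A}_{-}(\wtd{R}_{q}(z))$), and has cross-multiplication between them governed by $\sigma$ and $\bar\sigma$ via the standard formula involving the coproducts $\Delta_{\pm}$ of~\eqref{eq:affine_A(R)_coproducts}. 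Since $\wtd{U}(\wtd{R}_{q}(z))$ is generated by $\wtd{U}_{\ge}(\wtd{R}_{q}(z))$ and $\wtd{U}_{\le}(\wtd{R}_{q}(z))$, it suffices to match these against the two halves of the double and then match the cross-relations.

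First I would check that $\what{\psi}$ is a well-defined algebra homomorphism. The relations of $\wtd{U}(\wtd{R}_{q}(z))$ involving only $\ell^{+}_{ij}(z)$ and $\gamma^{\pm 1/2}$ (the first line of~\eqref{eq:Affine_RLL} with sign $+$, together with~\eqref{eq:LC_affine},~\eqref{eq:B_extra_relation_affine},~\eqref{eq:zero_mode_relations}, and centrality of $\gamma^{\pm 1/2}$) are exactly the defining relations of $\wtd{A}_{+}(\wtd{R}_{q}(z))$ in the generators $t^{+}_{ij}[\mp m],\gamma_{+}^{\pm 1/2}$, hence are satisfied by their $\what{\psi}$-images in $1 \otimes \wtd{A}_{+}(\wtd{R}_{q}(z))$; symmetrically for the $-$ side. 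The one genuine point is that the cross-relation, i.e.\ the second line of~\eqref{eq:Affine_RLL}, is reproduced by the Drinfeld-double cross-multiplication. This is precisely the computation of~\cite[Example 10.16]{KS}: expanding $(1 \otimes t^{+}_{\bullet}(z))(t^{-}_{\bullet}(w) \otimes 1)$ with $\Delta_{\pm}$ of~\eqref{eq:affine_A(R)_coproducts} — whose fractional $\gamma_{\pm}$-shifts are exactly what produce the spectral shifts $z\gamma(\gamma\gamma')^{1/2}/w$ and $z\gamma'(\gamma\gamma')^{1/2}/w$ — together with the pairing values~\eqref{eq:sigma_generators},~\eqref{eq:sigma_bar_generators} and the Yang--Baxter equation~\eqref{eq:YBE_affine}, one recovers the second relation of~\eqref{eq:Affine_RLL}; the $\bar\sigma$-part of the formula is what makes the $\xi^{\pm 1}v^{\pm 1}$ prefactors in~\eqref{eq:sigma_generators},~\eqref{eq:sigma_bar_generators} cancel. (This is also the origin of the specific powers of $\gamma,\gamma'$ fixed in~\eqref{eq:Affine_RLL}, cf.\ Remark~\ref{rem:central-powers} and~\cite[(3.12)]{DF}.)

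Next I would produce the inverse. The same computation read backwards shows that the assignment $a \otimes b \mapsto \what{\psi}^{-1}(a)\,\what{\psi}^{-1}(b)$ on generators respects all defining relations of $\cal{D}(\wtd{A}_{+}(\wtd{R}_{q}(z)),\wtd{A}_{-}(\wtd{R}_{q}(z));\sigma)$ — the $\pm$ relations go over verbatim into $\wtd{U}_{\ge}(\wtd{R}_{q}(z))$ and $\wtd{U}_{\le}(\wtd{R}_{q}(z))$, and the double's cross-relation goes to the second line of~\eqref{eq:Affine_RLL} — hence defines an algebra homomorphism which is clearly a two-sided inverse to $\what{\psi}$. (Alternatively, surjectivity is immediate and injectivity follows by comparing the triangular-decomposition basis of $\wtd{U}(\wtd{R}_{q}(z))$ from Proposition~\ref{prop:RTT_crossed_products_affine} with the factorization $\cal{D} = \wtd{A}_{+}(\wtd{R}_{q}(z)) \otimes \wtd{A}_{-}(\wtd{R}_{q}(z))$ and analogous bases of $\wtd{A}_{\pm}(\wtd{R}_{q}(z))$.) Finally, the Hopf compatibility: the coproducts, counits, and antipodes on $\wtd{U}(\wtd{R}_{q}(z))$ were defined precisely so that $\Delta(\ell^{\pm}_{ij}(z))$ matches $\Delta_{\pm}(t^{\pm}_{ij}(z))$, $\epsilon$ matches $\epsilon_{\pm}$, and $S(L^{\pm}(z)) = (L^{\pm}(z\gamma^{\mp 2}))^{-1}$, $S(\gamma^{\pm 1/2}) = \gamma^{\mp 1/2}$ match $S_{\pm}$, while the Drinfeld double carries its standard Hopf structure restricting to those of $\wtd{A}_{\pm}(\wtd{R}_{q}(z))$ on the two subalgebras; checking on generators and extending by (anti)multiplicativity shows $\what{\psi}$ intertwines all Hopf data. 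The main obstacle is the single cross-relation identity of the first paragraph — keeping the $\gamma_{\pm}$-dependent spectral shifts and the $\xi,v$ normalization straight; once that is in place the remainder is bookkeeping parallel to the finite case.
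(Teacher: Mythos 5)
Your approach matches the paper's: both present $\cal{D}(\wtd{A}_{+}(\wtd{R}_{q}(z)),\wtd{A}_{-}(\wtd{R}_{q}(z));\sigma)$ by the RLL, TC, and zero-mode relations together with the Drinfeld-double cross-relation reduced to generators, and both observe that this cross-relation, after expanding via $\Delta_{\pm}$ and evaluating the $\sigma,\bar\sigma$ pairing values, reproduces the second line of~\eqref{eq:Affine_RLL}. The paper carries out the coefficient-extraction computation in full (which your sketch defers to the method of~\cite[Example 10.16]{KS}), but the structure of the argument is identical.
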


\begin{proof}
Note that $\cal{D}(\wtd{A}_{+}(\wtd{R}_{q}(z)),\wtd{A}_{-}(\wtd{R}_{q}(z));\sigma)$ is isomorphic to the algebra generated by
$t_{ij}^{\pm}[\mp m],\gamma_{\pm}^{1/2},\gamma_{\pm}^{-1/2}$ with the defining
relations~\eqref{eq:central_elements_A(R(z))},~\eqref{eq:RTT_affine},~\eqref{eq:TC_relations_affine},
$t_{ij}^{+}[0] = t_{ji}^{-}[0] = 0$ for $i < j$ (and the additional relation $t_{n+1,n+1}^{\pm}[0] = 1$ in type $B_{n}$),
and the relations (\cite[(22) of \S8.2]{KS})
\begin{equation}\label{eq:double_cross_relations}
  ab = \sum_{(a)(b)}b_{(2)}a_{(2)}\bar{\sigma}(a_{(1)},b_{(1)})\sigma(a_{(3)},b_{(3)}) \qquad
  \text{for all}\qquad a \in \wtd{A}_{+}(\wtd{R}_{q}(z)),\ b \in \wtd{A}_{-}(\wtd{R}_{q}(z)).
\end{equation}
In fact, since the coproducts $\Delta_{\pm}(t_{ij}^{\pm}[\mp m])$, $\Delta_{\pm}(\gamma_{\pm}^{1/2})$, $\Delta_{\pm}(\gamma_{\pm}^{-1/2})$
only involve elements $t_{k\ell}^{\pm}[\mp p]$, $\gamma_{\pm}^{1/2}$, $\gamma_{\pm}^{-1/2}$, it is enough to impose the
relations~\eqref{eq:double_cross_relations} only for $a \in \{t_{ij}^{+}[-m] \,|\, 1 \le i,j \le N,\, m \in \bb{Z}_{\ge 0}\}$ and
$b \in \{t_{ij}^{-}[m] \,|\, 1 \le i,j \le N,\, m \in \bb{Z}_{\ge 0}\}$. Thus, it is enough to show that
relations~\eqref{eq:double_cross_relations} for $a=t_{ij}^{+}[-m], b=t_{k\ell}^{-}[p]$ (with $1\leq i,j,k,\ell\leq N$ and $m,p\geq 0$)
are equivalent to the second line of~\eqref{eq:Affine_RLL}. For this, we first note that the second and third equalities
in~\eqref{eq:sigma_generators} and~\eqref{eq:sigma_bar_generators} imply that
$\sigma(u_{+}\gamma_{+}^{d/2},u_{-}\gamma_{-}^{d'/2}) = \sigma(u_{+},u_{-})$ and
$\bar{\sigma}(u_{+}\gamma_{+}^{d/2},u_{-}\gamma_{-}^{d'/2}) = \sigma(u_{+},u_{-})$ for all $d,d' \in \bb{Z}$
and $u_{\pm} \in \wtd{A}_{\pm}(\wtd{R}_{q}(z))$. Then since
\[
  (1 \otimes \Delta_{+})\Delta_{+}(t_{ij}^{+}[-m])
  = \sum^{\substack{d_{1},d_{2},d_{3}\geq 0 \\ d_{1} + d_{2} + d_{3} = m}}_{1 \le k,\ell \le N}
    t_{ik}^{+}[-d_{1}]\gamma_{+}^{(d_{2} + d_{3})/2} \otimes
    t_{k\ell}^{+}[-d_{2}]\gamma_{+}^{(3d_{1} + d_{3})/2} \otimes t_{\ell j}^{+}[-d_{3}]\gamma_{+}^{3(d_{1} + d_{2})/2}
\]
and
\[
  (1 \otimes \Delta_{-})\Delta_{-}(t_{ij}^{-}[m])
  = \sum^{\substack{d_{1},d_{2},d_{3}\geq 0 \\ d_{1} + d_{2} + d_{3} = m}}_{1 \le k,\ell \le N}
    t_{ik}^{-}[d_{1}]\gamma_{-}^{3(d_{2} + d_{3})/2} \otimes
    t_{k\ell}^{-}[d_{2}]\gamma_{-}^{(d_{1} + 3d_{3})/2} \otimes t_{\ell j}^{-}[d_{3}]\gamma_{-}^{(d_{1} + d_{2})/2},
\]
we find that the relation~\eqref{eq:double_cross_relations} with $a = t_{ij}^{+}[-m_{1}]$ and $b = t_{k\ell}^{-}[m_{2}]$ becomes
\begin{align*}
  t_{ij}^{+}[-m_{1}]t_{k\ell}^{-}[m_{2}]
  = \sum t_{p_{2}h_{2}}^{-}[d_{2}']\gamma_{-}^{(d_{1}' + 3d_{3}')/2}t_{p_{1}h_{1}}^{+}[-d_{2}]\gamma_{+}^{(3d_{1} + d_{3})/2}
    \bar{\sigma}(t_{ip_{1}}^{+}[-d_{1}],t_{kp_{2}}^{-}[d_{1}'])\sigma(t_{h_{1}j}^{+}[-d_{3}],t_{h_{2}\ell}^{-}[d_{3}']),
\end{align*}
where the sum above ranges over all $1 \le p_{1},h_{1},p_{2},h_{2} \le N$ and $d_{1},d_{1}',d_{2},d_{2}',d_{3},d_{3}' \ge 0$ with
$d_{1} + d_{2} + d_{3} = m_{1}$ and $d_{1}' + d_{2}' + d_{3}' = m_{2}$. Using the first equalities in~\eqref{eq:sigma_generators}
and~\eqref{eq:sigma_bar_generators}, the above relation is equivalent to
\[
  t_{ij}^{+}[-m_{1}]t_{k\ell}^{-}[m_{2}]
  = \sum t_{p_{2}h_{2}}^{-}[d_{2}']t_{p_{1}h_{1}}^{+}[-d_{2}](\gamma_{-}^{3/2}\gamma_{+}^{1/2})^{d_{3}}(\gamma_{+}^{3/2}\gamma_{-}^{1/2})^{d_{1}}
    (\wtd{R}_{q}^{-1})^{ik}_{p_{1}p_{2}}[-d_{1}](\wtd{R}_{q})^{h_{1}h_{2}}_{j\ell}[-d_{3}],
\]
where the sum is over all $d_1,d_3\geq 0$ such that $d_{2}=m_1-d_1-d_3, d'_2=m_2-d_1-d_3$ are both $\geq 0$.  One can easily check that the
right-hand side of the equality above is precisely the $z^{m_{1}}w^{-m_{2}}E_{ij} \otimes E_{k \ell}$-coefficient of
\[
  \wtd{R}_{q}(z\gamma_{+}(\gamma_{+}\gamma_{-})^{1/2}/w)^{-1}T_{2}^{-}(w)T_{1}^{+}(z)\wtd{R}_{q}(z\gamma_{-}(\gamma_{+}\gamma_{-})^{1/2}/w).
\]
This completes the proof.
\end{proof}

Combining Propositions~\ref{prop:RTT_crossed_products_affine} and~\ref{prop:RTT_drinfeld_double_affine}, we immediately obtain the following result:

\begin{theorem}
The multiplication map
  $\wtd{U}_{-}(\wtd{R}_{q}(z)) \otimes \wtd{U}_{0,-}(\wtd{R}_{q}(z)) \otimes \wtd{U}_{0,+}(\wtd{R}_{q}(z)) \otimes \wtd{U}_{+}(\wtd{R}_{q}(z))
   \to \wtd{U}(\wtd{R}_{q}(z))$
is an isomorphism of vector spaces.
\end{theorem}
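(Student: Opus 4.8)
The plan is to follow the strategy already used in the finite case to prove Theorem~\ref{thm:quadrilangular_decomposition}, namely to combine the affine Drinfeld double realization of Proposition~\ref{prop:RTT_drinfeld_double_affine} with the affine crossed product decompositions of Proposition~\ref{prop:RTT_crossed_products_affine}.

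First I would use Proposition~\ref{prop:RTT_drinfeld_double_affine}. The subalgebra $\wtd{U}_{\le}(\wtd{R}_{q}(z))$ is generated by $(\gamma')^{\pm 1/2}$ together with the $\ell_{ij}^{-}[m]$, and $\wtd{U}_{\ge}(\wtd{R}_{q}(z))$ is generated by $\gamma^{\pm 1/2}$ together with the $\ell_{ij}^{+}[-m]$; by the explicit formulas in that proposition, the Hopf algebra isomorphism $\what{\psi}$ carries these subalgebras onto $\wtd{A}_{-}(\wtd{R}_{q}(z)) \otimes 1$ and $1 \otimes \wtd{A}_{+}(\wtd{R}_{q}(z))$, respectively. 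Since the multiplication map $\wtd{A}_{-}(\wtd{R}_{q}(z)) \otimes \wtd{A}_{+}(\wtd{R}_{q}(z)) \to \cal{D}(\wtd{A}_{+}(\wtd{R}_{q}(z)),\wtd{A}_{-}(\wtd{R}_{q}(z));\sigma)$, $a_{-} \otimes a_{+} \mapsto (a_{-} \otimes 1)(1 \otimes a_{+})$, is a linear isomorphism by the construction of the Drinfeld double in~\cite[Definition 8.4]{KS}, transporting it back along $\what{\psi}$ shows that the multiplication map $\wtd{U}_{\le}(\wtd{R}_{q}(z)) \otimes \wtd{U}_{\ge}(\wtd{R}_{q}(z)) \to \wtd{U}(\wtd{R}_{q}(z))$ is a vector space isomorphism.

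Next I would invoke Proposition~\ref{prop:RTT_crossed_products_affine}: the isomorphism $\what{\psi}_{-}$ identifies $\wtd{U}_{\le}(\wtd{R}_{q}(z))$ with $\wtd{U}_{-}(\wtd{R}_{q}(z)) \rtimes_{v} \wtd{U}_{0,-}(\wtd{R}_{q}(z))$, which as a vector space is $\wtd{U}_{-}(\wtd{R}_{q}(z)) \otimes \wtd{U}_{0,-}(\wtd{R}_{q}(z))$; from the form of its inverse $u \otimes h \mapsto uh$ recorded in the proof of Proposition~\ref{prop:RTT_crossed_products_affine} one reads off that the multiplication map $\wtd{U}_{-}(\wtd{R}_{q}(z)) \otimes \wtd{U}_{0,-}(\wtd{R}_{q}(z)) \to \wtd{U}_{\le}(\wtd{R}_{q}(z))$ is a vector space isomorphism, and likewise $\what{\psi}_{+}$ yields that the multiplication map $\wtd{U}_{0,+}(\wtd{R}_{q}(z)) \otimes \wtd{U}_{+}(\wtd{R}_{q}(z)) \to \wtd{U}_{\ge}(\wtd{R}_{q}(z))$ is a vector space isomorphism. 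Then the multiplication map of the statement factors as
\[
  \wtd{U}_{-}(\wtd{R}_{q}(z)) \otimes \wtd{U}_{0,-}(\wtd{R}_{q}(z)) \otimes \wtd{U}_{0,+}(\wtd{R}_{q}(z)) \otimes \wtd{U}_{+}(\wtd{R}_{q}(z))
  \longrightarrow \wtd{U}_{\le}(\wtd{R}_{q}(z)) \otimes \wtd{U}_{\ge}(\wtd{R}_{q}(z))
  \longrightarrow \wtd{U}(\wtd{R}_{q}(z)),
\]
a composition of vector space isomorphisms, hence is itself one.

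The one point that needs care — and the step I expect to be the main, if still routine, obstacle — is to confirm that all of the structural maps above ($\what{\psi}$, $\what{\psi}_{\pm}$, and the canonical isomorphisms attached to the Drinfeld double and to the crossed products) are genuinely realized by honest multiplication inside $\wtd{U}(\wtd{R}_{q}(z))$, so that their composite reassembles precisely into the multiplication map of the theorem rather than a twisted variant. This is handled exactly as in the finite case: $\what{\psi}$ and $\what{\psi}_{\pm}$ are "the identity on generators up to the natural embeddings," and the commutation relations~\eqref{eq:RTT_q_commute_affine} governing the interaction of $\wtd{U}_{0,\pm}(\wtd{R}_{q}(z))$ with $\wtd{U}_{\pm}(\wtd{R}_{q}(z))$ ensure that the displayed factorization holds on the nose. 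No computation beyond those already carried out for Theorem~\ref{thm:quadrilangular_decomposition} is required.
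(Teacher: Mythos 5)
Your proof is correct and is exactly the route the paper takes: it obtains the statement by combining Proposition~\ref{prop:RTT_drinfeld_double_affine} (the Drinfeld double realization, giving the isomorphism $\wtd{U}_{\le}(\wtd{R}_{q}(z)) \otimes \wtd{U}_{\ge}(\wtd{R}_{q}(z)) \iso \wtd{U}(\wtd{R}_{q}(z))$ via multiplication) with Proposition~\ref{prop:RTT_crossed_products_affine} (the crossed-product decompositions of $\wtd{U}_{\le}$ and $\wtd{U}_{\ge}$), mirroring the finite-case Theorem~\ref{thm:quadrilangular_decomposition}. Your closing remark about the factorization being realized by honest multiplication is the right point of care, and it indeed goes through by~\eqref{eq:RTT_q_commute_affine} just as in the finite case.
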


As with Theorem~\ref{thm:quadrilangular_decomposition} in the finite case, the above result has a number of consequences, analogous to
Corollaries~\ref{cor:triangular_decomposition_RTT}--\ref{cor:positive_negative_RTT} and Proposition~\ref{prop:triangular_decomposition_RTT'}.
Since the proofs in the affine case are essentially the same as the proofs of the aforementioned results, we shall just record the statements:

\begin{cor}\label{cor:triangular_decomposition_affine_RTT}
The multiplication map $\wtd{U}_{-}(\wtd{R}_{q}(z)) \otimes \wtd{U}_{0}(\wtd{R}_{q}(z)) \otimes \wtd{U}_{+}(\wtd{R}_{q}(z)) \to \wtd{U}(\wtd{R}_{q}(z))$
is an isomorphism of vector spaces.
\end{cor}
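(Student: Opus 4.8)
The plan is to follow the proof of Corollary~\ref{cor:triangular_decomposition_RTT} essentially line by line, replacing Theorem~\ref{thm:quadrilangular_decomposition} by the quadrilangular decomposition recorded in the Theorem immediately above. Since the multiplication map $\wtd{U}_{-}(\wtd{R}_{q}(z)) \otimes \wtd{U}_{0,-}(\wtd{R}_{q}(z)) \otimes \wtd{U}_{0,+}(\wtd{R}_{q}(z)) \otimes \wtd{U}_{+}(\wtd{R}_{q}(z)) \to \wtd{U}(\wtd{R}_{q}(z))$ is already known to be a linear isomorphism, the corollary reduces to showing that the multiplication map $m_{0}\colon \wtd{U}_{0,-}(\wtd{R}_{q}(z)) \otimes \wtd{U}_{0,+}(\wtd{R}_{q}(z)) \to \wtd{U}_{0}(\wtd{R}_{q}(z))$ is a linear isomorphism; composing with the quadrilangular decomposition then yields the assertion.

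Injectivity of $m_{0}$ is immediate, as the composite $\bb{K} \otimes \wtd{U}_{0,-}(\wtd{R}_{q}(z)) \otimes \wtd{U}_{0,+}(\wtd{R}_{q}(z)) \otimes \bb{K} \hookrightarrow \wtd{U}(\wtd{R}_{q}(z))$ is injective by the quadrilangular decomposition and its image lies in $\wtd{U}_{0}(\wtd{R}_{q}(z))$. For surjectivity, I would observe that $\wtd{U}_{0}(\wtd{R}_{q}(z))$ is generated by the central elements $\gamma^{\pm 1/2}, (\gamma')^{\pm 1/2}$, which cause no trouble, together with the $\ell_{ii}^{\pm}[0]^{\pm 1}$; moreover, by the affine analogue of~\eqref{eq:diagonal_reflection} (which follows from the zero-mode part of the relations~\eqref{eq:LC_explicit_affine_2} exactly as before) one has $\ell_{ii}^{\pm}[0] = (\ell_{i'i'}^{\pm}[0])^{-1}$. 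Hence surjectivity of $m_{0}$ amounts precisely to the commutativity $\ell_{ii}^{+}[0]\,\ell_{jj}^{-}[0] = \ell_{jj}^{-}[0]\,\ell_{ii}^{+}[0]$ for all $1 \le i,j \le N$, and by this reflection identity we may assume $j \ne i'$.

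To establish this commutativity I would extract the $z^{0}w^{0}$-component of the second relation in~\eqref{eq:Affine_RLL}. Because $f(0) = 1$, the constant term of $\wtd{R}_{q}(z)$ equals $R_{q}$, and each spectral shift occurring there is (a power of $z/w$) times a central unit built from $\gamma^{1/2},(\gamma')^{1/2}$; such shifts act as the identity on the constant term, so the zero-mode part of the relation is literally the finite cross relation~\eqref{eq:RLL_cross_relations} among the elements $\{\ell_{ij}^{\pm}[0]\}_{i,j=1}^{N}$. Taking its $(m,p) = (i,j)$ component and invoking $(R_{q})^{ij}_{k\ell} = 0$ unless $\{i,j\} = \{k,\ell\}$, together with $\ell_{ij}^{+}[0] = \ell_{ji}^{-}[0] = 0$ for $i<j$ (relations~\eqref{eq:zero_mode_relations}) and $(R_{q})^{ij}_{ij} \ne 0$, then forces $\ell_{ii}^{+}[0]\ell_{jj}^{-}[0] = \ell_{jj}^{-}[0]\ell_{ii}^{+}[0]$, exactly as in the proof of Corollary~\ref{cor:triangular_decomposition_RTT}. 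The only step deserving a little care --- and the one I would flag as the (mild) main obstacle --- is the bookkeeping needed to confirm that the $\gamma^{1/2},(\gamma')^{1/2}$-dependent spectral shifts really drop out upon restriction to zero modes; once that is checked, the remainder is a verbatim transcription of the finite argument.
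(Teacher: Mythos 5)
Your proposal is correct and follows exactly the route the paper intends: the paper's own ``proof'' of this Corollary is the single line ``Since the proofs in the affine case are essentially the same as the proofs of the aforementioned results, we shall just record the statements,'' deferring to the finite Corollary~\ref{cor:triangular_decomposition_RTT}, and your write-up is precisely the verification of that claim (reduce via Theorem preceding the Corollary to the statement about $\wtd{U}_{0,-}\otimes\wtd{U}_{0,+}\to\wtd{U}_{0}$, get injectivity for free, and derive surjectivity from the commutativity of $\ell_{ii}^{+}[0]$ and $\ell_{jj}^{-}[0]$ plus centrality of $\gamma^{1/2},(\gamma')^{1/2}$).

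One small imprecision in your step~4, which does not affect the conclusion but is worth tightening: you claim that ``the constant term of $\wtd{R}_{q}(z)$ equals $R_{q}$'' because $f(0)=1$. In fact $f(0)=1$ only gives $\wtd{R}_{q}(0)=R_{q}(0)$, and from the Baxterization formulas~\eqref{eq:Baxterization-B}--\eqref{eq:Baxterization-D} one has $\hat{R}_{q}(0)=\xi v\,\hat{R}_{q}$ (with $\xi v = q^{-4n}, q^{-2n-3}, q^{-2n+1}$ in types $B_{n}, C_{n}, D_{n}$ respectively), so $\wtd{R}_{q}(0)=\xi v\, R_{q}$ is a nonzero scalar multiple of $R_q$, not $R_q$ itself. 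Since this scalar appears on both sides of the zero-mode cross relation $\wtd{R}_{q}[0]\, L_{1}^{+}[0]L_{2}^{-}[0]=L_{2}^{-}[0]L_{1}^{+}[0]\,\wtd{R}_{q}[0]$, it cancels and you still recover~\eqref{eq:RLL_cross_relations} verbatim, so the rest of your argument is fine. Your remaining observations---that the zero-mode relation sees no contribution from the $\gamma,\gamma'$-dependent spectral shift (because extracting the $z^{0}w^{0}$-coefficient forces the $R$-matrix mode index $d$ to vanish), and that the affine $\ell_{i'i'}^{\pm}[0]\ell_{ii}^{\pm}[0]=1$ follows from~\eqref{eq:LC_explicit_affine_2} together with the zero-mode triangularity~\eqref{eq:zero_mode_relations} exactly as in the finite case---are both correct.
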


\begin{cor}\label{cor:doubled_cartan_subalgebra_affine_RTT}
There is an isomorphism from the Laurent polynomial algebra $\bb{K}[x_{0}^{\pm 1},\ldots ,x_{n}^{\pm 1},y_{0}^{\pm 1},\ldots ,y_{n}^{\pm 1}]$ to
$\wtd{U}_{0}(\wtd{R}_{q}(z))$ given by $x_{0} \mapsto \gamma^{1/2}$, $y_{0} \mapsto (\gamma')^{1/2}$, and
$x_{i} \mapsto \ell_{ii}^{+}[0]$, $y_{i} \mapsto \ell_{ii}^{-}[0]$ for all $1 \le i \le n$.
\end{cor}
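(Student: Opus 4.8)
The plan is to follow the proof of Corollary~\ref{cor:double_cartan_subalgebra_RTT} from the finite case, with the Laurent polynomial ring enlarged by the two extra variables $x_{0},y_{0}$ corresponding to the central elements $\gamma^{1/2},(\gamma')^{1/2}$. First I would record that $\wtd{U}_{0}(\wtd{R}_{q}(z))$ is commutative. The elements $\gamma^{\pm 1/2},(\gamma')^{\pm 1/2}$ are central by definition, while the commutativity of the zero modes $\ell_{ii}^{\pm}[0]$ among themselves and with $\ell_{jj}^{\mp}[0]$ is obtained by extracting the $z^{0}w^{0}$-component of the relations~\eqref{eq:Affine_RLL}, using that the constant term $\wtd{R}_{q}(0)$ is a nonzero scalar multiple of $R_{q}$ (so that $(\wtd{R}_{q})^{ij}_{k\ell}[0]=0$ unless $\{i,j\}=\{k,\ell\}$ or $i=j',k=\ell'$), together with $\ell_{ij}^{+}[0]=\ell_{ji}^{-}[0]=0$ for $i<j$; this is exactly the computation carried out in Lemma~\ref{lem:cartan_subalgebras_RTT} and Corollary~\ref{cor:triangular_decomposition_RTT}, and is in any case part of the content of the triangular decompositions recorded above (in particular Corollary~\ref{cor:triangular_decomposition_affine_RTT}).

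Given commutativity of $\wtd{U}_{0}(\wtd{R}_{q}(z))$ together with the invertibility of $\gamma^{\pm 1/2},(\gamma')^{\pm 1/2},\ell_{ii}^{+}[0],\ell_{ii}^{-}[0]$, the universal property of the Laurent polynomial algebra yields a well-defined algebra homomorphism
\[
  g\colon \bb{K}[x_{0}^{\pm 1},\ldots ,x_{n}^{\pm 1},y_{0}^{\pm 1},\ldots ,y_{n}^{\pm 1}]\longrightarrow \wtd{U}_{0}(\wtd{R}_{q}(z))
\]
with $g(x_{0})=\gamma^{1/2}$, $g(y_{0})=(\gamma')^{1/2}$, $g(x_{i})=\ell_{ii}^{+}[0]$, $g(y_{i})=\ell_{ii}^{-}[0]$ for $1\le i\le n$. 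To see that $g$ is bijective, I would argue as in the finite case. By the quadrilangular decomposition (the unlabeled vector-space isomorphism stated just before Corollary~\ref{cor:triangular_decomposition_affine_RTT}), restricting the multiplication map to the outer tensor factors $\bb{K}\subseteq \wtd{U}_{-}(\wtd{R}_{q}(z))$ and $\bb{K}\subseteq \wtd{U}_{+}(\wtd{R}_{q}(z))$ shows that $\mu\colon \wtd{U}_{0,-}(\wtd{R}_{q}(z))\otimes \wtd{U}_{0,+}(\wtd{R}_{q}(z))\to \wtd{U}_{0}(\wtd{R}_{q}(z))$ is injective; it is also surjective, since commutativity of $\wtd{U}_{0}(\wtd{R}_{q}(z))$ forces $\wtd{U}_{0}(\wtd{R}_{q}(z))=\wtd{U}_{0,-}(\wtd{R}_{q}(z))\cdot\wtd{U}_{0,+}(\wtd{R}_{q}(z))$, which is the step carried out in the proof of Corollary~\ref{cor:triangular_decomposition_RTT}. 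Composing $\mu$ with the isomorphisms $\bb{K}[y_{0}^{\pm 1},\ldots ,y_{n}^{\pm 1}]\iso \wtd{U}_{0,-}(\wtd{R}_{q}(z))$ and $\bb{K}[x_{0}^{\pm 1},\ldots ,x_{n}^{\pm 1}]\iso \wtd{U}_{0,+}(\wtd{R}_{q}(z))$ from the unlabeled Lemma preceding Lemma~\ref{lem:borel_spanning_sets_affine}, and identifying the source of $g$ with $\bb{K}[y_{0}^{\pm 1},\ldots ,y_{n}^{\pm 1}]\otimes\bb{K}[x_{0}^{\pm 1},\ldots ,x_{n}^{\pm 1}]$, one checks that $g$ carries the monomial basis of the Laurent polynomial ring onto a basis of $\wtd{U}_{0}(\wtd{R}_{q}(z))$. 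Hence $g$ is an algebra isomorphism.

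I do not expect a genuine obstacle here: all needed inputs are already available. The only mild subtlety is the bookkeeping of the tensor-factor order, since the quadrilangular decomposition lists $\wtd{U}_{0,-}$ before $\wtd{U}_{0,+}$ whereas $g$ as stated lists the $x$-variables (which map into $\wtd{U}_{0,+}$) first; but commutativity of $\wtd{U}_{0}(\wtd{R}_{q}(z))$ makes the multiplication map symmetric in its two arguments at the level of images, so this is harmless.
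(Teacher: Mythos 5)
Your proposal is correct and mirrors the paper's intended argument: the paper states that this corollary follows as in the finite case (Corollary~\ref{cor:double_cartan_subalgebra_RTT}), and your proof is exactly that finite-case argument transposed to the affine setting, with the two extra variables for $\gamma^{1/2},(\gamma')^{1/2}$. The one additional step you correctly supply — that commutativity of the zero modes follows from the $z^0w^0$-component of~\eqref{eq:Affine_RLL} because $\wtd{R}_{q}(0)$ is a nonzero scalar multiple of $R_{q}$ — is precisely what makes the finite-case computation carry over.
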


\begin{cor}\label{cor:positive_negative_affine_RTT}
There are algebra isomorphisms $\wtd{U}_{\pm}(\wtd{R}_{q}(z)) \to \wtd{U}_{\pm}'(\wtd{R}_{q}(z))$ given by
$X_{ij}^{\pm}[\mp m] \mapsto X_{ij}^{\pm}[\mp m]$ for all $1 \le i,j \le N$ and $m \in \bb{Z}_{\ge 0}$.
\end{cor}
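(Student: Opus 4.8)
The plan is to mirror the proof of the finite-type statement Corollary~\ref{cor:positive_negative_RTT}, with the affine triangular decomposition of Corollary~\ref{cor:triangular_decomposition_affine_RTT} and the commutation relations~\eqref{eq:RTT_q_commute_affine} replacing their finite analogues. Write $\wtd{U}'(\wtd{R}_{q}(z)) = \wtd{U}(\wtd{R}_{q}(z))/J$, where $J$ is the two-sided ideal generated by $\gamma^{1/2} - (\gamma')^{-1/2}$ and by $\ell_{ii}^{+}[0] - \ell_{ii}^{-}[0]^{-1}$ for $1 \le i \le N$ (all of these lying in the Cartan subalgebra $\wtd{U}_{0}(\wtd{R}_{q}(z))$), let $J_{0} \subseteq \wtd{U}_{0}(\wtd{R}_{q}(z))$ be the ideal they generate there, and put $J' = \mu\big(\wtd{U}_{-}(\wtd{R}_{q}(z)) \otimes J_{0} \otimes \wtd{U}_{+}(\wtd{R}_{q}(z))\big)$. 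Once we show $J = J'$, the corollary follows: Corollary~\ref{cor:triangular_decomposition_affine_RTT} gives $J \cap \wtd{U}_{\pm}(\wtd{R}_{q}(z)) = \mu\big(\bb{K} \otimes (J_{0}\cap \bb{K}) \otimes \wtd{U}_{+}(\wtd{R}_{q}(z))\big) = 0$, where $J_{0}\cap\bb{K} = 0$ because, by Corollary~\ref{cor:doubled_cartan_subalgebra_affine_RTT}, $\wtd{U}_{0}(\wtd{R}_{q}(z))$ is a Laurent polynomial ring in which $J_{0}$ (cutting out the subtorus $\gamma^{1/2} = (\gamma')^{-1/2}$, $\ell_{ii}^{+}[0] = \ell_{ii}^{-}[0]^{-1}$) is a proper ideal. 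Hence $\wtd{U}_{\pm}'(\wtd{R}_{q}(z))$, being the image of $\wtd{U}_{\pm}(\wtd{R}_{q}(z))$ under the quotient map, equals $\wtd{U}_{\pm}(\wtd{R}_{q}(z))/(J \cap \wtd{U}_{\pm}(\wtd{R}_{q}(z))) \cong \wtd{U}_{\pm}(\wtd{R}_{q}(z))$, with $X_{ij}^{\pm}[\mp m] \mapsto X_{ij}^{\pm}[\mp m]$.

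Next I would prove $J = J'$. The inclusions $\{\gamma^{1/2} - (\gamma')^{-1/2}\} \cup \{\ell_{ii}^{+}[0] - \ell_{ii}^{-}[0]^{-1}\}_{i=1}^{N} \subset J' \subseteq J$ are clear, so it suffices to check that $J'$ is a two-sided ideal. As in the finite case, Corollary~\ref{cor:triangular_decomposition_affine_RTT} lets one rewrite any product $u\cdot(w_{-}h_{0}w_{+})$ with $w_{\mp}\in\wtd{U}_{\mp}(\wtd{R}_{q}(z))$, $h_{0}\in J_{0}$ in triangular form, reducing the claim $uJ' \subseteq J'$ to $u_{+}J' \subseteq J'$ for $u_{+} \in \wtd{U}_{+}(\wtd{R}_{q}(z))$ (the $\wtd{U}_{0}$- and $\wtd{U}_{-}$-parts of $u$ being absorbed via~\eqref{eq:RTT_q_commute_affine}, the centrality of $\gamma^{\pm 1/2},(\gamma')^{\pm 1/2}$, and the fact that $J_{0}$ is an ideal of $\wtd{U}_{0}(\wtd{R}_{q}(z))$). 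For $u_{+}J' \subseteq J'$ one needs: (i) $u_{+}(\gamma^{1/2} - (\gamma')^{-1/2}) = (\gamma^{1/2} - (\gamma')^{-1/2})u_{+}$, immediate from centrality; and (ii) for $u_{+} \in \wtd{U}_{+}(\wtd{R}_{q}(z))_{\lambda,\lambda'}$,
\[
  u_{+}\big(\ell_{ii}^{+}[0] - \ell_{ii}^{-}[0]^{-1}\big) = v^{-(\varepsilon_{i},\,\lambda+\lambda')}\big(\ell_{ii}^{+}[0] - \ell_{ii}^{-}[0]^{-1}\big)u_{+},
\]
with $v = q^{-2}$ in type $B_{n}$ and $v = q^{-1}$ in types $C_{n},D_{n}$. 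The first term in (ii) is exactly~\eqref{eq:RTT_q_commute_affine}; for the second one needs the affine cross-commutation relation
\[
  \ell_{ii}^{-}[0]\,\ell_{kj}^{+}[-m] = v^{\delta_{ik} - \delta_{ij}}\,\ell_{kj}^{+}[-m]\,\ell_{ii}^{-}[0]\qquad \text{for all}\qquad k \neq j,\ i \neq k',j',
\]
whence, after combining with the affine analogue of~\eqref{eq:diagonal_reflection}, one obtains $\ell_{\mu}^{-}[0]\,u = v^{-(\mu,\,\lambda+\lambda')}u\,\ell_{\mu}^{-}[0]$ for $u \in \wtd{U}_{\ge}(\wtd{R}_{q}(z))_{\lambda,\lambda'}$, the affine counterpart of~\eqref{eq:RTT_q_commute_cross}. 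The symmetric verification $J'u_{+} \subseteq J'$ then completes the proof that $J'$ is an ideal.

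The main obstacle — and the only genuinely new computation — is establishing that cross-commutation relation. The key point is that one extracts it from the zero-mode component of the second line of~\eqref{eq:Affine_RLL}: since $\gamma,\gamma'$ are central, the spectral shifts $z\gamma(\gamma\gamma')^{1/2}/w$ and $z\gamma'(\gamma\gamma')^{1/2}/w$ contribute nothing to the constant term in $z,w$, so the $z^{0}w^{0}$-coefficient of that relation reduces to precisely the finite-type relation~\eqref{eq:RLL_cross_relations} with $(\wtd{R}_{q})^{ab}_{cd}[0]$ a fixed nonzero multiple of $(R_{q})^{ab}_{cd}$. The same case analysis on the entries of $R_{q}$ — together with $\ell_{ij}^{+}[0] = \ell_{ji}^{-}[0] = 0$ for $i<j$ — as in the proofs of Lemma~\ref{lem:borel_spanning_sets} (cf.\ also Lemma~\ref{lem:borel_spanning_sets_affine}) and Corollary~\ref{cor:positive_negative_RTT} then yields the claimed relation; in type $B_{n}$, the extra relation~\eqref{eq:B_extra_relation_affine} plays the same bookkeeping role for the middle index as~\eqref{eq:rel_B_extra} does in the finite case. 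With this in hand, everything above goes through, giving $J = J'$ and completing the proof.
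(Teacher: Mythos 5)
Your plan correctly mirrors the finite-type proof of Corollary~\ref{cor:positive_negative_RTT}, which is exactly what the paper intends when it says the affine proofs are ``essentially the same.'' Two small inaccuracies deserve repair. First, the cross-commutation relation $\ell_{ii}^{-}[0]\,\ell_{kj}^{+}[-m] = v^{\delta_{ik} - \delta_{ij}}\,\ell_{kj}^{+}[-m]\,\ell_{ii}^{-}[0]$ must hold for \emph{all} $m \ge 0$, but the $z^{0}w^{0}$-coefficient of the mixed relation in~\eqref{eq:Affine_RLL} only yields the $m=0$ case. You should instead take the $z^{m}w^{0}$-coefficient: since $L_{1}^{+}(z)$ carries only nonnegative powers of $z$, $L_{2}^{-}(w)$ only nonpositive powers of $w$, and $\wtd{R}_{q}(z\,\cdot/w)$ contributes only powers of $(z/w)^{k}$ with $k\ge 0$, the $z^{m}w^{0}$-coefficient forces $k=0$ and picks out $\wtd{R}_{q}[0]\,L_{1}^{+}[-m]\,L_{2}^{-}[0] = L_{2}^{-}[0]\,L_{1}^{+}[-m]\,\wtd{R}_{q}[0]$, which has exactly the finite form since $\wtd{R}_{q}[0]$ is a nonzero multiple of $R_{q}$; your case analysis then proceeds verbatim. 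Second, the closing sentence should read ``the symmetric verification $J'u_{-} \subseteq J'$ for $u_{-} \in \wtd{U}_{-}(\wtd{R}_{q}(z))$,'' since right-ideal closure reduces (by the triangular decomposition) to right multiplication by $\wtd{U}_{-}(\wtd{R}_{q}(z))$, not by $\wtd{U}_{+}(\wtd{R}_{q}(z))$. With these corrections, the argument is complete and agrees with the paper's (unwritten) proof.
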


\begin{prop}\label{prop:triangular_decomposition_affine_RTT'}
The multiplication map $\wtd{U}_{-}'(\wtd{R}_{q}(z)) \otimes \wtd{U}_{0}'(\wtd{R}_{q}(z)) \otimes \wtd{U}_{+}'(\wtd{R}_{q}(z)) \to \wtd{U}'(\wtd{R}_{q}(z))$
is an isomorphism of vector spaces. Moreover, if $J_{0}$ is the ideal of $\wtd{U}_{0}(\wtd{R}_{q}(z))$ generated by
$\{\ell_{ii}^{+}[0] - \ell_{ii}^{-}[0]^{-1}\}_{i = 1}^{n} \cup \{(\gamma')^{1/2} - \gamma^{-1/2}\}$,
then we have $\wtd{U}_{0}'(\wtd{R}_{q}(z)) = \wtd{U}_{0}(\wtd{R}_{q}(z))/J_{0}$.
\end{prop}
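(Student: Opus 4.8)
The plan is to imitate the finite-type arguments of Corollary~\ref{cor:positive_negative_RTT} and Proposition~\ref{prop:triangular_decomposition_RTT'}, using the affine triangular decomposition of Corollary~\ref{cor:triangular_decomposition_affine_RTT} in place of its finite-type counterpart Corollary~\ref{cor:triangular_decomposition_RTT}. The crux is to prove the affine analogue of Corollary~\ref{cor:positive_negative_RTT}, i.e.\ that the quotient map $\wtd{U}(\wtd{R}_{q}(z)) \twoheadrightarrow \wtd{U}'(\wtd{R}_{q}(z))$ restricts to isomorphisms on the nilpotent and Cartan subalgebras. Write $\wtd{U}'(\wtd{R}_{q}(z)) = \wtd{U}(\wtd{R}_{q}(z))/J$, where $J$ is the two-sided ideal generated by $\{\ell_{ii}^{+}[0] - \ell_{ii}^{-}[0]^{-1}\}_{i=1}^{n} \cup \{(\gamma')^{1/2} - \gamma^{-1/2}\}$ (the relations with index $i > n$ being redundant via $\ell_{ii}^{\pm}[0]^{-1} = \ell_{i'i'}^{\pm}[0]$ and, in type $B_{n}$, via~\eqref{eq:B_extra_relation_affine}). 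It then suffices to show that $J = J' := \mu\big(\wtd{U}_{-}(\wtd{R}_{q}(z)) \otimes J_{0} \otimes \wtd{U}_{+}(\wtd{R}_{q}(z))\big)$, where $\mu$ is the multiplication map of Corollary~\ref{cor:triangular_decomposition_affine_RTT} and $J_{0}$ is as in the statement.

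The inclusion $J' \subseteq J$ is immediate since all generators of $J'$ lie in $J$. For $J \subseteq J'$ I would verify that $J'$ is a two-sided ideal of $\wtd{U}(\wtd{R}_{q}(z))$. The element $(\gamma')^{1/2} - \gamma^{-1/2}$ is central, hence harmless. For the elements $\ell_{ii}^{+}[0] - \ell_{ii}^{-}[0]^{-1}$ one argues exactly as in the finite case, using the commutation formulas~\eqref{eq:RTT_q_commute_affine} together with their ``cross'' counterpart: namely, that $\ell_{\mu}^{-}[0]$ commutes with every homogeneous $u \in \wtd{U}_{\ge}(\wtd{R}_{q}(z))_{\lambda,\lambda'}$ up to the scalar $v^{-(\mu,\lambda + \lambda')}$, and symmetrically $\ell_{\mu}^{+}[0]$ against $\wtd{U}_{\le}(\wtd{R}_{q}(z))$. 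The first of these is recorded in~\eqref{eq:RTT_q_commute_affine}; the second is the affine analogue of~\eqref{eq:RTT_q_commute_cross}, obtained by extracting the $z^{0}w^{0}$-coefficient of the mixed $RLL$-relation (second line of~\eqref{eq:Affine_RLL}) exactly as in the proof of Lemma~\ref{lem:borel_spanning_sets_affine}; the central shifts by $\gamma,\gamma'$ in that relation play no role at the level of zero modes since $\gamma^{1/2},(\gamma')^{1/2}$ are central and one extracts only the constant terms in the spectral parameters. These relations show that $u_{\pm}\big(\ell_{ii}^{+}[0] - \ell_{ii}^{-}[0]^{-1}\big) \in J'$ for all $u_{\pm} \in \wtd{U}_{\pm}(\wtd{R}_{q}(z))$, and combined with Corollary~\ref{cor:triangular_decomposition_affine_RTT} this yields $uJ' \subseteq J'$ for all $u \in \wtd{U}(\wtd{R}_{q}(z))$; the inclusion $J'u \subseteq J'$ is symmetric. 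Hence $J = J'$.

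Granting $J = J' = \mu\big(\wtd{U}_{-}(\wtd{R}_{q}(z)) \otimes J_{0} \otimes \wtd{U}_{+}(\wtd{R}_{q}(z))\big)$, the conclusions are formal. By Corollary~\ref{cor:triangular_decomposition_affine_RTT} one has $J \cap \wtd{U}_{\pm}(\wtd{R}_{q}(z)) = 0$ and $J \cap \wtd{U}_{0}(\wtd{R}_{q}(z)) = J_{0}$, so that $\wtd{U}'_{\pm}(\wtd{R}_{q}(z)) = \wtd{U}_{\pm}(\wtd{R}_{q}(z))/(J \cap \wtd{U}_{\pm}(\wtd{R}_{q}(z))) = \wtd{U}_{\pm}(\wtd{R}_{q}(z))$ (this is the promised analogue of Corollary~\ref{cor:positive_negative_affine_RTT}) and $\wtd{U}'_{0}(\wtd{R}_{q}(z)) = \wtd{U}_{0}(\wtd{R}_{q}(z))/J_{0}$. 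Finally, passing to the quotient of the vector-space isomorphism $\mu\colon \wtd{U}_{-}(\wtd{R}_{q}(z)) \otimes \wtd{U}_{0}(\wtd{R}_{q}(z)) \otimes \wtd{U}_{+}(\wtd{R}_{q}(z)) \iso \wtd{U}(\wtd{R}_{q}(z))$ by the subspace $\wtd{U}_{-}(\wtd{R}_{q}(z)) \otimes J_{0} \otimes \wtd{U}_{+}(\wtd{R}_{q}(z))$, which $\mu$ carries onto $J' = J$, produces the desired isomorphism $\wtd{U}'_{-}(\wtd{R}_{q}(z)) \otimes \wtd{U}'_{0}(\wtd{R}_{q}(z)) \otimes \wtd{U}'_{+}(\wtd{R}_{q}(z)) \iso \wtd{U}'(\wtd{R}_{q}(z))$.

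The main obstacle is the verification that $J'$ is a two-sided ideal, which rests on the ``cross'' commutation relations between the zero-mode Cartan elements $\ell_{\mu}^{\pm}[0]$ and the opposite nilpotent subalgebras; these must be teased out of the mixed $RLL$-relation~\eqref{eq:Affine_RLL} (with due care for its central shifts, which however trivialize on zero modes) and, in type $B_{n}$, for the middle index one invokes $\ell_{n+1,n+1}^{\pm}[0] = 1$. Once these relations are in place, everything else is the same bookkeeping with the triangular decomposition as in Section~\ref{sec:FRT construction finite}.
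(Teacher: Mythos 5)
Your proposal is correct and follows exactly the approach the paper indicates (and then omits): transferring the finite-type arguments of Corollary~\ref{cor:positive_negative_RTT} and Proposition~\ref{prop:triangular_decomposition_RTT'} verbatim to the affine setting via Corollary~\ref{cor:triangular_decomposition_affine_RTT}. You correctly isolated the one genuinely new point, that the central shifts by $\gamma,\gamma'$ in the mixed $RLL$-relation~\eqref{eq:Affine_RLL} are invisible when extracting the $z^{m}w^{0}$ coefficient (forcing $R[0]$, a scalar multiple of the finite $R_{q}$, on both sides), so the cross commutation of the zero-mode Cartan elements $\ell_{\mu}^{\pm}[0]$ with the opposite nilpotent modes reduces to the finite-case computation, and the centrality of $(\gamma')^{1/2}-\gamma^{-1/2}$ takes care of the remaining generator of $J_{0}$.
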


We are finally ready to prove the following result, which in conjunction with Proposition~\ref{prop:double_cartan_D} will allow us to upgrade
the isomorphism $U_{q}^{D}(\what{\fg}) \iso \wtd{U}'(\wtd{R}_{q}(z))$ to an isomorphism $U_{q,q^{-1}}^{D}(\what{\fg}) \iso \wtd{U}(\wtd{R}_{q}(z))$:

\begin{prop}\label{prop:double_cartan_RTT_affine}
Let $L_{\delta},L_{\varepsilon_{1}},\ldots ,L_{\varepsilon_{n}}$ be algebraically independent transcendental elements. There is a unique
injective algebra homomorphism
  $\what{\eta}_{R}\colon \wtd{U}(\wtd{R}_{q}(z)) \to
   \wtd{U}'(\wtd{R}_{q}(z)) \otimes \bb{C}[L_{\delta}^{\pm 1},L_{\varepsilon_{1}}^{\pm 1},\ldots ,L_{\varepsilon_{n}}^{\pm 1}]$
given by
\begin{equation}\label{eq:eta-R_assignment}
\begin{split}
  & \what{\eta}_{R}(\gamma^{1/2}) = \gamma^{1/2} \otimes L_{-\delta},\qquad \what{\eta}_{R}((\gamma')^{1/2}) = \gamma^{-1/2} \otimes L_{-\delta},  \\
  & \what{\eta}_{R}(\ell_{ij}^{\pm}[\mp m]) = \ell_{ij}^{\pm}[\mp m] \otimes L_{-\varepsilon_{i} - \varepsilon_{j} - 2m\delta}
    \qquad \text{for all}\qquad 1 \le i,j \le N,\ m \in \bb{Z},
\end{split}
\end{equation}
where we write $L_{\mu} = L_{\delta}^{c_{0}}L_{\varepsilon_{1}}^{c_{1}}\ldots L_{\varepsilon_{n}}^{c_{n}}$ for all
$\mu = c_{0}\delta + \sum_{i = 1}^{n}c_{i}\varepsilon_{i} \in \bb{Z}\delta \oplus P \subset \what{P}$.
\end{prop}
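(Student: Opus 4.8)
The plan is to mimic the proof of Proposition~\ref{prop:double_cartan_RTT_finite}, substituting the affine ingredients established above. Using the $\what{P}$-bigrading of Lemma~\ref{lem:affine_RTT_bigradings}(a), I would define $\what{\eta}_{R}$ on a homogeneous element $u \in \wtd{U}(\wtd{R}_{q}(z))_{\lambda,\mu}$ by $\what{\eta}_{R}(u) = \bar{u} \otimes L_{\lambda - \mu}$, where $\bar{u}$ is the image of $u$ under the quotient $\wtd{U}(\wtd{R}_{q}(z)) \twoheadrightarrow \wtd{U}'(\wtd{R}_{q}(z))$. First I would check that $\lambda - \mu \in \bb{Z}\delta \oplus P$, so that $L_{\lambda - \mu}$ is defined: this is immediate on the generators ($\lambda - \mu = -\delta$ for $\gamma^{1/2}$ and $(\gamma')^{1/2}$, and $\lambda - \mu = -\varepsilon_{i} - \varepsilon_{j} - 2m\delta$ for $\ell_{ij}^{\pm}[\mp m]$), and it propagates by additivity of the bidegree. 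Then $\what{\eta}_{R}$ is a well-defined linear map, it is an algebra homomorphism because the bidegree is multiplicative under the product, and a short degree computation confirms that it acts on the generators as in~\eqref{eq:eta-R_assignment}; uniqueness is automatic since these elements generate the algebra.

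For injectivity, I would build a PBW-type basis of $\wtd{U}(\wtd{R}_{q}(z))$ by combining the quadrilangular decomposition (the Theorem preceding Corollary~\ref{cor:triangular_decomposition_affine_RTT}) with Corollary~\ref{cor:doubled_cartan_subalgebra_affine_RTT}: fixing homogeneous bases $\{b_{i}^{\mu,\pm}\}$ of $\wtd{U}_{\pm}(\wtd{R}_{q}(z))$, the elements $b_{i}^{\mu,-}\,\ell_{\lambda}^{-}[0]\,(\gamma')^{k'/2}\,\ell_{\lambda'}^{+}[0]\,\gamma^{k/2}\,b_{j}^{\nu,+}$ (with $\lambda,\lambda' \in \bb{Z}\varepsilon_{1} \oplus \cdots \oplus \bb{Z}\varepsilon_{n}$ and $k,k' \in \bb{Z}$) form such a basis. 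By Corollary~\ref{cor:positive_negative_affine_RTT} the $b_{i}^{\mu,\pm}$ descend to bases of $\wtd{U}_{\pm}'(\wtd{R}_{q}(z))$, so Proposition~\ref{prop:triangular_decomposition_affine_RTT'} yields a corresponding basis of $\wtd{U}'(\wtd{R}_{q}(z))$. Applying $\what{\eta}_{R}$ to a basis element and using $(\gamma')^{1/2} \mapsto \gamma^{-1/2}$, $\ell_{ii}^{-}[0] \mapsto \ell_{ii}^{+}[0]^{-1}$ in $\wtd{U}'(\wtd{R}_{q}(z))$, one gets $\bar{b}_{i}^{\mu,-}\,\ell_{\lambda'-\lambda}^{+}[0]\,\gamma^{(k-k')/2}\,\bar{b}_{j}^{\nu,+} \otimes L_{\nu - \mu - 2\lambda - 2\lambda' - (k+k')\delta}$. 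From the $\wtd{U}'(\wtd{R}_{q}(z))$-factor one reads off $\mu, i, \nu, j$ together with $\lambda' - \lambda$ and $k - k'$; knowing $\nu - \mu$, the $L$-exponent then determines $2(\lambda + \lambda') + (k+k')\delta$, whose $P$-part recovers $\lambda + \lambda'$ and whose $\delta$-part recovers $k + k'$. Thus all of $\lambda, \lambda', k, k'$ are determined, $\what{\eta}_{R}$ sends the PBW basis to a linearly independent set, and injectivity follows.

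The genuinely substantive input — the quadrilangular decomposition, the description of the Cartan subalgebra, and the identification $\wtd{U}_{\pm}(\wtd{R}_{q}(z)) \simeq \wtd{U}_{\pm}'(\wtd{R}_{q}(z))$ — is already in hand (ultimately resting on the affine crossing-symmetry identities of Lemma~\ref{lem:affine_crossing_symmetries} and the Drinfeld double realization of Proposition~\ref{prop:RTT_drinfeld_double_affine}), so the work here is largely bookkeeping. The one point I expect to require real care is tracking the half-$\delta$ shifts carried by $\gamma^{\pm 1/2}$ and $(\gamma')^{\pm 1/2}$ (and, more generally, the imaginary directions) in the $\what{P}$-bigrading: I must make sure these do not obstruct the definition of $L_{\lambda - \mu}$ and that, in the injectivity step, the $P$-component and the $\delta$-component of the recovered quantity $2(\lambda + \lambda') + (k+k')\delta$ can indeed be separated, which is exactly where the splitting $\what{P} = P \oplus \bb{Z}\tfrac{\delta}{2}$ enters.
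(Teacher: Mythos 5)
Your proposal follows the paper's proof exactly: define $\what{\eta}_{R}$ on $\what{P}$-bihomogeneous elements via $u\mapsto \bar{u}\otimes L_{\lambda-\mu}$ (using Lemma~\ref{lem:affine_RTT_bigradings}(a) for well-definedness and multiplicativity), then establish injectivity by constructing a $\what{P}\times\what{P}$-homogeneous basis from Corollary~\ref{cor:triangular_decomposition_affine_RTT} (equivalently the quadrilangular decomposition) together with Corollary~\ref{cor:doubled_cartan_subalgebra_affine_RTT}, and showing via Corollary~\ref{cor:positive_negative_affine_RTT} and Proposition~\ref{prop:triangular_decomposition_affine_RTT'} that this basis maps to linearly independent elements. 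Your explicit computation of the image $\bar{b}_{i}^{\mu,-}\ell_{\lambda'-\lambda}^{+}[0]\gamma^{(k-k')/2}\bar{b}_{j}^{\nu,+}\otimes L_{\nu-\mu-2(\lambda+\lambda')-(k+k')\delta}$, and the recovery of $(\lambda,\lambda',k,k')$ from the pair of differences and sums using the splitting $\bb{Z}\delta\oplus P$, is precisely the bookkeeping the paper leaves implicit.
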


\begin{proof}
Due to Lemma~\ref{lem:affine_RTT_bigradings}(a), the proof that the above assignment~\eqref{eq:eta-R_assignment} gives rise to an algebra
homomorphism $\what{\eta}_{R}$ is the same as that of Proposition~\ref{prop:double_cartan_D}. The proof of injectivity is similar to that of
Propositions~\ref{prop:double_cartan_DJ},~\ref{prop:double_cartan_RTT_finite}, and~\ref{prop:double_cartan_D}: one uses
Corollaries~\ref{cor:triangular_decomposition_affine_RTT} and~\ref{cor:doubled_cartan_subalgebra_affine_RTT} to construct an appropriate basis
of $\wtd{U}(\wtd{R}_{q}(z))$, homogeneous with respect to the $\widehat{P}\times \widehat{P}$-grading, and then using
Corollary~\ref{cor:positive_negative_affine_RTT}, Proposition~\ref{prop:triangular_decomposition_affine_RTT'}, and the definition of
$\what{\eta}_{R}$, one observes that this basis for $\wtd{U}(\wtd{R}_{q}(z))$ is mapped by $\what{\eta}_{R}$ to linearly independent elements
in $\wtd{U}'(\wtd{R}_{q}(z)) \otimes \bb{C}[L_{\delta}^{\pm 1},L_{\varepsilon_{1}}^{\pm 1},\ldots ,L_{\varepsilon_{n}}^{\pm 1}]$.
\end{proof}


\subsection{Gaussian generators}\label{ssec:gaussian_generators}
\

Finally, using that the $\{\ell_{ii}^{\pm}[0]\}_{i=1}^N$ are invertible (which follows from $\ell_{ii}^{\pm}[0]\ell_{i'i'}^{\pm}[0]=1$)
and $\ell_{ij}^{+}[0] = \ell_{ji}^{-}[0] = 0$ for $i < j$, one can show that the conditions of~\cite[Theorem 4.9.7]{GGRW} hold for $L^{\pm}(z)$,
and therefore we have the \textbf{Gauss decomposition}
\[
  L^{\pm}(z) = F^{\pm}(z)H^{\pm}(z)E^{\pm}(z),
\]
where
\[
  F^{\pm}(z) =
  \begin{bmatrix}
    1 & 0 & \ldots & \ldots & 0 \\
    f_{21}^{\pm}(z) & 1 & 0 & \ldots & 0 \\
    \vdots & f_{32}^{\pm}(z) & 1 & \ddots & \vdots \\
    \vdots & \vdots & \ddots & \ddots & 0 \\
    f_{N1}^{\pm}(z) & f_{N2}^{\pm}(z) & \ldots & f_{N,N-1}^{\pm}(z) & 1
  \end{bmatrix},\qquad
  E^{\pm}(z) =
  \begin{bmatrix}
    1 & e_{12}^{\pm}(z) & \ldots & \ldots & e_{1N}^{\pm}(z) \\
    0 & 1 & e_{23}^{\pm}(z) & \ldots & e_{2N}^{\pm}(z) \\
    \vdots & 0 & \ddots & \ddots & \vdots \\
    \vdots & \vdots & \ddots & 1 & e_{N-1,N}^{\pm}(z) \\
    0 & 0 & \ldots & 0 & 1
  \end{bmatrix},
\]
and $H^{\pm}(z) = \diag(h_{1}^{\pm}(z),\ldots ,h_{N}^{\pm}(z))$. We will refer to the entries of these matrices as
\textbf{Gaussian generators}. We also note that, by construction, the series $h_{i}^{\pm}(z)$ are invertible.
For the entries of these matrices we write
\[
  h_{i}^{\pm}(z) = \sum_{m \geq 0} h_{i}^{\pm}[\mp m]z^{\pm m},\quad
  e_{ij}^{\pm}(z) = \sum_{m \geq 0} e_{ij}^{\pm}[\mp m]z^{\pm m},\quad
  f_{ji}^{\pm}(z) = \sum_{m \geq 0} f_{ji}^{\pm}[\mp m]z^{\pm m} \quad \forall\, 1 \le i < j \le N.
\]

Then we have the following simple lemma:

\begin{lemma}\label{lem:gaussian_generator_degrees}
(a) Relative to the $\what{P}$-bigrading of Lemma~\ref{lem:affine_RTT_bigradings}(a), we have
\[
  \deg(h_{i}^{\pm}[\mp m]) = (-\varepsilon_{i},\varepsilon_{i}) + md_{\pm},\quad
  \deg(e_{ij}^{\pm}[\mp m]) = (0,\varepsilon_{j} - \varepsilon_{i}) + md_{\pm},\quad
  \deg(f_{ji}^{\pm}[\mp m]) = (\varepsilon_{i} - \varepsilon_{j},0) + md_{\pm},
\]
for all $1 \le i < j \le N$ and $m \in \bb{Z}_{\ge 0}$, where
$d_{+} = (-\frac{1}{2}\delta,\frac{3}{2}\delta)$ and $d_{-} = (-\frac{3}{2}\delta,\frac{1}{2}\delta)$.

\medskip
\noindent
(b) Relative to the $P$-bigrading of Lemma~\ref{lem:affine_RTT_bigradings}(b), we have
\[
  \deg(h_{i}^{\pm}[\mp m]) = (-\varepsilon_{i},\varepsilon_{i}),\qquad
  \deg(e_{ij}^{\pm}[\mp m]) = (0,\varepsilon_{j} - \varepsilon_{i}),\qquad
  \deg(f_{ji}^{\pm}[\mp m]) = (\varepsilon_{i}-\varepsilon_{j},0),
\]
for all $1 \le i < j \le N$ and $m \in \bb{Z}_{\ge 0}$.
\end{lemma}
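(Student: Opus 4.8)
The plan is to prove this by inductively peeling off the Gauss decomposition $L^{\pm}(z) = F^{\pm}(z)H^{\pm}(z)E^{\pm}(z)$ in a way that tracks bidegrees. Since part (b) follows immediately from part (a) by projecting $\what{P} = P \oplus \bb{Z}\frac{\delta}{2}$ onto $P$ (which kills the imaginary parts, so $d_{\pm}$ maps to $(0,0)$), it suffices to establish (a). I will work with the $\what{P}$-bigrading of Lemma~\ref{lem:affine_RTT_bigradings}(a), under which $\deg(\ell_{ij}^{\pm}[\mp m]) = (-\varepsilon_{i},\varepsilon_{j}) + md_{\pm}$, where $d_{+} = (-\frac{1}{2}\delta,\frac{3}{2}\delta)$ and $d_{-} = (-\frac{3}{2}\delta,\frac{1}{2}\delta)$, matching the conventions in the statement.

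First I would recall the explicit formulas for the Gaussian generators as rational expressions in the $\ell_{ij}^{\pm}(z)$ and their inverses, which come from the leading principal minors of $L^{\pm}(z)$ (this is precisely the content of~\cite[Theorem 4.9.7]{GGRW}); concretely, $h_{i}^{\pm}(z)$ is a ratio of consecutive leading principal minors, $e_{ij}^{\pm}(z)$ and $f_{ji}^{\pm}(z)$ are ratios of certain almost-principal minors by principal minors. The key bookkeeping point is that $\deg(\wtd{\ell}_{ij}^{\pm}[\mp m]) = (\varepsilon_{j},-\varepsilon_{i}) + md_{\pm}$ for the entries of $(L^{\pm}(z))^{-1}$ — this was established inside the proof of Lemma~\ref{lem:affine_RTT_bigradings}(a), where it was shown that each coefficient of the $(i,j)$-entry of $L^{\pm}(z)^{-1}$ is homogeneous of the indicated bidegree. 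Given this, I would argue by a simple induction on $i$: the $1\times 1$ leading minor is $\ell_{11}^{\pm}(z)$, so $h_{1}^{\pm}[\mp m] = \ell_{11}^{\pm}[\mp m]$ has degree $(-\varepsilon_{1},\varepsilon_{1}) + md_{\pm}$; and for the inductive step, since $H^{\pm}(z) = F^{\pm}(z)^{-1}L^{\pm}(z)E^{\pm}(z)^{-1}$ with $F^{\pm},E^{\pm}$ unipotent, one reads off $h_{i}^{\pm}(z)$, $e_{ij}^{\pm}(z)$, $f_{ji}^{\pm}(z)$ as products of $\ell$'s and $\ell^{-1}$'s whose bidegrees add up correctly. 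Alternatively — and more cleanly — one uses that $L^{\pm}(z) = F^{\pm}(z)H^{\pm}(z)E^{\pm}(z)$ is an identity of matrices with entries in a $\what{P}$-bigraded algebra, so matching the $(i,j)$-entry on both sides and comparing bidegrees forces $h_i^{\pm}$ to carry bidegree $(-\varepsilon_i,\varepsilon_i)$ on its zero-coefficient, $e_{ij}^{\pm}$ to carry $(0,\varepsilon_j-\varepsilon_i)$, $f_{ji}^{\pm}$ to carry $(\varepsilon_i-\varepsilon_j,0)$, plus the common imaginary shift $md_{\pm}$ on coefficient $m$.

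The one genuine subtlety — and the step I expect to be the main obstacle — is justifying that the Gaussian entries are in fact \emph{homogeneous} at all, i.e. that forming ratios of minors doesn't destroy homogeneity. This requires knowing that each leading principal minor $D_k^{\pm}(z)$ of $L^{\pm}(z)$ has all its coefficients homogeneous of a single bidegree (namely $(-\varepsilon_1-\cdots-\varepsilon_k,\varepsilon_1+\cdots+\varepsilon_k)$ plus an imaginary shift on the $m$-th coefficient), so that $h_k^{\pm}(z) = D_k^{\pm}(z) D_{k-1}^{\pm}(z)^{-1}$ lands in a single bidegree. That $D_k^{\pm}(z)$ is homogeneous follows because $D_k^{\pm}(z)$ is (up to sign) an entry of the $k$-th exterior power $\Lambda^k L^{\pm}(z)$ acting on $\Lambda^k V$, and the $\what{P}\times\what{P}$-grading on $U(\wtd{R}_q(z))$ extends to this exterior construction; alternatively one expands the determinant directly and checks that every monomial $\ell_{1\sigma(1)}^{\pm}\cdots\ell_{k\sigma(k)}^{\pm}$ has the same bidegree since $\sum_j \varepsilon_{\sigma(j)} = \sum_j \varepsilon_j$ for any permutation $\sigma$ of $\{1,\ldots,k\}$. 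With homogeneity in hand, the bidegree computation is the routine additive bookkeeping sketched above, and for $\wtd{U}(\wtd{R}_q(z))$ and $\wtd{U}'(\wtd{R}_q(z))$ the statement passes to the quotients since the defining ideals are homogeneous by Lemma~\ref{lem:affine_RTT_bigradings}.
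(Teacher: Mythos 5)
Your overall plan — induction on $k$ using the Gauss decomposition, with part (b) obtained from part (a) by projecting out $\bb{Z}\frac{\delta}{2}$ — is the right plan and is exactly what the paper does. The paper writes out
\[
  \ell_{ij}^{\pm}(z) = \sum_{p \le \min(i,j)} f_{ip}^{\pm}(z) h_p^{\pm}(z) e_{pj}^{\pm}(z)
\]
and solves for the Gaussian generators recursively: $h_1^{\pm}(z) = \ell_{11}^{\pm}(z)$, then $e_{1j}^{\pm}(z) = h_1^{\pm}(z)^{-1}\ell_{1j}^{\pm}(z)$ and $f_{i1}^{\pm}(z) = \ell_{i1}^{\pm}(z)h_1^{\pm}(z)^{-1}$, and in the inductive step $h_k^{\pm}(z) = \ell_{kk}^{\pm}(z) - \sum_{p<k}f_{kp}^{\pm}(z)h_p^{\pm}(z)e_{pk}^{\pm}(z)$, etc. Each such formula produces the next Gaussian generator as an explicit expression in manifestly homogeneous ingredients, so homogeneity never needs to be argued separately.

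Where your proposal goes astray is precisely in the two places you offer as clean-ups. First, the ``alternative'' of taking $L^{\pm}(z) = F^{\pm}(z)H^{\pm}(z)E^{\pm}(z)$ as a matrix identity in a bigraded algebra and ``comparing bidegrees'' on both sides is circular: one can only read off bidegrees of the Gaussian entries after one already knows they are homogeneous, which is exactly what needs proving. Second, and more seriously, the argument you give for homogeneity of leading principal minors does not apply in this setting: $\wtd{U}(\wtd{R}_q(z))$ is noncommutative, so leading principal minors in the sense of a signed sum over permutations are not well-defined, and the objects appearing in~\cite[Theorem~4.9.7]{GGRW} are quasi-determinants --- rational expressions built from entries and inverses of sub-quasi-determinants, not polynomials. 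The exterior-power heuristic and the expansion into monomials $\ell_{1\sigma(1)}^{\pm}\cdots\ell_{k\sigma(k)}^{\pm}$ are commutative-determinant facts and simply do not exist here. One could in principle prove homogeneity of quasi-determinants by inducting on the quasi-determinant recursion (tracking degrees of inverses), but at that point one is doing essentially the same bookkeeping as the paper's direct recursion, without the benefit of a closed formula. The direct recursion is the cleaner route.
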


\begin{proof}
As in Lemma~\ref{lem:affine_RTT_bigradings}, part (b) follows immediately from (a).
To prove (a), we will show by induction on $k$ that the claim holds for $h_{k}^{\pm}[\mp m]$, $e_{kj}^{\pm}[\mp m]$, $f_{jk}^{\pm}[\mp m]$
with any $j > k$ and $m\geq 0$. First, we note that
\begin{equation}\label{eq:gauss_decomposition_entries_1}
  \ell_{ij}^{\pm}(z) = \sum_{1\leq p\leq i}f_{ip}^{\pm}(z)h_{p}^{\pm}(z)e_{pj}^{\pm}(z)\qquad \text{for all}\qquad 1 \le i \le j \le N
\end{equation}
and
\begin{equation}\label{eq:gauss_decomposition_entries_2}
  \ell_{ij}^{\pm}(z) = \sum_{1\leq p\leq j}f_{ip}^{\pm}(z)h_{p}^{\pm}(z)e_{pj}^{\pm}(z)\qquad \text{for all}\qquad 1 \le j \le i \le N,
\end{equation}
where we set $f_{ii}^{\pm}(z) = e_{ii}^{\pm}(z) = 1$ for all $1 \le i \le N$. Taking $i = j = 1$ in~\eqref{eq:gauss_decomposition_entries_1},
we get $\ell_{11}^{\pm}(z) = h_{1}^{\pm}(z)$, and hence $\deg(h_{1}^{\pm}[\mp m]) = (-\varepsilon_{1},\varepsilon_{1}) + md_{\pm}$
by Lemma~\ref{lem:affine_RTT_bigradings}(a). Next, if we take $i = 1 < j$
in \eqref{eq:gauss_decomposition_entries_1}, we find that $\ell_{1j}^{\pm}(z) = h_{1}^{\pm}(z)e_{1j}^{\pm}(z)$. Moreover, since
\[
  h_{1}^{\pm}(z) = h_{1}^{\pm}[0] \left (1 + \sum_{m > 0}h_{1}^{\pm}[0]^{-1}h_{1}^{\pm}[\mp m]z^{\pm m}\right ),
\]
and the $z^{\pm m}$-coefficient of the series $1 + \sum_{m > 0}h_{1}^{\pm}[0]^{-1}h_{1}^{\pm}[\mp m]z^{\pm m}$ has degree $md_{\pm}$ for all $m$,
it follows that the $z^{\pm m}$-coefficient of $h_{1}^{\pm}(z)^{-1}$ has degree $(\varepsilon_{1},-\varepsilon_{1}) + md_{\pm}$. As
$e_{1j}^{\pm}(z) = h_{1}^{\pm}(z)^{-1}\ell_{1j}^{\pm}(z)$, we conclude that
  $\deg(e_{1j}^{\pm}[\mp m]) = (\varepsilon_{1},-\varepsilon_{1}) + (-\varepsilon_{1},\varepsilon_{j}) + md_{\pm}
    = (0,\varepsilon_{j} - \varepsilon_{1}) + md_{\pm}$
for all $1 < j \le N$ and $m\geq 0$. Similarly, taking $j = 1 < i$ in~\eqref{eq:gauss_decomposition_entries_2}, we find that
$\ell_{i1}^{\pm}(z) = f_{i1}^{\pm}(z)h_{1}^{\pm}(z)$, so as before we obtain
  $\deg(f_{i1}^{\pm}[\mp m]) = (-\varepsilon_{i},\varepsilon_{1}) + (\varepsilon_{1},-\varepsilon_{1}) + md_{\pm}
   = (\varepsilon_{1} - \varepsilon_{i},0) + md_{\pm}$,
as desired. This completes the proof of the base case.

For the induction step, suppose that $k > 1$, and that for all $t < k$, the degrees of $h_{t}^{\pm}[\mp m]$, $e_{tj}^{\pm}[\mp m]$, and
$f_{jt}[\mp m]$ are as specified whenever $j > t$ and $m \in \bb{Z}_{\ge 0}$. Then taking $i = j = k$ in~\eqref{eq:gauss_decomposition_entries_1},
we get $h_{k}^{\pm}(z) = \ell_{kk}^{\pm}(z) - \sum_{p = 1}^{k - 1}f_{kp}^{\pm}(z)h_{p}^{\pm}(z)e_{pk}^{\pm}(z)$. By the induction hypothesis,
\begin{equation*}
\begin{split}
  \deg(f_{kp}^{\pm}[\mp m_{1}]h_{p}^{\pm}[\mp m_{2}]e_{pk}^{\pm}[\mp m_{3}])
  &= (\varepsilon_{p} - \varepsilon_{k},0) + (-\varepsilon_{p},\varepsilon_{p}) + (0,\varepsilon_{k} - \varepsilon_{p}) + (m_{1} + m_{2} + m_{3})d_{\pm} \\
  &= (-\varepsilon_{k},\varepsilon_{k}) + (m_{1} + m_{2} + m_{3})d_{\pm},
\end{split}
\end{equation*}
so it follows that $\deg(h_{k}^{\pm}[\mp m]) = (-\varepsilon_{k},\varepsilon_{k}) +md_{\pm}$. As above, we then conclude that the
$z^{\pm m}$-coefficient $\wtd{h}_{k}^{\pm}[\mp m]$ of $h_{k}^{\pm}(z)^{-1}$ has degree $(\varepsilon_{k},-\varepsilon_{k}) + md_{\pm}$.
Next, taking $i = k < j$ in~\eqref{eq:gauss_decomposition_entries_1}, we get
\begin{equation}\label{eq:ekj_formula}
  e_{kj}^{\pm}(z) = h_{k}^{\pm}(z)^{-1}\ell_{kj}^{\pm}(z) - h_{k}^{\pm}(z)^{-1}\sum_{1\leq p\leq k - 1}f_{kp}^{\pm}(z)h_{p}^{\pm}(z)e_{pj}^{\pm}(z).
\end{equation}
Then by the induction hypothesis, we have
\begin{equation*}
\begin{split}
  \deg(f_{kp}^{\pm}[\mp m_{1}]h_{p}^{\pm}[\mp m_{2}]e_{pj}^{\pm}[\mp m_{3}])
  &= (\varepsilon_{p} - \varepsilon_{k},0) + (-\varepsilon_{p},\varepsilon_{p}) + (0,\varepsilon_{j} - \varepsilon_{p}) + (m_{1} + m_{2} + m_{3})d_{\pm} \\
  &= (-\varepsilon_{k},\varepsilon_{j}) + (m_{1} + m_{2} + m_{3})d_{\pm},
\end{split}
\end{equation*}
so combining this with the above results and Lemma~\ref{lem:affine_RTT_bigradings}(a), we find
$\deg(e_{kj}^{\pm}[\mp m]) = (0,\varepsilon_{j} - \varepsilon_{k}) + md_{\pm}$ for all $m \in \bb{Z}_{\ge 0}$ and $j > k$.
Finally, by considering~\eqref{eq:gauss_decomposition_entries_2} with $j = k < i$, and proceeding similarly, one derives
$\deg(f_{ik}^{\pm}[\mp m]) = (\varepsilon_{k} - \varepsilon_{i},0) + md_{\pm}$ for all $m \in \bb{Z}_{\ge 0}$ and $i > k$.
This completes the proof.
\end{proof}

We also need a result, which describes the images of the Gaussian generators under $\what{\Psi}'$ of Proposition~\ref{prop:2_vs_1_parameter_affine_RTT}.
To avoid any confusion, we denote the Gaussian generators of $L^{\pm}(z)$ in $\wtd{U}(\wtd{R}_{r,s}(z))$ by $\wtd{h}_{i}^{\pm}(z)$, $\wtd{e}_{ij}^{\pm}(z)$,
$\wtd{f}_{ji}^{\pm}(z)$.

\begin{lemma}\label{lem:gaussian_generators_2_param}
For all $1 \le i < j \le N$, we have:
\[
  (\what{\Psi}')^{-1}\colon \quad
  h_{i}^{\pm}(z) \mapsto \wtd{h}_{i}^{\pm}(z),\qquad
  e_{ij}^{\pm}(z) \mapsto \psi_{i}\psi_{j}^{-1}\zeta(\varepsilon_{j},\varepsilon_{i})\wtd{e}_{ij}^{\pm}(z),\qquad
  f_{ji}^{\pm}(z) \mapsto \psi_{i}^{-1}\psi_{j}\zeta(\varepsilon_{j},\varepsilon_{i})\wtd{f}_{ji}^{\pm}(z),
\]
where $\psi_{i}$ are given by~\eqref{eq:psi_values_B} in type $B_{n}$,~\eqref{eq:psi_values_C} in type $C_{n}$,
and~\eqref{eq:psi_values_D} in type $D_{n}$.
\end{lemma}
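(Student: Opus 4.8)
The plan is to compute the Gauss decomposition of $L^{\pm}(z)$ inside the twisted algebra $\wtd{U}(\wtd{R}_{q}(z))_{\zeta}$ in terms of the ordinary one in $\wtd{U}(\wtd{R}_{q}(z))$, and then transport this along the isomorphism $\what{\Psi}'$ of Proposition~\ref{prop:2_vs_1_parameter_affine_RTT}.

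First I would record that, relative to the $P$-bigrading of Lemma~\ref{lem:affine_RTT_bigradings}(b), Lemma~\ref{lem:gaussian_generator_degrees}(b) gives $\deg(h_{i}^{\pm}[\mp m])=(-\varepsilon_{i},\varepsilon_{i})$, $\deg(e_{ij}^{\pm}[\mp m])=(0,\varepsilon_{j}-\varepsilon_{i})$, and $\deg(f_{ji}^{\pm}[\mp m])=(\varepsilon_{i}-\varepsilon_{j},0)$. Plugging these bidegrees into the definition~\eqref{eq:twisted_product_general} of $\circ$ and using $\zeta(\varepsilon_{i},\varepsilon_{i})=1$, a direct computation shows that for all $p\leq\min(i,j)$ one has
\[
  f_{ip}^{\pm}(z)\circ h_{p}^{\pm}(z)\circ e_{pj}^{\pm}(z)
  = \zeta(\varepsilon_{i},\varepsilon_{p})\zeta(\varepsilon_{p},\varepsilon_{j})^{-1}\,f_{ip}^{\pm}(z)h_{p}^{\pm}(z)e_{pj}^{\pm}(z)
\]
(with the conventions $f_{ii}^{\pm}=e_{ii}^{\pm}=1$), where on the right-hand side the product is the original one in $\wtd{U}(\wtd{R}_{q}(z))$. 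Therefore the ordinary Gauss decomposition $\ell_{ij}^{\pm}(z)=\sum_{p}f_{ip}^{\pm}(z)h_{p}^{\pm}(z)e_{pj}^{\pm}(z)$ rewrites, inside $\wtd{U}(\wtd{R}_{q}(z))_{\zeta}$, as a factorization $L^{\pm}(z)=F_{\zeta}^{\pm}(z)\circ H_{\zeta}^{\pm}(z)\circ E_{\zeta}^{\pm}(z)$ whose diagonal part is still $H^{\pm}(z)=\diag(h_{1}^{\pm}(z),\dots,h_{N}^{\pm}(z))$, while the nontrivial off-diagonal entries are $\zeta(\varepsilon_{i},\varepsilon_{j})e_{ij}^{\pm}(z)$ and $\zeta(\varepsilon_{i},\varepsilon_{j})f_{ji}^{\pm}(z)$ for $i<j$. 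Since the $h_{i}^{\pm}(z)$ remain invertible in the twisted algebra (a scalar bicharacter twist preserves invertibility of homogeneous elements), the uniqueness part of~\cite[Theorem 4.9.7]{GGRW} identifies this with the Gauss decomposition of $L^{\pm}(z)$ in $\wtd{U}(\wtd{R}_{q}(z))_{\zeta}$.

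Next I would apply $(\what{\Psi}')^{-1}$. By Proposition~\ref{prop:2_vs_1_parameter_affine_RTT} it sends $\ell_{ij}^{\pm}[\mp m]\mapsto\psi_{i}\psi_{j}^{-1}\ell_{ij}^{\pm}[\mp m]$, i.e.\ $(\what{\Psi}')^{-1}(L^{\pm}(z))=S'L^{\pm}(z)(S')^{-1}$ for $S'=\sum_{i}\psi_{i}E_{ii}$, with entries now multiplied in $\wtd{U}(\wtd{R}_{r,s}(z))$. Because $(\what{\Psi}')^{-1}$ is an algebra isomorphism carrying $\circ$ to the ordinary product, it takes the factorization $L^{\pm}(z)=F_{\zeta}^{\pm}(z)\circ H_{\zeta}^{\pm}(z)\circ E_{\zeta}^{\pm}(z)$ to a factorization $S'L^{\pm}(z)(S')^{-1}=\mathsf{F}^{\pm}(z)\mathsf{H}^{\pm}(z)\mathsf{E}^{\pm}(z)$ in $\wtd{U}(\wtd{R}_{r,s}(z))$ into a lower-unitriangular, a diagonal (with invertible entries), and an upper-unitriangular matrix — the unitriangular shapes being preserved since $(\what{\Psi}')^{-1}$ fixes $1$ and kills no off-shape zero entry. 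On the other hand, conjugating the Gauss decomposition $L^{\pm}(z)=\wtd{F}^{\pm}(z)\wtd{H}^{\pm}(z)\wtd{E}^{\pm}(z)$ in $\wtd{U}(\wtd{R}_{r,s}(z))$ by the $z$-independent scalar matrix $S'$ produces another such factorization of $S'L^{\pm}(z)(S')^{-1}$. Invoking uniqueness~\cite[Theorem 4.9.7]{GGRW} once more yields $\mathsf{H}^{\pm}(z)=\wtd{H}^{\pm}(z)$, $\mathsf{E}^{\pm}(z)=S'\wtd{E}^{\pm}(z)(S')^{-1}$, and $\mathsf{F}^{\pm}(z)=S'\wtd{F}^{\pm}(z)(S')^{-1}$; reading off the $(i,i)$-, $(i,j)$- and $(j,i)$-entries and dividing by the scalars $\zeta(\varepsilon_{i},\varepsilon_{j})$ inserted in the first step (and using $\zeta(\varepsilon_{i},\varepsilon_{j})^{-1}=\zeta(\varepsilon_{j},\varepsilon_{i})$) gives exactly the claimed formulas.

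The only real difficulty is bookkeeping: getting the $\zeta$-exponents in the first displayed identity right by carefully propagating the $P$-bidegrees of the Gaussian generators through~\eqref{eq:twisted_product_general}, and making the two appeals to uniqueness of the Gauss decomposition rigorous — in particular verifying that $(\what{\Psi}')^{-1}$ preserves the lower/diagonal/upper-unitriangular shapes and that conjugation by the scalar diagonal matrix $S'$ is compatible with the Gauss factorization.
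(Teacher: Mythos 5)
Your proof is correct, but it takes a genuinely different route from the paper's. The paper proves the claim by induction on $k$ for $h_{k}^{\pm}(z)$, $e_{kj}^{\pm}(z)$, $f_{jk}^{\pm}(z)$ with $j > k$, proceeding step by step via the recursive Gauss-decomposition identities~\eqref{eq:gauss_decomposition_entries_1},~\eqref{eq:gauss_decomposition_entries_2},~\eqref{eq:ekj_formula} and explicitly tracking the $\zeta$- and $\psi$-factors at each stage. You instead argue globally: using the $P$-bidegrees of Lemma~\ref{lem:gaussian_generator_degrees}(b) you compute in one shot what the Gauss factorization of $L^{\pm}(z)$ becomes inside the twisted algebra $\wtd{U}(\wtd{R}_{q}(z))_{\zeta}$ (same diagonal part, off-diagonal entries rescaled by $\zeta(\varepsilon_{i},\varepsilon_{j})$), then observe that $(\what{\Psi}')^{-1}$ acts on $L^{\pm}(z)$ as conjugation by the scalar diagonal matrix $S'=\sum_i\psi_i E_{ii}$, and conclude by two appeals to uniqueness of the Gauss decomposition. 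Both proofs rest on the same degree computations and the same isomorphism $\what{\Psi}'$, so the content is the same; what your version buys is that the induction is absorbed into a single structural statement, which makes the $\zeta$- and $\psi$-bookkeeping more transparent. Your own caveats — that $h_i^{\pm}(z)$ stays invertible under $\circ$ (immediate since its $P$-bidegree is $(-\varepsilon_i,\varepsilon_i)$ and $\zeta(\varepsilon_i,\varepsilon_i)=1$), that entrywise application of the algebra isomorphism $(\what{\Psi}')^{-1}$ commutes with matrix products and preserves unitriangularity, and that conjugation by $S'$ respects the factorization — are all routine verifications, so there is no gap.
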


\begin{proof}
Following a similar method to the proof of Lemma~\ref{lem:gaussian_generator_degrees}, we shall prove by induction on $k$ that the claim holds
for $h_{k}^{\pm}(z)$, $e_{kj}^{\pm}(z)$, $f_{jk}^{\pm}(z)$ with any $j > k$. For the base case $k = 1$, we have $\wtd{h}_{1}^{\pm}(z) = \ell_{11}^{\pm}(z)$,
so the result clearly holds for $\wtd{h}_{1}^{\pm}(z)$. Next, taking $j > i = 1$ in~\eqref{eq:gauss_decomposition_entries_1} yields
\begin{align*}
  (\what{\Psi}')^{-1}(e_{1j}^{\pm}(z))
  &= (\what{\Psi}')^{-1}(\ell_{11}^{\pm}(z)^{-1}\ell_{1j}^{\pm}(z))
  = (\what{\Psi}')^{-1}(\zeta(\varepsilon_{j},\varepsilon_{1})\ell_{11}^{\pm}(z)^{-1} \circ \ell_{1j}^{\pm}(z)) \\
  &= \psi_{1}\psi_{j}^{-1}\zeta(\varepsilon_{j},\varepsilon_{1})\ell_{11}^{\pm}(z)^{-1}\ell_{1j}^{\pm}(z)
  = \psi_{1}\psi_{j}^{-1}\zeta(\varepsilon_{j},\varepsilon_{1})\wtd{e}_{1j}^{\pm}(z),
\end{align*}
as claimed. Similarly, taking $i > j = 1$ in~\eqref{eq:gauss_decomposition_entries_2}, we get
\[
  (\what{\Psi}')^{-1}(f_{i1}^{\pm}(z))
  = (\what{\Psi}')^{-1}(\ell_{i1}^{\pm}(z)\ell_{11}^{\pm}(z)^{-1})
  = (\what{\Psi}')^{-1}(\zeta(\varepsilon_{i},\varepsilon_{1})\ell_{i1}^{\pm}(z) \circ \ell_{11}^{\pm}(z)^{-1})
  = \psi_{i}\psi_{1}^{-1}\zeta(\varepsilon_{i},\varepsilon_{1})\wtd{f}_{i1}^{\pm}(z),
\]
which completes the proof of the base case. For the induction step, suppose that $k > 1$ and the claim holds for all $t < k$. Then taking $ i = j = k$
in~\eqref{eq:gauss_decomposition_entries_1} yields $h_{k}^{\pm}(z) = \ell_{kk}^{\pm}(z) - \sum_{p = 1}^{k - 1}f_{kp}^{\pm}(z)h_{p}^{\pm}(z)e_{pk}^{\pm}(z)$,
so by the induction hypothesis and Lemma~\ref{lem:gaussian_generator_degrees}(b), we have
\[
  (\what{\Psi}')^{-1}(f_{kp}^{\pm}(z)h_{p}^{\pm}(z)e_{pk}^{\pm}(z))
  = (\what{\Psi}')^{-1}(\zeta(\varepsilon_{p},\varepsilon_{k})^{2}f_{kp}^{\pm}(z) \circ h_{p}^{\pm}(z) \circ e_{pk}^{\pm}(z))
  = \wtd{f}_{kp}^{\pm}(z)\wtd{h}_{p}^{\pm}(z)\wtd{e}_{pk}^{\pm}(z)
\]
for all $1 \le p \le k - 1$. Thus, $(\what{\Psi}')^{-1}(h_{k}^{\pm}(z)) = \wtd{h}_{k}^{\pm}(z)$, as stated.
Next, using~\eqref{eq:ekj_formula}, the induction hypothesis, and Lemma~\ref{lem:gaussian_generator_degrees}(b), we get
\begin{align*}
  &(\what{\Psi}')^{-1}(e_{kj}^{\pm}(z)) \\
  &= (\what{\Psi}')^{-1}\left (\zeta(\varepsilon_{j},\varepsilon_{k})h^{\pm}_{k}(z)^{-1} \circ \ell_{kj}^{\pm}(z) -
     \zeta(\varepsilon_{j},\varepsilon_{k}) h_{k}^{\pm}(z)^{-1}\circ
     \sum_{1\leq p\leq k - 1}\zeta(\varepsilon_{p},\varepsilon_{k} + \varepsilon_{j})f_{kp}^{\pm}(z) \circ h_{p}^{\pm}(z) \circ e_{pj}^{\pm}(z)\right ) \\
  &= \zeta(\varepsilon_{j},\varepsilon_{k})\left (\psi_{k}\psi_{j}^{-1}\wtd{h}^{\pm}_{k}(z)^{-1}\ell_{kj}^{\pm}(z) -
    \wtd{h}_{k}^{\pm}(z)^{-1}\sum_{1\leq p\leq k-1}\psi_{k}\psi_{j}^{-1}\wtd{f}_{kp}^{\pm}(z)\wtd{h}_{p}^{\pm}(z)\wtd{e}_{pj}^{\pm}(z)\right )
    = \zeta(\varepsilon_{j},\varepsilon_{k})\psi_{k}\psi_{j}^{-1}\wtd{e}_{kj}^{\pm}(z).
\end{align*}
Finally, taking $j = k < i$ in~\eqref{eq:gauss_decomposition_entries_2} and proceeding as above, we get
$(\what{\Psi}')^{-1}(f_{ik}^{\pm}(z)) = \psi_{k}^{-1}\psi_{i}\zeta(\varepsilon_{i},\varepsilon_{k})\wtd{f}_{ik}^{\pm}(z)$ for all $i > k$.
This completes the proof.
\end{proof}


\subsection{Isomorphism between RTT and new Drinfeld realizations in classical types}\label{ssec:affine_RTT_to_D}
\

In this Subsection, we construct an isomorphism between the two-parameter new Drinfeld and RTT-type two-parameter quantum affine groups
in types $B_{n}$, $C_{n}$, $D_{n}$ by combining the general results above with the corresponding one-parameter isomorphisms of~\cite[Main Theorem]{JLM1}
and~\cite[Main Theorem]{JLM2}.

\begin{remark}\label{rem:rtt=Dr-hatP-compatibility}
All isomorphisms in this Subsection actually intertwine the $\what{P}$-bigradings from Lemmas~\ref{lem:Q_hat_bigrading}
and~\ref{lem:affine_RTT_bigradings}(a), though for our arguments it only suffices to know they intertwine the corresponding $P$-bigradings.
\end{remark}

To state the isomorphisms, it is convenient to package the generators $\{x_{i,m}^{\pm}\}$ of $U_{r,s}^{D}(\fg)$ into the currents
\[
  x_{i}^{\pm}(z) = \sum_{m \in \bb{Z}} x_{i,m}^{\pm}z^{-m} \qquad \text{for all}\qquad 1 \le i \le n.
\]
Similarly, for $U_{q}^{D}(\fg)$, we define
\[
  X_{i}^{\pm}(z) = \sum_{m \in \bb{Z}} X_{i,m}^{\pm}z^{-m},
\]
as well as
\begin{equation}\label{eq:Phi_current}
  \Phi_{i}^{\pm}(z) = \sum_{r \geq 0} \Phi_{i,\pm r}^{\pm}z^{\mp r}
  = K_{i}^{\pm 1} \exp \left( \pm(q_{i} - q_{i}^{-1})\sum_{r>0} H_{i,\pm r}z^{\mp r} \right)
\end{equation}
for all $1 \le i \le n$. We also recall the following standard result:

\begin{lemma}\label{lem:affine_cartan_involution}
There is a unique $\BC(q)$-algebra automorphism $\what{\omega}$ of $U_{q}^{D}(\what{\fg})$ such that
\[
  \gamma^{1/2} \mapsto \gamma^{-1/2},\qquad K_{i} \mapsto K_{i}^{-1},\qquad
  X_{i}^{\pm}(z) \mapsto X_{i}^{\mp}(z^{-1}),\qquad \Phi_{i}^{\pm}(z) \mapsto \Phi_{i}^{\mp}(z^{-1}) \qquad \forall\, 1\leq i\leq n.
\]
\end{lemma}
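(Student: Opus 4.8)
The plan is to verify directly that the assignment $\what{\omega}$ respects all the defining relations of $U_q^D(\what{\fg})$ in the new Drinfeld presentation (Definition~\ref{def:drinfeld_2_parameter} with $r=q,s=q^{-1}$, after passing to the one-parameter quotient $U_q^D(\what{\fg})$ via $\omega_i\mapsto K_i$, $\omega_i'\mapsto K_i^{-1}$, $\gamma^{1/2}=(\gamma')^{1/2}$). Since $U_q^D(\what{\fg})$ is generated by $\gamma^{\pm 1/2}$, $K_i^{\pm 1}$, $X_{i,m}^{\pm}$, and $H_{i,\ell}$, it suffices to exhibit the images of these generators and check that the defining relations are preserved; then $\what{\omega}$ is an algebra endomorphism, and applying the construction twice gives $\what{\omega}^2=\mathrm{id}$ (each current is sent $z\mapsto z^{-1}\mapsto z$, and $\gamma^{1/2}\mapsto\gamma^{-1/2}\mapsto\gamma^{1/2}$), so $\what{\omega}$ is an automorphism. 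First I would record the images of $H_{i,\ell}$ implicit in the prescription $\Phi_i^{\pm}(z)\mapsto\Phi_i^{\mp}(z^{-1})$: expanding~\eqref{eq:Phi_current}, this forces $H_{i,\ell}\mapsto -H_{i,-\ell}$ for $\ell\in\bb{Z}\setminus\{0\}$, which is the natural Cartan-type flip.

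The key steps, in order: (1) relations~\eqref{eq:D1}--\eqref{eq:D3} among Cartan-type generators are immediate, since $K_i\mapsto K_i^{-1}$, $H_{i,\ell}\mapsto -H_{i,-\ell}$ preserve commutativity and $\gamma^{\pm 1/2}$ stays central. (2) For~\eqref{eq:D4}, rewriting in current form, $\what{\omega}$ sends $K_i x_{j}^{\pm}(z) = c^{\pm}x_{j}^{\pm}(z)K_i$ (for the appropriate scalar $c^{\pm}$) to $K_i^{-1}X_j^{\mp}(z^{-1}) = (c^{\pm})^{-1}X_j^{\mp}(z^{-1})K_i^{-1}$, which is exactly the $\mp$ case of the same relation after noting the scalar in the $-$ relation is the reciprocal of that in the $+$ relation. (3) Relation~\eqref{eq:D5} is preserved because $[H_{i,\ell},H_{j,\ell'}]\mapsto[{-}H_{i,-\ell},-H_{j,-\ell'}]=[H_{i,-\ell},H_{j,-\ell'}]$ and the right-hand side of~\eqref{eq:D5} is manifestly symmetric under $\ell\mapsto-\ell$ (the factor $(\gamma\gamma')^{|\ell|/2}\frac{\gamma^{|\ell|}-(\gamma')^{|\ell|}}{q_j-q_j^{-1}}$ is symmetric, and in the one-parameter quotient $\gamma'=\gamma^{-1}$). (4) Relation~\eqref{eq:D6}: the sign $\pm$ and the $\ell>0$ versus $\ell<0$ branches get swapped simultaneously, and $H_{i,\ell}\mapsto-H_{i,-\ell}$, $x_{j,m+\ell}^{\pm}\mapsto X_{j,-m-\ell}^{\mp}$ recombine into the opposite branch of~\eqref{eq:D6}. (5) For~\eqref{eq:D7}, the current form of this relation (the standard $x_i^{\pm}(z)x_j^{\pm}(w)$ exchange with rational coefficients) is symmetric under $z\leftrightarrow z^{-1},w\leftrightarrow w^{-1}$ combined with $\pm\mapsto\mp$. (6) Relation~\eqref{eq:D8} in current form reads $[X_i^+(z),X_j^-(w)]\propto\delta_{ij}\bigl(\delta(\gamma w/z)\Phi_i^+(\gamma^{1/2}w)-\delta(\gamma^{-1}w/z)\Phi_i^-(\gamma^{-1/2}w)\bigr)$; applying $\what{\omega}$ sends this to $[X_i^-(z^{-1}),X_j^+(w^{-1})]\propto\delta_{ij}\bigl(\delta(\gamma w/z)\Phi_i^-(\gamma^{-1/2}w^{-1})-\delta(\gamma^{-1}w/z)\Phi_i^+(\gamma^{1/2}w^{-1})\bigr)$, and after the substitution $z\to z^{-1},w\to w^{-1}$ and using $\delta(u)=\delta(u^{-1})$ this is precisely the same relation with the roles of $X^+$ and $X^-$ exchanged. (7) The Serre relations~\eqref{eq:D9+}, \eqref{eq:D9-} are interchanged under $\pm\mapsto\mp$, $x_{i,d_k}^{\pm}\mapsto X_{i,-d_k}^{\mp}$, since the coefficient $(-1)^k\qbinom{N}{k}_{q_i}(q_i^{-1})\cdots$ (in the one-parameter case) is unchanged and the ordering of indices in~\eqref{eq:D9+} matches that in~\eqref{eq:D9-} after reversing.

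The main obstacle will be step (6): one must be careful with the half-integer powers of $\gamma$ appearing in the $\Phi_i^{\pm}$ arguments in the current form of~\eqref{eq:D8}, and confirm that $\what{\omega}$ genuinely interchanges the ``$+$'' and ``$-$'' branches of the right-hand side rather than producing spurious powers of $\gamma$; this requires writing~\eqref{eq:D8} carefully in delta-function form and tracking how the substitutions $z\mapsto z^{-1}$, $w\mapsto w^{-1}$, $\gamma^{1/2}\mapsto\gamma^{-1/2}$ act on each $\delta$-function and each $\Phi_i^{\pm}$ argument. Once the bookkeeping in step (6) is settled, the remaining relations are routine symmetry checks. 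Finally, the $\BC(q)$-linearity and uniqueness are clear: any algebra automorphism is determined by its values on generators, and $\what{\omega}$ fixes $\BC(q)$ by construction, so the statement follows.
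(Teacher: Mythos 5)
The paper does not actually provide a proof of this lemma; it is stated as a ``standard result'' in the literature (it can be found, e.g., in Chari--Pressley). Your approach --- direct verification that the prescribed assignment respects every defining relation of the new Drinfeld presentation, followed by the observation that $\what{\omega}^2 = \mathrm{id}$ to get invertibility --- is exactly the content behind that phrase, so the overall structure of the argument is right and there is nothing to contrast with.

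Two of your intermediate claims are slightly off, though both are fixable and the conclusions survive. In step (3) you assert that the right-hand side of~\eqref{eq:D5} is ``manifestly symmetric'' under $\ell\mapsto -\ell$, but it is \emph{anti}-symmetric: the factor $[\ell a_{ij}]_{q_i}$ changes sign (the factor $\gamma^{|\ell|}-\gamma^{-|\ell|}$ does not, since only $|\ell|$ appears). What saves the check is that $\what{\omega}$ also sends $\gamma\mapsto\gamma^{-1}$, which flips the sign of $\gamma^{|\ell|}-\gamma^{-|\ell|}$; the two sign flips cancel and~\eqref{eq:D5} is preserved, but the reason is anti-symmetry plus the $\gamma$-flip, not symmetry. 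In step (6) the displayed image of the right-hand side after $\what{\omega}$ is written as $\delta(\gamma w/z)\Phi_i^-(\gamma^{-1/2}w^{-1}) - \delta(\gamma^{-1}w/z)\Phi_i^+(\gamma^{1/2}w^{-1})$, but the correct image is $\delta(\gamma^{-1}w/z)\Phi_i^-(\gamma^{1/2}w^{-1}) - \delta(\gamma w/z)\Phi_i^+(\gamma^{-1/2}w^{-1})$: applying $\what{\omega}$ to $\Phi_i^+(\gamma^{1/2}w) = \sum_k \Phi_{i,k}^+\gamma^{-k/2}w^{-k}$ sends $\Phi_{i,k}^+\mapsto\Phi_{i,-k}^-$ and $\gamma^{-k/2}\mapsto\gamma^{k/2}$, giving $\Phi_i^-(\gamma^{1/2}w^{-1})$ (you appear to have applied $u\mapsto u^{-1}$ to the argument but forgotten the $\gamma\mapsto\gamma^{-1}$ piece), and likewise $\delta(\gamma w/z)\mapsto\delta(\gamma^{-1}w/z)$ stays attached to $\Phi_i^-$. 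Once corrected, the comparison with $[X_i^-(z^{-1}),X_j^+(w^{-1})]$ goes through exactly as you describe, via $\delta(u)=\delta(u^{-1})$ and the substitution property $\delta(\gamma^{\mp 1}w/z)f(z^{-1})=\delta(\gamma^{\mp 1}w/z)f(\gamma^{\mp 1}w^{-1})$, which converts the $z^{-1}$-arguments into the $w^{-1}$-arguments and lines everything up. So your flagging of step (6) as the main bookkeeping obstacle was well placed.
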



\noindent
$\bullet$ \textbf{Type $B_n$.}

Let $R_{q}(z) = \hat{R}_{q}(z) \circ \tau$, where $\hat{R}_{q}(z)$ is the one-parameter $B_{n}$-type affine $R$-matrix, that is $\hat{R}_{q}(z)$ is
the $r=q=s^{-1}$ specialization of \cite[(6.11)]{MT1}, and let $\wtd{R}_{q}(z)$ be its normalization as in~\eqref{eq:f_prefactor}. We first recall
the one-parameter algebra isomorphism of~\cite[Main Theorem]{JLM1}. To state it, we define
\begin{equation}\label{eq:X_series}
  \frak{X}_{i}^{+}(z) = e_{i,i+1}^{+}(z\gamma^{1/2}) - e_{i,i+1}^{-}(z\gamma^{-1/2}),\qquad
  \frak{X}_{i}^{-}(z) = f_{i+1,i}^{+}(z\gamma^{-1/2}) - f_{i+1,i}^{-}(z\gamma^{1/2})\qquad \forall \, 1 \le i \le n,
\end{equation}
where $e_{i,i+1}^{\pm}(z)$, $f_{i+1,i}^{\pm}(z)$, and $h_{i}^{\pm}(z)$ are the Gaussian generators corresponding to $L^{\pm}(z)$, as defined above.

Then we have the following result (see~\cite[Main Theorem]{JLM1}):

\begin{theorem}\label{thm:D=affine_RTT_Btype_1param}
There is a unique algebra isomorphism $\what{\theta}_{q}'\colon U_{q}^{D}(\what{\frak{so}}_{2n+1}) \iso \wtd{U}'(\wtd{R}_{q}(z))$ such that
\begin{align*}
  &\what{\theta}_{q}'(\gamma^{1/2}) = \gamma^{-1/2}, & & ~\\
  &\what{\theta}_{q}'(\Phi_{i}^{\pm}(z)) = h_{i+1}^{\pm}(z^{-1}q^{2i})h_{i}^{\pm}(z^{-1}q^{2i})^{-1} & &  \textit{for} \quad 1 \le i \le n, \\
  &\what{\theta}_{q}'(X_{i}^{\pm}(z)) = \frac{1}{q^{2} - q^{-2}}\frak{X}_{i}^{\mp}(z^{-1}q^{2i}) & & \textit{for} \quad 1 \le i < n, \\
  &\what{\theta}_{q}'(X_{n}^{\pm}(z)) = \frac{q^{\mp 1/2}}{(q - q^{-1})[2]_{q}^{1/2}}\frak{X}_{n}^{\mp}(z^{-1}q^{2n}).
\end{align*}
\end{theorem}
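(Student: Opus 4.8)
The plan is to deduce this from \cite[Main Theorem]{JLM1}, which establishes an isomorphism between $U_{q}^{D}(\what{\frak{so}}_{2n+1})$ and an RTT-type algebra attached to their $B_{n}$-type affine $R$-matrix, after making the same three conventional adjustments as in the finite case (cf.\ Remark~\ref{rmk:JLM_comparison_B}) and then precomposing with the affine Cartan involution $\what{\omega}$ of Lemma~\ref{lem:affine_cartan_involution}. Since we only claim an \emph{algebra} isomorphism here, no coproduct compatibility needs to be checked.

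In more detail, I would proceed as follows. First, \cite{JLM1} use $q_{i} = q$ for $i<n$ and $q_{n} = q^{1/2}$, whereas our conventions are $q_{i} = q^{2}$ for $i<n$ and $q_{n} = q$; accordingly one substitutes their $q$ by the square of our $q$ in all of their formulas. This substitution is what produces the spectral-parameter shifts $z^{-1}q^{2i}$ (encoding the values $\rho_{i}$ of~\eqref{eq:rho_B}) together with the scalars $q^{2} - q^{-2}$ at $i<n$ and $(q - q^{-1})[2]_{q}^{1/2}$ at $i=n$. Second, the affine $R$-matrix of \cite{JLM1} differs from our $\wtd{R}_{q}(z) = f(z)R_{q}(z)$ of~\eqref{eq:f_prefactor}--\eqref{eq:f_prefactor_formula} only by a $z$-independent diagonal conjugation $R(z) \mapsto SR(z)S^{-1}$ with $S$ of the form~\eqref{eq:S_matrix} (explicitly $S' = \sum_{i=1}^{N} q^{\delta_{i,n+1}/2}E_{ii}$, as in Remark~\ref{rmk:JLM_comparison_B}) and by a harmless rescaling of the scalar prefactor; invoking Proposition~\ref{prop:diagonal_conjugation_affine} then turns \cite{JLM1}'s isomorphism into one valued in $\wtd{U}'(\wtd{R}_{q}(z))$. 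At this point one must also impose the extra relation~\eqref{eq:B_extra_relation_affine}, which \cite{JLM1} omit, exactly as was needed in the finite case. Finally, \cite{JLM1}'s isomorphism naturally sends $X_{i}^{+}(z)$ to an expression in the upper Gaussian generators $e_{i,i+1}^{\pm}(z)$ and $X_{i}^{-}(z)$ to one in the lower Gaussian generators $f_{i+1,i}^{\pm}(z)$; precomposing with $\what{\omega}$, which swaps $X_{i}^{+}(z) \leftrightarrow X_{i}^{-}(z^{-1})$ and $\Phi_{i}^{+}(z) \leftrightarrow \Phi_{i}^{-}(z^{-1})$ and sends $\gamma^{1/2} \mapsto \gamma^{-1/2}$, produces precisely the formulas in the statement, with $\frak{X}_{i}^{\mp}(z^{-1}q^{2i})$ of~\eqref{eq:X_series} and $\Phi_{i}^{\pm}(z)$ of~\eqref{eq:Phi_current}. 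Applying $\what{\omega}$ on the source, rather than a Cartan-type antiautomorphism on the target, is what keeps the resulting isomorphism compatible with the $\what{P}$-bigradings of Lemmas~\ref{lem:Q_hat_bigrading} and~\ref{lem:affine_RTT_bigradings}(a); cf.\ Remark~\ref{rem:rtt=Dr-hatP-compatibility}.

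The remaining work is the bookkeeping verification that the composition of these three adjustments maps the generators $\gamma^{1/2}$, $\Phi_{i}^{\pm}(z)$, $X_{i}^{\pm}(z)$ as written. This amounts to tracking the spectral-parameter shifts through the Gauss decomposition $L^{\pm}(z) = F^{\pm}(z)H^{\pm}(z)E^{\pm}(z)$ — which behaves predictably under diagonal conjugation and under $z \mapsto z^{-1}$, with Lemma~\ref{lem:gaussian_generator_degrees} organizing the degrees of the Gaussian generators — and confirming that our normalization $f(z)$, expressed through $\xi = q^{-4n+2}$ and $v = q^{-2}$, agrees with \cite{JLM1}'s normalization after the $q \mapsto q^{2}$ substitution. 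Uniqueness is immediate: $\gamma^{1/2}$ together with all coefficients of the currents $\Phi_{i}^{\pm}(z)$ and $X_{i}^{\pm}(z)$ generate $U_{q}^{D}(\what{\frak{so}}_{2n+1})$, so any algebra homomorphism out of it is determined by their images.

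The step I expect to be the main obstacle is exactly this simultaneous reconciliation of conventions: keeping straight the interplay between the $q \mapsto q^{2}$ renormalization, the diagonal conjugation of the $R$-matrix (which rescales the $\ell_{ij}^{\pm}(z)$, hence the off-diagonal Gaussian generators, by $S$-dependent scalars), and the inversion of the spectral parameter coming from $\what{\omega}$, while ensuring that the scalar prefactor $f(z)$ and the short-root coefficient $\tfrac{q^{\mp 1/2}}{(q - q^{-1})[2]_{q}^{1/2}}$ come out with the correct powers of $q$. None of this is conceptually deep, but it is the place most prone to exponent errors, and it is where the convention caveats recorded in Remark~\ref{rmk:JLM_comparison_B} are essential.
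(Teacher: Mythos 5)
Your proposal matches the paper's proof essentially exactly: the paper's own argument is a two-sentence justification that cites Remark~\ref{rmk:JLM_comparison_B} for the convention adjustments (the $q \mapsto q^2$ substitution, the diagonal conjugation of the $R$-matrix, and imposing the extra relation~\eqref{eq:B_extra_relation_affine}) and then composes the resulting isomorphism from~\cite[Main Theorem]{JLM1} with $\what{\omega}$ of Lemma~\ref{lem:affine_cartan_involution}, noting that the role of $\what{\omega}$ is to preserve the $\what{P}\times\what{P}$-bigradings. You have correctly identified all of these ingredients, including the observation that only an \emph{algebra} isomorphism is claimed here, and your more detailed tracking of the bookkeeping is a faithful expansion of what the paper leaves implicit.
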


\begin{proof}
After adjusting for the differences between our conventions and those of~\cite{JLM1} (see Remark~\ref{rmk:JLM_comparison_B}), this result
follows by composing the isomorphism of~\cite[Main Theorem]{JLM1} with the map $\what{\omega}$ of Lemma~\ref{lem:affine_cartan_involution}.
We also note that $\what{\theta}_{q}'$ is compatible with $\what{P} \times \what{P}$-grading of both algebras (which is the reason for applying
the map $\what{\omega}$), see Remark~\ref{rem:rtt=Dr-hatP-compatibility}.
\end{proof}

We shall next upgrade this result to a Cartan-doubled version by using Proposition~\ref{prop:double_cartan_D}
and Proposition~\ref{prop:double_cartan_RTT_affine}.
For $1 \le i \le n$, we define the generating series $\wtd{\frak{X}}_{i}^{\pm}(w) \in \wtd{U}(\wtd{R}_{q}(z))[[w]]$ via
\begin{equation}\label{eq:X_series_double_cartan}
\begin{split}
  & \wtd{\frak{X}}_{i}^{+}(w) = e_{i,i+1}^{+}(w(\gamma')^{-1/2}) - e_{i,i+1}^{-}(w(\gamma')^{1/2}(\gamma\gamma')^{1/2}),\\
  & \wtd{\frak{X}}_{i}^{-}(w) = f_{i+1,i}^{+}(w\gamma^{-1/2}(\gamma\gamma')^{-1/2}) - f_{i+1,i}^{-}(w\gamma^{1/2}).
\end{split}
\end{equation}
Then we have the following result:

\begin{cor}\label{cor:affine_1param_double_Btype}
There is a unique algebra isomorphism $\what{\theta}_{q}\colon U_{q,q^{-1}}^{D}(\what{\frak{so}}_{2n+1}) \iso \wtd{U}(\wtd{R}_{q}(z))$ such that
\begin{align*}
  &\what{\theta}_{q}(\gamma^{1/2}) = (\gamma')^{1/2}, & & \what{\theta}_{q}((\gamma')^{1/2}) = \gamma^{1/2}, \\
  &\what{\theta}_{q}(\omega_{i}(z)) = h_{i+1}^{+}(z^{-1}q^{2i})h_{i}^{+}(z^{-1}q^{2i})^{-1},
  & & \what{\theta}_{q}(\omega_{i}'(z)) = h_{i+1}^{-}(z^{-1}q^{2i})h_{i}^{-}(z^{-1}q^{2i})^{-1} & & 1 \le i \le n, \\
  &\what{\theta}_{q}(x_{i}^{\pm}(z)) = \frac{1}{q^{2} - q^{-2}}\wtd{\frak{X}}_{i}^{\mp}(z^{-1}q^{2i}),
  & & \what{\theta}_{q}(x_{n}^{\pm}(z)) = \frac{q^{\mp 1/2}}{(q - q^{-1})[2]_{q}^{1/2}}\wtd{\frak{X}}_{n}^{\mp}(z^{-1}q^{2n}) & & 1 \le i < n.
\end{align*}
\end{cor}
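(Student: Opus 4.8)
The proof will run exactly parallel to that of Corollary~\ref{cor:DJ=RTT_Btype_1param_double_cartan}, now using the two ``key embeddings'' $\eta_{D}\colon U_{q,q^{-1}}^{D}(\what{\frak{so}}_{2n+1}) \hookrightarrow U_{q}^{D}(\what{\frak{so}}_{2n+1}) \otimes \bb{K}[L_{\alpha_{0}}^{\pm 1},\ldots ,L_{\alpha_{n}}^{\pm 1}]$ of Proposition~\ref{prop:double_cartan_D} and $\what{\eta}_{R}\colon \wtd{U}(\wtd{R}_{q}(z)) \hookrightarrow \wtd{U}'(\wtd{R}_{q}(z)) \otimes \bb{C}[L_{\delta}^{\pm 1},L_{\varepsilon_{1}}^{\pm 1},\ldots ,L_{\varepsilon_{n}}^{\pm 1}]$ of Proposition~\ref{prop:double_cartan_RTT_affine}, together with the ring embedding $g\colon \bb{K}[L_{\alpha_{0}}^{\pm 1},\ldots ,L_{\alpha_{n}}^{\pm 1}] \hookrightarrow \bb{C}[L_{\delta}^{\pm 1},L_{\varepsilon_{1}}^{\pm 1},\ldots ,L_{\varepsilon_{n}}^{\pm 1}]$ induced by the lattice embedding $\what{Q}\hookrightarrow \what{P}=\bb{Z}\delta\oplus P$ determined by $\alpha_{0}\mapsto \delta-\theta$ and $\alpha_{i}\mapsto \alpha_{i}$ for $1\le i\le n$ (so $g(L_{\mu})=L_{\mu}$, the right-hand side read via this embedding; in particular $g(L_{\delta})=L_{\delta}$, $g(L_{\alpha_{i}})=L_{\varepsilon_{i}}L_{\varepsilon_{i+1}}^{-1}$ for $i<n$ with $\varepsilon_{n+1}:=0$, and $g(L_{\alpha_{0}})=L_{\delta}L_{\varepsilon_{1}}^{-1}L_{\varepsilon_{2}}^{-1}$). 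First I would check that $(\what{\theta}_{q}'\otimes g)\circ \eta_{D}$ maps $U_{q,q^{-1}}^{D}(\what{\frak{so}}_{2n+1})$ into the subalgebra $\what{\eta}_{R}(\wtd{U}(\wtd{R}_{q}(z)))$, where $\what{\theta}_{q}'$ is the isomorphism of Theorem~\ref{thm:D=affine_RTT_Btype_1param}; granting this, set $\what{\theta}_{q} = \what{\eta}_{R}^{-1}\circ (\what{\theta}_{q}'\otimes g)\circ \eta_{D}$, an algebra homomorphism, injective because each of $\eta_{D}$, $\what{\theta}_{q}'$, $\what{\eta}_{R}$, $g$ is.

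\textbf{The easy generators.} It suffices to verify the containment, and read off the formulas of the statement, on the generators $\gamma^{\pm 1/2},(\gamma')^{\pm 1/2},\omega_{i}^{\pm 1},(\omega_{i}')^{\pm 1},x_{i,m}^{\pm},a_{i,\ell}$ of $U_{q,q^{-1}}^{D}(\what{\frak{so}}_{2n+1})$. For $\gamma^{1/2}$: $\eta_{D}(\gamma^{1/2})=\gamma^{1/2}\otimes L_{\delta}^{-1}$, so $(\what{\theta}_{q}'\otimes g)\eta_{D}(\gamma^{1/2})=\gamma^{-1/2}\otimes L_{\delta}^{-1}=\what{\eta}_{R}((\gamma')^{1/2})$, whence $\what{\theta}_{q}(\gamma^{1/2})=(\gamma')^{1/2}$; similarly for $(\gamma')^{1/2}$ one gets $\what{\theta}_{q}((\gamma')^{1/2})=\gamma^{1/2}$. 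For $\omega_{i}(z),\omega_{i}'(z)$ one proceeds as in the finite case, now using the Gauss decomposition: $\what{\theta}_{q}'(\Phi_{i}^{\pm}(z))=h_{i+1}^{\pm}(z^{-1}q^{2i})h_{i}^{\pm}(z^{-1}q^{2i})^{-1}$, and tracking $\eta_{D}$ on the modes of $\omega_{i}(z),\omega_{i}'(z)$ (equivalently on $a_{i,\ell}$ and $\omega_{i},\omega_{i}'$) together with $\what{\eta}_{R}$ on the Gaussian generators; the $L$-factors cancel since the bidegrees of $h_{i+1}^{\pm}[\mp m]$ and $h_{i}^{\pm}[\mp m]$ coincide up to the $\varepsilon$-part (Lemma~\ref{lem:gaussian_generator_degrees}), yielding the claimed $\what{\theta}_{q}(\omega_{i}(z))=h_{i+1}^{+}(z^{-1}q^{2i})h_{i}^{+}(z^{-1}q^{2i})^{-1}$ and $\what{\theta}_{q}(\omega_{i}'(z))=h_{i+1}^{-}(z^{-1}q^{2i})h_{i}^{-}(z^{-1}q^{2i})^{-1}$.

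\textbf{The current generators — the heart of the argument.} For $x_{i,m}^{\pm}$ one has $\eta_{D}(x_{i,m}^{\pm})=X_{i,m}^{\pm}\otimes L_{\alpha_{i}\pm m\delta}$, so $\eta_{D}(x_{i}^{\pm}(z))$ equals $X_{i}^{\pm}$ of the formally rescaled spectral parameter $zL_{\delta}^{\mp 1}$, tensored with $1\otimes L_{\alpha_{i}}$. Applying $\what{\theta}_{q}'\otimes g$ turns this into $\tfrac{1}{q^{2}-q^{-2}}$ (or the node-$n$ scalar) times $\frak{X}_{i}^{\mp}$ of a rescaled argument, an element of $\wtd{U}'(\wtd{R}_{q}(z))$ defined via the Gaussian generators and the single central element $\gamma$ of $\wtd{U}'(\wtd{R}_{q}(z))$ as in~\eqref{eq:X_series}. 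The crucial step is then to recognize, using the exact $\what{P}\times\what{P}$-bidegrees of $e_{i,i+1}^{\pm}[\mp m]$ and $f_{i+1,i}^{\pm}[\mp m]$ from Lemma~\ref{lem:gaussian_generator_degrees}(a), that the $\what{\eta}_{R}$-preimage of $e_{i,i+1}^{\pm}(\text{rescaled})\otimes L_{(\cdots)}$ (and likewise for $f_{i+1,i}^{\pm}$) is precisely the shifted Gaussian current of $\wtd{U}(\wtd{R}_{q}(z))$ appearing in $\wtd{\frak{X}}_{i}^{\pm}(w)$ of~\eqref{eq:X_series_double_cartan}; this is exactly where the single $\gamma$ of $\wtd{U}'(\wtd{R}_{q}(z))$ splits, via $\what{\eta}_{R}(\gamma^{1/2})=\gamma^{1/2}\otimes L_{-\delta}$ and $\what{\eta}_{R}((\gamma')^{1/2})=\gamma^{-1/2}\otimes L_{-\delta}$, into the asymmetric combinations $(\gamma')^{\mp 1/2}$, $(\gamma')^{\pm 1/2}(\gamma\gamma')^{1/2}$, $\gamma^{\mp 1/2}(\gamma\gamma')^{\mp 1/2}$, $\gamma^{\pm 1/2}$ occurring in~\eqref{eq:X_series_double_cartan}. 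This bookkeeping — reconciling the one central element on the RTT-primed side with the two on the non-primed side through $\what{\eta}_{R}$, while keeping the $L$-rescalings of spectral parameters consistent — is the main obstacle; the modes $a_{i,\ell}$ (using $\eta_{D}(a_{i,\ell})=H_{i,\ell}\otimes L_{\delta}^{-2|\ell|}$ and the $h_{i}^{\pm}$-expansion of $\Phi_{i}^{\pm}(z)$) are handled the same way but are more routine.

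\textbf{Surjectivity.} Once $\what{\theta}_{q}$ is shown to be a well-defined injective algebra homomorphism with the stated images, surjectivity follows as in Corollary~\ref{cor:DJ=RTT_Btype_1param_double_cartan}: the elements $\gamma^{\pm 1/2},(\gamma')^{\pm 1/2}$ lie in the image (from $\what{\theta}_{q}((\gamma')^{\pm 1/2})$ and $\what{\theta}_{q}(\gamma^{\pm 1/2})$); since $e_{i,i+1}^{+}(w)$ and $e_{i,i+1}^{-}(w)$ (resp.\ $f_{i+1,i}^{\pm}(w)$) have disjoint Laurent supports in $w$, the currents $\what{\theta}_{q}(x_{i}^{\pm}(z))$ recover each Gaussian generator $e_{i,i+1}^{\pm}(\cdot),f_{i+1,i}^{\pm}(\cdot)$ separately, and $\what{\theta}_{q}(\omega_{i}(z)),\what{\theta}_{q}(\omega_{i}'(z))$ recover the ratios $h_{i+1}^{\pm}/h_{i}^{\pm}$; anchoring with $\ell_{n+1,n+1}^{\pm}[0]=1$ from~\eqref{eq:B_extra_relation_affine} then yields all $h_{i}^{\pm}(z)$ together with $h_{i}^{\pm}[0]^{-1}$, and finally Lemma~\ref{lem:borel_spanning_sets_affine} (in combination with the Gauss-decomposition relations, as in the proof of Lemma~\ref{lem:borel_spanning_sets}) produces all $\ell_{ij}^{\pm}[\mp m]$. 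Hence $\what{\theta}_{q}$ is an algebra isomorphism; compatibility with the $\what{P}$-bigradings of Lemmas~\ref{lem:Q_hat_bigrading} and~\ref{lem:affine_RTT_bigradings}(a) is immediate by construction, cf.\ Remark~\ref{rem:rtt=Dr-hatP-compatibility}.
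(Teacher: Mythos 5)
Your construction of $\what{\theta}_{q}$ as $\what{\eta}_{R}^{-1}\circ(\what{\theta}_{q}'\otimes\what{g})\circ\eta_{D}$, the lattice map $\what{g}$ sending $L_{\alpha_{0}}\mapsto L_{\delta-\theta}$ and $L_{\alpha_{i}}\mapsto L_{\varepsilon_{i}-\varepsilon_{i+1}}$, and the bookkeeping of how the one central element on the $\wtd{U}'$-side unfolds under $\what{\eta}_{R}$ into the asymmetric $\gamma,\gamma'$-dressings of $\wtd{\frak{X}}_{i}^{\pm}$ all match the paper's proof and are correct (the disjoint-support observation for $e^{\pm}_{i,i+1}(w)$, $f^{\pm}_{i+1,i}(w)$ is also valid since $e^{+}_{i,i+1}[0]=0=f^{-}_{i+1,i}[0]$).

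The surjectivity paragraph, however, has a gap. From the images of $\omega_{i}(z),\omega'_{i}(z)$ you only obtain the \emph{ratios} $h^{\pm}_{i+1}(\cdot)h^{\pm}_{i}(\cdot)^{-1}$ for $1\le i\le n$. Anchoring with $\ell^{\pm}_{n+1,n+1}[0]=1$ from~\eqref{eq:B_extra_relation_affine} only pins the constant term $h^{\pm}_{n+1}[0]$, not the entire series $h^{\pm}_{n+1}(z)$ --- so ``yields all $h^{\pm}_{i}(z)$'' does not follow from ratios plus a zero-mode anchor. Worse, nothing in your argument produces the Gaussian generators with \emph{large} indices: $h^{\pm}_{\imath}$ for $n+1<\imath\le 2n+1$, and $e^{\pm}_{j,j+1}$, $f^{\pm}_{j+1,j}$ for $n<j\le 2n$. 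These are obtained in the paper by invoking the explicit relations of~\cite[Lemma~4.21, Proposition~4.22]{JLM1}, which express the large-index Gaussian currents through small-index ones and the (trivialized) central series $\frak{z}^{\pm}_{q}(z)$; this step is genuinely needed and cannot be replaced by a support argument. Finally, your appeal to Lemma~\ref{lem:borel_spanning_sets_affine} does not close the proof either: that lemma asserts that (ordered products of) \emph{all} $X^{\pm}_{ij}[\mp m]$ span $\wtd{U}_{\gtrless}(\wtd{R}_{q}(z))$, not that the near-diagonal ones generate. Passing from $e^{\pm}_{i,i+1}$, $f^{\pm}_{i+1,i}$, $h^{\pm}_{i}$ to arbitrary $\ell^{\pm}_{ij}[\mp m]$ is a separate reconstruction, which the paper handles via an analogue of~\cite[Proposition~A.4]{HT}. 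In short: your plan and all the ``well-definedness'' verifications are the same as the paper's, but the surjectivity step is asserted rather than proved; it requires the additional inputs from~\cite{JLM1} and~\cite{HT} that the paper cites.
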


\begin{proof}
Similarly to~\eqref{eq:g-map}, we define a map
  $\what{g}\colon \bb{K}[L_{\alpha_{0}}^{\pm 1},L_{\alpha_{1}}^{\pm 1},\ldots ,L_{\alpha_{n}}^{\pm 1}] \to
   \bb{K}[L_{\delta}^{\pm 1},L_{\varepsilon_{1}}^{\pm 1},\ldots ,L_{\varepsilon_{n}}^{\pm 1}]$
by $\what{g}(L_{\alpha_{0}}) = L_{\delta- \theta}$ and $\what{g}(L_{\alpha_{i}}) = L_{\alpha_{i}} = L_{\varepsilon_{i} - \varepsilon_{i+1}}$
for $1 \le i \le n$. Then, evoking the algebra embeddings $\eta_{D}$ of Proposition~\ref{prop:double_cartan_D} and $\what{\eta}_{R}$ of
Proposition~\ref{prop:double_cartan_RTT_affine}, along with the algebra isomorphism $\what{\theta}_{q}'$ of Theorem~\ref{thm:D=affine_RTT_Btype_1param},
we can define $\what{\theta}_{q} = \what{\eta}_{R}^{-1}\circ (\what{\theta}_{q}' \otimes \what{g}) \circ \eta_{D}$, provided that
$(\what{\theta}_{q}' \otimes \what{g}) \circ \eta_{D}$ maps $U_{q,q^{-1}}^{D}(\what{\frak{so}}_{2n+1})$ into $\what{\eta}_{R}(\wtd{U}(\wtd{R}_{q}(z)))$.
It suffices to verify the latter on the generators $\gamma^{1/2}$, $(\gamma')^{1/2}$, and the coefficients of $x_{i}^{\pm}(z)$, $\omega_{i}(z)$, $\omega_{i}'(z)$.
The claim is obvious for $\gamma^{1/2},(\gamma')^{1/2}$. To deal with the remaining generators, we note that, by Proposition~\ref{prop:double_cartan_D}, we have
\[
  \eta_{D}(x_{i}^{\pm}(z)) = X_{i}^{\pm}(z(1 \otimes L_{\mp \delta}))(1 \otimes L_{\alpha_{i}}),
\]
\[
  \eta_{D}(\omega_{i}(z)) = \Phi_{i}^{+}(z(1 \otimes L_{2\delta}))(1 \otimes L_{\alpha_{i}}^{2}),\qquad \eta_{D}(\omega_{i}'(z)) =
  \Phi_{i}^{-}(z(1 \otimes L_{-2\delta}))(1 \otimes L_{\alpha_{i}}^{2}),
\]
for $1 \le i \le n$. Likewise, by Proposition~\ref{prop:double_cartan_RTT_affine} and Lemma~\ref{lem:gaussian_generator_degrees}, we have
\begin{equation}\label{eq:eta_R_Btype-1}
  \what{\eta}_{R}(h_{i}^{\pm}(z)) = h_{i}^{\pm}(z(1 \otimes L_{\mp 2\delta}))(1 \otimes L_{-2\varepsilon_{i}}) ,\qquad
  \what{\eta}_{R}(h_{i}^{\pm}(z)^{-1}) = h_{i}^{\pm}(z(1 \otimes L_{\mp 2\delta}))^{-1}(1 \otimes L_{2\varepsilon_{i}})
\end{equation}
for all $1 \le i \le N$ and
\begin{equation}\label{eq:eta_R_Btype-2}
  \what{\eta}_{R}(e_{ij}^{\pm}(z)) = e_{ij}^{\pm}(z(1 \otimes L_{\mp 2\delta}))(1 \otimes L_{\varepsilon_{i} - \varepsilon_{j}}) ,\qquad
  \what{\eta}_{R}(f_{ji}^{\pm}(z)) = f_{ji}^{\pm}(z(1 \otimes L_{\mp 2\delta}))(1 \otimes L_{\varepsilon_{i} - \varepsilon_{j}})
\end{equation}
for all $1 \le i < j \le N$. Then we find that for $i < n$:
\begin{equation*}
\begin{split}
  (\what{\theta}_{q}' \otimes \what{g})\eta_{D}(x_{i}^{+}(z))
  &= (\what{\theta}_{q}' \otimes \what{g})\left (X_{i}^{+}(z(1 \otimes L_{-\delta}))(1 \otimes L_{\alpha_{i}})\right )   \\
  &= \frac{1}{q^{2} - q^{-2}}
      \left (f_{i+1,i}^{+}(z^{-1}(1 \otimes L_{\delta})\gamma^{-1/2}q^{2i}) - f_{i+1,i}^{-}(z^{-1}(1 \otimes L_{\delta})\gamma^{1/2}q^{2i})\right )
      (1 \otimes L_{\varepsilon_{i} - \varepsilon_{i+1}}).
\end{split}
\end{equation*}
According to~\eqref{eq:eta_R_Btype-2}, we have
\begin{equation*}
\begin{split}
  &f_{i+1,i}^{+}(z^{-1}(1 \otimes L_{\delta})\gamma^{-1/2}q^{2i})(1 \otimes L_{\varepsilon_{i} - \varepsilon_{i+1}}) \\
  &= f_{i+1,i}^{+}(z^{-1}(1 \otimes L_{-2\delta})(\gamma^{1/2} \otimes L_{-\delta})^{-1}(\gamma^{1/2} \otimes L_{-\delta})^{-1}(\gamma^{-1/2} \otimes
      L_{-\delta})^{-1}q^{2i})(1 \otimes L_{\varepsilon_{i} - \varepsilon_{i+1}}) \\
  &= \what{\eta}_{R}(f_{i+1,i}^{+}(z^{-1}\gamma^{-1/2}(\gamma\gamma')^{-1/2}q^{2i})),
\end{split}
\end{equation*}
and
\begin{equation*}
\begin{split}
  f_{i+1,i}^{-}(z^{-1}(1 \otimes L_{\delta})\gamma^{1/2}q^{2i})(1 \otimes L_{\varepsilon_{i} - \varepsilon_{i+1}})
  &= f_{i+1,i}^{-}(z^{-1}(1 \otimes L_{2\delta})(\gamma^{1/2} \otimes L_{-\delta})q^{2i})(1 \otimes L_{\varepsilon_{i} - \varepsilon_{i+1}}) \\
  &= \what{\eta}_{R}(f_{i+1,i}^{-}(z^{-1}\gamma^{1/2}q^{2i})),
\end{split}
\end{equation*}
so that we ultimately get
\[
  (\what{\theta}_{q}' \otimes \what{g})\eta_{D}(x_{i}^{+}(z)) = \what{\eta}_{R} \left(\frac{1}{q^{2} - q^{-2}}\wtd{\frak{X}}_{i}^{-}(z^{-1}q^{2i})\right).
\]
Thus $(\what{\theta}_{q}' \otimes \what{g})\eta_{D}(x_{i,m}^{+}) \in \what{\eta}_{R}(\wtd{U}(\wtd{R}_{q}(z)))$ for all $m$, and
$\what{\theta}_{q}(x_{i}^{+}(z)) = \frac{1}{q^{2} - q^{-2}}\wtd{\frak{X}}_{i}^{-}(z^{-1}q^{2i})$ for $i < n$, as claimed.
The verifications for $x_{n}^{+}(z)$ and $x_{i}^{-}(z)$ with $1 \le i \le n$ are similar.

For $\omega_{i}(z)$, we have
\begin{equation*}
\begin{split}
  (\what{\theta}_{q}' \otimes \what{g})(\eta_{D}(\omega_{i}(z)))
  &= (\what{\theta}_{q}' \otimes \what{g})(\Phi_{i}^{+}(z(1 \otimes L_{2\delta}))(1 \otimes L_{\alpha_{i}}^{2})) \\
  &= h_{i+1}^{+}(z^{-1}(1 \otimes L_{-2\delta})q^{2i})h_{i}^{+}(z^{-1}(1 \otimes L_{-2\delta})q^{2i})^{-1}(1 \otimes L_{2\varepsilon_{i} - 2\varepsilon_{i+1}}) \\
  &= \what{\eta}_{R}(h_{i+1}^{+}(z^{-1}q^{2i})h_{i}^{+}(z^{-1}q^{2i})^{-1}),
\end{split}
\end{equation*}
which implies that $\what{\theta}_{q}(\omega_{i}(z)) = h_{i+1}^{+}(z^{-1}q^{2i})h_{i}^{+}(z^{-1}q^{2i})^{-1}$. Finally, we also have
\begin{equation*}
\begin{split}
  (\what{\theta}_{q}' \otimes \what{g})(\eta_{D}(\omega_{i}'(z)))
  &= (\what{\theta}_{q}' \otimes g)(\Phi_{i}^{-}(z(1 \otimes L_{-2\delta}))(1 \otimes L_{\alpha_{i}}^{2})) \\
  &= h_{i+1}^{-}(z^{-1}(1 \otimes L_{2\delta})q^{2i})h_{i}^{-}(z^{-1}(1 \otimes L_{2\delta})q^{2i})^{-1}(1 \otimes L_{2\varepsilon_{i} - 2\varepsilon_{i+1}}) \\
  &= \what{\eta}_{R}(h_{i+1}^{-}(z^{-1}q^{2i})h_{i}^{-}(z^{-1}q^{2i})^{-1}),
\end{split}
\end{equation*}
so that $\what{\theta}_{q}(\omega_{i}'(z)) = h_{i+1}^{-}(z^{-1}q^{2i})h_{i}^{-}(z^{-1}q^{2i})^{-1}$, as stated.

Thus, we have a well-defined algebra homomorphism $\what{\theta}_{q}\colon U_{q,q^{-1}}^{D}(\what{\frak{so}}_{2n+1}) \to \wtd{U}(\wtd{R}_{q}(z))$, which is
injective by construction.  It can be easily deduced by looking at the images of the generators of $U_{q,q^{-1}}^{D}(\what{\frak{so}}_{2n+1})$ that all elements
$\{h^\pm_\imath[\mp m],e^\pm_{j,j+1}[\mp m], f^\pm_{j+1,j}[\mp m]\}$ with $1\leq \imath\leq n+1, 1\leq j\leq n, m\geq 0\}$ lie in the image of $\what{\theta}_{q}$.
Then using~\cite[Lemma 4.21, Proposition 4.22]{JLM1}, we see that also such elements with $n+1<\imath\leq 2n+1$ and $n<j\leq 2n$ lie in the image of
$\what{\theta}_{q}$. Finally, similarly to~\cite[Proposition A.4]{HT} one shows that all $e^\pm_{ij}[\mp m]$ with $1\leq i<j\leq 2n+1$ lie in the image
of $\what{\theta}_{q}$, and the same reasoning also applies to show that all $f^\pm_{ji}[\mp m]$ with $1\leq i<j\leq 2n+1$ lie in the image of
$\what{\theta}_{q}$. Therefore, $\what{\theta}_{q}$ is indeed an algebra isomorphism.
\end{proof}

Combining this with Lemma~\ref{lem:gaussian_generators_2_param}, we can immediately derive the two-parameter analogue of
Theorem~\ref{thm:D=affine_RTT_Btype_1param}. As above, we shall denote the Gaussian generators of $\wtd{U}(\wtd{R}_{r,s}(z))$ by
$\wtd{e}_{ij}^{\pm}(z)$, $\wtd{f}_{ji}^{\pm}(z)$, $\wtd{h}_{i}^{\pm}(z)$, and we use the same symbol $\wtd{\frak{X}}_{i}^{\pm}(w)$
to denote~\eqref{eq:X_series_double_cartan} while using $\wtd{e}_{i,i+1}^{\pm}(z)$ and $\wtd{f}_{i+1,i}^{\pm}(z)$ in place of
$e_{i,i+1}^{\pm}(z)$ and $f_{i+1,i}^{\pm}(z)$.

\begin{theorem}\label{thm:D=affine_RTT_Btype_2param}
There is a unique algebra isomorphism $\what{\theta}_{r,s}\colon U_{r,s}^{D}(\what{\frak{so}}_{2n+1}) \iso \wtd{U}(\wtd{R}_{r,s}(z))$ such that
\begin{align*}
  &\what{\theta}_{r,s}(\gamma^{1/2}) = (\gamma')^{1/2}, & & \what{\theta}_{r,s}((\gamma')^{1/2}) = \gamma^{1/2}, \\
  &\what{\theta}_{r,s}(\omega_{i}(z)) = \wtd{h}_{i+1}^{+}(z^{-1}r^{i}s^{-i})\wtd{h}_{i}^{+}(z^{-1}r^{i}s^{-i})^{-1},
  & & \what{\theta}_{r,s}(\omega_{i}'(z)) = \wtd{h}_{i+1}^{-}(z^{-1}r^{i}s^{-i})\wtd{h}_{i}^{-}(z^{-1}r^{i}s^{-i})^{-1} & & 1 \le i \le n, \\
  &\what{\theta}_{r,s}(x_{i}^{\pm}(z)) = \frac{1}{r^{2} - s^{2}}\wtd{\frak{X}}_{i}^{\mp}(z^{-1}r^{i}s^{-i}) & & ~ & & 1 \le i < n, \\
  &\what{\theta}_{r,s}(x_{n}^{+}(z)) = \frac{r^{1/2}s}{(r - s)[2]_{r,s}^{1/2}}\wtd{\frak{X}}_{n}^{-}(z^{-1}r^{n}s^{-n}),
  & & \what{\theta}_{r,s}(x_{n}^{-}(z)) = \frac{r^{1/2}}{(r - s)[2]_{r,s}^{1/2}}\wtd{\frak{X}}_{n}^{+}(z^{-1}r^{n}s^{-n}).
\end{align*}
\end{theorem}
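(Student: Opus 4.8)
The plan is to follow verbatim the strategy of Theorem~\ref{thm:DJ=RTT_Btype_2_param}, deducing the two-parameter statement from its Cartan-doubled one-parameter counterpart by composing the three twisting isomorphisms already at hand. The map $\widehat{\theta}_q\colon U_{q,q^{-1}}^D(\widehat{\mathfrak{so}}_{2n+1}) \iso \widetilde{U}(\widetilde{R}_q(z))$ of Corollary~\ref{cor:affine_1param_double_Btype} intertwines the $\widehat{P}$-bigradings of Lemmas~\ref{lem:Q_hat_bigrading} and~\ref{lem:affine_RTT_bigradings}(a) (cf.\ Remark~\ref{rem:rtt=Dr-hatP-compatibility}), and in particular the induced $P$-bigradings, so it descends to an algebra isomorphism $\widehat{\theta}_q\colon U_{q,\zeta}^D(\widehat{\mathfrak{so}}_{2n+1}) \iso \widetilde{U}(\widetilde{R}_q(z))_\zeta$ between the twisted algebras. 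I would then set
\[
  \widehat{\theta}_{r,s} = (\widehat{\Psi}')^{-1} \circ \widehat{\theta}_q \circ \varphi_D,
\]
where $\varphi_D$ is the isomorphism of Theorem~\ref{thm:twisted_algebra_loop} and $\widehat{\Psi}'$ is the isomorphism of Proposition~\ref{prop:2_vs_1_parameter_affine_RTT}. By construction this is an algebra isomorphism $U_{r,s}^D(\widehat{\mathfrak{so}}_{2n+1}) \iso \widetilde{U}(\widetilde{R}_{r,s}(z))$, and since the listed elements generate $U_{r,s}^D(\widehat{\mathfrak{so}}_{2n+1})$, uniqueness will be automatic once we evaluate $\widehat{\theta}_{r,s}$ on generators.

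The evaluation I would organize current by current. For $\gamma^{1/2},(\gamma')^{1/2}$ all three maps act transparently, giving the claimed swap. For $\omega_i(z),\omega_i'(z)$ one uses that $\varphi_D$ fixes these currents (from the proof of Theorem~\ref{thm:twisted_algebra_loop}, $\varphi_D(\omega_{i,m}) = \omega_{i,m}$ and $\varphi_D(\omega'_{i,m}) = \omega'_{i,m}$), that $\widehat{\theta}_q$ sends them to $h_{i+1}^\pm(z^{-1}q^{2i})h_i^\pm(z^{-1}q^{2i})^{-1}$, and that $(\widehat{\Psi}')^{-1}$ fixes each $h_i^\pm(z)$ by Lemma~\ref{lem:gaussian_generators_2_param}; here one first notes that the product $h_{i+1}^\pm \circ (h_i^\pm)^{-1}$ carries no $\zeta$-correction, since the two $P$-degrees $(-\varepsilon_{i+1},\varepsilon_{i+1})$ and $(\varepsilon_i,-\varepsilon_i)$ pair to $1$ under $\zeta$, so multiplicativity of $(\widehat{\Psi}')^{-1}$ applies directly. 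Substituting $q^{2i} = r^is^{-i}$ (as $q = r^{1/2}s^{-1/2}$) yields the stated formula. For the currents $x_i^\pm(z)$ with $i<n$, one has $\varphi_D(x_i^+(z)) = x_i^+(z)$, $\varphi_D(x_i^-(z)) = (r_is_i)^{-1/2}x_i^-(z)$, then $\widehat{\theta}_q$ gives $\tfrac{1}{q^2-q^{-2}}\widetilde{\mathfrak{X}}_i^{\mp}(z^{-1}q^{2i})$, and Lemma~\ref{lem:gaussian_generators_2_param} shows $(\widehat{\Psi}')^{-1}$ multiplies $\widetilde{\mathfrak{X}}_i^-$ (resp.\ $\widetilde{\mathfrak{X}}_i^+$), built from $\widetilde{f}_{i+1,i}^\pm$ (resp.\ $\widetilde{e}_{i,i+1}^\pm$) as in~\eqref{eq:X_series_double_cartan}, by the scalar $\psi_i^{-1}\psi_{i+1}\zeta(\varepsilon_{i+1},\varepsilon_i)$ (resp.\ $\psi_i\psi_{i+1}^{-1}\zeta(\varepsilon_{i+1},\varepsilon_i)$), with no $\circ$-conversion needed since each term is a single Gaussian generator times central elements of degree zero. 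Using $r_i = r^2,\,s_i = s^2$ for $i<n$ together with the finite-case identities $\psi_{i+1}\psi_i^{-1} = \zeta(\varepsilon_{i+1},\varepsilon_i) = (rs)^{-1/2}$ recorded in the proof of Theorem~\ref{thm:DJ=RTT_Btype_2_param}, the coefficient $\tfrac{1}{q^2-q^{-2}} = \tfrac{rs}{r^2-s^2}$ collapses to $\tfrac{1}{r^2-s^2}$, as claimed.

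The main work, as in the finite case, is the index $i = n$, where all the relevant half-powers must be tracked carefully. Here $d_n = 1$, so $\varphi_D(x_n^-(z)) = (rs)^{-1/2}x_n^-(z)$ while $\varphi_D(x_n^+(z)) = x_n^+(z)$; this single asymmetric twist is precisely what produces the two distinct prefactors $\tfrac{r^{1/2}s}{(r-s)[2]_{r,s}^{1/2}}$ and $\tfrac{r^{1/2}}{(r-s)[2]_{r,s}^{1/2}}$ in the theorem. Using the convention $\varepsilon_{n+1} = 0$ one gets $\zeta(\varepsilon_{n+1},\varepsilon_n) = 1$ and $\psi_n = \psi_{n+1} = 1$ from~\eqref{eq:psi_values_B}, so $(\widehat{\Psi}')^{-1}$ contributes no extra factor at $i=n$; what remains is to substitute $q^{\mp1/2} = r^{\mp1/4}s^{\pm1/4}$, $q - q^{-1} = (rs)^{-1/2}(r-s)$, $[2]_q = (rs)^{-1/2}[2]_{r,s}$, and $q^{2n} = r^ns^{-n}$ into $\widehat{\theta}_q(x_n^\pm(z)) = \tfrac{q^{\mp1/2}}{(q-q^{-1})[2]_q^{1/2}}\widetilde{\mathfrak{X}}_n^{\mp}(z^{-1}q^{2n})$, then multiply by the $(rs)^{-1/2}$ coming from $\varphi_D$ in the $x_n^-$ case. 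I expect this bookkeeping of fractional powers to be the only delicate point; everything else is the routine composition of isomorphisms whose action on generators has already been recorded in Theorem~\ref{thm:twisted_algebra_loop}, Corollary~\ref{cor:affine_1param_double_Btype}, and Lemma~\ref{lem:gaussian_generators_2_param}.
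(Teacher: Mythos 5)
Your proposal is correct and follows essentially the same route as the paper: pass to the $\zeta$-twisted version of $\what{\theta}_q$ from Corollary~\ref{cor:affine_1param_double_Btype}, compose with $\varphi_D$ of Theorem~\ref{thm:twisted_algebra_loop} and $(\what{\Psi}')^{-1}$ of Proposition~\ref{prop:2_vs_1_parameter_affine_RTT}, and evaluate on generators via Lemma~\ref{lem:gaussian_generators_2_param} together with the $\psi_i$ and $\zeta$ identities from the proof of Theorem~\ref{thm:DJ=RTT_Btype_2_param}. You have merely spelled out the bookkeeping (the degree-zero observation for the $\gamma$ factors, the vanishing $\zeta$-correction in $h_{i+1}^\pm (h_i^\pm)^{-1}$, and the $q\to r,s$ conversions at $i=n$) that the paper leaves implicit, and all the scalars check out.
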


\begin{proof}
Note that the isomorphism $\what{\theta}_{q}$ of Corollary~\ref{cor:affine_1param_double_Btype} is compatible with $Q\times Q$-grading
of~\eqref{eq:Drinfeld-grading} and Lemma~\ref{lem:affine_RTT_bigradings}(b), so it induces an algebra isomorphism
$\what{\theta}_{q}\colon U_{q,\zeta}^{D}(\what{\frak{so}}_{2n+1}) \iso \wtd{U}(\wtd{R}_{q}(z))_{\zeta}$. Then we define
$\what{\theta}_{r,s} = (\what{\Psi}')^{-1}\circ \what{\theta}_{q} \circ \varphi_D$, where $\what{\Psi}'$ and $\varphi_D$ are the algebra isomorphisms
of Proposition~\ref{prop:2_vs_1_parameter_affine_RTT} and Theorem~\ref{thm:twisted_algebra_loop}, respectively. Thus, $\what{\theta}_{r,s}$ is also
an algebra isomorphism. The $\what{\theta}_{r,s}$-images of the generators are computed using Lemma~\ref{lem:gaussian_generators_2_param} and the
identities $\psi_{i+1}\psi_{i}^{-1} = (rs)^{-1/2} = \zeta(\varepsilon_{i+1},\varepsilon_{i})$ for $i < n$ and
$\psi_{n+1}\psi_{n}^{-1} = 1= \zeta(\varepsilon_{n+1},\varepsilon_{n})$ (cf.\ the proof of Theorem~\ref{thm:DJ=RTT_Btype_2_param}).
\end{proof}


\noindent
$\bullet$ \textbf{Type $C_n$.}

Let $R_{q}(z) = \hat{R}_{q}(z) \circ \tau$, where $\hat{R}_{q}(z)$ is the one-parameter $C_{n}$-type affine $R$-matrix, that is the $r=q=s^{-1}$ specialization
of~\cite[(6.12)]{MT1}, and let $\wtd{R}_{q}(z)$ be its normalization as in~\eqref{eq:f_prefactor}. Following the procedure carried out for $B_{n}$-type above,
we shall now derive a two-parameter analogue of the one-parameter isomorphism $U_{q}^{D}(\what{\frak{sp}}_{2n}) \iso \wtd{U}'(\wtd{R}_{q}(z))$
from~\cite[Main Theorem]{JLM2}. To do so, we first need to enlarge $U_{r,s}^{D}(\what{\frak{sp}}_{2n})$ by adjoining generators $\omega_{n}^{1/2}$ and
$(\omega_{n}')^{1/2}$, with the obvious additional relations imposed. We also consider a similarly defined extension of $U_{q}^{D}(\what{\frak{sp}}_{2n})$.
Now, we consider the Gauss decomposition of $L^{\pm}(z)$ in $\wtd{U}'(\wtd{R}_{q}(z))$, and define $\frak{X}_{i}^{\pm}(z)$ as in~\eqref{eq:X_series}.
Then we have the following result (see~\cite[Main Theorem]{JLM2}):

\begin{theorem}\label{thm:D=RTT_Ctype_1param}
There is a unique algebra isomorphism $\what{\theta}_{q}'\colon U_{q}^{D}(\what{\frak{sp}}_{2n}) \iso \wtd{U}'(\wtd{R}_{q}(z))$ such that
\begin{align*}
  &\what{\theta}_{q}'(\gamma^{1/2}) = \gamma^{-1/2}, & & \what{\theta}_{q}'(K_{n}^{1/2}) = h_{n}^{+}[0]^{-1},  \\
  &\what{\theta}_{q}'(\Phi_{i}^{\pm}(z)) = h_{i+1}^{\pm}(z^{-1}q^{i})h_{i}^{\pm}(z^{-1}q^{i})^{-1},
    & & \what{\theta}_{q}'(X_{i}^{\pm}(z)) = \frac{1}{q - q^{-1}}\frak{X}_{i}^{\mp}(z^{-1}q^{i}) & &  \textit{for} \quad 1 \le i < n, \\
  &\what{\theta}_{q}'(\Phi_{n}^{\pm}(z)) = h_{n+1}^{\pm}(z^{-1}q^{n+1})h_{n}^{\pm}(z^{-1}q^{n+1})^{-1},
   & &\what{\theta}_{q}'(X_{n}^{\pm}(z)) = \frac{1}{q^{2} - q^{-2}}\frak{X}_{n}^{\mp}(z^{-1}q^{n+1}).
\end{align*}
\end{theorem}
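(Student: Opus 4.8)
The plan is to derive this from \cite[Main Theorem]{JLM2}, which establishes an algebra isomorphism between $U_q^D(\what{\frak{sp}}_{2n})$ and the affine RTT-type algebra attached to the (suitably normalized) $C_n$-type $R$-matrix with a spectral parameter. As a preliminary step, I would record that in $\wtd{U}'(\wtd{R}_q(z))$ the relations $(\ell_{ii}^{\pm}[0])^{-1} = \ell_{ii}^{\mp}[0]$ together with the $z^{0}$-component of \eqref{eq:LC_explicit_series} force $\ell_{n+1,n+1}^{\pm}[0]\,\ell_{nn}^{\pm}[0] = 1$; since $L^{+}[0]$ is lower-triangular and $L^{-}[0]$ upper-triangular, the Gauss decomposition gives $h_i^{\pm}[0] = \ell_{ii}^{\pm}[0]$, and hence $h_{n+1}^{\pm}[0] = h_n^{\pm}[0]^{-1}$. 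This is exactly the identity needed to make the assignments $K_n^{1/2}\mapsto h_n^{+}[0]^{-1}$ and $\Phi_n^{\pm}(z)\mapsto h_{n+1}^{\pm}(z^{-1}q^{n+1})h_n^{\pm}(z^{-1}q^{n+1})^{-1}$, $X_n^{\pm}(z)\mapsto\tfrac{1}{q^{2}-q^{-2}}\frak{X}_n^{\mp}(z^{-1}q^{n+1})$ consistent with the square-root generators adjoined to $U_q^D(\what{\frak{sp}}_{2n})$ (one checks $(K_n^{1/2})^{2}\mapsto h_n^{+}[0]^{-2}$, matching $\Phi^{+}_{n,0}\mapsto h_{n+1}^{+}[0]h_n^{+}[0]^{-1}=h_n^{+}[0]^{-2}$), exactly as in the finite-type argument of Theorem~\ref{thm:DJ=RTT_Ctype_1param}.

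Next I would compose the isomorphism of \cite[Main Theorem]{JLM2} with the automorphism $\what{\omega}$ of Lemma~\ref{lem:affine_cartan_involution}, which interchanges $X_i^{+}(z)\leftrightarrow X_i^{-}(z^{-1})$ and $\Phi_i^{+}(z)\leftrightarrow\Phi_i^{-}(z^{-1})$ while sending $\gamma^{1/2}\mapsto\gamma^{-1/2}$, $K_i\mapsto K_i^{-1}$. This composition accounts for the superscripts $\frak{X}_i^{\mp}$ appearing on the right-hand sides and, more importantly, it is precisely what renders the resulting map compatible with the bigradings of Lemmas~\ref{lem:Q_hat_bigrading} and~\ref{lem:affine_RTT_bigradings}(a) --- using Lemma~\ref{lem:gaussian_generator_degrees} to track the bidegrees of the Gaussian generators --- cf.\ Remark~\ref{rem:rtt=Dr-hatP-compatibility} and the parallel discussion in the proof of Theorem~\ref{thm:D=affine_RTT_Btype_1param}. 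The spectral shift $z\mapsto z^{-1}q^{i}$ and the scalar prefactors $\tfrac{1}{q-q^{-1}}$ and $\tfrac{1}{q^{2}-q^{-2}}$ are then read off directly from \cite{JLM2}, after translating their normalization of the $q_i$ and of the long root $\alpha_n$ into the conventions of Section~\ref{sec:loop-realization}, in the same spirit as Remark~\ref{rmk:JLM_comparison_B}.

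Finally, verifying that the composite map respects the defining relations of $\wtd{U}'(\wtd{R}_q(z))$ on the generators $\{X_i^{\pm}(z),\Phi_i^{\pm}(z),K_n^{1/2},\gamma^{1/2}\}$ is exactly the content of \cite[Main Theorem]{JLM2} under the above dictionary, so $\what{\theta}_q'$ is a well-defined algebra isomorphism; uniqueness is immediate since these elements generate $U_q^D(\what{\frak{sp}}_{2n})$. The only genuinely delicate point --- and thus the main obstacle --- is the convention bookkeeping with \cite{JLM2}: matching the normalizing factor $f(z)$ of \eqref{eq:f_prefactor}, the transposition $\mathsf{t}'$ entering \eqref{eq:LC_affine}, and the rescaling at the long root $\alpha_n$, after which every formula in the statement drops out.
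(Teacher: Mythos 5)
Your proposal is correct and follows essentially the same route as the paper: compose the isomorphism of \cite[Main Theorem]{JLM2} with the affine Cartan involution $\what{\omega}$ of Lemma~\ref{lem:affine_cartan_involution}, tracking the translation of conventions. The paper's proof is simply a one-line citation of this composition; your elaborations — the diagonal identity $h_{n+1}^{\pm}[0] = h_n^{\pm}[0]^{-1}$, the bigrading compatibility, and the normalization bookkeeping — are all accurate background checks that the paper leaves implicit.
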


\begin{proof}
This follows by composing the isomorphism of~\cite[Main Theorem]{JLM2} with $\what{\omega}$ of Lemma~\ref{lem:affine_cartan_involution}.
\end{proof}

Then, as for type $B_{n}$, we can use Propositions~\ref{prop:double_cartan_D} and~\ref{prop:double_cartan_RTT_affine} to derive a Cartan-doubled
version of the above result, where $\wtd{\frak{X}}_{i}^{\pm}(z)$ are defined exactly as in~\eqref{eq:X_series_double_cartan}:

\begin{cor}\label{cor:D=RTT_Ctype_1param_double_cartan}
There is a unique algebra isomorphism $\what{\theta}_{q}\colon U_{q,q^{-1}}^{D}(\what{\frak{sp}}_{2n}) \iso \wtd{U}(\wtd{R}_{q}(z))$ such that
\begin{align*}
  &\what{\theta}_{q}(\gamma^{1/2}) = (\gamma')^{1/2}, & & \what{\theta}_{q}((\gamma')^{1/2}) = \gamma^{1/2}, \\
  &\what{\theta}_{q}(\omega_{n}^{1/2}) = h_{n}^{+}[0]^{-1}, & & \what{\theta}_{q}((\omega_{n}')^{1/2}) = h_{n}^{-}[0]^{-1}, \\
  &\what{\theta}_{q}(\omega_{i}(z)) = h_{i+1}^{+}(z^{-1}q^{i})h_{i}^{+}(z^{-1}q^{i})^{-1},
   & & \what{\theta}_{q}(\omega_{i}'(z)) = h_{i+1}^{-}(z^{-1}q^{i})h_{i}^{-}(z^{-1}q^{i})^{-1} & & \textit{for} \quad 1 \le i < n, \\
  &\what{\theta}_{q}(\omega_{n}(z)) = h_{n+1}^{+}(z^{-1}q^{n+1})h_{n}^{+}(z^{-1}q^{n+1})^{-1},
   & & \what{\theta}_{q}(\omega_{n}'(z))= h_{n+1}^{-}(z^{-1}q^{n+1})h_{n}^{-}(z^{-1}q^{n+1})^{-1}, \\
  &\what{\theta}_{q}(x_{i}^{\pm}(z)) = \frac{1}{q - q^{-1}}\wtd{\frak{X}}_{i}^{\mp}(z^{-1}q^{i}),
   & & \what{\theta}_{q}(x_{n}^{\pm}(z)) = \frac{1}{q^{2} - q^{-2}}\wtd{\frak{X}}_{n}^{\mp}(z^{-1}q^{n+1}) & & \textit{for} \quad 1 \le i < n.
\end{align*}
\end{cor}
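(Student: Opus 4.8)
The plan is to deduce Corollary~\ref{cor:D=RTT_Ctype_1param_double_cartan} from Theorem~\ref{thm:D=RTT_Ctype_1param} in exactly the same way that Corollary~\ref{cor:affine_1param_double_Btype} was deduced from Theorem~\ref{thm:D=affine_RTT_Btype_1param}, via the two key embeddings $\eta_{D}$ of Proposition~\ref{prop:double_cartan_D} and $\what{\eta}_{R}$ of Proposition~\ref{prop:double_cartan_RTT_affine}. First I would introduce the map
\[
  \what{g}\colon \bb{K}[L_{\alpha_{0}}^{\pm 1},\ldots ,L_{\alpha_{n}}^{\pm 1}] \longrightarrow
  \bb{K}[L_{\delta}^{\pm 1},L_{\varepsilon_{1}}^{\pm 1},\ldots ,L_{\varepsilon_{n}}^{\pm 1}],\qquad
  \what{g}(L_{\alpha_{0}}) = L_{\delta - \theta},\quad \what{g}(L_{\alpha_{i}}) = L_{\varepsilon_{i} - \varepsilon_{i+1}}\ (1\le i<n),\quad
  \what{g}(L_{\alpha_{n}}) = L_{2\varepsilon_{n}},
\]
where the last formula reflects $\alpha_{n} = 2\varepsilon_{n}$ in type $C_{n}$. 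One also extends $\eta_{D}$ of Proposition~\ref{prop:double_cartan_D} to the adjoined square-root generators via $\omega_{n}^{1/2}\mapsto K_{n}^{1/2}\otimes L_{\alpha_{n}}$ and $(\omega_{n}')^{1/2}\mapsto K_{n}^{-1/2}\otimes L_{\alpha_{n}}$, consistently with the extension used in Corollary~\ref{thm:DJ=RTT_Ctype_1param_double_cartan}.

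Next I would define $\what{\theta}_{q} = \what{\eta}_{R}^{-1}\circ(\what{\theta}_{q}'\otimes \what{g})\circ \eta_{D}$, and check that $(\what{\theta}_{q}'\otimes \what{g})\circ \eta_{D}$ maps $U_{q,q^{-1}}^{D}(\what{\frak{sp}}_{2n})$ into $\what{\eta}_{R}(\wtd{U}(\wtd{R}_{q}(z)))$. It suffices to verify this on the generators $\gamma^{1/2}$, $(\gamma')^{1/2}$, $\omega_{n}^{1/2}$, $(\omega_{n}')^{1/2}$, and the coefficients of $x_{i}^{\pm}(z)$, $\omega_{i}(z)$, $\omega_{i}'(z)$. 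For $\gamma^{1/2},(\gamma')^{1/2}$ it is immediate. For $x_{i}^{\pm}(z)$ and $\omega_{i}^{(\prime)}(z)$ with $i<n$ the computation is line-by-line identical to the one in the proof of Corollary~\ref{cor:affine_1param_double_Btype}: one uses the explicit form of $\eta_{D}$ on the currents $X_{i}^{\pm}(z)$, $\Phi_{i}^{\pm}(z)$, the explicit form of $\what{\eta}_{R}$ on the Gaussian generators from Lemma~\ref{lem:gaussian_generator_degrees} (which controls the powers of $L_{\delta}$ attached to each Gaussian coefficient), and then matches the $L$-factors on both sides, arriving at $\what{\theta}_{q}(x_{i}^{\pm}(z)) = \frac{1}{q-q^{-1}}\wtd{\frak{X}}_{i}^{\mp}(z^{-1}q^{i})$ and $\what{\theta}_{q}(\omega_{i}^{(\prime)}(z)) = h_{i+1}^{\pm}(z^{-1}q^{i})h_{i}^{\pm}(z^{-1}q^{i})^{-1}$. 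The only genuinely new verifications are the node-$n$ ones: for $\omega_{n}^{1/2}$ one computes $(\what{\theta}_{q}'\otimes\what{g})\eta_{D}(\omega_{n}^{1/2}) = h_{n}^{+}[0]^{-1}\otimes \what{g}(L_{\alpha_{n}}) = (h_{n}^{+}[0]\otimes L_{-2\varepsilon_{n}})^{-1} = \what{\eta}_{R}(h_{n}^{+}[0])^{-1}$, and similarly for $(\omega_{n}')^{1/2}$; for $x_{n}^{\pm}(z)$ and $\omega_{n}^{(\prime)}(z)$ one repeats the $i<n$ argument but with the spectral shift $q^{n+1}$ and the factor $\frac{1}{q^{2}-q^{-2}}$ dictated by Theorem~\ref{thm:D=RTT_Ctype_1param}, using that $\what{g}(L_{\alpha_{n}}) = L_{2\varepsilon_{n}}$ (rather than $L_{\varepsilon_{n}-\varepsilon_{n+1}}$, which would be the $B_{n}$-analogue) correctly absorbs the $L$-factor $L_{\varepsilon_{n}-\varepsilon_{n+1}}$ coming from $\what{\eta}_{R}$ on $e_{n,n+1}^{\pm}$, $f_{n+1,n}^{\pm}$ — here one uses $\varepsilon_{n+1} = -\varepsilon_{n}$ so that $\varepsilon_{n}-\varepsilon_{n+1} = 2\varepsilon_{n}$.

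Having shown $\what{\theta}_{q}$ is a well-defined algebra homomorphism, injectivity is automatic since $\eta_{D}$, $\what{\eta}_{R}$, and $\what{\theta}_{q}'$ are all injective. For surjectivity I would argue as in Corollary~\ref{cor:affine_1param_double_Btype}: looking at the images of the generators of $U_{q,q^{-1}}^{D}(\what{\frak{sp}}_{2n})$, all Gaussian generators $h_{\imath}^{\pm}[\mp m], e_{j,j+1}^{\pm}[\mp m], f_{j+1,j}^{\pm}[\mp m]$ with $1\le \imath\le n+1$, $1\le j\le n$, $m\ge 0$ lie in the image (the node-$n$ case now uses $\what{\theta}_{q}(\omega_n^{1/2}) = h_n^+[0]^{-1}$), then the $C_{n}$-analogue of~\cite[Lemma 4.21, Proposition 4.22]{JLM1} (established in~\cite{JLM2}) extends this to $n+1<\imath\le 2n$ and $n<j\le 2n$, and finally an analogue of~\cite[Proposition A.4]{HT} produces all remaining $e_{ij}^{\pm}[\mp m]$, $f_{ji}^{\pm}[\mp m]$ with $1\le i<j\le 2n$. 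I expect the main obstacle to be purely bookkeeping rather than conceptual: keeping the powers of $\gamma,\gamma'$ and of the $L$-variables straight in the $\wtd{\frak{X}}_{i}^{\pm}$-series (whose definition~\eqref{eq:X_series_double_cartan} mixes $\gamma^{\pm 1/2}$, $(\gamma')^{\pm 1/2}$ and $(\gamma\gamma')^{\pm 1/2}$), and verifying that the spectral-parameter substitutions $q^{i}$ versus $q^{n+1}$ at the two types of nodes are exactly compensated by $\what{g}$ and $\what{\eta}_{R}$ — this is where the $C_{n}$-specific choice $\alpha_{n} = 2\varepsilon_{n}$ enters and must be tracked carefully.
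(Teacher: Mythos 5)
Your proof is correct and follows exactly the strategy the paper intends: the paper does not write out a separate proof for type $C_n$ but states that the Cartan-doubled version is obtained "as for type $B_n$" from Propositions~\ref{prop:double_cartan_D} and~\ref{prop:double_cartan_RTT_affine}. Your reconstruction — defining $\what{\theta}_q = \what{\eta}_R^{-1}\circ(\what{\theta}_q'\otimes\what{g})\circ\eta_D$ with the additional assignment $\what{g}(L_{\alpha_n})=L_{2\varepsilon_n}$ (which is just $L_{\varepsilon_n-\varepsilon_{n+1}}$ under the $C_n$ convention $\varepsilon_{n+1}=-\varepsilon_n$), extending $\eta_D$ to the adjoined square roots as in Corollary~\ref{thm:DJ=RTT_Ctype_1param_double_cartan}, verifying the node-$n$ generators by the same $L$-factor bookkeeping, and arguing surjectivity via~\cite{JLM2} and the~\cite[Prop.~A.4]{HT}-style argument — is precisely the argument used in the proof of Corollary~\ref{cor:affine_1param_double_Btype}, specialized to $C_n$.
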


We can now immediately obtain a two-parameter version of Theorem~\ref{thm:D=RTT_Ctype_1param}. As for type $B_{n}$, we shall denote
the Gaussian generators of $\wtd{U}(\wtd{R}_{r,s}(z))$ by $\wtd{e}_{ij}^{\pm}(z)$, $\wtd{h}_{i}^{\pm}(z)$, $\wtd{f}_{ji}^{\pm}(z)$,
and use the same symbol $\wtd{\frak{X}}_{i}^{\pm}(w)$ to denote~\eqref{eq:X_series_double_cartan} but with $\wtd{e}_{i,i+1}^{\pm}(z)$
and $\wtd{f}_{i+1,i}^{\pm}(z)$ in place of $e_{i,i+1}^{\pm}(z)$ and $f_{i+1,i}^{\pm}(z)$.

\begin{theorem}\label{thm:D=RTT_Ctype_2param}
There is a unique algebra isomorphism $\what{\theta}_{r,s}\colon U_{r,s}^{D}(\what{\frak{sp}}_{2n}) \iso \wtd{U}(\wtd{R}_{r,s}(z))$ such that
\begin{equation*}
  \what{\theta}_{r,s}(\gamma^{1/2}) = (\gamma')^{1/2}, \qquad \what{\theta}_{r,s}((\gamma')^{1/2}) = \gamma^{1/2},
\end{equation*}
\begin{equation*}
  \what{\theta}_{r,s}(\omega_{n}^{1/2}) = h_{n}^{+}[0]^{-1},\qquad \what{\theta}_{r,s}((\omega_{n}')^{1/2}) = h_{n}^{-}[0]^{-1},
\end{equation*}
\begin{align*}
  &\what{\theta}_{r,s}(\omega_{i}(z)) =
  \begin{cases}
    \wtd{h}_{i+1}^{+}(z^{-1}r^{i/2}s^{-i/2})\wtd{h}_{i}^{+}(z^{-1}r^{i/2}s^{-i/2})^{-1} & \textit{if}\ 1 \le i < n \\
    \wtd{h}_{n+1}^{+}(z^{-1}r^{(n+1)/2}s^{-(n+1)/2})\wtd{h}_{n}^{+}(z^{-1}r^{(n+1)/2}s^{-(n+1)/2})^{-1} & \textit{if}\ i = n
  \end{cases}, \\
  & \what{\theta}_{r,s}(\omega_{i}'(z)) =
  \begin{cases}
    \wtd{h}_{i+1}^{-}(z^{-1}r^{i/2}s^{-i/2})\wtd{h}_{i}^{-}(z^{-1}r^{i/2}s^{-i/2})^{-1} & \textit{if}\  1 \le i < n \\
    \wtd{h}_{n+1}^{-}(z^{-1}r^{(n+1)/2}s^{-(n+1)/2})\wtd{h}_{n}^{-}(z^{-1}r^{(n+1)/2}s^{-(n+1)/2})^{-1} & \textit{if}\ i = n
  \end{cases}, \\
  &\what{\theta}_{r,s}(x_{i}^{\pm}(z)) = \frac{1}{r - s}\wtd{\frak{X}}_{i}^{\mp}(z^{-1}r^{i/2}s^{-i/2})\qquad \textit{for}\qquad 1 \le i < n, \\
  &\what{\theta}_{r,s}(x_{n}^{+}(z)) = \frac{rs}{r^{2} - s^{2}}\wtd{\frak{X}}_{n}^{-}(z^{-1}r^{(n+1)/2}s^{-(n+1)/2}),\qquad
   \what{\theta}_{r,s}(x_{n}^{-}(z)) = \frac{1}{r^{2} - s^{2}}\wtd{\frak{X}}_{n}^{+}(z^{-1}r^{(n+1)/2}s^{-(n+1)/2}).
\end{align*}
\end{theorem}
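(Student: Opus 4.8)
The plan is to follow the same strategy as in the proof of its $B_n$-type counterpart, Theorem~\ref{thm:D=affine_RTT_Btype_2param}. The key input is the Cartan-doubled one-parameter isomorphism $\what{\theta}_{q}\colon U_{q,q^{-1}}^{D}(\what{\frak{sp}}_{2n})\iso\wtd{U}(\wtd{R}_{q}(z))$ of Corollary~\ref{cor:D=RTT_Ctype_1param_double_cartan}, which one first observes to be homogeneous with respect to the $Q\times Q$-gradings of~\eqref{eq:Drinfeld-grading} on the source and Lemma~\ref{lem:affine_RTT_bigradings}(b) on the target. This is checked on generators: $\gamma^{1/2},(\gamma')^{1/2}$ and $\omega_n^{1/2},(\omega_n')^{1/2}$ are sent to degree-$(0,0)$ elements, while each coefficient of $x_i^{\pm}(z)$ (resp.\ of $\omega_i(z)$, $\omega_i'(z)$) maps to a linear combination of Gaussian generators of the same bidegree as $x_i^{\pm}(z)$ (resp.\ as $\omega_i(z)$, $\omega_i'(z)$), cf.\ Lemma~\ref{lem:gaussian_generator_degrees}(b). (In fact $\what{\theta}_q$ is $\what{P}$-homogeneous, cf.\ Remark~\ref{rem:rtt=Dr-hatP-compatibility}, but only the coarser $Q\times Q$-version is needed.) Consequently $\what{\theta}_q$ descends to an algebra isomorphism $\what{\theta}_q\colon U_{q,\zeta}^{D}(\what{\frak{sp}}_{2n})\iso\wtd{U}(\wtd{R}_q(z))_\zeta$ between the respective bicharacter twists.

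With this in hand, I would define $\what{\theta}_{r,s} = (\what{\Psi}')^{-1}\circ\what{\theta}_q\circ\varphi_D$, where $\varphi_D\colon U_{r,s}^{D}(\what{\frak{sp}}_{2n})\iso U_{q,\zeta}^{D}(\what{\frak{sp}}_{2n})$ is the isomorphism of Theorem~\ref{thm:twisted_algebra_loop} (extended to the adjoined square roots $\omega_n^{1/2},(\omega_n')^{1/2}$ in the obvious way via $\varphi_D(\omega_n)=\omega_n$) and $\what{\Psi}'\colon\wtd{U}(\wtd{R}_{r,s}(z))\iso\wtd{U}(\wtd{R}_q(z))_\zeta$ is the isomorphism of Proposition~\ref{prop:2_vs_1_parameter_affine_RTT}. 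Being a composition of algebra isomorphisms, $\what{\theta}_{r,s}$ is an algebra isomorphism, so it only remains to evaluate it on the generators of $U_{r,s}^{D}(\what{\frak{sp}}_{2n})$. For $\gamma^{1/2},(\gamma')^{1/2},\omega_n^{1/2},(\omega_n')^{1/2}$ this is immediate. For $x_i^{\pm}(z)$, $\omega_i(z)$, $\omega_i'(z)$ one applies $\varphi_D$ first (which multiplies $x_{i,m}^{-}$ by $(r_is_i)^{-1/2}$, recalling $d_n=2$ so that $(r_ns_n)^{-1/2}=(rs)^{-1}$, and fixes $x_{i,m}^{+},\omega_i,\omega_i'$), then $\what{\theta}_q$ via Corollary~\ref{cor:D=RTT_Ctype_1param_double_cartan}, and finally rewrites the resulting Gaussian-generator series of $\wtd{U}(\wtd{R}_q(z))_\zeta$ in terms of those of $\wtd{U}(\wtd{R}_{r,s}(z))$ using Lemma~\ref{lem:gaussian_generators_2_param}, which sends $h_i^{\pm}(z)\mapsto\wtd{h}_i^{\pm}(z)$, $e_{ij}^{\pm}(z)\mapsto\psi_i\psi_j^{-1}\zeta(\varepsilon_j,\varepsilon_i)\wtd{e}_{ij}^{\pm}(z)$, and $f_{ji}^{\pm}(z)\mapsto\psi_i^{-1}\psi_j\zeta(\varepsilon_j,\varepsilon_i)\wtd{f}_{ji}^{\pm}(z)$. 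Gathering the scalar factors through $q=r^{1/2}s^{-1/2}$, together with the identities $\psi_{i+1}\psi_i^{-1}=(rs)^{-1/4}=\zeta(\varepsilon_{i+1},\varepsilon_i)$ for $1\le i<n$ and $\psi_{n+1}\psi_n^{-1}=1=\zeta(\varepsilon_{n+1},\varepsilon_n)$ (which follow from~\eqref{eq:psi_values_C} and~\eqref{eq:zeta_formula}, exactly as in the proof of Theorem~\ref{thm:DJ=RTT_Ctype_2param}), yields the stated formulas; in particular the absence of a constant in $h_i^{\pm}\mapsto\wtd{h}_i^{\pm}$ means that $\omega_i(z)$, $\omega_i'(z)$ are expressed purely through $\wtd{h}_i^{\pm}$, acquiring only the spectral-parameter rescaling $q^{i}=r^{i/2}s^{-i/2}$ (resp.\ $q^{n+1}=r^{(n+1)/2}s^{-(n+1)/2}$ at the node $n$).

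The main obstacle is the scalar-and-spectral-parameter bookkeeping in the last step. Three sources of rescaling interact: the shifts $z^{-1}q^{i}$ and $z^{-1}q^{n+1}$ coming from the one-parameter map, the $\gamma$- and $\gamma'$-dependent shifts hidden inside $\wtd{\frak{X}}_i^{\pm}$ of~\eqref{eq:X_series_double_cartan}, and the constants picked up when passing between the ordinary product and the twisted product $\circ$ (controlled by the $Q\times Q$-bidegrees, where $\gamma^{1/2},(\gamma')^{1/2}$ are degree $(0,0)$ and hence contribute nothing). The node $n$ requires particular care because $\varphi_D$ attaches the extra factor $(rs)^{-1}$ only to $x_{n,m}^{-}$, so the constants in $\what{\theta}_{r,s}(x_n^{+}(z))$ and $\what{\theta}_{r,s}(x_n^{-}(z))$ come out unequal --- precisely the phenomenon already visible in the finite case (Theorem~\ref{thm:DJ=RTT_Ctype_2param}). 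None of this is genuinely new: the computation is longer than, but entirely parallel to, the $B_n$-type argument already carried out, so I expect no essential difficulty beyond careful accounting.
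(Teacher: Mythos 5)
Your proposal is correct and follows essentially the same route as the paper: observe that $\what{\theta}_q$ of Corollary~\ref{cor:D=RTT_Ctype_1param_double_cartan} respects the $Q\times Q$-gradings and hence descends to a twisted isomorphism, define $\what{\theta}_{r,s}=(\what{\Psi}')^{-1}\circ\what{\theta}_q\circ\varphi_D$, and compute the generator images via Lemma~\ref{lem:gaussian_generators_2_param} with the $C_n$-type $\psi_i$ from~\eqref{eq:psi_values_C} and the identities $\psi_{i+1}\psi_i^{-1}=(rs)^{-1/4}=\zeta(\varepsilon_{i+1},\varepsilon_i)$ for $i<n$ and $\psi_{n+1}\psi_n^{-1}=1=\zeta(\varepsilon_{n+1},\varepsilon_n)$. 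The supporting scalar checks you carry out (e.g.\ $(r_ns_n)^{-1/2}=(rs)^{-1}$ producing the asymmetric constants at the node $n$) all match the stated formulas.
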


\begin{proof}
Since the isomorphism $\what{\theta}_{q}$ of Corollary~\ref{cor:D=RTT_Ctype_1param_double_cartan} preserves the $Q\times Q$-grading, it induces
an algebra isomorphism $\what{\theta}_{q}\colon U_{q,\zeta}^{D}(\what{\frak{sp}}_{2n}) \iso \wtd{U}(\wtd{R}_{q}(z))_{\zeta}$. We define
$\what{\theta}_{r,s} = (\what{\Psi}')^{-1}\circ \what{\theta}_{q} \circ \varphi_D$ with $\what{\Psi}'$ of Proposition~\ref{prop:2_vs_1_parameter_affine_RTT}
and $\varphi_D$ of Theorem~\ref{thm:twisted_algebra_loop}, so that $\what{\theta}_{r,s}$ is an algebra isomorphism. Applying
Lemma~\ref{lem:gaussian_generators_2_param} (with $\psi_{i}$ from~\eqref{eq:psi_values_C} though), the above formulas
for $\what{\theta}_{r,s}$-images of the generators are computed as in Theorem~\ref{thm:DJ=RTT_Btype_2_param}.
\end{proof}


\noindent
$\bullet$ \textbf{Type $D_n$.}

Let $R_{q}(z) = \hat{R}_{q}(z) \circ \tau$, where $\hat{R}_{q}(z)$ is the one-parameter $D_{n}$-type affine $R$-matrix, which is the $r = q = s^{-1}$
specialization of~\cite[(6.13)]{MT1}, and let $\wtd{R}_{q}(z)$ be its normalization as in~\eqref{eq:f_prefactor}. Similarly to type $C_{n}$, we first need
to enlarge $U_{r,s}^{D}(\what{\frak{so}}_{2n})$ by adjoining the generators $(\omega_{n-1}\omega_{n})^{1/2}$, $(\omega_{n-1}'\omega_{n}')^{1/2}$,
$(\omega_{n-1}^{-1}\omega_{n})^{1/2}$, and $((\omega_{n-1}')^{-1}\omega_{n}')^{1/2}$, with the obvious additional relations imposed. We also need
a similarly defined extension of $U_{q}^{D}(\what{\frak{so}}_{2n})$. As in types $B_{n}$ and $C_{n}$, we consider the Gauss decomposition of $L^{\pm}(z)$,
and define $\frak{X}_{i}^{\pm}(z)$ exactly as in~\eqref{eq:X_series} for $i < n$, while for $i = n$ we rather set
\[
  \frak{X}_{n}^{+}(z) = e_{n-1,n+1}^{+}(z\gamma^{1/2}) - e_{n-1,n+1}^{-}(z\gamma^{-1/2}),\qquad
  \frak{X}_{n}^{-}(z) = f_{n+1,n-1}^{+}(z\gamma^{-1/2}) - f_{n+1,n-1}^{-}(z\gamma^{1/2}).
\]
Then we have the following result (see~\cite[Main Theorem]{JLM1}):

\begin{theorem}\label{thm:D=affine_RTT_Dtype_1param}
There is a unique algebra isomorphism $\what{\theta}_{q}'\colon U_{q}^{D}(\what{\frak{so}}_{2n}) \iso \wtd{U}'(\wtd{R}_{q}(z))$ such that
\begin{align*}
  &\what{\theta}_{q}'(\gamma^{1/2}) = \gamma^{-1/2}, & & ~\\
  &\what{\theta}_{q}'((K_{n-1}K_{n})^{1/2}) = h_{n-1}^{+}[0]^{-1}, & & \what{\theta}_{q}'\left ((K_{n-1}^{-1}K_{n})^{1/2}\right ) = h_{n}^{+}[0]^{-1}, \\
  &\what{\theta}_{q}'(\Phi_{i}^{\pm}(z)) = h_{i+1}^{\pm}(z^{-1}q^{i})h_{i}^{\pm}(z^{-1}q^{i})^{-1},
   & & \what{\theta}_{q}'(X_{i}^{\pm}(z)) = \frac{1}{q - q^{-1}}\frak{X}_{i}^{\mp}(z^{-1}q^{i}) & &  \textit{for} \quad 1 \le i < n, \\
  &\what{\theta}_{q}'(\Phi_{n}^{\pm}(z)) = h_{n+1}^{\pm}(z^{-1}q^{n-1})h_{n-1}(z^{-1}q^{n-1})^{-1},
   & &\what{\theta}_{q}'(X_{n}^{\pm}(z)) = \frac{1}{q - q^{-1}}\frak{X}_{n}^{\mp}(z^{-1}q^{n-1}).
\end{align*}
\end{theorem}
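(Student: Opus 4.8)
The plan is to follow the template already established for type $B_n$ in the preceding Theorem~\ref{thm:D=affine_RTT_Btype_1param}, whose proof deduces the RTT-to-Drinfeld isomorphism for $U_q^D(\what{\frak{so}}_{2n+1})$ from the corresponding statement of~\cite{JLM1} by post-composing with the Cartan involution $\what{\omega}$ of Lemma~\ref{lem:affine_cartan_involution}. For type $D_n$, the relevant one-parameter result is again~\cite[Main Theorem]{JLM1} (which treats both $B$ and $D$ types), so the starting point is the algebra isomorphism produced there between $U_q^D(\what{\frak{so}}_{2n})$ and a version of $\wtd{U}'(\wtd{R}_q(z))$ (possibly in slightly different conventions). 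The first step is to record precisely the formulas of~\cite{JLM1} in their conventions, identify the discrepancies with ours — these are of two kinds, as catalogued in Remark~\ref{rmk:JLM_comparison_B}: a reparametrization of $q$ (which in $D_n$ is milder than in $B_n$ since all $q_i=q$) and a diagonal conjugation of the $R$-matrix, accounted for by Proposition~\ref{prop:diagonal_conjugation_U(R)}(b) in the finite case and its affine analogue. One then composes with the Cartan involution $\what{\omega}$ to turn $X_i^+ \leftrightarrow X_i^-$, $\Phi_i^+\leftrightarrow \Phi_i^-$, $\gamma^{1/2}\mapsto \gamma^{-1/2}$, which is exactly what produces the $\frak{X}_i^{\mp}$ appearing on the right-hand sides.

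Next I would verify that the resulting map is an \emph{algebra} isomorphism onto $\wtd{U}'(\wtd{R}_q(z))$ as we have defined it — this means checking that, under our normalization $\wtd{R}_q(z) = f(z)R_q(z)$ of~\eqref{eq:f_prefactor} and our ideal relations~\eqref{eq:LC_affine} (equivalently $\frak{z}_q^\pm(z)=1$ by Proposition~\ref{prop:central_element_affine}), the images of the Gaussian generators satisfy the target relations. The key point here is that $\wtd{U}'(\wtd{R}_q(z))$ is a quotient of $U'(\wtd{R}_q(z))$ by exactly the relations needed to make the RTT presentation match the new Drinfeld one, and the analogous match is already known in the one-parameter single-Cartan case by~\cite{JLM1}; so the work is reconciling conventions rather than proving anything genuinely new. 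The special feature of $D_n$ is the fork at the end of the Dynkin diagram: the $n$-th current is built from the Gaussian generators $e_{n-1,n+1}^\pm$, $f_{n+1,n-1}^\pm$ rather than $e_{n,n+1}^\pm$, $f_{n+1,n}^\pm$, and the Cartan part involves $(K_{n-1}K_n)^{1/2}$ and $(K_{n-1}^{-1}K_n)^{1/2}$ rather than a single $K_n^{1/2}$, which matches the extension of $U_q^D(\what{\frak{so}}_{2n})$ introduced in this subsection. I would check the images of these two ``forked'' generators by direct inspection against the formulas of~\cite{JLM1}, exactly as the $B_n$ proof tacitly does.

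Concretely, the proof should read: apply the algebra isomorphism of~\cite[Main Theorem]{JLM1} in type $D_n$, adjust for the conventions of~\cite{JLM1} versus ours via an affine analogue of Proposition~\ref{prop:diagonal_conjugation_U(R)}(b) (as in Remark~\ref{rmk:JLM_comparison_B}), and compose with $\what{\omega}$ of Lemma~\ref{lem:affine_cartan_involution}; the spectral shifts $z^{-1}q^i$ and $z^{-1}q^{n-1}$ in the statement are precisely the images of the parameters $q^i, q^{n-1}$ appearing in~\cite{JLM1} after this adjustment. One also notes, in parallel with Remark~\ref{rmk:rtt=Dr-hatP-compatibility}, that $\what{\theta}_q'$ respects the $\what{P}$-bigradings of Lemmas~\ref{lem:Q_hat_bigrading} and~\ref{lem:affine_RTT_bigradings}(a) — this is the reason for inserting $\what{\omega}$, as the uncomposed map of~\cite{JLM1} would send positive-degree elements to negative-degree ones — and it suffices for the later arguments to know compatibility with the coarser $P$-bigrading. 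The main obstacle I anticipate is purely bookkeeping: tracking the precise powers of $q$ and the diagonal conjugation matrix needed to pass from the conventions of~\cite{JLM1} to ours, and confirming that the $f(z)$-normalization does not disturb the match (it does not, since $f(z)$ is a scalar series and the RTT relations are homogeneous in it). There is no conceptual difficulty beyond what was already handled in type $B_n$.
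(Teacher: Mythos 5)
Your proposal is correct and follows the same route as the paper, whose entire proof is the single sentence: compose the isomorphism of~\cite[Main Theorem]{JLM1} with $\what{\omega}$ of Lemma~\ref{lem:affine_cartan_involution}. The only overcaution is your inclusion of a convention-reconciliation step \`a la Remark~\ref{rmk:JLM_comparison_B} (reparametrization of $q$ and diagonal conjugation via an affine analogue of Proposition~\ref{prop:diagonal_conjugation_U(R)}(b)): this is genuinely needed in type $B_n$ because $\sso_{2n+1}$ is non-simply-laced and the $(n{+}1)$-st index plays a special role in $R_q$, but in type $D_n$ all $d_i=1$ so all $q_i=q$ on both sides, and the even-dimensional $R$-matrix has no distinguished middle index, so the conventions of~\cite{JLM1} already line up with the paper's and no adjustment is performed.
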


\begin{proof}
This follows by composing the isomorphism of~\cite[Main Theorem]{JLM1} with $\what{\omega}$ of Lemma~\ref{lem:affine_cartan_involution}.
\end{proof}

As for types $B_{n}$ and $C_{n}$, we can use Propositions~\ref{prop:double_cartan_D} and~\ref{prop:double_cartan_RTT_affine} to derive
a Cartan-doubled version of the above theorem. To state it, we define $\wtd{\frak{X}}_{i}^{\pm}(w)$ exactly as in~\eqref{eq:X_series_double_cartan}
for $i < n$, while for $i = n$ we rather set
\begin{align*}
  &\wtd{\frak{X}}_{n}^{+}(w) = e_{n-1,n+1}^{+}(w(\gamma')^{-1/2}) - e_{n-1,n+1}^{-}(w(\gamma')^{1/2}(\gamma\gamma')^{1/2}), \\
  &\wtd{\frak{X}}_{n}^{-}(w) = f_{n+1,n-1}^{+}(w\gamma^{-1/2}(\gamma\gamma')^{-1/2}) - f_{n+1,n-1}^{-}(w\gamma^{1/2}).
\end{align*}

\begin{cor}\label{cor:D=RTT_Dtype_1param_double_cartan}
There is a unique algebra isomorphism $\what{\theta}_{q}\colon U_{q,q^{-1}}^{D}(\what{\frak{so}}_{2n}) \iso \wtd{U}(\wtd{R}_{q}(z))$ such that
\begin{align*}
  &\what{\theta}_{q}(\gamma^{1/2}) = (\gamma')^{1/2}, & & \what{\theta}_{q}((\gamma')^{1/2}) = \gamma^{1/2}, \\
  &\what{\theta}_{q}\big( (\omega_{n-1}\omega_{n})^{1/2} \big) = h_{n-1}^{+}[0]^{-1},
  & & \what{\theta}_{q} \big( (\omega_{n-1}'\omega_{n}')^{1/2} \big) = h_{n-1}^{-}[0]^{-1}, \\
  &\what{\theta}_{q} \big( (\omega_{n-1}^{-1}\omega_{n})^{1/2} \big) = h_{n}^{+}[0]^{-1}
  & & \what{\theta}_{q} \big( ((\omega_{n-1}')^{-1}\omega_{n}')^{1/2} \big) = h_{n}^{-}[0]^{-1},  \\
  &\what{\theta}_{q}(\omega_{i}(z)) = h_{i+1}^{+}(z^{-1}q^{i})h_{i}^{+}(z^{-1}q^{i})^{-1},
  & & \what{\theta}_{q}(\omega_{i}'(z)) = h_{i+1}^{-}(z^{-1}q^{i})h_{i}^{-}(z^{-1}q^{i})^{-1} \qquad \textit{for} \quad   1 \le i < n, \\
  &\what{\theta}_{q}(\omega_{n}(z)) = h_{n+1}^{+}(z^{-1}q^{n-1})h_{n-1}^{+}(z^{-1}q^{n-1})^{-1},
  & & \what{\theta}_{q}(\omega_{n}'(z))= h_{n+1}^{-}(z^{-1}q^{n-1})h_{n-1}^{-}(z^{-1}q^{n-1})^{-1}, \\
  &\what{\theta}_{q}(x_{i}^{\pm}(z)) = \frac{1}{q - q^{-1}}\wtd{\frak{X}}_{i}^{\mp}(z^{-1}q^{i}),
  & & \what{\theta}_{q}(x_{n}^{\pm}(z)) = \frac{1}{q - q^{-1}}\wtd{\frak{X}}_{n}^{\mp}(z^{-1}q^{n-1})  \qquad \textit{for} \quad 1 \le i < n.
\end{align*}
\end{cor}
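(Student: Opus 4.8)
The proof mirrors those of Corollaries~\ref{cor:affine_1param_double_Btype} and~\ref{cor:D=RTT_Ctype_1param_double_cartan}. First I would introduce the ring homomorphism
\[
  \what{g}\colon \bb{K}[L_{\alpha_{0}}^{\pm 1},L_{\alpha_{1}}^{\pm 1},\ldots ,L_{\alpha_{n}}^{\pm 1}]
  \longrightarrow \bb{K}[L_{\delta}^{\pm 1},L_{\varepsilon_{1}}^{\pm 1},\ldots ,L_{\varepsilon_{n}}^{\pm 1}]
\]
determined by $\what{g}(L_{\alpha_{0}}) = L_{\delta - \theta}$, $\what{g}(L_{\alpha_{i}}) = L_{\alpha_{i}} = L_{\varepsilon_{i} - \varepsilon_{i+1}}$ for $1 \le i < n$, and $\what{g}(L_{\alpha_{n}}) = L_{\alpha_{n}} = L_{\varepsilon_{n-1} + \varepsilon_{n}}$, in accordance with $\Pi_{D_{n}}$ of~\eqref{eq:D-system}. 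I would then extend the embedding $\eta_{D}$ of Proposition~\ref{prop:double_cartan_D} to the four square-root Cartan generators of the extended $U_{q,q^{-1}}^{D}(\what{\frak{so}}_{2n})$ by $(\omega_{n-1}\omega_{n})^{1/2} \mapsto (K_{n-1}K_{n})^{1/2} \otimes L_{\alpha_{n-1}+\alpha_{n}}$, $(\omega_{n-1}^{-1}\omega_{n})^{1/2} \mapsto (K_{n-1}^{-1}K_{n})^{1/2} \otimes L_{-\alpha_{n-1}+\alpha_{n}}$, and similarly for the primed generators (cf.\ the extension of $\eta$ preceding Corollary~\ref{thm:DJ=RTT_Dtype_1_param_double_cartan}). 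Invoking also the embedding $\what{\eta}_{R}$ of Proposition~\ref{prop:double_cartan_RTT_affine} and the isomorphism $\what{\theta}_{q}'$ of Theorem~\ref{thm:D=affine_RTT_Dtype_1param}, I would set
\[
  \what{\theta}_{q} = \what{\eta}_{R}^{-1} \circ (\what{\theta}_{q}' \otimes \what{g}) \circ \eta_{D},
\]
provided that $(\what{\theta}_{q}' \otimes \what{g}) \circ \eta_{D}$ maps $U_{q,q^{-1}}^{D}(\what{\frak{so}}_{2n})$ into $\what{\eta}_{R}(\wtd{U}(\wtd{R}_{q}(z)))$.

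The core of the argument is to verify this last containment on the generators $\gamma^{1/2}$, $(\gamma')^{1/2}$, the four square-root Cartan elements, and the coefficients of $x_{i}^{\pm}(z),\omega_{i}(z),\omega_{i}'(z)$. Using the explicit formulas for $\eta_{D}$ on the currents (as recorded in the proof of Corollary~\ref{cor:affine_1param_double_Btype}), the formulas for $\what{\eta}_{R}$ on the Gaussian generators supplied by Lemma~\ref{lem:gaussian_generator_degrees}, and the definition of $\what{\theta}_{q}'$, the verification for $\gamma^{1/2},(\gamma')^{1/2}$ and for $x_{i}^{\pm}(z),\omega_{i}(z),\omega_{i}'(z)$ with $i < n$ is word-for-word the $B_{n}$ computation; for $i = n$ one uses instead the $D_{n}$-specific series $\frak{X}_{n}^{\pm}$ and $\wtd{\frak{X}}_{n}^{\pm}$ built from $e_{n-1,n+1}^{\pm}(z)$ and $f_{n+1,n-1}^{\pm}(z)$, together with the spectral shift by $q^{n-1}$, and the four square-root Cartan generators are handled exactly as the $K_{n}^{1/2}$-type generator in the $C_{n}$ proof. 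This produces a well-defined algebra homomorphism $\what{\theta}_{q}$, which is injective because each of $\eta_{D}$, $\what{\eta}_{R}$, $\what{\theta}_{q}'$ is (and unique since the listed elements generate $U_{q,q^{-1}}^{D}(\what{\frak{so}}_{2n})$).

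For surjectivity, one reads off from the displayed images of the generators that the Gaussian generators $h_{\imath}^{\pm}[\mp m]$ with $1 \le \imath \le n+1$, the $e_{j,j+1}^{\pm}[\mp m]$ and $f_{j+1,j}^{\pm}[\mp m]$ with $1 \le j < n$, and $e_{n-1,n+1}^{\pm}[\mp m]$ and $f_{n+1,n-1}^{\pm}[\mp m]$ all lie in the image of $\what{\theta}_{q}$. One then invokes~\cite[Lemma 4.21, Proposition 4.22]{JLM1} to obtain the Gaussian generators with indices $n+1 < \imath \le 2n$ and $n < j \le 2n$, and finally an analogue of~\cite[Proposition A.4]{HT} to recover every remaining $e_{ij}^{\pm}[\mp m]$ and $f_{ji}^{\pm}[\mp m]$. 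The main obstacle is precisely this last step: one has to run the $D_{n}$ version of the finite-type argument (cf.\ the proof of Lemma~\ref{lem:borel_spanning_sets}) through the Gauss-decomposition identities~\eqref{eq:gauss_decomposition_entries_1}--\eqref{eq:gauss_decomposition_entries_2}, keeping careful track of the fact that $\alpha_{n} = \varepsilon_{n-1}+\varepsilon_{n}$ is not a difference of $\varepsilon$'s, so that the fork at the end of the Dynkin diagram is resolved correctly when propagating from short-root to general Gaussian generators. Once surjectivity is established, compatibility of $\what{\theta}_{q}$ with the $\what{P}$-bigradings of Lemmas~\ref{lem:Q_hat_bigrading} and~\ref{lem:affine_RTT_bigradings}(a) — and hence, a posteriori, with the $P$-bigradings — is checked directly on generators, as noted in Remark~\ref{rem:rtt=Dr-hatP-compatibility}.
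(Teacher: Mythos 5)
Your proof is correct and follows exactly the route the paper intends: the paper itself gives no explicit argument for this corollary, stating only that "as for types $B_n$ and $C_n$, we can use Propositions~\ref{prop:double_cartan_D} and~\ref{prop:double_cartan_RTT_affine}," and your write-up is a faithful expansion of that template (the map $\what{g}$, the composition $\what{\eta}_R^{-1}\circ(\what{\theta}_q'\otimes\what{g})\circ\eta_D$, the generator-by-generator containment check, and the surjectivity via~\cite[Lemma 4.21, Proposition 4.22]{JLM1} together with an analogue of~\cite[Proposition A.4]{HT}), with the correct $D_n$-specific adjustments ($\what{g}(L_{\alpha_n})=L_{\varepsilon_{n-1}+\varepsilon_n}$, the extension of $\eta_D$ to the four square-root Cartan generators mirroring the finite case, the replacement of $e_{n,n+1}^\pm, f_{n+1,n}^\pm$ by $e_{n-1,n+1}^\pm, f_{n+1,n-1}^\pm$ in $\wtd{\frak{X}}_n^\pm$, and the shift $q^{n-1}$).
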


We can now immediately derive a two-parameter version of Theorem~\ref{thm:D=affine_RTT_Dtype_1param}:

\begin{theorem}\label{thm:D=RTT_Dtype_2param}
There is a unique algebra isomorphism $\what{\theta}_{r,s}\colon U_{r,s}^{D}(\what{\frak{so}}_{2n}) \iso \wtd{U}(\wtd{R}_{r,s}(z))$ such that
\begin{equation*}
  \what{\theta}_{r,s}(\gamma^{1/2}) = (\gamma')^{1/2}, \qquad \what{\theta}_{r,s}((\gamma')^{1/2}) = \gamma^{1/2},
\end{equation*}
\begin{equation*}
  \what{\theta}_{r,s} \big( (\omega_{n-1}\omega_{n})^{1/2} \big) = h_{n-1}^{+}[0]^{-1} ,\qquad
  \what{\theta}_{r,s} \big( (\omega_{n-1}'\omega_{n}')^{1/2} \big) = h_{n-1}^{-}[0]^{-1},
\end{equation*}
\begin{equation*}
  \what{\theta}_{r,s} \big( (\omega_{n-1}^{-1}\omega_{n})^{1/2} \big) = h_{n}^{+}[0]^{-1} ,\qquad
  \what{\theta}_{r,s} \big( ((\omega_{n-1}')^{-1}\omega_{n}')^{1/2} \big) = h_{n}^{-}[0]^{-1},
\end{equation*}
\begin{equation*}
  \what{\theta}_{r,s}(\omega_{i}(z)) =
  \begin{cases}
    \wtd{h}_{i+1}^{+}(z^{-1}r^{i/2}s^{-i/2})\wtd{h}_{i}^{+}(z^{-1}r^{i/2}s^{-i/2})^{-1} & \textit{if}\ 1 \le i < n \\
    \wtd{h}_{n+1}^{+}(z^{-1}r^{(n-1)/2}s^{-(n-1)/2})\wtd{h}_{n-1}^{+}(z^{-1}r^{(n-1)/2}s^{-(n-1)/2})^{-1} & \textit{if}\ i = n
  \end{cases},
\end{equation*}
\begin{equation*}
  \what{\theta}_{r,s}(\omega_{i}'(z)) =
  \begin{cases}
    \wtd{h}_{i+1}^{-}(z^{-1}r^{i/2}s^{-i/2})\wtd{h}_{i}^{-}(z^{-1}r^{i/2}s^{-i/2})^{-1} & \textit{if}\  1 \le i < n \\
    \wtd{h}_{n+1}^{-}(z^{-1}r^{(n-1)/2}s^{-(n-1)/2})\wtd{h}_{n-1}^{-}(z^{-1}r^{(n-1)/2}s^{-(n-1)/2})^{-1} & \textit{if}\ i = n
  \end{cases},
\end{equation*}
\begin{equation*}
  \what{\theta}_{r,s}(x_{i}^{\pm}(z)) = \frac{1}{r - s}\wtd{\frak{X}}_{i}^{\mp}(z^{-1}r^{i/2}s^{-i/2})\qquad \textit{for}\qquad 1 \le i < n,
\end{equation*}
\begin{equation*}
  \what{\theta}_{r,s}(x_{n}^{+}(z)) = \frac{1}{r - s}\wtd{\frak{X}}_{n}^{-}(z^{-1}r^{(n-1)/2}s^{-(n-1)/2}),\qquad
  \what{\theta}_{r,s}(x_{n}^{-}(z)) = \frac{rs}{r - s}\wtd{\frak{X}}_{n}^{+}(z^{-1}r^{(n-1)/2}s^{-(n-1)/2}).
\end{equation*}
\end{theorem}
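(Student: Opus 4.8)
The plan is to follow verbatim the three-step strategy already used for Theorems~\ref{thm:D=affine_RTT_Btype_2param} and~\ref{thm:D=RTT_Ctype_2param}, now taking as input the Cartan-doubled one-parameter isomorphism $\what{\theta}_{q}\colon U^D_{q,q^{-1}}(\what{\frak{so}}_{2n})\iso \wtd{U}(\wtd{R}_{q}(z))$ of Corollary~\ref{cor:D=RTT_Dtype_1param_double_cartan}. First I would check, on the generators, that $\what{\theta}_{q}$ is compatible with the $Q\times Q$-grading~\eqref{eq:Drinfeld-grading} on $U^D_{q,q^{-1}}(\what{\frak{so}}_{2n})$ and the $P$-bigrading of Lemma~\ref{lem:affine_RTT_bigradings}(b) on $\wtd{U}(\wtd{R}_{q}(z))$ (extended to the half-Cartan generators $(\omega_{n-1}^{\pm 1}\omega_{n})^{1/2}$ and $h_{n-1}^{\pm}[0]^{-1},h_{n}^{\pm}[0]^{-1}$ in the evident way): indeed $\deg(x_{i,m}^{\pm})$ matches the bidegree of $\wtd{\frak X}_{i}^{\mp}(\cdot)$ and $\deg(\omega_{i}^{(\prime)}(z))$ matches that of the relevant ratios of $h^{\pm}$'s, including at the fork node $n$ where $\alpha_{n}=\varepsilon_{n-1}+\varepsilon_{n}$ pairs with $\wtd e_{n-1,n+1}^{\pm},\wtd f_{n+1,n-1}^{\pm}$. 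Hence $\what{\theta}_{q}$ descends to an algebra isomorphism $\what{\theta}_{q}\colon U^D_{q,\zeta}(\what{\frak{so}}_{2n})\iso \wtd{U}(\wtd{R}_{q}(z))_{\zeta}$.

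Second, I would set
\[
  \what{\theta}_{r,s} \;=\; (\what{\Psi}')^{-1}\circ \what{\theta}_{q}\circ \varphi_{D},
\]
where $\varphi_{D}\colon U^D_{r,s}(\what{\frak{so}}_{2n})\iso U^D_{q,\zeta}(\what{\frak{so}}_{2n})$ is the twisting isomorphism of Theorem~\ref{thm:twisted_algebra_loop} (understood on the enlarged algebras, with the half-Cartan generators fixed) and $\what{\Psi}'\colon \wtd{U}(\wtd{R}_{r,s}(z))\iso \wtd{U}(\wtd{R}_{q}(z))_{\zeta}$ is the isomorphism of Proposition~\ref{prop:2_vs_1_parameter_affine_RTT}. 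As a composition of three algebra isomorphisms, $\what{\theta}_{r,s}$ is an algebra isomorphism, so it only remains to evaluate it on the generating currents.

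Third, I would compute those images. The assertions for $\gamma^{1/2},(\gamma')^{1/2}$ and for the half-Cartan elements are immediate from Corollary~\ref{cor:D=RTT_Dtype_1param_double_cartan} together with the $h^{\pm}_{i}[0]$-case of Lemma~\ref{lem:gaussian_generators_2_param}. For $\omega_{i}(z),\omega_{i}'(z)$ and $x_{i}^{\pm}(z)$ I would push the formulas of Corollary~\ref{cor:D=RTT_Dtype_1param_double_cartan} through $(\what{\Psi}')^{-1}$ using Lemma~\ref{lem:gaussian_generators_2_param}, which rewrites the preimages of $h_{i}^{\pm}(z)$, $e_{ij}^{\pm}(z)$, $f_{ji}^{\pm}(z)$ in terms of $\wtd h_{i}^{\pm}(z)$, $\wtd e_{ij}^{\pm}(z)$, $\wtd f_{ji}^{\pm}(z)$ and the scalars $\psi_{i}$ of~\eqref{eq:psi_values_D} and $\zeta(\varepsilon_{j},\varepsilon_{i})$ evaluated via~\eqref{eq:zeta_formula}; one then substitutes $q=r^{1/2}s^{-1/2}$ and simplifies, exactly as in the proofs of Theorems~\ref{thm:DJ=RTT_Dtype_2param},~\ref{thm:D=affine_RTT_Btype_2param} and~\ref{thm:D=RTT_Ctype_2param}. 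For $i<n$ the $h^{\pm}$-ratios are unscaled, the $\psi$- and $\zeta$-factors entering $\wtd{\frak X}_{i}^{\mp}$ cancel against the $\varphi_{D}$-scaling, the spectral shift becomes $r^{i/2}s^{-i/2}$, and the prefactor is $\tfrac1{r-s}$. The one genuinely delicate point is the fork node $n$: here one works with $\wtd e_{n-1,n+1}^{\pm},\wtd f_{n+1,n-1}^{\pm}$, so the relevant scalars are $\psi_{n-1},\psi_{n+1}$ and $\zeta(\varepsilon_{n+1},\varepsilon_{n-1})$, and one must keep careful track of the extra powers of $\gamma,\gamma'$ in the definition of $\wtd{\frak X}_{n}^{\pm}$, of the asymmetric scaling $\varphi_{D}(x^{-}_{n,m})=(r_{n}s_{n})^{-1/2}x^{-}_{n,m}$, and of the spectral shift sending $q^{n-1}$ to $r^{(n-1)/2}s^{-(n-1)/2}$; these combine to give the prefactors $\tfrac1{r-s}$ for $x_{n}^{+}(z)$ and $\tfrac{rs}{r-s}$ for $x_{n}^{-}(z)$, consistent with $a_{r,s}=1$ in type $D_{n}$ from Theorem~\ref{thm:DJ=D_2_parameter}. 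No new ideas beyond those of the $B_{n}$ and $C_{n}$ cases are required; the only real work is this last bookkeeping at the fork node.
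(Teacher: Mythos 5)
Your proposal is correct and takes essentially the same approach as the paper, which in fact leaves the proof of this $D_{n}$-type theorem implicit (stating it directly after Corollary~\ref{cor:D=RTT_Dtype_1param_double_cartan}), precisely because it follows verbatim the three-step scheme of Theorems~\ref{thm:D=affine_RTT_Btype_2param} and~\ref{thm:D=RTT_Ctype_2param}: check grading compatibility, form $\what{\theta}_{r,s} = (\what{\Psi}')^{-1}\circ \what{\theta}_{q}\circ \varphi_{D}$, and evaluate on the currents via Lemma~\ref{lem:gaussian_generators_2_param}. Your identification of the fork-node bookkeeping (the $\psi_{n-1},\psi_{n+1}$ and $\zeta(\varepsilon_{n+1},\varepsilon_{n-1})$ scalars, the asymmetric $\varphi_{D}$-scaling, and the spectral shift $q^{n-1}\mapsto r^{(n-1)/2}s^{-(n-1)/2}$) as the only genuinely new work is exactly right.
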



\appendix

\section{A-type counterpart}\label{sec:app_A}

In this Appendix, we present the $A$-type counterparts of the main results from
Sections~\ref{sec:twisted_R_matrices},~\ref{sec:FRT construction finite} and~\ref{sec:affine_RTT}, omitting most of the details, as they are actually
simpler than $BCD$-types treated above. For convenience of exposition, we will primarily be working with $U_{q}(\gl_{n+1})$ and $U_{r,s}(\gl_{n+1})$
rather than $U_{q}(\ssl_{n+1})$ and $U_{r,s}(\ssl_{n+1})$, but the results for (an extended version of) $U_{r,s}(\ssl_{n+1})$ can be easily obtained
from those of $U_{r,s}(\gl_{n+1})$.

We first recall the formula for the one-parameter $A_{n}$-type $R$-matrix $\hat{R}_{q}$, which is the $r = q = s^{-1}$-specialization
of~\cite[(5.6)]{BW2}:
\begin{equation*}
  \hat{R}_{q}
  = \sum_{1\leq i\leq n+1} E_{ii} \otimes E_{ii} + q\sum_{1 \le i \neq j \le n + 1}E_{ij} \otimes E_{ji}
  + (1 - q^{2})\sum_{1 \le i < j \le n + 1}E_{jj} \otimes E_{ii}.
\end{equation*}
We note that the $R$-matrix of~\cite[(2.2)]{DF} is different from ours: their $R$ is related to $R_{q} = \hat{R}_{q} \circ \tau$ via
\begin{equation}\label{eq:DF-vs-us}
  q^{-1}R=(R_{q^{-1}})_{21} = (R_{q})_{21}^{-1}.
\end{equation}

Next, we recall the one-parameter isomorphism of~\cite[Theorem 2.1]{DF} (with $\ell_{ij}^{+}$ and $\ell_{ij}^{-}$ interchanged and $q$ replaced
by $q^{-1}$, which is done due to our different conventions and the identification~\eqref{eq:DF-vs-us}). This requires working with the algebra
$U_{q}(\gl_{n+1})$ of~\cite[Definition 2.3]{DF}, but we choose to denote the generators of $U_{q}(\gl_{n+1})$ by
$\{A_{j}^{\pm 1},E_{i},F_{i}\}_{1\leq i\leq n}^{1\leq j\leq n+1}$, in place of $\{q^{\pm H_{j}},e_{i},f_{i}\}_{1\leq i\leq n}^{1\leq j\leq n+1}$
from~\cite[Definition~2.3]{DF}, respectively.
We also equip $U_{q}(\gl_{n+1})$ with a Hopf algebra structure that is co-opposite to that of~\cite[(2.9)]{DF}.

\begin{theorem}\label{thm:DJ=RTT_Atype_1param}
There is a unique Hopf algebra isomorphism $\theta_{q}'\colon U_{q}(\gl_{n+1}) \iso U'(R_{q})$ such that
\[
  \theta_{q}'(A_{j}) = (\ell_{jj}^{+})^{-1}  \qquad \text{for}\quad 1 \le j \le n + 1,
\]
\[
  \theta_{q}'(E_{i}) = \frac{1}{q - q^{-1}}\ell_{i+1,i}^{+}(\ell_{ii}^{+})^{-1},\qquad
  \theta_{q}'(F_{i}) = \frac{1}{q^{-1} - q}(\ell_{ii}^{-})^{-1}\ell_{i,i+1}^{-} \qquad \text{for} \quad 1 \le i \le n.
\]
\end{theorem}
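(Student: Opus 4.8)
The plan is to deduce this from the one-parameter Ding--Frenkel isomorphism of~\cite[Theorem 2.1]{DF} by transporting it across the change of conventions recorded in~\eqref{eq:DF-vs-us}. Recall that~\cite{DF} identifies $U_{q}(\gl_{n+1})$ (in the presentation of~\cite[Definition 2.3]{DF}) with the FRT-algebra $U(R^{\mathrm{DF}})$ attached to their $R$-matrix $R^{\mathrm{DF}}$, under which the Cartan generators become the diagonal RTT-entries and $E_{i},F_{i}$ become the appropriate products of off-diagonal entries; in particular one has the relation $\ell_{ii}^{+}\ell_{ii}^{-} = 1$ on the RTT-side. Thus it suffices to match $U(R^{\mathrm{DF}})$ with $U'(R_{q})$ and to read off the resulting formulas.

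First I would record the relevant invariances of the FRT-construction. The defining relations~\eqref{eq:RLL_relations} of $U(R)$ are homogeneous in $R$, so $U(cR) = U(R)$ for any $c \in \bb{K}^{*}$; moreover, applying the flip in the two auxiliary tensor factors one checks directly that the substitution $R \mapsto R_{21}^{-1}$ together with the relabeling $L^{+} \leftrightarrow L^{-}$ carries each relation in~\eqref{eq:RLL_relations} (including the mixed one $RL_{1}^{+}L_{2}^{-} = L_{2}^{-}L_{1}^{+}R$) to another such relation, hence induces an algebra isomorphism $U(R_{21}^{-1}) \iso U(R)$ sending $\ell_{ij}^{\pm} \mapsto \ell_{ij}^{\mp}$. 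Since $R^{\mathrm{DF}} = q\,(R_{q})_{21}^{-1}$ by~\eqref{eq:DF-vs-us}, combining these two observations yields an algebra isomorphism $U(R^{\mathrm{DF}}) \iso U(R_{q})$ with $\ell_{ij}^{\pm} \mapsto \ell_{ij}^{\mp}$, which descends to $U'(R_{q})$ because the Ding--Frenkel relation $\ell_{ii}^{+}\ell_{ii}^{-} = 1$ is precisely the defining relation of that quotient. Composing with the Ding--Frenkel isomorphism and reading off the images of $A_{j} = q^{H_{j}}$, $E_{i}$, $F_{i}$ produces the assignment in the statement; uniqueness is automatic since $\{A_{j}^{\pm 1},E_{i},F_{i}\}$ generate $U_{q}(\gl_{n+1})$.

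It then remains to promote the algebra isomorphism $\theta_{q}'$ to a Hopf algebra isomorphism, which, as in the proof of Theorem~\ref{thm:DJ=RTT_Btype_1param}, is verified directly on generators: one checks $(\theta_{q}' \otimes \theta_{q}')\circ \Delta = \Delta \circ \theta_{q}'$ and $\epsilon = \epsilon \circ \theta_{q}'$. This is straightforward because the diagonal entries $\ell_{ii}^{\pm}$ and the relevant ``$K$-tails'' are grouplike while the images of $E_{i}$ and $F_{i}$ carry the matching skew-primitive coproducts; the point is that equipping $U_{q}(\gl_{n+1})$ with the Hopf structure \emph{co-opposite} to~\cite[(2.9)]{DF} is exactly what aligns its coproduct with the matrix coalgebra structure~\eqref{eq:U(R)_Hopf} of $U'(R_{q})$ after the swap $L^{+}\leftrightarrow L^{-}$. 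Compatibility with the antipode then follows automatically from its uniqueness, since $\theta_{q}'^{\,-1}\circ S \circ \theta_{q}'$ is again an antipode.

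The main obstacle here is purely one of bookkeeping: one must keep simultaneous track of the scalar factor, the substitution $R \mapsto R_{21}^{-1}$ with its attendant swap of the two families $\ell_{ij}^{\pm}$, and the passage to the co-opposite coproduct, and confirm that the net effect of all three is exactly the stated assignment together with a Hopf isomorphism. We stress that, unlike in $BCD$-types, no analogue of the symmetry relations~\eqref{eq:LC_relations} is needed, as for $A_{n}$ the FRT-algebra $U'(R_{q})$ already realizes $U_{q}(\gl_{n+1})$ on the nose; this is what makes the argument shorter than its $BCD$-counterparts in Section~\ref{sec:FRT construction finite}.
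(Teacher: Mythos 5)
Your proof is correct and follows the same route the paper intends: Theorem~\ref{thm:DJ=RTT_Atype_1param} is presented in the paper as a \emph{recall} of \cite[Theorem 2.1]{DF}, transported across the identification~\eqref{eq:DF-vs-us} together with the swap $\ell^{\pm}\leftrightarrow\ell^{\mp}$, the replacement $q\mapsto q^{-1}$, and the passage to the co-opposite Hopf structure, which is exactly what you carry out. You merely make the underlying bookkeeping explicit — the scalar invariance $U(cR)=U(R)$ and the isomorphism $U(R_{21}^{-1})\iso U(R)$ sending $\ell_{ij}^{\pm}\mapsto\ell_{ij}^{\mp}$ — which is a sound and useful unpacking of what the paper leaves to the reader.
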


To upgrade this result to the two-parameter setup, we first need to realize the two-parameter $R$-matrix of \cite[(5.6)]{BW2} as a twist
of $\hat{R}_{q}$. Let $P = \bigoplus_{i = 1}^{n+1} \bb{Z}\varepsilon_{i}$, where $\varepsilon_{i}$ is an orthonormal basis of $\bb{R}^{n+1}$ and
$\alpha_{i} = \varepsilon_{i} - \varepsilon_{i+1}$ for $1 \le i \le n$. It is easy to verify that the following formulas define an extension of
the skew bicharacter $\zeta$ of~\eqref{eq:zeta_formula} to a skew bicharacter $\zeta\colon P \times P \to \bb{K}^{*}$:
\begin{equation}\label{eq:zeta_extension_A}
  \zeta(\varepsilon_{i},\varepsilon_{j}) =
  \begin{cases}
    (rs)^{1/4} & \text{if}\ i < j \\
    1 & \text{if}\ i = j \\
    (rs)^{-1/4} & \text{if}\ i > j
  \end{cases} \,.
\end{equation}
Now, let $V = \bigoplus_{i = 1}^{n+1} \bb{K}v_{i}$. Then we have the following result:

\begin{prop}
The following defines a representation $\rho_{q}\colon U_{q,q^{-1}}(\ssl_{n+1}) \to \End(V)$:
\[
  \rho_q(E_{i}) = E_{i,i+1},\qquad \rho_q(F_{i}) = E_{i+1,i},
\]
\[
  \rho_q(\omega_{i}) = qE_{ii} + q^{-1}E_{i+1,i+1}  + \sum_{j \neq i,i+1}E_{jj},\qquad
  \rho_q(\omega_{i}') = q^{-1}E_{ii} + qE_{i+1,i+1} + \sum_{j \neq i,i + 1}E_{jj},
\]
for all $1 \le i \le n$.
\end{prop}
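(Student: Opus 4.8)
The plan is to verify directly that the assignment $\rho_q$ respects the defining relations~\eqref{eq:R1}--\eqref{eq:R5} of $U_{q,q^{-1}}(\ssl_{n+1})$, i.e.\ of $U_{r,s}(\ssl_{n+1})$ specialized at $r=q$, $s=q^{-1}$. Since $A_n$ is simply-laced we have $d_i=1$, hence $r_i=q$ and $s_i=q^{-1}$, and the modified Ringel form on $\Pi_{A_n}$ takes the values $\langle\alpha_i,\alpha_j\rangle=a_{ij}$ for $i<j$, $\langle\alpha_i,\alpha_i\rangle=1$, and $\langle\alpha_i,\alpha_j\rangle=0$ for $i>j$. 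Crucially, in this specialization $rs=1$ and $r_is_i=1$, so every factor of the form $(rs)^{\bullet}$ or $(r_is_i)^{\bullet}$ appearing in~\eqref{eq:R5} is trivial.

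First I would note that $\rho_q(\omega_i)$ and $\rho_q(\omega_i')$ are invertible diagonal matrices, so~\eqref{eq:R1} is immediate. For~\eqref{eq:R2} and~\eqref{eq:R3} I would compute the conjugation of $\rho_q(e_j)=E_{j,j+1}$ (resp.\ $\rho_q(f_j)=E_{j+1,j}$) by the diagonal matrix $\rho_q(\omega_i)$ (resp.\ $\rho_q(\omega_i')$): the resulting scalar is the ratio of the $j$-th to the $(j+1)$-st diagonal entry, and one checks in the four cases $j=i$, $j=i+1$, $j=i-1$, $|i-j|\ge 2$ that this ratio equals $r^{\langle\alpha_j,\alpha_i\rangle}s^{-\langle\alpha_i,\alpha_j\rangle}$ (resp.\ $r^{-\langle\alpha_j,\alpha_i\rangle}s^{\langle\alpha_i,\alpha_j\rangle}$, and the analogous scalars for $\omega_i'$), using the above values of the Ringel form. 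For~\eqref{eq:R4} one has $[\rho_q(e_i),\rho_q(f_j)]=\delta_{ij}(E_{ii}-E_{i+1,i+1})$ by the matrix-unit multiplication rule, while $\rho_q(\omega_i)-\rho_q(\omega_i')=(q-q^{-1})(E_{ii}-E_{i+1,i+1})$ (all the $\sum_{j\neq i,i+1}E_{jj}$ summands cancel), so dividing by $r_i-s_i=q-q^{-1}$ recovers exactly the required identity.

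For the quantum Serre relations~\eqref{eq:R5}, the key observation is that $\rho_q(e_i)^2=E_{i,i+1}^2=0$ and $\rho_q(f_i)^2=E_{i+1,i}^2=0$. When $|i-j|\ge 2$, since $1-a_{ij}=1$ and $\langle\alpha_j,\alpha_i\rangle=0$, the relation reduces to $[e_i,e_j]=0$, which holds because $E_{i,i+1}$ and $E_{j,j+1}$ have disjoint supports (likewise for the $f_i$). When $|i-j|=1$, using $rs=1$ and $r_is_i=1$ the relation becomes $e_i^2e_j-(q+q^{-1})e_ie_je_i+e_je_i^2=0$; the outer two terms vanish as $\rho_q(e_i)^2=0$, and the middle term vanishes because $E_{i,i+1}E_{j,j+1}E_{i,i+1}=0$ in both cases $j=i\pm 1$ (the product of matrix units telescopes to $0$). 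The argument for the $f$-Serre relation is identical. This exhausts~\eqref{eq:R1}--\eqref{eq:R5}, so $\rho_q$ is a well-defined algebra homomorphism.

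There is no genuine obstacle here: the computation is entirely elementary, and the only place requiring care is the bookkeeping of the Ringel-form values and the exponents of $r,s$ in the verification of~\eqref{eq:R2}--\eqref{eq:R3}. Alternatively, one may deduce the statement by specializing $r=q$, $s=q^{-1}$ in the standard two-parameter vector representation of $U_{r,s}(\ssl_{n+1})$, cf.~\cite{BW2}.
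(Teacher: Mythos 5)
Your direct verification is correct, and the computations all check out: the diagonal-conjugation argument for \eqref{eq:R2}--\eqref{eq:R3}, the matrix-unit calculation of $[E_{i,i+1},E_{j+1,j}]$ for \eqref{eq:R4}, and the observation that $E_{i,i+1}^2=0$ kills the outer terms while $E_{i,i+1}E_{j,j+1}E_{i,i+1}=0$ kills the middle term in the $|i-j|=1$ Serre case all go through. It is also a correct observation that the specialization $r=q$, $s=q^{-1}$ makes $rs=r_is_i=1$, so all the two-parameter prefactors in \eqref{eq:R5} trivialize.

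That said, your route differs from what the paper (implicitly) does. The paper states this proposition without proof, and by analogy with the $B_n$, $C_n$, $D_n$ cases (where the corresponding Propositions~\ref{prop:1-parameter_rep_B},~\ref{prop:1_parameter_rep_C},~\ref{prop:1_parameter_rep_D} are explicitly labeled ``the special case of~\cite[Proposition 3.2]{MT1} with $r=q$, $s=q^{-1}$''), the intended proof here is simply to specialize the known two-parameter vector representation of $U_{r,s}(\ssl_{n+1})$ from~\cite{BW2} at $r=q$, $s=q^{-1}$ --- i.e.\ the alternative you mention in your last sentence. Your main argument is therefore a more elementary and self-contained verification of the relations from scratch, at the cost of some case-by-case bookkeeping; the citation-and-specialize route is shorter but relies on an external source. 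Both are valid, and the content is the same once the formulas are unwound.
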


The above representation clearly has a weight space decomposition $V=\bigoplus_{i=1}^{n+1} V[\varepsilon_i]$ with $V[\varepsilon_i]=\bb{K}v_{i}$.
Using the bicharacter~\eqref{eq:zeta_extension_A}, we can twist this representation via Proposition~\ref{prop:twisted_representation} to obtain
a representation $\wtd{\rho}_{r,s}\colon U_{r,s}(\ssl_{n+1}) \to \End(V)$, which is related to the representation $\rho_{r,s}$
of~\cite[Proposition~3.1]{MT1} via
\[
  \wtd{\rho}_{r,s}(x) \circ \psi = \psi \circ \rho_{r,s}(x) \qquad \forall\, x \in U_{r,s}(\ssl_{n+1}),
\]
where the linear map $\psi\colon V \to V$ is defined by $\psi(v_{i}) = \psi_{i}v_{i}$, with
\begin{equation}\label{eq:psi_values_A}
  \psi_{i} = (rs)^{\frac{n + 1 - i}{4}}\qquad \text{for all}\qquad 1 \le i \le n + 1.
\end{equation}

As in Section~\ref{sec:twisted_R_matrices}, it follows that the natural intertwiner for the representation $V \otimes V$ defined by $\rho_{r,s}$ is
\begin{equation}\label{eq:R_rs_twisted}
  \hat{R}_{r,s} = (\psi \otimes \psi)^{-1} \circ \xi \circ \hat{R}_{q} \circ \xi^{-1} \circ (\psi \otimes \psi),
\end{equation}
with $\xi = \xi_{V,V}$ given by~\eqref{eq:xi_formula}. Moreover, it precisely coincides with that of~\cite[(5.6)]{BW2}, cf.~\cite[(4.6)]{MT1}:
\begin{equation}\label{eq:A_R_matrix_2param}
  \hat{R}_{r,s} = \sum_{i} E_{ii} \otimes E_{ii} + r\sum_{i < j} E_{ji} \otimes E_{ij}
  + s^{-1}\sum_{i < j} E_{ij} \otimes E_{ji} + (1 - rs^{-1})\sum_{i< j} E_{jj} \otimes E_{ii}.
\end{equation}
We define $R_{r,s}=\hat{R}_{r,s} \circ \tau$.

Moreover, as in Subsection~\ref{ssec:affine_R_twisting}, we can also twist the one-parameter affine $A_{n}$-type $R$-matrix $\hat{R}_{q}(z)$, which is the $r = q = s^{-1}$-specialization of~\cite[(2.4)]{JL2}, to obtain the corresponding two-parameter affine $R$-matrix $\hat{R}_{r,s}(z)$
of~\cite[(2.4)]{JL2} (cf.~\cite[(6.10)]{MT1}). First, recall that $\hat{R}_{r,s}(z)$ is given by the following formula:
\begin{equation}\label{eq:A_AffRMatrix}
\begin{split}
  \hat{R}_{r,s}(z) &=
  (1 -zrs^{-1})\sum_{i} E_{ii} \otimes E_{ii} + (1 - z)r\sum_{i > j} E_{ij} \otimes E_{ji} + (1 - z)s^{-1}\sum_{i < j}E_{ij} \otimes E_{ji} \\
  & \quad + (1 - rs^{-1})\sum_{i > j}E_{ii} \otimes E_{jj} + (1 - rs^{-1})z\sum_{i < j}E_{ii} \otimes E_{jj}.
\end{split}
\end{equation}
Then, we recall from~\cite[(7.2)]{MT1} that
\[
  \hat{R}_{r,s}(z) = \hat{R}_{r,s} - rs^{-1}z\hat{R}_{r,s}^{-1} = \hat{R}_{r,s} - q^{2}z\hat{R}_{r,s}^{-1},
\]
and thus, due to~\eqref{eq:R_rs_twisted}, we also have
\begin{equation}\label{eq:R_rs_twisted_affine}
  \hat{R}_{r,s}(z) = (\psi \otimes \psi)^{-1} \circ \xi \circ \hat{R}_{q}(z) \circ \xi^{-1} \circ (\psi \otimes \psi).
\end{equation}

We may now upgrade Theorem~\ref{thm:DJ=RTT_Atype_1param} to the two-parameter counterpart. To this end, we need to work with $U_{r,s}(\gl_{n+1})$,
which we define as follows (while our definition looks different than that of~\cite[\S1]{BW1} it is actually equivalent by
Remark~\ref{rem:DF-vs-our-Cartan}; however, our choice is more natural from the perspective of Proposition~\ref{prop:twisted_algebra_gln}):

\begin{definition}\label{def:BW-restated}
The two-parameter quantum group of $\gl_{n+1}$ is the associative $\bb{K}$-algebra $U_{r,s}(\gl_{n+1})$ generated by
$\{e_{i},f_{i},a_{j}^{\pm 1},(a_{j}')^{\pm 1}\}_{1\leq i\leq n}^{1\leq j\leq n+1}$, with the defining relations
\[
  [a_{i},a_{j}] = [a_{i},a_{j}'] = [a_{i}',a_{j}'] = 0,\qquad
  a_{i}^{\pm 1}a_{i}^{\mp 1} = 1 = (a_{i}')^{\pm 1}(a_{i}')^{\mp 1} \qquad \text{for} \quad 1 \le i,j \le n+1,
\]
\[
  a_{i}e_{j} = e_{j}a_{i},\qquad a_{i}f_{j} = f_{j}a_{i},\qquad
  a_{i}'e_{j} = e_{j}a_{i}',\qquad a_{i}'f_{j} = f_{j}a_{i}' \qquad \text{if} \quad i < j\ \text{or}\ i > j + 1,
\]
\[
  a_{i}e_{i} = s^{-1}e_{i}a_{i},\qquad a_{i}f_{i} = sf_{i}a_{i},\qquad
  a_{i}'e_{i} = r^{-1}e_{i}a_{i}',\qquad a_{i}'f_{i} = rf_{i}a_{i}' \qquad \text{for}\qquad 1 \le i \le n,
\]
\[
  a_{i+1}e_{i} = r^{-1}e_{i}a_{i+1},\quad a_{i+1}f_{i} = rf_{i}a_{i+1},\quad
  a_{i+1}'e_{i} = s^{-1}e_{i}a_{i+1}',\quad a_{i+1}'f_{i} = sf_{i}a_{i+1}' \quad \text{for}\quad 1 \le i \le n,
\]
\[
  [e_{i},f_{j}] = \delta_{ij}\frac{a_{i}a_{i+1}^{-1} - a_{i}'(a_{i+1}')^{-1}}{r_{i} - s_{i}}\qquad \text{for all}\qquad 1 \le i,j \le n,
\]
and relations~\eqref{eq:R5} for all $i \neq j$.
\end{definition}

\begin{remark}\label{rem:DF-vs-our-Cartan}
Let $U^{\mathrm{BW}}_{r,s}(\gl_{n+1})$ be the algebra of~\cite[\S1]{BW1}, generated by
$\{E_{i},F_{i},A_{j}^{\pm 1},B_{j}^{\pm 1}\}_{1\leq i\leq n}^{1\leq j\leq n+1}$ subject to~\cite[(R1)--(R7)]{BW1}
(with the capital font just to distinguish from the definition above). Then, we have an algebra isomorphism
$\varrho \colon U_{r,s}(\gl_{n+1})\iso U^{\mathrm{BW}}_{r,s}(\gl_{n+1})$ given by
$\varrho(e_i)=E_i, \varrho(f_i)=F_i$ for $1\leq i\leq n$ and
$\varrho(a_j)=1/(B_1\ldots B_j A_{1}\ldots A_{j-1}), \varrho(a'_j)=1/(A_1\ldots A_j B_{1}\ldots B_{j-1})$ for $1\leq j\leq n+1$.
In particular, we have $\varrho(a_i a^{-1}_{i+1})=A_iB_{i+1}, \varrho(a'_i (a'_{i+1})^{-1})=B_iA_{i+1}$ for $1\leq i\leq n$.
We further note that
  $\varrho^{-1}(A_j)=(a'_j)^{-1}\cdot (a_{j-1}a'_{j-1}) \cdot (a_{j-2}a'_{j-2})^{-1} \ldots  (a_{1}a'_{1})^{(-1)^j},
   \varrho^{-1}(B_j)=a_j^{-1}\cdot (a_{j-1}a'_{j-1}) \cdot (a_{j-2}a'_{j-2})^{-1} \ldots  (a_{1}a'_{1})^{(-1)^j}$.
\end{remark}

There is a Hopf algebra structure on $U_{r,s}(\gl_{n+1})$, where $a_{i},a_{i}'$ are grouplike, while the coproduct, counit, and antipode of $e_{i},f_{i}$
are as for $U_{r,s}(\ssl_{n+1})$ with $\omega_{i} = a_{i}a_{i+1}^{-1}$, $\omega_{i}' = a_{i}'(a_{i+1}')^{-1}$. Furthermore, the assignment
\[
  \deg(a_{j}) = (\varepsilon_{j},-\varepsilon_{j}),\qquad \deg(a_{j}') = (\varepsilon_{j},-\varepsilon_{j}),\qquad
  \deg(e_{i}) = (\alpha_{i},0),\qquad \deg(f_{i}) = (0,-\alpha_{i})
\]
makes $U_{r,s}(\gl_{n+1})$ into a $P$-bigraded Hopf algebra. Thus, using the bicharacter $\zeta$ of~\eqref{eq:zeta_extension_A}, we may define
the Hopf algebra $U_{q,\zeta}(\gl_{n+1})$, which has multiplication given by~\eqref{eq:twisted_product_general}, and the coproduct, counit, and
antipode are as those of $U_{q,q^{-1}}(\gl_{n+1})$. Then, we have the following analogue of Proposition~\ref{prop:twisted_algebra}:

\begin{prop}\label{prop:twisted_algebra_gln}
There is a Hopf algebra isomorphism $\wtd{\varphi}\colon U_{r,s}(\gl_{n+1}) \iso U_{q,\zeta}(\gl_{n+1})$ given by
\[
  \wtd{\varphi}(e_{i}) = e_{i},\quad \wtd{\varphi}(f_{i}) = (rs)^{-1/2}f_{i},\quad
  \wtd{\varphi}(a_{j}) = a_{j},\quad \wtd{\varphi}(a_{j}') = a_{j}' \qquad \forall\, 1\leq i\leq n,\ 1 \le j \le n + 1.
\]
\end{prop}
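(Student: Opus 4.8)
The plan is to mimic the proof of Proposition~\ref{prop:twisted_algebra}, verifying that the assignment given for $\wtd\varphi$ extends to a well-defined algebra homomorphism $U_{r,s}(\gl_{n+1})\to U_{q,\zeta}(\gl_{n+1})$, that it respects the Hopf structure, and that it is invertible. Since $U_{r,s}(\gl_{n+1})$ is presented by generators and the relations of Definition~\ref{def:BW-restated}, it suffices to check that the $\wtd\varphi$-images of $\{e_i,f_i,a_j^{\pm1},(a_j')^{\pm1}\}$ satisfy those relations in $U_{q,\zeta}(\gl_{n+1})$, where the multiplication is the twisted product $\circ$ of~\eqref{eq:twisted_product_general} associated to the bicharacter $\zeta\colon P\times P\to\bb K^*$ of~\eqref{eq:zeta_extension_A}. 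The homogeneous degrees are $\deg(e_i)=(\alpha_i,0)$, $\deg(f_i)=(0,-\alpha_i)$, and $\deg(a_j)=\deg(a_j')=(\varepsilon_j,-\varepsilon_j)$, so every relation to be checked is between homogeneous elements and the twisting scalars are explicitly computable from $\zeta$.

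First I would record the elementary values of $\zeta$ needed: from~\eqref{eq:zeta_extension_A} one gets $\zeta(\varepsilon_i,\alpha_j)$ and $\zeta(\alpha_i,\alpha_j)$ in closed form, and in particular $\zeta(\alpha_i,\alpha_j)=p_{ij}^{1/2}$ as in Subsection~\ref{ssec:quantum_group_twisting}, together with $\zeta(\varepsilon_i,\varepsilon_j)^2=(rs)^{\pm1/2}$ or $1$. Then I would go through the relations in order. The commutativity of the Cartan part is immediate since all of $a_j,a_j'$ have degree $(\varepsilon_j,-\varepsilon_j)$ and $\zeta(\varepsilon_i,\varepsilon_j)\zeta(\varepsilon_j,\varepsilon_i)^{-1}\cdot\zeta(\varepsilon_j,\varepsilon_i)\zeta(\varepsilon_i,\varepsilon_j)^{-1}=1$, so $a_i\circ a_j=a_j\circ a_i$; the invertibility relations are degree $(0,0)$ and hence unchanged. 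For the $a$-$e$ and $a$-$f$ commutation relations: in $U_{q,q^{-1}}(\gl_{n+1})$ the elements $a_j$ are the images of $K$-type Cartan elements (under the $\gl$-version of $\pi$ of~\eqref{eq:Uq_quotient}), so $a_i e_j=q^{(\varepsilon_i,\alpha_j)}e_j a_i$ etc.; converting the ordinary product to $\circ$ multiplies by $\zeta(\varepsilon_i,\alpha_j)^2$, and one checks $q^{(\varepsilon_i,\alpha_j)}\zeta(\varepsilon_i,\alpha_j)^2$ equals the required two-parameter scalar (e.g. $s^{-1}$ when $i=j$, $r^{-1}$ when $i=j+1$, and $1$ otherwise) using $\zeta(\lambda,\mu)=(\omega'_\mu,\omega_\lambda)^{1/2}q^{-(\lambda,\mu)/2}$ from~\eqref{eq:zeta_formula} extended to $P$. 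The $e_i f_j$ relation is checked exactly as in the proof of Proposition~\ref{prop:twisted_algebra}: the factor $(rs)^{-1/2}$ on $f_i$ together with the degree-$(0,0)$ nature of $[e_i,f_j]$ produces the denominator $r_i-s_i$ correctly, and $\wtd\varphi(a_i a_{i+1}^{-1})=a_i\circ a_{i+1}^{-1}$ matches $\omega_i$ since $\deg(a_i a_{i+1}^{-1})$ pairs trivially in the twist. Finally the $(r,s)$-Serre relations~\eqref{eq:R5} for the $\wtd\varphi$-images reduce, after collecting $\circ$-to-ordinary conversion factors, to the $q$-Serre relations in $U_{q,q^{-1}}(\gl_{n+1})$; this is the same bookkeeping that appears implicitly in Proposition~\ref{prop:twisted_algebra} and the computation in the proof of Theorem~\ref{thm:twisted_algebra_loop} around~\eqref{eq:D9+}.

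For the Hopf-algebra compatibility: by the general remarks following~\eqref{eq:twisted_product_general}, $U_{q,\zeta}(\gl_{n+1})$ carries the same coproduct, counit and antipode as $U_{q,q^{-1}}(\gl_{n+1})$, and these are visibly compatible under the assignment (the $a_j,a_j'$ are grouplike on both sides, and the coproducts of $e_i,f_i$ are the standard ones, with the scalar $(rs)^{-1/2}$ absorbed compatibly because $f_i\mapsto f_i\otimes\omega_i'+1\otimes f_i$ is homogeneous). Invertibility is immediate from the explicit inverse $e_i\mapsto e_i$, $f_i\mapsto (rs)^{1/2}f_i$, $a_j\mapsto a_j$, $a_j'\mapsto a_j'$, which is checked to be a homomorphism by the symmetric argument (replacing $\zeta$ by $\zeta^{-1}$). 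I do not expect any genuine obstacle here: the main point requiring care is simply the precise matching of the twisting scalars $\zeta(\varepsilon_i,\alpha_j)^2$ against the asymmetric constants $r^{\pm1},s^{\pm1}$ in Definition~\ref{def:BW-restated} (these are not symmetric in $i\leftrightarrow j$, which is exactly why one uses the non-diagonal extension~\eqref{eq:zeta_extension_A} rather than a naive one), and I would flag that as the one computation to present explicitly; the Serre relations, being the bulkiest, would be left to the reader as in the parallel statements above.
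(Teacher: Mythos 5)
Your proposal is correct and matches the paper's intent exactly: the paper declares the proof "straightforward" and leaves it to the reader, precisely because it amounts to the same generator-by-generator verification you carry out (mimicking Proposition~\ref{prop:twisted_algebra}, checking that each relation of Definition~\ref{def:BW-restated} transforms by the expected $\zeta$-twist factor, then noting the coalgebra structure is untouched and the inverse is given by the reciprocal scalars). The scalar bookkeeping you flag — that $\zeta(\varepsilon_i,\alpha_j)^2 q^{(\varepsilon_i,\alpha_j)}$ reproduces $s^{-1}$, $r^{-1}$, or $1$ depending on the position of $i$ relative to $j$, which is what forces the non-diagonal extension~\eqref{eq:zeta_extension_A} — is indeed the only computation with genuine content, and your values check out.
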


\begin{proof}
The proof is straightforward; we leave details to the reader.
\end{proof}

\begin{remark}\label{rem:sl-vs-gl_finiteA}
There is an algebra homomorphism $\iota \colon U_{r,s}(\ssl_{n+1}) \to U_{r,s}(\gl_{n+1})$ defined by
\begin{equation}\label{eq:sl-vs-gl-matching_finiteA}
  e_{i} \mapsto e_{i},\qquad f_{i} \mapsto f_{i},\qquad \omega_{i} \mapsto a_{i}a_{i+1}^{-1},\qquad \omega_{i}' \mapsto a_{i}'(a_{i+1}')^{-1}
  \qquad \forall\, 1\leq i\leq n.
\end{equation}
We note that $\iota$ intertwines the $P \times P$-gradings. In fact, $\iota$ is injective. Furthermore, $\wtd{\varphi}$
of Proposition~\ref{prop:twisted_algebra_gln} is compatible with $\varphi$ of Proposition~\ref{prop:twisted_algebra},
that is $\wtd{\varphi} \circ \iota = \iota \circ \varphi$.
\end{remark}

On the other hand, the algebras $U(R_{q})$ and $U(R_{r,s})$ are also $P$-bigraded Hopf algebras via the assignment
\[
  \deg(\ell_{ij}^{\pm}) = (-\varepsilon_{i},\varepsilon_{j}),
\]
since $(R_{q})^{ij}_{kt} = 0 = (R_{r,s})^{ij}_{kt}$ unless $\{i,j\} = \{k,t\}$, cf.~Propositions~\ref{prop:A(R)_grading} and~\ref{prop:U(R)_grading}.
Moreover, by Corollary~\ref{cor:U(R)_2_vs_1_parameter} (with $U(R_q),U(R_{r,s})$ instead of $\wtd{U}(R_q),\wtd{U}(R_{r,s})$, respectively)
and~\eqref{eq:R_rs_twisted}, we have a Hopf algebra isomorphism $U(R_{r,s}) \iso U(R_{q})_{\zeta}$ given by
$\ell_{ij}^{\pm} \mapsto \psi_{i}^{-1}\psi_{j}\ell_{ij}^{\pm}$ with $\psi_{i}$ of~\eqref{eq:psi_values_A}. Following the procedure
of Section~\ref{sec:FRT construction finite}, we get:

\begin{theorem}\label{thm:DJ=RTT_Atype_2param}
There is a unique Hopf algebra isomorphism $\theta_{r,s}\colon U_{r,s}(\gl_{n+1}) \iso U(R_{r,s})$ such that
\begin{align*}
  &\theta_{r,s}(a_{j}) = (\ell_{jj}^{+})^{-1}, & & \theta_{r,s}(a_{j}') = (\ell_{jj}^{-})^{-1} & & \textit{for} \ \ 1 \le j \le n + 1, \\
  &\theta_{r,s}(e_{i}) = \frac{1}{r - s}\ell_{i+1,i}^{+}(\ell_{ii}^{+})^{-1}, &
  &\theta_{r,s}(f_{i}) = \frac{1}{s - r}(\ell_{ii}^{-})^{-1}\ell_{i,i+1}^{-} & & \textit{for} \ \ 1 \le i \le n.
\end{align*}
\end{theorem}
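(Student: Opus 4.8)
The plan is to mirror the $BCD$-type argument that produced Theorems~\ref{thm:DJ=RTT_Btype_2_param}, \ref{thm:DJ=RTT_Ctype_2param}, and \ref{thm:DJ=RTT_Dtype_2param}, but in the substantially simpler $A_n$-setting where no Cartan-doubling is needed because $U(R_q)$ already carries all the Cartan data. First I would record that the map $\theta_q'$ of Theorem~\ref{thm:DJ=RTT_Atype_1param} is compatible with the $P$-bigradings on $U_q(\gl_{n+1})$ (via $\deg(A_j)=(\varepsilon_j,-\varepsilon_j)$, $\deg(E_i)=(\alpha_i,0)$, $\deg(F_i)=(0,-\alpha_i)$) and on $U'(R_q)$ (via $\deg(\ell_{ij}^\pm)=(-\varepsilon_i,\varepsilon_j)$); this compatibility is immediate on generators from the explicit formulas. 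Actually, since in type $A$ one wants $U(R_q)$ rather than $U'(R_q)$, I would first upgrade Theorem~\ref{thm:DJ=RTT_Atype_1param} to an isomorphism $\theta_q\colon U_q(\gl_{n+1})_{\text{doubled}}\iso U(R_q)$ where the ``doubled'' $\gl$-algebra has separate $A_j,A_j'$ generators — but a cleaner route is to observe that $U_{q,q^{-1}}(\gl_{n+1})$ of Definition~\ref{def:BW-restated} specialized at $r=q,s=q^{-1}$ already IS the natural Cartan-doubled $\gl_{n+1}$-quantum group, and one proves directly (imitating Corollary~\ref{cor:DJ=RTT_Btype_1param_double_cartan}, but here using that the full $U(R_q)$ contains both $\ell_{jj}^+$ and $\ell_{jj}^-$ as independent invertible commuting elements) that there is a Hopf algebra isomorphism $\theta_q\colon U_{q,q^{-1}}(\gl_{n+1})\iso U(R_q)$ with $\theta_q(a_j)=(\ell_{jj}^+)^{-1}$, $\theta_q(a_j')=(\ell_{jj}^-)^{-1}$, and the same $e_i,f_i$ formulas as in Theorem~\ref{thm:DJ=RTT_Atype_1param}; the injectivity here follows from the triangular decomposition of $U(R_q)$ (Corollary~\ref{cor:triangular_decomposition_RTT}, whose proof goes through verbatim in type $A$) together with Corollary~\ref{cor:double_cartan_+-_iso}, and surjectivity follows from Lemma~\ref{lem:borel_spanning_sets}.

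Next, because $\theta_q$ preserves $P$-bidegrees, it induces a Hopf algebra isomorphism $\theta_q\colon U_{q,\zeta}(\gl_{n+1})\iso U(R_q)_\zeta$, where $\zeta\colon P\times P\to\bb K^*$ is the extended bicharacter of~\eqref{eq:zeta_extension_A}. I would then set
\[
  \theta_{r,s} = \wtd\Psi^{-1}\circ\theta_q\circ\wtd\varphi,
\]
where $\wtd\varphi\colon U_{r,s}(\gl_{n+1})\iso U_{q,\zeta}(\gl_{n+1})$ is the isomorphism of Proposition~\ref{prop:twisted_algebra_gln} and $\wtd\Psi\colon U(R_{r,s})\iso U(R_q)_\zeta$ is the Hopf algebra isomorphism $\ell_{ij}^\pm\mapsto\psi_i^{-1}\psi_j\ell_{ij}^\pm$ coming from Corollary~\ref{cor:U(R)_2_vs_1_parameter} applied with $U(R_q),U(R_{r,s})$ in place of $\wtd U(R_q),\wtd U(R_{r,s})$ and using~\eqref{eq:R_rs_twisted}, with $\psi_i$ as in~\eqref{eq:psi_values_A}. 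By construction $\theta_{r,s}$ is a Hopf algebra isomorphism, so all that remains is to evaluate it on the generators of $U_{r,s}(\gl_{n+1})$. This is a direct computation exactly parallel to the proof of Theorem~\ref{thm:DJ=RTT_Btype_2_param}: for $a_j$ one gets $\wtd\Psi^{-1}(\theta_q(a_j)) = \wtd\Psi^{-1}((\ell_{jj}^+)^{-1}) = \psi_j\psi_j^{-1}(\ell_{jj}^+)^{-1} = (\ell_{jj}^+)^{-1}$ (and similarly for $a_j'$), while for $e_i$ one gets $\wtd\Psi^{-1}\bigl(\tfrac{1}{q-q^{-1}}\ell_{i+1,i}^+(\ell_{ii}^+)^{-1}\bigr)$ and, upon substituting $q-q^{-1} = (rs)^{-1/2}(r-s)$ together with $\psi_{i+1}\psi_i^{-1}=(rs)^{-1/2}$ and $\zeta(\varepsilon_{i+1},\varepsilon_i)=(rs)^{-1/2}$ (from~\eqref{eq:zeta_extension_A} and the definition of $\circ$), the prefactor collapses to $\tfrac{1}{r-s}$; the computation for $f_i$ picks up the extra $(rs)^{-1/2}$ from $\wtd\varphi(f_i)$ exactly as in the $B_n$ case and yields $\tfrac{1}{s-r}(\ell_{ii}^-)^{-1}\ell_{i,i+1}^-$.

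Uniqueness is clear since the listed elements generate $U(R_{r,s})$ (by Lemma~\ref{lem:borel_spanning_sets}, whose type-$A$ analogue holds without the $\cal I_q$-quotient) and the generators of $U_{r,s}(\gl_{n+1})$ determine any algebra homomorphism. The only genuinely new ingredient relative to the $BCD$ treatment is the verification that Definition~\ref{def:BW-restated} gives a $P$-bigraded Hopf algebra and that $\wtd\varphi$ of Proposition~\ref{prop:twisted_algebra_gln} is a genuine isomorphism — but both are stated or sketched in the surrounding text, and the compatibility $\wtd\varphi\circ\iota = \iota\circ\varphi$ of Remark~\ref{rem:sl-vs-gl_finiteA} guarantees everything is consistent with the $\ssl_{n+1}$-picture. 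I expect the main (minor) obstacle to be bookkeeping: making sure the passage from $U'(R_q)$ in Theorem~\ref{thm:DJ=RTT_Atype_1param} to the full $U(R_q)$ is handled correctly, i.e.\ that the extra diagonal generators $\ell_{jj}^-$ are independent and that the triangular-decomposition and spanning-set lemmas from Section~\ref{sec:FRT construction finite} carry over to $R_q$ in type $A$ (they do, since those proofs only used that $(R_q)^{ij}_{kt}=0$ unless $\{i,j\}=\{k,t\}$ or $j=i',t=k'$, and in type $A$ the second alternative is vacuous). Once that is in place, the twisting argument is completely mechanical.
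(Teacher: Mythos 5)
Your proposal is correct and follows what the paper leaves implicit in the single line ``Following the procedure of Section~\ref{sec:FRT construction finite}, we get: Theorem~\ref{thm:DJ=RTT_Atype_2param}.'' The structure — build the one-parameter Cartan-doubled isomorphism $\theta_q\colon U_{q,q^{-1}}(\gl_{n+1})\iso U(R_q)$ from $\theta_q'$ of Theorem~\ref{thm:DJ=RTT_Atype_1param}, then conjugate by the twisting isomorphisms $\wtd\varphi$ of Proposition~\ref{prop:twisted_algebra_gln} and $\wtd\Psi$ coming from Corollary~\ref{cor:U(R)_2_vs_1_parameter} with~\eqref{eq:R_rs_twisted} — is the right one, and your generator computations (using $\psi_{i+1}\psi_i^{-1}=(rs)^{-1/4}=\zeta(\varepsilon_{i+1},\varepsilon_i)$ and $q-q^{-1}=(rs)^{-1/2}(r-s)$) check out exactly.

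Two points of precision, neither a genuine gap. First, ``one proves directly (imitating Corollary~\ref{cor:DJ=RTT_Btype_1param_double_cartan})'' is a bit loose: imitating that corollary means running the embedding trick — write the $\gl_{n+1}$-analogue of Proposition~\ref{prop:double_cartan_DJ}, $\eta\colon U_{q,q^{-1}}(\gl_{n+1})\hookrightarrow U_q(\gl_{n+1})\otimes\bb K[L_{\varepsilon_1}^{\pm1},\ldots,L_{\varepsilon_{n+1}}^{\pm1}]$ with $a_j\mapsto A_j\otimes L_{2\varepsilon_j}$, $a_j'\mapsto A_j^{-1}\otimes L_{2\varepsilon_j}$, $e_i\mapsto E_i\otimes L_{\alpha_i}$, $f_i\mapsto F_i\otimes L_{\alpha_i}$, and the $A$-type analogue of Proposition~\ref{prop:double_cartan_RTT_finite}, $\eta_R\colon U(R_q)\hookrightarrow U'(R_q)\otimes\bb K[L_{\varepsilon_j}^{\pm1}]$, and then set $\theta_q=\eta_R^{-1}\circ(\theta_q'\otimes\mathrm{id})\circ\eta$ — because otherwise one must separately verify that the assignment respects the relations of Definition~\ref{def:BW-restated}, which your sketch elides. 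Second, the triangular decomposition and spanning-set lemmas do not ``go through verbatim'' from Section~\ref{sec:FRT construction finite}: in type $A$ there is no ideal $\cal I_q$ and no crossing-symmetry input, so the Drinfeld double realization of $U(R_q)$ and the resulting triangular decomposition are the classical~\cite[Example 10.16]{KS}, not the more elaborate Theorem~\ref{thm:quadrilangular_decomposition}; that the simpler statement holds is exactly what makes the type-$A$ case easier, but one should cite the right source rather than claim a verbatim transfer.
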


Combining this result with Remark~\ref{rem:sl-vs-gl_finiteA}, we immediately get:

\begin{cor}
There is an algebra embedding $\theta_{r,s}\colon U_{r,s}(\ssl_{n+1}) \hookrightarrow U(R_{r,s})$ such that for all $1\leq i\leq n$:
\begin{align*}
  &\theta_{r,s}(\omega_{i}) = \ell^+_{i+1,i+1} (\ell_{ii}^{+})^{-1}, &
  &\theta_{r,s}(\omega'_{i}) = (\ell_{ii}^{-})^{-1} \ell^-_{i+1,i+1}, \\
  &\theta_{r,s}(e_{i}) = \frac{1}{r - s}\ell_{i+1,i}^{+}(\ell_{ii}^{+})^{-1}, &
  &\theta_{r,s}(f_{i}) = \frac{1}{s - r}(\ell_{ii}^{-})^{-1}\ell_{i,i+1}^{-}.
\end{align*}
\end{cor}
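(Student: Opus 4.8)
\emph{Proof proposal.} The plan is to obtain $\theta_{r,s}$ on $U_{r,s}(\ssl_{n+1})$ simply as the composition $\theta_{r,s}\circ\iota$, where $\iota\colon U_{r,s}(\ssl_{n+1})\hookrightarrow U_{r,s}(\gl_{n+1})$ is the injective algebra homomorphism of Remark~\ref{rem:sl-vs-gl_finiteA} (given by \eqref{eq:sl-vs-gl-matching_finiteA}) and $\theta_{r,s}\colon U_{r,s}(\gl_{n+1})\iso U(R_{r,s})$ is the Hopf algebra isomorphism of Theorem~\ref{thm:DJ=RTT_Atype_2param}. Since both $\iota$ and $\theta_{r,s}$ are injective algebra homomorphisms, so is their composition, which (abusing notation, as in the statement) we again denote by $\theta_{r,s}$; this immediately gives the desired algebra embedding $\theta_{r,s}\colon U_{r,s}(\ssl_{n+1})\hookrightarrow U(R_{r,s})$.

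It then remains to evaluate this composition on the generators $e_i,f_i,\omega_i,\omega_i'$ of $U_{r,s}(\ssl_{n+1})$, which is a direct computation. For $e_i$ and $f_i$ the formulas transfer verbatim from Theorem~\ref{thm:DJ=RTT_Atype_2param}, since $\iota(e_i)=e_i$ and $\iota(f_i)=f_i$. For the Cartan generators, $\iota(\omega_i)=a_i a_{i+1}^{-1}$ yields $\theta_{r,s}(\omega_i)=\theta_{r,s}(a_i)\theta_{r,s}(a_{i+1})^{-1}=(\ell_{ii}^{+})^{-1}\ell_{i+1,i+1}^{+}$, and similarly $\theta_{r,s}(\omega_i')=(\ell_{ii}^{-})^{-1}\ell_{i+1,i+1}^{-}$. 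To match the order of factors displayed in the corollary one uses that $[\ell_{ii}^{\pm},\ell_{i+1,i+1}^{\pm}]=0$ in $U(R_{r,s})$: this is forced by the fact that $\theta_{r,s}$ of Theorem~\ref{thm:DJ=RTT_Atype_2param} is an isomorphism and $[a_i,a_{i+1}]=0$ in $U_{r,s}(\gl_{n+1})$ by Definition~\ref{def:BW-restated} (alternatively, it follows directly from the RTT relations and the fact that the $A$-type $R$-matrix $R_{r,s}$ of \eqref{eq:A_R_matrix_2param} satisfies $(R_{r,s})^{ab}_{cd}=0$ unless $\{a,b\}=\{c,d\}$, exactly as in the argument of Lemma~\ref{lem:cartan_subalgebras_RTT}). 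Hence $(\ell_{ii}^{\pm})^{-1}\ell_{i+1,i+1}^{\pm}=\ell_{i+1,i+1}^{\pm}(\ell_{ii}^{\pm})^{-1}$, and all four images coincide with those listed in the statement.

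There is no genuine obstacle here: the content is entirely absorbed into Theorem~\ref{thm:DJ=RTT_Atype_2param} and Remark~\ref{rem:sl-vs-gl_finiteA}, and what is left is bookkeeping. The only point deserving an explicit (one-line) remark is the commutativity of the diagonal Gaussian–type generators $\ell_{ii}^{\pm}$ used to normalize the formulas, which is immediate from either of the two observations above.
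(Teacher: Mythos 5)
Your proposal is correct and takes essentially the same route as the paper: the corollary is obtained by composing the embedding $\iota\colon U_{r,s}(\ssl_{n+1})\hookrightarrow U_{r,s}(\gl_{n+1})$ of Remark~\ref{rem:sl-vs-gl_finiteA} with the isomorphism $\theta_{r,s}\colon U_{r,s}(\gl_{n+1})\iso U(R_{r,s})$ of Theorem~\ref{thm:DJ=RTT_Atype_2param}, which is exactly the paper's one-line ``combining these results'' argument. Your additional remark about the commutativity $[\ell_{ii}^{+},\ell_{i+1,i+1}^{+}]=0$ (needed only to match the factor order in the displayed formula for $\theta_{r,s}(\omega_i)$, since the $\omega_i'$ formula already comes out in the stated order) is a harmless and valid piece of bookkeeping, and both justifications you offer for it are sound.
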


\begin{remark}\label{rem:sl-quotient-gl_finite}
(a) In the standard one-parameter setup, it is also known that (an extension of) $U_q(\ssl_{n+1})$ can be realized as a quotient of $U_q(\gl_{n+1})$.
In the Cartan-doubled version, we shall enlarge $U_{q,q^{-1}}(\ssl_{n+1})$ by adjoining additional generators
$(\omega_{1}^{n} \omega_{2}^{n-1} \ldots \omega_n)^{\frac{1}{n+1}}$ and $((\omega'_{1})^{n} (\omega'_{2})^{n-1} \ldots \omega'_n)^{\frac{1}{n+1}}$
subject to obvious additional relations. Then, the assignment~\eqref{eq:sl-vs-gl-matching_finiteA} gives rise to a Hopf algebra isomorphism
\begin{equation}\label{eq:1param_quotient_finiteA}
  U_{q,q^{-1}}(\ssl_{n+1}) \iso U_{q,q^{-1}}(\gl_{n+1})/(a_{1}\ldots a_{n+1} - 1,a_{1}'\ldots a_{n+1}' - 1).
\end{equation}
Since $\ell_{11}^{\pm}\ldots \ell_{n+1,n+1}^{\pm}$ are grouplike, the quotient
$\wtd{U}(R_{q}) = U(R_{q})/(\ell_{11}^{\pm}\ldots \ell_{n+1,n+1}^{\pm} - 1)$ is a Hopf algebra.
Combining Theorem~\ref{thm:DJ=RTT_Atype_2param} and~\eqref{eq:1param_quotient_finiteA}, we obtain
a Hopf algebra isomorphism $\theta_{q}\colon U_{q,q^{-1}}(\ssl_{n+1}) \iso \wtd{U}(R_{q})$ with
\begin{align*}
  &\theta_{q}\Big(\big(\omega_{1}^{n} \omega_{2}^{n-1} \ldots \omega_n\big)^{\frac{1}{n+1}}\Big)=(\ell^+_{11})^{-1}, &
  &\theta_{q}\Big(\big((\omega'_{1})^{n} (\omega'_{2})^{n-1} \ldots \omega'_n\big)^{\frac{1}{n+1}}\Big)=(\ell^-_{11})^{-1}, \\
  &\theta_{q}(\omega_{i}) = \ell_{i+1,i+1}^{+} (\ell_{ii}^{+})^{-1}, &
  &\theta_{q}(\omega_{i}') = (\ell_{ii}^{-})^{-1}\ell_{i+1,i+1}^{-} & & \textit{for} \ \ 1 \le i \le n, \\
  &\theta_{q}(e_{i}) = \frac{1}{q - q^{-1}}\ell_{i+1,i}^{+}(\ell_{ii}^{+})^{-1}, &
  &\theta_{q}(f_{i}) = \frac{1}{q - q^{-1}}(\ell_{ii}^{-})^{-1}\ell_{i,i+1}^{-} & & \textit{for} \ \ 1 \le i \le n.
\end{align*}

\noindent
(b) To generalize the above discussion to the two-parameter setup, one actually needs to slightly modify the construction of
Definition~\ref{def:BW-restated}. Namely, let $\mathsf{U}_{r,s}(\gl_{n+1})$ be defined similarly to $U_{r,s}(\gl_{n+1})$, but
with the following change of the defining relations (where we use $\lambda=(rs)^{\frac{1}{n+1}}$):
\[
  a_{i}e_{j} = \lambda e_{j}a_{i},\qquad a_{i}f_{j} = \lambda^{-1} f_{j}a_{i},\qquad
  a_{i}'e_{j} = \lambda e_{j}a_{i}',\qquad a_{i}'f_{j} = \lambda^{-1} f_{j}a_{i}' \qquad \text{if} \quad i < j\ \text{or}\ i > j + 1,
\]
\[
  a_{i}e_{i} = \lambda s^{-1}e_{i}a_{i},\qquad a_{i}f_{i} = \lambda^{-1} sf_{i}a_{i},\qquad
  a_{i}'e_{i} = \lambda r^{-1}e_{i}a_{i}',\qquad a_{i}'f_{i} = \lambda^{-1} rf_{i}a_{i}' \qquad \text{if}\ \ 1 \le i \le n,
\]
\[
  a_{i+1}e_{i} = \lambda r^{-1}e_{i}a_{i+1},\quad a_{i+1}f_{i} = \lambda^{-1} rf_{i}a_{i+1},\quad
  a_{i+1}'e_{i} = \lambda s^{-1}e_{i}a_{i+1}',\quad a_{i+1}'f_{i} = \lambda^{-1} sf_{i}a_{i+1}' \quad \text{if}\ \ 1 \le i \le n.
\]
Then, the assignment~\eqref{eq:sl-vs-gl-matching_finiteA} still defines an algebra embedding
$\iota \colon U_{r,s}(\ssl_{n+1}) \to \mathsf{U}_{r,s}(\gl_{n+1})$. Furthermore, the products
$a_1\ldots a_{n+1}$ and $a'_{1}\ldots a'_{n+1}$ are central elements of $\mathsf{U}_{r,s}(\gl_{n+1})$
and the assignment~\eqref{eq:sl-vs-gl-matching_finiteA} gives rise to a Hopf algebra isomorphism
\begin{equation}\label{eq:2param_quotient_finiteA}
  U_{r,s}(\ssl_{n+1}) \iso \mathsf{U}_{r,s}(\gl_{n+1})/(a_{1}\ldots a_{n+1} - 1,a_{1}'\ldots a_{n+1}' - 1),
\end{equation}
where we again extended $U_{r,s}(\ssl_{n+1})$ by adjoining $(\omega_{1}^{n} \omega_{2}^{n-1} \ldots \omega_n)^{\frac{1}{n+1}}$
and $((\omega'_{1})^{n} (\omega'_{2})^{n-1} \ldots \omega'_n)^{\frac{1}{n+1}}$.
\end{remark}

\begin{remark}
We note that the algebra $A(R_{r,s})$ of Subsection~\ref{ssec:algebra_A(R)} with $R_{r,s}=\hat{R}_{r,s}\circ \tau$,
cf.~\eqref{eq:A_R_matrix_2param}, is isomorphic to the algebra $M_{\alpha,\beta}$ of~\cite{T}, with parameters
$\alpha=r$ and $\beta=s^{-1}$, through the assignment $t_{ij}\mapsto x_{ij}$ for all $1\leq i,j\leq n+1$.
\end{remark}

We shall now carry out the affine version of Theorem~\ref{thm:DJ=RTT_Atype_2param}, following the methods of Section~\ref{sec:affine_RTT}.
To this end, we start by recalling the corresponding one-parameter isomorphism of~\cite[Main Theorem]{DF}. Consider
$\ol{R}_{q}(z) = \frac{1}{1 - zq^{2}}R_{q}(z)$, where $R_{q}(z) = \hat{R}_{q}(z) \circ \tau$ with $\hat{R}_{q}(z)=\hat{R}_{q,q^{-1}}(z)$
of~\eqref{eq:A_AffRMatrix}. Explicitly, we have:
\begin{equation*}
  \ol{R}_{q}(z)
  = \sum_{i} E_{ii} \otimes E_{ii} + \frac{z - 1}{zq - q^{-1}}\sum_{i \neq j} E_{ii} \otimes E_{jj}
  + \frac{q - q^{-1}}{zq - q^{-1}}\sum_{j < i}E_{ij} \otimes E_{ji} + \frac{(q - q^{-1})z}{zq - q^{-1}}\sum_{i < j} E_{ij} \otimes E_{ji},
\end{equation*}
which coincides with~\cite[(3.7)]{DF}. Let $U_{q}^{D}(\what{\gl}_{n+1})$ denote the algebra of~\cite[Definition 3.1]{DF},
but using the notation $\gamma^{\pm 1/2}$ instead of $q^{\pm c/2}$, and with the second-to-last relation corrected as follows:
\[
  [X_{i}^{+}(z),X_{j}^{-}(w)] = (q - q^{-1})\delta_{ij}
  \left( \delta(zw^{-1}\gamma^{-1})k_{i+1}^{+}(w\gamma^{1/2})k_{i}^{+}(w\gamma^{1/2})^{-1} -
         \delta(zw^{-1}\gamma)k_{i+1}^{-}(z\gamma^{1/2})k_{i}^{-}(z\gamma^{1/2})^{-1} \right).
\]
Following Subsection~\ref{ssec:gaussian_generators}, we also consider the algebra $U'(\ol{R}_{q}(z))$ and take the Gauss decomposition of the
its generating matrices $L^{\pm}(z)$ (note that the resulting Gaussian generators $e_{ij}^{\pm}(z), f_{ji}^{\pm}(z), h_{i}^{\pm}(z)$ correspond
respectively to the Gaussian generators $f_{ij}^{\pm}(z), e_{ji}^{\pm}(z), k_{i}^{\pm}(z)$ of~\cite[(3.22)]{DF}). We further set
(for $1 \le i \le n$):
\[
  \frak{X}_{i}^{+}(z) = e_{i,i+1}^{+}(z\gamma^{1/2}) - e_{i,i+1}^{-}(z\gamma^{-1/2}), \qquad
  \frak{X}_{i}^{-}(z) = f_{i+1,i}^{+}(z\gamma^{-1/2}) - f_{i+1,i}^{-}(z\gamma^{1/2})
\]
as in Subsection~\ref{ssec:affine_RTT_to_D}, cf.~\eqref{eq:X_series}. Then we have the following result (see~\cite[Main Theorem]{DF}):

\begin{theorem}\label{thm:D=RTT_Atype_1param}
There is an algebra isomorphism $\what{\theta}_{q}'\colon U_{q}^{D}(\what{\gl}_{n+1}) \iso U'(\ol{R}_{q}(z))$ such that
\[
  \what{\theta}_{q}'(\gamma^{1/2})=\gamma^{1/2}, \qquad
  \what{\theta}_{q}'(k_{i}^{\pm}(z)) = h_{i}^{\pm}(z),\qquad
  \what{\theta}_{q}'(X_{j}^{\pm}(z)) = \frak{X}_{j}^{\mp}(z)
\]
for all $1 \le i \le n+1$ and $1 \le j \le n$.
\end{theorem}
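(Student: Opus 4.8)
The plan is to obtain Theorem~\ref{thm:D=RTT_Atype_1param} as a restatement of~\cite[Main Theorem]{DF} under a careful translation of conventions, in complete parallel with how Theorems~\ref{thm:D=affine_RTT_Btype_1param},~\ref{thm:D=RTT_Ctype_1param},~\ref{thm:D=affine_RTT_Dtype_1param} were deduced from~\cite{JLM1,JLM2}. First I would recall the precise content of~\cite[Main Theorem]{DF}: for the $R$-matrix of~\cite[(3.7)]{DF} there is an algebra isomorphism from the new Drinfeld algebra $U_q^D(\what{\gl}_{n+1})$ of~\cite[Definition~3.1]{DF} onto the RTT-type algebra attached to that $R$-matrix, sending the Drinfeld Cartan currents to the diagonal Gaussian generators of the generating matrices and the Drinfeld currents $X_j^\pm(z)$ to the indicated off-diagonal Gaussian combinations. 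I would then observe that our normalized $\ol{R}_q(z) = \frac{1}{1 - zq^2}R_q(z)$ coincides on the nose with~\cite[(3.7)]{DF} (as already noted in the excerpt), so that $U'(\ol{R}_q(z))$ is literally the RTT algebra of~\cite{DF}, and that its generating matrices admit the Gauss decomposition $L^\pm(z) = F^\pm(z)H^\pm(z)E^\pm(z)$ guaranteed by~\cite[Theorem~4.9.7]{GGRW}, exactly as set up in Subsection~\ref{ssec:gaussian_generators}.

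The next step is to match the two sets of generators. Under the dictionary recorded in the excerpt, our Gaussian generators $e_{ij}^\pm(z)$, $f_{ji}^\pm(z)$, $h_i^\pm(z)$ are Ding--Frenkel's $f_{ij}^\pm(z)$, $e_{ji}^\pm(z)$, $k_i^\pm(z)$, respectively; this exchange of the $E$- and $F$-parts of the Gauss decomposition, together with the relabeling $q \mapsto q^{-1}$, is precisely the bookkeeping needed to pass from the $R$-matrix of~\cite[(2.2)]{DF} to our $R_q = \hat{R}_q\circ\tau$ via the relation $q^{-1}R = (R_{q^{-1}})_{21} = (R_q)_{21}^{-1}$ of~\eqref{eq:DF-vs-us} --- the finite-type Theorem~\ref{thm:DJ=RTT_Atype_1param} already invokes the same swap. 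Tracking the scalar factors through this chain of operations then produces the normalizations $k_i^\pm(z)\mapsto h_i^\pm(z)$ and $X_j^\pm(z)\mapsto \frak{X}_j^\mp(z)$ that appear in the statement, where $\frak{X}_j^\pm(z)$ are the combinations defined just above the theorem.

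Finally, the only genuine difference between our presentation of $U_q^D(\what{\gl}_{n+1})$ and~\cite[Definition~3.1]{DF} is the replacement of one relation by its corrected form --- the cross-bracket $[X_i^+(z),X_j^-(w)]$ written out in the excerpt --- along with the purely notational use of $\gamma^{\pm1/2}$ in place of $q^{\pm c/2}$. I would verify that this corrected relation holds among the images $\frak{X}_i^\mp(z)$ and $h_i^\pm(z)$ in $U'(\ol{R}_q(z))$; this is a direct computation from the $RLL$-relations and the Gauss decomposition, and it is in fact the relation that~\cite{DF} genuinely derives (the published form carries only a typo), so both the well-definedness of $\what{\theta}_q'$ and the construction of its inverse --- expressing the entries of $L^\pm(z)$ through the Drinfeld generators --- go through as in~\cite{DF}. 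Assembling these observations, the map $\what{\theta}_q'$ with the stated formulas is the Ding--Frenkel isomorphism transported through the above identifications. The main obstacle is entirely organizational: keeping straight the composite of the scaling, the transposition of tensor factors, and the inversion relating~\cite[(2.2)]{DF} to $R_q$, so that the constants in the formulas for $\what{\theta}_q'(k_i^\pm(z))$ and $\what{\theta}_q'(X_j^\pm(z))$ emerge exactly as written.
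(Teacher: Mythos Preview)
Your proposal is correct and takes essentially the same approach as the paper, which simply records this theorem as a restatement of \cite[Main Theorem]{DF} under the convention translation already set up in the surrounding text (the identification of $\ol{R}_q(z)$ with \cite[(3.7)]{DF}, the dictionary $e_{ij}^\pm,f_{ji}^\pm,h_i^\pm \leftrightarrow f_{ij}^\pm,e_{ji}^\pm,k_i^\pm$, and the corrected cross-bracket relation). The paper provides no proof beyond the citation, so your elaboration of how the translation works is exactly the intended justification.
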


We now proceed to deriving the two-parameter analogue of the above result. First, we need to introduce the two-parameter quantum affine
group of $\gl_{n+1}$ in the new Drinfeld presentation:

\begin{definition}\label{def:two-param-affine-gl}
The two-parameter quantum group of $\what{\gl}_{n+1}$ in the new Drinfeld presentation is an associative $\bb{K}$-algebra
$U_{r,s}^{D}(\what{\gl}_{n+1})$ generated by
  $\big\{x_{i,m}^{\pm},\psi_{k,\ell},\psi_{k,-\ell}',\psi_{k,0}^{-1},(\psi_{k,0}')^{-1}
   \big\}_{1 \le i \le n, 1 \le k \le n + 1}^{m \in \bb{Z},\ell \in \bb{Z}_{\ge 0}} \cup \{\gamma^{\pm 1/2},(\gamma')^{\pm 1/2}\}$,
with the following defining relations (for all $1 \le i,j \le n$ and $1 \le k,t \le n+1$):
\[
  \gamma^{1/2}\ \text{and}\ (\gamma')^{1/2}\ \text{are central}, \qquad
  \gamma^{\pm 1/2}\gamma^{\mp 1/2} = 1 = (\gamma')^{\pm 1/2}(\gamma')^{\mp 1/2},
\]
\[
  [\psi_{k}(z),\psi_{t}(w)] = [\psi_{k}'(z),\psi_{t}'(w)] = [\psi_{k}(z),\psi_{k}'(w)] = [\psi_{k,0}',\psi_{t,0}] = 0,\quad
  \psi_{k,0}^{\pm 1}\psi_{k,0}^{\mp 1} = 1 = (\psi_{k,0}')^{\pm 1}(\psi_{k,0}')^{\mp 1},
\]
\[
  \frac{1 - wz^{-1}\gamma(\gamma\gamma')^{1/2}}{1 - wz^{-1}\gamma(\gamma\gamma')^{1/2}r^{-1}s}\psi_{k}(z)\psi_{t}'(w)
  = \psi_{t}'(w)\psi_{k}(z)\frac{1 - wz^{-1}\gamma'(\gamma\gamma')^{1/2}}{1 - r^{-1}swz^{-1}\gamma'(\gamma\gamma')^{1/2}}
  \qquad \text{if}\qquad k < t,
\]
\[
  \frac{zw^{-1}\gamma(\gamma\gamma')^{1/2} - 1}{zw^{-1}\gamma(\gamma\gamma')^{1/2}rs^{-1} - 1}\psi_{k}'(z)\psi_{t}(w)
  = \psi_{t}(w)\psi_{k}'(z)\frac{zw^{-1}\gamma'(\gamma\gamma')^{1/2} - 1}{zw^{-1}\gamma'(\gamma\gamma')^{1/2}rs^{-1} - 1}
  \qquad \text{if}\qquad k < t,
\]
\[
  [\psi_{k}'(z),x_{i}^{\pm}(w)] = [\psi_{k}(z),x_{i}^{\pm}(w)] = 0
  \quad \text{if}\quad k - i \le -1 \quad \text{or} \quad k - i \ge 2,
\]
\[
  \psi_{i}(z)x_{i}^{-}(w)
  = \frac{1 - wz^{-1}(\gamma')^{1/2}(\gamma\gamma')^{1/2}}{r - swz^{-1}(\gamma')^{1/2}(\gamma\gamma')^{1/2}}x_{i}^{-}(w)\psi_{i}(z),\qquad
  \psi_{i}'(z)x_{i}^{-}(w) = \frac{1 - zw^{-1}(\gamma')^{1/2}}{s - rzw^{-1}(\gamma')^{1/2}}x_{i}^{-}(w)\psi_{i}'(z),
\]
\[
  \psi_{i+1}(z)x_{i}^{-}(w)
  = \frac{1 - wz^{-1}(\gamma')^{1/2}(\gamma\gamma')^{1/2}}{s - wz^{-1}r(\gamma')^{1/2}(\gamma\gamma')^{1/2}}x_{i}^{-}(w)\psi_{i+1}(z), \ \
  \psi_{i+1}'(z)x_{i}^{-}(w) = \frac{1 - zw^{-1}(\gamma')^{1/2}}{r - zw^{-1}s(\gamma')^{1/2}}x_{i}^{-}(w)\psi_{i+1}'(z),
\]
\[
  \psi_{i}(z)x_{i}^{+}(w) = \frac{r - wz^{-1}s\gamma^{1/2}}{1 - wz^{-1}\gamma^{1/2}}x_{i}^{+}(w)\psi_{i}(z), \qquad
  \psi_{i}'(z)x_{i}^{+}(w) = \frac{s - zw^{-1}r\gamma^{1/2}(\gamma\gamma')^{1/2}}{1 - zw^{-1}\gamma^{1/2}(\gamma\gamma')^{1/2}}x_{i}^{+}(w)\psi_{i}'(z),
\]
\[
  \psi_{i+1}(z)x_{i}^{+}(w) = \frac{s - wz^{-1}r\gamma^{1/2}}{1 - wz^{-1}\gamma^{1/2}}x_{i}^{+}(w)\psi_{i+1}(z),\qquad
  \psi_{i+1}'(z)x_{i}^{+}(w)
  = \frac{r - zw^{-1}s\gamma^{1/2}(\gamma\gamma')^{1/2}}{1 - zw^{-1}\gamma^{1/2}(\gamma\gamma')^{1/2}}x_{i}^{+}(w)\psi_{i+1}'(z),
\]
\[
  (zr^{\pm 1}s^{\mp 1} - w)x_{i}^{\pm}(z)x_{i}^{\pm}(w) = (z - r^{\pm 1}s^{\mp 1}w)x_{i}^{\pm}(w)x_{i}^{\pm}(z),
\]
\[
  (z - w)x_{i}^{+}(z)x_{i+1}^{+}(w) = (zr - ws)x_{i+1}^{+}(w)x_{i}^{+}(z), \qquad
  (z - w)x_{i+1}^{-}(w)x_{i}^{-}(z) = (zr - ws)x_{i}^{-}(z)x_{i+1}^{-}(w),
\]
as well as
\begin{multline*}
  [x_{i}^{+}(z), x_{j}^{-}(w)] \\
  = (s^{-1} - r^{-1})\delta_{ij}
   \left( \delta\left (\frac{z}{w}\gamma^{-1}\right )\psi_{i+1}'(w(\gamma')^{-1/2})\psi_{i}'(w(\gamma')^{-1/2})^{-1}
          - \delta\left (\frac{z}{w}(\gamma')^{-1}\right )\psi_{i+1}(z\gamma^{1/2})\psi_{i}(z\gamma^{1/2})^{-1} \right),
\end{multline*}
and quantum Serre relations~\eqref{eq:D9+}--\eqref{eq:D9-}, where the generating series $x^{\pm}_i(z), \psi_k(z), \psi'_k(z)$ are defined via
\[
  x_{i}^{\pm}(z) = \sum_{m \in \bb{Z}} x_{i,m}^{\pm}z^{-m},\qquad
  \psi_{k}(z) = \sum_{\ell \geq 0} \psi_{k,\ell}z^{-\ell},\qquad
  \psi_{k}'(z) = \sum_{\ell\geq 0} \psi_{k,-\ell}'z^{\ell},
\]
and in the last relation we use the delta function $\delta(z) = \sum_{m \in \bb{Z}} z^{m}$.
\end{definition}

\begin{remark}\label{rem:two-vs-one_affine_A}
There is a surjective algebra homomorphism $U_{q,q^{-1}}^{D}(\what{\gl}_{n+1}) \twoheadrightarrow U_{q}^{D}(\what{\gl}_{n+1})$
determined by
\[
  \psi_{t}(z) \mapsto k_{t}^{-}(z),\quad \psi_{t}'(z) \mapsto k_{t}^{+}(z),\quad x_{i}^{\pm}(z) \mapsto X_{i}^{\pm}(z),\quad
   \gamma^{1/2} \mapsto \gamma^{1/2},\quad (\gamma')^{1/2} \mapsto \gamma^{-1/2}.
\]
\end{remark}


Let $\what{P} = P \oplus \bb{Z}\frac{\delta}{2}$. Then we have the following result (the proof is similar to that of Lemma~\ref{lem:Q_hat_bigrading}):

\begin{lemma}\label{lem:gl_n_bigrading}
The following assignment defines a $\what{P} \times \what{P}$-grading on $U_{r,s}^{D}(\what{\gl}_{n+1})$:
\[
  \deg(\gamma^{1/2}) = \deg((\gamma')^{1/2}) = \left (-\frac{1}{2}\delta,\frac{1}{2}\delta\right ),
\]
\[
  \deg(\psi_{k,\ell}) = \left (-\varepsilon_{k} - \frac{3\ell}{2}\delta,\varepsilon_{k} + \frac{\ell}{2}\delta\right ),\qquad
  \deg(\psi_{k,-\ell}') = \left (-\varepsilon_{k} -\frac{\ell}{2}\delta,\varepsilon_{k} + \frac{3\ell}{2}\delta\right ),
\]
\[
  \deg(x_{i,m}^{+}) = (\alpha_{i} - m\delta,0),\qquad \deg(x_{i,m}^{-}) = (0,-\alpha_{i}-m\delta),
\]
for any $1 \le k \le n + 1$, $1 \le i \le n$, $m \in \bb{Z}$, and $\ell \in \bb{Z}_{\ge 0}$.
\end{lemma}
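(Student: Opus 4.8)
The plan is to follow the template of the proof of Lemma~\ref{lem:Q_hat_bigrading}: since $U_{r,s}^{D}(\what{\gl}_{n+1})$ is presented by generators and relations, it suffices to verify that each defining relation of Definition~\ref{def:two-param-affine-gl} is homogeneous with respect to the proposed bidegrees. The relations among $\gamma^{\pm 1/2},(\gamma')^{\pm 1/2},\psi_{k,0}^{\pm 1},(\psi_{k,0}')^{\pm 1}$, the commutativity of the Cartan currents, the pure $x^{\pm}$-relations, and the quantum Serre relations~\eqref{eq:D9+}--\eqref{eq:D9-} are homogeneous for the same elementary reasons as in Lemma~\ref{lem:Q_hat_bigrading}: the $r,s$-coefficients are scalars, so extracting a fixed monomial in the spectral parameters produces a $\bb{K}$-linear combination of monomials in the modes which visibly share one bidegree (for instance $(zr^{\pm1}s^{\mp1}-w)x_i^{\pm}(z)x_i^{\pm}(w)=(z-r^{\pm1}s^{\mp1}w)x_i^{\pm}(w)x_i^{\pm}(z)$: the $z^{-a}w^{-b}$-coefficient of either side lies in the span of $x_{i,a+1}^{\pm}x_{i,b}^{\pm}$ and $x_{i,a}^{\pm}x_{i,b+1}^{\pm}$, of common bidegree $(2\alpha_i-(a+b+1)\delta,0)$, resp.\ $(0,-2\alpha_i-(a+b+1)\delta)$). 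The only relations requiring genuine attention are the $\psi$--$\psi'$ relations, the mixed $\psi$--$x^{\pm}$ relations, and the $[x_i^{+}(z),x_j^{-}(w)]$ relation, where one must check that the half-integer powers of $\gamma,\gamma'$ hidden inside the rational prefactors are precisely calibrated to cancel the $\delta$-shifts produced by the index shifts in the mode expansion.

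First I would record a few auxiliary degree formulas: $\deg(\gamma)=\deg(\gamma')=\deg((\gamma\gamma')^{1/2})=(-\delta,\delta)$; $\deg(\psi_{k,0}^{\pm1})=\deg((\psi_{k,0}')^{\pm1})=(\mp\varepsilon_k,\pm\varepsilon_k)$; and the ``rescaled current'' degrees, namely that the $z^{-\ell}$-coefficient of $\psi_k(z\gamma^{1/2})$ is homogeneous of bidegree $(-\varepsilon_k-\ell\delta,\varepsilon_k)$, the $w^{\ell}$-coefficient of $\psi_k'(w(\gamma')^{-1/2})$ has bidegree $(-\varepsilon_k,\varepsilon_k+\ell\delta)$, and similarly for the substitutions $z\mapsto z\gamma^{\pm1/2}(\gamma\gamma')^{\pm1/2}$, $w\mapsto w(\gamma')^{\pm1/2}(\gamma\gamma')^{\pm1/2}$ occurring in the relations (these follow by absorbing the degree of the inserted power of $\gamma,\gamma'$ into the mode index). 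Then for the $\psi$--$\psi'$ and $\psi$--$x^{\pm}$ relations I would clear denominators (legal, as $\gamma,\gamma'$ are central), expand the polynomial prefactors, and check mode by mode: e.g.\ in the first $\psi_k$--$\psi_t'$ relation the term $wz^{-1}\gamma(\gamma\gamma')^{1/2}\psi_k(z)\psi_t'(w)$ contributes $\gamma(\gamma\gamma')^{1/2}\psi_{k,\ell-1}\psi_{t,-(m-1)}'$ to the $z^{-\ell}w^{m}$-coefficient, and since $\deg(\gamma(\gamma\gamma')^{1/2})=(-2\delta,2\delta)$ exactly offsets the change in degree caused by replacing $(\ell,m)$ by $(\ell-1,m-1)$ (each index drop changes the first slot by $\tfrac{3}{2}\delta$ resp.\ $\tfrac{1}{2}\delta$, totalling $2\delta$, and oppositely in the second), all terms on both sides carry the common bidegree $(-\varepsilon_k-\varepsilon_t-\tfrac{3\ell+m}{2}\delta,\ \varepsilon_k+\varepsilon_t+\tfrac{\ell+3m}{2}\delta)$; the mixed $\psi$--$x^{\pm}$ relations are handled identically.

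The most delicate case is the $[x_i^{+}(z),x_j^{-}(w)]$ relation. Its left-hand side has $z^{-a}w^{-b}$-coefficient $[x_{i,a}^{+},x_{j,b}^{-}]$, which vanishes unless $i=j$ and otherwise has bidegree $(\alpha_i-a\delta,-\alpha_i-b\delta)$; on the right-hand side one combines the bidegree of the $z^{m}w^{-m}$-coefficient $\gamma^{-m}$ of $\delta(zw^{-1}\gamma^{-1})$, namely $(m\delta,-m\delta)$, with the rescaled-current degrees above to check that the $z^{-a}w^{-b}$-coefficients of both $\delta(zw^{-1}\gamma^{-1})\psi_{i+1}'(w(\gamma')^{-1/2})\psi_i'(w(\gamma')^{-1/2})^{-1}$ and $\delta(zw^{-1}(\gamma')^{-1})\psi_{i+1}(z\gamma^{1/2})\psi_i(z\gamma^{1/2})^{-1}$ land in bidegree $(\alpha_i-a\delta,-\alpha_i-b\delta)$, so that (as $s^{-1}-r^{-1}$ is a scalar) the relation is homogeneous. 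The main obstacle is precisely this verification together with the analogous ones above: one must carefully track how each rescaling $z\mapsto z\gamma^{1/2}$, $w\mapsto w(\gamma')^{-1/2}$, etc., interacts with the mode expansion and confirm that the specific half-integer $\gamma,\gamma'$-powers chosen in Definition~\ref{def:two-param-affine-gl} are exactly the ones making every monomial on each side of each relation share a single bidegree; once the rescaled-current degree formulas are in hand this reduces to routine arithmetic in $\tfrac12\what{Q}$. Finally, as in Lemma~\ref{lem:Q_hat_bigrading}, reducing modulo $\bb{Z}\tfrac{\delta}{2}$ then yields the associated $P$-bigrading, which is the version actually used for the bicharacter twist.
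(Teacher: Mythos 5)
Your proposal is correct and takes essentially the same route as the paper, which dismisses the verification with the single line "the proof is similar to that of Lemma~\ref{lem:Q_hat_bigrading}" — i.e.\ check that each defining relation of Definition~\ref{def:two-param-affine-gl} is homogeneous for the declared bidegrees. Your detailed mode-by-mode bookkeeping (the auxiliary "rescaled current" degree formulas such as $\deg\big(\psi_{k,\ell}\gamma^{-\ell/2}\big)=(-\varepsilon_k-\ell\delta,\varepsilon_k)$, and the offset argument for $wz^{-1}\gamma(\gamma\gamma')^{1/2}$ and for the $\delta$-functions in the $[x_i^+(z),x_j^-(w)]$ relation) faithfully fills in what the paper leaves implicit, and the arithmetic checks out.
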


\begin{remark}\label{rem:gln-sln_2param_affineA}
There is an algebra homomorphism $\hat{\iota} \colon U_{r,s}^{D}(\what{\ssl}_{n+1}) \to U_{r,s}^{D}(\what{\gl}_{n+1})$ defined by
\begin{align*}
  &\hat{\iota}(\gamma^{1/2}) = (\gamma')^{1/2}, &
  &\hat{\iota}((\gamma')^{1/2}) = \gamma^{1/2}, \\
  &\hat{\iota}(x_{i}^{+}(z)) = \frac{(rs)^{1/2}}{r - s}x_{i}^{+}(z^{-1}r^{i/2}s^{-i/2}), &
  &\hat{\iota}(x_{i}^{-}(z)) = \frac{(rs)^{1/2}}{r - s}x_{i}^{-}(z^{-1}r^{i/2}s^{-i/2}), \\
  &\hat{\iota}(\omega_{i}(z)) = \psi_{i+1}'(z^{-1}r^{i/2}s^{-i/2})\psi_{i}'(z^{-1}r^{i/2}s^{-i/2})^{-1}, &
  &\hat{\iota}(\omega_{i}'(z)) = \psi_{i+1}(z^{-1}r^{i/2}s^{-i/2})\psi_{i}(z^{-1}r^{i/2}s^{-i/2})^{-1},
\end{align*}
for $1 \le i \le n$. We note that $\hat{\iota}$ intertwines the $\what{P} \times \what{P}$-gradings of
Lemmas~\ref{lem:Q_hat_bigrading} and~\ref{lem:gl_n_bigrading}. In fact, due to Remark~\ref{rem:gl_n-sl_n_twist_compatibility} and
the embedding from~\cite[Definition 3.1]{DF}\footnote{In fact, their embedding should be corrected as follows:
$X_{i}^{\pm}(z) \mapsto (q - q^{-1})^{-1}X_{i}^{\mp}(zq^{i})$, $\Phi_{i}^{\pm}(z) \mapsto k_{i+1}^{\mp}(zq^{i})k_{i}^{\mp}(zq^{i})^{-1}$.},
$\hat{\iota}$ is actually injective.
\end{remark}

Using the $P \times P$-grading on $U_{r,s}^{D}(\what{\gl}_{n+1})$ induced from Lemma~\ref{lem:gl_n_bigrading} via the quotient map
$\what{P} \to \what{P}/\bb{Z}\frac{\delta}{2} = P$ and the bicharacter $\zeta$ of~\eqref{eq:zeta_extension_A}, we can define the algebra
$U_{q,\zeta}(\what{\gl}_{n+1})$, with the multiplication given by~\eqref{eq:twisted_product_general}.
Then we have the following result (the proof is similar to that of Theorem~\ref{thm:twisted_algebra_loop}):

\begin{prop}\label{prop:twisted_algebra_affineA}
There is an algebra isomorphism $\wtd{\varphi}_D\colon U_{r,s}(\what{\gl}_{n+1}) \iso U_{q,\zeta}(\what{\gl}_{n+1})$ given by
\[
  \wtd{\varphi}_D(\gamma^{1/2}) = \gamma^{1/2},\qquad \wtd{\varphi}_D((\gamma')^{1/2}) = (\gamma')^{1/2},
\]
\[
  \wtd{\varphi}_D(\psi_{i,\ell}) = \psi_{i,\ell},\qquad \wtd{\varphi}_D(\psi_{i,-\ell}') = \psi_{i,-\ell}',\qquad
  \wtd{\varphi}_D(x_{j,m}^{+}) = x_{j,m}^{+},\qquad \wtd{\varphi}_D(x_{j,m}^{-}) = (rs)^{-1/2}x_{j,m}^{-}
\]
for all $1 \le i \le n + 1$, $1 \le j \le n$, $m \in \bb{Z}$, $\ell \in \bb{Z}_{\ge 0}$.
\end{prop}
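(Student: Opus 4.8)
The plan is to follow the proof of Theorem~\ref{thm:twisted_algebra_loop} essentially verbatim, replacing the new Drinfeld realization of $\what{\fg}$ by that of $\what{\gl}_{n+1}$ from Definition~\ref{def:two-param-affine-gl}. Concretely, equip $U^{D}_{q,q^{-1}}(\what{\gl}_{n+1})$ with the $P\times P$-grading induced from Lemma~\ref{lem:gl_n_bigrading} via the quotient $\what{P}\to\what{P}/\bb{Z}\tfrac{\delta}{2}=P$, form the twist $U_{q,\zeta}(\what{\gl}_{n+1})=(U^{D}_{q,q^{-1}}(\what{\gl}_{n+1}))_{\zeta}$ with the skew bicharacter $\zeta$ of~\eqref{eq:zeta_extension_A}, and check that the stated images $\gamma^{1/2}\mapsto\gamma^{1/2}$, $(\gamma')^{1/2}\mapsto(\gamma')^{1/2}$, $\psi_{k,\ell}\mapsto\psi_{k,\ell}$, $\psi'_{k,-\ell}\mapsto\psi'_{k,-\ell}$, $x^{+}_{j,m}\mapsto x^{+}_{j,m}$, $x^{-}_{j,m}\mapsto(rs)^{-1/2}x^{-}_{j,m}$ satisfy all the relations of Definition~\ref{def:two-param-affine-gl} when products are taken with the twisted multiplication $\circ$ of~\eqref{eq:twisted_product_general}. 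The only input beyond the $q$-case is a handful of elementary identities: $\zeta(\varepsilon_{i},\varepsilon_{i})=1$ and $\zeta(\varepsilon_{i},\varepsilon_{j})^{\pm2}=(rs)^{\pm1/2}$ for $i<j$ (from~\eqref{eq:zeta_extension_A}), together with $\zeta(\alpha_{i},\alpha_{j})^{2}=(\omega'_{j},\omega_{i})q^{-(\alpha_{i},\alpha_{j})}$ from~\eqref{eq:zeta_formula} and the type-$A$ equalities $r_{i}=r$, $s_{i}=s$, $q_{i}=q$; these play exactly the role that the analogous formulas played in the proof of Theorem~\ref{thm:twisted_algebra_loop} and of Proposition~\ref{prop:twisted_algebra_gln}.

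First I would dispose of the relations that are transparent from the bigrading: centrality of $\gamma^{\pm1/2},(\gamma')^{\pm1/2}$, the mutual commutativity of the $\psi_{k}$- and $\psi'_{k}$-currents, and the vanishing $[\psi_{k}(z),x^{\pm}_{i}(w)]=[\psi'_{k}(z),x^{\pm}_{i}(w)]=0$ for $k-i\le-1$ or $k-i\ge2$, all of which hold because the relevant bidegrees either pair trivially under $\zeta$ or differ only by a multiple of $\delta$, which is killed by $\what{P}\to P$. Next I would treat the ``cross'' current relations, namely those between $\psi_{k}(z)$ and $\psi'_{t}(z)$, and between $\psi_{k}(z)$ or $\psi'_{k}(z)$ and $x^{\pm}_{i}(w)$, whose coefficients in Definition~\ref{def:two-param-affine-gl} are rational functions $\tfrac{1-wz^{-1}\cdots}{1-wz^{-1}\cdots}$: passing to the $\circ$-product here introduces a single scalar factor $\zeta(\varepsilon_{k},\varepsilon_{t})^{\pm2}$, resp.\ $\zeta(\varepsilon_{k},\alpha_{i})^{\pm2}$, which I would check combines with the one-parameter coefficient and the substitution $q=r^{1/2}s^{-1/2}$ to produce precisely the two-parameter coefficient (the spectral-parameter shifts are unaffected, since $\zeta$ only sees the finite part of the bidegree). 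Finally, the quadratic relations among the $x^{\pm}_{i}(z)$, the $[x^{+}_{i}(z),x^{-}_{j}(w)]$ relation with the delta functions, and the Serre relations~\eqref{eq:D9+}--\eqref{eq:D9-} are transcribed word for word from the proof of Theorem~\ref{thm:twisted_algebra_loop}, the Serre case again using the identity $\qbinom{N}{k}_{r,s}=(rs)^{k(N-k)/2}\qbinom{N}{k}_{q}$.

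For bijectivity I would exhibit an explicit two-sided inverse rather than argue with PBW-type bases: since $\big((U^{D}_{q,q^{-1}}(\what{\gl}_{n+1}))_{\zeta}\big)_{\zeta^{-1}}=U^{D}_{q,q^{-1}}(\what{\gl}_{n+1})$ on the nose, the same verification with $\zeta$ replaced by $\zeta^{-1}$ and the rescaling $x^{-}_{j,m}\mapsto(rs)^{1/2}x^{-}_{j,m}$ yields an algebra homomorphism $U_{q,\zeta}(\what{\gl}_{n+1})\to U^{D}_{r,s}(\what{\gl}_{n+1})$, and the two maps are visibly mutually inverse on generators. I expect the only real work to be the middle step: bookkeeping the powers of $\zeta(\varepsilon_{i},\varepsilon_{j})$ and of $q$ in the rational-coefficient current relations and confirming that the $\delta$-shifts in the spectral parameter survive unchanged; everything else is a routine copy of the proofs of Theorem~\ref{thm:twisted_algebra_loop} and Proposition~\ref{prop:twisted_algebra_gln}.
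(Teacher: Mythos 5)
Your proposal is correct and follows precisely the plan the paper itself indicates (the paper gives no detailed proof here, just the pointer "(the proof is similar to that of Theorem~\ref{thm:twisted_algebra_loop})"). You correctly identify the key ingredients: the $P$-bigrading obtained from Lemma~\ref{lem:gl_n_bigrading} by killing $\bb{Z}\tfrac{\delta}{2}$, the extended bicharacter~\eqref{eq:zeta_extension_A} with $\zeta(\varepsilon_i,\varepsilon_j)^{\pm 2}=(rs)^{\pm 1/2}$ for $i<j$, and the type-$A$ simplifications $r_i=r$, $s_i=s$, $q_i=q$ in place of the $r_{i}s_{i}$-bookkeeping in Theorem~\ref{thm:twisted_algebra_loop}. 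Your observation that $\zeta$ only sees the finite part of the bidegree (so the spectral-parameter shifts and $\delta$-dependent central factors pass through untouched) is exactly the point that lets the verification of the current relations go through.

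One small caveat: your "transcribed word for word" claim for the $x^{\pm}_i$-relations is slightly loose — Definition~\ref{def:two-param-affine-gl} phrases the quadratic relations at the level of currents, with separate relations for $x^{\pm}_{i}$--$x^{\pm}_{i}$, $x^{+}_{i}$--$x^{+}_{i+1}$, and $x^{-}_{i}$--$x^{-}_{i+1}$, rather than the single indexed identity~\eqref{eq:D7}, and the bracket relation uses $\delta$-functions and the currents $\psi_k,\psi'_k$ rather than the coefficients $\omega_{i,m},\omega'_{i,m}$. The computation is structurally the same, but it is a re-derivation, not a transcription. Your bijectivity argument via the $\zeta^{-1}$-twist is also sound; it is worth noting that because each defining relation is a scalar-by-scalar match of monomials under the twist, the verification you perform for the forward direction is logically an equivalence at every step, so it simultaneously establishes the inverse homomorphism — which is in effect what makes the reverse verification "the same."
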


\begin{remark}\label{rem:gl_n-sl_n_twist_compatibility}
Let $\hat{\iota}_{q}\colon U_{q,q^{-1}}^{D}(\what{\ssl}_{n+1}) \to U_{q,q^{-1}}^{D}(\what{\gl}_{n+1})$ be the Cartan-doubled version of the map
from~\cite[Definition 3.1]{DF} (which one constructs using Proposition~\ref{prop:double_cartan_D} and its analogue for
$U_{q,q^{-1}}^{D}(\what{\gl}_{n+1})$). Then we have $\wtd{\varphi}_{D} \circ \hat{\iota} = \hat{\iota}_{q} \circ \what{\omega} \circ \varphi_{D}$, with
$\hat{\iota}$ of Remark~\ref{rem:gln-sln_2param_affineA}, $\varphi_{D}$ of Proposition~\ref{thm:twisted_algebra_loop}, and $\what{\omega}$ of
Lemma~\ref{lem:affine_cartan_involution}.
\end{remark}

On the other hand, one can also check that the assignments of Lemma~\ref{lem:affine_RTT_bigradings}(a)
(resp.~Lemma~\ref{lem:affine_RTT_bigradings}(b)) make both $U(\bar{R}_{q}(z))$ and $U(\bar{R}_{r,s}(z))$ into $\what{P}$-bigraded
(resp.\ $P$-bigraded) Hopf algebras, where
\[
  \bar{R}_{r,s}(z) = \frac{1}{1 - zrs^{-1}}R_{r,s}(z) =\frac{1}{1 - zrs^{-1}}\hat{R}_{r,s}(z) \circ \tau,
\]
with $\hat{R}_{r,s}(z)$ of~\eqref{eq:A_AffRMatrix}. We can thus form the algebra $U(\bar{R}_{q}(z))_{\zeta}$, and due
to~\eqref{eq:R_rs_twisted_affine}, we can apply Proposition~\ref{prop:U(R)_twisted_affine} (with $U(\bar{R}_{q}(z))$, $U(\bar{R}_{r,s}(z))$
instead of $\wtd{U}(\wtd{R}_{q}(z))$, $\wtd{U}(\wtd{R}_{r,s}(z))$, respectively).
Let $\wtd{e}_{ij}^{\pm}(z)$, $\wtd{h}_{i}^{\pm}(z)$, $\wtd{f}_{ji}^{\pm}(z)$
denote the Gaussian generators of $L^{\pm}(z)$ in $U(\bar{R}_{r,s}(z))$, and define $\wtd{\frak{X}}_{i}^{\pm}(z)$ as
in~\eqref{eq:X_series_double_cartan}, but with $\wtd{e}_{i,i+1}^{\pm}(z)$ and $\wtd{f}_{i+1,i}^{\pm}(z)$ in place of $e_{i,i+1}^{\pm}(z)$
and $f_{i+1,i}^{\pm}(z)$. Then, following the methods of Section~\ref{sec:affine_RTT}, we obtain:

\begin{theorem}\label{thm:D=RTT_Atype_2param}
There is an algebra isomorphism $\what{\theta}_{r,s}\colon U_{r,s}^{D}(\what{\gl}_{n+1}) \iso U(\bar{R}_{r,s}(z))$ such that
\begin{align*}
  &\what{\theta}_{r,s}(\gamma^{1/2}) = \gamma^{1/2}, &
  &\what{\theta}_{r,s}((\gamma')^{1/2}) = (\gamma')^{1/2}, \\
  &\what{\theta}_{r,s}(\psi_{i}(z)) = \wtd{h}_{i}^{-}(z), &
  &\what{\theta}_{r,s}(\psi_{i}'(z)) = \wtd{h}_{i}^{+}(z) & & 1 \le i \le n + 1, \\
  &\what{\theta}_{r,s}(x_{i}^{\pm}(z)) = (rs)^{-1/2}\wtd{\frak{X}}_{i}^{\mp}(z) & & ~ & & 1 \le i \le n.
\end{align*}
\end{theorem}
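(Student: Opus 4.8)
The plan is to follow the same three-step pattern used for types $B_{n}$, $C_{n}$, $D_{n}$ in Subsection~\ref{ssec:affine_RTT_to_D}: start from the one-parameter isomorphism $\what{\theta}_{q}'$ of Theorem~\ref{thm:D=RTT_Atype_1param} (that is, \cite[Main Theorem]{DF}), upgrade it to a Cartan-doubled isomorphism over $U_{q,q^{-1}}^{D}(\what{\gl}_{n+1})$, and then twist by the skew bicharacter $\zeta$ of~\eqref{eq:zeta_extension_A}. Because $U_{r,s}^{D}(\what{\gl}_{n+1})$ already carries the two families of Cartan currents $\psi_{k}(z),\psi_{k}'(z)$ and $U(\bar{R}_{r,s}(z))$ carries $\ell_{ij}^{+}(z)$ and $\ell_{ij}^{-}(z)$ with independent diagonal parts, and because there are no crossing-symmetry relations to impose in $A$-type, every ingredient below is a simplification of its $BCD$-counterpart; in particular the Cartan-doubling here affects only the central element $\gamma$.

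\textbf{Step 1 (Cartan-doubled one-parameter isomorphism).} I would first produce an algebra isomorphism $\what{\theta}_{q}\colon U_{q,q^{-1}}^{D}(\what{\gl}_{n+1}) \iso U(\bar{R}_{q}(z))$ sending $\gamma^{1/2}\mapsto \gamma^{1/2}$, $(\gamma')^{1/2}\mapsto (\gamma')^{1/2}$, $\psi_{i}(z)\mapsto h_{i}^{-}(z)$, $\psi_{i}'(z)\mapsto h_{i}^{+}(z)$, and $x_{i}^{\pm}(z)\mapsto \wtd{\frak{X}}_{i}^{\mp}(z)$, where $\wtd{\frak{X}}_{i}^{\pm}(z)$ are the doubled analogues of~\eqref{eq:X_series}. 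This is carried out exactly as in Corollary~\ref{cor:affine_1param_double_Btype}: one proves the $\gl_{n+1}$-analogues of Proposition~\ref{prop:double_cartan_D} (an injective algebra homomorphism $\eta_{D}\colon U_{q,q^{-1}}^{D}(\what{\gl}_{n+1}) \hookrightarrow U_{q}^{D}(\what{\gl}_{n+1}) \otimes \bb{K}[L^{\pm 1}_{\bullet}]$, built from the surjection of Remark~\ref{rem:two-vs-one_affine_A} using the $\what{P}$-bigrading of Lemma~\ref{lem:gl_n_bigrading}) and of Proposition~\ref{prop:double_cartan_RTT_affine} (an embedding $\what{\eta}_{R}\colon U(\bar{R}_{q}(z)) \hookrightarrow U'(\bar{R}_{q}(z)) \otimes \bb{K}[L^{\pm 1}_{\bullet}]$, via a triangular decomposition and the Gauss decomposition of Subsection~\ref{ssec:gaussian_generators}). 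One then sets $\what{\theta}_{q} = \what{\eta}_{R}^{-1}\circ (\what{\theta}_{q}' \otimes \what{g})\circ \eta_{D}$ for a suitable matching map $\what{g}$ on the Laurent polynomial rings, and checks on the generating currents that the composite lands in $\what{\eta}_{R}(U(\bar{R}_{q}(z)))$ and has the claimed values; injectivity is automatic, and surjectivity follows from Theorem~\ref{thm:D=RTT_Atype_1param} together with the Gauss decomposition, this step being much lighter than in $BCD$ since there is no reflection/transposition structure to unwind.

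\textbf{Step 2 (twist).} Since $\what{\theta}_{q}$ intertwines the $P$-bigradings of Lemma~\ref{lem:gl_n_bigrading} (reduced mod $\bb{Z}\tfrac{\delta}{2}$) and of the $\gl_{n+1}$-analogue of Lemma~\ref{lem:affine_RTT_bigradings}(b), it induces an isomorphism $\what{\theta}_{q}\colon U_{q,\zeta}^{D}(\what{\gl}_{n+1}) \iso U(\bar{R}_{q}(z))_{\zeta}$. Next I would record the $\gl_{n+1}$-version $\what{\Psi}'\colon U(\bar{R}_{r,s}(z)) \iso U(\bar{R}_{q}(z))_{\zeta}$, $\ell_{ij}^{\pm}[\mp m]\mapsto \psi_{i}^{-1}\psi_{j}\ell_{ij}^{\pm}[\mp m]$, of Proposition~\ref{prop:U(R)_twisted_affine} (valid because of~\eqref{eq:R_rs_twisted_affine}), together with the $\gl_{n+1}$-analogue of Lemma~\ref{lem:gaussian_generators_2_param} describing $(\what{\Psi}')^{-1}$ on Gaussian generators. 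Then I would define $\what{\theta}_{r,s} = (\what{\Psi}')^{-1}\circ \what{\theta}_{q}\circ \wtd{\varphi}_{D}$ with $\wtd{\varphi}_{D}$ of Proposition~\ref{prop:twisted_algebra_affineA}; it is an algebra isomorphism by construction. Its images on generators are computed exactly as in the proofs of Theorems~\ref{thm:D=affine_RTT_Btype_2param}--\ref{thm:D=RTT_Dtype_2param}, using $\psi_{i} = (rs)^{(n+1-i)/4}$ from~\eqref{eq:psi_values_A} --- so $\psi_{i+1}\psi_{i}^{-1} = (rs)^{-1/4} = \zeta(\varepsilon_{i+1},\varepsilon_{i})$ by~\eqref{eq:zeta_extension_A} --- and the prefactor $(rs)^{-1/2}$ carried by $\wtd{\varphi}_{D}(x_{j,m}^{-})$; these combine so that $\psi_{i}(z)\mapsto \wtd{h}_{i}^{-}(z)$, $\psi_{i}'(z)\mapsto \wtd{h}_{i}^{+}(z)$, and both $x_{i}^{\pm}(z)\mapsto (rs)^{-1/2}\wtd{\frak{X}}_{i}^{\mp}(z)$ (the factor for $x_{i}^{+}$ arising from the $\psi_{i}^{-1}\psi_{j}\zeta(\varepsilon_{j},\varepsilon_{i})$-twist of the $f$-type Gaussian generators, the one for $x_{i}^{-}$ from $\wtd{\varphi}_{D}$). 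No spectral shift appears, consistently with Theorem~\ref{thm:D=RTT_Atype_1param}, since the normalization $\bar{R}_{q}(z) = (1-zq^{2})^{-1}R_{q}(z)$ matches the conventions of~\cite{DF}.

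\textbf{Main obstacle.} The one genuinely delicate point is the verification in Step~1 that $(\what{\theta}_{q}' \otimes \what{g})\circ \eta_{D}$ sends the currents $\psi_{k}(z),\psi_{k}'(z),x_{i}^{\pm}(z)$ into $\what{\eta}_{R}(U(\bar{R}_{q}(z)))$: this requires carefully matching the powers of $\gamma,\gamma'$ (and of the auxiliary $L_{\delta}$-variables) that occur in the coproducts, in the Gauss decomposition, and in the spectral arguments of $\wtd{\frak{X}}_{i}^{\pm}$, i.e.\ the same bookkeeping as in Corollary~\ref{cor:affine_1param_double_Btype} but without the $\mathsf{t}'$-transposition. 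Everything else --- the triangular decomposition of $U(\bar{R}_{q}(z))$, surjectivity, and the final scalar comparisons in Step~2 --- is a routine transcription of the $BCD$-arguments and is in fact lighter here.
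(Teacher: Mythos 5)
Your proposal is correct and follows the same route the paper takes: the paper gives no separate proof for this theorem, saying only "following the methods of Section~\ref{sec:affine_RTT}, we obtain..." and pointing to the Gauss decomposition, the bigradings of Lemma~\ref{lem:affine_RTT_bigradings}, and the twist isomorphism of Proposition~\ref{prop:U(R)_twisted_affine} via~\eqref{eq:R_rs_twisted_affine} — exactly your three-step pattern (doubling via $\eta_{D}$ and $\what{\eta}_{R}$ from Theorem~\ref{thm:D=RTT_Atype_1param}, then conjugation by $\zeta$ through $\wtd{\varphi}_{D}$ of Proposition~\ref{prop:twisted_algebra_affineA} and the $\gl_{n+1}$-analogue of Lemma~\ref{lem:gaussian_generators_2_param}). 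Your scalar bookkeeping is also right: the sign $\gamma^{1/2}\mapsto\gamma^{1/2}$ (rather than $(\gamma')^{1/2}$ as in $BCD$) comes precisely from the absence of the Cartan involution $\what{\omega}$ in Theorem~\ref{thm:D=RTT_Atype_1param}, and $\psi_{i}^{-1}\psi_{i+1}\zeta(\varepsilon_{i+1},\varepsilon_{i}) = (rs)^{-1/4}\cdot(rs)^{-1/4} = (rs)^{-1/2}$ accounts for the prefactor on $x_{i}^{+}$, while $\wtd{\varphi}_{D}(x_{j,m}^{-}) = (rs)^{-1/2}x_{j,m}^{-}$ cancels the $(rs)^{1/2}$ from the twisted $e$-type Gaussian generators to produce the same prefactor on $x_{i}^{-}$.
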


Combining this result with Remark~\ref{rem:gln-sln_2param_affineA}, we immediately get:

\begin{cor}\label{cor:Dr-vs-RTT_affine_sl}
There is an algebra embedding $\what{\theta}_{r,s}\colon U_{r,s}^{D}(\what{\ssl}_{n+1}) \hookrightarrow U(\bar{R}_{r,s}(z))$ such that
\begin{align*}
  &\what{\theta}_{r,s}(\gamma^{1/2}) = (\gamma')^{1/2}, \qquad \what{\theta}_{r,s}((\gamma')^{1/2}) = \gamma^{1/2}, \\
  &\what{\theta}_{r,s}(\omega_{i}(z)) = \wtd{h}_{i+1}^{+}(z^{-1}r^{i/2}s^{-i/2})\wtd{h}_{i}^{+}(z^{-1}r^{i/2}s^{-i/2})^{-1} & &  1 \le i\le n, \\
  &\what{\theta}_{r,s}(\omega_{i}'(z)) = \wtd{h}_{i+1}^{-}(z^{-1}r^{i/2}s^{-i/2})\wtd{h}_{i}^{-}(z^{-1}r^{i/2}s^{-i/2})^{-1} & & 1 \le i \le n, \\
  &\what{\theta}_{r,s}(x_{i}^{\pm}(z)) = \frac{1}{r - s}\wtd{\frak{X}}_{i}^{\mp}(z^{-1}r^{i/2}s^{-i/2}) & &  1 \le i \le n.
\end{align*}
\end{cor}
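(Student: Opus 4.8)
The plan is to deduce Corollary~\ref{cor:Dr-vs-RTT_affine_sl} by simply restricting the isomorphism $\what{\theta}_{r,s}$ of Theorem~\ref{thm:D=RTT_Atype_2param} along the embedding $\hat{\iota}\colon U_{r,s}^{D}(\what{\ssl}_{n+1}) \hookrightarrow U_{r,s}^{D}(\what{\gl}_{n+1})$ of Remark~\ref{rem:gln-sln_2param_affineA}. Concretely, I would set $\what{\theta}_{r,s} := \what{\theta}_{r,s}^{\gl} \circ \hat{\iota}$, where $\what{\theta}_{r,s}^{\gl}$ denotes the $\gl$-version from Theorem~\ref{thm:D=RTT_Atype_2param}; since $\hat{\iota}$ is injective (as recorded in Remark~\ref{rem:gln-sln_2param_affineA}, via Remark~\ref{rem:gl_n-sl_n_twist_compatibility} and the corrected embedding of~\cite[Definition 3.1]{DF}) and $\what{\theta}_{r,s}^{\gl}$ is an isomorphism, the composition is automatically an algebra embedding. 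The content of the corollary is then purely a computation of $\what{\theta}_{r,s}^{\gl} \circ \hat{\iota}$ on the generators of $U_{r,s}^{D}(\what{\ssl}_{n+1})$.

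First I would handle the central elements: $\hat{\iota}(\gamma^{1/2}) = (\gamma')^{1/2}$ and $\hat{\iota}((\gamma')^{1/2}) = \gamma^{1/2}$ by definition in Remark~\ref{rem:gln-sln_2param_affineA}, and $\what{\theta}_{r,s}^{\gl}$ fixes $\gamma^{1/2}$ and $(\gamma')^{1/2}$, giving $\what{\theta}_{r,s}(\gamma^{1/2}) = (\gamma')^{1/2}$ and $\what{\theta}_{r,s}((\gamma')^{1/2}) = \gamma^{1/2}$ as claimed. Next, for the $x_{i}^{\pm}$-currents: by Remark~\ref{rem:gln-sln_2param_affineA} we have $\hat{\iota}(x_{i}^{\pm}(z)) = \frac{(rs)^{1/2}}{r - s}x_{i}^{\pm}(z^{-1}r^{i/2}s^{-i/2})$, and applying $\what{\theta}_{r,s}^{\gl}$, which sends $x_{i}^{\pm}(w) \mapsto (rs)^{-1/2}\wtd{\frak{X}}_{i}^{\mp}(w)$, yields $\what{\theta}_{r,s}(x_{i}^{\pm}(z)) = \frac{1}{r - s}\wtd{\frak{X}}_{i}^{\mp}(z^{-1}r^{i/2}s^{-i/2})$, matching the stated formula. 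For the Cartan currents $\omega_{i}(z)$ and $\omega_{i}'(z)$, I would use $\hat{\iota}(\omega_{i}(z)) = \psi_{i+1}'(z^{-1}r^{i/2}s^{-i/2})\psi_{i}'(z^{-1}r^{i/2}s^{-i/2})^{-1}$ and $\hat{\iota}(\omega_{i}'(z)) = \psi_{i+1}(z^{-1}r^{i/2}s^{-i/2})\psi_{i}(z^{-1}r^{i/2}s^{-i/2})^{-1}$, then apply $\what{\theta}_{r,s}^{\gl}$ which sends $\psi_{i}(w) \mapsto \wtd{h}_{i}^{-}(w)$ and $\psi_{i}'(w) \mapsto \wtd{h}_{i}^{+}(w)$; this gives $\what{\theta}_{r,s}(\omega_{i}(z)) = \wtd{h}_{i+1}^{+}(z^{-1}r^{i/2}s^{-i/2})\wtd{h}_{i}^{+}(z^{-1}r^{i/2}s^{-i/2})^{-1}$ and the corresponding formula for $\omega_{i}'(z)$, exactly as in the statement.

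Since all generating currents of $U_{r,s}^{D}(\what{\ssl}_{n+1})$ are accounted for and the formulas produced agree with those claimed, the corollary follows; uniqueness of the map with these properties is immediate from the fact that these elements generate $U_{r,s}^{D}(\what{\ssl}_{n+1})$. I expect no serious obstacle here: the only genuinely delicate point is confirming that the scalar shifts $z \mapsto z^{-1}r^{i/2}s^{-i/2}$ in $\hat{\iota}$ and the spectral-parameter normalizations hidden in the Gaussian generators $\wtd{\frak{X}}_{i}^{\mp}$ compose correctly, but this is a bookkeeping check already implicitly validated by the compatibility statement in Remark~\ref{rem:gln-sln_2param_affineA} and the $\what{P}$-bigrading intertwining property. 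If one wished to be fully self-contained, the mild subtlety would be verifying that $\hat{\iota}$ indeed lands in the subalgebra on which $\what{\theta}_{r,s}^{\gl}$ restricts to the asserted formulas rather than merely in $U_{r,s}^{D}(\what{\gl}_{n+1})$, but this is automatic since $\what{\theta}_{r,s}^{\gl}$ is a global isomorphism.
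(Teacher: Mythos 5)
Your proof is correct and takes exactly the same approach as the paper, which deduces the corollary immediately by composing $\what{\theta}_{r,s}$ of Theorem~\ref{thm:D=RTT_Atype_2param} with the embedding $\hat{\iota}$ of Remark~\ref{rem:gln-sln_2param_affineA}. Your generator-by-generator verification (including the cancellation of $(rs)^{\pm 1/2}$ prefactors and the carried-through spectral-parameter shift $z \mapsto z^{-1}r^{i/2}s^{-i/2}$) is precisely the computation the paper leaves implicit.
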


\begin{remark}
(a) In the standard one-parameter setup, it is also known that (an extension of) $U^D_q(\what{\ssl}_{n+1})$ can be realized as a quotient of
$U^D_q(\what{\gl}_{n+1})$, which we recall now. Let $\frak{z}_{q}^{\pm}(z) = \prod_{i=1}^{n+1} k_{i}^{\pm}(zq^{2i}) \in U_{q}^{D}(\what{\gl}_{n+1})$.
Then we have the following equalities:
\[
  [\frak{z}_{q}^{\pm}(z),X_{i}^{\pm}(w)] = [\frak{z}_{q}^{\pm}(z),X_{i}^{\mp}(w)] = [\frak{z}_{q}^{\pm}(z),k_{t}^{\pm}(w)] = 0,
\]
\[
  \frak{z}_{q}^{+}(z)k_{t}^{-}(w) = k_{t}^{-}(w)\frak{z}_{q}^{+}(z)\cdot
  \frac{1-zw^{-1}\gamma^{-1} q^{2}}{1 - zw^{-1}\gamma^{-1}q^{2(n + 1)}}\cdot
  \frac{1 - zw^{-1}\gamma q^{2(n + 1)}}{1-zw^{-1}\gamma q^{2}}
\]
for any $1\leq i\leq n$, $1\leq t\leq n+1$. As a consequence of these relations, we see that the coefficients of $\frak{z}_{q}^{\pm}(z)$
commute with the elements of $\mathrm{Im}(\hat{\iota}_q)$ as well as satisfy the following relation:
\begin{equation}\label{eq:almost_commutativity}
  \frak{z}_{q}^{+}(z) \frak{z}_{q}^{-}(w) = \frak{z}_{q}^{-}(w) \frak{z}_{q}^{+}(z)\cdot \frac{G(zw^{-1}\gamma)}{G(zw^{-1}\gamma^{-1})}
  \quad \mathrm{with} \quad
  G(u)=\prod_{1\leq t\leq n+1} \frac{1-uq^{2(n+1-t)}}{1-uq^{2(1-t)}}.
\end{equation}
Pick a unique series $f(u)$ (with the free term $1$) satisfying the difference equation
\begin{equation}\label{eq:difference_eq1}
  \prod_{1\leq t\leq n+1} f(uq^{2t})=\frac{1-uq^{2(n+1)}}{1-uq^2}
\end{equation}
which also implies
\begin{equation}\label{eq:difference_eq2}
  \prod_{1\leq k,t\leq n+1} f(uq^{2k-2t})=G(u).
\end{equation}
In analogy with~\eqref{eq:f_prefactor}, we consider the normalized $R$-matrix
\begin{equation}\label{eq:f_prefactor_A}
  \wtd{R}_q(z)=f(z)\ol{R}_q(z).
\end{equation}
We shall also consider the algebra $U^{'D}_{q}(\what{\gl}_{n+1})$ defined similarly to $U^{D}_{q}(\what{\gl}_{n+1})$, but
with the commutation relation between $k^+_i(z)$ and $k^-_j(w)$ modified by $f(u\gamma^{-1})/f(u\gamma)$ for all $1\leq i,j\leq n+1$.
Then, in analogy with Theorem~\ref{thm:D=RTT_Atype_1param}, we have an algebra isomorphism $U_{q}^{'D}(\what{\gl}_{n+1}) \iso U'(\wtd{R}_{q}(z))$
given by the same formulas. However, combining~\eqref{eq:almost_commutativity} and~\eqref{eq:difference_eq2}, we get
$[\wtd{\frak{z}}_{q}^{+}(z),\wtd{\frak{z}}_{q}^{-}(w)]=0$, where
$\wtd{\frak{z}}_{q}^{\pm}(z) = \prod_{i=1}^{n+1} k_{i}^{\pm}(zq^{2i}) \in U_{q}^{'D}(\what{\gl}_{n+1})$.
Let $\wtd{U}'(\wtd{R}_{q}(z))$ denote the quotient of $U'(\wtd{R}_{q}(z))$ by the relations $\wtd{\frak{z}}_{q}^{\pm}(z)=1$.
Then, evoking the embedding $U_{q}^{D}(\what{\ssl}_{n+1}) \hookrightarrow U_{q}^{'D}(\what{\gl}_{n+1})$ analogous
to that of Remark~\ref{rem:gl_n-sl_n_twist_compatibility}, we obtain a Hopf algebra isomorphism
$U^D_{q}(\what{\ssl}_{n+1}) \iso \wtd{U}'(\wtd{R}_{q}(z))$ given by $r=q=s^{-1}$-specialization of the formulas from
Corollary~\ref{cor:Dr-vs-RTT_affine_sl}, whereas we extend
$U^D_{q}(\what{\ssl}_{n+1})$ by adjoining $(\omega_{1,0}^{n} \omega_{2,0}^{n-1} \ldots \omega_{n,0})^{\frac{1}{n+1}}$,
$((\omega'_{1,0})^{n} (\omega'_{2,0})^{n-1} \ldots \omega'_{n,0})^{\frac{1}{n+1}}$, as in Remark~\ref{rem:sl-quotient-gl_finite}.

\medskip
\noindent
(b) The above can be naturally generalized to the two-parameter setup, where we define
\[
  \frak{z}_{r,s}(z) = \prod_{1\leq i\leq n+1}\psi_{i}(zr^{i}s^{-i}),\qquad
  \frak{z}_{r,s}'(z) = \prod_{1\leq i\leq n+1}\psi_{i}'(zr^{i}s^{-i}).
\]
Then we have the following relations:
\[
  \frak{z}_{r,s}(z)x_{i}^{\pm}(w) = (rs)^{\pm 1}x_{i}^{\pm}(w)\frak{z}_{r,s}(z),\qquad
  \frak{z}_{r,s}'(z)x_{i}^{\pm}(w) = (rs)^{\pm 1}x_{i}^{\pm}(w)\frak{z}_{r,s}(z),
\]
and
\[
  \frak{z}_{r,s}'(z)\psi_{t}(w) = \psi_{t}(w)\frak{z}_{r,s}'(z)
  \cdot \frac{1 - zw^{-1}\gamma'(\gamma\gamma')^{1/2}rs^{-1}}{1 - zw^{-1}\gamma'(\gamma\gamma')^{1/2}r^{n + 1}s^{-n-1}}
  \cdot \frac{1 - zw^{-1}\gamma(\gamma\gamma')^{1/2}r^{n+1}s^{-n-1}}{1 - zw^{-1}\gamma(\gamma\gamma')^{1/2}rs^{-1}}
\]
for all $1\leq i\leq n$, $1\leq t\leq n+1$. In analogy with part~(a), let us consider an algebra $\mathsf{U}^{'D}_{r,s}(\what{\gl}_{n+1})$
defined similarly to Definition~\ref{def:two-param-affine-gl}, but with the commutation relation between $\psi'_i(z)$ and $\psi_j(w)$
modified by $f(u\gamma'(\gamma\gamma')^{1/2})/f(u\gamma(\gamma\gamma')^{1/2})$ for all $1\leq i,j\leq n+1$ (where $f$ is determined
through a difference relation akin~\eqref{eq:difference_eq1}) and with the commutation relation between $\psi_{t}(z), \psi'_{t}(z)$
and $x^\pm_i(w)$ modified by $(rs)^{-\frac{1}{n+1}}$ (cf.\ definition of $\mathsf{U}_{r,s}(\gl_{n+1})$ in
Remark~\ref{rem:sl-quotient-gl_finite}(b)). Then, we have a quotient algebra realization:
\begin{equation}\label{eq:2param_quotient_affineA}
  U^D_{r,s}(\what{\ssl}_{n+1}) \iso \mathsf{U}^{'D}_{r,s}(\what{\gl}_{n+1})/(\frak{z}_{r,s}(z) - 1, \frak{z}'_{r,s}(z) - 1),
\end{equation}
where we extended $U^D_{r,s}(\what{\ssl}_{n+1})$ by again adjoining $(\omega_{1,0}^{n} \omega_{2,0}^{n-1} \ldots \omega_{n,0})^{\frac{1}{n+1}}$
and $((\omega'_{1,0})^{n} (\omega'_{2,0})^{n-1} \ldots \omega'_{n,0})^{\frac{1}{n+1}}$.
\end{remark}

\begin{remark}
Let us conclude by noting that precisely the same arguments also apply to the two-parameter quantum (affine) superalgebras. In the particular
case of $\widehat{\gl}(m|n)$ this immediately yields and upgrades the main result of the recent note~\cite{HJZ2}. Furthermore, in the
orthosymplectic case $\mathfrak{osp}(V)$ (with any underlying parity sequence) this yields super-analogues of the main results from
Sections~\ref{sec:twisted_R_matrices},~\ref{sec:FRT construction finite} and~\ref{sec:affine_RTT} accordingly, cf.~\cite{HT}.
\end{remark}


\end{document}